\tikzset{>=latex}
\def\thm@space@setup{%
 \thm@preskip=\parskip \thm@postskip=0pt
}
\def\th@remark{%
  \thm@headfont{\itshape}%
  \normalfont 
  \thm@preskip\parskip \thm@postskip=0pt
}
\newtheorem{theorem}{Theorem}[section]
\newtheorem{lemma}[theorem]{Lemma}
\newtheorem{prop}[theorem]{Proposition}
\newtheorem{cor}[theorem]{Corollary}
\theoremstyle{definition}
\newtheorem{defn}[theorem]{Definition}
\newtheorem{remark}[theorem]{Remark}
\newtheorem{example}[theorem]{Example}
\numberwithin{equation}{section}
\def\beq{\begin{equation}}
\def\eeq{\end{equation}}
\def\longra{\longrightarrow}
\def\longla{\longleftarrow}
\newcommand{\hr}[1]{\left(#1\right)}                                                    
\newcommand{\hm}[1]{\left|#1\right|}                                                    
\newcommand{\hs}[1]{\left[#1\right]}                                                    
\newcommand{\hc}[1]{\left\{#1\right\}}                                                  
\def\le{\leqslant}
\def\ge{\geqslant}
\def\Ad{\operatorname{Ad}}
\def\C{\mathbb C}
\def\msD{\mathscr D}
\def\Fc{\mathcal F'}
\def\Gc{\mathcal G}
\def\Heis{\mathrm{Heis}}
\def\Hc{\mathcal H}
\def\Oc{\mathcal O}
\def\pt{\mathrm{pt}}
\def\Qc{\mathcal Q}
\def\R{\mathbb R}
\def\Rc{\mathcal R}
\def\Res{\mathrm{Res}}
\def\sgn{\operatorname{sgn}}
\def\sl{\mathfrak{sl}}
\def\Sc{\mathcal S}
\def\Z{\mathbb Z}
\newcommand{\gfr}[2]{\genfrac{\{}{\}}{0pt}{1}{#1}{#2}}
\newcommand{\Hsp}{\mathcal{H}}
\newcommand{\Bc}{\mathcal{B}}
\newcommand{\vb}{\mathbf{v}}
\newcommand{\eb}{\mathbf{E}}
\newcommand{\Gauss}{G}
\newcommand{\Qb}{\mathbf{Q}}
\newcommand{\Pb}{\mathbf{P}}
\newcommand{\Xb}{\mathbf{X}}
\newcommand{\bbH}{\mathbb{H}}
\newcommand{\bbU}{\mathbb{U}}
\newcommand{\bbV}{\mathbb{V}}
\newcommand{\rd}{\mathrm{d}\!\,}
\newcommand{\Kc}{\mathcal{K}}
\newcommand{\Fct}{\mathcal{F}}
\newcommand{\Fctt}{\mathcal{F}''}
\newcommand{\tFct}{\mathcal{G}}
\newcommand{\tFc}{\mathcal{G}'}
\newcommand{\tFctt}{\mathcal{G}''}
\newcommand{\tKc}{\mathcal L}
\newcommand{\Nilp}{U}
\newcommand{\Bor}{B}
\newcommand{\Tor}{T}
\newcommand{\nears}{\scaleobj{0.7}{\nearrow}}
\newcommand{\sears}{\scaleobj{0.7}{\searrow}}
\newcommand{\nwars}{\scaleobj{0.7}{\nwarrow}}
\newcommand{\swars}{\scaleobj{0.7}{\swarrow}}
\newcommand{\nee}{\overset{\nears}{e}}
\newcommand{\seee}{\overset{\sears}{e}}
\newcommand{\nwe}{\overset{\nwars}{e}}
\newcommand{\swe}{\overset{\swars}{e}}
\newcommand{\neC}{\overset{\nears}{C}}
\newcommand{\seC}{\overset{\sears}{C}}
\newcommand{\wneC}{\overset{\nears}{C}\,\!'}
\newcommand{\wseC}{\overset{\sears}{C}\,\!'}
\newcommand{\nevarpi}{\overset{\nears}{\varpi}}
\newcommand{\sevarpi}{\overset{\sears}{\varpi}}
\newcommand{\nwvarpi}{\overset{\nwars}{\varpi}}
\newcommand{\nef}{\overset{\scaleobj{0.7}{\nearrow}}{f}}
\newcommand{\sef}{\overset{\scaleobj{0.7}{\searrow}}{f}}
\newcommand{\id}{\mathrm{id}}
\newcommand{\Eb}{\mathbf{E}}
\newcommand{\Fb}{\mathbf{F}}
\newcommand{\Kb}{\mathbf{K}}
\newcommand{\Lb}{\mathbf{L}}
\newcommand{\dint}{\int\!\!\!\!\int}
\newcommand{\Ab}{\mathbf{A}}
\newcommand{\Bb}{\mathbf{B}}
\newcommand{\Cb}{\mathbf{C}}
\newcommand{\Db}{\mathbf{D}}
\newcommand{\Hb}{\mathbf{H}}
\newcommand{\mfsl}{\mathfrak{sl}}
\newcommand{\mfb}{\mathfrak{b}}
\newcommand{\Imm}{\mathrm{Im}}
\newcommand{\Ree}{\mathrm{Re}}
\newcommand{\btd}[1]{\raisebox{\depth}{$\bigtriangledown$\!}_{#1}}
\newcommand{\wbtd}[1]{\raisebox{\depth}{$\bigtriangledown'$\!\!}_{#1}}
\newcommand{\wwbtd}[1]{\Bor^-_{#1}}
\newcommand{\wwwbtd}[1]{\Bor^+_{#1}}
\newcommand{\Tr}{\mathrm{Tr}}
\newcommand{\wU}{\widetilde{U}}
\newcommand{\Wbb}{\mathbb{W}}
\newcommand{\wWbb}{\mathbb{W}_c}
\newcommand{\Vbb}{\mathbb{V}}
\newcommand{\Ubb}{\mathbb{U}}
\newcommand{\wUbb}{\widetilde{\Ubb}}
\newcommand{\wW}{\widetilde{W}}
\newcommand{\wV}{\widetilde{V}}
\newcommand{\Spec}{\mathrm{Spec}}
\newcommand{\opp}{\mathrm{op}}
\newcommand{\Ker}{\mathrm{Ker}}
\newcommand{\ext}{\mathrm{ext}}
\newcommand{\msF}{\mathscr{F}}
\newcommand{\mfm}{\mathfrak{m}}
\newcommand{\mfn}{\mathfrak{n}}
\newcommand{\Sb}{\mathbf{S}}
\newcommand{\Tb}{\mathbf{T}}
\newcommand{\Ec}{\mathcal{E}}
\newcommand{\Lc}{\mathcal{L}}
\newcommand{\G}{\mathbb{G}}
\newcommand{\Bbbb}{\mathbb{B}}
\newcommand{\Jc}{\mathcal{J}}
\newcommand{\Pbb}{\mathbb{P}}
\newcommand{\nsf}{\mathrm{nsf}}
\newcommand{\Hol}{\mathrm{Hol}}
\newcommand{\Rad}{\mathrm{Rad}}
\newcommand{\redu}{\mathrm{red}}
\newcommand{\msE}{\mathscr{E}}
\newcommand*{\ad}{\mathsf{ad}}
\def\Ebf{\mathbf{E}}
\def\stgE{\mathbf{E}}
\def\stgF{\mathbf{F}}
\newcommand{\boxsum}{
  \mathop{
    \vphantom{\bigoplus} 
    \mathchoice
      {\vcenter{\hbox{\resizebox{\widthof{$\displaystyle\bigoplus$}}{!}{$\boxplus$}}}}
      {\vcenter{\hbox{\resizebox{\widthof{$\bigoplus$}}{!}{$\boxplus$}}}}
      {\vcenter{\hbox{\resizebox{\widthof{$\scriptstyle\oplus$}}{!}{$\boxplus$}}}}
      {\vcenter{\hbox{\resizebox{\widthof{$\scriptscriptstyle\oplus$}}{!}{$\boxplus$}}}}
  }\displaylimits 
}
\newcommand{\Heiss}{\mathrm{H}}
\newcommand{\mbi}{\mathbf{i}}
\newcommand{\bbE}{\mathbb{E}}
\newcommand{\vNTens}{\bar{\otimes}}
\newcommand{\standard}{\mathrm{st}}
\newcommand{\triangspace}{space of type $A_n$}
\newcommand{\msP}{\mathscr{P}}
\title{Quantum $SL^+(N,\R)$ as a locally compact quantum group}
\author{K. De Commer}
\address{Vrije Universiteit Brussel}
\email{kenny.de.commer@vub.be}
\author{G. Schrader}
\address{Northwestern University}
\email{gus.schrader@northwestern.edu}
\author{A. Shapiro}
\address{University of Edinburgh}
\email{alexander.shapiro@ed.ac.uk}
\author{C. Voigt}
\address{University of Glasgow}
\email{christian.voigt@glasgow.ac.uk}
\begin{document}

\begin{abstract}
We construct the first examples of purely continuous, $q$-deformed Lie type locally compact quantum groups in higher rank. They arise from Drinfeld--Jimbo quantization, at unimodular deformation parameter, of the totally positive part of higher rank split real Lie groups in type $A$. Our techniques are based on quantum cluster theory, in particular as developed through the work of Fock and Goncharov.
\end{abstract}

\maketitle

\section*{Introduction}

Positive representations arise in the representation theory of split real quantum groups, and are deeply connected with quantum cluster algebras and higher quantum Teichm\"uller theory. They were first studied by Ponsot--Teschner \cite{PT99, PT01} in the rank $ 1 $ case in connection with Liouville theory, and later on in general type by Ip and Frenkel--Ip \cite{Ip12, FI14, Ip15}. A key feature of these representations is that the standard generators of the quantized universal enveloping algebra act by positive self-adjoint (unbounded) operators. This resembles classical Lie algebra representations arising from unitary representations of split semisimple Lie groups, except that, in this analogy, the positivity requirement corresponds to looking only at the totally positive part of the underlying groups.  

The main result of the present paper is the construction of a quantization $SL^+_\hbar(N,\R)$ of the semigroup $SL^+(N,\R)$ of totally positive matrices in $ SL(N, \mathbb{R}) $ as a locally compact quantum group in the sense of Kustermans--Vaes \cite{KV00, KV03}. Here $ \hbar $ is a nonzero real parameter, related to the usual $q$ in the algebraic setting by $ q = e^{\pi i \hbar} $. In the above analogy, this provides the analytic foundations for a global approach to the study of positive representations of quantum groups, and allows one to bring the powerful machinery of operator algebras into the picture. At the same time, it yields the first examples of Drinfeld--Jimbo-type higher rank locally compact quantum groups which are genuinely outside the realm of algebraic constructions.

Historically, the theory of locally compact quantum groups originated from attempts to generalize Pontryagin duality to the nonabelian setting, see \cite{Kus05} for an overview. To a large part, the structure of a locally compact quantum group is governed by a \emph{multiplicative unitary}, encoding the analogue of the regular representation of a locally compact group. The concept of a multiplicative unitary was studied earlier on by Baaj--Skandalis \cite{BS93}, and developed further by Woronowicz \cite{Wor96}, who introduced the notion of \emph{manageability}, later generalized to \emph{modularity} in \cite{SW01}. The key insight of Kustermans and Vaes was that requiring the existence of Haar weights allows one to build an elegant and extremely versatile theory of locally compact quantum groups from a simple set of axioms, closed under Pontryagin duality and encompassing all known examples at the time. They showed in particular that the \emph{Kac--Takesaki operator} of a locally compact quantum group is a manageable multiplicative unitary, thus linking their approach to the work of Woronowicz.  

However, apart from constructions of an essentially algebraic nature, only a few isolated examples of genuine locally compact quantum groups of $q$-deformation type were hitherto known, each involving subtle analytic considerations. These examples are limited to low rank \cite{VD01,WZ02,Wor03,KK03}, with case by case techniques. 

One of these examples is the quantum `$ax + b$'-group introduced by Woronowicz--Zakrzewski \cite{WZ02}.  For the analysis of this quantum group, Woronowicz studied the quantum exponential function \cite{Wor00}, which is closely related to Faddeev's noncompact quantum dilogarithm \cite{Fad95, Kas01, FG09}. Most of the intricacies of the quantum `$ax + b$'-group are due to the fact that the operators involved are only assumed to be self-adjoint, possibly with negative parts in their spectrum. 

It was first realised by Ip \cite{Ip13} that the work of Woronowicz--Zakrzewski implicitly contains all ingredients needed for a \emph{totally positive} version of the quantum `$ax + b$'-group, see also \cite{BT03}. This is surprising in at least two ways: firstly, the classical counterpart of the totally positive `$ax + b$'-quantum group is only a semigroup, and not a group, and secondly, working with positive self-adjoint operators simplifies the analysis significantly. Ip showed that one obtains a modular multiplicative unitary in this situation, and one also has Haar weights, thus giving a locally compact quantum group. One should view this positive `$ax+b$'-quantum group as corresponding to a Borel in the associated totally positive quantum $SL(2, \R) $-group, obtained via the Drinfeld double construction. 

This is the starting point of our work. 
The main algebraic ingredient we use to go beyond the rank 1 case is \emph{quantum cluster theory.} Cluster algebras were first introduced in the commutative setting by Fomin and Zelevinsky in the context of (dual) canonical bases in Lie theory \cite{Lus93}, and the theory was shown to admit a natural $q$-deformation by Berenstein and Zelevinsky in \cite{BZ05}, as well as by Fock and Goncharov~\cite{FG09} as part of their work on the quantization of higher Teichm\"uller spaces. Starting from the data of a \emph{quiver}, a quantum cluster algebra is defined as the intersection of a collection of mildly non-commutative algebras known as \emph{quantum tori} or skew-Laurent polynomial rings. The precise form of the intersection is governed by the combinatorics of \emph{cluster mutation}, with the intersection being taken over all quivers \emph{mutation-equivalent} to the original. A quantum cluster algebra contains a distinguished class of elements known as \emph{global monomials} (or `cluster monomials'). For a large class of quivers, it is proven in~\cite{DM21} (and~\cite{GHKK18} at $q=1$) that the expansion of a global monomial relative to the quantum tori of an arbitrary quiver is a skew-Laurent polynomial whose coefficients are elements of $\mathbb{N}[q^{\pm1}]$ invariant under the bar involution $q\mapsto q^{-1}$. In the cases relevant for our purposes, we have in fact an explicit combinatorial description of these expansion coefficients in terms of partition functions of paths in a directed graph. 

The setting of quantum tori and $q$-Laurent positivity is particularly attractive from an analytic point of view. On the one hand, there is in this setting a well-developed operator-algebraic deformation theory due to Rieffel~\cite{Rie93}. On the other, Laurent positivity guarantees that each summand in the expansion of a global monomial corresponds to a positive self-adjoint operator. By taking \emph{form sums} of such summands, we are able to make sense of the global monomials corresponding to PBW generators of the quantum group in various clusters as rather explicit positive operators.  This perspective seems to be new, and we hope it will find further applications in the analytic theory of quantum cluster algebras initiated by Fock and Goncharov in~\cite{FG09}. 

In~\cite{FG09}, it is explained that to each cluster mutation one can associate a unitary known as a quantum  mutation operator. It is these quantum mutation operators that we will use for the construction of the multiplicative unitary for $SL^+_\hbar(N,\mathbb{R})$. In a little more detail, the particular cluster algebras we use in this paper are those considered by Fock and Goncharov in their work~\cite{FG06} on moduli spaces of framed $SL(N,\mathbb{C})$ local systems on surfaces. In this dictionary, the cluster algebra through which we describe the quantum Borel corresponds to a `triangle', i.e.\ a disk with three marked points on its boundary.  Fock and Goncharov construct an explicit composite of cluster mutations realizing the `flip' intertwining the cluster charts corresponding to the two ways of gluing a disk with four marked points from a pair of triangles along a single edge, and it is the corresponding quantum mutation operator that will serve as our multiplicative unitary. 

Using the form sum construction, we are able to show that the same operator can be obtained via functional calculus from the general factorized form of the $R$-matrix for the quantum group, interpreted as an element of the tensor square of the Heisenberg double of the quantum Borel. On the other hand, the cluster-theoretic description of our multiplicative unitary is highly amenable to explicit computation, a feature which we exploit heavily in verifying its compatibility with the locally compact quantum group axioms. 

We would like to point out that Ip pioneered the study of positive quantum groups, also in higher rank. In \cite{Ip12,Ip13,Ip17}, he proposed the theory of \emph{multiplier Hopf algebras} due to Van Daele \cite{VD98} to approach this problem. Multiplier Hopf algebras with integrals, or more precisely algebraic quantum groups in the sense of \cite{KVD97}, give rise to a well-behaved class of locally compact quantum groups. These exhibit many features of the general theory, but are still close to the purely algebraic theory of Hopf algebras. However, the analytic side of discussion in \cite{Ip12,Ip13,Ip17} lacks mathematical rigor, and some of the claims in these papers are only valid on a heuristic level. As we will show, the scaling constant of the Borel part of $ SL^+_\hbar(N, \mathbb{R}) $ is different from $ 1 $, which means that it cannot be obtained as an algebraic quantum group \cite{DCVD10}. 

Our construction of $SL^+_\hbar(N,\mathbb{R})$ as a locally compact quantum group  opens several avenues for further exploration, some of which we now outline. 
Since the inception of the theory, it has been very desirable to define a meaningful `category of positive representations' arising from cluster-type realizations of quantum groups.\footnote{An attempt at describing algebraic structures that may underlie such a definition has been made in~\cite{SS17}, a preprint with an overly optimistic title.}
We regard the construction of the locally compact quantum group $SL^+_\hbar(N,\mathbb{R})$ presented here as the first step towards addressing this issue. In future work, we hope to make aspects of the representation categories of $SL^+_\hbar(N,\mathbb{R})$ more concrete, including the Plancherel formula and an explicit description of the reduced unitary dual.  

Another direction for further exploration consists of determining to which extent the representation theory of $SL^+_\hbar(N,\mathbb{R})$ can be `integrated' over more general surfaces \cite{BZBJ18} to define an analog of a topological quantum field theory. This question seems closely related to the infinite dimensional analog of a modular functor arising in the work~\cite{FG09} of Fock and Goncharov on quantized higher Teichm\"uller theory.  

Finally, we would like to draw the reader's attention to the fact that all our constructions work for all $\hbar \in \R^\times$, in particular they remain valid when $q = e^{\pi i\hbar}$ is a root of unity. We note that the constructions are sensitive to the value of $\hbar$ rather than that of $q$, in particular we obtain a locally compact quantum group at $\hbar=2$ which corresponds to $q=1$. On a related note, in the algebraic study of positive representations, the notion of a \emph{modular double} plays a prominent role, going back to work of Faddeev \cite{Fad95}. This encompasses two formally commuting copies of the quantized universal enveloping algebra, at dual deformation parameters $ q = e^{i \pi \hbar} $ and $ \check{q} = e^{\pi i \hbar^{-1}} $. Modular duality is a mathematical reflection of the duality properties of Liouville theory and its generalizations. In the analytic setting, this is already built into the construction of the locally compact quantum group, and presents itself in our context as an isomorphism between $ SL^+_\hbar(n, \mathbb{R}) $ and $ SL^+_{\hbar^{-1}}(n, \mathbb{R})$, identifying the corresponding multiplicative unitaries. 

Let us now come to the contents of this paper. 

In the \emph{first section}, we present some basic facts on vector spaces with a skew-symmetric bilinear form, and study their associated representation theory. 

In the \emph{second section}, we construct the multiplicative unitary that will turn out to be associated to a locally compact quantum group, namely the quantization $\Bbbb_N^+$ of the Borel part of $SL^+(N,\R)$. We present two formulas for this multiplicative unitary: one based directly on the formula for the universal $R$-matrix in quantum group theory, and one constructed from a particular Laurent polynomial chart for (a reduction of) the Heisenberg algebra associated to the quantum group. 

In the \emph{third section}, we prove the fundamental fact that our multiplicative unitary is \emph{modular}, which already unlocks many of the desired features for the associated representation theory. Our strategy is to decompose the multiplicative unitary, viewed as a representation of the quantum group, as a tensor product of `rank one'-representations, for which the associated modularity is already known. 

In the \emph{fourth section}, we determine the von Neumann algebras underlying the above modular multiplicative unitary, which are to be seen as the quantized measureable spaces underlying our quantum group. These von Neumann algebras are easily described as twisted group von Neumann algebras, but to get at this result we need access to the more refined C$^*$-algebraic theory, using results of Rieffel's deformation theory \cite{Rie93}. At the end of this section, we also briefly comment on the phenomenon of modular duality. 

In the \emph{fifth section}, we obtain our main result, showing that our modular multiplicative unitary leads to quantum groups admitting Haar measures. The crucial idea here is based on fundamental insights of Woronowicz and Van Daele \cite{VD01,Wor03}, together with a flexibility to use irreducible representations of the (degenerate) Heisenberg algebra when studying our multiplicative unitary. 

In the \emph{sixth section}, we give a complete description of the modular data associated to our quantum group, such as modular elements, modular operators and modular conjugations. 

In the \emph{seventh section}, we consider the duality theory for our quantum group, and determine a formula for the associated left regular multiplicative unitary, or equivalently, the Kac--Takesaki operator, in terms of the multiplicative unitary that we started with. 

Finally, in the \emph{eighth section} we show how one can apply the Drinfeld double construction for the quantized Borel group, to arrive (via the results of \cite{BV05}) at the desired quantization $ SL^+_
\hbar(N, \R) $ as a locally compact quantum group, of the semi-group $SL^+(N,\R)$ of totally positive special linear matrices. 

The paper is concluded with a long list of appendices, gathering some known results that might be less accessible to readers from one of the different communities addressed by this paper (locally compact quantum groups and quantum cluster algebras). 

In \emph{Appendix A}, we gather some analytic information on the quantum dilogarithm function that is crucial for the development of the whole theory.

In \emph{Appendix B}, we shortly explain the proof of the pentagon equation associated to the quantum dilogarithm. 

In \emph{Appendix C}, we comment on the proof of modularity for the rank $1$ case (`$ax+b$'-group), following \cite{WZ02}. 

In \emph{Appendix D}, we explain some technical concepts relating to C$^*$-algebras (multipliers and affiliated elements).

In \emph{Appendix E} we explain how C$^*$-algebras can be deformed when acted on by a vector space with a skew-bilinear form, following \cite{Rie93}. 

In \emph{Appendix F}, we give a brief overview of the theory of von Neumann algebras and their associated weight theory. 

Finally, in \emph{Appendix G} we present a direct algebraic proof of the pentagon equation for the multiplicative unitary constructed in Section \ref{SecConstrMultUni}.

\textbf{Conventions and notations}: 
For a complex-valued function $f$ with domain $X$, we write
\[
\overline{f}(x) = \overline{f(x)},\qquad x\in X. 
\]
For $X$ a locally compact Hausdorff space, we write $C_b(X)$ for the C$^*$-algebra of bounded continuous complex-valued functions on $X$ with pointwise algebra structure and $*$-operation $f \mapsto \overline{f}$. We write $C_0(X) \subseteq C_b(X)$ for its C$^*$-subalgebra of functions vanishing at infinity.

For $A$ a complex algebra, $M$ a left $A$-module and $V \subseteq M$ a linear subspace, we write 
\[
AV = \left\{\sum_{i=1}^n a_iv_i\mid n \geq 1,a_i \in A,v_i\in V\right\} \subseteq M.
\]

We denote by $[-]$ the norm-closure of the linear span of a set inside a normed space, or $[-]^{\tau}$ if we take the closure with respect to another linear topology $\tau$. 

We take inner products for Hilbert spaces anti-linear in the first factor. We write $\Bc(\Hc)$ for the von Neumann algebra of bounded operators on a Hilbert space $\Hc$, and $\Kc(\Hc)$ for the C$^*$-algebra of compact operators. Then for $\Hc$ a Hilbert space and $\xi,\eta \in \Hc$, we write $\omega_{\xi,\eta}$ for the vector functional 
\begin{equation}\label{EqGenNormFunctVect}
\omega_{\xi,\eta}\colon \Bc(\Hc) \rightarrow \C, \qquad \omega_{\xi,\eta}(x) = \langle \xi,x\eta\rangle,\qquad x\in \Bc(\Hc),
\end{equation} 
and we denote by $\theta_{\xi,\eta}$ the rank $1$-operator
\begin{equation}\label{EqRank1Op}
\theta_{\xi,\eta}\in \Kc(\Hc),\qquad \theta_{\xi,\eta}(\zeta) = \langle \eta,\zeta\rangle \xi,\qquad \zeta\in \Hc.
\end{equation}
We denote by
\[
\Bc(\Hc)_* = \Kc(\Hc)^* = [\omega_{\xi,\eta}\mid \xi,\eta \in \Hc] \subseteq \Bc(\Hsp)^*
\]
the space of normal functionals on $\Bc(\Hc)$. We also adapt these notations to spaces of operators between different Hilbert spaces.

We generically use boldface symbols to indicate unbounded operators on Hilbert spaces. For $\mathbf{T}$ an unbounded operator on a Hilbert space $\Hsp$, we denote by $\msD(\mathbf{T})\subseteq \Hsp$ its domain, which by default we assume dense in $\Hsp$. When $\Sb,\Tb$ are two unbounded operators with $\msD(\Sb)\cap \msD(\Tb)$ dense in $\Hsp$, we denote by $\Sb+\Tb$ the sum operator on $\msD(\Sb)\cap \msD(\Tb)$. When $\Sb,\Tb$ are closed and $\Sb+\Tb$ is closeable, we write $\Sb \dotplus \Tb$ for the closure of $\Sb+ \Tb$. 

A self-adjoint operator on a Hilbert space is by default allowed to be unbounded (with dense domain). By a \emph{positive operator} we will always mean a self-adjoint operator with spectrum contained in $\R_{\geq0}$, and by a \emph{strictly positive operator} an invertible positive operator (i.e.\ with $0$ not in the point spectrum). We frequently use functional calculus applied to self-adjoint operators, see e.g.\  \cite[Chapter 5]{Sch12} for an exposition. 

We denote by $\otimes$ the tensor product of Hilbert spaces, or the minimal tensor product of C$^*$-algebras. We denote by $\vNTens$ the spatial tensor product of von Neumann algebras. We  write $\odot$ for algebraic tensor products of vector spaces when disambiguation is needed. For example, we denote by $\Hb_1\odot \Hb_2$ the algebraic tensor product of self-adjoint operators $\Hb_i$, whose domain is the algebraic tensor product $\msD(\Hb_1)\odot \msD(\Hb_2)$, while we denote by $\Hb_1\otimes \Hb_2$ the closure of $\Hb_1\odot \Hb_2$, which is then again self-adjoint, resp.\ (strictly) positive if both $\Hb_i$ are (\cite[Section VIII.10]{RS81}).

Between tensor products of vector spaces or Hilbert spaces, we use $\Sigma$ to denote the flip map:
\[
\Sigma\colon v\otimes w \mapsto w\otimes v.
\]
We indicate standard leg numbering notation for operators $X$ between Hilbert spaces by $X_{ij...}$, so e.g.\ if $\Hc_1,\Hc_2,\Hc_3$ are Hilbert spaces and $X\in \Bc(\Hc_2\otimes\Hc_3)$, then 
\[
X_{23} = 1\otimes X \in \Bc(\Hc_1\otimes \Hc_2\otimes \Hc_3),\qquad X_{32} = 1\otimes \Sigma X \Sigma \in \Bc(\Hc_1\otimes \Hc_3\otimes \Hc_2),\qquad \textrm{etc.} 
\]

\emph{Acknowledgments}: KDC thanks J.\,Krajczok and W.\,Malfait for discussions and comments on preliminary versions of this paper. He also thanks M. Yakimov for discussions on cluster algebra theory at a (very) early stage of this project. GS has been supported by the NSF Standard Grant DMS-2302624. AS\ has been supported by the European Research Council under the European Union’s Horizon 2020 research and innovation programme under grant agreement No 948885 and by the Royal Society University Research
Fellowship.

\section{Skew-symmetric spaces and their representations}

\subsection{Skew-symmetric spaces}

\begin{defn}
A \emph{skew-symmetric space} $(V,\epsilon)$ is a finite-dimensional real vector space $V$, equipped with a skew-symmetric bilinear form 
\[
\epsilon\colon V\times V \rightarrow \R,\qquad \epsilon(v,w) = -\epsilon(w,v),\qquad v,w\in V.
\] 
We call $V$ \emph{symplectic} if $\epsilon$ is non-degenerate, and \emph{trivial} if $\epsilon=0$.
\end{defn}

The collection of skew-symmetric spaces is closed under taking subspaces, finite direct sums and conjugates 
\[
V = (V,\epsilon) \mapsto \overline{V} = (V,-\epsilon).
\]
Formally, it is convenient to change the notation for the underlying vector space of $\overline{V}$ by writing its elements as $\overline{v}$, with $\overline{v}$ corresponding to $v\in V$.

We drop $\epsilon$ from the notation if it is clear from the context, i.e.\ we write 
\[
(v,w) = \epsilon(v,w),\qquad v,w\in V.
\]

To any skew-symmetric space $V$ we can associate its \emph{radical} 
\[
\Rad(V) = \{v\in V\mid (v,-) = 0\} \subseteq V. 
\]
Then $\Rad(V)$ is a trivial skew-symmetric subspace of $V$, and $\epsilon$ passes to the quotient
\begin{equation}\label{EqReduction}
V_{\redu} = V/\Rad(V), 
\end{equation}
with $V_{\redu}$ symplectic. We call $V_{\redu}$ the \emph{reduction} of $V$.

If $V'\subseteq V$ is any subspace complementary to $\Rad(V)$, then $V'$ is a symplectic subspace of $V$, isomorphic to $V_{\redu}$ through the quotient map. We can in particular decompose $V$ as an internal direct sum of skew-symmetric subspaces
\begin{equation}\label{EqDecompRad}
V = \Rad(V) \oplus V'. 
\end{equation}
A \emph{Lagrangian subspace} of the symplectic space $V'$ is any trivially skew-symmetric subspace $W \subseteq V'$ of maximal dimension. For such a subspace one can select a Lagrangian complementary subspace $\hat{W}$ in $V'$, called a \emph{dual} for $W$, so that in particular the pairing between $W$ and $\hat{W}$ is non-degenerate. Then associated to  a basis $\{e_1,\ldots,e_m\}$ of $W$ is a unique \emph{dual} basis $\{\hat{e}_1,\ldots,\hat{e}_m\}$ of $\hat{W}$ such that 
\[
(\hat{e}_i,e_j) = \delta_{ij},\qquad 1\leq i,j\leq m.  
\]
Writing $Z_i = \mathrm{span}_{\R}\{e_i,\hat{e}_i\}$, the $Z_i \subseteq V'$ are two-dimensional symplectic vector subspaces, and \eqref{EqDecompRad} can be further refined into  an internal direct sum of skew-symmetric subspaces
\begin{equation}\label{EqDecompRadRefined}
V = \Rad(V) \oplus V' = \Rad(V) \oplus \bigoplus_{i=1}^m Z_i. 
\end{equation}

\subsection{Representations of skew-symmetric spaces}

\begin{defn}\label{DefUniProjRep}
Let $V$ a skew-symmetric space, and view $(V,+)$ as a locally compact group.

For $\hbar\in \R^{\times} = \R \setminus \{0\} $, a \emph{unitary $\hbar$-representation of $V$} is a strongly continuous unitary $\Omega$-projective representation $(\Hsp,\pi)$ of $(V,+)$ on a Hilbert space $\Hsp$, where $\Omega$ is the unitary $2$-cocycle
\[
\Omega(v,w) = \Omega_{\hbar}(v,w) =  e^{-\pi i \hbar (v,w)},\qquad v,w\in V.
\] 
\end{defn}
Spelled out, this means that $\pi(v)$ is a unitary operator on $\Hsp$ for each $v\in V$, that for each $\xi\in \Hsp$ the map $v\mapsto \pi(v)\xi$ is norm-continuous, and that
\begin{equation}\label{EqProdProj}
\pi(v+w) = e^{-\pi i \hbar (v,w)}\pi(v)\pi(w),\qquad v,w\in V,
\end{equation}
from which we get in particular that 
\begin{equation}
\pi(v)\pi(w) = e^{2\pi i \hbar (v,w)}\pi(w)\pi(v),\qquad v,w\in V.
\end{equation}

\begin{remark}
Note that in principle one can restrict to the case $\hbar = 1$ by subsuming $\hbar$ into the skew-symmetric bilinear form. However, in certain contexts it is convenient to explicitly have $\hbar$ around as a deformation parameter.
\end{remark}
\begin{remark}
When $V$ is trivial as a skew-symmetric space, Definition \ref{DefUniProjRep} becomes independent of $\hbar$, and just gives back the notion of (strongly continuous) unitary $V$-representation.
\end{remark}

Subrepresentations, direct sums and irreducibility of unitary $\hbar$-representations are defined in the obvious way.

\begin{example}\label{ExaStandardRep}
Endowing $V$ with Lebesgue measure, 
$L^2(V)$ carries the \emph{standard} unitary $\hbar$-representation $\pi = \pi_{\standard}$ of $V$, determined by 
\begin{equation}\label{EqStandardDefDualRep}
(\pi(v)g)(w) = e^{\pi i\hbar (v,w)}g(w-v),\qquad v,w\in V,g\in L^2(V).
\end{equation}
\end{example}

\begin{example}\label{ExaHeisenberg}
Assume $V$ is symplectic, and choose complementary Lagrangian subspaces $W,\hat{W} \subseteq V$. Then $L^2(W)$ carries a unique unitary $\hbar$-representation $\pi= \pi_{\Heis}$ of $V$, the \emph{Heisenberg $\hbar$-representation} (with respect to the polarisation $(W,\hat{W})$), such that  \begin{multline*}
(\pi(w)f)(w') =  f(w'-w),\quad (\pi(\hat{w})f)(w') =  e^{2\pi i \hbar (\hat{w},w')}f(w'),\\ f\in L^2(W), w,w'\in W,\hat{w}\in \hat{W}.
\end{multline*}
\end{example}

\begin{theorem}[Stone-von Neumann theorem] If $V$ is symplectic, all its Heisenberg $\hbar$-representations are irreducible and unitarily equivalent. Any other unitary $\hbar$-representation is a (possibly infinite) direct sum of Heisenberg $\hbar$-representations.
\end{theorem}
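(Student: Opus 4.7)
My plan is to follow the classical Stone--von Neumann strategy, adapted to an abstract symplectic vector space. Fix complementary Lagrangians $V = W \oplus \hat{W}$, so that in the model $L^2(W)$ the Heisenberg representation sends $w\in W$ to translation and $\hat{w}\in\hat{W}$ to multiplication by the character $w' \mapsto e^{2\pi i\hbar(\hat{w},w')}$. Two polarizations can be compared by an explicit partial Fourier transform "along the direction in which they differ", using non-degeneracy of $\epsilon$ to identify $\hat{W}$ with the Pontryagin dual of $W$; this produces a $V$-equivariant unitary and reduces everything to a single fixed polarization.

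For irreducibility of $\pi_{\Heis}$, the image $\pi_{\Heis}(\hat{W})$ generates, by strong continuity and non-degeneracy of $\epsilon$, the maximal abelian subalgebra $L^{\infty}(W) \subseteq \Bc(L^2(W))$ of multiplication operators; an element of its commutant that further commutes with all translations $\pi_{\Heis}(W)$ must be translation-invariant multiplication, hence a scalar. For the decomposition claim I would work with the integrated algebra $\pi(f) := \int_V f(v)\pi(v)\,\rd v$, where $f$ lies in the twisted convolution algebra attached to the cocycle $\Omega_\hbar$. Pick a positive quadratic form $Q$ on $V$ compatible with the symplectic structure and set $f_0(v) = e^{-\pi|\hbar|Q(v)/2}$. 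A direct computation in the Heisenberg model shows that $\pi_{\Heis}(f_0)$ is a scalar multiple of the rank one projection onto the Gaussian vector $\Omega_W(w) = e^{-\pi|\hbar|Q(w)/2}$; moreover the Gaussian satisfies an abstract twisted convolution identity of the form $f_0 \star_{\Omega_\hbar} \delta_v \star_{\Omega_\hbar} f_0 = \lambda(v) f_0$ for an explicit function $\lambda\colon V \to \C$, which does not reference any particular representation.

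Applied to an arbitrary unitary $\hbar$-representation $(\Hsp,\pi)$, this identity yields $\pi(f_0)\pi(v)\pi(f_0) = \lambda(v)\pi(f_0)$, so $\pi(f_0)$ is a bounded positive operator whose range decomposes orthogonally into one-dimensional subspaces, each of which generates a copy of $\pi_{\Heis}$ under $\pi$. Setting $\Mc := \overline{\pi(f_0)\Hsp}$ as a multiplicity space, the map $\Mc \otimes L^2(W) \to \Hsp$, $\xi\otimes g \mapsto \int_V g(v)\pi(v)\xi\,\rd v$ (suitably normalized) is then $V$-equivariant, and both isometry and density of the range follow from the rank one identity together with cyclicity. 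The main obstacle I anticipate is the clean verification of the rank one projection statement for $\pi_{\Heis}(f_0)$ together with the abstract twisted convolution identity for $f_0 \star_{\Omega_\hbar} \delta_v \star_{\Omega_\hbar} f_0$; once these analytic computations are in place the decomposition goes through routinely. A conceptually cleaner repackaging would be to identify the twisted group $C^*$-algebra $C^*(V, \Omega_\hbar)$ with $\Kc(L^2(W))$ via the Heisenberg model (with $f_0$ becoming a rank one projection in $\Kc(L^2(W))$), after which both uniqueness and the direct-sum decomposition drop out from the classical classification of non-degenerate representations of the compact operators.
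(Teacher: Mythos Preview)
The paper does not prove this statement; it is simply cited as the classical Stone--von Neumann theorem, with no argument given. So there is no ``paper's own proof'' to compare against.

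Your outline is a correct sketch of one of the standard proofs (essentially von Neumann's original argument via the Gaussian projection, and its $C^*$-algebraic repackaging: the twisted group $C^*$-algebra $C^*_\hbar(V)$ of a symplectic $V$ is isomorphic to $\Kc(L^2(W))$, after which everything follows from the representation theory of the compacts). Two small points worth tightening. First, the identity $f_0 \star_{\Omega_\hbar} \delta_v \star_{\Omega_\hbar} f_0 = \lambda(v) f_0$ is best read as a computation in $L^1(V)$: one really computes $f_0 \star_{\Omega_\hbar} (v\cdot f_0)$, where $v\cdot f_0$ is the left translate under the projective action, and checks it is a scalar multiple of $f_0$; formulating it with $\delta_v$ obscures this and is not literally a convolution in $L^1$. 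Second, your intertwiner $\Mc\otimes L^2(W)\to\Hsp$, $\xi\otimes g\mapsto\int_V g(v)\pi(v)\xi\,\rd v$, mixes domains: $g$ lives on $W$ but you integrate over $V$. The clean formulation is to define it first on the dense span of vectors $\pi_{\Heis}(v)\Omega_W$ by $\xi\otimes\pi_{\Heis}(v)\Omega_W\mapsto\pi(v)\xi$, and then verify isometry using the rank-one identity for $\pi(f_0)$; density of the range follows because $\pi(f_0)\neq 0$ forces the $\pi(V)$-orbit of $\Mc$ to be total (else its orthocomplement would be a subrepresentation on which $\pi(f_0)$ vanishes, contradicting $f_0\star_{\Omega_\hbar} f_0 = c f_0$ with $c\neq 0$). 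With those adjustments the argument is complete.
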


If $V$ is a skew-symmetric space and $(\Hsp,\pi)$ a unitary $\hbar$-representation,  the one-parameter group of unitaries 
\[
\pi_v \colon \R\rightarrow\Bc(\Hsp),\qquad t \mapsto \pi(tv),\qquad v\in V
\] 
has an infinitesimal generator 
\[
\vb = \vb_{\hbar} = \vb_{\pi},\qquad v\in V,
\]
so $\msD(\vb)$ consists of those vectors $\xi\in \Hsp$ for which $t \mapsto \pi(tv)\xi$ is norm-differentiable, with
\[
\vb \xi = -i \frac{d}{dt}(\pi(tv)\xi)_{\mid t=0},\qquad \xi\in \msD(\vb).
\]
The operator $\vb$ is self-adjoint, and we write
\begin{equation}
\eb(v) = \eb_{\hbar}(v) = \eb_{\pi}(v)
\end{equation}
for the strictly positive operator which is its exponential (using functional calculus):
\begin{equation}\label{EqExponentialpi}
\eb(v) = e^\vb,\qquad \pi(tv) = \eb(v)^{it} =  e^{it\vb},\qquad v\in V,t\in \R.
\end{equation}
Then we can rewrite \eqref{EqProdProj} as
\begin{equation}\label{EqProdY}
\eb(v+w)^{it} = e^{-\pi i \hbar t^2 (v,w)}\eb(v)^{it}\eb(w)^{it},\qquad t\in\R,v,w\in V.
\end{equation}

In the following, we use the shorthand
\begin{equation}\label{EqProdRule}
\eb(v)\star\eb(w) := \eb(v+w),\qquad v,w\in V.
\end{equation}

We conclude this section with the following two operations for unitary $\hbar$-representations:
\begin{defn}
If $V_1,V_2$ are skew-symmetric spaces with unitary $\hbar$-representations $\pi_1,\pi_2$ on respective Hilbert spaces $\Hsp_1,\Hsp_2$, their \emph{product} is the unitary $\hbar$-representation 
\begin{equation}\label{EqDirSumRepUnit}
\pi_{\otimes}\colon V_1\oplus V_2 \rightarrow \Bc(\Hsp_1 \otimes \Hsp_2) ,\qquad \pi_{\otimes}(v_1\oplus v_2) = \pi_1(v_1) \otimes \pi_2(v_2).
\end{equation}
\end{defn}
In terms of exponentials, this means
\begin{equation}\label{EqDirSumRep}
\eb(v_1\oplus v_2) = \eb(v_1)\otimes \eb(v_2),\qquad v_1\in V_1,v_2\in V_2.
\end{equation}

\begin{defn}
If $V$ is a skew-symmetric space with unitary $\hbar$-representation $\pi$ on a Hilbert space $\Hsp$, its \emph{conjugate} is the unitary $\hbar$-representation
\begin{equation}\label{EqDirSumRepUnit2}
\overline{\pi}\colon \overline{V}\rightarrow \Bc(\overline{\Hsp}),\qquad \overline{\pi}(\overline{v}) = \overline{\pi(-v)},
\end{equation}
where $\overline{\Hsp}$ is the conjugate-linear Hilbert space of $\Hsp$.
\end{defn}
With this convention, we can write 
\begin{equation}\label{EqComplexConjj}
\eb(\overline{v}) = \overline{\eb(v)},\qquad v\in V.
\end{equation}

\subsection{Gaussians, quantum dilogarithms and quantum exponential maps}

Fix $\hbar\in \R^{\times}$. Let $V_1,V_2$ be two skew-symmetric spaces. With respect to a product $\hbar$-representation, we define unitaries 
\begin{equation}\label{EqGausDef}
\Gauss(v_1,v_2) = \Gauss_{\hbar}(v_1,v_2) :=e^{\frac{i \vb_1\otimes \vb_2}{2\pi \hbar}} \in \Bc(\Hsp_1 \otimes \Hsp_2),\qquad v_i \in V_i,
\end{equation}
referred to as \emph{Gaussians}. Then for all $v_i,w_i\in V_i$ one computes by functional calculus that
\begin{equation}\label{EqGausEq}
\Gauss(v_1,v_2) \eb(w_1\oplus w_2) \Gauss(v_1,v_2)^* = \eb((w_1+(v_2,w_2)v_1) \oplus (w_2+(v_1,w_1)v_2)),
\end{equation}
\begin{equation}\label{EqGausEqInv}
\Gauss(v_1,v_2)^* \eb(w_1\oplus w_2) \Gauss(v_1,v_2) = \eb((w_1+(w_2,v_2)v_1) \oplus (w_2+(w_1,v_1)v_2)).
\end{equation}

More generally, if we have $z = \sum_{i=1}^n v_i \otimes w_i \in V_1\otimes V_2$ such that the $\{v_i\}_{i=1}^n$ (resp.\ $\{w_i\}_{i=1}^n$) are mutually $\epsilon$-orthogonal, we put
\begin{equation}\label{EqGenTens}
\Gauss(z) := \prod_{i=1}^n \Gauss(v_i,w_i) \in \Bc(\Hsp_1 \otimes \Hsp_2)
\end{equation}
This is indeed independent of the ordering of the factors, and of the presentation of $z$. 

\begin{defn}
For $\hbar\in \R^{\times}$, consider 
\begin{equation}\label{EqFunctWTheta}
W = W_{1/\hbar}\colon \R\rightarrow \C,\qquad W(z) := -\frac{\pi i}{2} \int_{\Omega} \frac{dy}{y}\, \frac{e^{-2 iyz}}{\sinh(2\pi y)\sinh(2\pi \hbar y)},\qquad z \in \R,
\end{equation}
with $\Omega$ any curve in $\C$ going from $-\infty$ to $+\infty$ on the real line, with a small bump in the upper half plane around the origin, staying below the poles of the denominator with strictly positive imaginary part.

We define the \emph{(Faddeev) quantum dilogarithm function} $\varphi = \varphi_{\hbar}$ and the closely associated \emph{(Woronowicz) quantum exponential function}\footnote{In this terminology, one should think of $F$ as an analogue of the exponential map in the form $t \mapsto e^{it}$, but restricted to the semigroup $(\R_{>0},+)$. See Theorem \ref{TheoExponGen} below for a motivation of this interpretation.} $F = F_{\hbar}$  by
\begin{equation}\label{EqQuantExp}
\varphi\colon \R\rightarrow \C,\qquad \varphi(t) := \exp\left(\frac{i}{2\pi}W(t)\right),\qquad t\in \R,
\end{equation}
\begin{equation}\label{EqQuantExp2}
F\colon \R_{>0} \rightarrow \C,\qquad F(r) = \overline{\varphi}(\ln(r)),\qquad r>0.
\end{equation}
\end{defn}

We provide some more information on the quantum dilogarithm and its properties in Appendix \ref{Apqdilog}. For example, we gather directly from Lemma \ref{LemOtherDescW} that
\begin{itemize}
\item  $W$ is real-valued, hence $\varphi$ and $F$ take values on the unit circle, 
\item $F$ has a continuous extension to $0$ with 
\begin{equation}\label{EqValueF0}
F(0)=1,
\end{equation}
or equivalently,
\item $\varphi$ has a continuous extension to $-\infty$ with 
\begin{equation}
\varphi(-\infty)= 1.
\end{equation}
\end{itemize}

Given a unitary $\hbar$-representation of a skew-symmetric space $V$ on a Hilbert space $\Hc$, we use the following shorthand for the associated unitaries obtained through functional calculus: 
\begin{equation}\label{EqAbbrFuncCalc}
\varphi(v) = \varphi(\vb)\in \Bc(\Hsp),\qquad v\in V.
\end{equation}
\begin{theorem}\label{TheoPentagon}
For $\hbar\in \R^{\times}$, the following identities hold: 
\begin{equation}\label{EqCommComm}
\varphi(v)\varphi(w) = \varphi(w)\varphi(v),\qquad \forall v,w\in V \textrm{ with }(v,w) = 0,
\end{equation}
\begin{equation}\label{EqCommKashaev}
\varphi(v)\varphi(w) = \varphi(w)\varphi(v+w)\varphi(v),\qquad \forall v,w\in V \textrm{ with }(v,w) = 1.
\end{equation}
\end{theorem}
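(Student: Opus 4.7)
The first identity, for $(v,w)=0$, is immediate. Equation \eqref{EqProdY} reduces to $\eb(v+w)^{it}=\eb(v)^{it}\eb(w)^{it}$, so the one-parameter unitary groups $t\mapsto\pi(tv)$ and $s\mapsto\pi(sw)$ commute. By Stone's theorem this means that $\vb$ and $\wb$ are strongly commuting self-adjoint operators, so by the joint functional calculus any bounded Borel functions of them commute, in particular $\varphi(v)$ and $\varphi(w)$.

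For the pentagon relation, the plan is to reduce to the universal Heisenberg model and then invoke Faddeev's pentagon equation for the scalar function $\varphi_\hbar$. First, since $(v,w)=1$, the real span $V_{0}=\R v\oplus\R w\subseteq V$ is a two-dimensional symplectic subspace, and the restriction of $\pi$ to $V_{0}$ is a unitary $\hbar$-representation of $V_{0}$. By the Stone--von Neumann theorem quoted in the excerpt, this restriction is a (possibly infinite) orthogonal direct sum of copies of the Heisenberg $\hbar$-representation of $V_{0}$. The operators $\varphi(\vb)$, $\varphi(\wb)$ and $\varphi(\vb\dotplus\wb)$ are obtained from $\vb,\wb,\vb\dotplus\wb$ by functional calculus, and in particular they commute with this direct sum decomposition; thus it suffices to verify the identity on a single Heisenberg summand.

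On the Heisenberg model one may realise $\vb$ as multiplication by $2\pi\hbar x$ and $\wb$ as $-i\,d/dx$ on $L^{2}(\R)$, in which case \eqref{EqProdY} translates into the Weyl commutation relation $\eb(v)^{it}\eb(w)^{is}=e^{-2\pi i\hbar ts}\eb(w)^{is}\eb(v)^{it}$, equivalently $AB=q^{2}BA$ with $q=e^{\pi i\hbar}$ for the strictly positive operators $A=e^{\vb}$, $B=e^{\wb}$. Via \eqref{EqQuantExp2}, the desired identity becomes the Woronowicz form $F(A)F(B)=F(B)F(AB)F(A)$ of Faddeev's pentagon identity for the quantum exponential, the operator $\vb\dotplus\wb$ being the self-adjoint logarithm of $AB$ by \eqref{EqProdY}. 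This identity, in the form \eqref{EqCommKashaev}, is precisely what is proved in Appendix~B starting from the integral representation \eqref{EqFunctWTheta} of $W$.

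The hard part is the pentagon identity itself in the Heisenberg model; everything else is a reduction. A typical route, which is the one followed in Appendix~B, is to establish first the auxiliary commutation relation expressing $\varphi(\wb)^{*}\varphi(\vb)\varphi(\wb)$ in terms of $\varphi(\vb)$ and $\varphi(\vb\dotplus\wb)$ by shifting the contour $\Omega$ past the simple poles of the integrand in \eqref{EqFunctWTheta} and collecting residues, using the Weyl relation to move $e^{is\wb}$ past $e^{it\vb}$. Rearranging this commutation relation then yields \eqref{EqCommKashaev}. Once proven on $L^{2}(\R)$, the identity transports to the abstract Heisenberg representation on $L^{2}(W)$ of Example~\ref{ExaHeisenberg} by Stone--von Neumann uniqueness, and then to the original $\pi$ by the direct sum decomposition above.
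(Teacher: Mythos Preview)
Your reduction is correct and matches the paper: the commuting case is basic functional calculus for strongly commuting self-adjoint operators, and for the pentagon you correctly invoke Stone--von Neumann to reduce to a single Heisenberg summand and then defer to Appendix~\ref{SecPentQE}.

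However, your description of what Appendix~\ref{SecPentQE} actually does is inaccurate. It does \emph{not} argue by shifting the contour $\Omega$ past poles of the integrand in \eqref{EqFunctWTheta} and collecting residues. Rather, it cites Woronowicz's proof \cite[Theorem~6.1]{Wor00}, observes that a careful reading of that argument extends its validity from $0<\hbar<1/2$ to $0<\hbar<1$ (the only requirement being that $x\mapsto(1-\varphi_\hbar(\ln x))/x$ stay bounded near $0$), then uses the modular duality $W_{1/\theta}(x)=W_\theta(x/\theta)$ of \eqref{EqFunctEquationW} to transport the identity to $\hbar>1$, and finally fills in $\hbar=1$ by strong continuity of $b\mapsto e_b(\mathbf{a})$. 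A second route, via holomorphicity in $b$ over the right half-plane following \cite{FKV01}, is also sketched. None of this involves residue computations on $\Omega$.

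A minor notational point: your ``$AB$'' and ``self-adjoint logarithm of $AB$'' should be the paper's $\Ab\star\Bb$ of \eqref{EqStarProd}; the naive operator product $AB$ is not self-adjoint, so $F(AB)$ is not a priori meaningful.
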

Here the first identity is basic functional calculus, while the second identity is the celebrated \emph{Kashaev pentagon identity} \cite{FK94,Wor00,FKV01}. We refer to Appendix \ref{SecPentQE} for more information. 

\begin{defn}
Two strictly positive operators $\Ab,\Bb$ on a Hilbert space $\Hc$ \emph{skew-commute} if there exists $\hbar \in \R^{\times}$ such that
\begin{equation}\label{EqSkewCommDef}
\Ab^{it}\Bb^{is} = e^{2\pi i \hbar st}\Bb^{is}\Ab^{it},\qquad \forall s,t\in \R.
\end{equation}
We then say more specifically that $\Ab$ and $\Bb$ \emph{$\hbar$-commute}.
\end{defn}
In other words, $\Ab,\Bb$ arise from a unitary $\hbar$-representation on $\Hc$ of 
\[
V = \R a+ \R b,\qquad (a,b)=1,
\]
with $\Ab = e^{\mathbf{a}}$ and $\Bb = e^{\mathbf{b}}$. Note that $\hbar$ is uniquely determined by the pair $(\Ab,\Bb)$. We also recall the notation \eqref{EqProdRule}, by which $\Ab^{\alpha}\star\Bb^{\beta}$ is the unique strictly positive operator such that 
\begin{equation}\label{EqStarProd}
(\Ab^{\alpha}\star\Bb^{\beta})^{it} = e^{-\pi i\hbar \alpha \beta t^2}\Ab^{i\alpha t}\Bb^{i\beta t},\qquad \alpha,\beta,t\in\R. 
\end{equation}

Recall now the notion of \emph{form sum} $\Ab \boxplus \Bb$ of two positive operators $\Ab,\Bb$ with dense intersection of the domains of their square roots: it is the unique positive operator such that 
\[
\msD((\Ab\boxplus \Bb)^{1/2}) = \msD(\Ab^{1/2})\cap \msD(\Bb^{1/2}),
\]
\[
\|(\Ab\boxplus \Bb)^{1/2}\xi\|^2 = \|\Ab^{1/2}\xi\|^2+ \|\Bb^{1/2}\xi\|^2,\qquad \forall \xi\in \msD(\Ab^{1/2})\cap \msD(\Bb^{1/2}).
\]
See Section \ref{SecPropSkewComm} for more information. We note that $\Ab\boxplus \Bb$ is strictly positive if $\Ab,\Bb$ are strictly positive.

\begin{remark}\label{RemFormSumFinite}
Let $V$ be a  skew-symmetric space, and choose on $V$ a Euclidian structure. Let $V_{\C}$ be its complexification, and consider for each $t>0,w\in V_{\C}$ and $\kappa \in \C$ the function 
\[
g_{t,w,\kappa}(v) = e^{-t\|v\|^2 +\langle w,v\rangle + \kappa},\qquad v\in V. 
\]
If $\hbar\in\R^{\times}$ and $(\Hsp,\pi)$ is a unitary $\hbar$-representation of a skew-symmetric space $V$, denote 
\[
 \pi(f)\xi = \int_{V} f(v) \pi(v) \xi \rd v,\qquad \xi \in \Hsp,f\in L^1(V),
\]
and put
\[
\Hsp_0 = \textrm{linear span}\{\pi(g_{t,w,\kappa})\xi\mid \xi \in \Hsp,t>0,w\in V_{\C},\kappa \in \C\}.
\]
Then $\Hsp_0$ is dense in $\Hsp$, and for any $v\in V$ we have 
\[
\Hsp_0  \subseteq \msD(\eb(v)),\qquad \eb(v)\pi(g_{t,w,\kappa})\xi = \pi(g_{t,w+2tiv,\kappa+i\langle w,v\rangle})\xi,\qquad \xi \in \Hsp,t>0,w\in V_{\C},\kappa \in \C. 
\]
In particular, we can take the form sum of any finite collection of operators $\eb(v_1),\ldots,\eb(v_n)$. 
\end{remark} 

\begin{prop}\label{PropSkewComm}
Let $\hbar\in\R^{\times}$. For $\Ab,\Bb$ $\hbar$-commuting strictly positive operators, we have
\begin{equation}\label{EqBoxPlusDef}
\Ab \boxplus \Bb = F(\Ab\star\Bb^{-1})^*\Bb F(\Ab\star\Bb^{-1}).
\end{equation}
\end{prop}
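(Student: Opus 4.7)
The overall strategy is to reduce to the standard Heisenberg representation via Stone--von Neumann, then verify the identity using the functional equation of the quantum exponential $F$ together with a Baker--Campbell--Hausdorff manipulation.

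\textbf{Reduction.} By hypothesis, the pair $(\Ab,\Bb)$ arises from a unitary $\hbar$-representation $\pi$ of the symplectic plane $V=\R a+\R b$ with $(a,b)=1$, via $\Ab=\eb(a)$, $\Bb=\eb(b)$. By the Stone--von Neumann theorem this representation decomposes as a direct sum of copies of the standard Heisenberg representation, and both sides of the claimed identity are built functorially from $\pi$, so we may assume $\pi$ is irreducible and acts on $L^2(\R)$.

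\textbf{Identifying the form sum.} Set $\mathbf{c}:=\mathbf{a}-\mathbf{b}$ and $\Cb:=e^{\mathbf{c}}$. By \eqref{EqProdY} one has $\Cb=\Ab\star\Bb^{-1}$, while $[\mathbf{c},\mathbf{b}]=-2\pi i\hbar$ is a central scalar. Because the commutator is scalar, the Baker--Campbell--Hausdorff formula is rigorous on the Gaussian dense domain of Remark \ref{RemFormSumFinite} and yields
\[
\Bb^{1/2}\Cb\,\Bb^{1/2}=e^{\mathbf{b}/2}e^{\mathbf{c}}e^{\mathbf{b}/2}=e^{\mathbf{b}+\mathbf{c}}=\Ab.
\]
Computing the associated quadratic form then gives
\[
\langle\xi,\Bb^{1/2}(1+\Cb)\Bb^{1/2}\xi\rangle=\|\Bb^{1/2}\xi\|^{2}+\langle\xi,\Ab\xi\rangle=\|\Bb^{1/2}\xi\|^{2}+\|\Ab^{1/2}\xi\|^{2},
\]
which by the definition of form sum identifies
\[
\Ab\boxplus\Bb\;=\;\Bb^{1/2}(1+\Cb)\Bb^{1/2}.
\]

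\textbf{Conjugation by $F(\Cb)$ via the functional equation.} The commutation $\Bb\Cb=q^{2}\Cb\Bb$ (with $q=e^{i\pi\hbar}$) translates, via analytic continuation in the spectral parameter of $\mathbf{c}$, into the intertwining relation
\[
\Bb\,F(\Cb)\;=\;F(q^{2}\Cb)\,\Bb.
\]
On the other hand $F$ satisfies the functional equation
\[
F(q^{2}r)\;=\;(1+qr)\,F(r),\qquad r>0,
\]
which, after exponentiation, is equivalent to the standard Faddeev shift identity
\[
\varphi(z-i\pi\hbar)\;=\;(1+e^{z})\,\varphi(z+i\pi\hbar),\qquad z\in\R,
\]
and can be read off from the integral representation \eqref{EqFunctWTheta} by a contour shift. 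Combining these two ingredients,
\[
F(\Cb)^{*}\Bb\,F(\Cb)\;=\;F(\Cb)^{-1}F(q^{2}\Cb)\,\Bb\;=\;(1+q\Cb)\,\Bb,
\]
and a final scalar BCH rearrangement $q\Cb\Bb=q^{-1}\Bb\Cb=\Bb^{1/2}\Cb\Bb^{1/2}$ converts the right-hand side into $\Bb^{1/2}(1+\Cb)\Bb^{1/2}$, matching the expression for $\Ab\boxplus\Bb$ obtained above.

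\textbf{Main obstacle.} The delicate point is giving a rigorous meaning to $\Bb F(\Cb)\Bb^{-1}=F(q^{2}\Cb)$ and to the functional equation $F(q^{2}r)=(1+qr)F(r)$, since $q^{2}\Cb$ is \emph{not} a positive operator but corresponds to an imaginary translation of the generator $\mathbf{c}$. I would handle this by expressing $F(\Cb)$ through its Fourier--Mellin representation in terms of the unitary one-parameter group $\Cb^{it}$, for which the intertwining $\Cb^{it}\Bb\Cb^{-it}=e^{2\pi\hbar t}\Bb$ is unambiguous, and then carrying out the contour shift in the parameter $t$ that encodes the shift identity for $\varphi$; the exchange of integration and operator product is justified on the Gaussian domain from Remark \ref{RemFormSumFinite}.
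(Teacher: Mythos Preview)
Your formal computation is correct, and the reduction to the Heisenberg representation via Stone--von Neumann is the same first step as in the paper. However, there is a genuine gap: the substance of the proposition lies in a domain equality that your argument does not establish.

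The form sum $\Ab\boxplus\Bb$ is characterized by its form domain $\msD((\Ab\boxplus\Bb)^{1/2})=\msD(\Ab^{1/2})\cap\msD(\Bb^{1/2})$, while the right-hand side $F(\Cb)^*\Bb F(\Cb)$ is a unitary conjugate of $\Bb$ with form domain $F(\Cb)^*\msD(\Bb^{1/2})$. The proposition is therefore equivalent to
\[
\msD(\Ab^{1/2})\cap\msD(\Bb^{1/2}) \;=\; F(\Cb)^*\msD(\Bb^{1/2}),
\]
together with equality of the quadratic forms there. Your BCH step ``$\Bb^{1/2}\Cb\,\Bb^{1/2}=\Ab$'' is an identity on a dense subspace, but being a \emph{core} for both sides is exactly what is in question. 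The remark following the proposition is the warning sign: for $|\hbar|>1$ the symmetric operator $\Ab+\Bb$ is \emph{not} essentially self-adjoint and admits several self-adjoint extensions, of which $\Ab\boxplus\Bb$ is only one. No computation on a Gaussian domain can single it out without additional input. Your proposed Fourier--Mellin fix addresses the interpretation of $F(q^{2}\Cb)$, but does not touch this form-domain question.

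The paper's proof (Section~\ref{SecPropSkewComm}) attacks the domain equality directly: it realizes $\msD(\Bb^{1/2})$ as a Hardy space $H(S_{\pi\hbar})$ on a strip, identifies $\msD(\Ab^{1/2})\cap\msD(\Bb^{1/2})$ inside it by a weighted $L^{2}$ condition on the upper boundary (Lemma~\ref{LemDomainInt}), and then matches this with $\varphi_\hbar(\Xb)\,\msD(\Bb^{1/2})$ using Phragm\'en--Lindel\"of together with the identity $|\varphi_\hbar(x+\pi i\hbar)|^{2}=(1+e^{x})^{-1}$ (Lemma~\ref{LemEqIdDomains}). That identity is precisely your shift equation in disguise, so the algebraic seed is the same; what your argument is missing is the complex-analytic machinery that turns it into a rigorous statement about form domains.
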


We present the proof of Proposition \ref{PropSkewComm} in Section \ref{SecPropSkewComm}. As far as we know, this result is new in this generality.

\begin{remark}
Following \cite[Appendix B]{Rui05}, one can show that when $|\hbar|<1$, the operator $\Ab \boxplus \Bb$ has domain $\msD(\Ab)\cap \msD(\Bb)$, and coincides there with the algebraic sum of operators $\Ab+\Bb$ (which has $\msD(\Ab)\cap \msD(\Bb)$ as domain by definition). When $|\hbar|>1$ on the other hand, one has that the symmetric operator $\Ab+\Bb$ is \emph{not essentially self-adjoint}, but has $\Ab\boxplus \Bb$ as one of its self-adjoint extensions.
\end{remark}

The following is referred to as the \emph{quantum exponential property} \cite{Wor00}. Using Proposition \ref{PropSkewComm}, it is a direct consequence of the Kashaev pentagon relation through functional calculus. 
\begin{theorem}\label{TheoExponGen}
Let $\hbar\in \R^{\times}$. If $\Ab$ and $\Bb$ are $\hbar$-commuting strictly positive operators, then 
\begin{equation}\label{EqExpoGenProp}
F(\Ab \boxplus \Bb) = F(\Ab)F(\Bb). 
\end{equation}
\end{theorem}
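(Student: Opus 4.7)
The plan is to deduce the identity directly from the Kashaev pentagon relation \eqref{EqCommKashaev} via Proposition \ref{PropSkewComm} and Borel functional calculus, as suggested in the paragraph preceding the theorem. First, by the Stone--von Neumann theorem I may assume that $\Ab = \eb(a)$ and $\Bb = \eb(b)$ arise from a unitary $\hbar$-representation of the two-dimensional symplectic space $V = \R a + \R b$ with $(a,b) = 1$. A direct computation using \eqref{EqStarProd} and the $\star$-product convention shows that $\Ab\star\Bb^{-1} = \eb(a-b)$. Since $|\varphi| \equiv 1$ on $\R$, the definition $F(r) = \overline{\varphi}(\ln r)$ combined with functional calculus yields
\[
F(\Ab) = \varphi(a)^*, \qquad F(\Bb) = \varphi(b)^*, \qquad F(\Ab\star\Bb^{-1}) = \varphi(a-b)^*,
\]
where I use the shorthand \eqref{EqAbbrFuncCalc}.

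Next, set $U := F(\Ab\star\Bb^{-1}) = \varphi(a-b)^*$. Proposition \ref{PropSkewComm} gives $\Ab \boxplus \Bb = U^* \Bb\, U$, and since conjugation by a unitary commutes with Borel functional calculus,
\[
F(\Ab \boxplus \Bb) \;=\; U^* F(\Bb)\, U \;=\; \varphi(a-b)\,\varphi(b)^*\,\varphi(a-b)^*.
\]
Thus the desired equality $F(\Ab \boxplus \Bb) = F(\Ab)F(\Bb) = \varphi(a)^*\varphi(b)^*$ is equivalent, after taking adjoints on both sides, to the assertion
\[
\varphi(a-b)\,\varphi(b) \;=\; \varphi(b)\,\varphi(a)\,\varphi(a-b).
\]

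Finally, applying \eqref{EqCommKashaev} with $v := a-b$ and $w := b$, for which $(v,w) = (a,b) = 1$ and $v+w = a$, yields precisely this identity, completing the argument. There is no substantive obstacle here: the well-definedness of $\Ab \boxplus \Bb$ as a strictly positive operator is guaranteed by the discussion immediately preceding the theorem, and the covariance $F(U^*\Bb U) = U^*F(\Bb)U$ is a standard property of the Borel functional calculus. The only genuine input is the Kashaev pentagon identity, which is already established in \eqref{EqCommKashaev}; everything else is bookkeeping in the translation between the operator-algebraic form of $F$ and the projective representation $\varphi$ of $V$.
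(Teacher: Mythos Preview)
Your argument is correct and follows exactly the route the paper sketches: apply Proposition~\ref{PropSkewComm} to write $\Ab\boxplus\Bb$ as a unitary conjugate of $\Bb$, push $F$ through the conjugation by functional calculus, and reduce to the Kashaev pentagon identity \eqref{EqCommKashaev} with $v=a-b$, $w=b$. One small remark: the invocation of the Stone--von Neumann theorem is unnecessary, since the identification of an $\hbar$-commuting pair with a unitary $\hbar$-representation of $V=\R a+\R b$, $(a,b)=1$, is tautological (the paper records this immediately after the definition of $\hbar$-commuting); everything you use---Proposition~\ref{PropSkewComm} and \eqref{EqCommKashaev}---is already formulated for arbitrary such representations.
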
 

We also record the following property, which is immediate from \eqref{EqBoxPlusDef}:

\begin{lemma}\label{LemSumSkewComm}
Assume $\Ab,\Bb$ and $\Cb$ are strictly positive operators such that $\Ab$ and $\Bb$ skew-commute, and such that there exists $\hbar\in \R^{\times}$ such that $(\Ab,\Cb)$ and $(\Bb,\Cb)$ $\hbar$-commute. Then also $\Ab\boxplus \Bb$ and $\Cb$ $\hbar$-commute, and 
\begin{equation}
\Cb\star(\Ab\boxplus \Bb) = \Cb\star\Ab\boxplus \Cb\star\Bb.
\end{equation}
\end{lemma}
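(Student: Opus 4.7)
The plan is to leverage the explicit formula \eqref{EqBoxPlusDef} throughout and to exploit a ``phase cancellation'' that makes $\Cb$ commute literally (not just skew-commute) with $\Ab\star\Bb^{-1}$. Let $\hbar'$ denote the skew-commutation parameter of $(\Ab,\Bb)$. Using $(\Ab\star\Bb^{-1})^{it} = e^{\pi i \hbar' t^2}\Ab^{it}\Bb^{-it}$ together with the $\hbar$-commutation of $\Cb$ with each of $\Ab$ and $\Bb$, moving $\Cb^{is}$ across $\Ab^{it}\Bb^{-it}$ produces the phases $e^{2\pi i \hbar st}$ and $e^{-2\pi i \hbar st}$, which cancel. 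Thus $\Cb$ commutes with $\Ab\star\Bb^{-1}$, and by functional calculus it commutes with the unitary $F(\Ab\star\Bb^{-1})$ and its adjoint.

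Given this, the first assertion follows directly: conjugating by the formula $(\Ab\boxplus\Bb)^{it} = F(\Ab\star\Bb^{-1})^*\Bb^{it}F(\Ab\star\Bb^{-1})$, the two $F$-factors pass through $\Cb^{is}$, and only the commutation of $\Cb$ with $\Bb$ contributes, producing the required phase $e^{2\pi i \hbar st}$. Hence $\Cb$ and $\Ab\boxplus\Bb$ are $\hbar$-commuting.

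For the identity $\Cb\star(\Ab\boxplus\Bb) = \Cb\star\Ab \boxplus \Cb\star\Bb$, I would first check that $\Cb\star\Ab$ and $\Cb\star\Bb$ still skew-commute with the same parameter $\hbar'$ (a direct computation using $(\Cb\star X)^{it}=e^{-\pi i \hbar t^2}\Cb^{it}X^{it}$ for $X=\Ab,\Bb$ shows that the $\Cb$-phases combine into a perfect square while the $\Ab,\Bb$-phase survives). Next I would compute, in the same spirit of phase cancellation, that
\[
(\Cb\star\Ab)\star(\Cb\star\Bb)^{-1} \;=\; \Ab\star\Bb^{-1},
\]
the $\Cb$-contributions killing each other. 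Substituting into \eqref{EqBoxPlusDef} gives
\[
(\Cb\star\Ab)\boxplus(\Cb\star\Bb) \;=\; F(\Ab\star\Bb^{-1})^* (\Cb\star\Bb)\, F(\Ab\star\Bb^{-1}).
\]

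On the other hand, from $(\Cb\star(\Ab\boxplus\Bb))^{it} = e^{-\pi i \hbar t^2}\Cb^{it}(\Ab\boxplus\Bb)^{it}$ and the first step, the two $F$-factors can be pulled outside past $\Cb^{it}$, leaving $e^{-\pi i\hbar t^2}\Cb^{it}\Bb^{it} = (\Cb\star\Bb)^{it}$ sandwiched between them, which matches the expression above. Thus both sides agree at the level of imaginary powers, hence as strictly positive operators. I expect the main nuisance to be simply bookkeeping of the various phases and the skew-commutation conventions, but no conceptual obstacle arises once the literal commutation of $\Cb$ with $\Ab\star\Bb^{-1}$ is observed.
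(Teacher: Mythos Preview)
Your proposal is correct and follows exactly the route the paper indicates: the authors simply record the lemma as ``immediate from \eqref{EqBoxPlusDef}'', and your argument is precisely the unpacking of that claim via the phase cancellation making $\Cb$ literally commute with $\Ab\star\Bb^{-1}$ and hence with $F(\Ab\star\Bb^{-1})$.
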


\section{Pentagonal Fock--Goncharov flip}\label{SecConstrMultUni}

In the remainder of this paper, we fix $n \geq 1$ a natural number, and put $N = n+1$. 

\subsection{Skew-symmetric \texorpdfstring{\triangspace}{of type An}}

\begin{defn}\label{DefFGDiag}
We define 
\begin{equation}\label{EqConeC}
C_N = \{(a,b,c) \in \Z_{\geq0}^3\mid a+b+c=N\}.
\end{equation}
We write $\btd{N}$ for the $\binom{N+2}{2}$-dimensional skew-symmetric space with basis vectors $e_{a,b,c}$ for $(a,b,c) \in C_N$, with the only non-trivial positive pairings given by
\begin{multline}
\label{eq:q-comm}
\hr{e_{a,b,c},e_{a+1,b-1,c}} = \hr{e_{a,b,c},e_{a,b+1,c-1}} = \hr{e_{a,b,c},e_{a-1,b,c+1}} \\ =
\begin{cases}
1/2 & \text{if the common index is }0, \\
1 &\text{otherwise.}
\end{cases}
\end{multline}
We refer to $\btd{N}$ as the \emph{skew-symmetric \triangspace} (of order $N$). 

We denote $C_N'$ for $C_N$ with its corner vertices cut off, and we put $\wbtd{N} \subseteq \btd{N}$ for the vector subspace spanned by basis vectors labelled by $C_N'$. We call $\wbtd{N}$ the `snubbed' skew-symmetric \triangspace, and $C_N'$ the associated snubbed diagram. 
\end{defn}

We picture the skew form diagrammatically by a graph with $C_N$ as its nodes, and with an oriented arrow from $(a,b,c)$ to $(a',b',c')$ if the skew product is $1$, resp.\ a dashed oriented arrow if the skew product is $1/2$.  

\begin{figure}[ht]\label{FigTriang}
\adjustbox{scale=0.7,center}{%
\begin{tikzcd}
\color{red}{400} && 310 && 220 && 130 && \color{red}{040}\\
	&301 && 211 && 121 && 031 & \\
	&& 202 && 112 && 022 &&\\
&&&103 && 013 &&&\\
&&&& \color{red}{004} &&&&
	\arrow[red,dashed, from=1-3, to=1-1]
	\arrow[dashed, from=1-5, to=1-3]
	\arrow[dashed, from=1-7, to=1-5]
	\arrow[red,dashed, from=1-9, to=1-7]
	\arrow[red,dashed, from=1-1, to=2-2]
	\arrow[dashed, from=2-2, to=3-3]
	\arrow[dashed, from=3-3, to=4-4]
	\arrow[red,dashed, from=4-4, to=5-5]
	\arrow[dashed, from=2-4, to=1-5]
	\arrow[red,dashed, from=5-5, to=4-6]
\arrow[dashed, from=4-6, to=3-7]
\arrow[dashed, from=3-7, to=2-8]
\arrow[red,dashed, from=2-8, to=1-9]
	\arrow[from=2-2, to=1-3]
\arrow[from=2-4, to=1-5]
\arrow[from=2-6, to=1-7]
\arrow[from=3-3, to=2-4]
\arrow[from=3-5, to=2-6]
\arrow[from=4-4, to=3-5]
\arrow[from=2-4, to=2-2]
\arrow[from=2-6, to=2-4]
\arrow[from=2-8, to=2-6]
\arrow[from=3-7, to=3-5]
\arrow[from=3-5, to=3-3]
\arrow[from=4-6, to=4-4]
\arrow[from=1-3, to=2-4]
\arrow[from=1-5, to=2-6]
\arrow[from=1-7, to=2-8]
\arrow[from=2-4,to=3-5]
\arrow[from=2-6,to=3-7]
\arrow[from=3-5,to=4-6]
\end{tikzcd}
}
\caption{The diagram  \textcolor{red}{$C_4$}  with the diagram $C_4'$ inside.}
\end{figure}
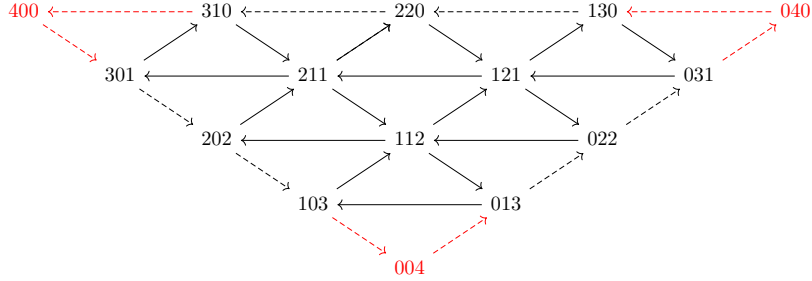

\begin{defn}
For $(a,b,c)\in C_N$ and $k \ge 0$ with $b+k \leq N$ and $0\leq c-k$, we set
$$
\gfr{a,b,c}{a,b+k,c-k} = \sum_{j=0}^k e_{a,b+j,c-j} \in \btd{N},
$$
and similarly for cyclic shifts of the columns/indices.
\end{defn}

For example, we have
$$
\gfr{3,0,2}{3,1,1} \oplus \gfr{3,2,0}{1,2,2}  = (e_{3,0,2}+e_{3,1,1}) \oplus (e_{3,2,0}+e_{2,2,1}+e_{1,2,2}) \in \btd{5}^{\,\oplus2}. 
$$
We also use the shorthand notation 
\begin{equation}\label{EqShorthand}
\nee_{s,k} = \gfr{N-s,0,s}{N-s,k,s-k},\qquad \seee_{s,k} = \gfr{s,N-s,0}{s-k,N-s,k},\qquad 0\leq k \leq s \leq N,
\end{equation}
with the further shorthand.
\begin{equation}\label{EqDiagFull}
\nee_s = \nee_{s,s},\qquad \seee_s = \seee_{s,s},\qquad 0 \leq s \leq N.
\end{equation}
Similarly, we put
\begin{equation}\label{EqShorthandRev}
\nwe_{s,k} = \gfr{0,N-s,s}{k,N-s,s-k},\qquad \swe_{s,k} = \gfr{N-s,s,0}{N-s,s-k,k},\qquad 0\leq k \leq s \leq N
\end{equation}
and $\nwe_s = \nwe_{s,s}, \swe_s = \swe_{s,s}$, so in particular
\begin{equation}\label{EqDiagFullSame}
 \swe_{s}= \nee_s,\qquad \nwe_{s}=\seee_s ,\qquad 0 \leq s \leq N
\end{equation}
and 
\begin{equation}\label{EqDiffOr}
\swe_{s,k} = \nee_s - \nee_{s,s-k-1},\quad \nwe_{s,k} = \seee_s - \seee_{s,s-k-1},\qquad 0 \leq k<s \leq N.
\end{equation}

\begin{figure}[ht]\label{FigTriangVect}
\adjustbox{scale=0.7,center}{%
\begin{tikzcd}
400 && \color{green}{310} && 220 && \color{blue}{130} && 040\\
	& 301 && 211 && \color{red}{121} && \color{blue}{031} & \\
	&& 202 && \color{red}{112} && 022 &&\\
&&& \color{red}{103} && 013 &&&\\
&&&& 004 &&&&
	\arrow[dashed, from=1-3, to=1-1]
	\arrow[dashed, from=1-5, to=1-3]
	\arrow[dashed, from=1-7, to=1-5]
	\arrow[dashed, from=1-9, to=1-7]
	\arrow[dashed, from=1-1, to=2-2]
	\arrow[dashed, from=2-2, to=3-3]
	\arrow[dashed, from=3-3, to=4-4]
	\arrow[dashed, from=4-4, to=5-5]
	\arrow[dashed, from=2-4, to=1-5]
	\arrow[dashed, from=5-5, to=4-6]
\arrow[dashed, from=4-6, to=3-7]
\arrow[dashed, from=3-7, to=2-8]
\arrow[dashed, from=2-8, to=1-9]
	\arrow[from=2-2, to=1-3]
\arrow[from=2-4, to=1-5]
\arrow[from=2-6, to=1-7]
\arrow[from=3-3, to=2-4]
\arrow[red,from=3-5, to=2-6]
\arrow[red,from=4-4, to=3-5]
\arrow[from=2-4, to=2-2]
\arrow[from=2-6, to=2-4]
\arrow[from=2-8, to=2-6]
\arrow[from=3-7, to=3-5]
\arrow[from=3-5, to=3-3]
\arrow[from=4-6, to=4-4]
\arrow[from=1-3, to=2-4]
\arrow[from=1-5, to=2-6]
\arrow[blue,from=1-7, to=2-8]
\arrow[from=2-4,to=3-5]
\arrow[from=2-6,to=3-7]
\arrow[from=3-5,to=4-6]
\end{tikzcd}
}
\caption{The vectors \textcolor{red}{$\protect\nee_{3,2}$}, \textcolor{blue}{$\protect\seee_{1,1}$} and \textcolor{green}{$\protect\swe_{1,0}$} as (undirected) paths in $C_4$.}
\end{figure}

We record here for convenience the following values for pairings between the above vectors. 

\begin{lemma}\label{LemSkewProdFormDiff}
\begin{enumerate}
\item For all $1\leq s,s'\leq n$,  $0\leq k<s$ and $0\leq k'<s'$, 
\begin{equation}\label{EqSameNil}
(\nee_{s,k},\nee_{s',k'}) = (\seee_{s,k},\seee_{s',k'}) =\begin{cases}
1&\text{if}\quad s' = s,k'>k,\\
1/2 &\text{if}\quad s' = s+1,k'\leq k,\\
1/2&\text{if}\quad s'=s-1,k'<k,\\
0& \textrm{if}\quad|s-s'|\geq 2, 
\end{cases}
\end{equation}
with the remaining cases determined by skew-symmetry. 
\item For all $0\leq s\leq N$, $1\leq s'\leq n$ and $0\leq k' < s'$, 
\begin{equation}\label{EqSameCar}
(\nee_s,\nee_{s',k'}) = (\seee_s,\seee_{s',k'}) = \begin{cases}
-1&\text{if}\quad s' = s,\\
1/2 & \text{if} \quad |s'-s|= 1,\\
0&\text{if}\quad |s'-s|\geq 2.
\end{cases}
\end{equation}
\item For all $0\leq s,t\leq N$,\begin{equation}\label{EqVanishing}
(\nee_s,\nee_t) = (\seee_s,\seee_t) = 0.
\end{equation}
\item For all  $1\leq s,s'\leq n$, $0\leq k<s$ and $0\leq k'<s'$,
\begin{equation}\label{EqDiffNil}
(\nee_{s,k},\seee_{s',k'}) = \delta_{k+k',s-1}(\delta_{k+s',n} - \delta_{k+s',N}).
\end{equation}
\item For all $0\leq s\leq N$, $1\leq s'\leq n$ and $0\leq k' < s'$,
\begin{equation}\label{EqDiffCar}
(\nee_s,\seee_{s',k'}) = \begin{cases}
1&\text{if}\quad s' = N-s,\\
-1/2 & \text{if} \quad |s'  -(N-s)|=1,\\
0& \text{if}\quad |s'-(N-s)|\geq 2.
\end{cases}
\end{equation}
\item For all $1\leq s\leq n$, $0\leq k < s$ and $0\leq s'\leq N$, 
\begin{equation}\label{EqDiffCarOpp}
(\nee_{s,k},\seee_{s'}) = 0.
\end{equation}
\item For all $1\leq s,t\leq n$,\begin{equation}\label{EqDiffCart}
(\nee_s,\seee_t) = \begin{cases}
1&\text{if}\quad t = N-s,\\
-1/2 & \text{if} \quad |t-(N-s)|=1,\\
0&  \text{if}\quad |t-(N-s)|\geq 2.
\end{cases}
\end{equation}
\end{enumerate}
\end{lemma}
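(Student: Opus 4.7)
First I would unfold the vectors via their definitions, writing $\nee_{s,k} = \sum_{j=0}^{k} e_{N-s,\,j,\,s-j}$ and $\seee_{s,k} = \sum_{j=0}^{k} e_{s-j,\,N-s,\,j}$, and expand each pairing as a double sum over constituent basis vectors. By \eqref{eq:q-comm}, two basis vectors $e_{a,b,c}$ and $e_{a',b',c'}$ have non-zero pairing if and only if the difference $(\Delta a, \Delta b, \Delta c) = (a'-a,\,b'-b,\,c'-c)$ is one of the six unit vectors $\pm(1,-1,0)$, $\pm(0,1,-1)$, $\pm(-1,0,1)$. The contribution is $\pm 1/2$ when the ``common'' (unchanged) coordinate equals zero and $\pm 1$ otherwise, with the overall sign dictated by whether one is reading one of the three positive forms in \eqref{eq:q-comm} or its skew-symmetric reversal.

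For parts (1)--(3), both vectors lie on rows $\{a=N-s\}$ and $\{a=N-s'\}$ of the diagram $C_N$, so adjacency forces $|s-s'|\leq 1$, which immediately yields the vanishing case of \eqref{EqSameNil} and part of \eqref{EqVanishing}. When $s=s'$, only the two horizontal adjacencies $j'=j\pm 1$ contribute, and the telescoping sum gives the values $\pm 1$ or $0$ of \eqref{EqSameNil} depending on the order of $k$ and $k'$. When $s'=s\pm 1$, two adjacencies contribute --- one with $\Delta b=0$ (giving $\pm 1$, or $\pm 1/2$ at the boundary endpoint $j=0$ where $b=0$) and one with $\Delta b=\pm 1$ (giving $\mp 1$) --- whose enumerations almost cancel, leaving just the boundary correction $\pm 1/2$ predicted by \eqref{EqSameNil}. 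For the full vectors $\nee_s = \nee_{s,s-1} + e_{N-s,s,0}$ appearing in parts (2)--(3), I would add the extra adjacencies involving the corner basis vector $e_{N-s,s,0}$ (whose $c$-coordinate vanishes, producing the $\pm 1/2$ contributions of \eqref{EqSameCar}), and observe the endpoint cancellation yielding \eqref{EqVanishing}.

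For parts (4)--(7), the $\nee$-vector lies on the row $\{a=N-s\}$ while the $\seee$-vector lies on the column $\{b=N-s'\}$; these lines intersect in $C_N$ (when $s+s' \geq N$) at the single point $(N-s,\,N-s',\,s+s'-N)$. Each of the six unit adjacencies between a row vector $e_{N-s,j,s-j}$ and a column vector $e_{s'-j',N-s',j'}$ becomes a linear system in $(j,j')$ with a unique solution expressible purely in terms of $(s,s',N)$. Enumerating the six candidates and checking when each lies in the allowed range $[0,k]\times[0,k']$ recovers the Kronecker-delta structure of \eqref{EqDiffNil}: the factor $\delta_{k+k',\,s-1}$ encodes the requirement that the path lengths sum correctly for an adjacency to exist at all, while the sign split $\delta_{k+s',\,n}-\delta_{k+s',\,N}$ distinguishes between the two adjacency types that can survive the range constraints --- one of type $(0,1,-1)$ contributing $+1$, and one of type $(1,0,-1)$ contributing $-1$ via skew-symmetric reversal. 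The remaining identities \eqref{EqDiffCar}, \eqref{EqDiffCarOpp}, \eqref{EqDiffCart} for pairings involving one or two full vectors follow by the same decomposition $\nee_s = \nee_{s,s-1} + e_{N-s,s,0}$ and its $\seee$-analogue, with the vanishing in \eqref{EqDiffCarOpp} following from a delicate cancellation between the bulk contribution of the partial path and the contribution of the corner basis vector.

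The main obstacle is purely organizational: for each of the seven identities one has to enumerate the applicable adjacencies, verify the range constraints on $j$ and $j'$, and check whether the common index happens to vanish --- but no individual step is conceptually hard. To keep the proof readable I would first state a preliminary sub-lemma tabulating the six adjacency candidates for each of the two geometric configurations (``$\nee$ vs $\nee$'' and ``$\nee$ vs $\seee$''), and then derive the seven identities by specializing the ranges of $(s, s', k, k')$ in each part, invoking skew-symmetry in part (1) to halve the case analysis.
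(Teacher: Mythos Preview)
Your proposal is correct and is exactly the approach the paper intends: the paper's entire proof reads ``Immediate upon inspection,'' and what you have written is a careful unpacking of that inspection. Your adjacency analysis and the telescoping/boundary arguments are the right way to make the computation explicit, though the paper evidently regards these as routine enough not to spell out.
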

\begin{proof}
Immediate upon inspection.
\end{proof}

Before we continue, we also introduce the following notations: 

\begin{defn}\label{DefCNOr}
We denote
\[
\neC_N = \{(a,b,c)\in C_N \mid 1\leq a \leq n\}\subseteq C_N', 
\]
so $\neC_N$ is $C_N'$ with its lower right line removed. We put 
\[
\wneC_N = \{(a,b,c)\in C_N \mid 1\leq a \leq n,c>0\}\subseteq C_N', 
\]
so $\wneC_N$ is $C_N'$ with its lower right and upper line removed.

Similarly, we put 
\[
\seC_N = \{(a,b,c)\in C_N \mid 1\leq b \leq n\}\subseteq C_N', 
\]
so $\seC_N$ is $C_N'$ with its lower left line removed, and 
\[
\wseC_N = \{(a,b,c)\in C_N \mid 1\leq b \leq n,a>0\}\subseteq C_N', 
\]
so $\wseC_N$ is $C_N'$ with its lower left and lower right line removed.
\end{defn}

\begin{figure}[ht]\label{FigTriangDoublePrime}
\adjustbox{scale=0.7,center}{%
\begin{tikzcd}
400 && \color{red}{310} && \color{red}{220} && \color{red}{130} && 040\\
	& \color{red}{301} && \color{red}{211} && \color{red}{121} && 031 & \\
	&& \color{red}{202} && \color{red}{112} && 022 &&\\
&&&\color{red}{103} && 013 &&&\\
&&&& 004 &&&&
	\arrow[dashed, from=1-3, to=1-1]
	\arrow[red,dashed, from=1-5, to=1-3]
	\arrow[red,dashed, from=1-7, to=1-5]
	\arrow[dashed, from=1-9, to=1-7]
	\arrow[dashed, from=1-1, to=2-2]
	\arrow[red,dashed, from=2-2, to=3-3]
	\arrow[red,dashed, from=3-3, to=4-4]
	\arrow[dashed, from=4-4, to=5-5]
	\arrow[dashed, from=2-4, to=1-5]
	\arrow[dashed, from=5-5, to=4-6]
\arrow[dashed, from=4-6, to=3-7]
\arrow[dashed, from=3-7, to=2-8]
\arrow[dashed, from=2-8, to=1-9]
	\arrow[red,from=2-2, to=1-3]
\arrow[red,from=2-4, to=1-5]
\arrow[red,from=2-6, to=1-7]
\arrow[red,from=3-3, to=2-4]
\arrow[red,from=3-5, to=2-6]
\arrow[red,from=4-4, to=3-5]
\arrow[red,from=2-4, to=2-2]
\arrow[red,from=2-6, to=2-4]
\arrow[from=2-8, to=2-6]
\arrow[from=3-7, to=3-5]
\arrow[red,from=3-5, to=3-3]
\arrow[from=4-6, to=4-4]
\arrow[red,from=1-3, to=2-4]
\arrow[red,from=1-5, to=2-6]
\arrow[from=1-7, to=2-8]
\arrow[red,from=2-4,to=3-5]
\arrow[from=2-6,to=3-7]
\arrow[from=3-5,to=4-6]
\end{tikzcd}
}
\caption{The diagram \textcolor{red}{$\protect\neC_4$} inside $C_4$.}
\end{figure}

\begin{figure}[ht]\label{FigTriangDoublePrime2}
\adjustbox{scale=0.7,center}{%
\begin{tikzcd}
400 && \color{red}{310} && \color{red}{220} && \color{red}{130} && 040\\
	& 301 && \color{red}{211} && \color{red}{121} && 031 & \\
	&& 202 && \color{red}{112} && 022 &&\\
&&& 103 && 013 &&&\\
&&&& 004 &&&&
	\arrow[dashed, from=1-3, to=1-1]
	\arrow[red,dashed, from=1-5, to=1-3]
	\arrow[red,dashed, from=1-7, to=1-5]
	\arrow[dashed, from=1-9, to=1-7]
	\arrow[dashed, from=1-1, to=2-2]
	\arrow[dashed, from=2-2, to=3-3]
	\arrow[dashed, from=3-3, to=4-4]
	\arrow[dashed, from=4-4, to=5-5]
	\arrow[dashed, from=2-4, to=1-5]
	\arrow[dashed, from=5-5, to=4-6]
\arrow[dashed, from=4-6, to=3-7]
\arrow[dashed, from=3-7, to=2-8]
\arrow[dashed, from=2-8, to=1-9]
	\arrow[from=2-2, to=1-3]
\arrow[red,from=2-4, to=1-5]
\arrow[red,from=2-6, to=1-7]
\arrow[from=3-3, to=2-4]
\arrow[red,from=3-5, to=2-6]
\arrow[from=4-4, to=3-5]
\arrow[from=2-4, to=2-2]
\arrow[red,from=2-6, to=2-4]
\arrow[from=2-8, to=2-6]
\arrow[from=3-7, to=3-5]
\arrow[from=3-5, to=3-3]
\arrow[from=4-6, to=4-4]
\arrow[red,from=1-3, to=2-4]
\arrow[red,from=1-5, to=2-6]
\arrow[from=1-7, to=2-8]
\arrow[red,from=2-4,to=3-5]
\arrow[from=2-6,to=3-7]
\arrow[from=3-5,to=4-6]
\end{tikzcd}
}
\caption{The diagram \textcolor{red}{$\protect\wseC_4$} inside $C_4$.}
\end{figure}

\begin{defn}\label{DefBorel}
We put 
\begin{equation}\label{EqSpaceBNMin}
\Nilp_N^- \subseteq \wwbtd{N} \subseteq \wbtd{N}
\end{equation}
for the vector spaces spanned by respectively the basis vectors labelled by $\wneC_N$ and $\neC_N$. 

Similarly, we denote
\begin{equation}\label{EqSpaceBNPlus}
\Nilp_N^+ \subseteq \Bor_N^+ \subseteq \wbtd{N}
\end{equation}
for the vector spaces spanned by respectively the basis vectors labelled by $\wseC_N$ and $\seC_N$. 

We further put
\begin{equation}
\Tor_N^-  = \mathrm{span}\{\nee_s\mid 1\leq s\leq n \} \subseteq \Bor_N^-,\qquad 
\Tor_N^+  = \mathrm{span}\{\seee_s\mid 1\leq s\leq n \} \subseteq \Bor_N^+,
\end{equation}
so that 
\begin{equation}
\Bor_N^- = \Tor_N^- \oplus \Nilp_N^-,\qquad \Bor_N^+ = \Tor_N^+\oplus \Nilp_N^+.
\end{equation}
\end{defn}

\begin{lemma}\label{LemNonDegPair}
The subspaces $\Bor_N^-$ and $\Bor_N^+$ of $\wbtd{N}$ are non-degenerately paired under the skew pairing.
\end{lemma}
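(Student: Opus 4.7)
The plan is to exhibit bases of $\Bor_N^\pm$ making the pairing matrix $P$ of $\Bor_N^-\times \Bor_N^+\to \R$ block upper triangular with invertible diagonal blocks. Since $\dim \Bor_N^-=\dim \Bor_N^+=n(n+3)/2$, non-degeneracy is equivalent to $\det P\neq 0$. The change of basis is: in $\Bor_N^-$ replace each top-row vector $e_{N-s,s,0}$ by the Cartan vector $\nee_s$ for $1\leq s\leq n$; in $\Bor_N^+$ replace each right-edge vector $e_{0,N-s,s}$ by $\seee_s$. Since $\nee_s - e_{N-s,s,0} \in \Nilp_N^-$ and $\seee_s - e_{0,N-s,s}\in \Nilp_N^+$, these are bona fide bases. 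Listing the Cartan vectors first partitions $P$ into four blocks $M_{TT}, M_{TN}, M_{NT}, M_{NN}$ matching the splittings $\Bor_N^\pm = \Tor_N^\pm\oplus \Nilp_N^\pm$.

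The crucial observation is that $M_{NT}=0$. Given $(a,b,c)\in \wneC_N$, one has $e_{a,0,c}=\nee_{N-a,0}$ when $b=0$, and $e_{a,b,c}=\nee_{N-a,b}-\nee_{N-a,b-1}$ when $b\geq 1$. In either case the summands $\nee_{N-a,k}$ involved satisfy $1\leq N-a\leq n$ and $0\leq k<N-a$ (the last inequality from $c\geq 1$, i.e.\ $b<N-a$), so Lemma \ref{LemSkewProdFormDiff}(6) forces $(e_{a,b,c},\seee_s)=0$ for every $s\in\{1,\ldots,n\}$. Therefore $\det P=\det M_{TT}\cdot \det M_{NN}$.

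For $M_{TT}$: Lemma \ref{LemSkewProdFormDiff}(7) describes $(\nee_s,\seee_t)$ as $+1$ on the anti-diagonal $t=N-s$, $-1/2$ on the two adjacent anti-diagonals, and $0$ elsewhere; reversing rows turns this into $\tfrac12$ times the Cartan matrix of type $A_n$, of determinant $n+1\neq 0$. For $M_{NN}$: the map $(a,b,c)\mapsto (a,b+1,c-1)$ is a bijection $\wneC_N\to \wseC_N$ along which $(e_{a,b,c},e_{a,b+1,c-1})=+1$ (adjacent via the positive $(0,1,-1)$-move with common index $a\geq 1$). Order both sets by the third coordinate increasing as primary, and the first coordinate increasing as secondary. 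For a given $(a,b,c)\in \wneC_N$, any other neighbor $(a',b',c')\in \wseC_N$ satisfies either $c'\geq c>c-1$, or $c'=c-1$ with $a'=a+1>a$; either way the corresponding entry sits strictly above the diagonal in the chosen order. Hence $M_{NN}$ is upper triangular with unit diagonal, $\det M_{NN}=1$, and $\det P\neq 0$.

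The substantial step is engineering the change of basis and the ordering so that the block-triangular structure appears. Once this is done, $M_{NT}=0$ follows immediately from Lemma \ref{LemSkewProdFormDiff}(6), and $\det M_{TT}\neq 0$ is the classical invertibility of the $A_n$ Cartan matrix. The real work, and the main obstacle, is the combinatorial verification that $M_{NN}$ is upper triangular: one must enumerate the five non-diagonal adjacency types from $\wneC_N$ to $\wseC_N$ and check each lands strictly above the chosen diagonal.
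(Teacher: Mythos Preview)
Your proof is correct and takes a genuinely different route from the paper. The paper argues directly that the kernel of the pairing in $\Bor_N^-$ is trivial: given $\lambda\in\Bor_N^-$ orthogonal to $\Bor_N^+$, it runs a double induction (on $a$, and on $b$ descending) pairing against the individual basis vectors $e_{a,b,c}\in\seC_N$ to force $\lambda_{a,b,c}$ to depend only on $a$, and then pairs against the top-row vectors $e_{a,b,0}$ to reduce to the linear system $B\Lambda=0$ for the $A_n$ Cartan matrix. You instead change basis so that the pairing matrix becomes block upper triangular: replacing the boundary basis vectors by the Cartan vectors $\nee_s,\seee_s$ and invoking Lemma~\ref{LemSkewProdFormDiff}(6) kills the $M_{NT}$ block, while a clever lexicographic ordering via the bijection $(a,b,c)\mapsto(a,b+1,c-1)$ makes $M_{NN}$ unit upper triangular. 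Both arguments ultimately rest on the invertibility of the $A_n$ Cartan matrix for the torus part; the paper's approach is more elementary (working only with the raw relations~\eqref{eq:q-comm}), whereas yours is more structural and makes the splitting $\Bor_N^\pm = \Tor_N^\pm\oplus\Nilp_N^\pm$ do real work, at the cost of importing Lemma~\ref{LemSkewProdFormDiff}.
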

\begin{proof}
Let
\[
\lambda = \sum_{(a,b,c)\in \neC_N} \lambda_{a,b,c}e_{a,b,c}
\]
be a vector in $\Bor_N^-$ which is orthogonal to $\Bor_N^+$. It then follows by induction on $a\colon0 \rightarrow n$ and taking pairings with vectors of the form $e_{a,b,c}$ for $(a,b,c)\in \seC_N$ (with a second induction on  $b$ descending) that $\lambda$ needs to be constant on northeast-oriented diagonals, i.e.\ 
\[
\lambda = \sum_{(a,b,c)\in \neC_N} \lambda_a e_{a,b,c}. 
\]
But then taking pairings with vectors of the form $e_{a,b,0}$ for $(a,b,0) \in \seC_N$, we find that, with $\Lambda$ the column vector with entries $\lambda_a$, we must have  
\[
B \Lambda =0,
\] 
with $B$ the Cartan matrix for type $A_n$ (see \eqref{EqBCartan} below). Hence $\Lambda=0$.
\end{proof}

\begin{cor}\label{CorHeisDeg}
Let 
\begin{equation}\label{EqDegHeisDoub}
D_N := \textrm{\raisebox{\depth}{$\bigtriangledown'$\!\!}}_{N,\redu}
\end{equation}
be the reduction of $\wbtd{N}$ as in~\eqref{EqReduction}. Then through the quotient map from $\wbtd{N}$ we obtain embeddings
\[
\Bor_N^{\pm}\hookrightarrow D_N.
\]
\end{cor}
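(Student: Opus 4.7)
The plan is to observe that $\Bor_N^\pm \hookrightarrow D_N$ is equivalent to $\Bor_N^\pm \cap \Rad(\wbtd{N}) = 0$, since the kernel of the quotient map $\wbtd{N} \twoheadrightarrow D_N = \wbtd{N}/\Rad(\wbtd{N})$ is precisely $\Rad(\wbtd{N})$. Hence everything reduces to checking that the radical of $\wbtd{N}$ meets $\Bor_N^-$ and $\Bor_N^+$ trivially.

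For this, let $v \in \Bor_N^- \cap \Rad(\wbtd{N})$. Being in the radical, $v$ pairs to $0$ with every vector of $\wbtd{N}$, and in particular with every vector of $\Bor_N^+ \subseteq \wbtd{N}$. Lemma \ref{LemNonDegPair} says exactly that the pairing between $\Bor_N^-$ and $\Bor_N^+$ is non-degenerate, so $v = 0$. The argument for $\Bor_N^+$ is symmetric, interchanging the roles of $\Bor_N^-$ and $\Bor_N^+$ (the non-degeneracy in Lemma \ref{LemNonDegPair} is two-sided, since it concerns a perfect skew pairing between two subspaces of equal dimension $|\neC_N| = |\seC_N|$).

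There is no real obstacle here: the entire content is packaged into the non-degeneracy statement of Lemma \ref{LemNonDegPair}, which has already been proved by the Cartan-matrix induction. The corollary itself is just the elementary linear-algebra fact that if $W_1, W_2 \subseteq V$ are subspaces of a skew-symmetric space which are perfectly paired to each other, then each injects into the reduction $V_{\redu}$. So the proof should be a short two- or three-line deduction from Lemma \ref{LemNonDegPair}.
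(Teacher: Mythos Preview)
Your argument is correct and is exactly the intended one: the paper states this as an immediate corollary of Lemma~\ref{LemNonDegPair} without further proof, and your deduction that $\Bor_N^{\pm}\cap\Rad(\wbtd{N})=0$ via the non-degenerate pairing is precisely what makes it a corollary. Your remark on two-sided non-degeneracy via the dimension count $|\neC_N|=|\seC_N|$ is also on point, as the proof of Lemma~\ref{LemNonDegPair} only treats one side explicitly.
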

We still use the notation $e_{a,b,c}$ etc.\ to denote the images of these elements in $D_N$. 

\begin{defn}
Let $B$ be the Cartan matrix of type $A_n$, 
\begin{equation}\label{EqBCartan}
B \in M_n(\Z),\qquad B_{rs} = 2 \delta_{r,s} - \delta_{|r-s|,1}.
\end{equation}
We put 
\begin{equation}\label{EqFundWeightUp}
\nevarpi_s := \sum_{t=1}^n (B^{-1})_{st} \nee_t \in \Tor_N^-,\qquad 1\leq s \leq n,
\end{equation}
and similarly 
\begin{equation}\label{EqFundWeightDown}
\sevarpi_s := \sum_{t=1}^n (B^{-1})_{st} \seee_t \in \Tor_N^+,\qquad 1\leq s \leq n.
\end{equation}
\end{defn}

By \eqref{EqSameCar} and \eqref{EqDiffCar} we get for all $1\leq s,s'\leq n$ and $0\leq k' < s'$ that 
\begin{equation}\label{EqPairFundWeight}
(\nevarpi_s,\nee_{s',k'}) = (\sevarpi_s,\seee_{s',k'}) = -\frac{1}{2}\delta_{s,s'},\qquad (\nevarpi_s,\seee_{s',k'}) = \frac{1}{2}\delta_{s,N-s'},
\end{equation}
while from \eqref{EqVanishing} and \eqref{EqDiffCart} it follows that 
\begin{equation}\label{EqOrthogonalWeights}
(\nevarpi_t,\nevarpi_s)=0 = (\sevarpi_t,\sevarpi_s),\qquad (\nevarpi_t,\seee_s)  = (\nee_t,\sevarpi_s) = \frac{1}{2}\delta_{s,N-t}\qquad 1\leq s,t\leq n.
\end{equation}

\subsection{Colored braid graphs}\label{SecStandGenPrel}
Let $B_m$ be the braid group on $m$ strands with standard generators $\{\sigma_i\}_{i=1}^{m-1}$.

\begin{defn}
By a \emph{positive $m$-strand braid word} we mean an ordered collection of elements of the alphabet $\{\sigma_i\}_{i=1}^{m-1}$, i.e.\ we do not allow inverses of the generators as letters. 

We call two braid words $\mathbf{i},\mathbf{i}'$ \emph{mutation-equivalent} if they can be related by some sequence of the following three types of moves, the first two of which are called  \emph{mutation moves}:
\begin{enumerate}
\item \emph{braid moves}, consisting of a substring replacement $\sigma_i\sigma_{i+1}\sigma_i\leftrightarrow\sigma_{i+1}\sigma_i\sigma_{i+1}$,  and 
\item \emph{$1$- or ${(m-1)}$-Demazure moves}, consisting of substring replacements $\sigma_1\sigma_1 \leftrightarrow \sigma_1$ and $\sigma_{m-1}\sigma_{m-1} \leftrightarrow \sigma_{m-1}$ respectively, and
\item \emph{commutation moves}, consisting of substring replacements $\sigma_i\sigma_j\leftrightarrow \sigma_j\sigma_{i}$ when $|i-j|\geq 2$. 
\end{enumerate}
\end{defn}

Each positive $m$-strand braid word $\mbi$ determines a 1-d manifold with corners $\Gamma_{\mathbf i}$ via the following inductive construction: to a word consisting of a single generator $\sigma_i$ corresponds the subset $\Gamma_{\sigma_i}\in\mathbb{R}^2$ given by the union of $m$ horizontal closed intervals $h_j=[0,1]\times \{-j+1\}$ for $1\leq j \leq m$, together with a single vertical closed interval $\{1/2\}\times[-i+1,-i]$ connecting $h_{i}$ to $h_{i+1}$. We then declare that for any concatenation $\mathbf{i}= \mathbf{i}'*\mathbf{i}''$, the associated manifold $\Gamma_{\mathbf i}$ is obtained by gluing each point $(1,-j)$ on the right boundary of $\Gamma_{\mathbf{i}'}$ to the corresponding point $(0,-j)$ on the left boundary of $\Gamma_{\mathbf {i}''}$, and rescaling the horizontal intervals back to $[0,1]$. The resulting manifold with corners is naturally embedded into the disk $\mathbb{D}^2$, and two such manifolds with corners will be considered the same if they are ambient isotopic.	Note that then $\Gamma_{\mbi}=\Gamma_{\mbi'}$  if and only if $\mbi$ and $\mbi'$ are connected by a sequence of commutation moves. 

We may enhance $\Gamma_{\bf i}$ into a planar directed graph whose vertices are its corners and boundary points, and whose edges are the intervals connecting pairs of corners or pairs of a corner and a boundary point, oriented so that horizontal edges go to the right and vertical ones go down. We further color in blue the vertices which are sources of the vertical edges, and color in red those that are sinks of vertical edges. 
\begin{figure}
\begin{tikzpicture}[>=Stealth, line cap=round, scale=0.8]
  \tikzset{
    wire/.style={line width=0.8pt},
    post/.style={line width=0.9pt,->},
    reddot/.style={circle,draw=red!70!black,fill=red,inner sep=1.2pt},
    bluedot/.style={circle,draw=blue!70!black,fill=blue,inner sep=1.2pt},
    facelabel/.style={font=\small}
  }

  \newcommand{\threewirepanel}[4]{%
    \begin{scope}[shift={(#1,0)}]
      \foreach \yy/\lab in {1/{\(i{-}1\)},0/{\(i\)},-1/{\(i{+}1\)}}{
        \draw[wire,->] (0,\yy) -- (6,\yy);
        \node[left] at (0,\yy) {\lab};
      }
      \foreach \x in {#2}{
        \draw[post] (\x,0) -- (\x,-1);
        \node[reddot] at (\x,-1) {};
        \node[bluedot] at (\x,0)  {};
      }
      \foreach \x in {#3}{
        \draw[post] (\x,1) -- (\x,0);
        \node[bluedot] at (\x,1) {};
        \node[reddot]  at (\x,0) {};
      }
      \foreach \pt/\txt in {#4}{
        \node[facelabel] at \pt {\txt};
      }
    \end{scope}
  }

  \threewirepanel{0}{2,4}{3}{
    (1.5,0.5)/$ $, (3,-0.5)/$*$, (4.5,0.5)/$ $,
    (1,-0.5)/$ $, (5,-0.5)/$ $
  }

  \threewirepanel{8.5}{3}{2,4}{
    (1,0.5)/$ $, (3,0.5)/$ $, (5,0.5)/$ $,
    (2,-0.5)/$ $, (4,-0.5)/$ $
  }
\end{tikzpicture}
\caption{Local graph modification for braid move $ \sigma_{i+1}\sigma_{i}\sigma_{i+1}\mapsto \sigma_{i}\sigma_{i+1}\sigma_{i}$.}
\label{fig:graph-braid}
\end{figure}

\begin{figure}
\begin{tikzpicture}[>=Stealth, line cap=round, scale=0.8]
  \tikzset{
    wire/.style={line width=0.8pt},
    post/.style={line width=0.9pt,->},
    reddot/.style={circle,draw=red!70!black,fill=red,inner sep=1.2pt},
    bluedot/.style={circle,draw=blue!70!black,fill=blue,inner sep=1.2pt},
    facelabel/.style={font=\small}
  }

  \newcommand{\twowirepanel}[3]{%
    \begin{scope}[shift={(#1,0)}]
      \draw[wire,->] (0,1) -- (6,1);   \node[left] at (0,1) {$1$};
      \draw[wire,->] (0,0) -- (6,0);   \node[left] at (0,0) {$2$};
      \foreach \x in {#2}{
        \draw[post] (\x,1) -- (\x,0);
        \node[reddot]  at (\x,0) {};
        \node[bluedot] at (\x,1) {};
      }
      \foreach \pt/\txt in {#3}{
        \node[facelabel] at \pt {\txt};
      }
    \end{scope}
  }

  \twowirepanel{0}{2,4}{
    (1,0.5)/$  $, (3,0.5)/$*$, (5,0.5)/$ $
  }

  \twowirepanel{8.5}{3}{
    (2,0.5)/$ $, (4,0.5)/$  $
  }
\end{tikzpicture}
\caption{Local graph modification for Demazure move $\sigma_{1}\sigma_{1}\mapsto \sigma_{1}$. }
\label{fig:graph-dem}
\end{figure}

\begin{figure}
\begin{tikzpicture}[>=Stealth, line cap=round, scale=0.8]
  \tikzset{
    wire/.style={line width=0.8pt},
    post/.style={line width=0.9pt,->},
    reddot/.style={circle,draw=red!70!black,fill=red,inner sep=1.2pt},
    bluedot/.style={circle,draw=blue!70!black,fill=blue,inner sep=1.2pt},
    facelabel/.style={font=\small}
  }

  \newcommand{\twowirepanel}[3]{%
    \begin{scope}[shift={(#1,0)}]
      \draw[wire,->] (0,1) -- (6,1);   \node[left] at (0,1) {$m-1$};
      \draw[wire,->] (0,0) -- (6,0);   \node[left] at (0,0) {$m$};
      \foreach \x in {#2}{
        \draw[post] (\x,1) -- (\x,0);
        \node[reddot]  at (\x,0) {};
        \node[bluedot] at (\x,1) {};
      }
      \foreach \pt/\txt in {#3}{
        \node[facelabel] at \pt {\txt};
      }
    \end{scope}
  }

  \twowirepanel{0}{2,4}{
    (1,0.5)/$ $, (3,0.5)/$*$, (5,0.5)/$ $
  }

  \twowirepanel{8.5}{3}{
    (2,0.5)/$ $, (4,0.5)/$ $
  }
\end{tikzpicture}
\caption{Local graph modification for Demazure move $\sigma_{m-1}\sigma_{m-1}\mapsto \sigma_{m-1}$. }
\label{fig:graph-dem2}
\end{figure}

We call the planar graphs $\Gamma =\Gamma_{\mbi}$ that we obtain in this way \emph{braid graphs}. We will simply refer to them as graphs in what follows. Then saying that two graphs $\Gamma,\Gamma'$ are related by a mutation move means that we can find representatives $\Gamma = \Gamma_{\mbi}$ and $\Gamma'= \Gamma_{\mbi'}$ such that $\mbi$ and $\mbi'$ are related by that mutation move. On the level of the graphs such moves can be represented locally: 
\begin{enumerate}
\item If $\mathbf{i},\mathbf{i}'$ are related by a braid move, the corresponding directed graphs $\Gamma_{\bf i},\Gamma_{\bf i'}$ are identical outside the neighborhood illustrated in Figure~\ref{fig:graph-braid}.
\item The case of a Demazure move is similar, and shown in Figures~\ref{fig:graph-dem} and \ref{fig:graph-dem2}.
\end{enumerate}
Clearly, mutation can be defined directly on the level of graphs, and we then say that $\Gamma$ is mutable at a cell if $\Gamma$ can be presented locally in the form \ref{fig:graph-braid},~\ref{fig:graph-dem} or ~\ref{fig:graph-dem2}, with the mutable cell indicated by $*$. 

Now let $V$ be a skew-symmetric space. 
\begin{defn}
If $\Gamma$ is a braid graph, then a \emph{$V$-coloring} $\{v_f\}$ of $\Gamma$ is a labelling of the faces $f$ of $\Gamma$ by linearly independent elements $v_f\in V$ via a map 
\begin{equation}
\ell \colon \mathrm{Faces}(\Gamma)\rightarrow V,
\end{equation} 
such that the following conditions are satisfied:
\begin{enumerate}
\item if two faces $f,g$ are not separated by any edge connecting red and blue vertices, we have $(v_f,v_g)=0$;
\item if $f,g$ are two faces separated by an edge  connecting a red vertex with a blue vertex such that when crossing from $f$ into $g$ the red vertex lies to the right, we have $(v_f,v_g)=1$.
\item if $f,g$ are two faces separated by an edge connecting a red (resp. blue) vertex  to a boundary vertex  such that when crossing from $f$ into $g$ the boundary vertex lies to the left, we have $(v_f,v_g)=1/2$ (resp. $(v_f,v_g)=-1/2$).
\end{enumerate}
\end{defn}

Mutation of graphs can also be lifted to the level of $V$-colored braid graphs: 
\begin{defn}
If $\bf i'$ is obtained from $\bf i$ by a braid move $\sigma_{i+1}\sigma_{i}\sigma_{i+1}\mapsto\sigma_{i}\sigma_{i+1}\sigma_{i}$ and $\ell$ is a $V$-labelling of $\Gamma_{\bf i}$, we define the \emph{mutated} labelling $\mu_v(\ell)$ of $\Gamma_{\bf i'}$ as in Figure~\ref{fig:br-mut1}, with $\mu_v(\ell)$ coinciding with $\ell$ on all faces not appearing in that Figure.  

Similarly, if $\bf i'$ is obtained from $\bf i$ by a braid move $\sigma_{i}\sigma_{i+1}\sigma_{i}\mapsto\sigma_{i+1}\sigma_{i}\sigma_{i+1}$, the mutated labelling is defined in  Figure~\ref{fig:br-mut2}. 

Finally, if $\bf i'$ is obtained from $\bf i$ by a Demazure move, we define the mutated labelling by Figure~\ref{fig:mut-dem} or Figure \ref{fig:mut-dem2}. 
\end{defn}

It is easily checked that $V$-colorings remain $V$-colorings after mutation. As in this case the labelling is by distinct elements of $V$, we may as well refer to a cell by its labelling, and say for example that `a face $v$ is a mutable face'. Note however that performing two consecutive braid moves  $\sigma_{i+1}\sigma_{i}\sigma_{i+1}\mapsto \sigma_{i}\sigma_{i+1}\sigma_{i}\rightarrow\sigma_{i+1}\sigma_{i}\sigma_{i+1}$ returns us to the original graph but \emph{not} the original $V$-labelling.

\begin{figure}
\begin{tikzpicture}[>=Stealth, line cap=round, scale=0.8]
  \tikzset{
    wire/.style={line width=0.8pt},
    post/.style={line width=0.9pt,->},
    reddot/.style={circle,draw=red!70!black,fill=red,inner sep=1.2pt},
    bluedot/.style={circle,draw=blue!70!black,fill=blue,inner sep=1.2pt},
    facelabel/.style={font=\small}
  }

  \newcommand{\threewirepanel}[4]{%
    \begin{scope}[shift={(#1,0)}]
      \foreach \yy/\lab in {1/{\(i{-}1\)},0/{\(i\)},-1/{\(i{+}1\)}}{
        \draw[wire,->] (0,\yy) -- (6,\yy);
        \node[left] at (0,\yy) {\lab};
      }
      \foreach \x in {#2}{
        \draw[post] (\x,0) -- (\x,-1);
        \node[reddot] at (\x,-1) {};
        \node[bluedot] at (\x,0)  {};
      }
      \foreach \x in {#3}{
        \draw[post] (\x,1) -- (\x,0);
        \node[bluedot] at (\x,1) {};
        \node[reddot]  at (\x,0) {};
      }
      \foreach \pt/\txt in {#4}{
        \node[facelabel] at \pt {\txt};
      }
    \end{scope}
  }

  \threewirepanel{0}{2,4}{3}{
    (1.5,0.5)/$v_1$, (3,-0.5)/$v$, (4.5,0.5)/$v_2$,
    (1,-0.5)/$v_4$, (5,-0.5)/$v_3$
  }

  \threewirepanel{8.5}{3}{2,4}{
    (1,0.5)/$v_1+v$, (3,0.5)/$-v$, (5,0.5)/$v_2$,
    (2,-0.5)/$v_4$, (4,-0.5)/$v_3+v$
  }
\end{tikzpicture}
\caption{Local $V$-label modification for braid move $ \sigma_{i+1}\sigma_{i}\sigma_{i+1}\mapsto \sigma_{i}\sigma_{i+1}\sigma_{i}$. }
\label{fig:br-mut1}
\end{figure}

\begin{figure}
\begin{tikzpicture}[>=Stealth, line cap=round, scale=0.8]
  \tikzset{
    wire/.style={line width=0.8pt},
    post/.style={line width=0.9pt,->},
    reddot/.style={circle,draw=red!70!black,fill=red,inner sep=1.2pt},
    bluedot/.style={circle,draw=blue!70!black,fill=blue,inner sep=1.2pt},
    facelabel/.style={font=\small}
  }

  \newcommand{\threewirepanel}[4]{%
    \begin{scope}[shift={(#1,0)}]
      \foreach \yy/\lab in {1/{\(i{-}1\)},0/{\(i\)},-1/{\(i{+}1\)}}{
        \draw[wire,->] (0,\yy) -- (6,\yy);
        \node[left] at (0,\yy) {\lab};
      }
      \foreach \x in {#2}{
        \draw[post] (\x,0) -- (\x,-1);
        \node[reddot] at (\x,-1) {};
        \node[bluedot] at (\x,0)  {};
      }
      \foreach \x in {#3}{
        \draw[post] (\x,1) -- (\x,0);
        \node[bluedot] at (\x,1) {};
        \node[reddot]  at (\x,0) {};
      }
      \foreach \pt/\txt in {#4}{
        \node[facelabel] at \pt {\txt};
      }
    \end{scope}
  }

  \threewirepanel{0}{3}{2,4}{
    (1,0.5)/$v_1$, (3,0.5)/$v$, (5,0.5)/$v_2$,
    (2,-0.5)/$v_4$, (4,-0.5)/$v_3$
}

  \threewirepanel{8.5}{2,4}{3}{
    (1.5,0.5)/$v_1$, (3,-0.5)/$-v$, (4.5,0.5)/$v_2+v$,
    (1,-0.5)/$v_4+v$, (5,-0.5)/$v_3$
  }

\end{tikzpicture}
\caption{Local $V$-label modification for braid move $\sigma_{i}\sigma_{i+1}\sigma_{i} \mapsto \sigma_{i+1}\sigma_{i}\sigma_{i+1}$. }
\label{fig:br-mut2}
\end{figure}

\begin{figure}
\begin{tikzpicture}[>=Stealth, line cap=round, scale=0.8]
  \tikzset{
    wire/.style={line width=0.8pt},
    post/.style={line width=0.9pt,->},
    reddot/.style={circle,draw=red!70!black,fill=red,inner sep=1.2pt},
    bluedot/.style={circle,draw=blue!70!black,fill=blue,inner sep=1.2pt},
    facelabel/.style={font=\small}
  }

  \newcommand{\twowirepanel}[3]{%
    \begin{scope}[shift={(#1,0)}]
      \draw[wire,->] (0,1) -- (6,1);   \node[left] at (0,1) {$1$};
      \draw[wire,->] (0,0) -- (6,0);   \node[left] at (0,0) {$2$};
      \foreach \x in {#2}{
        \draw[post] (\x,1) -- (\x,0);
        \node[reddot]  at (\x,0) {};
        \node[bluedot] at (\x,1) {};
      }
      \foreach \pt/\txt in {#3}{
        \node[facelabel] at \pt {\txt};
      }
    \end{scope}
  }

  \twowirepanel{0}{2,4}{
    (1,0.5)/$v_1$, (3,0.5)/$v$, (5,0.5)/$v_2$
  }

  \twowirepanel{8.5}{3}{
    (2,0.5)/$v_1$, (4,0.5)/$v_2+v$
  }
\end{tikzpicture}
\caption{Local $V$-label modification for $1$-Demazure move.}
\label{fig:mut-dem}
\end{figure}

\begin{figure}
\begin{tikzpicture}[>=Stealth, line cap=round, scale=0.8]
  \tikzset{
    wire/.style={line width=0.8pt},
    post/.style={line width=0.9pt,->},
    reddot/.style={circle,draw=red!70!black,fill=red,inner sep=1.2pt},
    bluedot/.style={circle,draw=blue!70!black,fill=blue,inner sep=1.2pt},
    facelabel/.style={font=\small}
  }

  \newcommand{\twowirepanel}[3]{%
    \begin{scope}[shift={(#1,0)}]
      \draw[wire,->] (0,1) -- (6,1);   \node[left] at (0,1) {$m-1$};
      \draw[wire,->] (0,0) -- (6,0);   \node[left] at (0,0) {$m$};
      \foreach \x in {#2}{
        \draw[post] (\x,1) -- (\x,0);
        \node[reddot]  at (\x,0) {};
        \node[bluedot] at (\x,1) {};
      }
      \foreach \pt/\txt in {#3}{
        \node[facelabel] at \pt {\txt};
      }
    \end{scope}
  }

  \twowirepanel{0}{2,4}{
    (1,0.5)/$v_1$, (3,0.5)/$v$, (5,0.5)/$v_2$
  }

  \twowirepanel{8.5}{3}{
    (2,0.5)/$v_1$, (4,0.5)/$v_2+v$
  }
\end{tikzpicture}
\caption{Local $V$-label modification for $m-1$-Demazure move. }
\label{fig:mut-dem2}
\end{figure}

Suppose that $(\Gamma_{\mathbf{i}},\ell)$ is a $V$-labelled graph corresponding to positive braid word $\mathbf{i}$, and $\mathbf{i}_0\subseteq\mathbf{i}$ is some (not necessarily consecutive) substring of $\mathbf{i}$. If we write $\mathbf{i}\setminus\mathbf{i}_0$ for the word obtained from  $\mathbf{i}$ by deleting the subword $\mathbf{i}_0$, the graph $\Gamma_{\mathbf{i}\setminus\mathbf{i}_0}$ is the one built from $\Gamma_{\mathbf{i}}$ by deleting the vertical edges in $\Gamma_{\mathbf{i}}$ associated to $\mathbf{i}_0$. So the faces of $\Gamma_{\mathbf{i}\setminus\mathbf{i}_0}$  are unions of faces of $(\Gamma_{\mathbf{i}},\ell)$.

\begin{defn} 
If $\mbi_0 \subseteq \mbi$ and $\ell$ is a $V$-labelling on $\Gamma_{\mbi}$, we define on $\Gamma_{\mbi \setminus \mbi_0}$ the labelling  
\begin{align}
\label{eq:face-merge}
\overline{l}(\overline{f}) =\sum_{f\subseteq \overline{f}}l(f)
\end{align}
 where the sum is taken over all faces $f$ of $\Gamma_{\mathbf{i}}$ contained in the face $\overline{f}$ of $\Gamma_{\mathbf{i}\setminus\mathbf{i}_0}$. 
\end{defn}

By a direct verification one sees that if $\ell$ is a $V$-coloring, also $\overline{\ell}$ is a $V$-coloring. 

\begin{defn}
Let $p$ be a directed path in a $V$-labelled graph $(\Gamma_{\bf i},\ell)$ starting from the left boundary vertex $a$ and ending at the right boundary vertex $b$. We define its \emph{weight} $w_p$ and \emph{adjusted weight} $u_p$ to be the sums
\begin{equation}
w_p=\sum_{f\subseteq p} v_f\in V,\qquad u_p = w_p - w_{p_{b,b}} \in V,
\end{equation}
where we write $f \subseteq p$ to mean that the face $f$ lies under $p$, and where $p_{b,b}$ is the unique directed path in $\Gamma_{\mathbf{i}}$ connecting the boundary source vertex $b$ with the boundary sink $b$.
\end{defn}
Inspecting Figures~\ref{fig:br-mut1},~\ref{fig:br-mut2},~\ref{fig:mut-dem} and ~\ref{fig:mut-dem2}, we see that the weight of this path is invariant under braid and Demazure moves.

The basic colored graphs that we will make use of are the following: 

\begin{defn}
We define $\Gamma_{\mathbb{E}} = (\Gamma_{\mathbf{w}},\ell)$ to be the $\btd{N}$-colored graph arranged along the decomposition 
\begin{equation}\label{EqLongestWord1}
\mathbf{w}_0 = (\mathbf{w}_n,\mathbf{w}_{n-1},\ldots,\mathbf{w}_2,\mathbf{w}_1),\qquad \mathbf{w}_k = (\sigma_1,\sigma_{2},\ldots,\sigma_{k}),
\end{equation}
with the vector $e_{a,b,c}$ in the $c$'th cell on the $b$'th row.

Similarly, we define $\Gamma_{\mathbb{F}} = (\Gamma_{\overline{\mathbf{w}}_0},\ell)$ to be the $\btd{N}$-colored graph arranged along the decomposition 
\begin{equation}\label{EqLongestWord2}
\overline{\mathbf{w}}_0 = \overline{\mathbf{w}}_n\overline{\mathbf{w}}_{n-1}\ldots \overline{\mathbf{w}}_2\overline{\mathbf{w}}_1,\qquad \overline{\mathbf{w}}_k = (\sigma_n,\sigma_{n-1},\ldots,\sigma_{N-k}),
\end{equation}
with the vector $e_{a,b,c}$ in the $b$'th cell on the $a$'th row.
\end{defn}

See Figures \ref{fig:E-graph} and \ref{fig:F-graph} for illustrations in the case $N=4$.

\begin{figure}
\begin{tikzpicture}[>=Stealth, line cap=round, x=8mm, y=8mm]
  \tikzset{
    wire/.style   ={line width=0.8pt},
    post/.style   ={line width=0.9pt,->},
    reddot/.style ={circle,draw=red!70!black,fill=red,inner sep=1.2pt},
    bluedot/.style={circle,draw=blue!70!black,fill=blue,inner sep=1.2pt},
    blackdot/.style={circle,fill=black,inner sep=1.2pt},
    elab/.style   ={font=\small}
  }
  \draw[wire,->] ( -1,0) -- (7,0);
  \draw[wire,->] (  -1,1) -- ( 7,1);
  \draw[wire,->] (  -1,2) -- ( 7,2);
  \draw[wire,->] (  -1,3) -- ( 7,3);

  \foreach \x in {1,3,5,}{
    \draw[post] (\x,3) -- (\x,2);
    \node[reddot] at (\x,2) {};
    \node[bluedot]  at (\x,3) {};
  }
  \foreach \x in {2,4}{
    \draw[post] (\x,2) -- (\x,1);
    \node[reddot] at (\x,1) {};
    \node[bluedot]  at (\x,2) {};
  }
  \foreach \x in {3}{
    \draw[post] (\x,1) -- (\x,0);
    \node[reddot] at (\x,0) {};
    \node[bluedot]  at (\x,1) {};
  }

   \node[blackdot] at (-1,0) {};
   \node[blackdot] at (-1,1) {};
   \node[blackdot] at (-1,2) {};
   \node[blackdot] at (-1,3) {};
   \node[blackdot] at (7,0) {};
   \node[blackdot] at (7,1) {};
   \node[blackdot] at (7,2) {};
   \node[blackdot] at (7,3) {};
   \node[elab] at (-1.5,0) {$4$};
  \node[elab] at (-1.5,1) {$3$};
  \node[elab] at (-1.5,2) {$2$};
  \node[elab] at (-1.5,3) {$1$};

  \node[elab, anchor=east] at (0.5,2.5) {$e_{310}$};
  \node[elab, anchor=west] at (1.5,2.5) {$e_{211}$};
    \node[elab, anchor=west] at (3.5,2.5) {$e_{112}$};
      \node[elab, anchor=west] at (5.5,2.5) {$e_{013}$};

  \node[elab, anchor=east] at (1,1.5) {$e_{220}$};
  \node[elab]              at (3.0,1.5) {$e_{121}$};
  \node[elab, anchor=west] at (5.0,1.5) {$e_{022}$};

  \node[elab, anchor=east] at (1.5,0.5) {$e_{130}$};
  \node[elab, anchor=west] at (4.5,.5) {$e_{031}$};

\end{tikzpicture}
\caption{The $\nabla_N$-labelled graph $\Gamma_{\mathbb{E}}$ for $N=4$. The boundary vertices are drawn in black.}
\label{fig:E-graph}
\end{figure}

\begin{figure}
\begin{tikzpicture}[>=Stealth, line cap=round, x=8mm, y=8mm]
  \tikzset{
    wire/.style   ={line width=0.8pt},
    post/.style   ={line width=0.9pt,->},
    reddot/.style ={circle,draw=red!70!black,fill=red,inner sep=1.2pt},
    bluedot/.style={circle,draw=blue!70!black,fill=blue,inner sep=1.2pt},
    blackdot/.style={circle,fill=black,inner sep=1.2pt},    
    elab/.style   ={font=\small}
  }
  \draw[wire,->] ( -1,0) -- (7,0);
  \draw[wire,->] (  -1,1) -- ( 7,1);
  \draw[wire,->] (  -1,2) -- ( 7,2);
  \draw[wire,->] (  -1,3) -- ( 7,3);

  \foreach \x in {1,3,5,}{
    \draw[post] (\x,1) -- (\x,0);
    \node[reddot] at (\x,0) {};
    \node[bluedot]  at (\x,1) {};
  }
  \foreach \x in {2,4}{
    \draw[post] (\x,2) -- (\x,1);
    \node[reddot] at (\x,1) {};
    \node[bluedot]  at (\x,2) {};
  }
  \foreach \x in {3}{
    \draw[post] (\x,3) -- (\x,2);
    \node[reddot] at (\x,2) {};
    \node[bluedot]  at (\x,3) {};
  }

   \node[blackdot] at (-1,0) {};
   \node[blackdot] at (-1,1) {};
   \node[blackdot] at (-1,2) {};
   \node[blackdot] at (-1,3) {};
   \node[blackdot] at (7,0) {};
   \node[blackdot] at (7,1) {};
   \node[blackdot] at (7,2) {};
   \node[blackdot] at (7,3) {};

  \node[elab] at (-1.5,0) {$4$};
  \node[elab] at (-1.5,1) {$3$};
  \node[elab] at (-1.5,2) {$2$};
  \node[elab] at (-1.5,3) {$1$};

  \node[elab, anchor=east] at (1.5,2.5) {$e_{301}$};
  \node[elab, anchor=west] at (4.5,2.5) {$e_{310}$};

  \node[elab, anchor=east] at (1,1.5) {$e_{202}$};
  \node[elab]              at (3.0,1.5) {$e_{211}$};
  \node[elab, anchor=west] at (5.25,1.5) {$e_{220}$};

  \node[elab, anchor=east] at (.5,0.5) {$e_{103}$};
  \node[elab]              at (2.0,0.5) {$e_{112}$};
  \node[elab]              at (4.0,0.5) {$e_{121}$};
  \node[elab, anchor=west] at (5.5,.5) {$e_{130}$};

\end{tikzpicture}
\caption{The $\nabla_N$-labelled graph $\Gamma_{\mathbb{F}}$ for $N=4$. The boundary vertices are drawn in black.}
\label{fig:F-graph}
\end{figure}

For later use, we include here the following lemma. 

\begin{lemma}\label{LemLongestDouble}
Let $\Gamma_{\mathbf{w}_0}'$ be the $\btd{N}$-colored subgraph of $\Gamma_{\mathbf{w}_0}$ forgetting all utmost right vertical line on every row. Consider any inclusion $\btd{N}\subseteq V$ and any extension of $\Gamma_{\mathbb{E}}'$ to a $V$-colored graph structure $(\Gamma_{\mathbf{w}_0\mathbf{w}_0},\widetilde{\ell})$.

Then there exist a sequence of mutations reducing $(\widetilde{\Gamma}_{\mathbf{w}_0\mathbf{w}_0})$ to a $V$-labeled graph $(\Gamma_{\mathbf{w}_0},\ell')$ such that $\ell_{\mid \Gamma_{\mathbf{w}_0}'} = \ell_{\mid \Gamma_{\mathbf{w}_0}}$.  
\end{lemma}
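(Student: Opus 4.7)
The plan is to proceed by induction on $n = N - 1$, exploiting the block decomposition $\mathbf{w}_0 = (\mathbf{w}_n, \mathbf{w}_0^{(N-1)})$ with $\mathbf{w}_0^{(N-1)} = (\mathbf{w}_{n-1}, \ldots, \mathbf{w}_1)$. Under this decomposition, the removed subword $\mathbf{i}_0$ consists of the terminal letter $\sigma_k$ of each block $\mathbf{w}_k$, so that $\mathbf{w}_0 \setminus \mathbf{i}_0 = \mathbf{w}_0^{(N-1)}$ is a reduced expression for the longest element of $S_{N-1}$ using only $\sigma_1, \ldots, \sigma_{n-1}$.

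For the base case $n = 1$, we have $\mathbf{w}_0 = (\sigma_1)$ and $\Gamma_{\mathbf{w}_0}'$ is edgeless. The doubled word $\mathbf{w}_0 \mathbf{w}_0 = (\sigma_1, \sigma_1)$ reduces to $\mathbf{w}_0$ via a single Demazure move, valid here simultaneously as both a $1$-Demazure and an $(m-1)$-Demazure since $m = 2$. Direct inspection of Figure~\ref{fig:mut-dem} shows the sum of face labels on the single merged face of $\Gamma_{\mathbf{w}_0}'$ coincides before and after the reduction.

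For the inductive step, the central algebraic input is that the longest element $w_0$ is absorbing in the $0$-Hecke monoid of $S_N$, so $w_0 \cdot w_0 = w_0$, guaranteeing the existence of some reduction of $\mathbf{w}_0 \mathbf{w}_0$ to $\mathbf{w}_0$ via braid, commutation, and Demazure moves. Our task is to realize such a reduction using only the boundary ($\sigma_1$ or $\sigma_n$) Demazure moves permitted here, and to verify the merged labels on $\Gamma_{\mathbf{w}_0}' = \Gamma_{\mathbf{w}_0^{(N-1)}}$ are preserved. The reduction is constructed iteratively: at the interface of the two copies of $\mathbf{w}_0$, the terminal letter of the first (from $\mathbf{w}_1 = (\sigma_1)$) and the initial letter of the second (from the start of $\mathbf{w}_n$) are both $\sigma_1$, forming an adjacent $\sigma_1\sigma_1$ pair immediately reducible via a $1$-Demazure. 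Further braid moves of the form $\sigma_i\sigma_{i+1}\sigma_i \leftrightarrow \sigma_{i+1}\sigma_i\sigma_{i+1}$ and commutation moves propagate additional equal-pair collisions toward the boundary rows, where each is absorbed by a boundary Demazure.

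To guarantee preservation of the merged labels on $\Gamma_{\mathbf{w}_0}'$, we organize the mutation sequence so that every mutation either happens strictly within a single face of $\Gamma_{\mathbf{w}_0}'$ or is confined to the complement of $\Gamma_{\mathbf{w}_0 \setminus \mathbf{i}_0}$. Since $\mathbf{i}_0$ contains the unique $\sigma_n$ appearing in $\mathbf{w}_0$, all mutations involving $\sigma_n$ automatically lie in this complementary region; the remaining mutations involve only $\sigma_1, \ldots, \sigma_{n-1}$ and effectively constitute a rank-$(N-1)$ sub-problem handled by the inductive hypothesis. Since every individual mutation in Figures~\ref{fig:br-mut1}--\ref{fig:mut-dem2} preserves the total sum of face labels, the confinement guarantees that this sum-preservation is realized separately on each face of $\Gamma_{\mathbf{w}_0}'$. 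The main obstacle is the explicit combinatorial construction ensuring this confinement: one must verify, for the specific mutation sequence chosen, that no braid or Demazure move straddles a $\Gamma_{\mathbf{w}_0}'$-face boundary in a way that would redistribute labels across distinct faces of $\Gamma_{\mathbf{w}_0}'$. The invariance of adjusted path weights under all mutation moves (noted following the definition of $u_p$) supplies the key constraint on how labels may redistribute and ultimately forces the desired preservation.
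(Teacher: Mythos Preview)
Your proposal has a genuine gap: you outline a strategy but do not execute its central step, and you explicitly acknowledge this when you write ``The main obstacle is the explicit combinatorial construction ensuring this confinement: one must verify, for the specific mutation sequence chosen, that no braid or Demazure move straddles a $\Gamma_{\mathbf{w}_0}'$-face boundary\ldots''. This is precisely the content of the lemma, and your proof does not supply it. The appeal to induction does not close the gap either: you assert that after handling the interface $\sigma_1\sigma_1$ and the row-$n$ contributions, ``the remaining mutations\ldots effectively constitute a rank-$(N-1)$ sub-problem handled by the inductive hypothesis'', but you do not exhibit the intermediate labelled graph and verify that it is an instance of the lemma at rank $N-1$ with the correct extension hypothesis. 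The final sentence invoking invariance of adjusted path weights $u_p$ is too vague to do any work; that invariance constrains sums along paths, not individual cell labels, and you give no mechanism by which it forces the preservation you need.

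There is also a conceptual issue with the confinement strategy itself. A braid move at a face $v$ alters not only $v$ but two of its neighbours (e.g.\ $v_1$ and $v_3$ in Figure~\ref{fig:br-mut1}), so ``happening within a single face of $\Gamma_{\mathbf{w}_0}'$'' is essentially impossible for the protected (single-cell) faces; what you really need is that no mutation ever has a protected cell among its affected neighbours. Whether such a sequence exists is not obvious, and the paper does \emph{not} prove the lemma this way. Instead, the paper re-encodes braid graphs as families of non-intersecting lattice paths (``snake paths'') recorded as matrices, and identifies the allowed mutations $\sigma_{i+1}\sigma_i\sigma_{i+1}\mapsto\sigma_i\sigma_{i+1}\sigma_i$ and $\sigma_1\sigma_1\mapsto\sigma_1$ with explicit local moves on these matrices. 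The key observation is then an invariant: if a weight vector $z$ sits at a vertex in one of a short list of local edge-patterns before a mutation, then $z$ reappears (possibly at a different vertex on the same horizontal line) in one of those same patterns afterwards. Since the original labels $e_{abc}$ begin in such patterns, they persist through the entire reduction, and a cardinality count forces them to occupy exactly the non-rightmost cells of the final $\Gamma_{\mathbf{w}_0}$. This is a genuinely different mechanism from confinement: labels are allowed to move, but they are shown never to be destroyed.
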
 

So, in case $N=4$, the lemma says that the Figure \ref{fig:E-graphDoubledAlt} can be reduced to the Figure \ref{fig:E-graphNilpAlt}:

\begin{figure}[ht]
\begin{tikzpicture}[>=Stealth, line cap=round, x=8mm, y=8mm]
  \tikzset{
    wire/.style   ={line width=0.8pt},
    post/.style   ={line width=0.9pt,->},
    reddot/.style ={circle,draw=red!70!black,fill=red,inner sep=1.2pt},
    bluedot/.style={circle,draw=blue!70!black,fill=blue,inner sep=1.2pt},
    elab/.style   ={font=\small}, 
    blackdot/.style={circle,fill=black,inner sep=1.2pt},
    elab/.style   ={font=\small}
  }

  \draw[wire,->] ( -1,3) -- (14,3);
  \draw[wire,->] (  -1,2) -- ( 14,2);
  \draw[wire,->] (  -1,1) -- ( 14,1);
  \draw[wire,->] (  -1,0) -- ( 14,0);

  \foreach \x in {1,3,5,7,9,11,}{
    \draw[post] (\x,3) -- (\x,2);
    \node[bluedot] at (\x,3) {};
    \node[reddot]  at (\x,2) {};
  }
  \foreach \x in {2,4,8,10}{
    \draw[post] (\x,2) -- (\x,1);
    \node[bluedot] at (\x,2) {};
    \node[reddot]  at (\x,1) {};
  }
  \foreach \x in {3,9}{
    \draw[post] (\x,1) -- (\x,0);
    \node[bluedot] at (\x,1) {};
    \node[reddot]  at (\x,0) {};
  }
   \node[blackdot] at (-1,0) {};
   \node[blackdot] at (-1,1) {};
   \node[blackdot] at (-1,2) {};
   \node[blackdot] at (-1,3) {};
   \node[blackdot] at (14,0) {};
   \node[blackdot] at (14,1) {};
   \node[blackdot] at (14,2) {};
   \node[blackdot] at (14,3) {};
  \node[elab] at (-1.5,3) {$1$};
  \node[elab] at (-1.5,2) {$2$};
  \node[elab] at (-1.5,1) {$3$};
  \node[elab] at (-1.5,0) {$4$};

  \node[elab, anchor=east] at (1.5,0.5) {{\tiny $e_{130}$}};
  \node[elab] at (6,0.5) {{\tiny $*$}};
  \node[elab, anchor=west] at (10.5,0.5) {{\tiny $*$}};

  \node[elab, anchor=east] at (1,1.5) {{\tiny $e_{220}$}};
  \node[elab]              at (3.0,1.5) {{\tiny $e_{121}$}};
  \node[elab] at (6,1.5) {{\tiny $*$}};
  \node[elab]              at (9,1.5) {{\tiny $*$}};
  \node[elab, anchor=west] at (11,1.5) {{\tiny $*$}};

  \node[elab, anchor=east] at (.5,2.5) {{\tiny $e_{310}$}};
  \node[elab]              at (2.0,2.5) {{\tiny $e_{211}$}};
  \node[elab]              at (4.0,2.5) {{\tiny $e_{112}$}};
  \node[elab] at (6.0,2.5) {{\tiny $*$}};
  \node[elab]              at (8.0,2.5) {{\tiny $*$}};
  \node[elab]              at (10.0,2.5) {{\tiny $*$}};
  \node[elab, anchor=west] at (11.5,2.5) {{\tiny $*$}};
\end{tikzpicture}
\caption{The $V$-labelled graph $(\widetilde{\Gamma}_{\mathbf{w}_0\mathbf{w}_0},\widetilde{\ell})$.}
\label{fig:E-graphDoubledAlt}
\end{figure}
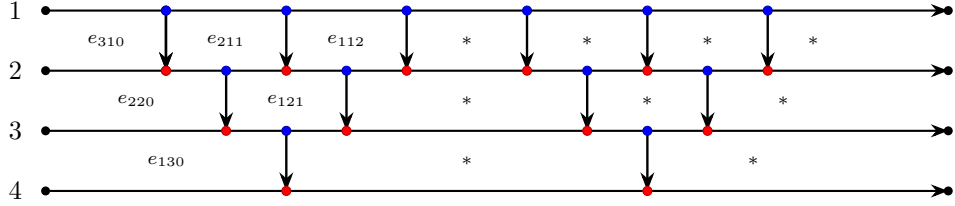

\begin{figure}[ht]
\begin{tikzpicture}[>=Stealth, line cap=round, x=8mm, y=8mm]
  \tikzset{
    wire/.style   ={line width=0.8pt},
    post/.style   ={line width=0.9pt,->},
    reddot/.style ={circle,draw=red!70!black,fill=red,inner sep=1.2pt},
    bluedot/.style={circle,draw=blue!70!black,fill=blue,inner sep=1.2pt},
    elab/.style   ={font=\small},
    blackdot/.style={circle,fill=black,inner sep=1.2pt},
    elab/.style   ={font=\small}
  }

  \draw[wire,->] ( -1,3) -- (8,3);
  \draw[wire,->] (  -1,2) -- ( 8,2);
  \draw[wire,->] (  -1,1) -- ( 8,1);
  \draw[wire,->] (  -1,0) -- ( 8,0);

  \foreach \x in {1,3,5,}{
    \draw[post] (\x,3) -- (\x,2);
    \node[bluedot] at (\x,3) {};
    \node[reddot]  at (\x,2) {};
  }
  \foreach \x in {2,4}{
    \draw[post] (\x,2) -- (\x,1);
    \node[bluedot] at (\x,2) {};
    \node[reddot]  at (\x,1) {};
  }
  \foreach \x in {3}{
    \draw[post] (\x,1) -- (\x,0);
    \node[bluedot] at (\x,1) {};
    \node[reddot]  at (\x,0) {};
  }
  
   \node[blackdot] at (-1,0) {};
   \node[blackdot] at (-1,1) {};
   \node[blackdot] at (-1,2) {};
   \node[blackdot] at (-1,3) {};
   \node[blackdot] at (8,0) {};
   \node[blackdot] at (8,1) {};
   \node[blackdot] at (8,2) {};
   \node[blackdot] at (8,3) {};
  \node[elab] at (-1.5,3) {$1$};
  \node[elab] at (-1.5,2) {$2$};
  \node[elab] at (-1.5,1) {$3$};
  \node[elab] at (-1.5,0) {$4$};

  \node[elab, anchor=east] at (1.5,0.5) {{\tiny $e_{130}$}};
  \node[elab, anchor=west] at (4.5,0.5) {{\tiny $*$}};

  \node[elab, anchor=east] at (1,1.5) {{\tiny $e_{220}$}};
  \node[elab]              at (3.0,1.5) {{\tiny $e_{121}$}};
  \node[elab, anchor=west] at (5.25,1.5) {{\tiny $*$}};

  \node[elab, anchor=east] at (.5,2.5) {{\tiny $e_{310}$}};
  \node[elab]              at (2.0,2.5) {{\tiny $e_{211}$}};
  \node[elab]              at (4.0,2.5) {{\tiny $e_{112}$}};
  \node[elab, anchor=west] at (5.5,2.5) {{\tiny $*$}};

\end{tikzpicture}
\caption{The $V$-labelled graph $(\Gamma_{\mathbf{w}_0},\ell')$.}
\label{fig:E-graphNilpAlt}
\end{figure}
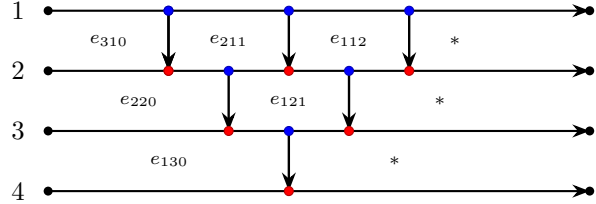

\begin{proof}
We claim in fact that the required sequence of mutations can be found using only the mutations 
\begin{equation}\label{EqBraidMoves}
\sigma_{i+1}\sigma_i \sigma_{i+1} \mapsto \sigma_i \sigma_{i+1} \sigma_i,\qquad \sigma_1\sigma_1 \mapsto \sigma_1. 
\end{equation}
To see this efficiently, we will encode our diagrams slightly differently, in the spirit of the \emph{snake paths,} see~\cite{FG06}.

Consider the set $\msP_n'$ of all possible families $P$ of $n$ non-intersecting  paths in $\Z^2$, with the $i$-th path connecting $(0,i)$ with $(i,0)$, and allowing within each unit square only path pieces of the form $\rightarrow,\downarrow, \sears$ if the path passes through it. We can encode such a family of paths conveniently as a matrix $P \in M_n(\Z)$, with the $(k,l)$-th entry equal to $i$ if the $i$th path hits the vertex $(k,l)$ (and being $0$ if no path passes through it). For convenience, we pad this matrix on the left and bottom with the constant vectors $(1,2,\ldots,n)$ and $(n,n-1,\ldots,1)$.

\[\begin{tikzcd}
	\color{blue}{\bullet} & \bullet \\
	\color{blue}{\bullet}& \bullet & \bullet \\
	\color{blue}{\bullet} & \bullet & \bullet & \bullet \\
	& \color{red}{\bullet} & \color{red}{\bullet} & \color{red}{\bullet}
	\arrow[from=1-1, to=1-2]
	\arrow[from=1-2, to=2-3]
	\arrow[from=2-1, to=2-2]
	\arrow[from=2-2, to=3-2]
	\arrow[from=2-3, to=3-3]
	\arrow[from=3-1, to=4-2]
	\arrow[from=3-2, to=4-3]
	\arrow[from=3-3, to=3-4]
	\arrow[from=3-4, to=4-4]
\end{tikzcd}\qquad,\qquad  
P = \left[
\begin{array}{c|ccc}
  \textcolor{blue}{1}  & 1 & 0 &0 \\ \textcolor{blue}{2} & 2 & 1 & 0 \\ \textcolor{blue}{3} & 2 & 1 &1 \\
  \hline  & \textcolor{red}{3} & \textcolor{red}{2} & \textcolor{red}{1} 
\end{array}
\right] \in \msP_3 
\]
(to be consistent with the positioning of the graph, we will count the rows of the matrix from the bottom, starting at $0$, and the columns from the left, starting at $0$).

We then consider within $\msP_n'$ the family $\msP_n$ whose matrices have the fixed borders as above, and such that any $2$-by-$2$-square inside (including ones with border entries) is of one of the following types: 
\begin{multline}\label{EqMatrixGraph}
\begin{pmatrix} i-1 & i-1 \\ i & i-1\end{pmatrix},\quad \begin{pmatrix} i & i-1\\ i & i-1\end{pmatrix},\quad \begin{pmatrix} i- 1 & i-1 \\ i & i \end{pmatrix},\quad \begin{pmatrix}i & i-1 \\i & i \end{pmatrix},\\ \begin{pmatrix} i-1 & i-2 \\ i & i-1\end{pmatrix},\quad \begin{pmatrix} 0 & 0 \\ 0 & 0\end{pmatrix},
\end{multline}
with all entries  between $0$ and $n$. 

To each $P \in \msP_n$, we can associate a braid graph $\Gamma_P$, in the following way: connect a vertex to its right neighbour if they lie on different paths, and else to its northeast neighbour (which is then necessarily on a different path). Necessarily, if a vertex lies on the $i$-th path, it is connected to a vertex on the $i-1$-th path. 

We then construct a map 
\begin{equation}\label{EqCorrPaths}
P \mapsto \Gamma_P, 
\end{equation}
associating to $P$ the braid graph $\Gamma_P$ whose $i$-th row has its cells labelled in order by the vertices of the $i$-th path of $P$, and where the right hand wall of the $k$th cell on the $i$th row has the $l$'th cell of the $i-1$-th row on top of it if and only if the vertices corresponding to $k$ and $l$ are connected in the way described in the previous paragraph.

For example, for \eqref{EqMatrixGraph} we get 
\begin{equation}
\begin{tikzcd}
	\color{blue}{\bullet} & \bullet \\
	\color{blue}{\bullet}& \bullet & \bullet \\
	\color{blue}{\bullet} & \bullet & \bullet & \bullet \\
	& \color{red}{\bullet} & \color{red}{\bullet} & \color{red}{\bullet}
	\arrow[from=1-1, to=1-2]
	\arrow[from=1-2, to=2-3]
	\arrow[color=green, from=2-1, to=1-2]
	\arrow[from=2-1, to=2-2]
	\arrow[color=green, from=2-2, to=2-3]
	\arrow[from=2-2, to=3-2]
	\arrow[from=2-3, to=3-3]
	\arrow[color=green, from=3-1, to=3-2]
	\arrow[from=3-1, to=4-2]
	\arrow[color=green, from=3-2, to=3-3]
	\arrow[from=3-2, to=4-3]
	\arrow[from=3-3, to=3-4]
	\arrow[from=3-4, to=4-4]
\end{tikzcd}, \qquad 
\vcenter{ \hbox{ \begin{tikzpicture}[>=Stealth, line cap=round, x=4mm, y=8mm]
  \tikzset{
    wire/.style   ={line width=0.8pt},
    post/.style   ={line width=0.9pt,->},
    reddot/.style ={circle,draw=red!70!black,fill=red,inner sep=1.2pt},
    bluedot/.style={circle,draw=blue!70!black,fill=blue,inner sep=1.2pt},
    blackdot/.style={circle,fill=black,inner sep=1.2pt},    
    elab/.style   ={font=\small}
  }
  \draw[wire,->] ( -1,0) -- (10,0);
  \draw[wire,->] (  -1,1) -- ( 10,1);
  \draw[wire,->] (  -1,2) -- ( 10,2);
  \draw[wire,->] (  -1,3) -- ( 10,3);

  \foreach \x in {5}{
    \draw[post] (\x,1) -- (\x,0);
    \node[reddot] at (\x,0) {};
    \node[bluedot]  at (\x,1) {};
  }
  \foreach \x in {2,4,6}{
    \draw[post] (\x,2) -- (\x,1);
    \node[reddot] at (\x,1) {};
    \node[bluedot]  at (\x,2) {};
  }
  \foreach \x in {1,3,5,7,8}{
    \draw[post] (\x,3) -- (\x,2);
    \node[reddot] at (\x,2) {};
    \node[bluedot]  at (\x,3) {};
  }

   \node[blackdot] at (-1,0) {};
   \node[blackdot] at (-1,1) {};
   \node[blackdot] at (-1,2) {};
   \node[blackdot] at (-1,3) {};
   \node[blackdot] at (10,0) {};
   \node[blackdot] at (10,1) {};
   \node[blackdot] at (10,2) {};
   \node[blackdot] at (10,3) {};

  \node[elab] at (-1.5,0) {$4$};
  \node[elab] at (-1.5,1) {$3$};
  \node[elab] at (-1.5,2) {$2$};
  \node[elab] at (-1.5,3) {$1$};

\end{tikzpicture}
}}
\end{equation}
  
Let us now say that for $P \in \msP_n$ can be \emph{mutated} at position $(k,l)$ if the matrix $P'$ again lies in $\msP_n$, where $P'$ equals $P$ everywhere except at one entry $(k,l)$ for $1\leq k,l\leq n$, where 
\[
P_{(k,l)}' = P_{(k,l)}-1. 
\]
It is easily checked that such mutations are possible exactly at those vertices $(k,l)$ where a path passes through, say the $i$th one, where the local configuration looks either as 
\begin{equation}\label{EqLocalForm}
 \begin{tikzcd}
	& {i-1} & {i-2} \\
	i & i & {i-1} \\
	 & i
	\arrow[from=1-2, to=2-3]
	\arrow[color={rgb,255:red,92;green,214;blue,92}, from=2-1, to=1-2]
	\arrow[from=2-1, to=2-2]
	\arrow[color={rgb,255:red,92;green,214;blue,92}, from=2-2, to=2-3]
	\arrow[from=2-2, to=3-2]
\end{tikzcd},\quad \textrm{or}\qquad 
\begin{tikzcd}
	& {0} & {0} \\
	1 & 1 & {0} \\
	{2} & 1
	\arrow[from=2-1, to=2-2]
	\arrow[from=2-2, to=3-2]
\arrow[color={rgb,255:red,92;green,214;blue,92}, from=3-1, to=3-2]
\end{tikzcd},
\end{equation}
with $(k,l)$ the center vertex. After mutation, we then get the diagrams
\begin{equation}\label{EqLocalFormAfter}
\begin{tikzcd}
	& {i-1} & {i-2} \\
	i & i-1 & {i-1} \\
	 & i
	\arrow[from=1-2, to=2-2]
	\arrow[color={rgb,255:red,92;green,214;blue,92}, from=2-1, to=2-2]
	\arrow[from=2-2, to=2-3]
    \arrow[color={rgb,255:red,92;green,214;blue,92}, from=2-2, to=1-3]
	\arrow[from=2-1, to=3-2]
\end{tikzcd},\quad \textrm{}\qquad 
\begin{tikzcd}
	& {0} & {0} \\
	1 & 0 & {0} \\
	{2} & 1
	\arrow[from=2-1, to=3-2]
\arrow[color={rgb,255:red,92;green,214;blue,92}, from=3-1, to=3-2]
\end{tikzcd}.
\end{equation}
It is then further immediately checked that these mutations correspond exactly to the braid moves in \eqref{EqBraidMoves} under the correspondence \eqref{EqCorrPaths}. 

Finally, let us now consider $V$ a vector space, and let $\msP_n(V)$ be the set of couples $(P,\ell)$ with $P \in \msP_n$ and $\ell \colon P \rightarrow V$ (identifying $P$ with its set of vertices where paths pass through - other vertices $(k,l)$ can be given the zero vector). So, elements of $\msP_n(V)$ can be represented as matrices with values in $\Z\times V$, e.g.\
\[
\left[
\begin{array}{c|ccc}
  \textcolor{blue}{(1,w_{10})}  & (1,w_{11}) & 0 &0 \\ \textcolor{blue}{(2,w_{20})} & (2,w_{21}) & (1,w_{22}) & 0 \\ \textcolor{blue}{(3,w_{30})} & (2,w_{31}) & (1,w_{32}) &(1,w_{33}) \\
  \hline  & \textcolor{red}{3} & \textcolor{red}{2} & \textcolor{red}{1} 
\end{array}
\right] \in \msP_3(V)
\]
(it will be unnecessary to weight the bottom row). 

Now we also need to specify how such weighted elements in $\msP_n(V)$ mutate: namely, we mutate $P$ into $P'$ as before, and if the local diagram of $P$ is in one of the forms of \eqref{EqLocalForm}, with weighting 
\begin{equation}\label{EqLocalFormWeight}
\begin{tikzcd}
	& w_1 & u \\
	z_1 & z_2 & w_2 \\
	 & z_3
	\arrow[from=1-2, to=2-3]
	\arrow[color={rgb,255:red,92;green,214;blue,92}, from=2-1, to=1-2]
	\arrow[from=2-1, to=2-2]
	\arrow[color={rgb,255:red,92;green,214;blue,92}, from=2-2, to=2-3]
	\arrow[from=2-2, to=3-2]
\end{tikzcd},\quad \textrm{or}\qquad 
\begin{tikzcd}
	& {0} & {0} \\
	z_1 & z_2 & {0} \\
	{v} & z_3
	\arrow[from=2-1, to=2-2]
	\arrow[from=2-2, to=3-2]
\arrow[color={rgb,255:red,92;green,214;blue,92}, from=3-1, to=3-2]
\end{tikzcd},
\end{equation}
then after mutation it is weighted as
\begin{equation}\label{EqLocalFormAfterWeight}
\begin{tikzcd}
	& {w_1 +z_2 -z_1} & {u} \\
	z_1 & w_1 & {w_2} \\
	 & z_3
	\arrow[from=1-2, to=2-2]
	\arrow[color={rgb,255:red,92;green,214;blue,92}, from=2-1, to=2-2]
	\arrow[from=2-2, to=2-3]
    \arrow[color={rgb,255:red,92;green,214;blue,92}, from=2-2, to=1-3]
	\arrow[from=2-1, to=3-2]
\end{tikzcd},\quad \textrm{}\qquad 
\begin{tikzcd}
	& {0} & {0} \\
	z_1 & 0 & {0} \\
	{v} & z_3
	\arrow[from=2-1, to=3-2]
\arrow[color={rgb,255:red,92;green,214;blue,92}, from=3-1, to=3-2]
\end{tikzcd},
\end{equation}
with all other weights remaining the same.

If we now extend \eqref{EqCorrPaths} to 
\begin{equation}\label{EqMapTocomplete}
\msP_n(V) \rightarrow \{V\textrm{-colored graphs}\},
\end{equation}
by sending $P\in \msP_n(V)$ the colored graph $(\Gamma_P,\ell)$, where $\ell$ assigns to the $k$-th cell on the $i$-th row the difference of the weights at the $k$-th and $k-1$-th position of the $i$-th path of $P$, then one again easily checks that the mutation \eqref{EqLocalFormAfterWeight} corresponds to the transformations in \eqref{fig:br-mut1} and \eqref{fig:mut-dem} (note that there is no need of compatibility with a skew-symmetric space structure on $V$ if one simply wants to apply mutations of weighted graphs $\Gamma_{\mbi}$). 

We are now finally ready to deduce that $(\mathbf{w}_0\mathbf{w}_0,\widetilde{\ell})$ can be reduced to $(\mathbf{w}_0,\ell')$. Indeed, on the level of $\msP_n(V)$ this means that we want to find a path of mutations from $P_n(2)$ to $P_n$, where 
\[
P_n(2)_{(k,l)} \left\{\begin{array}{lll}(N-k,\seee_{kl})& \textrm{if}& l<k,\\
(N-l,\sef_{l,n-k})&\textrm{if}&l\geq k.\end{array}\right.,\qquad P_{n,(k,l)} = (\max{0,N-k-l},\seee_{k-l,l}),
\]
where we take $\seee_{i,j}= 0$ if ill-defined (note also again that we do not care about the bottom border row of the matrix), and where the vectors $\sef_{s,k}\in V$ are chosen so that they map to the labelling $\widetilde{\ell}$ under \eqref{EqMapTocomplete}. 

\begin{figure}[ht]\label{FigP2}
\begin{tikzcd}
	\color{blue}{\seee_{30}} & {\seee_{31}} & {\seee_{32}} & {\sef_{30}} \\
	\color{blue}{\seee_{20}} & {\seee_{21}} & {\sef_{20}} & {\sef_{31}} \\
	\color{blue}{\seee_{10}} & {\sef_{10}} & {\sef_{21}} & { \sef_{32}} \\
	& \color{red}{\bullet} & \color{red}{\bullet} & \color{red}{\bullet}
	\arrow[from=1-1, to=1-2]
	\arrow[from=1-2, to=1-3]
	\arrow[from=1-3, to=1-4]
	\arrow[from=1-4, to=2-4]
	\arrow[from=2-1, to=2-2]
	\arrow[from=2-2, to=2-3]
	\arrow[from=2-3, to=3-3]
	\arrow[from=2-4, to=3-4]
	\arrow[from=3-1, to=3-2]
	\arrow[from=3-2, to=4-2]
	\arrow[from=3-3, to=4-3]
	\arrow[from=3-4, to=4-4]
\end{tikzcd}, \qquad \qquad

\begin{tikzcd}
	\color{blue}{\seee_{30}} \\
	\color{blue}{\seee_{20}} & {\seee_{31}} \\
	\color{blue}{\seee_{10}} & {\seee_{21}} & {\seee_{32}} \\
	& \color{red}{\bullet} & \color{red}{\bullet} & \color{red}{\bullet}
	\arrow[from=1-1, to=2-2]
	\arrow[from=2-1, to=3-2]
	\arrow[from=2-2, to=3-3]
	\arrow[from=3-1, to=4-2]
	\arrow[from=3-2, to=4-3]
	\arrow[from=3-3, to=4-4]
\end{tikzcd}
\caption{The cases $P_3(2)$ and $P_3$}
\end{figure}
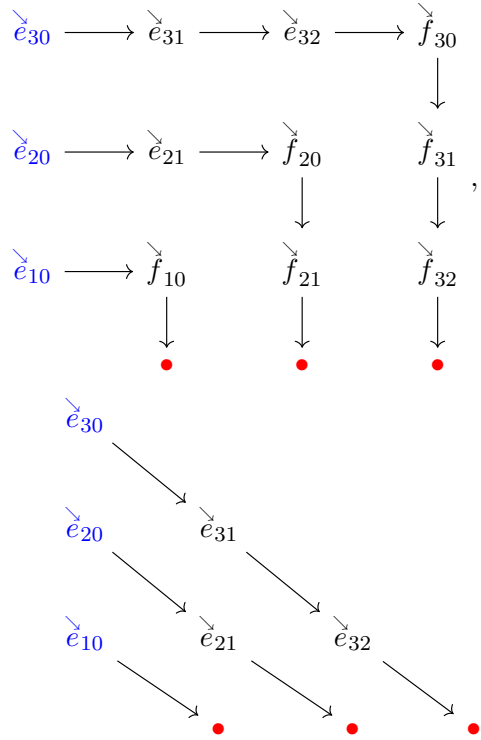

Now, first of all it is clear that we can find such a sequence of mutations on the level of the unweighted paths: this follows simply from the combinatorics of reduction in the positive braid monoid with the Demazure relation, but let us give a direct proof in terms of the above diagrams: indeed, if at some point we have arrived at a situation where the paths $n,n-1, \ldots, i+1$ are all slanted (consisting only of $\sears$-arrows), then the $i$-th path \emph{must} have a corner of the form ${}^{\rightarrow} \!\!\!\downarrow$. If this does not fit into the lower left bottom of the first diagram in \eqref{EqLocalForm}, then the existence of a corner of this form \emph{must} propagate to the next path $i-1$. If this keeps happening till we reach the line $1$, then it means that we have a corner of this form on the first line, and we can hence apply the mutation for the second diagram in \eqref{EqLocalForm}. This means that the only diagram without mutations is the one of $P_n$. 

To see that the reduction of $P_n(2)$ to $P_n$ is also compatible with the weights, we observe that the mutation from \eqref{EqLocalFormWeight} to  \eqref{EqLocalFormAfterWeight} satisfies the following property: if a weight vector $z$ appears before mutation in one of the patterns

\[
\begin{tikzcd}
	 \bullet & z 	 &	\bullet
    \arrow[from=1-1, to=1-2]
	\arrow[from=1-2, to=1-3]
\end{tikzcd}, \quad
\begin{tikzcd}
	   \bullet &z &\\
	&& \bullet 
	\arrow[from=1-1, to=1-2]
	\arrow[from=1-2, to=2-3]
\end{tikzcd},\quad
\begin{tikzcd}
	 \bullet  &  \\
	  z & \bullet
	\arrow[from=1-1, to=2-1]
	\arrow[from=2-1, to=2-2]
\end{tikzcd},\quad 
\begin{tikzcd}
	 \bullet &&  \\
	& z &\bullet	
    \arrow[from=1-1, to=2-2]
	\arrow[from=2-2, to=2-3]
\end{tikzcd},
\]
\[
\begin{tikzcd}
	 \bullet &&  \\
	& z &	\\
    & &  \bullet
    \arrow[from=1-1, to=2-2]
	\arrow[from=2-2, to=3-3]
\end{tikzcd},\quad 
\begin{tikzcd}
	 \bullet &  \\
	 z& 	\\
     &  \bullet
    \arrow[from=1-1, to=2-1]
	\arrow[from=2-1, to=3-2]
\end{tikzcd}, \quad 
\begin{tikzcd}
	 \color{blue}{z} & \bullet
    \arrow[from=1-1, to=1-2]
\end{tikzcd},\quad 
\begin{tikzcd}
	 \color{blue}{z} \\ & \bullet
    \arrow[from=1-1, to=2-2]
\end{tikzcd},
\]
then the same weight vector also appears somewhere after mutation in again one of these shapes. In particular, the original weight vectors $\seee_{N-i,k}$, for $1 < i< N, 0\leq k <N-i$, always stay present somewhere on the $i$'th line, respecting their original ordering on that line. By cardinality, we see that they \emph{must} be the weights appearing as in the diagram $P_n$. 

\end{proof}

\subsection{Partition functions and standard generators via form sums}\label{SecStandGen}

Let now $\hbar\in \R^{\times}$, and let $(\Hc,\pi)$ be a unitary $\hbar$-representation of $V$ on $\Hsp$. Recall the notation \eqref{EqExponentialpi}.  

\begin{defn}
Let $(\Gamma,\ell)$ be a $V$-colored braid graph.

The \emph{partition function} of $(\Gamma,\ell)$ with boundary conditions $(a,b)$ is defined as the form sum
\begin{align}
\label{eq:Zdef}
Z_{\Gamma,\ell,a,b} := \boxplus_{p:a\rightarrow b} \Eb(u_p), \quad 1\leq a<b\leq m,
\end{align}
where we sum over all paths $p \colon a\rightarrow b$ in $\Gamma$.
\end{defn}

Note that the above form sum makes sense (see Remark \ref{RemFormSumFinite}). If $\mbi$ is a positive braid word in $B_m$, we then further abbreviate 
\begin{equation}\label{EqPartitionAbb}
Z_{\mbi,\ell,a,b}  = Z_{\Gamma_{\mbi},\ell,a,b},
\end{equation}
and by 
\begin{equation}\label{EqPartitionTopBot}
Z_{\mathbf{i},\ell} = Z_{\mathbf{i},\ell,1,m}=\boxplus_{p:1\rightarrow m} \Eb(w_p)
\end{equation}
the partition function of paths starting at the top left boundary vertex 1 and ending at the bottom right boundary vertex $m$.

We use this construction to define some positive operators  analagous to the PBW generators of the subalgebras $U_q(\mfn^{\pm})\subseteq U_q(\mfsl(N,\C))$ corresponding to the nilpotent subalgebras of strictly upper/lower triangular matrices $\mfn^{\pm} \subseteq \mfsl(N,\C)$. 

For $1\leq r<s\leq N$, we define the $\nabla_N$-labelled  \emph{$\mathbb{E}$-graph} $\Gamma_{\mathbb{E}_{rs}}$ to be the subgraph of the graph $\Gamma_{\mathbb{E}}$ illustrated in Figure~\ref{fig:E-graph} bounded by horizontal lines $r$ and $s$. We write $\mathbf{i}_{\mathbb{E}_{rs}}$ for the positive braid word in $B_{r-s+1}$ corresponding to the subgraph $\Gamma_{\mathbb{E}_{rs}}$, with the labelling inherited from $\Gamma_{\mathbb{E}}$. Similarly, we form  the $\nabla_N$-labelled  \emph{$\mathbb{F}$-graphs} as indicated in Figure~\ref{fig:F-graph}.

\begin{defn}
\label{Defstandardgen}
 We define the \emph{standard generators} to be the positive operators
\begin{equation}\label{EqStandardGenE}
\stgE_{rs} = Z_{\mathbf{i}_{\mathbb{E}_{rs}},\ell} = Z_{\Gamma_{\mathbb{E}},\ell,r,s}, \quad 1\leq r<s\leq N
\end{equation}
and
\begin{equation}\label{EqStandardGenF}
\stgF_{rs} = Z_{\mathbf{i}_{\mathbb{F}_{rs}},\ell}=Z_{\Gamma_{\mathbb{F}},\ell,r,s}, \quad 1\leq r<s\leq N.
\end{equation}
\end{defn}

\begin{example}
For $N=3$, we have
$$
\stgE_{1,2} = \Ebf(\seee_{2,0})\boxplus  \Ebf(\seee_{2,1}),\quad \stgE_{2,3} =  \Ebf(\seee_{1,0}),\quad \stgE_{1,3} = \Ebf(\seee_{2,0}+\seee_{1,0})
$$
$$
\stgF_{2,3} =  \Ebf(\nee_{2,0}) \boxplus  \Ebf(\nee_{2,1}),\quad \stgF_{1,2} = \Ebf(\nee_{1,0}) ,\quad \stgF_{1,3} = \Ebf(\nee_{1,0}+\nee_{2,1}).
$$
\end{example}

Recall now the quantum dilogarithm $\varphi = \varphi_{\hbar}$ introduced in \eqref{EqQuantExp}, and the notation for the associated functional calculus introduced in \eqref{EqAbbrFuncCalc}. 
\begin{lemma}
\label{lem:Zmut}
If $v$ is a mutable face of a $V$-colored graph $(\Gamma_{\bf i},\ell)$ and $\bf i'$ the positive braid word obtained by applying the corresponding braid or Demazure move, we have 
$$
\varphi(v)Z_{\mathbf{i},\ell,a,b}\varphi(v)^* = Z_{\mathbf{i}',\mu_v(\ell),a,b}.
$$
\end{lemma}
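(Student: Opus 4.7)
The proof is based on two dual conjugation identities derived from the Kashaev pentagon (Theorem~\ref{TheoPentagon}) and the quantum exponential property (Theorem~\ref{TheoExponGen}). Rewriting \eqref{EqCommKashaev} for $(v,w)=1$ as $\varphi(v)\varphi(w)\varphi(v)^* = \varphi(w)\varphi(v+w)$, taking adjoints, substituting $F(\Eb(w))=\varphi(w)^*$, and noting that $(v+w,w)=1$ so Theorem~\ref{TheoExponGen} applies, functional calculus produces
\begin{equation*}
\varphi(v)\Eb(w)\varphi(v)^* = \Eb(w)\boxplus\Eb(v+w) \qquad \text{when } (v,w)=1.
\end{equation*}
Running the same argument with Kashaev applied to $(w,v)$ (so $(w,v)=1$) and inverting yields the dual identity
\begin{equation*}
\varphi(v)\bigl(\Eb(w)\boxplus\Eb(v+w)\bigr)\varphi(v)^* = \Eb(w) \qquad \text{when } (v,w)=-1.
\end{equation*}
When $(v,w)=0$, the commutation \eqref{EqCommComm} and functional calculus give $\varphi(v)\Eb(w)\varphi(v)^*=\Eb(w)$.

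Next, I would localize to a small neighborhood $R$ of the mutable face $v$. Outside $R$, the graphs $\Gamma_{\mathbf{i}},\Gamma_{\mathbf{i}'}$ and their $V$-colorings agree, and $(v,v_f)=0$ for every face $f\notin R$ by the coloring axioms. Any path $p\colon a\to b$ factorizes as $p=p_L\cdot q\cdot p_R$, where $q$ is the local fragment through $R$, with $u_p = u_{p_L}+u_q+u_{p_R}$ splitting according to the location of the faces below $p$, and correspondingly $\Eb(u_p)=\Eb(u_{p_L})\star\Eb(u_q)\star\Eb(u_{p_R})$ via \eqref{EqProdRule}. Since $(v,u_{p_L})=(v,u_{p_R})=0$, repeated application of Lemma~\ref{LemSumSkewComm} allows the outer factors $\Eb(u_{p_L})$ and $\Eb(u_{p_R})$ to commute through form sums over $q$ and commute outright with $\varphi(v)$, reducing the claim to the statement that, for each fixed pair of local entry/exit heights, the inner form sum $\boxplus_q\Eb(u_q)$ over local fragments in $\Gamma_{\mathbf{i}}$ is conjugated by $\varphi(v)$ to the analogous inner form sum in $\Gamma_{\mathbf{i}'}$.

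Finally, I would verify this inner claim by a direct finite enumeration for each of the four mutation pictures (Figures~\ref{fig:br-mut1}--\ref{fig:mut-dem2}). For each fixed pair of entry/exit heights, one checks that exactly one of three scenarios occurs: both graphs contribute a single fragment of equal weight $w$ with $(v,w)=0$; $\Gamma_{\mathbf{i}}$ contributes a single fragment of weight $w$ with $(v,w)=1$ while $\Gamma_{\mathbf{i}'}$ contributes two fragments of weights $w$ and $w+v$; or $\Gamma_{\mathbf{i}}$ contributes two fragments of weights $w$ and $w+v$ with $(v,w)=-1$ while $\Gamma_{\mathbf{i}'}$ contributes a single fragment of weight $w$. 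In each scenario the corresponding identity from the first paragraph yields the required equality. The main obstacle is the combinatorial bookkeeping for the braid moves: one must check that the label shift $v_j\mapsto v_j\pm v$ prescribed by Figures~\ref{fig:br-mut1}--\ref{fig:br-mut2} produces \emph{exactly} the weight difference $\pm v$ between the paired fragments, for each of the entry/exit pairs on the three local wires. This is tedious but transparent once the cells are enumerated by hand.
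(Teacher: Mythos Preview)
Your overall strategy matches the paper's: localize to the neighborhood of the mutable face, observe that paths whose interaction with that neighborhood is trivial have adjusted weight orthogonal to $v$ (hence are fixed under conjugation by $\varphi(v)$), and for the remaining entry/exit configurations use a conjugation identity of the form $\varphi(v)\Eb(w)\varphi(v)^*=\Eb(w)\boxplus\Eb(v+w)$ (or its inverse) to match the two partition functions. The paper organizes this by grouping \emph{full} paths into pairs differing only inside the neighborhood and applying Proposition~\ref{PropSkewComm} to each pair; your factorization $p=p_L\cdot q\cdot p_R$ followed by Lemma~\ref{LemSumSkewComm} is a slightly more elaborate packaging of the same idea, and works once one notes that the local fragments $q$ with fixed entry/exit differ in weight by multiples of $v$, so the outer factors $\Eb(u_{p_L}),\Eb(u_{p_R})$ skew-commute uniformly with all of them.

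There is, however, a genuine gap in your derivation of the two conjugation identities. From Kashaev plus Theorem~\ref{TheoExponGen} you obtain $F(\varphi(v)\Eb(w)\varphi(v)^*)=F(\Eb(w)\boxplus\Eb(v+w))$, but you cannot conclude equality of the positive operators from equality of their $F$-images: $F$ is circle-valued and not injective on $\R_{>0}$, so ``functional calculus produces'' is not justified. Moreover the route is circular, since in the paper Theorem~\ref{TheoExponGen} is itself deduced from Proposition~\ref{PropSkewComm}. The identities you need are exactly Proposition~\ref{PropSkewComm}: taking $\Ab=\Eb(v+w)$, $\Bb=\Eb(w)$ with $(v,w)=1$ gives $\Ab\star\Bb^{-1}=\Eb(v)$ and $F(\Eb(v))^*=\varphi(v)$, hence $\Eb(w)\boxplus\Eb(v+w)=\varphi(v)\Eb(w)\varphi(v)^*$; the dual identity for $(v,w)=-1$ follows by applying the proposition with the roles of $\Ab,\Bb$ swapped (equivalently, with parameter $-\hbar$, using $F_{-\hbar}=\varphi_\hbar$). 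Cite Proposition~\ref{PropSkewComm} directly, as the paper does, and the argument is complete.
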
 
\begin{proof}
We treat the case of a braid move $ \sigma_{i+1}\sigma_{i}\sigma_{i+1}\mapsto \sigma_{i}\sigma_{i+1}\sigma_{i}$; the other cases are handled the same way. 

Consider the neighborhoods $N_v,N'_v$ of the face $v$ in $\Gamma_{\bf i}$ and $\Gamma_{\bf i'}$ respectively illustrated in Figure~\ref{fig:br-mut1}.
There is a natural weight-preserving bijection between the set of paths in $\Gamma_{\bf i}$ which do not meet $N_v$ with the set of paths in $\Gamma_{\bf i'}$ which do not meet $N_v'$. Then for any path $p$ in $\Gamma_{\bf i}$ which does not enter $N_v$, we have $(u_p,v)=0$, so that $\varphi(v)\Eb({u}_p)\varphi(v)^*=\Eb({u}_p)$. So the contributions to the partition function $Z_{\bf i,\ell,a,b}$ from paths not meeting $N_v$ is conjugated under the unitary $\varphi(v)$ to the contribution to $Z_{\mathbf{i}',\mu_v(\ell),a,b}$ from paths not meeting $N'_v$. 

The same argument applies to show that the contribution to $Z_{\mathbf{i},\ell,a,b}$ from paths which enter and exit $N_v$ along the same horizontal line $r\in {i-1,i,i+1}$ is conjugated to the corresponding contribution to $Z_{\mathbf{i}',\mu_v(\ell),a,b}$, and similarly for the contributions from paths entering $N_v$ along line $i-1$ and exiting along line $i+1$. Indeed, inspecting Figure~\ref{fig:br-mut1} it is clear that for all such paths we again have $(u_p,v)=0$, and that the natural bijection between the set of such paths in $\Gamma_{\mbi}$ and the corresponding set $\Gamma_{\bf i'}$ is weight preserving. 

More generally, for $r,s\in \{i-1,i,i+1\}$ let $\Pi(\Gamma_{\bf i},v,r,s)$ be the set of paths in $\Gamma_{\bf i}$ which enter $N_v$ through horizontal line $r $ and exit through horizontal line $s$, and define similarly $\Pi(\Gamma_{\bf i'},v,r,s)$. Write
$$
Z_{\Gamma_{\bf i}}(r,s)  = \boxplus_{p\in \Pi(\Gamma_{\bf i},v,r,s)}\Eb(u_p)
$$
for the corresponding contribution to the partition function $Z_{\mathbf{i},\ell}$. We claim that
\begin{align}
\label{eq:conj1}
\varphi(v)Z_{\Gamma_{\bf i}}(i,i+1) \varphi(v)^* = Z_{\Gamma_{\bf i'}}(i,i+1).
\end{align}
To see this, we group the paths in $\Pi(\Gamma,v,i,i+1)$ into pairs $(p',p'')$ where the paths $p'$ and $p''$ are identical outside of the neighborhood $N_v$, and $p''$ reaches line $i+1$ before $p'$. Then we have $u_{p'}=u_{p''}+v$, $(v,u_{p'})=-1$. 
So by Proposition~\ref{PropSkewComm} we have
$$
\varphi(v)(\Eb({u}_{p'})\boxplus \Eb({u}_{p''}))\varphi(v)^* = \Eb({u}_{p''}).
$$
On the other hand, the pair of paths $p',p''$ in $\Gamma_{\bf i}$ corresponds to a single path $p$ in $\Gamma_{\bf i'}$ whose adjusted weight is exactly ${u}_{p''}$, and so we deduce~\eqref{eq:conj1} holds. 

In the same way each path $\pi$ in $\Pi(\Gamma_{\bf i},v,i-1,i)$ corresponds to a pair of paths $\pi',\pi''$ in $\Gamma_{\bf i'}$ identical outside of $N_v$, and this time we have $(v,u_{\pi})=1$. So again by Proposition~\ref{PropSkewComm} we have
$$
\varphi(v)Z_{\Gamma_{\bf i}}(i-1,i) \varphi(v)^* = Z_{\Gamma_{\bf i'}}(i-1,i),
$$ 
and this completes the proof of the Lemma.
\end{proof}

\begin{cor}[Serre relations]
\label{cor-serre}
The pair of positive operators $(\stgE_{i-1,i+1}, \stgE_{i-1,i})$  is $\hbar/2$-commuting in the sense of~\eqref{EqSkewCommDef}, as is $(\stgE_{i,i+1}, \stgE_{i-1,i+1})$.
\end{cor}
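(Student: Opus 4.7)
The plan is to combine the form-sum expansion coming from the partition-function definition~\eqref{EqStandardGenE} of the standard generators, the explicit skew pairings in Lemma~\ref{LemSkewProdFormDiff}, and the stability of $\hbar$-commutation under form sums from Lemma~\ref{LemSumSkewComm}, together with the mutation invariance of Lemma~\ref{lem:Zmut} when available.

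First, I would read off from the three-row strip $\Gamma_{\mathbb{E}_{i-1,i+1}}\subseteq\Gamma_{\mathbb{E}}$ (Figure~\ref{fig:E-graph}) that the summands of $\stgE_{i-1,i}$, $\stgE_{i,i+1}$, and $\stgE_{i-1,i+1}$ take the forms $\Ebf(\seee_{N-i+1,k})$, $\Ebf(\seee_{N-i,k'})$, and $\Ebf(\seee_{N-i+1,k}+\seee_{N-i,k'})$ respectively, indexed by admissible drop positions of the corresponding directed paths. A direct computation via Lemma~\ref{LemSkewProdFormDiff}(i) shows that the summands within each form sum pairwise $\hbar$-commute, so the form-sum construction is compatible with iterated applications of Lemma~\ref{LemSumSkewComm}.

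Second, the cross-pairing between a summand $\seee_{N-i+1,k}+\seee_{N-i,k'}$ of $\stgE_{i-1,i+1}$ and a summand $\seee_{N-i+1,k''}$ of $\stgE_{i-1,i}$ splits by linearity into a half-integer contribution $(\seee_{N-i,k'},\seee_{N-i+1,k''})=\pm 1/2$ and an integer correction $(\seee_{N-i+1,k},\seee_{N-i+1,k''})\in\Z$, again via Lemma~\ref{LemSkewProdFormDiff}(i). When $i=2$ or $i=N-1$ the Demazure relations $\sigma_1^2\leftrightarrow\sigma_1$ (respectively $\sigma_{N-1}^2\leftrightarrow\sigma_{N-1}$) of the ambient graph, transported through Lemma~\ref{lem:Zmut}, reduce the three-row subword to the canonical $\sigma_{i-1}\sigma_i\sigma_{i-1}$; then $\stgE_{i-1,i+1}$ collapses to a single exponential whose cross-pairing with each summand of $\stgE_{i-1,i}$ is uniformly $1/2$, and two applications of Lemma~\ref{LemSumSkewComm} deliver the $\hbar/2$-commutation. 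Since mutations conjugate all of the relevant $\stgE_{r,s}$ by a common unitary, the resulting commutation pulls back to the original graph.

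Third, for generic interior $i$ the integer corrections do not vanish summand-wise and Demazure reduction is unavailable, so the commutation must be extracted at the level of full form sums. The strategy here is to invoke the quantum exponential property (Theorem~\ref{TheoExponGen}), valid because the summands of $\stgE_{i-1,i+1}$ pairwise $\hbar$-commute, to factorize $F(\stgE_{i-1,i+1})$ as a product over $(k,k')$, and then to reorganize this product via the pentagon identity of Theorem~\ref{TheoPentagon} so that the non-uniform integer corrections cancel in aggregate against the intra-row pairings of $\stgE_{i-1,i}$. This cancellation is the main technical obstacle of the proof and is essentially an operator-level incarnation of the quantum Serre relation; once established, the companion pair $(\stgE_{i,i+1},\stgE_{i-1,i+1})$ is handled by the symmetric argument exchanging the roles of $\seee_{N-i+1,\cdot}$ and $\seee_{N-i,\cdot}$.
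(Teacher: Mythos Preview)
Your proposal contains a genuine gap stemming from a misconception about where the mutation moves are being applied. You work in the ambient $B_N$-graph $\Gamma_{\mathbb{E}}$, where indeed only $\sigma_1$- and $\sigma_{N-1}$-Demazure moves are available, and this forces you into an elaborate (and uncompleted) pentagon argument for interior $i$. But by Definition~\ref{Defstandardgen}, the operators $\stgE_{i-1,i}$ and $\stgE_{i-1,i+1}$ are both partition functions $Z_{\Gamma_{\mathbb{E}_{i-1,i+1}},\ell,a,b}$ on the \emph{three-strand} subgraph $\Gamma_{\mathbb{E}_{i-1,i+1}}$, viewed as a $B_3$-braid graph with its inherited $V$-coloring. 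In $B_3$ both generators are extremal, so $\sigma_1$- \emph{and} $\sigma_2$-Demazure moves are legitimate mutation moves for this subgraph, regardless of where the strip sits inside $\Gamma_{\mathbb{E}}$.

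This is exactly what the paper exploits: the braid word $\mathbf{i}_{\mathbb{E}_{i-1,i+1}}$ can be reduced by braid and Demazure moves in $B_3$ to $\sigma_2\sigma_1\sigma_2$, and Lemma~\ref{lem:Zmut} (which conjugates \emph{all} boundary-condition partition functions simultaneously by the same unitary) then carries both $\stgE_{i-1,i}$ and $\stgE_{i-1,i+1}$ to single exponentials $\Eb(u)$, $\Eb(v)$ with $(u,v)=1/2$. Your boundary-case argument is therefore the entire proof, uniformly in $i$; the ``main technical obstacle'' you identify for the interior case does not exist. The form-sum bookkeeping via Lemma~\ref{LemSumSkewComm} and the pentagon reorganization you sketch in step three are not needed.
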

\begin{proof}
This follows since $\stgE_{i-1,i}$ and $\stgE_{i-1,i+1}$ can both be computed as appropriate partition functions for the graph $\Gamma_{\mathbb{E}_{i-1,i+1}}$, and the latter can be brought by a sequence of braid and Demazure moves to one corresponding to the braid word  $\sigma_2\sigma_1\sigma_2$. By Lemma~\ref{lem:Zmut}, conjugating by the corresponding unitary brings $\stgE_{i-1,i}, \stgE_{i-1,i+1}$ to positive operators of the form $\Eb(u),\Eb(v)$ with $(u,v)=1/2$, so the Corollary follows.
\end{proof}

Now suppose $\Gamma_1,\Gamma_2$ correspond to $B_m$-braid words with Demazure product admitting prefix $\sigma_1\cdots \sigma_{m-1}$, and are equipped with $V_1$- and $V_2$-colorings $\ell_1,\ell_2$ for skew-symmetric spaces $V_1,V_2$.  Then we can consider a partition function 
$$
Z_{(\Gamma_{1},\ell_1)\times(\Gamma_{2},\ell_2)} =  \boxplus_{(p_1,p_2)}\Eb({w}_{p_1}\oplus {w}_{p_2})
$$ of pairs of paths $(p_1,p_2)$ where $p_1$ is a path in $\Gamma_{1}$ entering along the top horizontal line (which we number 1) and exiting along the bottom horizontal line $m$, and $p_2$ a path in $\Gamma_{2}$ entering at 1 and exiting at $m$. 
To each such path $p_1$ we can associate a sequence of integers $(n_1,\ldots n_{m-1})$ where $n_i$ is the number of faces lying between horizontal lines $i,i+1$ and to the right of $p_1$. The lexicographic ordering on the set of all integer sequences then defines a total order on the set of all paths $p_1$ in $\Gamma_{1}$ from 1 to $m$. Equivalently, $q_1<p_1$ in this total order if the first time that $q_1,p_1$ diverge, it is the path $q_1$ which goes down. We equip the set of all paths from 1 to $m_2$ in $\Gamma_2$ with a total order in the same way, and put the lexical total order on the set of all pairs $(p_1,p_2)$ where we order in the first factor, then break ties with the second.

Recall now the quantum exponential function $F = F_{\hbar}$ from \eqref{EqQuantExp2}. 
\begin{lemma}
\label{lem:dilog-product}
Suppose $\Gamma_1,\Gamma_2$ are graphs corresponding to $B_m$-braid words $\mathbf{i}_1,\mathbf{i}_2$,  equipped with $V_1$- and $V_2$-colorings for skew-symmetric spaces $V_1,V_2$. Then if $\mathbf{i}_1$ and $\mathbf{i}_2$ can be brought by some sequence of braid and Demazure moves to words of the form $\sigma_1\sigma_2\cdots \sigma_{m-1}\mathbf{i}'$ where $\mathbf{i}'$ does not contain $\sigma_{m-1}$, we have
\begin{align}
\label{eq:p-exp}
\overline{F}(Z_{(\Gamma_{1},\ell_1)\times (\Gamma_{2},\ell_2)}) = \overset{\longleftarrow}{\prod_{(p_1,p_2)}}\varphi(\mathrm{w}_{p_1}\oplus \mathrm{w}_{p_2}),
\end{align}
 where the product is taken over all pairs of paths $(p_1,p_2)$, ordered with respect to the lexical total order described above so that the smallest pair is the right-most factor in the product.
\end{lemma}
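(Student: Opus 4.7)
The plan is to establish the identity by reducing to a canonical form, in which it becomes tautological, and then transferring it back via inverse (backward) mutations combined with the Kashaev pentagon.

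\emph{Canonical form and base case.} By the hypothesis, each $\mathbf{i}_i$ can be brought via a sequence of braid and Demazure moves to the canonical form $\sigma_1 \sigma_2 \cdots \sigma_{m-1} \mathbf{i}'$ with $\mathbf{i}'$ not containing $\sigma_{m-1}$. In this form, any path from boundary $1$ to boundary $m$ must use the unique $\sigma_{m-1}$-edge at the end of the staircase prefix, and in order to reach level $m-1$ at that position starting from level $1$ at the origin it must descend via $\sigma_1, \sigma_2, \ldots, \sigma_{m-2}$ in turn within the prefix. Thus there is a \emph{unique} path from $1$ to $m$ in each of the canonical $\Gamma_{\mathbf{i}_i}$, the partition function reduces to a single summand, and the identity becomes the tautology $\overline{F}(\Eb(w_{p_1^0} \oplus w_{p_2^0})) = \varphi(w_{p_1^0} \oplus w_{p_2^0})$, immediate from the definitions \eqref{EqQuantExp2} and \eqref{EqAbbrFuncCalc}.

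\emph{Transfer via inverse mutations.} One then reverses the reducing sequence of mutations, applying one backward mutation at a time. At each step, Lemma~\ref{lem:Zmut} combined with the relation $\overline{F}(U A U^*) = U\overline{F}(A)U^*$ for unitary $U$ shows that the left-hand side $\overline{F}(Z)$ transforms by conjugation with $\varphi(v)^{\pm 1}$, where $v$ labels the newly-introduced mutable face. We claim the right-hand side transforms the same way via the Kashaev pentagon \eqref{EqCommKashaev}. The key manipulation is that when $(W, V) = 1$,
\[
\varphi(V)^{-1}\,\varphi(W)\,\varphi(V) = \varphi(W+V)\,\varphi(W),
\]
which converts a single factor $\varphi(W)$ into two adjacent factors precisely when a backward Demazure move splits one path into two; an analogous triple rearrangement handles backward braid moves. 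The path-weight invariance under mutations, together with the face-label mutation rules of Figures~\ref{fig:br-mut1}--\ref{fig:mut-dem2}, ensures that the new factors $\varphi(W+V)$ and $\varphi(W)$ are exactly $\varphi(w_{p_1'} \oplus w_{p_2'})$ for the two new pairs, and the $V$-coloring rules at the mutation site supply the required pairing $(W, V) = 1$.

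\emph{Main obstacle.} The principal technical difficulty is the combinatorial bookkeeping needed to match the pentagon-induced rewriting with the lex-ordered product on the right-hand side. When both $\Gamma_1$ and $\Gamma_2$ already contain several paths, a backward Demazure in $\Gamma_1$ duplicates every factor $\varphi(w_p \oplus w_{p_2})$ into a pair, and these pairs must then be interleaved across different $p_2$'s so as to respect the lex order on the enlarged set of pairs. Verifying that such reorderings are possible requires a sequence of commutations using \eqref{EqCommComm} between factors $\varphi(w \oplus w_{p_2})$ and $\varphi(w' \oplus w_{p_2'})$ coming from distinct paths in $\Gamma_2$, which rely on the alternating signs of the skew-pairings $(w_{p_2}, w_{p_2'})$ across $\Gamma_2$. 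The cleanest organization is an induction on the length of the reducing mutation sequence, the inductive step being precisely one such local pentagon manipulation together with the requisite reordering of commuting factors.
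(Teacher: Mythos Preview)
Your overall strategy—reduce both graphs to the canonical single-path form where the identity is tautological, then transport back through each mutation via conjugation by $\varphi(v)^{\pm1}$ and the Kashaev pentagon—is exactly the paper's. You have also correctly isolated the one delicate point: when a mutation in $\Gamma_1$ splits a path $\pi$ into a consecutive pair $\pi'<\pi''$, the pairs $(\pi',p_2)$ and $(\pi'',p_2)$ for a fixed $p_2$ are \emph{not} adjacent in the stated lex order (first by $p_1$, then by $p_2$); conjugation by $\varphi(v\oplus 0)$ therefore produces the \emph{interleaved} product in which $\varphi(w_{\pi''}\oplus w_{p_2^{(j)}})$ and $\varphi(w_{\pi'}\oplus w_{p_2^{(j)}})$ alternate over $j$, rather than the block form $D_{\pi''}D_{\pi'}$ demanded by the lex order.

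Your proposed fix, however, does not work. De-interleaving by commutations would require $(w_{p_2^{(j)}},w_{p_2^{(l)}}) = -1$ for \emph{every} pair $j>l$ of paths in $\Gamma_2$, and this already fails for $\Gamma_2 = \sigma_1\sigma_1\sigma_2\sigma_2$ on three strands: the four path weights satisfy $(w_{q_4},w_{q_1}) = -2$ and $(w_{q_3},w_{q_2}) = 0$, so no sequence of \eqref{EqCommComm}-moves alone can effect the reordering. The phrase ``alternating signs of the skew-pairings'' does not describe a property that $V$-colored braid graphs actually have.

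The mechanism that makes the two-stage reduction go through is the asymmetry of the lex order itself. Group the product as $\prod_{p_1} D_{p_1}$ with $D_{p_1}=\prod_{p_2}\varphi(w_{p_1}\oplus w_{p_2})$; since conjugation distributes over products, a $\Gamma_2$-mutation at $u$ sends each block separately to $\varphi(0\oplus u)^{-1}D_{p_1}\,\varphi(0\oplus u)$, and \emph{inside} a block the factors are ordered by $p_2$ alone—so consecutive $\Gamma_2$-paths are adjacent and the single-graph pentagon applies directly, with no interleaving whatsoever. Once $\Gamma_2$ has been brought to its unique-path form this way, each $D_{p_1}$ collapses to a single dilogarithm and the $\Gamma_1$-mutations reduce to the elementary single-factor pentagon. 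This ordering of the two reductions is what sidesteps the obstacle you flagged; the commutations you invoke are neither available nor needed.
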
 
\begin{proof}
We argue by induction using the following observation: if $v$ is a mutable face, formula ~\eqref{eq:p-exp} holds for $(\Gamma_1,\Gamma_2)$ if and only if it holds for $(\mu_v(\Gamma_1),\Gamma_2)$. Let us show this in the case that $\mu_v$ corresponds to the braid move~$\sigma_{i}\sigma_{i+1}\sigma_{i}\mapsto\sigma_{i+1}\sigma_{i}\sigma_{i+1}$ from Figure~\ref{fig:br-mut2}. By Lemma~\ref{lem:Zmut}, we have
\begin{align}
\label{eq:FZ}
\overline{F}(Z_{\Gamma_1,\Gamma_2}) = \varphi(v)^*\overline{F}(Z_{\mu_v\Gamma_1,\Gamma_2})\varphi(v).
\end{align}
Now observe that the well-ordered list of paths in $\Gamma_1$ can be obtained from the corresponding list of paths in $\mu_v\Gamma_1$ by the following three-step procedure: first, for each path $\pi$ in $\mu_v\Gamma_1$ entering $\mu_v(N)\subseteq\mu_v\Gamma_1$ along line $i-1$ and exiting along line $i$, replace $\pi$ by a consecutive pair of paths $(\pi',\pi'')$ in $\Gamma_1$ which are identical to $\pi$ outside of $N$, such that the path $\pi'$ reaches line $i$ in $N\subseteq\Gamma_1$ before $\pi''$. Note that $\pi''$ is indeed the successor to $\pi'$ in the lexical ordering for $\Gamma_1$. The second step consists of identifying in the resulting list  each pair $(p',p'')$ of paths in $\mu_v\Gamma_1$ which enter $\mu_v(N)$ along line $i$ and exit along line $i+1$, and are identical outside of $\mu_v(N)$, so that $p''$ is the successor to $p'$ in the $\mu_v\Gamma_1$ lexical order. We then replace the consecutive pair $(p',p'')$ by the single path $p$ in $\Gamma_1$ identical to $p',p''$ outside of $N$ and which enters $\mu_v(N)$ through line $i$ and exits through line $i+1$. Finally, all remaining items in the list which are still paths in $\mu_v\Gamma_1$ are paths which do not intersect $N$, and we replace each of these by the corresponding identical path in $\Gamma_1$. From this description, it follows immediately from formula~\eqref{eq:FZ} and the Kashaev pentagon identity that if~\eqref{eq:p-exp} holds for $\mu_v\Gamma_1,\Gamma_2$ then it also holds for $\Gamma_1,\Gamma_2$. The same argument applies in the case that $\mu_v$ corresponds to the reverse braid move or a Demazure move.

Now let us perform such a sequence of moves to bring $\mathbf{i}_1$ to a word of the form $\sigma_1\sigma_2\cdots \sigma_{m-1}\mathbf{i}'$ where $\mathbf{i}'$ does not contain $\sigma_{m-1}$. Then in the resulting graph $\Gamma_1^\bullet$ there is a unique path starting from the boundary vertex on line $1$ ending at the boundary vertex on line $m$. For such a graph $\Gamma_1^\bullet$, the same argument used above shows that for any mutable face $v$ in $\Gamma_2$, ~\eqref{eq:p-exp} holds for $\Gamma_1^\bullet,\Gamma_2$ if and only if it holds for $\Gamma_1^\bullet,\mu_v\Gamma_2$.
So if we perform a sequence of braid and Demazure moves bringing $\Gamma_2$ to a graph $\Gamma_2^\bullet$ of the same form, there is a single pair of paths in $\Gamma_1^\bullet,\Gamma^\bullet_2$ and~\eqref{eq:p-exp} becomes a tautology. This completes the proof of the Lemma. 
\end{proof}

\begin{remark}
The same proof shows $\overline{F}(Z_{(\Gamma_{1},\ell_1)\times (\Gamma_{2},\ell_2)})$ can also be expanded as a differently ordered product of the same factors where we first order using $\Gamma_2$, then break ties using $\Gamma_1$.
\end{remark}

Since form sums distribute over tensor product, $\stgE_{rs}\otimes\stgF_{rs}$ can be computed as a partition function $Z_{\Gamma_{\mathbb{E}_{rs}}\times\Gamma_{\mathbb{F}_{rs}}}$ of pairs of paths $(p_E,p_F)$ where $p_E$ is a path in $\Gamma_{\mathbb{E}_{rs}}$ entering along the top horizontal line (which we number 1) and exiting along the bottom horizontal line $s-r+1$, and $p_F$ a path in $\Gamma_{\mathbb{F}_{rs}}$ entering at 1 and exiting at $s-r+1$. 

\subsection{\texorpdfstring{$\Rc$-matrix}{R-matrix} and the braided Fock--Goncharov flip}
\label{SecFG}

Consider the unitary
\beq
\label{eq:R-init2}
\Rc = \prod_{a=1}^{\substack{n \\[-2pt] \longrightarrow}} \prod_{b=a+1}^{\substack{n+1 \\[-2pt] \longrightarrow}} \overline{F}_\hbar(\stgF_{a,b} \otimes \stgE_{a,b}).
\eeq

For $1\leq a<b\leq N$ let $\Gamma_{{\mathbb{F}^{[1]}_{a,b}}}$ be the graph obtained from $\Gamma_{{\mathbb{F}_{a,b}}}$ by deleting all vertical edges connecting the top two horizontal lines in $\Gamma_{{\mathbb{F}_{a,b}}}$ except for the left-most such edge, and write $\Gamma_{{\mathbb{F}^{>1}_{a,b}}}$ for the graph obtained from 
$\Gamma_{{\mathbb{F}_{a,b}}}$ by deleting only the left-most such edge.  By the construction~\eqref{eq:face-merge}, the graphs $\Gamma_{{\mathbb{F}^{[1]}_{a,b}}}$ and $\Gamma_{{\mathbb{F}^{>1}_{a,b}}}$  inherit $V$-labellings from that of $\Gamma_{{\mathbb{F}_{a,b}}}$, and we set
$$
\stgF^{[1]}_{a,b} = Z({\Gamma_{{\mathbb{F}^{[1]}_{a,b}}}}),\qquad \stgF^{>1}_{a,b}= Z({\Gamma_{{\mathbb{F}^{>1}_{a,b}}}}),
$$
so that
$$
Z({\Gamma_{{\mathbb{F}_{a,b}}}})= Z({\Gamma_{{\mathbb{F}^{[1]}_{a,b}}}})\boxplus Z({\Gamma_{{\mathbb{F}^{>1}_{a,b}}}}).
$$
Equivalently, we can think of $\stgF^{[1]}_{a,b} $ and $\stgF^{>1}_{a,b} $ as `conditional partition functions' in the original graph $\Gamma_{{\mathbb{F}_{a,b}}}$ given by restricting the sum~\eqref{eq:Zdef} to only paths travsersing (respectively not traversing) the first vertical arrow between horizontal lines $a,a+1$.

Note that when $a=1$ there is a single vertical edge in the top row of $\Gamma_{\mathbb{F}_{a,b}}$ so that for any $b$ we have
$$
Z({\Gamma_{{\mathbb{F}^{>1}_{1,b}}}})=0.
$$
Similarly for $1\leq a<b\leq N$ and $1\leq j\leq N-a$, let $\Gamma_{{\mathbb{E}^{[j]}_{a,b}}}$ be the graph obtained from $\Gamma_{{\mathbb{E}_{a,b}}}$  by deleting all vertical edges connecting the top two horizontal lines in $\Gamma_{{\mathbb{E}_{a,b}}}$ except for the $j$-th such edge encountered starting from the left. By the construction~\eqref{eq:face-merge}, $\Gamma_{{\mathbb{E}^{[j]}_{a,b}}}$ inherits a $V$-labelling from that of $\Gamma_{{\mathbb{E}_{a,b}}}$, and we set
$$
\stgE^{[j]}_{a,b} = Z({\Gamma_{{\mathbb{E}^{[j]}_{a,b}}}}).
$$
Since every path summed over in $Z({\Gamma_{{\mathbb{E}_{a,b}}}})$ traverses exactly one of the vertical edges mentioned above we have
$$
\stgE_{a,b} = Z(\Gamma_{{\mathbb{E}_{a,b}}}) = \boxsum_{j=1}^{N-a}Z(\Gamma_{{\mathbb{E}^{[j]}_{a,b}}}).
$$
In other words, the partition function $Z(\Gamma_{{\mathbb{E}^{[j]}_{a,b}}})$ is the contribution to $Z(\Gamma_{{\mathbb{E}_{a,b}}})$ from paths taking the $j$-th vertical edge between the top pair of vertical lines.

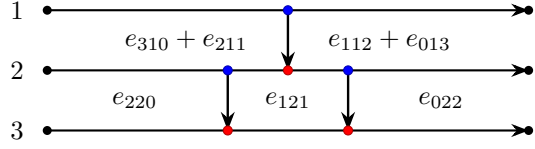
\begin{figure}
\begin{tikzpicture}[>=Stealth, line cap=round, x=8mm, y=8mm]
  \tikzset{
    wire/.style   ={line width=0.8pt},
    post/.style   ={line width=0.9pt,->},
    reddot/.style ={circle,draw=red!70!black,fill=red,inner sep=1.2pt},
    bluedot/.style={circle,draw=blue!70!black,fill=blue,inner sep=1.2pt},
    blackdot/.style={circle,fill=black,inner sep=1.2pt},
    elab/.style   ={font=\small}
  }
  \draw[wire,->] (  -1,1) -- ( 7,1);
  \draw[wire,->] (  -1,2) -- ( 7,2);
  \draw[wire,->] (  -1,3) -- ( 7,3);

  \foreach \x in {3}{
    \draw[post] (\x,3) -- (\x,2);
    \node[reddot] at (\x,2) {};
    \node[bluedot]  at (\x,3) {};
  }
  \foreach \x in {2,4}{
    \draw[post] (\x,2) -- (\x,1);
    \node[reddot] at (\x,1) {};
    \node[bluedot]  at (\x,2) {};
  }

   \node[blackdot] at (-1,1) {};
   \node[blackdot] at (-1,2) {};
   \node[blackdot] at (-1,3) {};
   \node[blackdot] at (7,1) {};
   \node[blackdot] at (7,2) {};
   \node[blackdot] at (7,3) {};
  \node[elab] at (-1.5,1) {$3$};
  \node[elab] at (-1.5,2) {$2$};
  \node[elab] at (-1.5,3) {$1$};

  \node[elab, anchor=east] at (2.5,2.5) {$e_{310}+e_{211}$};
    \node[elab, anchor=west] at (3.5,2.5) {$e_{112}+e_{013}$};

  \node[elab, anchor=east] at (1,1.5) {$e_{220}$};
  \node[elab]              at (3.0,1.5) {$e_{121}$};
  \node[elab, anchor=west] at (5.0,1.5) {$e_{022}$};


\end{tikzpicture}
\caption{The labelled graph $\Gamma_{{\mathbb{E}^{[2]}_{1,3}}}$ for $N=4$.}
\label{fig:conditional-E-graph}
\end{figure}

We illustrate the setup for $N=4, (a,b)=(1,3), j=2$ in Figure~\ref{fig:conditional-E-graph}. Note that for $j_1<j_2$, the paths in the sum defining $Z(\Gamma_{{\mathbb{E}_{a,b}}})$ which contribute to $Z(\Gamma_{{\mathbb{E}^{[j_1]}_{a,b}}})$ are less than those contributing to $Z(\Gamma_{{\mathbb{E}^{[j_2]}_{a,b}}})$ in the lexical order from Lemma~\ref{lem:dilog-product}. Similarly, the paths contributing to $Z(\Gamma_{{\mathbb{F}^{[1]}_{a,b}}})$ are less than those contributing to $Z(\Gamma_{{\mathbb{F}^{>1}_{a,b}}})$. It follows from Lemma~\ref{lem:dilog-product} that we have a factorization
\begin{align*}
\overline{F}_\hbar(\stgF_{a,b}\otimes \stgE_{a,b}) &= \overline{F}_\hbar( \stgF^{>1}_{a,b}\otimes\stgE_{a,b})~\overline{F}_\hbar( \stgF^{[1]}_{a,b}\otimes\stgE_{a,b}),
\end{align*}
so that
\beq
\label{eq:R-init3}
\Rc = \prod_{a=1}^{\substack{n \\[-2pt] \longrightarrow}} \prod_{b=a+1}^{\substack{n+1 \\[-2pt] \longrightarrow}} \overline{F}_\hbar( \stgF^{>1}_{a,b}\otimes \stgE_{a,b})~\overline{F}_\hbar( \stgF^{[1]}_{a,b}\otimes \stgE_{a,b}).
\eeq

\begin{lemma}
We have the following collection of $\hbar/2$-commuting pairs of positive operators:
\begin{align}
\label{eq:pairs1}
(\stgF_{a,b}^{[1]},\stgF_{a,c}^{>1}),\quad (\stgE_{a,c},\stgE_{a,b}), \quad a<b<c,
\end{align}
\begin{align}
\label{eq:pairs2}
(\stgF_{a,b}^{[1]},\stgF_{a,a+1}^{[1]}), \quad (\stgE_{a,a+1}^{[j]},\stgE_{a,b}^{[k]}), \quad b>a+1, \quad j<k.
\end{align}
\begin{align}
\label{eq:pairs3}
(\stgF_{a,a+1}^{[1]},\stgF_{a+1,b}^{>1}), \quad (\stgE_{a,a+1}^{[j+1]},\stgE_{a+1,b}^{>j}), \quad (\stgE_{a+1,b}^{[j]}, \stgE_{a,a+1}^{[k]}), \quad b>a+1, \quad j<k.
\end{align}
The following pairs of positive operators strongly commute:
\begin{align}
\label{eq:comm-pairs}
(\stgF_{a,c}^{[1]},\stgF_{a+1,b}^{>1}),\quad (\stgE_{a,c},\stgE_{a+1,b}), \quad a<b<c.
\end{align}

\end{lemma}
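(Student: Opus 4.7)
The plan is to reduce every (skew-)commutation claim to a skew-pairing computation in $\btd{N}$ and then invoke Lemma~\ref{LemSumSkewComm}. Each operator appearing in the lemma is by construction a form sum $\boxplus_p \Eb(u_p)$ indexed by an explicit family of directed paths $p$ in a subgraph of $\Gamma_{\mathbb{E}}$ or $\Gamma_{\mathbb{F}}$, with adjusted weights $u_p \in \btd{N}$. Since \eqref{EqProdProj} gives $\Eb(u)^{it}\Eb(v)^{is} = e^{2\pi i\hbar st(u,v)}\Eb(v)^{is}\Eb(u)^{it}$, the pair $(\Eb(u),\Eb(v))$ is $\hbar/2$-commuting precisely when $(u,v)=1/2$ and strongly commutes when $(u,v)=0$. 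Iterating Lemma~\ref{LemSumSkewComm} then reduces $\hbar/2$-commutation of two form sums to the constancy of $(u_p,u_q)=1/2$ across all contributing pairs $(p,q)$. The analogous reduction in the strong-commutation case follows from~\eqref{EqBoxPlusDef}: an operator which strongly commutes with $\Ab$ and $\Bb$ strongly commutes with $\Ab\star\Bb^{-1}$, hence with the unitary $F(\Ab\star\Bb^{-1})$ by functional calculus, and therefore with $\Ab\boxplus\Bb$.

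Next I would identify the path weights in each conditional partition function. For the E-graph, the adjusted weight $u_{p_E}$ of a path $p_E$ contributing to $\stgE_{a,b}^{[j]}$ is a $\Z_{\ge 0}$-combination of basis labels of the form $\seee_{s,k}$ with $1\le s\le b-a$, where $j$ records which vertical edge between lines $a$ and $a+1$ the path first descends through. Similarly, paths contributing to $\stgF^{[1]}_{a,b}$ and $\stgF^{>1}_{a,b}$ give adjusted weights that are sums of $\nee_{s,k}$'s. All cross-pairings $(u_p,u_q)$ relevant for the lemma can then be assembled from the tables in Lemma~\ref{LemSkewProdFormDiff}: parts (i)--(iii) govern same-graph pairs and parts (iv)--(vii) govern the mixed $\stgE$--$\stgF$ pairs.

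As an illustration, for the pair $(\stgE_{a,c},\stgE_{a,b})$ with $a<b<c$, a path $p_E \in \stgE_{a,c}$ decomposes into an initial segment living in rows $a,\ldots,b$ and a tail in rows $b,\ldots,c$; an $\stgE_{a,b}$ path weight is contained in rows $a,\ldots,b$, so its pairing with the tail vanishes via the $|s-s'|\ge 2$ case of~\eqref{EqSameNil}, while its pairing with the initial segment is constantly $1/2$ via the non-vanishing cases of~\eqref{EqSameNil}. For the strong-commutation claim $(\stgE_{a,c},\stgE_{a+1,b})$, every path contributing to $\stgE_{a+1,b}$ lives in rows $a+1,\ldots,b$; the contribution from row $a$ of an $\stgE_{a,c}$-summand pairs trivially against it by~\eqref{EqSameNil}, while the overlapping $\seee$-contributions from rows $a+1,\ldots,b$ telescope to zero via the $+1/2$--$-1/2$ cancellation built into the same lemma. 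The remaining five cases are handled by the same template, substituting parts (ii)--(iii) of Lemma~\ref{LemSkewProdFormDiff} for same-graph $\stgE^{[j]}$-pairs and parts (iv)--(v) for the mixed $\stgE$--$\stgF$ pairs.

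The main obstacle is the uniform bookkeeping: writing down the exact collection of basis labels $\seee_{s,k}$ (resp.\ $\nee_{s,k}$) entering each path weight for each conditional partition function and checking that the cross-pairings are constant as claimed. A cleaner route, which I would attempt if the direct enumeration becomes unwieldy, is to apply Lemma~\ref{lem:Zmut} to conjugate the two form sums by a common sequence of unitaries $\varphi(v)$ coming from braid and Demazure moves that simplifies both partition functions, ideally bringing each into a presentation as a form sum of pure $\Eb(v)$'s whose skew pairings against the summands of the other operator are immediately readable from Lemma~\ref{LemSkewProdFormDiff}.
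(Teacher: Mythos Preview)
Your primary approach has a genuine gap: the claim that $(u_p,u_q)$ is constant across all pairs of contributing paths is \emph{false}, so Lemma~\ref{LemSumSkewComm} cannot be iterated as you suggest. Consider $(\stgE_{1,3},\stgE_{1,2})$ for $N=4$. The path $p$ in $\Gamma_{\mathbb{E}_{1,3}}$ descending at the first edge in row~$1$--$2$ and the second edge in row~$2$--$3$ has $u_p=\seee_{3,0}+\seee_{2,1}$, while the path $q$ in $\Gamma_{\mathbb{E}_{1,2}}$ descending at the second edge has $u_q=\seee_{3,1}$. Using Lemma~\ref{LemSkewProdFormDiff}(1) one computes $(\seee_{3,0},\seee_{3,1})=1$ and $(\seee_{2,1},\seee_{3,1})=1/2$, so $(u_p,u_q)=3/2$, whereas other pairs give $1/2$. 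The form sum still $\hbar/2$-commutes with $\stgE_{1,2}$, but this is not detected by the termwise pairings; the ``telescoping'' you describe does not happen at the level of individual paths.

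What the paper does---and what you correctly flag as the cleaner route at the end of your proposal---is precisely to avoid this obstruction: one uses Lemma~\ref{lem:Zmut} to conjugate \emph{both} partition functions simultaneously by a sequence of braid and Demazure moves until each reduces to a single monomial $\Eb(u)$, $\Eb(v)$, and then reads off $(u,v)=1/2$ (or $0$) from the resulting labelled graph. The key point, which your direct approach misses, is that the mutation unitaries absorb the non-constant cross-terms; once both operators are monomials, a single pairing computation suffices. Your backup plan is therefore the actual proof, and you should lead with it rather than the termwise check.
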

\begin{proof}
We treat case of the pair $(\stgF_{a,b}^{[1]},\stgF_{a,c}^{>1})$, with the others being handled using the same technique. The idea is to use Lemma~\ref{lem:Zmut} to construct a unitary which will conjugate the operators  $(\stgF_{a,b}^{[1]},\stgF_{a,c}^{>1})$ to a standard $\hbar/2$ commuting pair. Consider the piece of the graph $\Gamma_{\mathbb{F}}$ lying between horizontal lines $a$ and $c$. The corresponding braid word takes the form $\beta = \sigma_{c-a}\cdots \sigma_2 \sigma_1\beta'$, and by a sequence of braid and Demazure moves we can transform $\beta'$ to the word $\beta''=(\sigma_1\sigma_2\cdots\sigma_{c-a})\cdots \sigma_1\sigma_2\sigma_1$. The conjugate of $\stgF_{a,c}^{>1}$ under the associated unitary is computed as a sum over a single path in the labelled graph associated to $\tilde\beta=\sigma_{c-a}\cdots \sigma_2 \sigma_1\beta''$. Moreover, observe that deleting all but the first two occurrences of $\sigma_1$ in $\tilde\beta$ to produce a word $\widehat{\beta}$ has no effect on the conditional partition functions computing the unitary conjugates of $(\stgF_{a,b}^{[1]},\stgF_{a,c}^{>1})$. But by a further sequence of braid moves we can bring $\widehat{\beta}$ to the word $\sigma_{c-a}\cdots \sigma_2 \sigma_1 \sigma_1\beta'''$ where $\beta''' = (\sigma_{c-a}\cdots \sigma_3\sigma_2)\cdots \sigma_3\sigma_2\sigma_3$. In this latter graph the conditional partition functions computing the unitary conjugates of both $(\stgF_{a,b}^{[1]},\stgF_{a,c}^{>1})$ both consist of contributions from a single path, and the weights of these paths $\hbar/2$-commute as desired. 
\end{proof}

\begin{cor}
\label{lem:pent2}
For $b>a+1$, the positive operators $(\stgF_{a+1,b}^{>1}\otimes\stgE_{a+1,b}^{>j},\stgF_{a,a+1}^{[1]}\otimes\stgE_{a,a+1}^{[j+1]})$ form an $\hbar$-commuting pair, and we have a pentagon identity
\begin{multline*}
\overline F_\hbar(\stgF^{[1]}_{a,a+1} \otimes \stgE^{[j+1]}_{a,a+1})\overline{F}_\hbar( \stgF^{[1]}_{a,b}\otimes\stgE^{[j+1]}_{a,b})\overline{F}_\hbar( \stgF^{>1}_{a+1,b}\otimes\stgE^{>j}_{a+1,b}) = \\ = \overline{F}_\hbar( \stgF^{>1}_{a+1,b}\otimes\stgE^{>j}_{a+1,b})\overline F_\hbar(\stgF^{[1]}_{a,a+1} \otimes \stgE^{[j+1]}_{a,a+1}).
$$
\end{multline*}
\end{cor}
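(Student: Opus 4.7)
The plan is to establish the $\hbar$-commutation claimed in the Corollary, then identify the middle factor $C = \stgF^{[1]}_{a,b}\otimes\stgE^{[j+1]}_{a,b}$ as a $\star$-product $A\star B$ of the outer factors $A = \stgF^{[1]}_{a,a+1}\otimes\stgE^{[j+1]}_{a,a+1}$ and $B = \stgF^{>1}_{a+1,b}\otimes\stgE^{>j}_{a+1,b}$, and finally invoke the Kashaev pentagon \eqref{EqCommKashaev} via functional calculus.

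For the $\hbar$-commutation, the preceding lemma's~\eqref{eq:pairs3} gives that both pairs $(\stgF^{[1]}_{a,a+1}, \stgF^{>1}_{a+1,b})$ and $(\stgE^{[j+1]}_{a,a+1}, \stgE^{>j}_{a+1,b})$ are $\hbar/2$-commuting. Taking tensor products, the phases in the commutation relations of the individual factors add, so $A$ and $B$ themselves satisfy a commutation relation with phase $e^{2\pi i\hbar st}$, i.e., they form an $\hbar$-commuting pair in the sense of~\eqref{EqSkewCommDef} (with the sign of $\hbar$ consistent with the orientation of the skew pairings in Lemma~\ref{LemSkewProdFormDiff}).

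For the identification $C = A \star B$, first observe that $A$ is itself a single exponential $A = \Eb(v_A)$: the conditional partition functions $\stgF^{[1]}_{a,a+1}$ and $\stgE^{[j+1]}_{a,a+1}$ are defined on the single-row sub-graphs $\Gamma_{\mathbb F_{a,a+1}}$ and $\Gamma_{\mathbb E_{a,a+1}}$, in which fixing the vertical edge traversed completely determines the path. Now each pair of paths summed in $C$ decomposes canonically into a ``top piece'' in rows $\{a,a+1\}$ (passing through the conditioned vertical edges, and hence contributing exactly $v_A$ to the weight) and a ``bottom piece'' in rows $\{a+1,\ldots,b\}$, with total weight $v_A + w_p$ as $w_p$ ranges over weights of the path pairs summed in $B$. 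Using the strong commutation relations~\eqref{eq:comm-pairs} together with Lemma~\ref{LemSumSkewComm}, the $\star$-product by $A = \Eb(v_A)$ distributes across the form sum defining $B$, yielding
\[
C = \boxplus_p \Eb(v_A + w_p) = \Eb(v_A) \star \left(\boxplus_p \Eb(w_p)\right) = A \star B.
\]
The Kashaev pentagon~\eqref{EqCommKashaev}, applied with $v = v_B$, $w = v_A$ (for which $(v_B,v_A) = 1$ by Step~1) and translated into $\overline F$-notation via $\overline F\circ \Eb = \varphi$, then reads
\[
\overline F_\hbar(B)\,\overline F_\hbar(A) = \overline F_\hbar(A)\,\overline F_\hbar(A\star B)\,\overline F_\hbar(B),
\]
which, combined with $C = A\star B$, is precisely the stated identity.

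The hard part will be the careful bookkeeping in Step~2: the decomposition of paths, while geometrically natural, requires a precise comparison of the skew pairings along rows $\{a,a+1\}$ with the ``cross'' pairings to rows $\{a+1,\ldots,b\}$, and one must verify that the relevant auxiliary pairs strongly commute (cf.~\eqref{eq:comm-pairs}) so that Lemma~\ref{LemSumSkewComm} can legitimately be invoked to distribute the $\star$-product over the form sum.
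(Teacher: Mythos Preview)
Your argument is correct and matches the paper's implicit reasoning: the Corollary carries no proof, being an immediate consequence of the preceding Lemma together with the Kashaev pentagon. Two small clarifications are worth making. First, \eqref{EqCommKashaev} is stated for vectors in a fixed skew-symmetric space, and $B$ here is a form sum rather than a single exponential $\Eb(v_B)$; the identity still applies because any $\hbar$-commuting pair of strictly positive operators defines a unitary $\hbar$-representation of the two-dimensional symplectic space spanned by their logarithms, so Theorem~\ref{TheoPentagon} applies verbatim in that auxiliary representation. (The paper would likely phrase this instead by using the mutation technique of the preceding Lemma to conjugate $A$, $B$, $C$ simultaneously to single exponentials, at which point \eqref{EqCommKashaev} applies literally.) Second, the citation of~\eqref{eq:comm-pairs} is not quite what you need for Step~2: to iterate Lemma~\ref{LemSumSkewComm} over the form sum $B=\boxplus_p\Eb(w_p)$ you must know that $(v_A,w_p)$ is the \emph{same} constant for every summand $w_p$, not merely that certain pairs strongly commute. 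This holds because $v_A$ pairs nontrivially only with the first two cells of the row immediately below, and every path in $\stgF^{>1}_{a+1,b}$ (respectively $\stgE^{>j}_{a+1,b}$) necessarily passes above both of them; a direct check gives $(v_A,w_p)=-1$ uniformly, which is exactly the $\hbar$-commutation of $(B,A)$ asserted in the Corollary.
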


From~\eqref{eq:pairs1} and~\eqref{eq:comm-pairs} we see that the following pairs of positive operators strongly commute:
$$
 [\stgF^{>1}_{a,c}\otimes\stgE_{a,c},\stgF^{[1]}_{a,b}\otimes\stgE_{a,b}]=0=[\stgF^{>1}_{a+1,b}\otimes\stgE_{a+1,b},\stgF^{[1]}_{a,c}\otimes \stgE_{a,c}], \quad a<b<c.
$$
This observation allows us to re-order the factors in~\eqref{eq:R-init3} as
\beq
\label{eq:R-init4}
\Rc = \prod_{a=1}^{\substack{n \\[-2pt] \longrightarrow}} \hc{ \overline{F}_\hbar( \stgF^{[1]}_{a,a+1} \otimes \stgE_{a,a+1})\prod_{b=a+2}^{\substack{n+1 \\[-2pt] \longrightarrow}}\overline{F}_\hbar( \stgF^{[1]}_{a,b}\otimes\stgE_{a,b}) ~\overline{F}_\hbar( \stgF^{>1}_{a+1,b}\otimes\stgE_{a+1,b})}.
\eeq

\begin{lemma}
\label{lem:1}
For $b>a+1$ we have
\begin{align}
\label{eq:lem-1}
&\overline F_\hbar(\stgF^{[1]}_{a,a+1} \otimes \stgE_{a,a+1}) \overline{F}_\hbar( \stgF^{[1]}_{a,b}\otimes\stgE_{a,b}) ~\overline{F}_\hbar( \stgF^{>1}_{a+1,b}\otimes\stgE_{a+1,b})\\
 &=\overline{F}_\hbar( \stgF^{>1}_{a+1,b}\otimes\stgE_{a+1,b}) \overline F_\hbar(\stgF^{[1]}_{a;a+1} \otimes \stgE_{a,a+1}).
\end{align}
\end{lemma}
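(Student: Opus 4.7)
The asserted identity has the shape $\overline F(\mathbf{A})\overline F(\mathbf{B})\overline F(\mathbf{C})=\overline F(\mathbf{C})\overline F(\mathbf{A})$ with $\mathbf{A}=\stgF^{[1]}_{a,a+1}\otimes\stgE_{a,a+1}$, $\mathbf{B}=\stgF^{[1]}_{a,b}\otimes\stgE_{a,b}$, $\mathbf{C}=\stgF^{>1}_{a+1,b}\otimes\stgE_{a+1,b}$. This is the same shape as Corollary~\ref{lem:pent2} but with the $\stgE$-factors unrestricted rather than cut to a single top-row edge. The plan is to derive the Lemma from Corollary~\ref{lem:pent2} by aggregating over the top-edge index $j$: we expand each $\overline F$-factor as an ordered product via the quantum exponential property (Theorem~\ref{TheoExponGen}) and then iterate Cor$_j$ to telescope the expansion into the RHS.

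Let $N_0=N-a$. Partitioning paths by which top-row edge they traverse yields
\[
\stgE_{a,a+1}=\boxplus_{k=1}^{N_0}\stgE^{[k]}_{a,a+1},\qquad \stgE_{a,b}=\boxplus_{k=1}^{N_0}\stgE^{[k]}_{a,b},\qquad \stgE_{a+1,b}=\boxplus_{k=1}^{N_0-1}\stgE^{[k]}_{a+1,b}.
\]
Using the skew pairings in Lemma~\ref{LemSkewProdFormDiff} one verifies that the summands in each decomposition form pairwise $\hbar$-commuting families in a consistent ordering, and this property survives tensoring with the relevant $\stgF$-factor. Theorem~\ref{TheoExponGen}, applied iteratively, then gives ordered-product expansions
\[
\overline F(\mathbf{A})=M_{N_0}\cdots M_1,\qquad \overline F(\mathbf{B})=N_{N_0}\cdots N_1,\qquad \overline F(\mathbf{C})=R_{N_0-1}\cdots R_1,
\]
where $M_k$, $N_k$, $R_k$ abbreviate the individual $\overline F$-factors. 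The degenerate $j=N_0-1$ case of Cor forces $N_{N_0}=1$, reflecting the fact that the rightmost top edge of $\Gamma_{\mathbb{E}_{a,b}}$ admits no valid path to row $b$.

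We now iterate Corollary~\ref{lem:pent2}, which in our notation reads $M_{j+1}\,N_{j+1}\,(R_{N_0-1}\cdots R_{j+1})=(R_{N_0-1}\cdots R_{j+1})\,M_{j+1}$, processing $j=0,1,\ldots,N_0-2$ in increasing order. At step $j$, we transport $M_{j+1}$ rightwards past $N_{N_0-1},\ldots,N_{j+2}$ using the strong commutations $[M_{j+1},N_k]=0$ for $k>j+1$, so that the block $M_{j+1}N_{j+1}(R_{N_0-1}\cdots R_{j+1})$ becomes adjacent; we apply Cor$_j$ to replace it by $(R_{N_0-1}\cdots R_{j+1})M_{j+1}$; and we then use the strong commutations $[R_k,M_l]=0$ for $l>k$ to reshuffle the residual factors and set up the next step. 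After all $N_0-1$ iterations, the string collapses to $R_{N_0-1}\cdots R_1\cdot M_{N_0}\cdots M_1=\overline F(\mathbf{C})\overline F(\mathbf{A})$, which is the RHS. (The pattern is transparent for small $N_0$: for $N_0=2$, step $j=0$ is immediate and one then uses $[M_2,R_1]=0$; for $N_0=3$, step $j=0$ followed by step $j=1$ leaves $M_3 R_2 M_2 R_1 M_1$, and $[M_3,R_2]=0=[R_1,M_2]=[R_1,M_3]$ finishes the rearrangement.)

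The main obstacle is the systematic verification of the various strong commutations enabling these rearrangements. The mechanism is always the same: both operators being moved past each other have the tensor-product form $\stgF^{\cdot}\otimes\stgE^{\cdot}$, and the $\hbar/2$-phase contributed by the $\stgF$-factor (governed by Eqs.~\eqref{eq:pairs1}--\eqref{eq:pairs3}) carries the opposite sign to the $\hbar/2$-phase contributed by the corresponding $\stgE$-factor, so the two phases cancel at the level of one-parameter subgroups, yielding strong commutation of the $\overline F$-images. All the skew pairings required for these cancellations are tabulated in Lemma~\ref{LemSkewProdFormDiff}, so the verifications are mechanical but numerous, and their careful bookkeeping constitutes the principal technical content of the proof.
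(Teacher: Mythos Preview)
Your argument is correct and follows essentially the same route as the paper's: both expand the $\overline F$-factors into ordered products indexed by the top-row edge (via Lemma~\ref{lem:dilog-product}, or equivalently the quantum exponential property together with the pairwise skew-commutation of the conditional partition functions), interleave using the commutations from \eqref{eq:pairs2}, iterate Corollary~\ref{lem:pent2}, and finish by reshuffling the resulting $R_k M_l$ string via \eqref{eq:pairs3}. The only cosmetic differences are that the paper splits $\overline F(\mathbf C)$ incrementally (peeling off one $R_j$ after each pentagon application) rather than fully upfront, and gathers all the $M_k N_k$ pairs at the outset rather than moving one $M_{j+1}$ into position per step.
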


\begin{proof}
By Lemma~\ref{lem:dilog-product}, we can split
\begin{align}
\label{eq:leftsplit}
\overline F_\hbar(\stgF^{[1]}_{a;a+1} \otimes \stgE_{a,a+1}) = \prod_{j=1}^{\substack{N-a \\[-2pt] \longleftarrow}} \overline F_\hbar(\stgF^{[1]}_{a,a+1} \otimes \stgE^{[j]}_{a,a+1})
\end{align}
and 
\begin{align}
\label{eq:rightsplit}
\overline{F}_\hbar( \stgF^{[1]}_{a,b}\otimes\stgE_{a,b})= \prod_{j=1}^{\substack{N-b+1 \\[-2pt] \longleftarrow}}\overline{F}_\hbar( \stgF^{[1]}_{a,b}\otimes\stgE^{[j]}_{a,b}).
\end{align}
Using~\eqref{eq:pairs2} to move quantum exponential factors from~\eqref{eq:leftsplit} to the right over those in~\eqref{eq:rightsplit} with which they commute, we rewrite the left-hand side of~\eqref{eq:lem-1} as
\beq
\label{eq:lem-1-2}
\prod_{j=N-b+2}^{\substack{N-a \\[-2pt] \longleftarrow}} \overline F_\hbar(\stgF^{[1]}_{a,a+1} \otimes \stgE^{[j]}_{a,a+1}) \!\!\!\prod\limits_{j=1}^{\substack{N-b+1 \\[-2pt] \longleftarrow}}\hc{\overline F_\hbar(\stgF^{[1]}_{a,a+1} \otimes \stgE^{[j]}_{a,a+1})\overline{F}_\hbar( \stgF^{[1]}_{a,b}\otimes\stgE^{[j]}_{a,b})}\overline{F}_\hbar( \stgF^{>1}_{a+1,b}\otimes\stgE_{a+1,b}).
\eeq
Now we use Lemma~\ref{lem:pent2} to make the replacement
\begin{multline*}
\overline F_\hbar(\stgF^{[1]}_{a,a+1} \otimes \stgE^{[1]}_{a,a+1})\overline{F}_\hbar( \stgF^{[1]}_{a,b}\otimes\stgE^{[1]}_{a,b})\overline{F}_\hbar( \stgF^{>1}_{a+1,b}\otimes\stgE_{a+1,b})\\=\overline{F}_\hbar( \stgF^{>1}_{a+1,b}\otimes\stgE_{a+1,b})\overline F_\hbar(\stgF^{[1]}_{a,a+1} \otimes \stgE^{[1]}_{a,a+1}),
\end{multline*}
and then use Lemma~\ref{lem:dilog-product} to split
$$
\overline{F}_\hbar( \stgF^{>1}_{a+1,b}\otimes\stgE_{a+1,b}) = \overline{F}_\hbar( \stgF^{>1}_{a+1,b}\otimes\stgE^{>1}_{a+1,b})\overline{F}_\hbar( \stgF^{>1}_{a+1,b}\otimes\stgE^{[1]}_{a+1,b})
$$
At this point, we can apply Lemma~\ref{lem:pent2} again to replace
\begin{multline*}
\overline F_\hbar(\stgF^{[1]}_{a,a+1} \otimes \stgE^{[2]}_{a,a+1})\overline{F}_\hbar( \stgF^{[1]}_{a,b}\otimes\stgE^{[2]}_{a,b})\overline{F}_\hbar( \stgF^{>1}_{a+1,b}\otimes\stgE^{>1}_{a+1,b})\\=\overline{F}_\hbar( \stgF^{>1}_{a+1,b}\otimes\stgE^{>1}_{a+1,b})\overline F_\hbar(\stgF^{[1]}_{a,a+1} \otimes \stgE^{[2]}_{a,a+1}),
\end{multline*}
and then split 
$$
\overline{F}_\hbar( \stgF^{>1}_{a+1,b}\otimes\stgE^{>1}_{a+1,b}) = \overline{F}_\hbar( \stgF^{>1}_{a+1,b}\otimes\stgE^{>2}_{a+1,b})\overline{F}_\hbar( \stgF^{>1}_{a+1,b}\otimes\stgE^{[2]}_{a+1,b}).
$$
Continuing this way, we rewrite the left-hand-side of~\eqref{eq:lem-1} as
$$
\prod_{j=N-b+2}^{\substack{N-a \\[-2pt] \longleftarrow}} \overline F_\hbar(\stgF^{[1]}_{a,a+1} \otimes \stgE^{[j]}_{a,a+1}) \cdot \prod\limits_{j=1}^{\substack{N-b+1 \\[-2pt] \longleftarrow}}\hc{\overline F_\hbar(\stgF^{>1}_{a+1,b} \otimes \stgE^{[j]}_{a+1,b})\overline F_\hbar(\stgF^{[1]}_{a,a+1} \otimes \stgE^{[j]}_{a,a+1})}.
$$
By~\eqref{eq:pairs3}, we can commute the factors $\overline F_\hbar(\stgF^{>1}_{a+1,b} \otimes \stgE^{[j]}_{a+1,b})$ all the way to the left, and regrouping the quantum exponentials we arrive at the right-hand-side of~\eqref{eq:lem-1}.
\end{proof}

With this lemma it becomes easy to prove:
\begin{prop}
\label{prop:R-factor}
There is an identity of unitaries
\beq
\label{eq:R-fin}
\Rc = \prod_{a=1}^{\substack{n \\[-2pt] \longrightarrow}} \prod_{b=a+1}^{\substack{n+1 \\[-2pt] \longrightarrow}} \overline{F}_\hbar(\stgF_{a,b} \otimes \stgE_{a,b})=\prod_{k=1}^{\substack{n \\[-2pt] \longleftarrow}} \prod_{a=k}^{\substack{n \\[-2pt] \longrightarrow}} \overline F_\hbar\hr{\stgF_{a;a+1}^{[k]} \otimes \stgE_{a,a+1}}.
\eeq
\end{prop}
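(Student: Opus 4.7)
The first equality in~\eqref{eq:R-fin} is the definition~\eqref{eq:R-init2}, so the content is the second equality. My plan is to prove it by iteratively peeling off one shell of the target factorization at a time, starting from the factorized form~\eqref{eq:R-init4}.

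\textbf{Peeling the $k=1$ shell.} For each fixed $a$, the bracketed factor in~\eqref{eq:R-init4} has the form
$$A_a \cdot \prod_{b=a+2}^{\rightarrow n+1} B_{a,b}\, C_{a,b},$$
where $A_a := \overline F_\hbar(\stgF^{[1]}_{a,a+1}\otimes\stgE_{a,a+1})$, $B_{a,b} := \overline F_\hbar(\stgF^{[1]}_{a,b}\otimes\stgE_{a,b})$ and $C_{a,b} := \overline F_\hbar(\stgF^{>1}_{a+1,b}\otimes\stgE_{a+1,b})$. Lemma~\ref{lem:1} gives $A_a B_{a,b} C_{a,b} = C_{a,b} A_a$, and applying it successively for $b=a+2,a+3,\ldots,n+1$ shifts $A_a$ past each pair while absorbing the intermediate $B_{a,b}$. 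This rewrites~\eqref{eq:R-init4} as
$$\Rc = \prod_{a=1}^{\rightarrow n}\Bigl\{\prod_{b=a+2}^{\rightarrow n+1} C_{a,b}\cdot A_a\Bigr\}.$$

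\textbf{Collecting the neck factors to the right.} Next I gather the $A_a$'s to the right end of the expression. This uses that $A_a$ strongly commutes with $A_{a'}$ for $a'\neq a$ and with every $C_{a',b'}$ for $a'>a$. Both commutations follow from row-disjointness of the cell supports on the diagram together with the explicit skew-pairings in Lemma~\ref{LemSkewProdFormDiff}, by an argument parallel to the derivation of~\eqref{eq:comm-pairs}. The resulting identity reads
$$\Rc = \Rc^{(2)} \cdot \prod_{a=1}^{\rightarrow n} A_a, \qquad \Rc^{(2)} := \prod_{a=2}^{\rightarrow n}\prod_{b=a+1}^{\rightarrow n+1}\overline F_\hbar(\stgF^{>1}_{a,b}\otimes\stgE_{a,b}),$$
where the right-hand factor is exactly the $k=1$ shell of~\eqref{eq:R-fin} and $\Rc^{(2)}$ has the same shape as $\Rc$ itself but with index range restricted to $a\in[2,n]$ and $\stgF_{a,b}$ replaced by $\stgF^{>1}_{a,b}$.

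\textbf{Iterating.} I then iterate the entire argument on $\Rc^{(2)}$. The required ingredients are (i) the decomposition $\stgF^{>1}_{a,b} = \stgF^{[2]}_{a,b} \boxplus \stgF^{>2}_{a,b}$ and the analogue of Lemma~\ref{lem:dilog-product} to split the corresponding quantum exponentials, (ii) a Lemma~\ref{lem:1}-type identity in which $\stgF^{[1]},\stgF^{>1}$ are replaced by $\stgF^{[2]},\stgF^{>2}$, and (iii) the commutation relations~\eqref{eq:pairs1}--\eqref{eq:comm-pairs} in their $\stgF^{[2]},\stgF^{>2}$-form. Running the two-step reduction above then peels off the $k=2$ shell $\prod_{a=2}^{\rightarrow n}\overline F_\hbar(\stgF^{[2]}_{a,a+1}\otimes\stgE_{a,a+1})$ and leaves behind a residual $\Rc^{(3)}$ built from $\stgF^{>2}$-partition functions on the range $a\in[3,n]$. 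After $n$ iterations in total, every shell has been extracted and the residual is an empty product, yielding~\eqref{eq:R-fin}.

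\textbf{Main obstacle.} The main obstacle is justifying the recursion uniformly in $k$: one needs, at every stage, exact analogues of Lemma~\ref{lem:1}, Corollary~\ref{lem:pent2}, and the commutation relations~\eqref{eq:pairs1}--\eqref{eq:comm-pairs} with $\stgF^{[1]},\stgF^{>1}$ replaced by $\stgF^{[k]},\stgF^{>k}$. Because these identities all descend from the same graph-combinatorial moves (braid and Demazure) and from the skew-pairings recorded in Lemma~\ref{LemSkewProdFormDiff}, the required analogues can be obtained by the same manipulations used at $k=1$, but the bookkeeping of conditional partition functions and their lexical orderings, as dictated by Lemma~\ref{lem:dilog-product}, is where the technical weight of the proof lies.
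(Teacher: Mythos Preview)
Your first two steps—applying Lemma~\ref{lem:1} iteratively to~\eqref{eq:R-init4} and then commuting the factors $\overline F_\hbar(\stgF^{[1]}_{a,a+1}\otimes\stgE_{a,a+1})$ to the right—match the paper's argument exactly and correctly produce the identity~\eqref{eq:ind-step}. The divergence is in how you handle the residual $\Rc^{(2)}$. The paper does not iterate; it argues by induction on $n$, observing that the graphs $\Gamma_{\mathbb{F}^{>1}_{a,b}}$ for $a\geq 2$ are, after relabelling, copies of the rank-$(n-1)$ $\mathbb{F}$-graphs, so that $\Rc^{(2)}$ is literally the rank-$(n-1)$ instance of $\Rc$ and the inductive hypothesis delivers the $k\geq 2$ shells at once. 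This dissolves what you flag as the main obstacle: the $\stgF^{[k]},\stgF^{>k}$-analogues of Lemma~\ref{lem:1}, Corollary~\ref{lem:pent2} and~\eqref{eq:pairs1}--\eqref{eq:comm-pairs} that your iteration would require at stage $k$ are nothing other than the $k=1$ statements applied inside the rank-$(n-k+1)$ subproblem, and so need not be proved separately. Your unwound iteration is correct in spirit, but the inductive packaging is precisely what makes the bookkeeping you worry about disappear.
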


\begin{proof}
 For $n=1$ the two factorizations are just identical, while the statement of the Proposition for rank $n-1$ implies the following equality in the rank $n$ setting:
$$
\prod_{a=2}^{\substack{n \\[-2pt] \longrightarrow}} \prod_{b=a+1}^{\substack{n +1\\[-2pt] \longrightarrow}} \overline F_\hbar\hr{\stgF^{>1}_{a,b} \otimes \stgE_{a,b}} = \prod_{k=2}^{\substack{n \\[-2pt] \longleftarrow}} \prod_{a=k}^{\substack{n \\[-2pt] \longrightarrow}}\overline F_\hbar\hr{\stgF_{a;a+1}^{[k]} \otimes \stgE_{a,a+1}}.
$$
So the proposition will follow by induction provided we can show that
\beq
\label{eq:ind-step}
\Rc = \hc{\prod_{a=2}^{\substack{n \\[-2pt] \longrightarrow}} \prod_{b=a+1}^{\substack{n+1 \\[-2pt] \longrightarrow}} \overline F_\hbar\hr{\stgF^{>1}_{a,b} \otimes \stgE_{a,b}} } \cdot \prod_{a=1}^{\substack{n \\[-2pt] \longrightarrow}} \overline F_\hbar\hr{\stgF_{a;a+1}^{[1]} \otimes \stgE_{a,a+1}}.
\eeq
To this end, by iterative application of Lemma~\ref{lem:1} to formula~\eqref{eq:R-init4} we have
\begin{align}
\label{eq:fact4}
\Rc = \prod_{a=1}^{\substack{n \\[-2pt] \longrightarrow}} \hc{\prod_{b=a+1}^{\substack{n \\[-2pt] \longrightarrow}} \overline F_\hbar\hr{\stgF_{a+1,b+1}^{>1} \otimes \stgE_{a+1,b+1}}} \overline F_\hbar\hr{\stgF_{a,a+1}^{[1]} \otimes \stgE_{a,a+1}}.
\end{align}
Since $\stgF_{a,a+1}^{[1]}\otimes \stgE_{a,a+1}$ strongly commutes with $\stgF_{c,d}^{>1}\otimes \stgE_{c,d}$ for any $d>c>a+1$, we 
can freely commute the factors $F_\hbar\hr{\stgF_{a,a+1}^{[1]} \otimes \stgE_{a,a+1}}$ in~\eqref{eq:fact4} rightwards over all such factors $F_\hbar\hr{\stgF_{c,d}^{>1}\otimes \stgE_{c,d}}$ to  arrive at~\eqref{eq:ind-step}, and this finishes the proof.
\end{proof}

The previous expression for the unitary $\mathcal{R}$ is reminiscent of the  familiar one in the algebraic setup (see e.g.\ \cite[Section 8.3.2]{KS97}).  We will now derive an alternative form of this unitary, identifying it with the quantum mutation operator associated to the Fock--Goncharov flip from cluster theory.

\begin{defn}
For any $1 \le i \le r \le b \le n$, we define on $\Hc \otimes \Hc$ the unitaries
\begin{equation}\label{DefBbri}
B^{b,r,i} 
 =  \varphi\hr{\nee_{b-i+1,b-r}\oplus \seee_{n-b+i,n-b}}.
\end{equation}
\end{defn}
\begin{lemma}\label{LemOrdering}
One has
\beq
\label{eq:comm-0}
\hs{B^{b,r,i}, B^{b',r,i'}} = 0, \qquad \forall 1 \le i,i' \le r \le b,b' \le n.
\eeq
\end{lemma}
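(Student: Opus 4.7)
The operator $B^{b,r,i}$ is the quantum dilogarithm $\varphi$ applied by functional calculus to a specific vector
\[
v^{b,r,i} \;:=\; \nee_{b-i+1,\,b-r}\;\oplus\;\seee_{n-b+i,\,n-b}
\]
in the direct sum of two copies of $\btd{N}$, realized via the product unitary $\hbar$-representation on $\Hc\otimes\Hc$. By~\eqref{EqCommComm} of Theorem~\ref{TheoPentagon}, commutation of $\varphi(v^{b,r,i})$ and $\varphi(v^{b',r,i'})$ reduces to the purely combinatorial identity
\[
\bigl(v^{b,r,i},\, v^{b',r,i'}\bigr) \;=\; 0.
\]
So the whole proof is to check this skew-pairing vanishes, and~\eqref{EqCommComm} finishes the argument.

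Using the splitting of the skew form on a direct sum, we have to verify that
\[
\bigl(\nee_{b-i+1,\,b-r},\,\nee_{b'-i'+1,\,b'-r}\bigr) \;+\; \bigl(\seee_{n-b+i,\,n-b},\,\seee_{n-b'+i',\,n-b'}\bigr) \;=\; 0.
\]
Set $s_1=b-i+1$, $k_1=b-r$, $s_2=n-b+i$, $k_2=n-b$, and similarly with primes. The key algebraic observation is that
\[
s_1+s_2 \;=\; N \;=\; s_1'+s_2', \qquad k_1+k_2 \;=\; n-r \;=\; k_1'+k_2',
\]
so $s_2-s_2' = -(s_1-s_1')$ and $k_2-k_2' = -(k_1-k_1')$. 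This means that the two terms above are computed by Lemma~\ref{LemSkewProdFormDiff}(1) from the ``opposite'' branches of the formula~\eqref{EqSameNil}.

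One then concludes by a short case analysis on $d := s_1-s_1'$ and the sign of $k_1-k_1'$:
\begin{itemize}
\item if $|d|\ge 2$, both pairings are $0$ by the last clause of~\eqref{EqSameNil};
\item if $d=0$, both pairings are $\pm 1$ or $0$ according to the sign of $k_1-k_1'$ resp.\ $k_2-k_2'=-(k_1-k_1')$, and they cancel;
\item if $|d|=1$, both pairings take values in $\{\pm 1/2,0\}$, and the sign reversal on passing from $(s_1,k_1)$ to $(s_2,k_2)$ again makes them cancel (one has to be a little careful in the boundary sub-case $k_1=k_1'$, which via skew-symmetry forces $|d|=1$ iff $|i-i'|=1$).
\end{itemize}
The admissibility bounds $1\le s_\bullet\le n$ and $0\le k_\bullet<s_\bullet$ follow from $1\le i\le r\le b\le n$ (and similarly with primes), so Lemma~\ref{LemSkewProdFormDiff}(1) applies in every subcase.

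The only genuine piece of insight is the two conservation laws $s_1+s_2=N$, $k_1+k_2=n-r$, which ensure that the $\nee$- and $\seee$-contributions always come with opposite signs; after that the case analysis is mechanical, and the final appeal to~\eqref{EqCommComm} is immediate. There is no real obstacle, but the matching of signs is what one must pin down carefully.
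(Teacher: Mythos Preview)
Your proposal is correct and follows essentially the same route as the paper. The paper also reduces to the vanishing of the skew pairing, writes both summands in terms of the $\nee$-formula~\eqref{EqSameNil} (using that $\seee$ and $\nee$ obey the same rule), and observes the cancellation; your ``conservation laws'' $s_1+s_2=N$, $k_1+k_2=n-r$ are exactly the symmetry the paper invokes, only you make the case analysis explicit where the paper leaves it to the reader. (Your parenthetical about $k_1=k_1'$ forcing $|d|=|i-i'|$ is a harmless aside that is not actually needed for the argument.)
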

\begin{proof}
Writing 
\[
s= b-i+1,\qquad s' = b'-i'+1,
\]
it follows immediately from \eqref{EqSameNil} that 
\begin{eqnarray*}
&& \hspace{-2cm} (\nee_{b-i+1,b-r}\oplus \seee_{n-b+i,n-b},\nee_{b'-i'+1,b'-r}\oplus \seee_{n-b'+i',n-b'})\\ 
&&= (\nee_{s,b-r},\nee_{s',b'-r}) + (\seee_{n-s+1,n-b},\seee_{n-s'+1,n-b'}) \\
&& =  (\nee_{s,b-r},\nee_{s',b'-r}) + (\nee_{n-s+1,n-b},\nee_{n-s'+1,n-b'}) \\
&& = 0.
\end{eqnarray*}
The lemma now follows from \eqref{EqCommComm}. 
\end{proof}

\begin{defn}
The \emph{braided Fock--Goncharov flip} $\Fc = \Fc_{\hbar} \in \Bc(\Hc\otimes \Hc)$ is defined as the unitary 
\begin{eqnarray}
\label{eq:F-1or}
\Fc &=& \prod_{1 \le r \le n}^{\longra} \prod_{r \le b \le n}^{\longra} \prod_{1 \le i \le r}^{\longra} B^{b,r,i}\\ 
&=& \label{eq:F-1or2} \prod_{1 \le r \le n}^{\longra} \prod_{1 \le s \le n}^{\longra} \underset{0\le (n-r)-k\le n-s}{\prod_{0 \le k \le s-1}^{\longra}}   \varphi\hr{\nee_{s,k}\oplus \seee_{N-s,(n-r)-k}}\\
&=& \label{eq:F-3or} \prod_{1 \le r \le n}^{\longra} \prod_{1 \le s \le n}^{\longra} \underset{0\le (n-r)-k\le n-s}{\prod_{0 \le k \le s-1}^{\longra}}   \varphi\hr{\nee_{N-s,(n-r)-k}\oplus \seee_{s,k}}.
\end{eqnarray}
\end{defn}
Note that by Lemma \ref{LemOrdering}, only the first product over $r$ in the above expressions must be ordered. 

\begin{remark}
   We note that $\Fc$ is nothing but a unitary implementing the quantum cluster transformation between cluster charts for the moduli space of framed $SL_N$ local systems corresponding to ideal triangulations related by a single diagonal flip, see~\cite[Section 10.3]{FG06}.
\end{remark}

\begin{lemma}
    The braided Fock--Goncharov flip $\mathcal{F}'$ coincides with the unitary $\mathcal{R}$ from~\eqref{eq:R-init2}.
\end{lemma}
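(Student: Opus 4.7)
The plan is to factor $\mathcal{R}$ further using Proposition~\ref{prop:R-factor}, expand each resulting piece into elementary quantum dilogarithms via Lemma~\ref{lem:dilog-product}, and then reorganize the product into the form defining $\mathcal{F}$.

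By Proposition~\ref{prop:R-factor} the unitary $\mathcal{R}$ equals the ordered product, taken over $k$ from $n$ down to $1$ and then $a$ from $k$ up to $n$, of the factors $\overline{F}_\hbar(\stgF_{a,a+1}^{[k]}\otimes\stgE_{a,a+1})$. Direct inspection of the single-row subgraphs $\Gamma_{\mathbb{F}_{a,a+1}}$ and $\Gamma_{\mathbb{E}_{a,a+1}}$, together with the definitions of the $\nee$- and $\seee$-vectors, yields the identifications $\stgF_{a,a+1}^{[k]}=\Eb(\nee_{a,k-1})$ for $1\le k\le a$ and $\stgE_{a,a+1}^{[j]}=\Eb(\seee_{N-a,j-1})$ for $1\le j\le N-a$, so that $\stgE_{a,a+1}=\boxplus_{j=1}^{N-a}\Eb(\seee_{N-a,j-1})$. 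Applying Lemma~\ref{lem:dilog-product} (whose hypotheses on the one-letter braid word $\sigma_1$ and the iterated word $\sigma_1^{N-a}$ are trivially verified via Demazure moves) then expands each factor as
\[
\overline{F}_\hbar(\stgF_{a,a+1}^{[k]}\otimes\stgE_{a,a+1}) = \varphi(\nee_{a,k-1}\oplus\seee_{N-a,N-a-1})\cdots\varphi(\nee_{a,k-1}\oplus\seee_{N-a,0}),
\]
with the factor coming from the path using the right-most vertical edge appearing on the left. Re-indexing via $k'=k-1$, $j'=j-1$ writes $\mathcal{R}$ as an ordered product of quantum dilogarithms $\varphi(\nee_{a,k'}\oplus\seee_{N-a,j'})$ over all triples $(a,k',j')$ with $a\in[1,n]$, $k'\in[0,a-1]$, $j'\in[0,n-a]$.

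I would then match these factors with those of $\mathcal{F}$ via the substitution $s=a$, $k_{\mathcal F}=k'$, $r=n-k'-j'$; the constraints on $(a,k',j')$ translate precisely into those of~\eqref{eq:F-1or2}, so the underlying set of quantum dilogarithm factors is the same. The principal remaining task, and the chief technical obstacle, is to check that the ordering produced by expanding $\mathcal{R}$ can be rearranged into the $r$-ordering prescribed by $\mathcal{F}$ using only commutations of skew-orthogonal pairs. Since the $V$-summands containing the $\nee$- and $\seee$-vectors are mutually skew-orthogonal, the skew-product of two such factors reduces to $(\nee_{a_1,k'_1},\nee_{a_2,k'_2})+(\seee_{N-a_1,j'_1},\seee_{N-a_2,j'_2})$, whose summands are tabulated in Lemma~\ref{LemSkewProdFormDiff}. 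A direct case analysis then shows that whenever an adjacent pair of factors must be swapped to pass from the $\mathcal{R}$-ordering to the $\mathcal{F}$-ordering, the two contributions have opposite signs and cancel, so that the pair commutes; this recovers Lemma~\ref{LemOrdering} as the special case of two factors sharing the same value of $r$. The final combinatorial bookkeeping is carried out by a downward induction on the outer index $k$ of the $\mathcal{R}$-factorisation: the new factors that enter when $k$ decreases by one are slid into the partially assembled $r$-ordered string by successive admissible commutations, completing the identification $\mathcal{R}=\mathcal{F}$.
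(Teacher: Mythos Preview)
Your approach is essentially the paper's: start from Proposition~\ref{prop:R-factor}, expand each factor via Lemma~\ref{lem:dilog-product}, identify the resulting monomials with the $B^{b,r,i}$, and reorder. Your substitution $s=a$, $k_{\mathcal F}=k'$, $r=n-k'-j'$ is exactly the right bijection, and your identification $\stgF_{a,a+1}^{[k]}=\Eb(\nee_{a,k-1})$, $\stgE_{a,a+1}^{[j]}=\Eb(\seee_{N-a,j-1})$ is correct.

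Where you are in fact \emph{more careful} than the paper: the paper invokes only Lemma~\ref{LemOrdering} for the reordering, but for $n\ge 3$ the $r$-values in the $\mathcal R$-ordering are \emph{not} weakly increasing (e.g.\ for $n=3$ one gets the sequence $1,1,2,2,1,2,3,2,3,3$), so same-$r$ swaps alone do not suffice. Your observation that the needed adjacent swaps all involve skew-orthogonal pairs is the correct fix: when factor~$1$ precedes factor~$2$ in $\mathcal R$ with $r_1>r_2$, one always has $k_1\ge k_2$ and (from $j_1+k_1<j_2+k_2$) $j_1<j_2$; feeding this into the $|a_1-a_2|\le 1$ cases of Lemma~\ref{LemSkewProdFormDiff} gives cancelling $\pm 1/2$ contributions from the $\nee$- and $\seee$-summands. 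This is precisely the case analysis you propose, and it goes through; Lemma~\ref{LemOrdering} is indeed the special case $r_1=r_2$. The inductive bookkeeping you sketch at the end is a clean way to organise the bubble-sort.
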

\begin{proof}
We apply Lemma~\ref{lem:dilog-product} to decompose the factorization of $\mathcal{R}$ from the right-hand-side in Proposition~\ref{prop:R-factor} into a product of quantum dilogarithms of the monomials~\eqref{DefBbri}, and by Lemma~\ref{LemOrdering} the factors can be reordered to obtain the expression for $\mathcal{F}'$ in ~\eqref{eq:F-1or}.
\end{proof}

\begin{remark}
We stress that $\Fc$ is defined with respect to a fixed unitary $\hbar$-representation $\Hc = (\Hc,\pi)$ of $\wbtd{N}$. If we want to emphasize this, we will speak of a \emph{braided Fock--Goncharov flip on $\Hc$} or \emph{with respect to $(\Hc,\pi)$}. 
\end{remark}

We now introduce notations
\begin{align*}
B_{[12]}^{b,r,i} = B_{12}^{b,r,i}= 
\varphi\hr{\nee_{b-i+1,b-r} \oplus \seee_{n-b+i,n-b} \oplus 0}, \\
B_{[23]}^{b,r,i} = B_{23}^{b,r,i}= 
\varphi\hr{0 \oplus \nee_{b-i+1,b-r} \oplus \seee_{n-b+i,n-b}},
\end{align*}
and
$$
B_{[13]}^{b,r,i} = 
\varphi\hr{\nee_{b-i+1,b-r} \oplus \nee_{b-i+1,b-i+1} \oplus \seee_{n-b+i,n-b}},
$$
and we let $\Fc_{[12]},\Fc_{[23]}$ and $\Fc_{[13]}$ be their corresponding products in the order as in \eqref{eq:F-1or}.

\begin{theorem}\label{TheoFlip}
The following pentagon relation holds:
\beq
\label{EqBraidPent}
\Fc_{[23]} \Fc_{[12]} = \Fc_{[12]} \Fc_{[13]} \Fc_{[23]}.
\eeq
\end{theorem}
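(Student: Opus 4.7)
The strategy is to reduce the pentagon identity \eqref{EqBraidPent} to a cascade of Kashaev pentagon identities \eqref{EqCommKashaev} together with commutations coming from the vanishing skew-pairings computed in Lemma \ref{LemSkewProdFormDiff}. The key observation is that under the identification $\Fc = \Rc$ established via Proposition \ref{prop:R-factor}, our flip takes the form of an ordered product of quantum dilogarithms whose arguments live in rank-$1$ subspaces, which is structurally analogous to the expression of the canonical element of the Heisenberg double of the quantum Borel. For such an element, the equation $W_{23}W_{12} = W_{12}W_{13}W_{23}$ is the standard pentagon equation.

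My first approach would be induction on $n$. The base case $n=1$ is precisely Theorem \ref{TheoPentagon} applied to a single Kashaev pentagon, since then each $\Fc_{[ij]}$ reduces to a single quantum dilogarithm $\varphi(\nee_{1,0}\oplus\seee_{1,0})$ on the appropriate tensor factors, and the pairings \eqref{EqDiffNil}, \eqref{EqSameNil} yield exactly the hypothesis $(v,w)=1$ needed to apply \eqref{EqCommKashaev}. For the inductive step, I would use Proposition \ref{prop:R-factor} to factor
\[
\Fc = \Fc^{(\text{outer})} \cdot \Fc^{(\text{inner})},
\]
where $\Fc^{(\text{inner})}$ is the braided Fock--Goncharov flip for the embedded $\widetilde{\nabla}_{N-1} \hookrightarrow \widetilde{\nabla}_N$ (corresponding to the index range $2 \le a \le n$), which satisfies the pentagon identity by induction, and $\Fc^{(\text{outer})}$ collects the remaining factors $\overline{F}_\hbar(\stgF_{1,b}^{[1]}\otimes\stgE_{1,b})$. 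The inductive step then amounts to showing that $\Fc^{(\text{outer})}$ can be braided through using only commutations and rank-$1$ Kashaev pentagons, in a manner paralleling the argument in Lemma \ref{lem:1}.

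The essential computational input is a careful bookkeeping of skew-pairings between the three copies of $\widetilde{\nabla}_N$, implemented via Lemma \ref{LemSkewProdFormDiff}. Specifically, writing $v^{(i)}$ for a vector placed in the $i$-th tensor factor, one verifies using \eqref{EqDiffNil}--\eqref{EqDiffCart} that:
\begin{itemize}
\item factors in $\Fc_{[12]}$ and $\Fc_{[23]}$ whose underlying weights pair to zero simply commute by \eqref{EqCommComm};
\item the remaining `resonant' pairs satisfy $(v^{(1)}\oplus \bullet^{(2)}, \bullet^{(2)} \oplus w^{(3)}) = 1$ exactly when the Kashaev relation can be applied to produce a factor whose argument in the middle leg matches the weight of the corresponding factor of $\Fc_{[13]}$.
\end{itemize}
This converts the pentagon into the statement that two sequences of such moves transform $\Fc_{[23]}\Fc_{[12]}$ into $\Fc_{[12]}\Fc_{[13]}\Fc_{[23]}$.

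The main obstacle is organizing the combinatorial bookkeeping so that the moves can be performed in a compatible order without obstruction. The cluster-theoretic interpretation of $\Fc$ as the quantum mutation operator implementing the Fock--Goncharov flip of an ideal triangulation provides the conceptual guidance: the pentagon identity mirrors the fact that in a pentagon with an ideal triangulation, the two ways of flipping from one triangulation to another via single diagonal flips produce the same final configuration. I expect the proof will essentially follow the mutation-sequence realization of this pentagon of triangulations given in \cite[Section 10.3]{FG06}, with the Kashaev identity inserted at each diagonal flip and the commutation moves used to reorder the quantum dilogarithm factors at each stage. A self-contained algebraic verification along these lines is what appears to be deferred to Appendix G.
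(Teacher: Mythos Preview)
Your identification of the ingredients is correct: the proof does reduce to a cascade of Kashaev pentagon identities \eqref{EqCommKashaev} interspersed with commutations coming from vanishing skew-pairings, exactly as you describe. However, the organizational scheme you propose---induction on the rank $n$ via a factorization $\Fc = \Fc^{(\text{outer})}\cdot\Fc^{(\text{inner})}$---differs from what the paper does and carries a subtlety you do not address.

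The paper's proof in Appendix \ref{AppProofTheoFlip} is not inductive in $n$. It is a direct $n$-step iterative manipulation of the full expression $\Fc_{[23]}\Fc_{[12]}$: starting from alternative orderings of $\Fc$ (formulas \eqref{eq:F-2}--\eqref{eq:F-5}, derived from the commutation Lemma \ref{lem:bri-bri'} and its Corollary \ref{cor:bri-bri'}), one applies a key transformation (Lemma \ref{lem:step}) once for each $s=1,\ldots,n$, each step producing a layer of $B_{[13]}$ factors and shuffling $B_{[12]}$, $B_{[23]}$ factors toward the boundary. After $n$ steps one arrives at $\Fc_{[12]}\Fc_{[13]}\Fc_{[23]}$. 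Lemma \ref{lem:step} itself is proved by two rounds of Kashaev pentagon plus reordering, so the spirit matches your description, but the bookkeeping is organized along the step-index $s$ rather than by peeling off rank.

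The specific difficulty with your rank-induction approach is the middle leg of $\Fc_{[13]}$. By definition
\[
B_{[13]}^{b,r,i} = \varphi\bigl(\nee_{b-i+1,b-r} \oplus \nee_{b-i+1} \oplus \seee_{n-b+i,n-b}\bigr),
\]
and the middle entry $\nee_{b-i+1}$ is the \emph{full} diagonal sum in $\wbtd{N}$. This does not restrict to the corresponding diagonal under any embedding $\wbtd{N-1}\hookrightarrow\wbtd{N}$. Hence even if your $\Fc^{(\text{inner})}$ satisfies a rank-$(n-1)$ pentagon, the object $(\Fc^{(\text{inner})})_{[13]}$ produced by that identity will not match the inner portion of the rank-$n$ $\Fc_{[13]}$: the middle legs differ by boundary terms. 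You would need a separate argument to absorb this discrepancy, and that is precisely where the combinatorics become delicate. The paper's approach sidesteps this by never reducing rank: it works entirely within rank $n$ and organizes the Kashaev moves along a different axis.
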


This result is well-known, see \cite{DGG16}, but for reader's convenience we present a direct algebraic proof in Appendix \ref{AppProofTheoFlip}.

\begin{remark}
One can show that $\Fc$ is a \emph{braided multiplicative unitary} as in \cite[Definition 3.2]{MRW17}. As we will not follow this approach further in this paper, we refrain from spelling out the details.
\end{remark} 


\subsection{Fock--Goncharov flip}

We resume the setting of Section \ref{SecFG}. We fix $\hbar\in \R^{\times}$ and a unitary $\hbar$-representation of $\wbtd{N}$ on a Hilbert space $\Hc$. 

By \eqref{EqVanishing} and \eqref{EqOrthogonalWeights}, the following definition is meaningful.

\begin{defn}
We define
\begin{equation}\label{EqGaussFG}
\Kc = \Kc_{\hbar} = \Gauss_{\hbar}\left(2\sum_{t=1}^n \nevarpi_t \otimes \seee_{N-t}\right) =  \Gauss_\hbar\left(2 \sum_{t=1}^n \nee_{t} \otimes \sevarpi_{N - t}\right). 
\end{equation}
\end{defn}

\begin{theorem}
The unitary
\begin{equation}\label{EqRealMU}
\Fct = \Fct_{\hbar} = \Kc\Fc
\end{equation}
is a multiplicative unitary, i.e.\ 
\begin{equation}\label{EqMUEq}
\Fct_{23}\Fct_{12} = \Fct_{12}\Fct_{13}\Fct_{23}.
\end{equation}
\end{theorem}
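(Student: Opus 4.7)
The plan is to deduce the pentagon for $\Fct=\Kc\Fc$ from the braided pentagon \eqref{EqBraidPent} for $\Fc$, by systematically commuting the Gaussian factors $\Kc_{ij}$ through the unipotent factors $\Fc_{ij}$ using the Gaussian shift formulas \eqref{EqGausEq}--\eqref{EqGausEqInv}. One first observes that $\Fc_{[12]}=\Fc_{12}$ and $\Fc_{[23]}=\Fc_{23}$ by inspection of the definitions (no middle-leg contribution in those elementary factors), so only $\Fc_{[13]}$ differs from its naive counterpart $\Fc_{13}$, the difference being the extra $\nee_{b-i+1,b-i+1}=\nee_{b-i+1}$ in the middle leg of each elementary factor $B^{b,r,i}_{[13]}$. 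The Gaussian $\Kc$ is designed precisely to absorb this discrepancy.

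I will then establish three intermediate identities. First, a \emph{pentagon for $\Kc$ itself}, namely $\Kc_{12}\Kc_{13}\Kc_{23}=\Kc_{23}\Kc_{12}$, with $\Kc_{12}$ and $\Kc_{13}$ commuting (the latter because $(\nevarpi_t,\nevarpi_{t'})=0$ by \eqref{EqOrthogonalWeights}); the former follows from a Baker--Campbell--Hausdorff computation, valid because the only non-zero cross-pairings $(\nevarpi_{t'},\seee_{N-t})=\tfrac12\delta_{t,t'}$ produce central commutators. Second, two \emph{partial commutations}: $\Kc_{12}$ commutes with $\Fc_{23}$ (because $(\nee_{s,k},\seee_{s'})=0$ by \eqref{EqDiffCarOpp}), and $\Kc_{13}\Kc_{23}$ commutes with $\Fc_{12}$ (the third-leg shift produced by $\Kc_{23}$ via $(\nevarpi_t,\seee_{s',k'})$ is precisely cancelled by that produced by $\Kc_{13}$ via $(\nevarpi_t,\nee_{s',k'})$, using \eqref{EqPairFundWeight}). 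Third, the \emph{key Gaussian twist identity} $\Fc_{[13]}=\Kc_{23}^{-1}\Fc_{13}\Kc_{23}$. Granted these, the desired pentagon reduces to the chain
\begin{align*}
\Fct_{23}\Fct_{12} &= \Kc_{23}\Kc_{12}\Fc_{23}\Fc_{12}=\Kc_{23}\Kc_{12}\Fc_{12}\Fc_{[13]}\Fc_{23}=\Kc_{12}\Kc_{13}\Kc_{23}\Fc_{12}\Fc_{[13]}\Fc_{23}\\
&=\Kc_{12}\Fc_{12}\Kc_{13}\Kc_{23}\Fc_{[13]}\Fc_{23}=\Kc_{12}\Fc_{12}\Kc_{13}\Fc_{13}\Kc_{23}\Fc_{23}=\Fct_{12}\Fct_{13}\Fct_{23},
\end{align*}
applying in order the $\Kc_{12}$--$\Fc_{23}$ commutation, the braided pentagon, the $\Kc$-pentagon, the $\Kc_{13}\Kc_{23}$--$\Fc_{12}$ commutation, and the twist identity in the form $\Kc_{23}\Fc_{[13]}=\Fc_{13}\Kc_{23}$.

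The main obstacle is verifying the twist identity. Applying \eqref{EqGausEqInv} factor-by-factor, the shift induced in the middle leg when conjugating $B^{b,r,i}_{13}$ by $\Kc_{23}^{-1}$ is $\sum_t 2(\seee_{N-t},\seee_{n-b+i,n-b})\,\nevarpi_t$. A careful case analysis of \eqref{EqSameNil}---including in particular the skew-symmetric case $s'=s,\ k'<k$ which contributes $-1$ and is easy to overlook---yields non-zero pairings only for $t\in\{b-i,\,b-i+1,\,b-i+2\}$, summing to $\nevarpi_{b-i}+\nevarpi_{b-i+2}-2\nevarpi_{b-i+1}=-\nee_{b-i+1}$ via \eqref{EqFundWeightUp} and the Cartan matrix \eqref{EqBCartan}. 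Thus the middle-leg argument of $\Kc_{23}^{-1}B^{b,r,i}_{13}\Kc_{23}$ is precisely the additional $\nee_{b-i+1}$ distinguishing $B^{b,r,i}_{[13]}$ from $B^{b,r,i}_{13}$, and since the index sets agree the factor-by-factor conjugates assemble into $\Fc_{[13]}$. As a consistency check, the alternative presentation $\Kc=G(2\sum_t\nee_t\otimes\sevarpi_{N-t})$ from \eqref{EqGaussFG} delivers the same shift in a single line via $(\sevarpi_{N-t},\seee_{s',k'})=-\tfrac12\delta_{N-t,s'}$ from \eqref{EqPairFundWeight}.
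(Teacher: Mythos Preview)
Your proof is correct and follows essentially the same strategy as the paper: reduce the pentagon for $\Fct$ to the braided pentagon \eqref{EqBraidPent} for $\Fc$ together with the pentagon \eqref{EqPentK} for $\Kc$, using the commutation of $\Fc_{12}$ with $\Kc_{13}\Kc_{23}$ (via \eqref{EqPairFundWeight}) and the twist identity $\Kc_{23}\Fc_{[13]}=\Fc_{13}\Kc_{23}$. One minor slip: in your detailed verification of the twist identity via the first presentation of $\Kc$, the shift formula from \eqref{EqGausEqInv} gives $(\seee_{n-b+i,n-b},\seee_{N-t})$ rather than $(\seee_{N-t},\seee_{n-b+i,n-b})$, so the sum is $2\nevarpi_{b-i+1}-\nevarpi_{b-i}-\nevarpi_{b-i+2}=+\nee_{b-i+1}$, not $-\nee_{b-i+1}$; your stated conclusion is correct, and your cross-check via the second presentation $\Kc=G(2\sum_t\nee_t\otimes\sevarpi_{N-t})$ (which is exactly how the paper argues) confirms it directly.
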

We refer to $\Fct$ as the \emph{Fock--Goncharov flip}. 
\begin{proof}
By \eqref{EqGausEqInv} and \eqref{EqDiffCarOpp}, we immediately get that the left hand side of \eqref{EqMUEq} equals
\[
\Fct_{23}\Fct_{12}  = \Kc_{23}\Kc_{12} \Fc_{23}\Fc_{12}. 
\]
On the other hand, for the factors on the right hand side of \eqref{EqMUEq} we find via \eqref{EqPairFundWeight} that
\begin{eqnarray*}
&& \hspace{-2cm} \varphi(\nee_{s,k}\oplus \seee_{N-s,(n-r)-k})_{12} \Kc_{13}\Kc_{23}\\  &&  = \Kc_{13}  \varphi(\nee_{s,k}\oplus \seee_{N-s,(n-r)-k}\oplus \seee_{N-s})\Kc_{23} \\
&& = \Kc_{13}\Kc_{[23]}\varphi(\nee_{s,k}\oplus \seee_{N-s,(n-r)-k})_{12},
\end{eqnarray*}
so $\Fc_{12}$ commutes with $\Kc_{13}\Kc_{23}$, while  from the last identity in \eqref{EqGaussFG} and \eqref{EqPairFundWeight} it follows that
\[
\varphi(\nee_{s,k}\oplus \seee_{N-s,(n-r)-k})_{[13]}\Kc_{23} = \Kc_{23} \varphi(\nee_{s,k}\oplus \nee_s \oplus \seee_{N-s,(n-r)-k}).
\]
Using the notation introduced above Theorem \ref{TheoFlip}, this gives that the right hand side of \eqref{EqMUEq} is 
\[
\Fct_{12}\Fct_{13}\Fct_{23} =  \Kc_{12}\Kc_{13}\Kc_{23} \Fc_{[12]}\Fc_{[13]}\Fc_{[23]},
\]
so since $\Fc_{[23]} = \Fc_{23}$ and $\Fc_{[12]} = \Fc_{12}$, the theorem follows from Theorem \ref{TheoFlip} once we show that
\begin{equation}\label{EqPentK}
\Kc_{23}\Kc_{12} = \Kc_{12}\Kc_{13}\Kc_{23}.
\end{equation}
But using \eqref{EqPairFundWeight} and \eqref{EqGausEq}  we have 
\[
\Kc (\eb(\seee_{s})\otimes 1)\Kc^* = \Eb(\seee_s\oplus \seee_s), 
\]
from which \eqref{EqPentK} directly follows.
\end{proof}

\begin{remark}
Note that if for $1\leq i\leq r \leq b \leq n$ we write 
\begin{equation}\label{DefCbri}
C^{b,r,i} = \varphi\hr{-\swe_{b-i+1,r-i}\oplus -\nwe_{n-b+i,i-1}},
\end{equation}
then by \eqref{EqGausEq}, \eqref{EqDiffOr} and \eqref{EqSameCar} we get the alternative formula 
\begin{equation}\label{EqRealMUAlt}
\Fct = \Fctt\Kc,
\end{equation}
where 
\begin{eqnarray}
\Fctt &=&  \prod_{1 \le r \le n}^{\longra} \prod_{r \le b \le n}^{\longra} \prod_{1 \le i \le r}^{\longra} C^{b,r,i}\\
&=& \label{eq:Fprime-1or2} \prod_{1 \le r \le n}^{\longra} \prod_{1\leq s \leq n}^{\longra} \underset{0\le r-k-1\le n-s}{\prod_{0 \le k \le s-1}^{\longra}}\varphi\hr{-\swe_{s,k}\oplus -\nwe_{N-s,r-k-1}}.
\end{eqnarray}  
\end{remark}

\section{Modularity of the Fock--Goncharov flip.}

Our goal in this section is to show that for suitable unitary $\hbar$-representations of $\wbtd{N}$, the Fock--Goncharov flip \eqref{EqRealMU} is \emph{modular}, in the sense of \cite[Definition 2.1]{SW01}.

\subsection{Modularity}
\begin{defn}\label{DefModular}
Let $\Wbb \in \Bc(\Hc\otimes \Hc)$ be a  unitary. We call $\Wbb$ \emph{modular} if there exist strictly positive operators $\Qb,\hat{\Qb}$ on $\Hc$ such that 
\begin{equation}\label{EqCommModStruct}
\Wbb^*(\hat{\Qb}\otimes \Qb)\Wbb = (\hat{\Qb}\otimes \Qb)
\end{equation}
and such that there exists a (necessarily unique) unitary operator $\wWbb \in \Bc(\overline{\Hc}\otimes \Hc)$ 
such that 
\begin{equation}\label{EqRightInvOp}
\langle \xi\otimes \eta',\Wbb(\eta\otimes \Qb\xi')\rangle = \langle \overline{\eta}\otimes \Qb\eta',\wWbb(\overline{\xi} \otimes \xi')\rangle,\qquad \forall \xi',\eta'\in \msD(\Qb),\xi,\eta\in \Hc.
\end{equation}
We then say that $\Wbb$ is \emph{modular with respect to the couple $(\Qb,\hat{\Qb})$}, and we call $(\Wbb,\Qb,\hat{\Qb},\wWbb)$ a \emph{modular datum}.
\end{defn}

\begin{remark}
\begin{enumerate}
\item Under the assumption of \eqref{EqCommModStruct}, it is easily seen that \eqref{EqRightInvOp} holds if and only if
\begin{equation}\label{EqLeftInvOp}
\langle \hat{\Qb}\xi\otimes \eta',\Wbb(\eta\otimes \xi')\rangle = \langle \overline{\hat{\Qb}\eta}\otimes \eta',\wWbb(\overline{\xi} \otimes \xi')\rangle,\qquad \forall \xi,\eta\in \msD(\hat{\Qb}),\xi',\eta'\in \Hc.
\end{equation}
\item The above definition is presented in a slightly different form than \cite[Definition 2.1]{SW01}, but the equality of the two definitions is shown in  \cite[Proposition 2.2]{SW01}.\footnote{We use in fact a slightly stronger version of that property when invoking the equivalence of conditions below, but this is immediately observed to hold from the proof of \cite[Proposition 2.2]{SW01}. We also take the opportunity to switch the inverse of the unbounded operator to the other side, which is as well simply a cosmetic operation. Finally, we formulate the condition for arbitrary unitaries on $\Hsp \otimes \Hsp$, instead of just multiplicative unitaries.}
\end{enumerate}
\end{remark}

\begin{remark}
One should think of $\hat{\Qb}$ as an auxiliary object implementing `the positive root of the antipode squared' on a quantum group (and $\Qb$ as the corresponding object for the dual quantum group). Indeed, assume for the moment that one has a  Hopf $*$-algebra $(M,\Delta)$ represented faithfully (in a $*$-preserving way) by bounded operators on a Hilbert space $\Hc$:
\[
\pi\colon  M \rightarrow \Bc(\Hc). 
\]
Then one wants to make a contragredient representation on the conjugate-linear Hilbert space $\overline{\Hsp}$ by means of the antipode $S$: 
\begin{equation}\label{EqContraRep}
\pi^c\colon M \rightarrow \Bc(\overline{\Hc}),\qquad a \mapsto \overline{\pi(S(a)^*)}.
\end{equation}
The problem is that this representation will not be $*$-preserving in general, as the antipode is not $*$-preserving if $S^2 \neq \id$: 
\[
S(a)^* = S^{-1}(a^*),\qquad a\in M.
\]
One hence tries to find an operator $\hat{\Qb}$ on $\Hc$ restoring the $*$-compatibility: 
\begin{equation}\label{EqContraGredRenorm}
\overline{\pi}\colon M \rightarrow \Bc(\overline{\Hc}),\qquad a \mapsto \overline{\hat{\Qb}\pi(S(a)^*)\hat{\Qb}^{-1}}
\end{equation}
(ignoring questions of unboundedness/domains of definition).

Now given a `good' multiplicative unitary $\Wbb$ (where `good' will turn out to be exactly the modularity condition), one intuitively thinks of its first leg $M$, consisting of (a completion of) elements of the form $(\id\otimes \omega)\Wbb$ for $\omega \in \Bc(\Hc)_*$, as forming a `topological' Hopf $*$-algebra with coproduct implemented by 
\begin{equation}\label{EqCoprodInt}
a \mapsto \Wbb^*(1\otimes a)\Wbb.
\end{equation}
(see Section \ref{SecvNMUMod}). In this setup, the antipode of $M$ should then act on the first leg of $\Wbb$ as 
\[
(S\otimes \id)\Wbb = \Wbb^*,
\]
which leads to \eqref{EqContraRep} becoming 
\[
\pi^c((\id\otimes \omega)\Wbb) = \overline{\pi((\id\otimes \overline{\omega})\Wbb)}, \qquad \overline{\omega}(x) = \overline{\omega(x^*)}. 
\]
Writing out the definition of $\overline{\pi}$ in  \eqref{EqContraGredRenorm} when $\pi$ is the identity representation and $\omega$ a vector functional, we arrive at the equation \eqref{EqLeftInvOp}, with 
\[
\overline{\pi}((\id\otimes \omega)\Wbb) = (\id\otimes \omega)\wWbb^*.
\]
Formally, the above manipulations then also lead to the conclusion that one should have 
\[
S^2(a) = \hat{\Qb}^2a\hat{\Qb}^{-2},\qquad a\in M,
\]
which explains the intuition of $\hat{\Qb}$ implementing `the positive root of the antipode squared'.

Note that making all this intuition rigorous requires a lot of hard work. The key point is however that in the analytic setting of multiplicative unitaries (as opposed to the setting of Hopf $*$-algebras), equation \eqref{EqLeftInvOp} should rather be seen as the starting point, from which a theory resembling the Hopf $*$-algebraic one can then be derived.
\end{remark}

The following theorem is known, see Appendix \ref{SecModAxB}. We will use the notation $\mathbf{X}$ and $\Pb = \frac{1}{2\pi i }d/dx$ for the infinitesimal self-adjoint generators of the unitary groups on $L^2(\R)$ determined by 
\begin{equation}\label{EqInfGenPosMom}
(e^{it\mathbf{X}}f)(x) = e^{itx}f(x),\qquad (e^{2\pi it\Pb}f)(x) = f(x+t),\qquad f\in L^2(\R).
\end{equation}
\begin{theorem}\label{TheoRank1}
Consider $\Hc = L^2(\R)$ together with 
\[
\stgF = e^{\mathbf{X}},\qquad \Lb = e^{-4\pi^2  \hbar \Pb},
\]
so that $\stgF$ and $\Lb$ form an $\hbar$-commuting pair:
\[
\stgF^{it}\Lb^{is} = e^{2\pi i \hbar st}\Lb^{is}\stgF^{it},\qquad s,t\in\R. 
\]
Put
\[
\Kb = \Fb^{-1}, \qquad \stgE = \Lb\star\Fb^{-1}. 
\]
Then, using the notation \eqref{EqQuantExp},  the unitary
\[
\Wbb = \exp\left(\frac{i}{2\pi \hbar}\ln(\Lb) \otimes \ln(\Kb)\right) \overline{F}_{\hbar}(\stgF\otimes \stgE)
\]
is a modular multiplicative unitary with respect to
\begin{equation}\label{EqModStrucRank1}
\Qb = \Kb^{-\frac{1+|\hbar|^{-1}}{2}},\qquad \hat{\Qb} = \Lb^{\frac{1+|\hbar|^{-1}}{2}},
\end{equation}
and $(\Wbb,\Qb,\hat{\Qb},\wWbb)$ is a modular datum for  
\[
\wWbb = \exp\left(\frac{i}{2\pi \hbar} \ln(\overline{\Lb}) \otimes \ln(\Kb)\right) F_{\hbar}(\overline{\stgF} \otimes \Kb^{-1}\star\stgE).
\]
\end{theorem}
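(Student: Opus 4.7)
The first step is to identify $\Wbb$ as the rank-one specialization ($N=2$) of the Fock--Goncharov flip $\Fct_\hbar$ from Section~\ref{SecConstrMultUni}. In this rank, $\wbtd{2}$ is two-dimensional with basis $\{\nee_1,\seee_1\}$ and pairing $(\nee_1,\seee_1)=1$. By Stone--von Neumann uniqueness, the prescribed $\hbar$-commuting pair $(\stgF,\Lb)$ on $L^2(\R)$ generates a unique irreducible unitary $\hbar$-representation of the associated symplectic space, on which $\stgF=\Eb(\nee_1)$ and $\stgE=\Eb(\seee_1)$ up to unitary equivalence. Using $\nevarpi_1 = \tfrac12\nee_1$, the Gaussian prefactor of the statement matches $\Kc_\hbar$ from~\eqref{EqGaussFG}, so that $\Wbb$ is (unitarily equivalent to) $\Fct_\hbar$ and is therefore a multiplicative unitary by~\eqref{EqMUEq}.

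Next I would establish the commutation relation~\eqref{EqCommModStruct}. Setting $\gamma := (1+|\hbar|^{-1})/2$ so that $\hat\Qb\otimes\Qb = \Lb^\gamma\otimes\Fb^\gamma$, I would first conjugate by the Gaussian factor using~\eqref{EqGausEq} applied to $v_1=\ln\Lb$, $v_2=\ln\Kb$: this produces an operator of the form $\Lb^\gamma\otimes(\Fb^\gamma\star\Lb^{c\gamma})$ with $c$ determined by the $\hbar$-commutation of $(\Lb,\Kb)$. Conjugation by $\overline F_\hbar(\stgF\otimes\stgE)$ then acts nontrivially only on the piece involving $\ln\stgE$, and can be computed via functional calculus using the shift identity $F_\hbar(x)^* g(\ln\stgE) F_\hbar(x)$ = $g(\ln\stgE \boxplus (\cdot))$ that comes out of Proposition~\ref{PropSkewComm}. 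The two conjugations are set up so as to cancel exactly, leaving $\hat\Qb\otimes\Qb$ invariant. This is precisely the positive ax+b analysis of Woronowicz--Zakrzewski~\cite{WZ02}, reviewed in Appendix~\ref{SecModAxB}, to which I would ultimately appeal.

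For the explicit formula for $\wWbb$, I would verify directly that it satisfies the intertwining relation~\eqref{EqRightInvOp}. Both $\Wbb$ and the proposed $\wWbb$ admit kernel representations obtained by inserting the integral formula~\eqref{EqFunctWTheta} for $\varphi$ into the quantum dilogarithm factors and exploiting the fact that the Gaussian prefactors act diagonally on the joint spectral decomposition of $(\Lb,\Kb)$. Testing~\eqref{EqRightInvOp} on the dense set of analytic vectors $\pi(g_{t,w,\kappa})\zeta$ from Remark~\ref{RemFormSumFinite}, on which all operators (including the unbounded $\Qb$) act in a controlled way, the identity reduces to an equality of scalar analytic kernels in a single real variable. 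This equality follows from the reflection and shift identities for $\varphi$ collected in Appendix~\ref{Apqdilog}, together with the Kashaev pentagon~\eqref{EqCommKashaev} through functional calculus.

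The principal obstacle is analytic rather than algebraic: one must justify each manipulation of the unbounded positive operator $\Qb$ inside the inner product in~\eqref{EqRightInvOp}, and propagate the kernel identity from the core of Gaussian test vectors to the entire Hilbert space. The Gaussian vectors of Remark~\ref{RemFormSumFinite} provide the regularity needed, and the argument closes by density once all the functional-calculus identities have been verified on this dense domain.
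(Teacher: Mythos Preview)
Your plan for multiplicativity and for the commutation~\eqref{EqCommModStruct} is essentially fine, with one correction: $\wbtd{2}$ is three-dimensional, spanned by $e_{101},e_{110},e_{011}$ (see the diagram immediately after Theorem~\ref{TheoRank1}), and the paper's identification is $\stgF=\Eb(e_{101})$, $\stgE=\Eb(e_{110})$, $\Lb^{-1}=\Eb(e_{011})$, not $\stgF=\Eb(\nee_1)$. The commutation itself is immediate once one observes that $(\Lb^{it}\otimes\Fb^{it})$ fixes $\stgF\otimes\stgE$ and commutes with the Gaussian factor.

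The substantive gap is in your third step. The Kashaev pentagon plays no role in verifying~\eqref{EqRightInvOp} here, and ``inserting the integral formula~\eqref{EqFunctWTheta}'' does not yield a usable kernel, since $\varphi_\hbar=\exp(\tfrac{i}{2\pi}W)$ depends nonlinearly on that contour integral. The paper's proof in Appendix~\ref{SecModAxB} rests instead on a distributional Fourier identity, Proposition~\ref{PropFourDualV}: with $V_\theta$ as in~\eqref{EqDefV} and $\theta=|\hbar|^{-1}$, one has $(\overline V_\theta)^{\wedge}(y)=\alpha_\theta(y)\,\widehat V_\theta(y)$ for $\alpha_\theta(y)=e^{(1+\theta)y/2}e^{-i\theta y^2/4\pi}$, proved by computing $\widehat V_\theta$ explicitly as a principal value plus a $\delta$ (via contour shift to the real axis) and then applying the reflection identity~\eqref{EqComplexConj}. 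The operator argument then runs as follows. With $Z=\Wbb_{21}^*$, $\widetilde Z=\wWbb_{21}$, and test vectors in the concrete core of Proposition~\ref{PropDenseSubspCor}, one forms the Schwartz functions $\varphi(y)=\langle\xi'\otimes\eta,\Cb^{iy/2\pi\hbar}E_\hbar(\xi\otimes\eta')\rangle$ for $\Cb=\stgE\otimes\stgF$, and the analogous $\psi(y)$ on the tilded side. A direct computation using only the skew-commutation relations and the Gaussian swap~\eqref{EqAntiSwitch} gives $\psi(y)=e^{-(1+\theta)y/2}e^{iy^2/4\pi\hbar}\varphi(y)$, i.e.\ $\alpha_\theta^{\varepsilon}(\varepsilon y)\psi(y)=\varphi(y)$ with $\varepsilon=\sgn(\hbar)$. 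Since the spectral measures of $\ln\Cb$ and $\ln\widetilde\Cb$ against these vectors have densities proportional to $\widehat\varphi(\varepsilon\cdot)$ and $\widehat\psi(\varepsilon\cdot)$, the two sides of~\eqref{EqHelpInvRep} become $\int V_\theta^{\varepsilon}\,d\mu$ and $\int V_\theta^{-\varepsilon}\,d\nu$, and Proposition~\ref{PropFourDualV} together with Parseval closes the loop. You correctly flag the reflection identity as an ingredient, but the Fourier-transform/spectral-measure mechanism that converts it into the required operator identity is absent from your sketch, and the pentagon is a red herring for this particular result.
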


We can rephrase this theorem as stating that in the case $n=1$, with associated diagram 
\[
C_2' = \adjustbox{scale=0.8}{\begin{tikzcd}
& 110 & \\
101 & &011
\arrow[from=2-1, to=1-2]
\arrow[from=1-2, to=2-3]
\arrow[from=2-3, to=2-1]
\end{tikzcd}},
\]
 the Fock--Goncharov unitary $\Fct$ is modular if we use the specific unitary $\hbar$-representation of $\wbtd{2}$ on $L^2(\R)$ given by 
\[
\eb(e_{101}) = \stgF,\qquad \eb(e_{110})=\stgE,\qquad \eb(e_{011}) = \Lb^{-1}.
\] 

\begin{remark}
The notation evokes a degenerate representation of the Heisenberg double of the Hopf $*$-algebra 
\begin{multline}\label{EqCoprodF}
U_q(\mfb^-) = \C\langle L,F\mid LF = q^2FL\rangle,\qquad F^* = F,L^* = L,\\ \Delta(L) = L\otimes L,\quad \Delta(F) = F\otimes L + 1 \otimes F,
\end{multline}
with $q = e^{\pi i \hbar}$ in this formalism. Indeed, if the dual of $U_q(\mfb^-)$ is realized as 
\begin{multline}
U_q(\mfb) = \C\langle K,E\mid KE = q^2EK\rangle,\qquad E^* = E,K^* = K,\\ \Delta(K) = K\otimes K,\quad \Delta(E) = E\otimes 1 + K \otimes E,
\end{multline}
then the (normalized) natural pairing between $U_q(\mfb)$ and $U_q(\mfb^-)$ leads to the Heisenberg commutation relations
\[
KL = q^2 LK,\quad KF = FK,\quad EL = q^2 LE,\quad EF - FE = (q-q^{-1})L,
\]
which is compatible with the relations between the operators in Theorem \ref{TheoRank1}.

Further note that the coproduct \eqref{EqCoprodInt} is compatible with the coproduct in \eqref{EqCoprodF}, through an easy application of the Kashaev pentagon equation. One then sees that the modular structure \eqref{EqModStrucRank1} amounts to the following modification of the contragredient of the identity representation $\pi$ as in \eqref{EqContraGredRenorm}:
\[
\overline{\pi}(\stgF) = \overline{\stgF}\star\overline{\Lb}^{-1},\qquad \overline{\pi}(\Lb) = \overline{\Lb}^{-1}. 
\]
\end{remark}

To extend the modularity in Theorem \ref{TheoRank1} to the general setting, it is convenient to single out the following auxiliary notion, focusing on Condition \eqref{EqLeftInvOp}.

\begin{defn}\label{DefWeaklyMod}
Let $\Gc,\Hc$ be Hilbert spaces. We call a unitary $U\in \Bc(\Gc \otimes\Hc)$ \emph{weakly modular} if there exists a strictly positive operator $\hat{\Qb}$ on $\Gc$ and a unitary operator $\wU \in \Bc(\overline{\Gc}\otimes \Hc)$ such that 
\begin{equation}\label{EqLeftInvOpGen}
\langle \hat{\Qb}\xi\otimes \eta',U(\eta\otimes \xi')\rangle = \langle \overline{\hat{\Qb}\eta}\otimes \eta',\wU(\overline{\xi} \otimes \xi')\rangle,\qquad \forall \xi,\eta\in \msD(\hat{\Qb}),\xi',\eta'\in \Hc.
\end{equation}
We then call $(U,\hat{\Qb},\wU)$ a \emph{weakly modular datum} for the Hilbert space couple $(\Gc,\Hc)$. We refer to $\hat{\Qb}$ as the \emph{modularity operator}.
\end{defn}

The following  (easy) observation will be crucial to show modularity of the Fock--Goncharov flip $\Fct$ in general rank. 

\begin{lemma}\label{LemMod}
Let $\Gc,\Kc,\Hc$ be Hilbert spaces, and let $(V,\hat{\Qb},\wV)$ and $(W,\hat{\Pb},\wW)$ be weakly modular data with respect to the respective pairs of Hilbert spaces $(\Gc,\Hc)$ and $(\Kc,\Hc)$. 

Then $(U,\hat{\Qb}\otimes \hat{\Pb},\wU)$ is a weakly modular datum with respect to $(\Gc\otimes \Kc,\Hc)$, where 
\begin{equation}\label{EqTensProdMod}
U = V_{13}W_{23} \in \Bc((\Gc\otimes \Kc)\otimes \Hc),\qquad \wU = \wV_{13}\wW_{23} \in \Bc((\overline{\Gc}\otimes \overline{\Kc})\otimes \Hc).
\end{equation}
\end{lemma}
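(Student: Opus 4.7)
The plan is to verify the defining identity~\eqref{EqLeftInvOpGen} for the candidate datum $(U,\hat{\Qb}\otimes\hat{\Pb},\wU)$ by peeling off the two factors of $\Gc\otimes\Kc$ one at a time. First I would reduce to simple tensors $\xi_1\otimes\xi_2$, $\eta_1\otimes\eta_2$ with $\xi_i,\eta_i$ in the individual domains of $\hat{\Qb}$ and $\hat{\Pb}$; by the conventions recorded in the introduction, such vectors form a core for the strictly positive closure $\hat{\Qb}\otimes\hat{\Pb}$, and strict positivity of the tensor product itself is automatic. Choosing an orthonormal basis $\{e_i\}$ of $\Kc$ and setting $\alpha_i=(\langle e_i|\otimes\id_{\Hc})W(\eta_2\otimes\xi')\in\Hc$, one has
\[
V_{13}W_{23}(\eta_1\otimes\eta_2\otimes\xi')=\sum_i V_{13}(\eta_1\otimes e_i\otimes\alpha_i),
\]
and since $V_{13}$ acts as the identity on the middle leg, pairing against $\hat{\Qb}\xi_1\otimes\hat{\Pb}\xi_2\otimes\eta'$ gives
\[
\sum_i \langle\hat{\Pb}\xi_2,e_i\rangle\,\langle\hat{\Qb}\xi_1\otimes\eta',V(\eta_1\otimes\alpha_i)\rangle.
\]
Applying the weakly modular relation for $V$ termwise rewrites this as $\sum_i\langle\hat{\Pb}\xi_2,e_i\rangle\langle\overline{\hat{\Qb}\eta_1}\otimes\eta',\wV(\overline{\xi_1}\otimes\alpha_i)\rangle$.

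Next I would expand $\wV^*(\overline{\hat{\Qb}\eta_1}\otimes\eta')=\sum_k u_k\otimes v_k\in\overline{\Gc}\otimes\Hc$. By the orthonormal basis identity the sum over $i$ collapses, for each fixed $k$, into $\langle\hat{\Pb}\xi_2\otimes v_k,W(\eta_2\otimes\xi')\rangle$ weighted by $\langle u_k,\overline{\xi_1}\rangle$. The weakly modular relation for $W$ now converts this directly into $\langle\overline{\hat{\Pb}\eta_2}\otimes v_k,\wW(\overline{\xi_2}\otimes\xi')\rangle$. Expanding $\wW(\overline{\xi_2}\otimes\xi')=\sum_j\beta_j\otimes\gamma_j$ and folding the sum over $k$ back through the defining relation of the $(u_k,v_k)$, I obtain
\[
\sum_j\langle\overline{\hat{\Pb}\eta_2},\beta_j\rangle\,\langle\overline{\hat{\Qb}\eta_1}\otimes\eta',\wV(\overline{\xi_1}\otimes\gamma_j)\rangle,
\]
which is precisely the inner product $\langle\overline{\hat{\Qb}\eta_1}\otimes\overline{\hat{\Pb}\eta_2}\otimes\eta',\wV_{13}\wW_{23}(\overline{\xi_1}\otimes\overline{\xi_2}\otimes\xi')\rangle$ demanded by the right-hand side of~\eqref{EqLeftInvOpGen} for $(U,\hat{\Qb}\otimes\hat{\Pb},\wU)$.

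The hard part is really just the bookkeeping through the two successive ONB resolutions; there is no analytic subtlety, since $V,W,\wV,\wW$ are unitary and every partial-sum expansion converges in norm. In effect, the whole argument is a double application of the weak modularity condition, transferred through the intermediate Hilbert space $\Kc$ via an orthonormal-basis slice in the first leg of $W$.
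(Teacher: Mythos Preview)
Your proof is correct and follows essentially the same route as the paper: reduce to simple tensors in the algebraic tensor product of the domains (a core for $\hat{\Qb}\otimes\hat{\Pb}$), then apply the weak modularity identity first for $V$ and then for $W$, slicing through the intermediate leg. The only cosmetic difference is that the paper phrases the slicing via vector functionals $(\omega_{\xi,\eta}\otimes\id)$ rather than via orthonormal-basis expansions, which avoids the explicit double sums but is otherwise the same computation.
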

\begin{proof}
For $\xi,\eta\in \msD(\hat{\Qb}),\theta,\zeta\in \msD(\hat{\Pb})$ and $\eta',\theta'\in \Hc$, we compute 
\begin{eqnarray*}
\langle \hat{\Qb}\xi\otimes \hat{\Pb}\theta\otimes \eta',V_{[13]}W_{[23]}(\eta\otimes \zeta\otimes\theta')\rangle &=& \langle \hat{\Qb}\xi\otimes \eta',V(\eta\otimes (\omega_{\hat{\Pb}\theta,\zeta}\otimes \id)(W)\theta')\rangle \\
&=& \langle \overline{\hat{\Qb}\eta}\otimes \eta',\wV(\overline{\xi}\otimes (\omega_{\hat{\Pb}\theta,\zeta}\otimes \id)(W)\theta')\rangle \\
&=& \langle \overline{\hat{\Qb}\eta} \otimes \hat{\Pb}\theta\otimes \eta',\wV_{[13]}W_{[23]}(\overline{\xi}\otimes \zeta\otimes \theta')\rangle \\
&=& \langle \hat{\Pb}\theta\otimes (\omega_{\overline{\xi},\overline{\hat{\Qb}\eta}}\otimes \id)(\wV^*)\eta',W(\zeta\otimes \theta')\rangle \\
&=& \langle \overline{\hat{\Pb}\zeta}\otimes (\omega_{\overline{\xi},\overline{\hat{\Qb}\eta}}\otimes \id)(\wV^*)\eta',\wW(\overline{\theta}\otimes \theta')\rangle \\
&=& \langle \overline{\hat{\Qb}\eta}\otimes \overline{\hat{\Pb}\zeta}\otimes \eta',\wV_{[13]}\wW_{[23]}(\overline{\xi}\otimes \overline{\theta}\otimes \theta')\rangle. 
\end{eqnarray*}
In other words, with $U,\wU$ as in \eqref{EqTensProdMod}, we find 
\[
\langle (\hat{\Qb}\xi\otimes \hat{\Pb}\theta)\otimes \eta',U((\eta\otimes \zeta)\otimes \theta')\rangle = \langle \overline{\hat{\Qb}\eta\otimes \hat{\Pb}\zeta}\otimes \eta',\wU(\overline{\xi\otimes \theta}\otimes \theta')\rangle. 
\]
Since the algebraic tensor product of $\msD(\hat{\Qb})$ and $\msD(\hat{\Pb})$ is a core for the tensor product $\hat{\Qb}\otimes \hat{\Pb}$,  we find that indeed $U$ is weakly modular, and that $(U,\hat{\Qb}\otimes \hat{\Pb},\wU)$ is a weakly modular datum with respect to $(\Gc\otimes \Kc,\Hc)$. 
\end{proof}

We also have the following trivial extension of Theorem \ref{TheoRank1}. 

\begin{theorem}\label{TheoRank1Gen}
Let $(\Lb,\Fb)$ and $(\Kb,\Eb)$ be \emph{any} two pairs of $\hbar$-commuting strictly positive operators on respective Hilbert spaces $\Hc$ and $\Gc$. Then 
\[
U = \exp\left(\frac{i}{2\pi \hbar}\ln(\Lb) \otimes \ln(\Kb)\right) \overline{F}_{\hbar}(\Fb\otimes \Eb)
\]
fits into a weakly modular datum $(U,\hat{\Qb},\widetilde{U})$ with
\[
\hat{\Qb} = \Lb^{\frac{1+|\hbar|^{-1}}{2}}
\]
and 
\[
\widetilde{U} = \exp\left(\frac{i}{2\pi \hbar} \ln(\overline{\Lb}) \otimes \ln(\Kb)\right) F_{\hbar}(\overline{\Fb} \otimes \Kb^{-1}\star\Eb).
\]
\end{theorem}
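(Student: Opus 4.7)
The plan is to reduce the statement to Theorem~\ref{TheoRank1} via the Stone--von Neumann theorem. First, the $\hbar$-commuting pair $(\Lb, \Fb)$ on $\Hc$ defines a unitary $\hbar$-representation of the two-dimensional symplectic space $V = \R a \oplus \R b$ with $(a,b)=1$, and by the Stone--von Neumann theorem we obtain an isomorphism $\Hc \cong L^2(\R) \otimes \Hc_0$ under which $\Lb = \Lb_0 \otimes 1$ and $\Fb = \Fb_0 \otimes 1$, with $(\Lb_0, \Fb_0)$ the standard first-leg pair appearing in Theorem~\ref{TheoRank1}. Applying the same reasoning to $(\Kb, \Eb)$ on $\Gc$, and absorbing the connecting unitary into the choice of identification, we arrange $\Gc \cong L^2(\R) \otimes \Gc_0$ so that $\Kb = \Kb_0 \otimes 1$ and $\Eb = \Eb_0 \otimes 1$ with $(\Kb_0, \Eb_0)$ the second-leg pair from Theorem~\ref{TheoRank1}.

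Next, after reordering tensor factors on $\Hc \otimes \Gc \cong (L^2(\R) \otimes L^2(\R)) \otimes (\Hc_0 \otimes \Gc_0)$, a direct calculation using functional calculus shows that $U = \Wbb \otimes 1_{\Hc_0 \otimes \Gc_0}$, where $\Wbb$ is the modular multiplicative unitary from Theorem~\ref{TheoRank1}. Likewise $\hat{\Qb} = \hat{\Qb}_0 \otimes 1_{\Hc_0}$ with $\hat{\Qb}_0 = \Lb_0^{(1+|\hbar|^{-1})/2}$. Using the natural isomorphism $\overline{\Hc} \cong \overline{L^2(\R)} \otimes \overline{\Hc_0}$ together with the identity $\overline{A \otimes 1} = \overline{A} \otimes 1$, the candidate $\widetilde{U}$ in the statement factorizes analogously as $\widetilde{\Wbb} \otimes 1_{\overline{\Hc_0} \otimes \Gc_0}$, where $\widetilde{\Wbb}$ is the corresponding operator from Theorem~\ref{TheoRank1}.

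With these decompositions in hand, the weakly modular datum condition~\eqref{EqLeftInvOpGen} for $(U, \hat{\Qb}, \widetilde{U})$ will, when tested on product vectors, factor as the analogous identity for $(\Wbb, \hat{\Qb}_0, \widetilde{\Wbb})$ multiplied by matching scalar inner products coming from the multiplicity factors $\Hc_0$ and $\Gc_0$. The identity for $\Wbb$ itself holds by Theorem~\ref{TheoRank1}, since a modular datum is \emph{a fortiori} a weakly modular datum. The general case then follows by sesquilinearity and density of product vectors.

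The only delicate point is the bookkeeping for the complex conjugation $\Hc \rightsquigarrow \overline{\Hc}$ and the various leg reorderings; once one verifies that the conjugate Hilbert space functor commutes with tensor products and that $\overline{A \otimes 1} = \overline{A} \otimes 1$, no analytic input beyond the Stone--von Neumann theorem and Theorem~\ref{TheoRank1} is required.
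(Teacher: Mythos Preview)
Your proposal is correct and follows essentially the same approach as the paper: reduce via the Stone--von Neumann theorem to the standard rank~$1$ model of Theorem~\ref{TheoRank1}, and observe that the weakly modular condition passes through amplification by multiplicity spaces. The paper phrases this slightly more abstractly, recording two invariance principles---that a weakly modular datum $(U,\hat{\Qb},\widetilde{U})$ for $(\Gc,\Hc)$ is equivalent to $(U_{[13]},\hat{\Qb}\otimes 1,\widetilde{U}_{[13]})$ being one for $(\Gc\otimes\Kc,\Hc\otimes\mathcal{L})$, and that unitary conjugation $(u\otimes v)U(u^*\otimes v^*)$ with corresponding transport of $\hat{\Qb}$ and $\widetilde{U}$ preserves weak modularity---and then invokes these together with Stone--von Neumann; your direct verification on product vectors amounts to the same thing.
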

\begin{proof}
Let $\Gc,\Hc,\Kc,\mathcal{L}$ be Hilbert spaces. Then clearly $(U,\hat{\Qb},\widetilde{U})$ is a weakly modular datum for $(\Gc,\Hc)$ if and only if 
\[
(U_{[13]},\hat{\Qb}\otimes 1,\widetilde{U}_{[13]})
\]
is a weakly modular datum for $(\Gc\otimes \Kc,\Hc\otimes \mathcal{L})$. Similarly, if $u\colon \Gc\rightarrow \Kc$ and $v\colon \Hc\rightarrow \mathcal{L}$ are unitaries, then $(U,\hat{\Qb},\widetilde{U})$ is a weakly modular datum for $(\Gc,\Hc)$ if and only if 
\[
((u\otimes v)U(u^*\otimes v^*),u\hat{\Qb}u^*,(\overline{u}\otimes v)\widetilde{U}(\overline{u}^*\otimes v^*))
\]
is a weakly modular datum for $(\Kc,\mathcal{L})$. 

Hence the theorem follows immediately from Theorem \ref{TheoRank1} and the Stone-von Neumann theorem.
\end{proof}

\subsection{Modularity of Fock--Goncharov flips in general rank}\label{SecModularity}

We start by constructing a concrete unitary $\hbar$-representation of $\wbtd{N}$ with respect to which we will show modularity of the Fock--Goncharov flip. We then later show that modularity holds with respect to \emph{any} $\hbar$-representation of $\wbtd{N}$.  

Recall the notations introduced in Definition \ref{DefCNOr} and Definition \ref{DefBorel}.  We identify $\neC_N$ with the set
\[
I = I_0 \cup I_1,\quad 
I_0 = \{s = (s,s)\mid 1\leq s\leq n\},\quad I_1 = \{(s,k)\mid 1\leq s\leq n, 0\leq k < s\}
\]
by means of the map 
\[
I \cong \neC_N,\qquad (s,k) \mapsto (n-s+1,k,s-k).
\]

We totally order $I$ (and $\neC_N$) by requiring that $I_0 = [1,n]$ has the normal order, that elements in $I_0$ precede elements in $I_1$, and that $I_1$ is totally ordered by
\begin{equation}\label{EqOrderingSetI}
(s,k)< (s',k') \qquad \iff \qquad s-k< s'-k' \textrm{ or }(s-k = s'-k'\textrm{ and }s > s'),
\end{equation}
see Figure \ref{FigTriangOrder} for an example.

\begin{figure}[ht]
\adjustbox{scale=0.7,center}{%
\begin{tikzcd}
* && 1 && 2 && 3 && *\\
	&6  && 5 && 4 && * & \\
	&& 8 && 7 && * &&\\
&&& 9 && * &&&\\
&&&& * &&&&
	\arrow[dashed, from=1-3, to=1-1]
	\arrow[dashed, from=1-5, to=1-3]
	\arrow[dashed, from=1-7, to=1-5]
	\arrow[dashed, from=1-9, to=1-7]
	\arrow[dashed, from=1-1, to=2-2]
	\arrow[dashed, from=2-2, to=3-3]
	\arrow[dashed, from=3-3, to=4-4]
	\arrow[dashed, from=4-4, to=5-5]
	\arrow[dashed, from=2-4, to=1-5]
	\arrow[dashed, from=5-5, to=4-6]
\arrow[dashed, from=4-6, to=3-7]
\arrow[dashed, from=3-7, to=2-8]
\arrow[dashed, from=2-8, to=1-9]
	\arrow[from=2-2, to=1-3]
\arrow[from=2-4, to=1-5]
\arrow[from=2-6, to=1-7]
\arrow[from=3-3, to=2-4]
\arrow[from=3-5, to=2-6]
\arrow[from=4-4, to=3-5]
\arrow[from=2-4, to=2-2]
\arrow[from=2-6, to=2-4]
\arrow[from=2-8, to=2-6]
\arrow[from=3-7, to=3-5]
\arrow[from=3-5, to=3-3]
\arrow[from=4-6, to=4-4]
\arrow[from=1-3, to=2-4]
\arrow[from=1-5, to=2-6]
\arrow[from=1-7, to=2-8]
\arrow[from=2-4,to=3-5]
\arrow[from=2-6,to=3-7]
\arrow[from=3-5,to=4-6]
\end{tikzcd}
}
\caption{The ordering of $I$ identified with $\protect\neC_4$}\label{FigTriangOrder}
\end{figure}
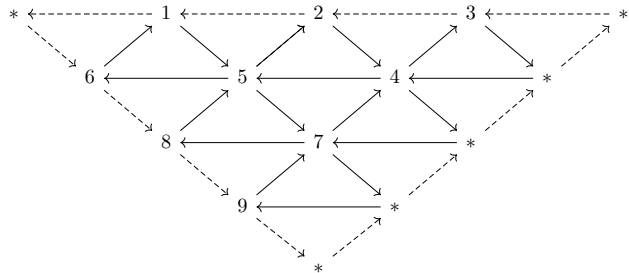

Consider the symplectic skew-symmetric space $W = \R f \oplus \R \varpi$ with
\begin{equation}\label{EqNormalisationfw}
(f,\varpi) = 1/2.
\end{equation}
We then consider $I$ copies $\{W_i\mid i \in I\}$ of $W$, whose respective vectors we denote $f_i,\varpi_i$. We further put
\begin{equation}\label{EqDirSum}
V_N = \oplus_{i\in I} W_i,
\end{equation}
keeping the same notation for the images of the $f_i,\varpi_i$ inside $V_N$. We can then by induction (on the reverse order) uniquely make an embedding of skew-symmetric spaces
\begin{equation}\label{EqEmbd}
\wwbtd{N} \subseteq V_N
\end{equation}
by requiring the following rules with respect to the basis $\{f_i,\varpi_i \mid i \in I\}$ of $V_N$:
\begin{enumerate}
\item if $i\in I_1$ and $j\in I$, then 
\begin{itemize}
\item for $j<i$, the $j$-th component of $\nee_i$ is zero, 
\item the $i$-th component of $\nee_i$ is $f$,
\item for $j>i$, the $j$-th component of $\nee_i$ lies in $\R \varpi$, and 
\end{itemize}
\item if $i\in I_0 \cong [1,n]$ and $j \in I$, then 
\begin{itemize}
\item for $j<i$, the $j$-th component of $\nevarpi_i$ is zero,
\item the $i$-th component of $\nevarpi_i$ is $\varpi$, and 
\item For $j>i$ the $j$-th component of $\nevarpi_i$ lies in $\R \varpi$, and is zero if $j \in I_0$.
\end{itemize}
\end{enumerate}
By \eqref{EqDiffCarOpp} and \eqref{EqDiffCart}, this embedding extends to $\wbtd{N}$ by putting
\begin{equation}\label{EqExtToTr}
\seee_{s} = -f_{N-s},\qquad s\in [1,n]. 
\end{equation}

Endow now $L^2(\R)$ with the unitary $\hbar$-representation of $W$ determined by 
\[
\eb(f) = \stgF,\qquad \eb(\varpi) = \Lb^{1/2},
\]
using the same notation as in Theorem \ref{TheoRank1}.  More generally, denote for $i\in I$ by $\Hc_i$ a copy of $L^2(\R)$, endowed with the corresponding unitary $\hbar$-representation of $W_i$,  and put 
\begin{equation}\label{EqTensProd}
\Hc = \otimes_{i\in I} \Hc_i,
\end{equation}
endowed with the product unitary $\hbar$-representation of $V_N = \oplus_{i\in I}W_i$. By restriction under the embedding \eqref{EqEmbd}, we can view this also as a unitary $\hbar$-representation of $\wbtd{N}$ or of $\wwbtd{N}$. 
  
Denote further, for $1\leq s\leq n$ and $0\leq k<s$,
\begin{equation}\label{EqPartFlipOne}
\bbU^{(s,k)} = \Gauss\left(2\varpi_{(s,k)} \otimes \seee_{N-s}\right)  \prod_{s \le r \le n}^{\longra}   \varphi\hr{f_{(s,k)}\oplus \seee_{N-s,n-r}} \in \Bc(\Hc_{(s,k)}\otimes \Hc), 
\end{equation}
\begin{equation}\label{EqPartFlipTwo}
\bbU^{(s)} =\Gauss\left(2\varpi_s \otimes \seee_{N-s}\right) \in  \Bc(\Hc_s\otimes \Hc). 
\end{equation}
We have the following alternative expression for  $\bbU^{(s,k)}$. Recall that $\boxplus$ denotes the form sum.  
\begin{lemma}
We have 
\begin{equation}\label{EqProdFormComp}
\bbU^{(s,k)} = \Gauss\left(2\varpi_{(s,k)} \otimes \seee_{N-s}\right)\overline{F}\hr{\eb(f_{(s,k)})\otimes  \left(\boxplus_{s \le r \le n} \eb(\seee_{N-s,n-r})\right)}.
\end{equation}
\end{lemma}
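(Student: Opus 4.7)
The plan is to reduce the identity \eqref{EqProdFormComp} to an iterated application of the quantum exponential property of Theorem \ref{TheoExponGen}, combined with a distributivity of form sum over a fixed tensor factor. Since the Gaussian prefactor $\Gauss\hr{2\varpi_{(s,k)}\otimes\seee_{N-s}}$ is common to both sides of \eqref{EqPartFlipOne} and \eqref{EqProdFormComp}, it suffices to prove
\begin{equation}\label{eq:proposal-key}
\prod_{s\le r\le n}^{\longra} \varphi\hr{f_{(s,k)}\oplus \seee_{N-s,n-r}} \;=\; \overline{F}\hr{\eb(f_{(s,k)})\otimes \boxplus_{s\le r\le n}\eb(\seee_{N-s,n-r})}.
\end{equation}
The first step is to translate between $\varphi$ and $\overline F$: combining \eqref{EqQuantExp2} with the fact that $F$ is unitary-valued gives $\varphi(v)=\overline F(\eb(v))$, and then by \eqref{EqDirSumRep} each factor on the left-hand side of \eqref{eq:proposal-key} can be rewritten as $\overline{F}(A_r)$, where
\[
A_r := \eb(f_{(s,k)})\otimes \eb(\seee_{N-s,n-r}),\qquad s\le r\le n.
\]

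The second step is to verify the $\hbar$-commutation relations among the $A_r$'s. On the first tensor leg the operators $\eb(f_{(s,k)})$ are identical, hence strongly commute. On the second leg, Lemma \ref{LemSkewProdFormDiff}\eqref{EqSameNil} and skew-symmetry give $(\seee_{N-s,n-r},\seee_{N-s,n-r'})=-1$ whenever $r<r'$, so that $\eb(\seee_{N-s,n-r})$ and $\eb(\seee_{N-s,n-r'})$ are $(-\hbar)$-commuting. Taking tensor products, a short direct computation shows that for $r<r'$ the pair $(A_{r'},A_r)$ is $\hbar$-commuting in the sense of \eqref{EqSkewCommDef}. By Lemma \ref{LemSumSkewComm}, this $\hbar$-commutation is preserved under form sums in the first slot, so inductively $(A_{r+1}\boxplus\cdots\boxplus A_n,\,A_r)$ is $\hbar$-commuting for each $r$.

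The third step applies Theorem \ref{TheoExponGen}. For $\hbar$-commuting strictly positive $\Ab,\Bb$ one has $F(\Ab\boxplus\Bb)=F(\Ab)F(\Bb)$, and taking adjoints yields $\overline{F}(\Ab\boxplus\Bb)=\overline{F}(\Bb)\overline{F}(\Ab)$. Applying this with $\Ab=A_{r+1}\boxplus\cdots\boxplus A_n$ and $\Bb=A_r$, and iterating downwards in $r$, produces the ordered factorization
\[
\overline{F}\hr{\boxplus_{s\le r\le n} A_r} \;=\; \prod_{s\le r\le n}^{\longra}\overline{F}(A_r) \;=\; \prod_{s\le r\le n}^{\longra}\varphi\hr{f_{(s,k)}\oplus \seee_{N-s,n-r}}.
\]

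The final step is to identify $\boxplus_r A_r$ with $\eb(f_{(s,k)})\otimes\boxplus_r \eb(\seee_{N-s,n-r})$. This is where the only genuine point requiring care arises, since $\eb(f_{(s,k)})$ is unbounded. The cleanest way is to use Proposition \ref{PropSkewComm}: writing $T=\eb(f_{(s,k)})$ and $S_r=\eb(\seee_{N-s,n-r})$, one computes with \eqref{EqStarProd} that $(T\otimes S_r)\star(T\otimes S_{r'})^{-1}=1\otimes(S_r\star S_{r'}^{-1})$, whence
\[
(T\otimes S_r)\boxplus(T\otimes S_{r'}) \;=\; T\otimes\bigl(F(S_r\star S_{r'}^{-1})^*\,S_{r'}\,F(S_r\star S_{r'}^{-1})\bigr) \;=\; T\otimes(S_r\boxplus S_{r'}),
\]
and the general case follows by induction on the number of terms. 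Substituting back into the identity obtained at the end of the third step gives \eqref{eq:proposal-key}, and multiplying by the Gaussian prefactor yields \eqref{EqProdFormComp}.

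I expect the only real subtlety to be the distributivity step in the last paragraph: the naive identity $\boxplus_r (T\otimes S_r)=T\otimes\boxplus_r S_r$ is not quite tautological for unbounded $T$, and appealing to Proposition \ref{PropSkewComm} is what makes it work cleanly. The skew-commutation bookkeeping in step two and the correct handling of ordering (since $\overline{F}$ reverses the order coming from $F$) are routine but need to be carried out carefully.
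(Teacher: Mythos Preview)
Your proposal is correct and follows exactly the approach the paper takes: the paper's proof is the single line ``This follows immediately from Theorem \ref{TheoExponGen},'' and your argument is a careful unpacking of that line, verifying the $\hbar$-commutation of the $A_r$, the resulting ordered factorization of $\overline{F}$, and the distributivity $\boxplus_r(T\otimes S_r)=T\otimes\boxplus_r S_r$ via Proposition \ref{PropSkewComm}. The only comment is that the distributivity step, which you rightly flag as the one nontrivial point, is used elsewhere in the paper without proof (see the remark before Section \ref{SecFG} that ``form sums distribute over tensor product''), so your justification via \eqref{EqBoxPlusDef} actually supplies a detail the paper takes for granted.
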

\begin{proof}
This follows immediately from Theorem \ref{TheoExponGen}. 
\end{proof}

The following is the main observation that will allow us to establish modularity of the Fock--Goncharov flip $\Fct$ as in  \eqref{EqRealMU}.

\begin{theorem}\label{TheoProdFG}
Inside $\Bc(\Hsp \otimes \Hsp) = \Bc\left(\left(\otimes_{i\in I} \Hc_i\right) \otimes \Hsp\right)$, we have an equality of unitaries
\begin{equation}\label{EqIdToProve}
\Fct = \overset{\longra}{\prod}_{i\in I} \bbU^{(i)}_{i,\bullet}. 
\end{equation}
\end{theorem}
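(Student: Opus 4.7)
The plan is to match both sides factor-by-factor by exploiting the triangular structure of the embedding $\wwbtd{N} \subset V_N$ with respect to the total ordering on $I$. The key structural feature, built into the embedding conditions, is the following triangularity: $\nee_{s,k}$ has its $f$-component at position $(s,k)$ with other components supported in $\R\varpi_j$ for $j > (s,k)$ only, while $\nevarpi_t$ has its $\varpi$-component at position $t$ with other components supported in $\R\varpi_j$ for $j > t$ with $j \in I_1$ only. This allows a sequential elimination compatible with the ordering on $I$.

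First, I would expand $\Kc = \prod_{t=1}^n \Gauss(2\nevarpi_t \otimes \seee_{N-t})$ using the decomposition of $\nevarpi_t$ in $V_N$. Since all of the resulting Gaussians mutually commute (from $(\varpi_i,\varpi_j) = 0 = (\seee_s,\seee_{s'})$ by \eqref{EqVanishing} and \eqref{EqOrthogonalWeights}), they can be reorganized as
\[
\Kc = \prod_{t \in I_0} \Gauss(2\varpi_t \otimes \seee_{N-t}) \cdot \prod_{i \in I_1} \Gauss(2\varpi_i \otimes \mu_i),
\]
with $\mu_i \in \Tor_N^+$ a specific linear combination of the $\seee_{N-t}$'s determined by the coefficients of the embedding. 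The first product is exactly $\prod_{t \in I_0}^{\longra} \bbU^{(t)}_{t,\bullet}$, which provides the torus part of the desired right-hand side.

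Second, for each factor $\varphi(\nee_{s,k} \oplus \seee_{N-s,(n-r)-k})$ of $\Fc$, I would expand $\nee_{s,k} = f_{(s,k)} + \sum_{j > (s,k)} \alpha^{(s,k)}_j \varpi_j$ and apply the Gaussian conjugation rules \eqref{EqGausEq}--\eqref{EqGausEqInv} to extract the $\varpi_j$-contributions as additional Gaussians acting nontrivially on the $\bullet$-slot. This reduces the $\varphi$-factor to one whose first slot involves only $f_{(s,k)}$, at the cost of generating extra Gaussians at positions $j > (s,k)$. Regrouping all factors by their first-slot support, using that Gaussians at distinct first-slot positions commute freely past one another (and past $\varphi$'s whose $f$-component is at an unrelated position), rewrites $\Fct$ as an ordered product indexed by $i \in I$.

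The main obstacle is verifying the combinatorial collapse in the regrouping. Specifically, I must check that (a) the combined Gaussian contribution at each position $i = (s,k) \in I_1$ collapses to exactly $\Gauss(2\varpi_i \otimes \seee_{N-s})$ with no residual cross-terms, which requires the specific form of the coefficients $\beta^{(t)}_{(s,k)}$ and $\alpha^{(s,k)}_j$ dictated by the Cartan matrix $B$ and the pairing formulas of Lemma \ref{LemSkewProdFormDiff}; and (b) the $\varphi$-factors at position $(s,k)$ assemble in the correct order to the product $\prod_{s \le r \le n}^{\longra} \varphi(f_{(s,k)} \oplus \seee_{N-s,n-r})$ appearing in the definition of $\bbU^{(s,k)}$. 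Checking (b) amounts to verifying that the ordering on $I_1$ (based on the value of $s-k$, then on decreasing $s$) is compatible with the nested ordering of $\Fc$, and is most naturally handled by induction on the diagonals $s-k$ in the snubbed diagram $C_N'$ (which correspond to the equivalence classes in the ordering of $I_1$), with the base case being the maximal element $(n,0) \in I$ whose $\bbU$-factor peels off trivially from the right of $\Fct$.
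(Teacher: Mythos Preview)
Your strategy correctly identifies the triangularity of the embedding as the key mechanism, but there is a slip in step~2 that makes the direction you chose harder than necessary. Writing $\varphi(\nee_{s,k}\oplus z)$ in terms of $\varphi(f_{(s,k)}\oplus z)$ via \eqref{EqGausEq}--\eqref{EqGausEqInv} is a \emph{conjugation}, not a one-sided factorization, so the ``extra Gaussians'' appear on both sides of the dilogarithm. And these Gaussians do \emph{not} commute freely past the other $\varphi$-factors: conjugating $\varphi(f_{(s',k')}\oplus z')$ by $G(\varpi_j,v)$ for $j\neq(s',k')$ preserves the second component but shifts the first by $(z',v)\varpi_j$. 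So your ``regrouping'' is not a reordering of commuting factors but a genuine telescoping whose residual modifications must be tracked; this tracking \emph{is} your obstacle~(a), not a separate and easier preliminary step.

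The paper sidesteps this by arguing in the reverse direction. Starting from $\prod_i^{\rightarrow}\bbU^{(i)}$, one commutes all Gaussian factors to the far left through the dilogarithms via \eqref{EqGausEqInv}. The result is a single Gaussian $\Lc$ times an ordered product of dilogarithms with explicit first-leg arguments $f_{(s,k)}+2\sum_{(s',k')>(s,k)}(\seee_{N-s,n-r},\seee_{N-s'})\varpi_{(s',k')}$. The whole proof then reduces to two short checks: that this vector equals $\nee_{s,k}$, and that $\sum_{0\le k\le s}\varpi_{(s,k)}=\nevarpi_s$ (so $\Lc=\Kc$). Both are verified directly from the defining pairing relations of the embedding using Lemma~\ref{LemSkewProdFormDiff} and \eqref{EqPairFundWeight} --- so the explicit coefficients $\alpha_j^{(s,k)}$, $\beta_i^{(t)}$ are never needed, no induction on diagonals is required, and the match with $\Fc$ is immediate from the reordering \eqref{eq:F-3}.
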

Here we use leg numbering notation and keep $\bullet$ as the index for the $\Hsp$-factor at the end.
\begin{proof}
For the $\bbU^{(s,k)}_{(s,k),\bullet}$ and $\bbU^{(s)}_{s,\bullet}$,  we can still use the formulas \eqref{EqPartFlipOne} and \eqref{EqPartFlipTwo}, now with the first leg viewed as acting on the $i$-th leg of $\otimes_{i\in I} \Hc_i$ (and as the unit on all other legs). 

By shifting all the Gaussians through the quantum dilogarithms, using \eqref{EqGausEqInv}, we find that the right hand side of \eqref{EqIdToProve} becomes
\[
\overset{\rightarrow}{\prod}_{i\in I} \bbU^{(i)}_{i,\bullet} = \Lc \overset{\rightarrow}{\prod}_{i\in I_1} \bbV^{(i)}
\]
where 
\begin{equation}\label{EqGausPart}
\Lc = \Gauss\left(2\sum_{1\leq s \leq n}\left(\sum_{0\leq k \leq s} \varpi_{(s,k)} \right)\otimes \seee_{N-s}\right)
\end{equation}
and 
\begin{equation}\label{EqqDilogPart}
\bbV^{(i)} = \prod_{s \le r \le n}^{\longra}   \varphi\hr{f_{(s,k)}+ 2\sum_{(s',k')>(s,k)}(\seee_{N-s,n-r},\seee_{N-s'})\varpi_{(s',k')} \oplus \seee_{N-s,n-r}}.
\end{equation}

We however easily check that 
\begin{equation}\label{EqSumNEvect}
f_{(s,k)}+ 2\sum_{(s',k')>(s,k)}(\seee_{N-s,n-r},\seee_{N-s'})\varpi_{(s',k')} = \nee_{s,k}.
\end{equation}
\begin{equation}\label{EqSumFundWeights}
\sum_{0\leq k \leq s} \varpi_{(s,k)} = \nevarpi_s.
\end{equation}

Indeed, for \eqref{EqSumNEvect}, we just need to verify that 
\[
-(\seee_{n-s+1,n-r},\seee_{n-s'+1}) = (\nee_{s,k},\nee_{s',k'}),\qquad (s',k')>(s,k),
\]
but this is immediate from \eqref{EqSameNil}, \eqref{EqSameCar} and the definition of the ordering \eqref{EqOrderingSetI}. Similarly, for \eqref{EqSumFundWeights} we only need to check that for $1\leq s,s' \leq n$ and $0\leq k' < s'$, it holds that
\[
\sum_{0\leq k< s} (\varpi_{(s,k)},\nee_{s',k'}) =  (\nevarpi_s,\nee_{s',k'}).
\]
But the left hand side is 
\[
-\frac{1}{2}\delta_{s,s'} \delta_{k'<s} =  -\frac{1}{2}\delta_{s,s'},
\]
which is exactly the right hand side by \eqref{EqPairFundWeight}.

It follows from the above that $\Lc = \Kc$ as in \eqref{EqGaussFG}, and that
\begin{eqnarray*}
\overset{\rightarrow}{\prod}_{i\in I} \bbU^{(i)}_{i,\bullet} &=& \Kc  \prod^{\longra}_{(s,k)\in I_1}\prod_{s \le r \le n}^{\longra}   \varphi\hr{\nee_{s,k} \oplus \seee_{N-s,n-r}}.
\end{eqnarray*}
But by \eqref{eq:F-3}, we have
\[
 \prod^{\longra}_{(s,k)\in I_1}\prod_{s \le r \le n}^{\longra}   \varphi\hr{\nee_{s,k} \oplus \seee_{N-s,n-r}} = \Fc,
\]
the braided Fock--Goncharov flip.
 This finishes the proof.
\end{proof}

Combined with Theorem \ref{TheoRank1} and Lemma \ref{LemMod}, we now obtain:
\begin{theorem}\label{TheoModFGFin}
With respect to the unitary $\hbar$-representation of $\wbtd{N}$ on \eqref{EqTensProd}, the Fock--Goncharov flip $\Fct$ is a modular multiplicative unitary with respect to 
\begin{equation}\label{EqModularityMUFG}
\hat{\Qb} = \Lb_{\delta}^{1+|\hbar|^{-1}},\qquad \Qb = \Kb_{\delta}^{-(1+|\hbar|^{-1})},
\end{equation}
where
\[
  \Lb_{\delta} = \eb(\sum_{i=1}^n \nevarpi_i),\qquad \Kb_{\delta}=  \eb(\sum_{i=1}^n \sevarpi_i).
\]
Furthermore, $(\Fct,\Qb,\hat{\Qb},\tFct)$ is a modular datum for  
\begin{equation}\label{EqModFG}
\tFct = \tKc \tFc = \tFctt \tKc
\end{equation}
with
\begin{equation}\label{EqTildeK}
\tKc = \Gauss\left(2\sum_{t=1}^n \overline{\nevarpi_t} \otimes \nwe_{N-t}\right) 
\end{equation}
and 
\begin{equation}\label{EqTildeFPrime}
\tFc = \prod_{1 \le r \le n}^{\longra} \prod_{1 \le s \le n}^{\longra} \underset{0\le (n-r)-k\le n-s}{\prod_{0 \le k \le s-1}^{\longra}}   \overline{\varphi}\hr{\overline{\nee_{s,k}}\oplus -\nwe_{N-s,(n-r)-k}},
\end{equation}
\begin{equation}\label{EqTildeFDoublePrime}
\tFctt = \prod_{1 \le r \le n}^{\longra} \prod_{1 \le s \le n}^{\longra} \underset{0\leq r-k-1 \leq n-s}{\prod_{0 \le k \le s-1}^{\longra}}   \overline{\varphi}\hr{-\overline{\swe_{s,k}}\oplus \seee_{N-s,r-k-1}}.
\end{equation}
\end{theorem}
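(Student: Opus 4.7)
The plan is to combine the factorization $\Fct = \overset{\longra}{\prod}_{i\in I}\bbU^{(i)}_{i,\bullet}$ from Theorem~\ref{TheoProdFG} with the rank-one modularity (Theorem~\ref{TheoRank1Gen}) and its tensor-product stability (Lemma~\ref{LemMod}), and then to verify the commutation relation~\eqref{EqCommModStruct} directly from the pairings collected in Lemma~\ref{LemSkewProdFormDiff} and in~\eqref{EqPairFundWeight}--\eqref{EqOrthogonalWeights}.

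\emph{Rank-one pieces and assembly.} For each $i=(s,k)\in I_1$, the expression~\eqref{EqProdFormComp} identifies $\bbU^{(i)}_{i,\bullet}$ as an instance of Theorem~\ref{TheoRank1Gen} with first-leg data $(\Lb_i,\Fb_i)=(\eb(2\varpi_i),\eb(f_i))$ on $\Hc_i$ and second-leg data $(\Kb_i,\Eb_i) = (\eb(\seee_{N-s}),\boxplus_{s\le r\le n}\eb(\seee_{N-s,n-r}))$ on $\Hsp$; the required $\hbar$-commutations follow from $(f_i,\varpi_i)=1/2$ and Lemma~\ref{LemSkewProdFormDiff}. For $i=s\in I_0$ the factor $\bbU^{(s)}_{s,\bullet}=\Gauss(2\varpi_s\otimes\seee_{N-s})$ is purely Gaussian, and weak modularity with the same shape of $\hat{\Qb}_i$ is immediate from a standard Fourier computation (the degenerate $\Fb_i=\Eb_i=0$ limit of the rank-one formula). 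In each case one obtains a weakly modular datum $(\bbU^{(i)}_{i,\bullet}, \hat{\Qb}_i, \widetilde{\bbU}^{(i)})$ with $\hat{\Qb}_i = \eb\!\bigl((1+|\hbar|^{-1})\varpi_i\bigr)$. Iterating Lemma~\ref{LemMod} along the ordered product then yields weak modularity of $\Fct$ with
\[
\hat{\Qb} \;=\; \bigotimes_{i\in I}\hat{\Qb}_i \;=\; \eb\!\left((1+|\hbar|^{-1})\sum_{i\in I}\varpi_i\right) \;=\; \eb\!\left((1+|\hbar|^{-1})\sum_{s=1}^{n}\nevarpi_s\right) \;=\; \Lb_{\delta}^{1+|\hbar|^{-1}},
\]
the penultimate equality using~\eqref{EqSumFundWeights} summed over $s$ (combining the $I_0$ and $I_1$ contributions of each row).

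\emph{Identification of $\tFct$.} The companion $\tFct = \overset{\longra}{\prod}_{i\in I}\widetilde{\bbU}^{(i)}$ supplied by Lemma~\ref{LemMod} must be matched with the formula~\eqref{EqModFG}. Each $\widetilde{\bbU}^{(i)}$ is, by Theorem~\ref{TheoRank1}, a product of a Gaussian (with the first leg conjugated) and a quantum exponential involving $\Kb_i^{-1}\star\Eb_i$. Pulling all Gaussians leftwards through the quantum dilogarithm factors using~\eqref{EqGausEqInv}, in direct analogy with the proof of Theorem~\ref{TheoProdFG}, the Gaussians coalesce via~\eqref{EqSumFundWeights} and~\eqref{EqDiagFullSame} into $\tKc$ as in~\eqref{EqTildeK}. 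The remaining quantum dilogarithms reassemble into the product~\eqref{EqTildeFPrime}; the shifts $\Kb_i^{-1}\star\Eb_i$ produce the vectors $-\nwe_{N-s,(n-r)-k}$ appearing there via~\eqref{EqDiffOr}. The alternative form $\tFct=\tFctt\tKc$ then follows from a further pass through~\eqref{EqGausEq}, paralleling the passage from~\eqref{EqRealMU} to~\eqref{EqRealMUAlt}.

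\emph{Commutation relation and main obstacle.} Writing $\hat{\Qb}\otimes\Qb = \eb\!\bigl((1+|\hbar|^{-1})(\sum_s\nevarpi_s \oplus -\sum_s\sevarpi_s)\bigr)$ via~\eqref{EqDirSumRep}, it suffices to show each factor of $\Fct=\Kc\Fc$ commutes with this exponential. For a dilogarithm factor $\varphi(\nee_{s,k}\oplus\seee_{N-s,(n-r)-k})$ this reduces through functional calculus and~\eqref{EqPairFundWeight} to $\tfrac12-\tfrac12=0$ (using $s,N-s\in[1,n]$). For each Gaussian factor $\Gauss(2\nevarpi_t,\seee_{N-t})$ of $\Kc$ the commutation follows via~\eqref{EqGausEq} from $(\nevarpi_t,\nevarpi_s)=0$ in~\eqref{EqOrthogonalWeights} and from $(\sevarpi_s,\seee_{N-t})=0$, the latter obtained by expanding $\sevarpi_s$ in $\seee$'s and invoking~\eqref{EqVanishing}. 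The central technical challenge is the identification of $\tFct$: translating the iterated output of Lemma~\ref{LemMod} into the clean ordered-product formulas~\eqref{EqTildeK}--\eqref{EqTildeFDoublePrime} demands a careful bookkeeping of Gaussian shifts and dilogarithm reorderings parallel to (but slightly more intricate than) the proof of Theorem~\ref{TheoProdFG}, since the conjugate vectors $\overline{\nee_{s,k}}$ and the negated vectors $-\nwe_{N-s,(n-r)-k}$ produced by the $\Kb^{-1}\star\Eb$ substitution must be tracked together.
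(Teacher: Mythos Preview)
Your proposal is correct and follows essentially the same route as the paper: factor $\Fct$ via Theorem~\ref{TheoProdFG}, apply the rank-one weak modularity (Theorem~\ref{TheoRank1Gen}) to each $\bbU^{(i)}$, assemble via Lemma~\ref{LemMod} using~\eqref{EqSumFundWeights}, and then verify the commutation~\eqref{EqCommModStruct} from the pairings. The only notable difference is tactical: for the identification of $\tFct$, the paper computes each $\wUbb^{(i)}$ explicitly and then observes that the resulting product has \emph{exactly the same structure} as the original factorization~\eqref{EqIdToProve} (since $(\overline{-\nwe_{s,k}},\overline{-\nwe_{s',k'}}) = (\seee_{s,k},\seee_{s',k'}})$), so the conclusion of Theorem~\ref{TheoProdFG} applies verbatim with the second-leg vectors replaced---this sidesteps the ``careful bookkeeping'' you flag as the main obstacle, and you may find it cleaner than redoing the Gaussian-shifting from scratch.
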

\begin{proof}
Note that
\[
(f_{(s,k)},2\varpi_{(s,k)}) = 1,\qquad (\seee_{N-s,n-r},\seee_{N-s}) = 1,
\]
so by Lemma \ref{LemSumSkewComm} we have that 
\[
(\eb(2\varpi_{(s,k)}),\eb(f_{(s,k)})),\qquad (\boxplus_{s \le r \le n} \eb(\seee_{N-s,n-r}),\eb(\seee_{N-s}))
\]
form $\hbar$-commuting pairs. From Theorem \ref{TheoRank1Gen}, we conclude  that $\bbU^{(s,k)}$ is weakly modular with respect to the operator
\[
\hat{\Qb}_{(s,k)} = \eb(\varpi_{(s,k)})^{1+|\hbar|^{-1}}.
\]
By Lemma \ref{LemMod} and using again \eqref{EqSumFundWeights}, it now follows that $\Fct$ is weakly modular for the modularity operator
\[
\hat{\Qb} = \otimes_{i \in I} \hat{\Qb}_i = \Lb_{\delta}^{1+|\hbar|^{-1}}.
\]
The proof of the modularity is concluded by the easy observation that $ \Lb_{\delta}\otimes \Kb_{\delta}^{-1}$ commutes with $\Fct$. 

To see that \eqref{EqModFG} completes the modular datum, we note that Lemma \ref{LemMod}, Theorem \ref{TheoRank1Gen} and Theorem \ref{TheoProdFG}, together with the expression \eqref{EqProdFormComp} and the first part of the proof, give that $(\Fct,\Lb_{\delta}^{1+|\hbar|^{-1}},\tFct)$ forms a weakly modular pair if we define
\begin{equation}\label{EqContrWeak}
\tFct = \prod_{i\in I}^{\longra} \wUbb^{(i)}_{i,\bullet},
\end{equation}
where 
\[
\wUbb^{(s)} = G_{\hbar}(2\overline{\varpi}_s\otimes \seee_{N-s}) = G_{-\hbar}(2\overline{\varpi}_s\otimes(- \nwe_{N-s}))
\]
and
\begin{eqnarray*}
\wUbb^{(s,k)} &=& G_{\hbar}(2\overline{\varpi_{(s,k)}} \otimes \seee_{N-s}) F_{\hbar}\left(\overline{\eb(f_{(s,k)})} \otimes \eb(-\seee_{N-s})\star \left(\boxplus_{s \le r \le n} \eb(\seee_{N-s,n-r})\right)\right)\\
&=& G_{-\hbar}(2\overline{\varpi_{(s,k)}} \otimes (-\seee_{N-s}))  \prod_{s \le r \le n}^{\longla}   \varphi_{-\hbar}\hr{\overline{f_{(s,k)}}\oplus \seee_{N-s,n-r}-\seee_{N-s}}\\
&=& G_{-\hbar}(2\overline{\varpi_{(s,k)}} \otimes (-\nwe_{N-s}))  \prod_{s \le r \le n}^{\longla}   \varphi_{-\hbar}\hr{\overline{f_{(s,k)}}\oplus -\nwe_{N-s,r-s}}\\
&=& G_{-\hbar}(2\overline{\varpi_{(s,k)}} \otimes (-\nwe_{N-s}))  \prod_{s \le r \le n}^{\longra}   \varphi_{-\hbar}\hr{\overline{f_{(s,k)}}\oplus -\nwe_{N-s,n-r}}\\
&=& \overline{G_{\hbar}(2\varpi_{(s,k)} \otimes \overline{-\nwe_{N-s}})  \prod_{s \le r \le n}^{\longra}   \varphi_{\hbar}\hr{f_{(s,k)}\oplus \overline{-\nwe_{N-s,n-r}}}}.
\end{eqnarray*}
As structurally the expression for \eqref{EqContrWeak} is of the same form as \eqref{EqIdToProve}, using that 
\[
(\overline{-\nwe_{s,k}},\overline{-\nwe_{s',k'}}) = (\seee_{s,k},\seee_{s',k'}),\qquad 1\leq s,s'\leq n,0\leq k \leq s,0\leq k'\leq s',
\]
we find that $\tFct$ is indeed of the form in \eqref{EqModFG}.

The alternative form $\tFct = \tFctt \tKc$ follows again straightforwardly from \eqref{EqGausEq} and the formulas in Lemma \ref{LemSkewProdFormDiff}.
\end{proof}

For the moment, we can not conclude yet that $\Fct$ will be modular with respect to an \emph{arbitrary} unitary $\hbar$-representation of $\wbtd{N}$. This turns out to be true, but to prove this we will need some more detailed information on C$^*$-algebras and von Neumann algebras associated to the Fock--Goncharov flip. This is what we will turn to in the next section. 

\section{von Neumann algebras associated to the Fock--Goncharov flip}\label{SecvNMUMod}

By \cite[Theorem 2.3]{SW01} (see also \cite[Theorem 5]{SW07}) one can associate to a modular multiplicative unitary $\Wbb \in \Bc(\Hc\otimes \Hc)$ a von Neumann algebra $M$, obtained as the $\sigma$-weak closure 
\begin{equation}\label{EqFormvN}
M = M_{\Wbb} := [(\id\otimes \omega)\Wbb\mid \omega\in \Bc(\Hc)_*]^{\sigma\textrm{-weak}} \subseteq \Bc(\Hc).
\end{equation}
This von Neumann algebra can further be enhanced into a \emph{von Neumann bialgebra}, i.e.\ $M$ comes equipped with a faithful normal unital $*$-homomorphism
\begin{equation}\label{EqComultiplication}
\Delta = \Delta_{\Wbb}\colon M \rightarrow M \bar{\otimes} M,\quad x \mapsto \Wbb^*(1\otimes x)\Wbb,
\end{equation}
satisfying the coassociativity condition. We refer to $(M,\Delta)$ as the  \emph{von Neumann bialgebra} associated to $\Wbb$. 

Now if $\Wbb$ is modular with respect to $(\Qb,\hat{\Qb})$, one has by \cite[Proposition 2.2]{SW01} that 
\[
\hat{\Wbb} = \Sigma \Wbb^* \Sigma = \Wbb_{21}^*
\]
is a modular multiplicative unitary with respect to $(\hat{\Qb},\Qb)$. 
We refer to $(\hat{M},\hat{\Delta}) = (M_{\hat{\Wbb}},\Delta_{\hat{\Wbb}})$ as the \emph{dual} von Neumann bialgebra. Explicitly, we can write 
\begin{equation}\label{EqFormvNDual}
\hat{M} = M_{\hat{\Wbb}} := [(\omega\otimes \id)\Wbb\mid \omega\in \Bc(\Hc)_*]^{\sigma\textrm{-weak}} \subseteq \Bc(\Hc),
\end{equation}
with the dual comultiplication
\begin{equation}\label{EqComultiplicationDual}
\hat{\Delta} = \Delta_{\hat{\Wbb}}\colon \hat{M} \rightarrow \hat{M} \bar{\otimes} \hat{M},\quad x \mapsto \hat{\Wbb}^*(1\otimes x)\hat{\Wbb}.
\end{equation}

The aim of this section is to show that in the case of the Fock--Goncharov flip, these von Neumann algebras are just completions of suitable \emph{quantum tori}.  To prove this, we will need to use some of the machinery in \cite{Rie93}, which we first recall. 

\subsection{\texorpdfstring{C$^*$-algebras}{C*-algebras} associated to a skew-symmetric space}\label{SecCStarRieff}

The following material is completely standard, but we use some care to introduce the necessary formalism. 

To a skew-symmetric space $V$ and $\hbar\in \R$ one can associate a C$^*$-algebra $C^*_{\hbar}(V)$ such that the unitary $\hbar$-representations of $V$ correspond to the non-degenerate $*$-representations of $C^*_{\hbar}(V)$. One may either view $C^*_{\hbar}(V)$ as a deformation of the group C$^*$-algebra of $C^*(V)$ or, in Pontryagin dual fashion, as a deformation $C_0^{\hbar}(\hat{V})$ of the C$^*$-algebra $C_0(\hat{V})$, with $\hat{V}$ the space of real linear functionals on $V$. Both viewpoints are useful to fuel intuition in the considerations below.

Write the canonical pairing between $V$ and its linear dual $\hat{V}$ as\footnote{In \cite{Rie93}, one identifies $\hat{V}$ and $V$ through an auxiliary positive-definite inner product on $V$ by which $w \mapsto \hat{w} = \langle w,-\rangle$ can actually be viewed as a concrete map instead of abstract notation for elements in the dual space. Though this auxiliary structure will be inessential in what follows, the reader is free to use such a concrete identification to more directly follow the arguments in \cite{Rie93}.} 
\[
\hat{w}(v) = \langle \hat{w},v\rangle = \langle v,\hat{w} \rangle = \hat{w} \cdot v = v\cdot \hat{w},\qquad v\in V, \hat{w} \in \hat{V}.
\] 
Put
\begin{equation}\label{EqJOperator}
\Jc = \Jc_{\hbar}\colon V \rightarrow \hat{V}, \qquad \langle v,\Jc w\rangle =   -\frac{1}{2}\hbar (v,w),\qquad \forall v,w\in V.
\end{equation}

We can then endow the Schwartz space $\Sc(\hat{V})$ with the $*$-algebra structure such that \[
f^*(\hat{v}) = \overline{f(\hat{v})}
\]
and 
\begin{eqnarray}\label{EqTwistProd}
(f\times_{\hbar} g)(\hat{v}) &:=& \dint_{V\times \hat{V}} f(\hat{v}-\Jc u) g(\hat{v}-\hat{w}) e^{2\pi i \hat{w} \cdot u}\rd u \rd \hat{w}\\ &=& \int_V f(\hat{v}-\Jc u) \hat{g}(u) e^{2\pi i \hat{v} \cdot u}\rd u  \label{EqTwistProd2}\\
&=&  \int_V \hat{f}(u) g(\hat{v} + \Jc u) e^{2\pi i \hat{v} \cdot u}\rd u. \label{EqTwistProd3}
\end{eqnarray}
Here we use the usual Fourier transform 
\begin{equation}\label{EqFourDualOrd}
\msF\colon \Sc(\hat{V})\rightarrow \Sc(V),\quad f\mapsto \hat{f},\quad  \hat{f}(v) = \int_{\hat{V}} f(\hat{w}) e^{-2\pi i v\cdot \hat{w}}\rd \hat{w},\qquad f \in \Sc(\hat{V}),v\in V,
\end{equation}
with the Lebesgue measures of $V$ and $\hat{V}$ normalized so that one has the reverse Fourier transform as 
\begin{equation}\label{EqInvFourDualOrd}
\msF^{-1}\colon \Sc(V)\rightarrow \Sc(\hat{V}),\quad f\mapsto \check{f},\quad \check{f}(\hat{v}) = \int_{V} f(w) e^{2\pi i \hat{v}\cdot w}\rd w,\qquad f \in \Sc(V),\hat{v}\in \hat{V}.
\end{equation}
In fact, to see that $(\Sc(\hat{V}),\times_{\hbar},*)$ is an associative $*$-algebra, it is easiest to consider the resulting $*$-algebra structure on $(\Sc(V),\ast_{\hbar},\sharp)$ on $\Sc(V)$ through the Fourier transform $\msF$ in \eqref{EqFourDualOrd}, which  leads to
\[
(f\ast_{\hbar}g)(v) = \int_V f(u)g(v-u) e^{2\pi i v\cdot \Jc u}\rd u,\qquad f^{\sharp}(u) = \overline{f(-u)},\qquad f\in \Sc(V),
\]
and so 
\begin{equation}\label{EqFourierDualCross}
\widehat{f\times_{\hbar}g} = \hat{f}\ast_{\hbar} \hat{g},\qquad \widehat{f^*} = \hat{f}^{\sharp}. 
\end{equation}
In fact, one easily checks that $(\Sc(V),*_{\hbar},\sharp)$ is a \emph{normed} associative $*$-algebra for the norm $f \mapsto \|\hat{f}\|_1$, which completes into the normed Banach $*$-algebra $(L^1(V),\ast_{\hbar},\sharp)$ with the twisted convolution $*$-algebra structure. Hence the following definition is meaningful, as any $*$-representation of a Banach $*$-algebra on a Hilbert space is contractive (= norm-decreasing).

\begin{defn}
We define  $C_{\hbar}^*(V)$ to be the C$^*$-algebra obtained as the norm-completion of $(\Sc(V),\ast_{\hbar},\#)$ with respect to its universal C$^*$-norm
\[
\|f\|_{\hbar} := \sup_{\pi} \{\|\pi(f)\|\},
\]
with $\pi$ ranging over all non-degenerate $*$-representations $\pi \colon L^1(V) \rightarrow \Bc(\Hc)$ of $(L^1(V),\ast_{\hbar},\sharp)$ on Hilbert spaces $\Hsp$. 

Correspondingly, we define $C_0^{\hbar}(\hat{V})$ to be the C$^*$-algebra completion of $(\Sc(\hat{V}),\times_{\hbar},*)$ through the C$^*$-norm 
\[
\|f\|_{\hbar} := \|\hat{f}\|_{\hbar},\qquad f\in \Sc(\hat{V}). 
\]
\end{defn}
By construction, we then have that $C_0^{\hbar}(\hat{V})$ and $C^*_{\hbar}(V)$ are isomorphic through 
\[
C_0^{\hbar}(\hat{V})\cong C^*_{\hbar}(V),\quad f\mapsto\hat{f}\textrm{ for }f\in \Sc(\hat{V}),
\]
and in what follows we can simply identify these two C$^*$-algebras. For $f\in \Sc(\hat{V})$, we denote by $f_{\hbar}$ its copy inside $C_{\hbar}^*(V)$ for emphasis, and we then denote the inclusion map of $\Sc(\hat{V})$ into $C^*_{\hbar}(V)$ by  
\begin{equation}\label{EqQuantMap}
\Qc\colon  \Sc(\hat{V}) \rightarrow C^*_{\hbar}(V),\qquad f \mapsto f_{\hbar}.
\end{equation}
On the other hand, there is less need to make such a differentiation on the Fourier dual side, so if $f\in \Sc(V)$ we simply write also $
f \in C^*_{\hbar}(V)$.

\begin{lemma}\label{LemUnivProp}
There is a one-to-one correspondence between unitary $\hbar$-representations $(\Hsp,\pi)$ of $V$ and non-degenerate $*$-representations $(\Hsp,\pi)$ of $C^*_{\hbar}(V)$, determined by 
\begin{equation}\label{EqExtRep}
\pi(f_{\hbar}) = \pi(\hat{f}) := \int_V \hat{f}(v) \pi(v)\rd v, \qquad f\in \Sc(\hat{V}).
\end{equation}
\end{lemma}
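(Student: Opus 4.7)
The plan is to establish the correspondence in both directions via the standard integration/disintegration machinery for (twisted) group C$^*$-algebras, the only subtlety being a consistent bookkeeping of the $2$-cocycle $\Omega_\hbar$.

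\emph{Forward direction.} Given a unitary $\hbar$-representation $(\Hsp, \pi)$ of $V$, I define $\pi(g)\xi := \int_V g(v)\pi(v)\xi\,\rd v$ for $g\in L^1(V)$ and $\xi\in\Hsp$; this is a bounded operator with $\|\pi(g)\|\le\|g\|_1$. The $*$-property $\pi(g^{\sharp})=\pi(g)^*$ follows from $\pi(v)^*=\pi(-v)$, which itself is immediate from \eqref{EqProdProj} applied to $v+(-v)=0$. The multiplicativity $\pi(g\ast_{\hbar}h)=\pi(g)\pi(h)$ is a direct Fubini computation: expand $\pi(g)\pi(h)\xi$ as a double integral, apply \eqref{EqProdProj} in the form $\pi(v)\pi(w)=e^{\pi i\hbar(v,w)}\pi(v+w)$, and change variables $v=u+w'$ to recover the defining formula for $\ast_{\hbar}$ (using $v\cdot \Jc u=-\tfrac{\hbar}{2}(v,u)$). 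Hence we obtain a $*$-representation of $(L^1(V),\ast_{\hbar},\sharp)$, which by the universal property extends uniquely to $C^*_{\hbar}(V)$; via \eqref{EqFourierDualCross} this is precisely \eqref{EqExtRep}. Nondegeneracy is proved in the usual way: choosing an approximate identity $g_n\in \Sc(V)$ concentrating at $0$ (say nonnegative with integral $1$ and supports shrinking to $\{0\}$), strong continuity of $\pi$ gives $\pi(g_n)\xi\to\xi$ for every $\xi\in\Hsp$.

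\emph{Backward direction.} Given a nondegenerate $*$-representation $\pi\colon C^*_{\hbar}(V)\to\Bc(\Hsp)$, define on $L^1(V)$ the twisted translation
\[
(L_v g)(w) = e^{\pi i\hbar(v,w)}\,g(w-v),\qquad v,w\in V,\; g\in L^1(V).
\]
A direct change of variables shows that $L_v$ is an isometry on $L^1(V)$ and that $L_v(g\ast_{\hbar}h) = (L_v g)\ast_{\hbar} h$, so $L_v$ is a left multiplier of the convolution algebra. Moreover the same computation that verifies \eqref{EqProdProj} for $\pi$ gives $L_v L_w = e^{\pi i\hbar(v,w)}L_{v+w}$, in particular $L_vL_{-v}=1$, so each $L_v$ extends to a unitary multiplier of $C^*_{\hbar}(V)$. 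Applying the canonical extension $\bar\pi$ of a nondegenerate representation to the multiplier algebra yields unitaries $\pi(v):=\bar\pi(L_v)\in\Bc(\Hsp)$ satisfying \eqref{EqProdProj}. Strong continuity follows because $v\mapsto L_vg$ is norm-continuous in $L^1(V)$ for each fixed $g\in \Sc(V)$: for $\xi\in\Hsp$ of the form $\pi(g)\eta$ one has $\pi(v)\xi=\pi(L_vg)\eta$, and the strongly dense set of such $\xi$ together with $\|\pi(v)\|=1$ gives continuity on all of $\Hsp$.

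\emph{Bijectivity.} These two constructions are inverse to each other. Starting from $(\Hsp,\pi)$ a unitary $\hbar$-representation, the constructed multiplier $L_v$ acts on $\pi(g)\xi$ by $\pi(v)\pi(g)\xi = \pi(L_v g)\xi$, which recovers the original $\pi(v)$ (test with an approximate identity). Conversely, starting from a nondegenerate $C^*_{\hbar}(V)$-representation and applying \eqref{EqExtRep} to the recovered $\pi(v)$ returns the original representation, since by construction $\pi(g)=\int_V g(v)\bar\pi(L_v)\,\rd v = \bar\pi(\int_V g(v) L_v\,\rd v)$ and $\int_V g(v)L_v\,\rd v$ acts on $L^1(V)$ precisely as left $\ast_{\hbar}$-multiplication by $g$. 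The main (mild) technical point is the multiplier-algebra extension step, but this is standard once $L_v$ has been identified as a unitary multiplier.
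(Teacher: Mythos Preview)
Your proof is correct and follows essentially the same route as the paper's sketch: both integrate a unitary $\hbar$-representation to a $*$-representation of $(L^1(V),\ast_\hbar,\sharp)$ in the forward direction, and in the backward direction both realise $v\in V$ as the unitary multiplier given by the twisted left translation $L_v g(w)=e^{\pi i\hbar(v,w)}g(w-v)$ on $L^1(V)$. The only cosmetic difference is that you verify the multiplier property via $L_v(g\ast_\hbar h)=(L_vg)\ast_\hbar h$, whereas the paper records the equivalent adjoint identity $g^\sharp\ast_\hbar(L_v f)=(L_{-v}g)^\sharp\ast_\hbar f$; either one suffices to recognise $L_v$ as a unitary in $M(C^*_\hbar(V))$.
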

\begin{proof}
Well-known. The proof goes as follows: writing
\[
(u \cdot f)(v) =  e^{\pi i \hbar(u,v)}f(v-u),\qquad f\in L^1(V),u,v\in V, 
\]
one checks that, for any $f\in L^1(V)$, the map
\[
V\ni u \mapsto u\cdot f\in L^1(V),
\]
is continuous with respect to the $L^1$-norm. One further verifies that 
\[
L^1(V) \ni f\mapsto u\cdot f \in L^1(V)
\]
is an isometric linear map for any $u\in V$, with inverse
\[
f \mapsto u^{-1}\cdot f.
\]
As a last ingredient, one uses the easily checked \emph{multiplier property}
\[
g^{\sharp} \ast_{\hbar} (u\cdot f) = (u^{-1}\cdot g)^{\sharp}\ast_{\hbar} f,\qquad f,g\in L^1(V),u\in V. 
\]
\end{proof}

For example, with respect to the standard unitary $\hbar$-representation of $V$ given through \eqref{EqStandardDefDualRep}, the associated $C^*_{\hbar}(V)$-representation is determined by 
\begin{equation}\label{EqStandardDefDualRepCstar}
\pi(f) g = f\ast_{\hbar}g,\qquad f,g\in \Sc(V).
\end{equation}
If we conjugate by the unitary Fourier transform $\mathscr{F}$, we find the unitarily equivalent unitary $\hbar$-representation $\pi = \pi_{L^2(\hat{V})}$ of $V$ on $L^2(\hat{V})$ (which we will denote by the same symbol), which is now determined by 
\begin{equation}\label{EqStandardDefRep}
(\pi(v)g)(\hat{w}) = e^{2\pi i v\cdot \hat{w}} g(\hat{w} + \Jc v),\qquad v\in V,\hat{w}\in \hat{V},g\in \mathcal{S}(\hat{V}), 
\end{equation}
and where the associated $*$-representation of $C_{\hbar}^*(V)$ on $L^2(\hat{V})$ is  determined,
using \eqref{EqTwistProd3}, by 
\begin{equation}\label{EqStandardRep}
\pi(f_{\hbar})g = f\times_{\hbar} g,\qquad f,g\in \Sc(\hat{V}).
\end{equation}

\begin{defn}
We refer to either $(L^2(V),\pi)$ or $(L^2(\hat{V}),\pi)$, with $\pi$ as in \eqref{EqStandardDefDualRepCstar} or \eqref{EqStandardRep}, as the \emph{standard $*$-representation} of $C_{\hbar}^*(V)$, or as the \emph{standard unitary $\hbar$-representation} of $V$. We then put 
\begin{equation}\label{EqvNQT}
L_{\hbar}(V) := [\pi(f_{\hbar}) \mid f\in \Sc(\hat{V})]^{\sigma-\textrm{weak}} \subseteq \Bc(L^2(\hat{V})),
\end{equation}
which defines a von Neumann algebra called the \emph{twisted group von Neumann algebra} of $V$.

We call a non-degenerate $*$-representation of $C_{\hbar}^*(V)$ \emph{(faithfully) normal} if it extends to a (faithful) normal $*$-representation of $L_{\hbar}(V)$. We then also use this terminology for the associated unitary $\hbar$-representation of $V$. 
\end{defn}

If $V$ is symplectic with Lagrangian subspace $W \subseteq V$, we get through the construction in Example \ref{ExaHeisenberg} identifications 
\begin{equation}
C_{\hbar}^*(V) \cong \Kc(L^2(W)),\qquad L_{\hbar}(V) \cong \Bc(L^2(W)), 
\end{equation}
and \emph{any} unitary $\hbar$-representation give rise to a faithfully normal $*$-representation of $C^*_{\hbar}(V)$. 

In particular, $C^*_{\hbar}(V)$ is a nuclear C$^*$-algebra if $V$ is either symplectic or trivial. Since unitary $\hbar$-representations of a direct sum of skew-symmetric spaces are just pairs of pointwise commuting $\hbar$-representations of the components on a same Hilbert space, it follows from the decomposition \eqref{EqDecompRad} that 
\begin{equation}\label{EqTensProdCanRad}
C^*_{\hbar}(V)\cong C^*(\Rad(V))\otimes C^*_{\hbar}(V')
\end{equation}
is nuclear for any skew-symmetric space, and that 
\begin{equation}\label{EqDirSumTens}
C^*_{\hbar}(V\oplus W) \cong C^*_{\hbar}(V) \otimes C^*_{\hbar}(W)
\end{equation}
for any direct sum of skew-symmetric spaces. Moreover, the isomorphism \eqref{EqTensProdCanRad} also gives that the standard $*$-representation $\pi$ of $C^*_{\hbar}(V)$ is faithful for any skew-symmetric space $V$ and that, more generally, the decomposition \eqref{EqTensProdCanRad} is implemented concretely through an identification of (tensor products of) standard representations in the form \eqref{EqStandardDefDualRepCstar} via the canonical unitary 
\[
L^2(V\oplus W) \cong L^2(V) \otimes L^2(W).
\]
This then also establishes
\[
L_{\hbar}(V\oplus W) \cong L_{\hbar}(V)\bar{\otimes} L_{\hbar}(W).
\]

If we now, for a general skew-symmetric space $V$, consider a direct sum decomposition as in \eqref{EqDecompRadRefined}, and identify $\R\cong \R e_i$ as a Lagrangian subspace of $Z_i$, one gets in particular
isomorphisms of C$^*$-algebras and von Neumann algebras 
\begin{equation}\label{EqIsoSkewBilvN}
C_{\hbar}^*(V) \cong C_0(\widehat{\Rad(V)}) \otimes  \bigotimes_{i=1}^m\Kc(L^2(\R)),\qquad L_{\hbar}(V) \cong L^{\infty}(\widehat{\Rad(V)})\bar{\otimes}\overline{\bigotimes}_{i=1}^m\Bc(L^2(\R)). 
\end{equation}

The following is a direct consequence of the above decompositions.

\begin{lemma}\label{LemTypeIRieff}
If $V$ is a skew-symmetric space, then $L_{\hbar}(V)$ is a type $I$-von Neumann algebra. It is a factor if and only if $V$ is symplectic.
\end{lemma}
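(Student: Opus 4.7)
The plan is to read off both statements directly from the structural isomorphism \eqref{EqIsoSkewBilvN}, after unpacking what `type $I$' and `factor' mean for a tensor product of this form.

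First I would invoke the decomposition \eqref{EqDecompRadRefined} $V = \Rad(V) \oplus \bigoplus_{i=1}^m Z_i$ and the resulting identification
\[
L_{\hbar}(V) \;\cong\; L^{\infty}(\widehat{\Rad(V)})\,\bar{\otimes}\,\overline{\bigotimes}_{i=1}^{m}\Bc(L^2(\R)),
\]
already provided by \eqref{EqIsoSkewBilvN}. Each factor on the right is of type $I$: abelian von Neumann algebras are type $I$, and $\Bc(L^2(\R))$ is the prototypical type $I$ factor. A spatial tensor product of type $I$ von Neumann algebras is again of type $I$ (one may exhibit an abelian projection by tensoring abelian projections, or invoke the standard stability of type $I$ under $\vNTens$). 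This gives the first assertion.

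For the second assertion, I would compute the center. The center of a spatial tensor product of von Neumann algebras is the tensor product of their centers, so
\[
\Zc(L_{\hbar}(V)) \;\cong\; L^{\infty}(\widehat{\Rad(V)})\,\bar{\otimes}\,\overline{\bigotimes}_{i=1}^{m}\Zc(\Bc(L^2(\R))) \;\cong\; L^{\infty}(\widehat{\Rad(V)})\,\bar\otimes\,\C \;\cong\; L^{\infty}(\widehat{\Rad(V)}).
\]
Thus $L_{\hbar}(V)$ is a factor if and only if $L^{\infty}(\widehat{\Rad(V)}) = \C$, which happens exactly when $\Rad(V) = 0$, i.e.\ when $V$ is symplectic.

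There is essentially no obstacle here beyond citing \eqref{EqIsoSkewBilvN}; the only point that deserves a sentence of justification is the claim that the center of the spatial tensor product decomposes as the tensor product of the centers, which is standard but worth recording explicitly for the abelian $\vNTens$ type $I$ case at hand (and could alternatively be seen concretely via the fact that $L^{\infty}(\widehat{\Rad(V)})\otimes 1$ is maximal abelian inside the abelian part and $\C\otimes \overline\bigotimes \Bc(L^2(\R))$ is a factor).
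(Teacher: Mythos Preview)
Your proposal is correct and follows exactly the paper's approach: the paper simply states that the lemma is a direct consequence of the decomposition \eqref{EqIsoSkewBilvN}, and you have spelled out precisely that deduction.
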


The following lemma is a consequence of the fact that if $Z \subseteq V$ is a subspace of a skew-symmetric space, we can always choose the decomposition \eqref{EqDecompRadRefined} of $V$ such that 
\[
Z = (Z\cap \Rad(V)) \oplus \bigoplus_{i=1}^m (Z\cap Z_i),
\]
with either $\emptyset, \{e_i\}$ or $\{e_i,\hat{e}_i\}$ a basis of $Z\cap Z_i$.

\begin{lemma}\label{LemIncGivesFaith}
If $Z \subseteq V$ is an inclusion of skew-symmetric spaces, then the  standard unitary $\hbar$-representation of $V$ restricts to a faithfully normal unitary $\hbar$-representation of $Z$. 
\end{lemma}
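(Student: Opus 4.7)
The plan is to exploit the compatible decomposition of $V$ supplied by the remark preceding the lemma. Specifically, I would choose a direct-sum decomposition $V = \Rad(V) \oplus \bigoplus_{i=1}^m Z_i$ of skew-symmetric subspaces as in \eqref{EqDecompRadRefined}, together with symplectic bases $\{e_i,\hat{e}_i\}$ of the $Z_i$, such that
\[
Z = (Z \cap \Rad(V)) \oplus \bigoplus_{i=1}^m (Z \cap Z_i),
\]
with each $Z \cap Z_i$ equal to either $\{0\}$, $\R e_i$, or $Z_i$. Using the tensor factorisations \eqref{EqDirSumTens} and \eqref{EqIsoSkewBilvN}, together with the fact that the standard $\hbar$-representation of a direct sum is naturally the product (in the sense of \eqref{EqDirSumRep}) of the standard representations of the summands, this reduces the statement to checking faithful normality separately on each local factor, since the spatial tensor product of faithful normal representations of type I von Neumann algebras is again faithful and normal.

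There are then two nontrivial local cases to treat (the cases $Z\cap Z_i=0$ and $Z \cap Z_i = Z_i$ being immediate). First, for a subspace $Z_0 \subseteq \Rad(V)$ of a trivial skew-symmetric space, I would pick a linear complement $\Rad(V) = Z_0 \oplus U$ to factor $L^2(\Rad(V)) \cong L^2(Z_0) \otimes L^2(U)$; under this identification the restriction of the translation representation of $\Rad(V)$ to $Z_0$ is visibly the amplification by $L^2(U)$ of the standard representation of $Z_0$, and faithful normality then follows from the Pontryagin duality isomorphism $L_\hbar(Z_0) \cong L^\infty(\widehat{Z_0})$. Second, for a Lagrangian line $\R e_i \subseteq Z_i$, I would invoke Stone--von Neumann: since the standard representation of $Z_i$ is irreducible by Lemma \ref{LemTypeIRieff}, it is unitarily equivalent to the Heisenberg representation on $L^2(\R e_i)$ associated to the polarisation $(\R e_i,\R \hat e_i)$, in which $e_i$ acts by translation. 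The restriction to $\R e_i$ is then the standard representation of $\R e_i$, handled by the previous case.

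Combining the three local pieces by Fubini-type reasoning on the tensor product of standard representations produces the desired faithful normal restriction. The main obstacle I anticipate is not conceptual but bookkeeping: one must verify carefully that, under the chosen compatible decomposition, the standard representation of $V$ genuinely factors as a tensor product of standard representations in a way \emph{compatible} with the inclusion $Z \hookrightarrow V$, so that restriction to $Z$ commutes cleanly with the tensor decomposition. Once that identification is in place, the three local verifications assemble immediately to yield the lemma.
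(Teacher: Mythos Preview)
Your proposal is correct and follows exactly the approach the paper indicates: the paper's proof consists solely of the remark immediately preceding the lemma, namely the existence of a compatible decomposition \eqref{EqDecompRadRefined} with $Z = (Z\cap \Rad(V)) \oplus \bigoplus_i (Z\cap Z_i)$ and each $Z\cap Z_i$ spanned by $\emptyset$, $\{e_i\}$, or $\{e_i,\hat e_i\}$. You have simply filled in the local verifications (the trivial and Lagrangian-line cases) and the tensor-product bookkeeping that the paper leaves implicit.
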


Concretely, the resulting embedding $L_{\hbar}(Z) \subseteq L_{\hbar}(V)$ is determined by 
\[
f_{\hbar}  =  \int_{Z} \hat{f}(z) \pi(z) \rd z,\qquad f\in \Sc(\hat{Z}).
\]

\subsection{Rieffel deformation of vector spaces with boundary}

For what follows, it will be important to know that one can extend the domain of the quantization map \eqref{EqQuantMap}. 

Fix again $V$ a skew-symmetric space, and consider $C_b^{\infty}(\hat{V})$, the space of uniformly smooth bounded functions on $\hat{V}$, i.e.\ those functions such that the Banach space-valued function $\hat{v}\mapsto f_{\hat{v}} \in C_b(\hat{V})$ is smooth, with $f_{\hat{v}}(\hat{w}) = f(\hat{w}-\hat{v})$. The following is then a special case of Proposition \ref{PropExtMultAlg}.

\begin{lemma}
The quantization map \eqref{EqQuantMap} extends uniquely to a map 
\begin{equation}\label{EqQuantMapGen}
\Qc\colon  C_b^{\infty}(\hat{V}) \rightarrow M(C^*_{\hbar}(V)),\qquad f \mapsto f_{\hbar},
\end{equation}
with $M(C^*_{\hbar}(V))$ the multiplier C$^*$-algebra of $C^*_{\hbar}(V)$, in such a way that, for $g \in \Sc(\hat{V})$, one has 
\[
f_{\hbar}g_{\hbar} = (f\times_{\hbar} g)_{\hbar},\qquad g_{\hbar}f_{\hbar} = (g\times_{\hbar} f)_{\hbar},\qquad f \in C_b^{\infty}(\hat{V}),g\in \Sc(\hat{V}),
\]
with $f\times_{\hbar}g$ and $g\times_{\hbar}f$ defined by respectively \eqref{EqTwistProd2} and \eqref{EqTwistProd3}.
\end{lemma}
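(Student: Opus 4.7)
The plan is to prove the lemma in three steps, reducing the analytic work to classical Weyl calculus in the final step.

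\textbf{Step 1 (Regularity).} First I would verify that for any $f \in C_b^{\infty}(\hat V)$ and $g \in \Sc(\hat V)$, both products $f \times_\hbar g$ and $g \times_\hbar f$ still lie in $\Sc(\hat V)$. Smoothness in $\hat v$ of \eqref{EqTwistProd2} follows by differentiating under the integral sign, since every $\hat v$-derivative of $f(\hat v - \Jc u) e^{2\pi i \hat v \cdot u}$ is controlled by a polynomial factor in $u$ that is dominated by the Schwartz function $\hat g(u)$. Rapid decay in $\hat v$ is obtained by repeated integration by parts in $u$, using the identity $u_j e^{2\pi i \hat v \cdot u} = (2\pi i \hat v_j)^{-1}\partial_{u_j} e^{2\pi i \hat v \cdot u}$: boundary terms vanish because $\hat g$ is Schwartz, and the inductive integrand remains absolutely integrable because all derivatives of $f$ are bounded. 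The same argument handles $g \times_\hbar f$ via \eqref{EqTwistProd3}.

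\textbf{Step 2 (Algebraic multiplier identities).} Define $L_f, R_f \colon \Sc(\hat V) \to \Sc(\hat V)$ by $L_f(g) := f \times_\hbar g$ and $R_f(g) := g \times_\hbar f$. Using the $*$-algebra structure on $(\Sc(\hat V), \times_\hbar)$, together with Fubini (which is justified because in each relevant triple product at least two factors are Schwartz, by Step 1), I would check the pair of identities
\[
L_f(g \times_\hbar h) = L_f(g) \times_\hbar h, \qquad R_f(g \times_\hbar h) = g \times_\hbar R_f(h),
\qquad g \times_\hbar L_f(h) = R_f(g) \times_\hbar h,
\]
for $g,h \in \Sc(\hat V)$. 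This identifies $(L_f, R_f)$ as a candidate double multiplier of the dense $*$-subalgebra $\Qc(\Sc(\hat V)) \subseteq C^*_{\hbar}(V)$.

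\textbf{Step 3 (Boundedness in the $C^*$-norm).} The remaining point is to show that $L_f$ (and symmetrically $R_f$) extends to a bounded operator on $C^*_{\hbar}(V)$. Using the decomposition \eqref{EqDecompRadRefined} and the resulting factorisation \eqref{EqIsoSkewBilvN} of $C^*_\hbar(V)$ as $C_0(\widehat{\Rad(V)}) \otimes \bigotimes_i \Kc(L^2(\R))$, the standard representation of $C^*_\hbar(V)$ on $L^2(\hat V)$ becomes a tensor product of the multiplication representation of $C_0(\widehat{\Rad(V)})$ on $L^2(\widehat{\Rad(V)})$ and of ordinary Weyl quantizations on each symplectic $\R^2$ factor. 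Under this identification, $L_f$ becomes the Weyl-type pseudodifferential operator with symbol $f$. Its boundedness, together with an estimate of the operator norm by finitely many sup norms of derivatives of $f$, then follows from a standard Calderón--Vaillancourt type result (after first checking it for pure tensors in $f$ by a density argument, or directly from the general Rieffel deformation estimates in Appendix E that underlie Proposition \ref{PropExtMultAlg}). Combining this with the multiplier identities of Step 2 yields $f_\hbar \in M(C^*_\hbar(V))$, and uniqueness of the extension is immediate from nondegeneracy of the quantization map on $\Sc(\hat V)$.

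The main obstacle will be the boundedness estimate in Step 3: unlike the purely symplectic case, one must handle the radical simultaneously with the symplectic blocks, and the symbol classes $C_b^\infty$ are borderline from the pseudodifferential viewpoint. The cleanest way around this is to note that all three steps are precisely what Rieffel's strict deformation machinery accomplishes, so the lemma is exactly the instance of Proposition \ref{PropExtMultAlg} one obtains by taking the action of $V$ on $\hat V$ by translation.
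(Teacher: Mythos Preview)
Your proposal is correct, and in its final paragraph it lands on precisely the paper's argument: the lemma is stated in the paper as a special case of Proposition~\ref{PropExtMultAlg}, applied with $B = C_0(\hat V)$ and $C = M(B) = C_b(\hat V)$ under the translation action of $\hat V$ (not $V$, as you wrote---the deforming group in Rieffel's setup of Appendix~E is $\hat V$). Your Steps~1--3 are a valid direct unpacking of what that proposition provides in this instance, but the paper does not spell any of this out; it simply invokes the reference. So your route and the paper's coincide once you reach your concluding sentence, with the earlier steps being redundant detail rather than a genuinely different approach.
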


For example, we have in $C_b^{\infty}(\hat{V})$ the function
\begin{equation}
e^{2\pi i v}\colon  \hat{V} \rightarrow \C,\qquad \hat{w}\mapsto e^{2\pi i v\cdot \hat{w}}.
\end{equation}
Then it is easily checked, e.g.\ from \eqref{EqTwistProd2} and \eqref{EqExtRep}, that for any unitary $\hbar$-representation $\pi$ we have, inversely to \eqref{EqExtRep},
\begin{equation}\label{EqCorrCanEl}
\pi(e_{\hbar}^{2\pi iv}) = \pi(v). 
\end{equation}
One further has 
\begin{equation}\label{EqExpAffRieff}
e_{\hbar}^v \,\eta\, C_{\hbar}^*(V),
\end{equation}
where $\eta$ denotes the affiliation relation for C$^*$-algebras, which we recall in Appendix \ref{SecMult}. The affiliation \eqref{EqExpAffRieff} is determined uniquely by
\begin{equation}\label{EqChare}
\pi(e_{\hbar}^{2\pi v}) = \eb(v), \qquad v\in V,
\end{equation}
for any unitary $\hbar$-representation of $V$ (see the discussion following Theorem \ref{TheoFunctRieffDef}). 

We apply this as follows. Assume we have dual bases $\{f_i\}$ and $\{\hat{f}_i\}$ of $V$ and $\hat{V}$, indexed by a set
\begin{equation}
I = I_0 \sqcup I_1.
\end{equation}
We identify $V \cong \R^I \cong \hat{V}$ through these bases via
\[
v  = \sum_i v_i f_i,\qquad \hat{v} = \sum_i \hat{v}_i\hat{f}_i,\qquad v\in V,\hat{v}\in \hat{V}.
\]

Consider the convex closed cone 
\begin{equation}\label{EqDualCone}
V \supseteq V_+ = V_+(I_0,I_1) := \{v\in V \mid v_i = \langle v,\hat{f}_i\rangle \geq 0,\forall i\in I_1\}
\end{equation}
as well as the dual extended convex closed cone 
\begin{equation}\label{EqExtCone}
\hat{V}\subseteq \hat{V}_+ = \hat{V}_+(I_0,I_1) := \R^{I_0} \times \R_{-\infty}^{I_1},\qquad \R_{-\infty} = [-\infty,\infty),
\end{equation}
endowed with the product topology and with $\R_{-\infty}$ having its topology generated by the open intervals in $\R$ as well as the intervals $[-\infty,a)$ for $a\in \R$.

Consider the inclusions
\begin{equation}\label{EqListInclusCont}
C_0(\hat{V}) \subseteq C_0(\hat{V}_+) \subseteq C_b(\hat{V}_+) \subseteq C_b(\hat{V}), 
\end{equation}
where the final inclusion is through restriction to $\hat{V}$. Since the translation action of $\hat{V}$ extends continuously to $\hat{V}_+$, the above inclusions are $\hat{V}$-equivariant, and we can consider also
\begin{equation}\label{EqListInclus}
C_0^{\infty}(\hat{V}) \subseteq C_0^{\infty}(\hat{V}_+) \subseteq C_b^{\infty}(\hat{V}_+) \subseteq C_b^{\infty}(\hat{V}). 
\end{equation}
Put
\begin{equation}\label{EqCStarCone}
C_{\hbar}^*(V_+) := [f_{\hbar}\mid f\in C_0^{\infty}(\hat{V}_+)] \subseteq M(C_{\hbar}^*(V)).
\end{equation}

\begin{theorem}\label{TheoAffil}
The space $C_{\hbar}^*(V_+)$ is a C$^*$-algebra. Moreover,  
\begin{equation}\label{EqAffExp}
e_{\hbar}^w \,\eta\, C_{\hbar}^*(V_+),\qquad w \in V_+.
\end{equation}
\end{theorem}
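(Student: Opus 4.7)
The plan is to deduce both statements from Rieffel's deformation theory as developed in Appendix E. The key structural observation is that $\hat{V}_+ = \R^{I_0} \times \R_{-\infty}^{I_1}$ carries a continuous translation action of $\hat{V}$: for any $\hat{u}\in\hat{V}$ and any $\hat{v}\in\hat{V}_+$ with $\hat{v}_i=-\infty$ for some $i\in I_1$, the translate $\hat{v}-\hat{u}$ again satisfies $(\hat{v}-\hat{u})_i=-\infty$, and on the remaining coordinates translation is standard. This makes $C_0(\hat{V}_+)$ into a continuous $\hat{V}$-C$^*$-algebra whose subalgebra of uniformly smooth vectors is precisely $C_0^\infty(\hat{V}_+)$.

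First I would apply Rieffel's deformation functor (Theorem \ref{TheoFunctRieffDef}) for the $2$-cocycle $\Omega_\hbar$ to this $\hat{V}$-C$^*$-algebra. The resulting deformed C$^*$-algebra $C_0(\hat{V}_+)_\hbar$ is by construction obtained as the norm-closure of the image of $C_0^\infty(\hat{V}_+)$ under the quantization map. Because the translation action of $\hat{V}$ on $\hat{V}_+$ is the restriction of the action on the ambient $\hat{V}$, this quantization map is the restriction of $\Qc$ from \eqref{EqQuantMapGen}, so $C_0(\hat{V}_+)_\hbar$ canonically coincides with $C^*_\hbar(V_+)$ as defined in \eqref{EqCStarCone}. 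This gives the first claim.

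For the affiliation, fix $w\in V_+$. Since $w_i\geq 0$ for $i\in I_1$, the function $e^w(\hat{v}) = \exp(w\cdot \hat{v})$ extends continuously from $\hat{V}$ to a $[0,+\infty]$-valued function on $\hat{V}_+$ by setting $e^w(\hat{v})=0$ when $\hat{v}_i=-\infty$ for some $i\in I_1$ with $w_i>0$ (and ignoring the coordinate when $w_i=0$). In the undeformed ($\hbar=0$) commutative picture, this positive continuous function is manifestly affiliated with $C_0(\hat{V}_+)$ in Woronowicz's sense: its bounded $z$-transform $\zeta := e^w/(1+e^{2w})^{1/2}$ is a continuous bounded function on $\hat{V}_+$ vanishing where $e^w$ vanishes and is uniformly smooth under translations (using the multiplicative identity $e^w(\hat{v}-\hat{u})=e^{-w\cdot\hat{u}}e^w(\hat{v})$), while $(1+e^{2w})^{-1/2}$ is nowhere vanishing so its product with $C_0(\hat{V}_+)$ is dense in $C_0(\hat{V}_+)$. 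I would then invoke the fact that the Rieffel deformation functor preserves both the multiplier algebra structure and the affiliation relation to transport this affiliation to the deformed setting, yielding $e_\hbar^w\,\eta\,C^*_\hbar(V_+)$. The uniqueness and concrete identification $\pi(e_\hbar^{2\pi v})=\eb(v)$ in every unitary $\hbar$-representation is then pinned down by \eqref{EqChare}.

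The main obstacle will be verifying that the Appendix E machinery really does apply to the partial compactification $\hat{V}_+$ rather than just to the free action on $\hat{V}$: one must check that the $\hat{V}$-action on $C_0(\hat{V}_+)$ is strongly continuous, that $C_0^\infty(\hat{V}_+)$ is invariant under the twisted product $\times_\hbar$ when viewed inside $C_b^\infty(\hat{V})$, and that Rieffel's preservation of affiliation (which in the stated references is usually formulated for actions of vector groups on locally compact spaces) transfers cleanly through the inclusions \eqref{EqListInclusCont}--\eqref{EqListInclus}. Once these compatibilities are confirmed, both conclusions follow in a unified manner from the classical commutative picture.
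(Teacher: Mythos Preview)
Your approach is essentially the paper's for the first claim: apply Rieffel's deformation functor to the $\hat V$-C$^*$-algebra $C_0(\hat V_+)$ (the translation action extending continuously through the $-\infty$ boundary), identify the smooth vectors as $C_0^\infty(\hat V_+)$, and conclude that the deformation $C_{0,\hbar}(\hat V_+)$ is exactly $C^*_\hbar(V_+)$ sitting inside $M(C^*_\hbar(V))$ via the inclusions \eqref{EqListInclus}.

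For the affiliation \eqref{EqAffExp}, your plan differs from the paper's and is less direct. You propose to verify affiliation by checking that the $z$-transform $\zeta=e^w/(1+e^{2w})^{1/2}$ lies in $C_b^\infty(\hat V_+)$ and then invoking a general ``Rieffel deformation preserves affiliation'' principle. That principle is not stated in Appendix~E, and making the density condition $[(1-\zeta_\hbar^*\zeta_\hbar)^{1/2}C^*_\hbar(V_+)]=C^*_\hbar(V_+)$ precise in the deformed setting would be exactly the work you flag as an obstacle. The paper sidesteps this entirely: since $w\in V_+$, the pairing $\hat w\mapsto w\cdot\hat w$ extends to a continuous $\hat V$-equivariant map $\hat V_+\to[-\infty,\infty)$, inducing a non-degenerate $*$-homomorphism $C_0([-\infty,\infty))\hookrightarrow M(C_0(\hat V_+))$. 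Rieffel functoriality (Theorem~\ref{TheoFunctRieffDef}) deforms this to $C_0([-\infty,\infty))=C_{0,\hbar}([-\infty,\infty))\to M(C^*_\hbar(V_+))$; here the left side is undeformed because the action factors through the line $\R w$, on which the skew form vanishes. Now $x\mapsto e^x$ is manifestly a continuous function on $[-\infty,\infty)$, hence affiliated to the commutative $C_0([-\infty,\infty))$, and Lemma~\ref{LemHomCorr} pushes this affiliation through the non-degenerate morphism. So the paper reduces to a one-variable undeformed situation rather than arguing preservation of affiliation in general; this is what your $z$-transform computation is implicitly exploiting (your $\zeta$ factors through $w\cdot-$), but the paper makes the reduction explicit and thereby avoids the verification you were worried about.
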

The proof of the above theorem can be found in Appendix \ref{SecRieff}.

This theorem will be important to have access to particular quotient maps. Namely, assume that 
\[
I' \subseteq I,\qquad \textrm{ with }I_i' = I_i\cap I',
\]
and assume that we have
\[
(f_i,f_j) = 0,\qquad \forall i \in I_0\setminus I_0',j\in I'.
\]
Put 
\[
V' = V(I_0',I_1') \subseteq V
\]
as a skew-symmetric subspace. Then although this map does not allow a splitting as skew-symmetric spaces, such a splitting exists when passing to the associated C$^*$-algebras:

\begin{theorem}\label{TheoDegRep}
There exists a unique non-degenerate C$^*$-algebra homomorphism 
\begin{equation}\label{EqRepDegBound}
\pi_{I'}\colon  C_{\hbar}^*(V_+) \rightarrow C_{\hbar}^*(V_+'),\qquad  f_{\hbar} \mapsto f_{I',\hbar}, \qquad f \in C_0^{\infty}(\hat{V}_+),
\end{equation}
where $f_{I'} =  f\circ \iota_{I'} \in C_0^{\infty}(\hat{V}_+')$ for
\begin{equation}\label{EqDegMap}
\iota_{I'}\colon  \hat{V}_+' \rightarrow \hat{V}_+,\qquad \iota_{I'}(\hat{v}')_i = \begin{cases}\hat{v}'_i &\text{if} \quad i\in I', \\
-\infty &\text{if} \quad i\in I_1\setminus I_1',\\
0 &\text{if} \quad i\in I_0\setminus I_0'.
\end{cases}
\end{equation}
Moreover, under this quotient map we have  
\begin{equation}\label{EqImageDegMap}
\pi_{I'}(e_{\hbar}^{f_i}) =  \begin{cases}e_{\hbar}^{f_i} &\text{if} \quad i\in I', \\
0 &\text{if} \quad i\in I_1\setminus I_1',\\
1 &\text{if} \quad i\in I_0\setminus I_0'.
\end{cases}
\end{equation}
\end{theorem}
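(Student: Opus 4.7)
Uniqueness is immediate: by definition \eqref{EqCStarCone}, the subspace $\{f_\hbar : f \in C_0^\infty(\hat{V}_+)\}$ is norm-dense in $C_\hbar^*(V_+)$, so any C$^*$-homomorphism is determined by its values there. The main task is to \emph{construct} $\pi_{I'}$. I would split the construction into two natural stages corresponding to the two types of indices being removed: first a ``$-\infty$-quotient'' removing $J_1 := I_1 \setminus I_1'$, then a ``$0$-quotient'' removing $J_0 := I_0 \setminus I_0'$. Setting $V'' := V(I_0, I_1')$ as the intermediate skew-symmetric space, $\pi_{I'}$ will be the composition $C_\hbar^*(V_+) \to C_\hbar^*(V''_+) \to C_\hbar^*(V'_+)$; the skew-orthogonality hypothesis $(f_i, f_j) = 0$ for $i \in J_0$, $j \in I'$ is only needed in the second stage.

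For the first stage, the image $K = \iota(\hat{V}''_+) \subseteq \hat{V}_+$ is a closed subset homeomorphic to $\hat{V}''_+$, so pullback yields a continuous linear map $\iota^* \colon C_0^\infty(\hat{V}_+) \to C_0^\infty(\hat{V}''_+)$. The crucial step is to verify that $\iota^*$ intertwines the twisted products, $\iota^*(f \times_\hbar g) = \iota^*(f) \times_\hbar \iota^*(g)$. I would establish this by directly computing both sides using \eqref{EqTwistProd3}: pushing $\hat{v}_i \to -\infty$ for $i \in J_1$ makes the oscillatory factor $e^{2\pi i \hat{v}\cdot u}$ effectively concentrate the Fourier integration onto the subspace $V'' \subseteq V$, which in the distributional limit cleanly factors through the twisted product for $V''$. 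Via Theorem \ref{TheoAffil}, this identifies the kernel of $\iota^*$ with the closed $*$-ideal in $C_\hbar^*(V_+)$ generated by the positive affiliated elements $e_\hbar^{t f_i}$ for $i \in J_1$ and $t > 0$, and the quotient C$^*$-algebra with $C_\hbar^*(V''_+)$.

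For the second stage, the skew-orthogonality hypothesis yields $(W_0, V') = 0$ where $W_0 := \mathrm{span}\{f_i : i \in J_0\}$; assuming additionally that $W_0$ is trivially skew-symmetric within itself (which must be tacit in the setup, since otherwise the noncommutative C$^*$-algebra $C_\hbar^*(W_0)$ would admit no quotient character), we obtain a direct sum decomposition $V'' = V' \oplus W_0$ of skew-symmetric spaces with no $W_0$-directional cone constraint. By \eqref{EqDirSumTens} then $C_\hbar^*(V''_+) \cong C_\hbar^*(V'_+) \otimes C^*(W_0) \cong C_\hbar^*(V'_+) \otimes C_0(\hat{W}_0)$, and the required map is $\mathrm{id} \otimes \mathrm{ev}_0$ with $\mathrm{ev}_0 \colon C_0(\hat{W}_0) \to \C$ evaluation at the origin.

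The formulas \eqref{EqImageDegMap} then follow by direct inspection applied to the generators $e_\hbar^{f_i}$: for $i \in I'$ each stage acts as the identity; for $i \in J_1$ the first stage sends $e_\hbar^{f_i}$ to $0$ because $e^{f_i}$ vanishes on $K$; and for $i \in J_0$ the second stage sends $e_\hbar^{f_i}$ to $\mathrm{ev}_0(1) = 1$. The principal obstacle I anticipate is the first stage: rigorously identifying the closed ideal in $C_\hbar^*(V_+)$ associated to the closed subset $K \subseteq \hat{V}_+$, and verifying that the quotient is precisely $C_\hbar^*(V''_+)$, requires a careful deployment of the same Rieffel-deformation machinery that underpins Theorem \ref{TheoAffil}, most naturally carried out in parallel with that result in Appendix \ref{SecRieff}.
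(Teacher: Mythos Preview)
Your two-stage decomposition matches the paper's exactly, and your Stage~2 (tensor splitting $C_\hbar^*(V''_+) \cong C_\hbar^*(V'_+) \otimes C_0(\hat W_0)$ followed by evaluation at $0$) is essentially what the paper does, phrased there via a section $s\colon V''\to V'$ of skew-symmetric spaces. You also correctly flag the tacit extra hypothesis that $W_0$ is itself trivially skew-symmetric; the paper's proof silently uses that the removed $f_i$ lie in the radical of $V''$.

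The gap is in Stage~1. Your proposed mechanism---pushing $\hat v_i \to -\infty$ in formula \eqref{EqTwistProd3} so that the oscillatory factor $e^{2\pi i \hat v\cdot u}$ ``concentrates the Fourier integration onto $V''$''---does not work as stated. The exponent $\hat v\cdot u$ is undefined at $\hat v_i=-\infty$; rapid oscillation produces Riemann--Lebesgue decay, not concentration at $u_i=0$; and more fundamentally the twisted product on $C_0^\infty(\hat V_+)$ is defined through the Rieffel embedding into $C_b^\infty(\hat V)$, so \eqref{EqTwistProd3} is only available at interior points and the boundary value must be recovered as a limit whose interchange with the integral needs independent justification. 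Your appeal to Theorem~\ref{TheoAffil} to identify the kernel as ``the ideal generated by the $e_\hbar^{tf_i}$'' is also not what that theorem provides.

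The paper's route here is a one-liner that bypasses all of this: the classical restriction $C_0(\hat V_+)\to C_0(\hat V''_+)$, $f\mapsto f\circ\iota$, is a surjective $\hat V$-equivariant $*$-homomorphism (with $\hat V$ acting on $\hat V''_+$ through the quotient $\hat V\twoheadrightarrow\hat V''$; translation fixes $-\infty$), and Rieffel's functoriality (Theorem~\ref{TheoFunctRieffDef}) deforms it directly to the required surjection $C_\hbar^*(V_+)\to C_\hbar^*(V''_+)$. You correctly anticipate at the end that the Rieffel machinery from Appendix~\ref{SecRieff} is what is needed; the point is that this functoriality \emph{is} the argument, not merely a tool to justify the limiting computation you sketch.
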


The proof is presented at the end of Appendix \ref{SecRieff}.

\begin{remark}
To compare this with the algebraic setup, one should view $C^*_{\hbar}(V)$ as an analytic analogue of $\C_q[X_1^{\pm1},\ldots,X_n^{\pm1}]$, a skew Laurent polynomial algebra with (say)
\[
X_i X_j = q^{\varepsilon_{ij}}X_jX_i
\]
for $q$ a (formal) constant and $\varepsilon_{ij}\in \Z$. Then although no $X_i$ can be put to zero in this algebra, one can easily obtain such quotient maps for the associated skew polynomial algebra $\C_q[X_1,\ldots,X_n]$, which is the algebraic counterpart of $C_{\hbar}^*(V_+)$ when $I= I_1$. 
\end{remark}

\subsection{von Neumann algebras associated to Fock--Goncharov flips}\label{SecProofvNFGflip}

In this section, we determine explicitly the von Neumann algebras $M_{\Fct}$ and $\hat{M}_{\Fct} = M_{\hat{\Fct}}$ associated to a Fock--Goncharov flip $\Fct$ (as in \eqref{EqFormvN} and \eqref{EqFormvNDual}), where in particular 
\[
\hat{\Fct} = \Fct_{21}^*.
\]
We first show that the modularity of $\Fct$, as well as the associated von Neumann algebras $M_{\Fct}$ and $\hat{M}_{\Fct}$, are independent of the precise choice of unitary $\hbar$-representation of $\wbtd{N}$.

\begin{theorem}\label{EqFGGeneralMod}
Let $\Fct$ be an arbitrary Fock--Goncharov flip. Then with $\Qb,\hat{\Qb}$ and $\tFct$ as in Theorem \ref{TheoModFGFin}, we have that $(\Fct,\Qb,\hat{\Qb},\tFct)$ is a  modular datum.
\end{theorem}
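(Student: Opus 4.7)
The plan is to reduce modularity of an arbitrary Fock--Goncharov flip to the specific instance proved in Theorem \ref{TheoModFGFin}, via a universality argument based on faithfulness of the model representation as a $*$-representation of $C^*_{\hbar}(\wbtd{N})$. The key feature exploited is that the four operators $\Fct$, $\tFct$, $\Qb$, and $\hat{\Qb}$ are all built by the same functional-calculus recipe from the data $\{\eb(v) : v \in \wbtd{N}\}$ of the chosen unitary $\hbar$-representation.

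First, I would observe that the operators in question are manufactured as products of Gaussians and quantum dilogarithms applied to operators $\vb$ attached to linear combinations in $\wbtd{N} \oplus \wbtd{N}$, or in $\overline{\wbtd{N}} \oplus \wbtd{N}$ in the case of $\tFct$. Accordingly, the commutation identity \eqref{EqCommModStruct} is an identity of unitary multipliers of $C^*_{\hbar}(\wbtd{N})^{\otimes 2}$, and the modular datum identity \eqref{EqRightInvOp}, read through its matrix coefficients in $\eta', \xi' \in \msD(\Qb)$, becomes an identity of unitary multipliers of $C^*_{\hbar}(\overline{\wbtd{N}}) \otimes C^*_{\hbar}(\wbtd{N})$. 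Here the conjugate-linear structure on $\overline{\Hc}$ is absorbed via the isomorphism $C^*_{\hbar}(\overline{\wbtd{N}}) \cong C^*_{\hbar}(\wbtd{N})$ induced by $v \mapsto \overline{v}$, using \eqref{EqComplexConjj}.

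Second, the unitary $\hbar$-representation used in Section \ref{SecModularity} arises by restricting the Heisenberg representation of the symplectic extension $V_N \supseteq \wbtd{N}$ constructed in \eqref{EqEmbd}. By Lemma \ref{LemIncGivesFaith}, together with the description \eqref{EqIsoSkewBilvN} of $L_{\hbar}(V_N)$, this restriction induces a faithful normal $*$-representation of $C^*_{\hbar}(\wbtd{N})$. Consequently, the two universal identities encoded in Theorem \ref{TheoModFGFin} hold as genuine identities of unitary multipliers; applying an arbitrary unitary $\hbar$-representation $\pi'$ of $\wbtd{N}$ to both sides yields the corresponding identity in $\Bc(\Hsp' \otimes \Hsp')$, respectively in $\Bc(\overline{\Hsp'} \otimes \Hsp')$, furnishing the desired modular datum $(\Fct, \Qb, \hat{\Qb}, \tFct)$ for the Fock--Goncharov flip of $\pi'$.

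The main technical obstacle will be to make the matrix-coefficient reformulation of \eqref{EqRightInvOp} fully rigorous in the presence of the unbounded operator $\Qb$ and the conjugate-linear space $\overline{\Hc}$. I would address this by restricting attention to the dense core of smooth vectors $\pi'(g_{t,w,\kappa})\zeta$ from Remark \ref{RemFormSumFinite}: on this core, all operators entering \eqref{EqRightInvOp} are defined and act holomorphically in their parameters, so both sides become evaluations of norm-continuous operator-valued expressions that depend only on the underlying multiplier algebra element. Validity in the model representation combined with faithfulness then yields validity for $\pi'$ on the core, and density completes the extension to the full domain of the modular datum relation.
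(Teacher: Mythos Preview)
Your high-level strategy coincides with the paper's: use faithfulness of the model representation to lift the modular identities to $M(C^*_{\hbar}(\wbtd{N}))$, then apply an arbitrary $\pi$. The gap is in your treatment of \eqref{EqRightInvOp}. This identity does not reduce to a single equality of unitary multipliers of a two-leg algebra as you claim: the left side involves $\Fct \in M(C^*_{\hbar}(\wbtd{N})^{\otimes 2})$ while the right side involves $\tFct \in M(C^*_{\hbar}(\overline{\wbtd{N}})\otimes C^*_{\hbar}(\wbtd{N}))$, and the passage between them goes through the unbounded $\Qb$ in a way that cannot simply be absorbed. What slicing on $\xi',\eta'$ actually yields is, for each such pair of \emph{model-space} vectors, an identity of first-leg multipliers; applying $\pi$ to that first leg still leaves the second leg pinned to the model representation. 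Restricting to smooth vectors does not help, because smooth vectors are representation-specific objects, not elements of the universal C$^*$-algebra.

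The paper fills this gap with a two-step slicing interleaved by a switch between the two equivalent modular formulations. After slicing on the second leg and applying $\pi$ to the first, one uses that $\hat{\Qb}_\pi \otimes \Qb$ commutes with $(\pi\otimes\id)\Fct$ to pass from the $\Qb$-form \eqref{EqRightInvOp} to the $\hat{\Qb}$-form \eqref{EqLeftInvOp}: now the unbounded operator sits on the first leg as $\hat{\Qb}_\pi$, and the second-leg vectors are unconstrained. One then slices on the first leg (with vectors in $\msD(\hat{\Qb}_\pi)\subseteq\Hsp_\pi$), obtaining an identity of second-leg multipliers to which $\pi$ can be applied; a final commutation of $\hat{\Qb}_\pi\otimes\Qb_\pi$ with $\Fct_\pi$ closes the argument. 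The essential mechanism you are missing is this use of \eqref{EqCommModStruct} to migrate the modularity operator from one leg to the other.
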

\begin{proof}
Let $V_N$ be the symplectic space as in \eqref{EqDirSum}, with its associated $\hbar$-Heisenberg representation on the Hilbert space $\Hsp$. Then this representation is faithfully normal, and hence also its restriction to $\wbtd{N}$ is faithfully normal by Lemma \ref{LemIncGivesFaith}. We may hence in particular identify $C^*_{\hbar}(\wbtd{N}) \subseteq \Bc(\Hsp)$. 

Let now $\Fct$ be the associated Fock--Goncharov flip acting on $\Hsp$. By \eqref{EqExpAffRieff}, we have 
\begin{equation}\label{EqWeakAffiliation}
\Fct \in M(C^*_{\hbar}(\wbtd{N})\otimes C^*_{\hbar}(\wbtd{N}))
\end{equation}
Consider its associated modular datum $(\Fct,\Qb,\hat{\Qb},\tFct)$. If then $\pi$ is a unitary $\hbar$-representation of $\wbtd{N}$, seen as a non-degenerate $*$-representation of $C^*_{\hbar}(\wbtd{N})$, we are to show that 
\[
(\Fct_{\pi},\Qb_{\pi},\hat{\Qb}_{\pi},\tFct_{\pi}) := ((\pi\otimes \pi)\Fct,\pi(\Qb),\pi(\hat{\Qb}),(\overline{\pi}\otimes \pi)\tFct)
\]
is a modular datum. 

Now \eqref{EqRightInvOp} can be rewritten as 
\[
\omega((\id\otimes \omega_{\Qb^{-1}\xi,\Qb\eta})\Fct) = \omega(\overline{(\id\otimes \omega_{\xi,\eta})\tFct}^*),\qquad \forall \xi \in \msD(\Qb^{-1}),\forall \eta\in \msD(\Qb),\forall \omega \in \Bc(\Hsp)_*,
\]
which implies 
\[
(\id\otimes \omega_{\Qb^{-1}\xi,\Qb\eta})\Fct = \overline{(\id\otimes \omega_{\xi,\eta})\tFct}^*.
\]

As these are elements of $M(C^*_{\hbar}(\wbtd{N}))$, we can apply $\pi$, and then find 
\[
\langle \xi\otimes \eta',(\pi\otimes \id)(\Fct)(\eta\otimes \Qb\xi')\rangle = \langle \overline{\eta}\otimes \Qb\eta',(\overline{\pi}\otimes \id)(\tFct)(\overline{\xi} \otimes \xi')\rangle,\qquad \forall \xi',\eta'\in \msD(\Qb),\xi,\eta\in \Hc.
\]
Since $\hat{\Qb}_{\pi}\otimes \Qb$ clearly commutes with $(\pi\otimes \id)\Fct$, this implies as well that
\begin{multline*}
\langle \hat{\Qb}_{\pi}\xi\otimes \eta',(\pi\otimes\id)(\Fct)(\eta\otimes \xi')\rangle = \langle \overline{\hat{\Qb}_{\pi}\eta}\otimes \eta',(\overline{\pi}\otimes \id)(\tFct)(\overline{\xi} \otimes \xi')\rangle,\\\forall \xi,\eta\in \msD(\hat{\Qb}_{\pi}),\xi',\eta'\in L^2(\wbtd{N}).
\end{multline*}
So,
\[
(\omega_{\hat{\Qb}_{\pi}\xi,\eta}\pi\otimes \id)\Fct = (\omega_{\overline{\hat{\Qb}_{\pi}\eta},\overline{\xi}}\overline{\pi}\otimes \id)\tFct,\qquad \forall \xi,\eta\in \msD(\hat{\Qb}_{\pi}).
\]
As these are elements in $M(C^*_{\hbar}(\wbtd{N}))$, we can again apply $\pi$, leading to 
\[
\langle \hat{\Qb}_{\pi}\xi\otimes \eta',\Fct_{\pi}(\eta\otimes \xi')\rangle = \langle \overline{\hat{\Qb}_{\pi}\eta}\otimes \eta',\tFct_{\pi}(\overline{\xi} \otimes \xi')\rangle,\qquad \forall \xi,\eta\in \msD(\hat{\Qb}_{\pi}),\xi',\eta'\in L^2(\wbtd{N}).
\]
As $\hat{\Qb}_{\pi}\otimes \Qb_{\pi}$ commutes with $\Fct_{\pi}$, we are done. 
\end{proof}
Note that there is some substance to this result: for any $N\geq 2$ we have that $\Rad(\wbtd{N})\neq \{0\}$, so $C^*_{\hbar}(\wbtd{N})$ allows non-faithful non-degenerate $*$-representations. We show however that, nevertheless, all Fock--Goncharov flips lead to the same von Neumann bialgebras.

Recall $\wwbtd{N},\Bor_N^+$ as defined in \eqref{EqSpaceBNMin}.

\begin{lemma}\label{LemResFaith}
If $\pi$ is a unitary $\hbar$-representation of $\wbtd{N}$, its restrictions to $\Bor_{N}^{\pm}$ are faithfully normal. 
\end{lemma}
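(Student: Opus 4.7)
I plan to prove the statement for $\Bor_N^-$; the argument for $\Bor_N^+$ will follow by swapping the roles of $\Bor_N^-$ and $\Bor_N^+$ throughout. Let $R=\Rad(\Bor_N^-)$ (the radical of $\Bor_N^-$ as a skew-symmetric space in its own right) and fix a symplectic complement $Q\subseteq\Bor_N^-$, so $\Bor_N^-=R\oplus Q$. By \eqref{EqDirSumTens} and the discussion following \eqref{EqIsoSkewBilvN}, one has $L_\hbar(\Bor_N^-)\cong L^\infty(\hat R)\,\bar\otimes\,L_\hbar(Q)$, with $L_\hbar(Q)$ a type I factor by Lemma \ref{LemTypeIRieff}. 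The restriction $\pi|_Q$ is a non-degenerate $*$-representation of $C^*_\hbar(Q)\cong\Kc(L^2(W))$, which automatically extends uniquely to a faithful normal representation of $L_\hbar(Q)$. Thus the extension of $\pi|_{\Bor_N^-}$ to $L_\hbar(\Bor_N^-)$ will be faithful and normal as soon as the joint spectral measure $E$ on $\hat R$ associated to $\pi|_R$ has full support.

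The heart of the argument will be a Mackey-type observation using that $\pi$ extends to the ambient space $\wbtd N$. For $v\in\Bor_N^+$ and $r\in R$, the cocycle identity~\eqref{EqProdProj} implies
\[
\pi(v)\,\pi(tr)\,\pi(v)^{-1}=e^{2\pi i\hbar t(v,r)}\,\pi(tr),\qquad t\in\R,
\]
so by functional calculus, conjugation by $\pi(v)$ shifts the infinitesimal generator of $t\mapsto\pi(tr)$ by the real constant $2\pi\hbar(v,r)$, and correspondingly translates $E$ by the element of $\hat R$ given by $r\mapsto 2\pi\hbar(v,r)$. Since the translated measure is equivalent to $E$ (both being spectral measures of unitarily equivalent representations of the trivial skew-symmetric space $R$), $\mathrm{supp}(E)\subseteq \hat R$ is invariant under all such translations. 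By Lemma \ref{LemNonDegPair} the pairing of $\Bor_N^-$ with $\Bor_N^+$ is non-degenerate, so its restriction to $R\times\Bor_N^+$ is non-degenerate in the $R$-variable; equivalently, the induced map $\Bor_N^+\to\hat R$, $v\mapsto(r\mapsto 2\pi\hbar(v,r))$, is surjective. Hence $\mathrm{supp}(E)$ is translation-invariant under all of $\hat R$, forcing $\mathrm{supp}(E)=\hat R$.

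The main obstacle I anticipate is precisely this Mackey-style translation-invariance step, which must be carried out with care to ensure that the support of the joint spectral measure is indeed preserved as a set under conjugation and that the identification of the dual linear functional with an element of $\hat R$ is the correct one; the translation directions then fill out $\hat R$ thanks to Lemma \ref{LemNonDegPair}. The remaining bookkeeping - combining the faithful normal action on the $L^\infty(\hat R)$-factor with the automatic faithful normal extension on the type I factor $L_\hbar(Q)$ to conclude faithful normality on the full tensor product $L^\infty(\hat R)\,\bar\otimes\,L_\hbar(Q)$ - is routine given that the $R$- and $Q$-subrepresentations commute and generate independent tensor factors.
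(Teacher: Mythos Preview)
Your approach differs from the paper's and has one real gap. The paper's proof is essentially a two-liner: by Lemma~\ref{LemNonDegPair} one can sandwich $\Bor_N^\pm$ inside a \emph{symplectic} subspace $Z^\pm\subseteq\wbtd{N}$; the restriction $\pi|_{Z^\pm}$ is then automatically faithfully normal (Stone--von~Neumann), and Lemma~\ref{LemIncGivesFaith} applied to the inclusion $\Bor_N^\pm\subseteq Z^\pm$ pushes this down to $\Bor_N^\pm$. Your Mackey-style route via the radical $R=\Rad(\Bor_N^-)$ is a legitimate alternative, but it needs more to be complete.

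The gap is your criterion ``$E$ has full support $\Rightarrow$ faithfully normal''. This is false in general: the representation of $C_0(\hat R)$ determined by $E$ extends to a faithful normal representation of $L^\infty(\hat R)$ (taken with respect to Lebesgue measure) exactly when $E$ and Lebesgue have the \emph{same null sets}, not merely the same closed support; an atomic measure on a countable dense subset of $\hat R$ has full support, yet the associated representation does not even extend to a well-defined map on $L^\infty(\hat R)$. Your argument can nevertheless be repaired, since you already establish more than support-invariance: unitary conjugation by $\pi(v)$ shows that every translate of $E$ has the same null sets as $E$, i.e.\ the \emph{measure class} of $E$ is quasi-invariant under all of $\hat R$. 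A nonzero quasi-invariant $\sigma$-finite measure class on a finite-dimensional real vector space is necessarily the Lebesgue class (uniqueness of Haar measure up to equivalence, via a standard Fubini argument). Replacing ``full support'' by ``Lebesgue measure class'' throughout closes the gap, and the recombination with the type~I factor $L_\hbar(Q)$ is then indeed routine.
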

\begin{proof}
By Lemma \ref{LemNonDegPair}, we can find symplectic subspaces $Z^{\pm}$ such that 
\[
\Bor_N^{\pm} \subseteq Z^{\pm} \subseteq \wbtd{N}.
\]
The result now follows from Lemma \ref{LemIncGivesFaith} and the fact that any unitary $\hbar$-representation of a symplectic space is faithfully normal.
\end{proof}

If we consider the standard unitary $\hbar$-representation of $\wbtd{N}$ as in \eqref{EqvNQT}, we will refer to its associated Fock--Goncharov flip \eqref{EqRealMU} as the \emph{standard Fock--Goncharov flip}, and in this section we fix for this the notation 
\begin{equation}\label{EqStandardFG}
\Fct  \in L_{\hbar}(\wbtd{N}) \bar{\otimes} L_{\hbar}(\wbtd{N}).
\end{equation}
We then view 
\begin{equation}\label{EqvNFG}
M  = M_{\Fct} \subseteq L_{\hbar}(\wbtd{N}),\qquad \hat{M} = M_{\hat{\Fct}} \subseteq L_{\hbar}(\wbtd{N}).
\end{equation}
Note that it immediately follows from the formula for $\Fct$ that in fact 
\begin{equation}\label{EqFGLegs}
\Fct \in L_{\hbar}(\Bor_N^-)\bar{\otimes} L_{\hbar}(\Bor_N^+). 
\end{equation}
From Lemma \ref{LemResFaith}, we then obtain:

\begin{cor}\label{CorUnitRepArb}
Let $\Fct$ be the standard Fock--Goncharov flip, and $\pi$ a unitary $\hbar$-representation of $\wbtd{N}$. Then with $\Fct_{\pi} = (\pi\otimes \pi)\Fct$, we get
\[
(M_{\Fct},\Delta_{\Fct}) \cong (M_{\Fct_{\pi}},\Delta_{\Fct_{\pi}}),\qquad (M_{\hat{\Fct}},\Delta_{\hat{\Fct}}) \cong (M_{\hat{\Fct}_{\pi}},\Delta_{\hat{\Fct}_{\pi}}).
\]
\end{cor}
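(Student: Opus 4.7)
The plan is to exploit the structural containment \eqref{EqFGLegs}, namely $\Fct \in L_\hbar(\Bor_N^-)\bar{\otimes} L_\hbar(\Bor_N^+)$, combined with the faithful normality of $\pi|_{\Bor_N^{\pm}}$ from Lemma \ref{LemResFaith}, to spatially implement the desired isomorphisms of von Neumann bialgebras.

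First, I would extend $\pi|_{\Bor_N^{\pm}}$ to faithful normal unital $*$-isomorphisms $\tilde\pi^{\pm}\colon L_\hbar(\Bor_N^{\pm}) \xrightarrow{\sim} \pi(C_\hbar^*(\Bor_N^{\pm}))'' \subseteq \Bc(\Hsp)$. Since $\Fct$ is, by \eqref{EqRealMU}, an explicit product of quantum dilogarithms and Gaussians of individual vectors in $\Bor_N^-\oplus\Bor_N^+$ (the first leg from $\Bor_N^-$, the second from $\Bor_N^+$), and normal $*$-isomorphisms of von Neumann algebras preserve bounded Borel functional calculus of commuting self-adjoint operators, one obtains at once
\begin{equation*}
(\tilde\pi^-\otimes\tilde\pi^+)(\Fct) = \Fct_\pi.
\end{equation*}
By \eqref{EqFGLegs}, the slices $(\id\otimes\omega)\Fct$ generating $M_\Fct$ are already indexed by $\omega \in L_\hbar(\Bor_N^+)_*$; since $\tilde\pi^+$ is a faithful normal $*$-isomorphism onto its image, its preadjoint surjects onto $L_\hbar(\Bor_N^+)_*$, so a direct slice computation yields $\tilde\pi^-(M_\Fct) = M_{\Fct_\pi}$. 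Consequently $\Phi := \tilde\pi^-|_{M_\Fct}$ is a faithful normal $*$-isomorphism $M_\Fct \xrightarrow{\sim} M_{\Fct_\pi}$.

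For the coproduct intertwining, I would use multiplicative unitarity \eqref{EqMUEq} in the rearranged form $\Fct_{12}^*\Fct_{23}\Fct_{12} = \Fct_{13}\Fct_{23}$ to rewrite, for $\omega \in L_\hbar(\Bor_N^+)_*$,
\begin{equation*}
\Delta_\Fct\bigl((\id\otimes\omega)\Fct\bigr) = (\id\otimes\id\otimes\omega)(\Fct_{13}\Fct_{23}).
\end{equation*}
By \eqref{EqFGLegs} the operator $\Fct_{13}\Fct_{23}$ lies in $L_\hbar(\Bor_N^-)\bar{\otimes} L_\hbar(\Bor_N^-)\bar{\otimes} L_\hbar(\Bor_N^+)$, so applying $\tilde\pi^-\otimes\tilde\pi^-\otimes\tilde\pi^+$ and invoking the previous step produces $\Delta_{\Fct_\pi}(\tilde\pi^-((\id\otimes\omega)\Fct))$. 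By normality and $\sigma$-weak density of the generators, the relation $(\Phi\otimes\Phi)\circ\Delta_\Fct = \Delta_{\Fct_\pi}\circ\Phi$ extends to all of $M_\Fct$. The dual statement is proven in parallel, starting from $\hat\Fct = \Fct_{21}^* \in L_\hbar(\Bor_N^+)\bar{\otimes} L_\hbar(\Bor_N^-)$ (so $\hat{M} \subseteq L_\hbar(\Bor_N^+)$) with the roles of $\tilde\pi^{\pm}$ exchanged.

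The main obstacle will be the coproduct step: the pentagon rewriting trick is essential, since it is precisely what moves $\Delta_\Fct(x)$ into the tensor square $L_\hbar(\Bor_N^-)\bar{\otimes} L_\hbar(\Bor_N^-)$ where $\tilde\pi^-\otimes\tilde\pi^-$ is directly applicable (a priori the formula $\Fct^*(1\otimes x)\Fct$ involves the full $\Fct$, whose second leg lies in $L_\hbar(\Bor_N^+)$ rather than $L_\hbar(\Bor_N^-)$). Once this rearrangement is in place, the remainder of the argument reduces to routine compatibility of normal $*$-isomorphisms with functional calculus and slice maps.
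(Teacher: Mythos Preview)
Your proposal is correct and follows exactly the approach the paper intends: the paper states this corollary as an immediate consequence of Lemma~\ref{LemResFaith} and \eqref{EqFGLegs} without writing out a proof, and your argument is precisely the detailed verification one would supply --- extending $\pi|_{\Bor_N^\pm}$ to faithful normal isomorphisms on $L_\hbar(\Bor_N^\pm)$, transporting $\Fct$ via $\tilde\pi^-\otimes\tilde\pi^+$, matching slices, and using the pentagon rewriting $\Fct_{12}^*\Fct_{23}\Fct_{12}=\Fct_{13}\Fct_{23}$ to place the coproduct computation inside $L_\hbar(\Bor_N^-)^{\bar\otimes 2}\bar\otimes L_\hbar(\Bor_N^+)$ where the transport maps apply. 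Your identification of the coproduct step as the only place requiring care is also accurate.
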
 

Because of this, we from now on simply refer to 
\[
(M_{\Fct},\Delta_{\Fct}),\qquad (M_{\hat{\Fct}},\Delta_{\hat{\Fct}})
\]
as the \emph{Fock--Goncharov von Neumann bialgebra} and \emph{dual von Neumann bialgebra}.

Specializing Corollary \ref{CorUnitRepArb} further, we have: 
\begin{cor}\label{CorIrrMap}
Consider the symplectic quotient 
\[
\wbtd{N}\twoheadrightarrow D_N
\]
as in \eqref{EqDegHeisDoub}. Let $\pi$ be an arbitary unitary $\hbar$-representation of $D_N$. 

Then viewing $\pi$ as a unitary $\hbar$-representation of $\wbtd{N}$, the associated Fock--Goncharov flip $\Fct$ implements an isomorphic copy of the Fock--Goncharov von Neumann bialgebra and its dual.
\end{cor}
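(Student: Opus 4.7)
The plan is to deduce this statement as an immediate specialization of Corollary \ref{CorUnitRepArb}. Given a unitary $\hbar$-representation $\pi$ of $D_N$, composition with the quotient map $\wbtd{N}\twoheadrightarrow D_N$ produces a unitary $\hbar$-representation $\tilde\pi$ of $\wbtd{N}$ on the same Hilbert space (in which $\Rad(\wbtd{N})$ acts trivially). By Corollary \ref{CorUnitRepArb}, the Fock--Goncharov flip $\Fct_{\tilde\pi} = (\tilde\pi\otimes\tilde\pi)(\Fct)$ gives a von Neumann bialgebra (and dual) isomorphic to the standard one; since $\Fct_{\tilde\pi}$ also coincides with $(\pi\otimes\pi)(\Fct)$ when $\Fct$ is pushed down along the quotient, this yields what we want.

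The only point to verify is that the hypotheses going into Corollary \ref{CorUnitRepArb} remain in force for a representation coming from the reduction. Concretely, Corollary \ref{CorUnitRepArb} depends, via \eqref{EqFGLegs}, on knowing that the restrictions of $\tilde\pi$ to $\Bor_N^\pm$ are faithfully normal (Lemma \ref{LemResFaith}). In the present setting this is even cleaner: by Corollary \ref{CorHeisDeg}, the quotient map $\wbtd{N}\twoheadrightarrow D_N$ restricts to \emph{embeddings} of skew-symmetric spaces $\Bor_N^{\pm}\hookrightarrow D_N$, so the restriction of $\tilde\pi$ to $\Bor_N^\pm$ factors as the restriction of $\pi$ along these embeddings. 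Since $D_N$ is symplectic (by construction in \eqref{EqDegHeisDoub}), the Stone--von Neumann theorem ensures that $\pi$ is a (possibly infinite) direct sum of Heisenberg $\hbar$-representations, and hence faithfully normal. Applying Lemma \ref{LemIncGivesFaith} to the symplectic embedding $\Bor_N^\pm\hookrightarrow D_N$ then yields that the restriction to $\Bor_N^\pm$ is faithfully normal as well.

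With this verification in place, Corollary \ref{CorUnitRepArb} directly produces the required isomorphisms
\[
(M_{\Fct},\Delta_{\Fct}) \cong (M_{\Fct_{\tilde\pi}},\Delta_{\Fct_{\tilde\pi}}),\qquad (M_{\hat{\Fct}},\Delta_{\hat{\Fct}}) \cong (M_{\hat{\Fct}_{\tilde\pi}},\Delta_{\hat{\Fct}_{\tilde\pi}}),
\]
which is exactly the claim. There is no substantive obstacle here: the entire argument is a bookkeeping exercise ensuring that ``descending through the radical'' $\Rad(\wbtd{N})$ does not destroy faithfulness of the restrictions to the Borel subspaces, and this is precisely guaranteed by the fact that $\Bor_N^\pm$ already embed into the symplectic reduction $D_N$.
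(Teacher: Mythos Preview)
Your proof is correct and follows the paper's approach: the paper simply presents this as a specialization of Corollary \ref{CorUnitRepArb}, which is exactly your first paragraph. Your second paragraph is unnecessary, however, since Corollary \ref{CorUnitRepArb} already applies to an \emph{arbitrary} unitary $\hbar$-representation of $\wbtd{N}$ with no further hypotheses; the faithful normality of the restrictions to $\Bor_N^\pm$ is established once and for all in Lemma \ref{LemResFaith} (whose proof is essentially what you reproduce), so there is nothing left to check.
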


Note that from \eqref{EqFGLegs}, we can view 
\[
M_{\Fct} \subseteq L_{\hbar}(\Bor_N^-),\qquad M_{\hat{\Fct}} \subseteq L_{\hbar}(\Bor_N^+).
\]
Our next aim is to show that these are in fact equalities.

Endow $\Bor_N^{\pm}$ with the respective bases
\begin{equation}
\{\nef_i \mid i \in I\} = \{\nevarpi_t,\nee_{(s,k)}\mid 1\leq t,s\leq n,0\leq k <s\},
\end{equation}
\begin{equation}
\{\sef_i \mid i \in I\} = \{\sevarpi_t,\seee_{(s,k)}\mid 1\leq t,s\leq n,0\leq k <s\}.
\end{equation}
We use for $\Bor_N^{\pm}$, endowed with these bases, the notation introduced in \eqref{EqDualCone} and \eqref{EqExtCone}. 

We fix a unitary $\hbar$-representation of $\wbtd{N}$ with associated Fock--Goncharov flip $\Fct$. 

\begin{lemma}\label{LemAffil}
For $1\leq r \leq n$, $r\leq b \leq n$ and $1\leq i\leq r$, let $B^{b,r,i}$ be as in \eqref{DefBbri}. Then
\[
B^{b,r,i} \in M(C_{\hbar}^*(\wwbtd{N,+})\otimes C_{\hbar}^*(\wwwbtd{N,+})).
\]
In particular,
\begin{equation}\label{EqFGInMultRestr}
\Fct \in M(C_{\hbar}^*(\wwbtd{N,+})\otimes C^*_{\hbar}(\wwwbtd{N,+})).
\end{equation}
\end{lemma}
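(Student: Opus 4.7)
The plan is to verify that each of the two summands of $w := \nee_{b-i+1,b-r} \oplus \seee_{n-b+i,n-b}$ lies in the corresponding positive cone, apply Theorem \ref{TheoAffil} to get affiliation of $e^w_\hbar$, and then exploit the finite boundary value $\varphi(-\infty)=1$ to lift the functional calculus $\varphi(\wb) = B^{b,r,i}$ into the multiplier algebra.

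First I would locate each summand in the relevant cone. Writing $s_1 = b-i+1,\ k_1 = b-r,\ s_2 = n-b+i,\ k_2 = n-b$, the hypothesis $1 \le i \le r \le b \le n$ gives directly that $1 \le s_\alpha \le n$ and $0 \le k_\alpha \le s_\alpha - 1$ for $\alpha=1,2$. Hence $\nee_{s_1,k_1}$ (resp.\ $\seee_{s_2,k_2}$) is precisely one of the $I_1$-indexed basis vectors of the basis $\{\nevarpi_t,\nee_{(s,k)}\}$ (resp.\ $\{\sevarpi_t,\seee_{(s,k)}\}$) of $\Bor_N^-$ (resp.\ $\Bor_N^+$) fixed in Section \ref{SecModularity}. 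Its coordinate vector has a single non-zero entry $+1$ supported in the $I_1$-block, so it lies in the positive cone $\wwbtd{N,+}$ (resp.\ $\wwwbtd{N,+}$) of Definition~\eqref{EqDualCone}. Applying Theorem~\ref{TheoAffil} to the direct sum of cones, and using the tensor splitting \eqref{EqDirSumTens}, we obtain $e^w_\hbar \,\eta\, C_\hbar^*(\wwbtd{N,+}) \otimes C_\hbar^*(\wwwbtd{N,+})$.

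Next I would pass to the Pontryagin-dual picture of Section \ref{SecCStarRieff} to identify $\varphi(\wb)$ as a bounded multiplier. The linear functional $\hat{v} \mapsto w\cdot\hat{v}$ extends continuously from the unextended dual to the extended positive dual cone of \eqref{EqExtCone}, taking values in $\R_{-\infty} = [-\infty,\infty)$, because the non-zero components of $w$ pair only with $I_1$-coordinates (which are the ones compactified at $-\infty$). Since the quantum dilogarithm $\varphi_\hbar$ is bounded smooth on $\R$ with $\varphi(-\infty)=1$ (as recorded just after \eqref{EqQuantExp2} and used throughout), it extends continuously to $\R_{-\infty}$. The composite $\hat{v} \mapsto \varphi(2\pi\, w\cdot\hat{v})$ is therefore a bounded continuous function on the extended dual cone, and under the extended quantization map from Theorem \ref{TheoAffil} / Appendix \ref{SecRieff} it is sent exactly to the bounded unitary $\varphi(\wb) = B^{b,r,i}$. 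This proves the first assertion.

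For the ``in particular'' clause I would combine this with the factorization $\Fct = \Kc \prod_{b,r,i} B^{b,r,i}$ from \eqref{EqRealMU} and \eqref{eq:F-1or}. The finite product of the $B^{b,r,i}$ is a multiplier by the previous paragraph. For the Gaussian $\Kc = \prod_{t=1}^n \Gauss(2\nevarpi_t,\seee_{N-t})$, observe that $\nevarpi_t$ has all non-zero coefficients in the $I_0$-block by definition, and $\seee_{N-t} = \sum_s B_{N-t,s}\sevarpi_s$ (by \eqref{EqFundWeightDown} and invertibility of $B$) also has all non-zero coefficients in the $I_0$-block of the basis $\{\sevarpi_s,\seee_{(s,k)}\}$. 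Hence the pairings $\hat{v}_1 \mapsto \nevarpi_t\cdot\hat{v}_1$ and $\hat{v}_2 \mapsto \seee_{N-t}\cdot\hat{v}_2$ are continuous $\R$-valued functions on the two extended dual cones (no $-\infty$ extension is produced), and the kernel $(x,y)\mapsto e^{ixy/(\pi\hbar)}$ is bounded continuous on $\R\times\R$. This realises each Gaussian factor as a bounded continuous function on $\hat{V}_{1,+}\times \hat{V}_{2,+}$, hence as a multiplier, and the desired conclusion $\Fct \in M(C_\hbar^*(\wwbtd{N,+}) \otimes C_\hbar^*(\wwwbtd{N,+}))$ follows by multiplying.

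The main technical subtlety is the precise identification of $\varphi(\wb)$ with the quantization of a bounded continuous function on the extended dual cone; this works only because $\varphi$ has a finite limit at $-\infty$, which is the compactification direction for the $I_1$-coordinates and hence precisely the boundary picked out by the cone structure on $\wwbtd{N,+}$ and $\wwwbtd{N,+}$.
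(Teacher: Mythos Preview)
Your approach is essentially the paper's: verify the summands lie in the cones, obtain affiliation via Theorem~\ref{TheoAffil}, and then use the boundary value $\varphi(-\infty)=1$ to land in the multiplier algebra --- the paper phrases this last step as ``$\overline{F}$ continuous on $[0,\infty)$'' and invokes the abstract functional calculus of Lemma~\ref{LemFunctCal} on the tensor product of affiliated positives via \eqref{EqAffiliTens}, which is slightly cleaner than routing through \eqref{EqDirSumTens} (stated only for the non-cone algebras). Your explicit treatment of $\Kc$ fills in what the paper leaves to ``follows immediately''; one caution: ``bounded continuous on $\hat V_{1,+}\times\hat V_{2,+}$, hence a multiplier'' is not a valid inference for the \emph{deformed} algebra in general (the Gaussian kernel is not in $C_b^\infty$), and what actually makes it work is that the function factors through the $I_0$-projection $\hat V_{1,+}\times\hat V_{2,+}\to\R\times\R$, so that the non-degenerate map $C_0(\R)\otimes C_0(\R)\to M(C_\hbar^*(\Bor_{N,+}^-)\otimes C_\hbar^*(\Bor_{N,+}^+))$ built from the appendix argument carries $C_b(\R\times\R)$ into multipliers.
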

Note that \eqref{EqFGInMultRestr} is much stronger than \eqref{EqWeakAffiliation}, as when passing to multiplier algebras the inclusion $C_{\hbar}^*(\wwbtd{N}) \subseteq C_{\hbar}^*(\wwbtd{N,+})$ switches the order of inclusion, cf.\ \eqref{EqListInclusCont} for the classical intuition for this.
\begin{proof}
By \eqref{EqAffExp}, we have that 
\[
\Eb(\nee_{b-i+1,b-r}) = e_{\hbar}^{2\pi \nee_{b-i+1,b-r}}\, \eta \, C_{\hbar}^*(\Bor_{N,+}^-),
\]
and similarly 
\[
\Eb(\seee_{n-b+i,n-b}) = e_{\hbar}^{2\pi \seee_{n-b+i,n-b}}\, \eta \, C_{\hbar}^*(\Bor_{N,+}^+).
\]
Then by the observation \eqref{EqAffiliTens}, we have 
\[
\Eb(\nee_{b-i+1,b-r}\oplus \seee_{n-b+i,n-b}) = \Eb(\nee_{b-i+1,b-r})\otimes \Eb(\seee_{n-b+i,n-b})\,\eta\,C_{\hbar}^*(\wwbtd{N,+})\otimes C^*_{\hbar}(\wwwbtd{N,+}). 
\]
Now as $\overline{F}$ is a continuous function on $[0,\infty)$, we get by functional calculus (Lemma \ref{LemFunctCal} and the remark following it) that 
\begin{multline*}
B^{b,r,i} = \varphi(\nee_{b-i+1,b-r}\oplus \seee_{n-b+i,n-b}) = \overline{F}(\Eb(\nee_{b-i+1,b-r})\otimes \Eb(\seee_{n-b+i,n-b}))\\  \in M(C_{\hbar}^*(\wwbtd{N,+})\otimes C^*_{\hbar}(\wwwbtd{N,+})).
\end{multline*}
The statement for $\Fct$ follows immediately.
\end{proof}

\begin{lemma}\label{LemCommGauss}
Let $\Ab$ and $\Bb$ be two self-adjoint operators on Hilbert spaces $\Hc$ and $\Gc$, and assume $\Spec(\Ab) = \R$. Assume that $u$ is a unitary in $\Bc(\Gc)$ such that 
\[
(1\otimes u)\Gauss_{\hbar}(\Ab\otimes \Bb) = \Gauss_{\hbar}(\Ab\otimes \Bb)(1\otimes u). 
\]
Then $u$ commutes with all $e^{it\Bb}$, and hence commutes with $\Bb$. 
\end{lemma}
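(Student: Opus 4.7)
The plan is to exploit that $\Gauss_{\hbar}(\Ab\otimes \Bb)=e^{\frac{i}{2\pi\hbar}\Ab\otimes \Bb}$ (see \eqref{EqGausDef}) can be expanded as a spectral integral over the spectrum of $\Ab$, with the spectrum assumption $\Spec(\Ab)=\R$ providing enough localization to isolate the commutator of $u$ with $e^{it\Bb}$ for every $t\in\R$.

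First I would write, using the joint functional calculus for the strongly commuting pair $(\Ab\otimes 1,1\otimes \Bb)$ and the spectral resolution $\Ab=\int_{\R}\lambda\,\rd E(\lambda)$,
\[
\Gauss_\hbar(\Ab\otimes \Bb)=\int_{\R} \rd E(\lambda)\otimes e^{\frac{i\lambda}{2\pi\hbar}\Bb}
\]
in the sense that on elementary tensors $\xi\otimes\eta$ (with $\xi$ in the domain of $\Ab$),
\[
\Gauss_\hbar(\Ab\otimes \Bb)(\xi\otimes\eta)=\int_{\R}\rd E(\lambda)\xi\,\otimes\, e^{\frac{i\lambda}{2\pi\hbar}\Bb}\eta.
\]
The hypothesis $(1\otimes u)\Gauss_\hbar(\Ab\otimes\Bb)=\Gauss_\hbar(\Ab\otimes\Bb)(1\otimes u)$ then rewrites as
\[
\int_{\R}\rd E(\lambda)\otimes [u,e^{\frac{i\lambda}{2\pi\hbar}\Bb}]=0.
\]

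Next, for fixed $\eta,\eta'\in\Gc$ I would pair with $\xi\otimes\eta$ and $\xi\otimes\eta'$ to obtain, for every unit vector $\xi\in\Hc$,
\[
\int_{\R} g(\lambda)\,\rd\mu_\xi(\lambda)=0,\qquad g(\lambda):=\langle \eta,[u,e^{\frac{i\lambda}{2\pi\hbar}\Bb}]\eta'\rangle,
\]
where $\mu_\xi(\cdot)=\langle \xi,E(\cdot)\xi\rangle$ is a probability measure on $\R$. The function $g$ is bounded and continuous on $\R$ by SOT-continuity of $\lambda\mapsto e^{\frac{i\lambda}{2\pi\hbar}\Bb}$. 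Here the assumption $\Spec(\Ab)=\R$ enters crucially: for any $\lambda_0\in\R$ and any $\eps>0$, the spectral projection $E((\lambda_0-\eps,\lambda_0+\eps))$ is nonzero, so there exists a unit vector $\xi$ with $\mu_\xi$ supported in $(\lambda_0-\eps,\lambda_0+\eps)$. Letting $\eps\to 0$ and using continuity of $g$ yields $g(\lambda_0)=0$, and since $\lambda_0\in\R$ was arbitrary, $g\equiv 0$.

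Since $\eta,\eta'$ were arbitrary, it follows that $[u,e^{\frac{i\lambda}{2\pi\hbar}\Bb}]=0$ for every $\lambda\in\R$. As $\lambda\mapsto \lambda/(2\pi\hbar)$ is a bijection of $\R$, this says $u$ commutes with $e^{it\Bb}$ for all $t\in\R$, and then by Stone's theorem (or by standard functional calculus) $u$ commutes with $\Bb$ itself. The only subtle point—really the only step requiring any care—is the spectral-localization argument in step three, which could fail if the spectrum of $\Ab$ had gaps; this is precisely why the hypothesis $\Spec(\Ab)=\R$ is invoked.
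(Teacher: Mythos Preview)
Your proof is correct and is precisely a detailed unpacking of the paper's one-line argument ``Immediate by specializing $\Ab$ to $t\in\R$ using functional calculus'': the spectral-localization step you spell out is exactly what that phrase means.
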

\begin{proof}
Immediate by specializing $\Ab$ to $t \in \R$ using functional calculus.
\end{proof}

\begin{lemma}\label{LemCommQDil}
Let $\Ab$ and $\Bb$ be two strictly positive operators on Hilbert spaces $\Hc$ and $\Gc$, and assume $\Spec(\Ab) = \R_{\geq0}$. Assume that $u$ is a unitary in $\Bc(\Gc)$ such that 
\[
(1\otimes u)F_{\hbar}(\Ab\otimes \Bb) = F_{\hbar}(\Ab\otimes \Bb)(1\otimes u). 
\]
Then $u$ commutes with all $\Bb^{it}$, and hence commutes with $\Bb$. 
\end{lemma}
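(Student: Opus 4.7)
My plan is to mimic the one-line proof of Lemma~\ref{LemCommGauss} and handle the new difficulty, which is that the family $\{F_\hbar(s\Bb)\}_{s\ge 0}$ generates the von Neumann algebra of $\Bb$ --- a non-trivial fact when $F_\hbar$ replaces a character.

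\emph{First step: specialize $\Ab$.} By the spectral theorem for $\Ab$, write $\Hc = \int^{\oplus}_{\R_{\ge 0}}\Hc_s\,d\mu(s)$ with $\mu$ a Radon measure of full support on $\Spec(\Ab)=\R_{\ge 0}$ and $\Ab$ acting on the fibre $\Hc_s$ as multiplication by $s$. Under this decomposition,
$$F_\hbar(\Ab\otimes \Bb)=\int^{\oplus}_{\R_{\ge 0}} F_\hbar(s\Bb)\,d\mu(s), \qquad 1\otimes u=\int^{\oplus}_{\R_{\ge 0}} 1_{\Hc_s}\otimes u\,d\mu(s),$$
so the commutation hypothesis forces $[F_\hbar(s\Bb),u]=0$ for $\mu$-a.e.\ $s\ge 0$. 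Since $F_\hbar\in C([0,\infty),\mathbb{T})$ with $F_\hbar(0)=1$, the map $s\mapsto F_\hbar(s\Bb)$ is strongly continuous, so the set on which this commutator vanishes is strongly closed; as $\mu$ has full support, it must equal all of $\R_{\ge 0}$.

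\emph{Second step: generation.} Setting $\Xb=\ln\Bb$, the previous step reads $[u,\overline{\varphi}_\hbar(t+\Xb)]=0$ for every $t\in\R$ (taking $t=\ln s$). To conclude, it is enough to show that the von Neumann algebra generated by $\{\overline{\varphi}_\hbar(t+\Xb):t\in\R\}$ coincides with $W^*(\Xb)=W^*(\Bb)$. Via the spectral identification $W^*(\Xb)\cong L^\infty(\Spec(\Xb),\nu)$, this reduces to showing that the countable family $\{x\mapsto \overline{\varphi}_\hbar(t+x):t\in\Q\}$ of continuous functions on $\Spec(\Xb)\subseteq\R$ generates the full Borel $\sigma$-algebra; by Kuratowski's theorem (an injective continuous map from a Polish space to $\R^\N$ is a Borel isomorphism onto its image) this in turn reduces to separation of points.

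\emph{Third step: non-periodicity of $\varphi_\hbar$.} Suppose that $\overline{\varphi}_\hbar(t+x_1)=\overline{\varphi}_\hbar(t+x_2)$ for all $t\in\R$, with $T:=x_2-x_1\neq 0$. Then $\overline{\varphi}_\hbar$ is $T$-periodic, so for any $x\in\R$,
$$\overline{\varphi}_\hbar(x)=\lim_{n\to\infty}\overline{\varphi}_\hbar(x-nT)=\overline{\varphi_\hbar(-\infty)}=1$$
by \eqref{EqValueF0}. This forces $\overline{\varphi}_\hbar\equiv 1$ and hence $W\equiv 0$, contradicting the non-trivial integral formula \eqref{EqFunctWTheta}. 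Thus $x_1=x_2$, separation of points holds, and $u$ commutes with every $\Bb^{it}$, and therefore with $\Bb$. The only non-routine point is the final separation argument, which rests crucially on the boundary value $\varphi_\hbar(-\infty)=1$; everything else is a standard manipulation of the spectral theorem and the continuous functional calculus.
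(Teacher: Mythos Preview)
Your proof is correct, but it takes a genuinely different route from the paper's in the key second step.

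Both arguments begin identically: specialize $\Ab$ via functional calculus to obtain $[u,\overline{\varphi}_\hbar(t+\Hb)]=0$ for all real $t$, where $\Hb=\ln\Bb$. From there the two diverge. The paper exploits the \emph{analytic} structure of $\overline{\varphi}_\hbar$: for $\hbar>0$ it is holomorphic on the upper half-plane, so the commutation persists for complex $t$; combining the identities at $t$ and $t+2\pi i$ with the functional equation $\overline{\varphi}_\hbar(z+2\pi i)=(1+e^{i\pi\hbar^{-1}}e^{\hbar^{-1}z})\,\overline{\varphi}_\hbar(z)$ yields directly $u\,e^{\hbar^{-1}\Hb}u^*=e^{\hbar^{-1}\Hb}$, hence $[u,\Hb]=0$. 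Your argument stays entirely on the real line and is more \emph{measure-theoretic}: you show the translates $\{\overline{\varphi}_\hbar(t+\cdot)\}_{t\in\R}$ separate points of $\R$ (using only $\varphi_\hbar(-\infty)=1$ and non-constancy), whence by Lusin--Souslin they generate the Borel $\sigma$-algebra and thus $W^*(\Hb)$.

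The paper's approach is shorter and uses the specific holomorphic/functional-equation structure of the quantum dilogarithm; yours is more robust in that it would apply to any continuous unimodular function with a limit at $-\infty$ and non-constant, at the price of invoking a descriptive-set-theory result. One cosmetic point: in your Step~3 the implication ``$\overline{\varphi}_\hbar\equiv 1\Rightarrow W\equiv 0$'' tacitly uses $W(-\infty)=0$ (not just $\varphi_\hbar(-\infty)=1$); but all you actually need is that $\varphi_\hbar$ is non-constant, which is immediate from, say, the asymptotics in Proposition~\ref{PropFundPropert}(\ref{Prop5}).
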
  
\begin{proof}
Again by functional calculus, the assumption implies that 
\[
u F_{\hbar}(t\Bb) = F_{\hbar}(t\Bb)u,\qquad \forall t>0. 
\]
In other words, writing $\Hb = \ln(\Bb)$, we have 
\begin{equation}\label{EqInvCond}
u\overline{\varphi}_{\hbar}(\Hb + t)u^* = \overline{\varphi}_{\hbar}(\Hb + t),\qquad \forall t\in \R. 
\end{equation}

Assume now that $\hbar>0$. Then as $\overline{\varphi}_{\hbar}$ extends to a holomorphic function on the upper half plane $\bbH$, the equality \eqref{EqInvCond} continues to hold (as an equality of unbounded normal operators) for any $t\in \bbH$. 
As $\overline{\varphi}_{\hbar}$ satisfies, rewriting \eqref{EqFunctEq1}, the functional equation 
\begin{equation}\label{EqFuncEqDilog}
\overline{\varphi}_{\hbar}(z+2\pi i) = (1+e^{i\pi \hbar^{-1}} e^{\hbar^{-1} z})\overline{\varphi}_{\hbar}(z),
\end{equation}
we can conclude from \eqref{EqInvCond} that 
\[
u e^{\hbar^{-1} \Hb} u^*=  e^{\hbar^{-1} \Hb}.
\]
So $u$ commutes with $\Hb$, and the lemma follows. 

The proof in the case $\hbar<0$ follows by taking adjoints.
\end{proof}

Note now that by Lemma \ref{LemSkewProdFormDiff}, we can apply Theorem \ref{TheoDegRep} to obtain non-degenerate $*$-representations
\begin{equation}\label{EqProjMapsPii}
\pi_t = \pi_{\{t\}},\qquad \pi_{(s,k)} = \pi_{\{s,(s,0)\}},\qquad t\in I_0, (s,k)\in I_1
\end{equation}
of $C_{\hbar}^*(\wwbtd{N,+})$. Writing 
\[
I_0'(t) = \{t\},\quad I_1'(t) = \emptyset,\quad I_0'(s,k) = \{s\},\qquad I_1'(s,k)= \{(s,0)\},
\]
we may identify (for respectively $i = t$ or $i = (s,k)$)  
\[
\wwbtd{}(I_0'(i),I_1'(i)) \subseteq W_i,\qquad \nevarpi_t \mapsto \varpi_t,\quad \varpi_s \mapsto \varpi_{(s,k)},\quad \nee_{s,0}\mapsto f_{(s,k)},
\]
with $W_i$ as in \eqref{EqDirSum}, and hence view the above $\pi_i$ as non-degenerate $*$-representations of $C_{\hbar}^*(\wwbtd{N,+})$ on $\Hsp_i = L^2(\R)$. By Lemma \ref{LemAffil}, we can apply these projection maps to the first leg of $\Fct$. Then one sees in fact immediately that 
\begin{equation}\label{EqProjFct}
(\pi_i \otimes \id)\Fct = \bbU^{(i)},\qquad i \in I, 
\end{equation}
with $\bbU^{(i)}$ as in \eqref{EqPartFlipOne} and \eqref{EqPartFlipTwo}. This is the main ingredient to prove the following theorem:

\begin{theorem}\label{TheovNFGFlip}
Let $\hbar\in \R^{\times}$, and let $\Fct$ be a Fock--Goncharov flip with associated dual von Neumann algebra $\hat{M} = M_{\hat{\Fct}}$. Then 
\[
\hat{M} = L_{\hbar}(\wwwbtd{N}).
\]
\end{theorem}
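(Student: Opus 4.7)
The plan is to prove the two inclusions $\hat{M} \subseteq L_{\hbar}(\wwwbtd{N})$ and $\hat{M} \supseteq L_{\hbar}(\wwwbtd{N})$ separately. By Corollary \ref{CorUnitRepArb} the von Neumann algebra $\hat{M}$ is, up to natural isomorphism, independent of the chosen unitary $\hbar$-representation of $\wbtd{N}$, so I will work throughout with the concrete representation $\Hsp = \otimes_{i \in I}\Hsp_i$ constructed in Section \ref{SecModularity}, in which Theorem \ref{TheoProdFG} furnishes the ordered factorisation $\Fct = \prod_{i\in I}^{\longra}\bbU^{(i)}_{i,\bullet}$.

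The upper bound follows from Lemma \ref{LemAffil}: since $\Fct \in M(C^*_{\hbar}(\wwbtd{N,+}) \otimes C^*_{\hbar}(\wwwbtd{N,+}))$, every first-leg slice of $\Fct$ lies in the $\sigma$-weak closure of $C^*_{\hbar}(\wwwbtd{N,+})$, which by Lemma \ref{LemResFaith} (faithfulness of the restriction of the ambient $\hbar$-representation to $\Bor_N^+$) coincides with $L_{\hbar}(\wwwbtd{N})$.

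For the reverse inclusion, I would first observe that for simple tensor vectors $\xi = \otimes_i \xi_i$ and $\eta = \otimes_i \eta_i$, slicing factorises as the ordered product
\[
(\omega_{\xi,\eta}\otimes \id)\Fct \;=\; \prod_{i\in I}^{\longra}(\omega_{\xi_i,\eta_i}\otimes \id)\bbU^{(i)},
\]
since the unitaries $\bbU^{(i)}_{i,\bullet}$ act on mutually distinct first-leg tensor components. Let $N_i \subseteq L_\hbar(\wwwbtd N)$ denote the $\sigma$-weakly closed subalgebra generated by all slices of $\bbU^{(i)}$. For $i = s \in I_0$, the formula $\bbU^{(s)} = \Gauss(2\varpi_s \otimes \seee_{N-s})$ together with the fact that the infinitesimal generator of $\varpi_s$ has full spectrum $\R$ in the Heisenberg representation on $\Hsp_s = L^2(\R)$ yields $N_s = L_{\hbar}(\R\seee_{N-s})$ via a standard Plancherel computation (slices realise autocorrelation operators $C_\phi(\boldsymbol{\seee}_{N-s})$, and the $C_\phi$'s are dense in $C_0(\R)$). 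For $i = (s,k) \in I_1$, a parallel spectral analysis of the Gaussian factor and the ordered product of quantum dilogarithms comprising $\bbU^{(s,k)}$ should identify $N_{(s,k)}$ with the von Neumann algebra generated by $\{\pi(t\seee_{N-s}),\,\pi(t\seee_{N-s,n-r}) : t \in \R,\, s \leq r \leq n\}$. A direct combinatorial check (with the substitution $t = N-s$, $j = n-r$) then confirms that $\{\seee_{t,j} : 1 \leq t \leq n,\; 0 \leq j \leq t\}$ is a basis of $\Bor_N^+ = \wwwbtd{N}$, so $\bigl(\bigcup_i N_i\bigr)'' = L_{\hbar}(\wwwbtd{N})$.

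To upgrade the inclusion of ordered products of slices to an inclusion of each $N_{i_0}$ inside $\hat{M}$, I would argue by a limiting procedure. For each $i \in I$, construct a family of unit vectors $\phi_i^{(l)} \in L^2(\R)$ realised as Gaussian wave packets centred at position $-l$ with position standard deviation of order $\sqrt{l}$ (hence momentum standard deviation $O(1/\sqrt{l})$ at zero). Then the slice $T_i^{(l)} := (\omega_{\phi_i^{(l)},\phi_i^{(l)}}\otimes \id)\bbU^{(i)}$ converges to $\mathbf{1}$ in the strong operator topology as $l \to \infty$: the Gaussian factors trivialise because $\Pb \to 0$ on $\phi_i^{(l)}$, while the dilogarithm factors trivialise because $\eb(f_{(s,k)}) = e^{\mathbf{X}} \to 0$ on $\phi_i^{(l)}$ combined with $F(0) = 1$ from \eqref{EqValueF0}. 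Fixing these $\phi_i^{(l)}$ for all $i \neq i_0$ and varying the slicing functional in the $i_0$ slot, joint strong-operator continuity of multiplication on norm-bounded sets together with Kaplansky density inside $N_{i_0}$ shows that every element of $N_{i_0}$ arises as a strong-operator (hence $\sigma$-weak) limit of slices of $\Fct$, placing $N_{i_0}$ inside $\hat{M}$ and completing the proof. The main technical obstacle I anticipate is the precise identification of $N_{(s,k)}$ for $(s,k) \in I_1$: since $\eb(\varpi_{(s,k)})$ and $\eb(f_{(s,k)})$ form a Heisenberg pair on $\Hsp_{(s,k)}$, the Gaussian and dilogarithm factors of $\bbU^{(s,k)}$ do not commute on the first leg, so one cannot extract either class in isolation by a direct slicing argument. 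The resolution will exploit that the Gaussian depends only on the momentum direction while the dilogarithm factors depend only on the position direction of $\Hsp_{(s,k)} = L^2(\R)$; selectively slicing with vectors localised in either representation (and appealing to functional calculus to identify the second-leg von Neumann algebra generated by each class of factors) should recover all claimed generators.
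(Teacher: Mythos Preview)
Your upper bound is fine and is essentially the paper's (which just reads it off from \eqref{EqFGLegs}).

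For the reverse inclusion there is a genuine gap, and it is not the one you flag. By the quantum exponential property (Theorem~\ref{TheoExponGen}) the ordered product of dilogarithms in \eqref{EqPartFlipOne} collapses to the single factor $\overline{F}\bigl(\eb(f_{(s,k)})\otimes\stgE_s\bigr)$ of \eqref{EqProdFormComp}. Hence the second leg of $\bbU^{(s,k)}$ lies in the von Neumann algebra generated by the $\hbar$-commuting pair $\Kb_s=\eb(\seee_{N-s})$ and $\stgE_s=\boxplus_{s\le r\le n}\eb(\seee_{N-s,n-r})$, and first-leg slices can never recover the individual $\eb(\seee_{N-s,n-r})$. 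For $N-s\ge 2$ that algebra (a type $I_\infty$ factor) is strictly smaller than $L_\hbar(\mathrm{span}\{\seee_{N-s,j}\}_{0\le j\le N-s})$, which has non-trivial centre. So your ``direct combinatorial check'' that the $\seee_{t,j}$ span $\Bor_N^+$ does not give $(\bigcup_i N_i)''=L_\hbar(\Bor_N^+)$. Separating the Gaussian from the dilogarithm --- your anticipated obstacle --- is in fact harmless and follows from the rank-$1$ case; what is genuinely missing is the statement that the collection $\{\Kb_s,\stgE_s:1\le s\le n\}$ generates $L_\hbar(\Bor_N^+)$.

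That statement is exactly Proposition~\ref{PropStandardGen}, whose proof is a non-trivial inductive peeling argument based on \eqref{EqBoxPlusDef} and a strong-limit trick. The paper reaches it by a commutant argument rather than a direct inclusion: for $x\in\hat M'$ one applies the Rieffel boundary homomorphisms $\pi_i$ of \eqref{EqProjMapsPii} (constructed via Theorem~\ref{TheoDegRep}) to the first leg of $[\Fct,1\otimes x]=0$, obtaining $[\bbU^{(i)},1\otimes x]=0$ directly --- these $\pi_i$ are honest non-degenerate $*$-homomorphisms out of $C^*_\hbar(\Bor^-_{N,+})$, so no wave-packet limiting is needed. Lemmas~\ref{LemCommGauss} and~\ref{LemCommQDil} (the latter using the functional equation of $\varphi_\hbar$) then extract $[x,\Kb_s]=[x,\stgE_s]=0$ from the form \eqref{EqProdFormComp}, and Proposition~\ref{PropStandardGen} forces $x\in L_\hbar(\Bor_N^+)'$. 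Your slicing-plus-limiting route can be salvaged once you supply Proposition~\ref{PropStandardGen}, but the commutant argument is shorter and avoids the analytic estimates your Gaussian wave-packet step would require.
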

\begin{proof}
Assume that $\Fct \subseteq \Bc(\Hsp\otimes \Hsp)$. As we already know that $\hat{M} \subseteq L_{\hbar}(\wwwbtd{N})$, we are to show 
\[
\hat{M}'  \subseteq  L_{\hbar}(\wwwbtd{N})' \subseteq \Bc(\Hsp).
\]
But assume that $x \in \hat{M}'$, i.e.\ 
\begin{equation}\label{EqCommx}
\Fct(1\otimes x) = (1\otimes x) \Fct. 
\end{equation}
 Then \eqref{EqCommx} gives us 
\begin{equation}
\bbU^{(i)}(1\otimes x) = (1\otimes x)\bbU^{(i)},\qquad i\in I. 
\end{equation}
By the expressions \eqref{EqPartFlipTwo}  and \eqref{EqProdFormComp} for the $\bbU^{(i)}$, we find that 
\[
[(1\otimes x),G(2\varpi \otimes \seee_{N-s})]=0 = \left[(1\otimes x),\overline{F}\hr{\eb(f)\otimes  \left(\boxplus_{s \le r \le n} \eb(\seee_{N-s,n-r})\right)}\right]
\]
for all $1 \leq s \leq n$ and $s \leq r \leq n$. Applying Lemma \ref{LemCommGauss} and Lemma \ref{LemCommQDil}, we find, using the notation $[-,-]=0$ to denote strong commutation between two operators, that 
\[
[x,\eb(\seee_{N-s})]=0 = [x,\boxplus_{s \le r \le n} \eb(\seee_{N-s,n-r})],\qquad \forall 1\leq s\leq n.
\]
The theorem now follows from the first part of the proposition below. 
\end{proof}

Recall the standard generators from Definition \ref{Defstandardgen}, where we abbreviate
\begin{equation}\label{EqGenEs}
\stgE_s := \stgE_{s,s+1} =  \boxplus_{s \le r \le n} \eb(\seee_{N-s,n-r}),\qquad 1\leq s\leq n.
\end{equation}
We also use the notation 
\begin{equation}\label{EqCartanK}
\Kb_s := \eb(\seee_{N-s}),\qquad 1\leq s \leq n.
\end{equation}

\begin{prop}\label{PropStandardGen}
\begin{enumerate}
\item
The operators $\{\stgE_s\mid 1\leq s\leq n\}$ generate $L_{\hbar}(\Nilp_N^+)$.
\item The operators $\{\Kb_s,\stgE_s\mid 1\leq s \leq n\}$ generate $L_{\hbar}(\Bor_N^+)$.
\end{enumerate}
\end{prop}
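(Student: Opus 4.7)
The plan is to reduce part~(2) to part~(1) and prove part~(1) by induction on $t\in\{1,\ldots,n\}$, showing that $\eb(\seee_{t,k})$ is affiliated with $M:=\langle \stgE_s : 1 \leq s \leq n\rangle''$ for every $0\leq k<t$. The base case $t=1$ is immediate from $\stgE_n = \eb(\seee_{1,0})$ by~\eqref{EqGenEs}. Once the induction is complete, since $\{\seee_{t,k} : 1\leq t\leq n,\,0\leq k<t\}$ is a basis of $\Nilp_N^+$, the Weyl relation~\eqref{EqProdY} yields $\eb(v)\,\eta\,M$ for every $v\in\Nilp_N^+$, hence $L_\hbar(\Nilp_N^+)\subseteq M$; the converse is clear as each $\stgE_s$ is a form sum of operators in $L_\hbar(\Nilp_N^+)$.

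For the inductive step at $t\geq 2$, by the inductive hypothesis the unitaries $u_{\vec\tau} := \prod_{j=0}^{t-2}\eb(\seee_{t-1,j})^{i\tau_j}$ lie in $M$ for every $\vec\tau\in\R^{t-1}$. Combining the scalar-rescaling identity $\eb(w)^{i\tau}\eb(v)\eb(w)^{-i\tau} = e^{2\pi\hbar\tau(w,v)}\eb(v)$ (immediate from differentiating~\eqref{EqProdProj}) with the pairing values $(\seee_{t-1,j},\seee_{t,k}) = \tfrac{1}{2}\sigma_{j,k}$ from~\eqref{EqSameNil}, where $\sigma_{j,k}=+1$ if $k\leq j$ and $-1$ otherwise, and using that form sums are compatible with unitary conjugation and positive scalar multiplication, I will compute
\[
u_{\vec\tau}\,\stgE_{N-t}\,u_{\vec\tau}^{-1} \;=\; \boxplus_{k=0}^{t-1}\exp\!\Big(\pi\hbar\sum_{j=0}^{t-2}\tau_j\sigma_{j,k}\Big)\,\eb(\seee_{t,k}).
\]
For a fixed target index $k_0\in\{0,\ldots,t-1\}$, choosing the ray $\vec\tau = T\vec a$ with $a_j = -\sgn(\hbar)$ for $j<k_0$ and $a_j = +\sgn(\hbar)$ for $j\geq k_0$, an elementary counting argument gives $\hbar\,\vec a\cdot(\sigma_{\cdot,k}-\sigma_{\cdot,k_0}) < 0$ for each $k\neq k_0$. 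Dividing by the scalar coefficient of the $k=k_0$ summand then produces operators in $M$ of the form $\eb(\seee_{t,k_0})\,\boxplus\boxplus_{k\neq k_0}\lambda_k(T)\eb(\seee_{t,k})$ with $\lambda_k(T)\to 0$ as $T\to +\infty$.

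The monotone convergence theorem for closed positive quadratic forms will then yield strong-resolvent convergence of these operators to $\eb(\seee_{t,k_0})$, the required form-core hypothesis being supplied by the common dense subspace $\Hsp_0$ of Gaussian states from Remark~\ref{RemFormSumFinite}. Since $M$ is strongly closed and affiliation is preserved under strong-resolvent limits (via continuity of bounded functional calculus), this gives $\eb(\seee_{t,k_0})\,\eta\,M$, closing the induction. For part~(2), the strongly commuting operators $\{\Kb_s=\eb(\seee_{N-s})\}$ have logarithms forming a basis of the trivial skew-symmetric space $\Tor_N^+$ (by~\eqref{EqVanishing}), whence $L_\hbar(\Tor_N^+)\subseteq\langle \Kb_s\rangle''$ follows by commutative functional calculus. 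Combined with part~(1) and the vector-space decomposition $\Bor_N^+ = \Tor_N^+\oplus\Nilp_N^+$, the identity $\eb(w+n)^{it} = e^{-\pi i\hbar t^2(w,n)}\eb(w)^{it}\eb(n)^{it}$ from~\eqref{EqProdY} yields $\eb(v)\,\eta\,\langle \Kb_s,\stgE_s\rangle''$ for every $v\in\Bor_N^+$.

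The principal analytic obstacle is the strong-resolvent convergence of the form sums as the coefficients $\lambda_k(T)$ tend to zero: this requires the common form domain $\bigcap_k\msD(\eb(\seee_{t,k})^{1/2})$ to be a form core for $\eb(\seee_{t,k_0})$, which will be verified using the explicit dense subspace $\Hsp_0\subseteq\Hc$ of Gaussian vectors from Remark~\ref{RemFormSumFinite}, on which every $\eb(v)$ and its fractional powers are simultaneously well-defined.
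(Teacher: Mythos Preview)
Your inductive strategy --- conjugating $\stgE_{N-t}=\boxplus_{k=0}^{t-1}\eb(\seee_{t,k})$ by unitaries built from the already-controlled level-$(t-1)$ generators so as to rescale the summands, and then passing to a limit that isolates a single $\eb(\seee_{t,k_0})$ --- is exactly the skeleton of the paper's proof. Your pairing computation and sign analysis are correct. Two differences deserve comment.

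First, cosmetically: the paper argues via the commutant (any $x$ commuting with the $\stgE_s$ must commute with each $\eb(\seee_{t,k})$) and peels off summands one at a time via a double induction, each step using a \emph{single} one-parameter conjugation by some $\eb(\seee_{t-1,j})^{i\tau}$. You instead argue affiliation directly and isolate any target summand in one shot with a multi-parameter conjugation. These are equivalent packagings.

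Second, and substantively: the paper does \emph{not} pass to the limit via monotone convergence of form sums. Instead it uses Proposition~\ref{PropSkewComm} to rewrite each rescaled two-term form sum as a unitary conjugate: writing $\stgE_{N-t}=\Ab\boxplus\Bb$ with $\Ab,\Bb$ a $(-\hbar)$-commuting pair, one has
\[
e^{-\pi\hbar\tau}\Ab\,\boxplus\,e^{\pi\hbar\tau}\Bb
\;=\;F_\hbar\bigl(e^{-2\pi\hbar\tau}\Ab\star\Bb^{-1}\bigr)\,\Bb\,F_\hbar\bigl(e^{-2\pi\hbar\tau}\Ab\star\Bb^{-1}\bigr)^*,
\]
and the limit step is then just the strong convergence $F_\hbar(c\,\Cb)\to F_\hbar(0)=1$ of bounded unitaries as $c\to 0$ (dominated convergence plus~\eqref{EqValueF0}). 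This immediately gives that $x$ commutes with $\Bb$, and symmetrically with $\Ab$; a second induction on $k$ then splits $\Ab$ further.

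Your route requires instead that $\eb(\seee_{t,k_0})\boxplus\boxplus_{k\neq k_0}\lambda_k(T)\eb(\seee_{t,k})\to\eb(\seee_{t,k_0})$ in strong resolvent sense. For decreasing closed forms on a fixed form domain $Q=\bigcap_k\msD(\eb(\seee_{t,k})^{1/2})$, the resolvents increase to the resolvent of the operator associated with the \emph{closure} of the limit form on $Q$; this equals $\eb(\seee_{t,k_0})$ only if $Q$ is a form core for it. You propose to supply this via $\Hsp_0$, but Remark~\ref{RemFormSumFinite} gives $\Hsp_0\subseteq\msD(\eb(v))$ with invariance under $\pi(w)$; invariance under the unitary group $\eb(v)^{is}=\pi(sv)$ makes $\Hsp_0$ a core for the \emph{generator} $\vb$, not for $\eb(v)^{1/2}=\eb(v/2)$. (Indeed, a dense subspace invariant under $A^{is}$ need not be a core for $A$: band-limited functions in $L^2(\R)$ are invariant under $e^{is\Xb}$ but do not even meet $\msD(e^{\Xb})$ nontrivially.) The form-core claim for $\Hsp_0$ is very likely true and provable from its Gaussian description, but it is a genuine extra lemma; the paper's unitary-conjugation argument sidesteps it entirely.
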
 
\begin{proof}
(1) Assume that $x$ is an operator which strongly commutes with all $\stgE_s$ for $1\leq s\leq n$. We show by induction on $N-s$ that $x$ then also commutes with all $\eb(\seee_{s,k})$ for $0\leq k <s$, and hence with all $\eb(e_{a,b,c})^{it}$ for $(a,b,c)\in \wseC_N$, which is enough to prove the lemma. 

The base case is automatic as $\stgE_n = \eb(\seee_{1,0})$. 

Assume then that, for fixed $s$, we know that $x$ commutes with all $\eb(\seee_{s,k})$ for $0\leq k <s$, as well as with $\stgE_{N-s-1}$. Putting 
\[
\Ab = \boxplus_{N-s-1\leq r <n} \eb(\seee_{s+1,n-r}),\qquad \Bb = \eb(\seee_{s+1,0}),
\]
we have that $\Ab$ and $\Bb$ $-\hbar$-commute, so by \eqref{EqBoxPlusDef} we can write 
\[
\stgE_{N-s-1} = \Ab \boxplus \Bb = F_{\hbar}(\Ab\star\Bb^{-1})\Bb F_{\hbar}(\Ab\star\Bb^{-1})^*.
\]
Hence from \eqref{EqSameNil} and Lemma \ref{LemSumSkewComm} we deduce 
\begin{equation}\label{EqConjSInf}
 e^{-\pi \hbar s} \Eb(\seee_{s,0})^{it} \stgE_{N-s-1} \Eb(\seee_{s,0})^{-it} =F_{\hbar}(e^{-2\pi \hbar s}\Ab\star\Bb^{-1})\Bb F_{\hbar}(e^{-2\pi \hbar s}\Ab\star\Bb^{-1})^*.
\end{equation}
As $x$ commutes with the left hand side by assumption, and as $F_{\hbar}(e^{-s}\Cb)$ converges strongly to $1$ for any positive operator $\Cb$ (by the dominated convergence theorem and \eqref{EqValueF0}), we see by letting $\hbar s$ tend to $-\infty$ on the right hand side of \eqref{EqConjSInf} that $x$ must also commute with $\Bb = \Eb(\seee_{s+1,0})$. 

Similarly, writing now rather 
\[
\stgE_{N-s-1} = \Bb \boxplus \Ab = F_{\hbar}(\Bb\star\Ab^{-1})^*\Ab F_{\hbar}(\Bb\star\Ab^{-1}),
\]
a same argument gives that $x$ must also commute with $\Ab = \boxplus_{N-s-1\leq r <n} \eb(\seee_{s+1,n-r})$. 

We can now conclude by a second induction on the variable $k$: if we assume that $x$ commutes with $\boxplus_{N-s-1\leq r \leq n-k} \eb(\seee_{s+1,n-r})$, the same argument as above, conjugating now however with $\Eb(\seee_{s,k})^{it}$, gives that $x$ also commutes with $\eb(\seee_{s+1,n-k})$ and $\boxplus_{N-s-1\leq r \leq n-k-1} \eb(\seee_{s+1,n-r})$. This finishes the proof part (1) of the lemma.

(2) This follows immediately from part (1).
\end{proof}

We can similarly determine the von Neumann algebra $M = M_{\Fct}$ associated to $\Fct$, by interpreting $M$ as the dual von Neumann algebra associated to $\hat{\Fct}$. First, we have the following analogue of Proposition \ref{PropStandardGen}. We introduce for this the notation \begin{equation}\label{EqGenFs}
\stgF_s := \stgF_{s,s+1} =  \boxplus_{0 \le r \le s-1} \eb(\nee_{s,r}),\qquad 1\leq s\leq n,
\end{equation}
\begin{equation}\label{EqCartanL}
\Lb_s := \eb(\nee_s),\qquad 1 \leq s \leq n.
\end{equation}

\begin{prop}\label{PropStandardGenDual}
\begin{enumerate}
\item
The operators $\{\stgF_s\mid 1\leq s\leq n\}$ generate $L_{\hbar}(\Nilp_N^-)$.
\item The operators $\{\Lb_s,\stgF_s\mid 1\leq s \leq n\}$ generate $L_{\hbar}(\Bor_N^-)$.
\end{enumerate}
\end{prop}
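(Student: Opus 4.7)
The plan is to mirror the proof of Proposition~\ref{PropStandardGen} essentially verbatim, exploiting the fact that the pairing formulas \eqref{EqSameNil} and \eqref{EqSameCar} of Lemma~\ref{LemSkewProdFormDiff} are \emph{identical} for the $\nee$- and $\seee$-vectors. Once part (1) is established, part (2) is immediate, since $\Bor_N^- = \Tor_N^- \oplus \Nilp_N^-$ and $\{\Lb_s\}_{s=1}^n$ already generates $L_\hbar(\Tor_N^-)$. So the only content is part (1): showing that any operator $x$ strongly commuting with every $\stgF_s$, $1\le s \le n$, must also strongly commute with every $\eb(\nee_{s,k})$ for $1 \le s \le n$, $0 \le k < s$, which in turn generates $L_\hbar(\Nilp_N^-)$ by \eqref{EqShorthand}.

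I would proceed by induction on $s$. The base case $s=1$ is automatic since $\stgF_1 = \eb(\nee_{1,0})$. For the inductive step, assume $x$ commutes with $\eb(\nee_{s,k})$ for all $0 \le k < s$, and consider
\[
\stgF_{s+1} = \boxplus_{0\le r\le s}\eb(\nee_{s+1,r}).
\]
Split this as $\stgF_{s+1} = \Ab \boxplus \Bb$ with
\[
\Ab := \boxplus_{1\le r \le s}\eb(\nee_{s+1,r}),\qquad \Bb := \eb(\nee_{s+1,0}).
\]
By \eqref{EqSameNil} one has $(\nee_{s+1,0},\nee_{s+1,r}) = 1$ for $r\ge 1$, and the internal terms of $\Ab$ pairwise skew-commute, so Lemma~\ref{LemSumSkewComm} yields that $\Ab$ and $\Bb$ are $\hbar$-commuting and formula~\eqref{EqBoxPlusDef} applies:
\[
\stgF_{s+1} = F_\hbar(\Ab\star\Bb^{-1})^*\,\Bb\, F_\hbar(\Ab\star\Bb^{-1}).
\]

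The key observation is that $\nee_{s,0}$ is the exact dual of $\seee_{s,0}$ in the original argument: by \eqref{EqSameNil} we have $(\nee_{s,0},\nee_{s+1,r}) = \tfrac12\delta_{r,0}$, so conjugation by $\Eb(\nee_{s,0})^{it}$ fixes $\Ab$ and scales $\Bb$ by $e^{\pi\hbar t}$. Dividing out the scalar and applying Lemma~\ref{LemSumSkewComm}, one obtains
\[
e^{-\pi \hbar t}\Eb(\nee_{s,0})^{it}\stgF_{s+1}\Eb(\nee_{s,0})^{-it} = F_\hbar(e^{-\pi\hbar t}\Ab\star\Bb^{-1})^*\,\Bb\, F_\hbar(e^{-\pi\hbar t}\Ab\star\Bb^{-1}).
\]
Since $x$ commutes with the LHS for every $t$, letting $\hbar t \to +\infty$ and using that $F_\hbar(c\Cb) \to 1$ strongly as $c\to 0^+$ (by \eqref{EqValueF0} and the dominated convergence theorem), we conclude that $x$ commutes with $\Bb = \eb(\nee_{s+1,0})$. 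The symmetric split $\stgF_{s+1} = \Bb \boxplus \Ab = F_\hbar(\Bb\star\Ab^{-1})^*\Ab F_\hbar(\Bb\star\Ab^{-1})$, combined with the same limiting argument, then extracts commutation of $x$ with $\Ab$. A secondary induction on $k$, conjugating the resulting partial form sums by $\Eb(\nee_{s,k})^{it}$ and playing the same limit game, peels off each individual factor $\eb(\nee_{s+1,k})$ for $0 \le k \le s$.

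The argument has no real obstacle: the entire combinatorial content is carried by the invariance of the pairing formulas \eqref{EqSameNil}--\eqref{EqSameCar} under the swap $\nee \leftrightarrow \seee$, and by the explicit matching $\nee_{s,0} \leftrightarrow \seee_{s,0}$. The only point that requires mild care is the bookkeeping of the sign of $\hbar$ in the limit and the choice of direction in the secondary induction, which may involve using $\Eb(\nee_{s,k})^{it}$ for both $t\to +\infty$ and $t \to -\infty$ depending on the sign of $\hbar$, exactly as in the proof of Proposition~\ref{PropStandardGen}.
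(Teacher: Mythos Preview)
Your approach is exactly what the paper intends—the proof is declared ``completely similar'' to that of Proposition~\ref{PropStandardGen}, and your mirroring via the identical $\nee$/$\seee$ pairing formulas is precisely the point. Two minor computational slips (harmless to the argument): $(\nee_{s,0},\nee_{s+1,r}) = -\tfrac12$ for $r\ge 1$, so $\Eb(\nee_{s,0})^{it}$ scales $\Ab$ by $e^{-\pi\hbar t}$ rather than fixing it, and since $\Ab,\Bb$ are $-\hbar$-commuting the factorization from \eqref{EqBoxPlusDef} reads $F_\hbar(\Ab\star\Bb^{-1})\,\Bb\,F_\hbar(\Ab\star\Bb^{-1})^*$.
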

\begin{proof}
The proof is completely similar to the one for Proposition \ref{PropStandardGen}.
\end{proof}

To get an analogue of Theorem \ref{TheovNFGFlip}, we first obtain a direct formula for $\hat{\Fct}$.

\begin{lemma}\label{LemDualFG}
We have 
\begin{equation}\label{EqDualMUFG}
\hat{\Fct} = \hat{\Kc}\hat{\Fc}
\end{equation}
where 
\[
\hat{\Kc} = \overline{\Gauss}(2\sum_{t=1}^n  (-\nwvarpi_t)\otimes (-\swe_{N-t}))
\]
and
\[
\hat{\Fc} =  \prod_{1 \le r \le n}^{\longra} \prod_{1 \le s \le n}^{\longra} \underset{0\le (n-r)-k\le n-s}{\prod_{0 \le k \le s-1}^{\longra}}\overline{\varphi}(-\nwe_{N-s,(n-r)-k}\oplus -\swe_{s,k}).
\]
\end{lemma}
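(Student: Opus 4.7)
The plan is to compute $\hat{\Fct} = \Fct_{21}^* = \Sigma \Fct^* \Sigma$ directly, using the alternative factorization $\Fct = \Fctt \Kc$ from \eqref{EqRealMUAlt}. This immediately yields
$$\hat{\Fct} = (\Sigma \Kc^* \Sigma)\,(\Sigma \Fctt^* \Sigma),$$
so it suffices to identify the two factors with $\hat{\Kc}$ and $\hat{\Fc}$ respectively.

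For the first factor, I would use the second form $\Kc = \Gauss(2\sum_t \nee_t \otimes \sevarpi_{N-t})$ in \eqref{EqGaussFG}. Taking adjoints flips the sign of the Gaussian exponent, and conjugation by $\Sigma$ swaps the two tensor legs. After reindexing $t \mapsto N-t$ and invoking the identifications $\sevarpi_t = \nwvarpi_t$ and $\nee_{N-t} = \swe_{N-t}$ (which follow from \eqref{EqDiagFullSame} together with the definitions \eqref{EqFundWeightUp}--\eqref{EqFundWeightDown}), one recovers precisely the claimed formula for $\hat{\Kc}$, with $\overline{\Gauss}$ interpreted as the inverse (complex conjugate) Gaussian and the two minus signs inside the tensor product cancelling.

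For the second factor, I would start from \eqref{eq:Fprime-1or2}. Taking adjoints turns each $\varphi$ into $\overline{\varphi}$ and reverses the ordering of the entire product; conjugation by $\Sigma$ then swaps the two legs inside every argument. This gives
$$\Sigma \Fctt^* \Sigma = \prod_{r=n}^{1,\longla} \prod_{s=n}^{1,\longla} \prod_{k=s-1}^{0,\longla} \overline{\varphi}\bigl(-\nwe_{N-s,r-k-1} \oplus -\swe_{s,k}\bigr),$$
with the range constraints preserved. Substituting $r' = n-r+1$ converts the descending outer product in $r$ into an ascending product in $r'$, rewrites $r-k-1 = (n-r')-k$, and transforms the constraint $0 \leq r-k-1 \leq n-s$ into $0 \leq (n-r')-k \leq n-s$, matching the indexing in $\hat{\Fc}$.

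The remaining, and main, obstacle is to flip the inner descending products in $s$ and $k$ back to ascending ones. For this, within each block of fixed $r$ (equivalently fixed $r'$), I would verify that the factors $\overline{\varphi}(-\nwe_{N-s,(n-r')-k} \oplus -\swe_{s,k})$ pairwise commute. By \eqref{EqCommComm} this reduces to the vanishing of the skew-pairing between any two such argument vectors for fixed $r'$, which (via \eqref{EqDiffOr}) is the exact analog of the vanishing already used in the proof of Lemma \ref{LemOrdering}: the relation $(\swe_{s_1,k_1},\swe_{s_2,k_2}) + (\nwe_{N-s_1,(n-r')-k_1},\nwe_{N-s_2,(n-r')-k_2}) = 0$ follows from Lemma \ref{LemSkewProdFormDiff} by symmetry. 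Once this commutation is in place, the inner factors can be freely reordered to ascending form, and the resulting product coincides with $\hat{\Fc}$, completing the identification and proving the lemma.
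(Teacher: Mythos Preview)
Your proposal is correct and follows essentially the same route as the paper: both start from the alternative factorization $\Fct = \Fctt \Kc$ of \eqref{EqRealMUAlt}, from which the paper simply declares the lemma ``immediate''. You just spell out the reindexing $r' = n-r+1$ and the commutation of the inner factors for fixed $r$; the latter is indeed the exact analog of Lemma~\ref{LemOrdering}, and in fact follows automatically from it since $C^{b,r,i} = \Kc\, B^{b,r,i}\, \Kc^*$ (which is how \eqref{EqRealMUAlt} is obtained in the first place), so unitary conjugation transports the commutation without any new skew-pairing computation being needed.
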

\begin{proof}
This follows immediately from \eqref{EqRealMUAlt}.
\end{proof} 

Now note that we have an isomorphism of skew-symmetric spaces
\begin{equation}\label{EqFormtheta}
\theta\colon  \btd{N} \rightarrow \overline{\btd{N}},\qquad e_{a,b,c} \mapsto \overline{-e_{b,a,c}}
\end{equation}
and that under this isomorphism
\[
- \swe_{s,k} \mapsto \overline{\seee_{s,k}},\qquad - \nwe_{s,k}\mapsto \overline{\nee_{s,k}},\qquad 1\leq s\leq n,0\leq k \leq s. 
\]
This leads to a $*$-isomorphism of von Neumann algebras
\begin{equation}\label{EqThetaIso}
\Theta\colon  L_{\hbar}(\wbtd{N}) \rightarrow \overline{L_{\hbar}(\wbtd{N})},\qquad \eb(v) \mapsto \eb(\theta(v))
\end{equation}
such that
\begin{equation}\label{EqThetaIm}
\Theta(L_{\hbar}(\Bor_N^-)) = \overline{L_{\hbar}(\Bor_N^+)},\qquad \Theta(L_{\hbar}(\Bor_N^+)) = \overline{L_{\hbar}(\Bor_N^-)}.
\end{equation}
On the other hand, from \eqref{EqDualMUFG} and a comparison with \eqref{eq:F-3or} we get for the standard Fock--Goncharov flip that
\begin{equation}\label{EqSelfDual}
(\Theta\otimes \Theta)(\hat{\Fct}) = \overline{\Fct},\qquad (\Theta\otimes \Theta)(\Fct) = \overline{\hat{\Fct}}.
\end{equation}
Then through Corollary \ref{CorUnitRepArb}, we get the following analogue of Theorem \ref{TheovNFGFlip}:

\begin{theorem}\label{TheovNFGFlipOr}
Let $\hbar\in \R^{\times}$, and let $\Fct$ be a Fock--Goncharov flip. Then 
\[
M_{\Fct} = L_{\hbar}(\Bor_N^-).
\]
\end{theorem}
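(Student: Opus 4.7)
My plan is to deduce this theorem from Theorem \ref{TheovNFGFlip} (identifying $M_{\hat{\Fct}}$) by exploiting the self-dual symmetry \eqref{EqSelfDual} of the Fock--Goncharov flip under the isomorphism $\Theta$, rather than repeating the argument directly for the second leg. By Corollary \ref{CorUnitRepArb} it suffices to establish the equality for a single concrete unitary $\hbar$-representation of $\wbtd{N}$, so the strategy is to replace the standard representation $\pi$ by the $\hbar$-representation $\Theta \circ \pi$ on $\overline{L^2(\wbtd{N})}$ (where $\Theta$ is viewed, via the canonical embedding, as a non-degenerate $*$-representation of $C^*_\hbar(\wbtd{N})$ coming from \eqref{EqThetaIso}) and compare.

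Applying $\Theta \otimes \Theta$ to the standard Fock--Goncharov flip yields $(\Theta \otimes \Theta)(\Fct) = \overline{\hat{\Fct}}$ by \eqref{EqSelfDual}. By Corollary \ref{CorUnitRepArb} applied to the representation $\Theta$, the associated von Neumann bialgebra for this flip is isomorphic to $M_\Fct$, and explicitly
\[
\Theta(M_\Fct) = M_{(\Theta\otimes \Theta)\Fct} = M_{\overline{\hat{\Fct}}}.
\]
A direct calculation with slice maps shows that conjugation commutes with the formation of the leg von Neumann algebra, i.e.\ $M_{\overline{W}} = \overline{M_W}$ inside $\Bc(\overline{\Hc})$ for any unitary $W \in \Bc(\Hc\otimes \Hc)$; this is an immediate consequence of the identity $(\id \otimes \omega_{\overline{\eta},\overline{\xi}})\overline{W} = \overline{(\id \otimes \omega_{\eta,\xi})W}$ for vector functionals.

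Combining this observation with Theorem \ref{TheovNFGFlip} gives $M_{\overline{\hat{\Fct}}} = \overline{M_{\hat{\Fct}}} = \overline{L_\hbar(\Bor_N^+)}$. Then \eqref{EqThetaIm} supplies $\Theta(L_\hbar(\Bor_N^-)) = \overline{L_\hbar(\Bor_N^+)}$, so applying $\Theta^{-1}$ to both sides of the chain of equalities yields $M_\Fct = L_\hbar(\Bor_N^-)$, as claimed.

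The only potentially delicate point is justifying that Corollary \ref{CorUnitRepArb} can be invoked with $\Theta$ in the role of the unitary $\hbar$-representation: one must check that $\Theta$, composed with the standard $\hbar$-representation of $\wbtd{N}$ on $L^2(\wbtd{N})$, produces a genuine unitary $\hbar$-representation on $\overline{L^2(\wbtd{N})}$ and that its induced action on slices $(\id\otimes\omega)\Fct \in L_\hbar(\wbtd{N})$ (or on the multiplier algebra level, via Lemma \ref{LemAffil}) coincides with applying $\Theta$. This follows straight from \eqref{EqThetaIso}--\eqref{EqThetaIm} and the construction in Section \ref{SecCStarRieff}, but is the only step where bookkeeping of conjugate-linear structures is required.
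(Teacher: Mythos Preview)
Your proof is correct and follows exactly the route the paper sketches: the paper derives Theorem~\ref{TheovNFGFlipOr} in one line from the self-duality identity \eqref{EqSelfDual}, the property \eqref{EqThetaIm} of $\Theta$, Corollary~\ref{CorUnitRepArb}, and Theorem~\ref{TheovNFGFlip}, and you have simply unpacked that sentence, including the elementary verifications $M_{\overline{W}}=\overline{M_W}$ and that $\Theta$ (via the underlying skew-symmetric isomorphism $\theta$) indeed furnishes a unitary $\hbar$-representation of $\wbtd{N}$ on $\overline{L^2(\wbtd{N})}$.
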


To end this section, we determine how the coproducts \eqref{EqComultiplication} and \eqref{EqComultiplicationDual} act on the standard generators. Note that by Proposition \ref{PropStandardGen}
 and Proposition \ref{PropStandardGenDual}, this then completely determines the coproduct.

\begin{prop} \label{Propstandardcoproducts}
On the generators $\Lb_s,\stgF_s$ of $M$, resp.\ $\Kb_s,\stgE_s$ of $\hat{M}$, we have 
\[
\Delta(\Lb_s) = \Lb_s \otimes \Lb_s,\qquad \hat{\Delta}(\Kb_s) = \Kb_s \otimes \Kb_s,
\]
\[
\Delta(\stgF_s) = (\stgF_s \otimes \Lb_s) \boxplus (1 \otimes \stgF_s),\qquad \hat{\Delta}(\stgE_s) = (\stgE_s \otimes 1)\boxplus (\Kb_s \otimes \stgE_s).
\]
\end{prop}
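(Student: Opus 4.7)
Both coproducts are implemented by conjugation: $\Delta(x) = \Fct^*(1\otimes x)\Fct$ for $x$ affiliated with $M$, and $\hat\Delta(x) = \Sigma\Fct(x\otimes 1)\Fct^*\Sigma$ for $x$ affiliated with $\hat M$ (the latter follows from $\hat\Fct = \Sigma\Fct^*\Sigma$). Unbounded generators are handled by applying the formulas to their unitary groups via functional calculus. We exploit both factorizations $\Fct = \Kc\Fc = \Fctt\Kc$ from Theorem \ref{TheoModFGFin}.

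\textbf{Cartan generators.} For $\Delta(\Lb_s) = \Fc^*\Kc^*(1\otimes\eb(\nee_s))\Kc\Fc$, the Gaussian step applies \eqref{EqGausEqInv} to $\Kc = \Gauss(2\sum_t \nevarpi_t\otimes\seee_{N-t})$: the first leg receives the shift $2\sum_t(\nee_s,\seee_{N-t})\nevarpi_t$. By \eqref{EqDiffCart} this equals $2\nevarpi_s-\nevarpi_{s-1}-\nevarpi_{s+1}$, which is exactly $\nee_s$ since by definition $\nee_s = \sum_t B_{st}\nevarpi_t$ for $B$ the Cartan matrix. Hence $\Kc^*(1\otimes\Lb_s)\Kc = \Lb_s\otimes\Lb_s$. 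Next, using \eqref{EqSameCar} and \eqref{EqDiffCar}, one checks that $\nee_s\oplus\nee_s$ is $\epsilon$-orthogonal to every face vector $\nee_{s',k'}\oplus\seee_{N-s',(n-r')-k'}$ occurring in $\Fc$: the two contributions cancel (as $1-1$ for $s=s'$ and $-\frac12+\frac12$ for $|s-s'|=1$). By \eqref{EqCommComm}, $\Lb_s\otimes\Lb_s$ commutes with $\Fc$, yielding $\Delta(\Lb_s)=\Lb_s\otimes\Lb_s$. The identity $\hat\Delta(\Kb_s)=\Kb_s\otimes\Kb_s$ is derived analogously via $\Fct=\Fctt\Kc$: the Gaussian step reduces using \eqref{EqPairFundWeight}, and commutation of $\Kb_s\otimes\Kb_s$ with $\Fctt$ follows from the decompositions $\swe_{s,k}=\nee_s-\nee_{s,s-k-1}$, $\nwe_{s,k}=\seee_s-\seee_{s,s-k-1}$ of \eqref{EqDiffOr} combined with the same pairing cancellations.

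\textbf{Nilpotent generators.} Since conjugation by a unitary preserves form sums,
\[
\Delta(\stgF_s) = \boxplus_{r=0}^{s-1}\Fct^*\eb(0\oplus\nee_{s,r})\Fct.
\]
The Gaussian step is trivial here: $(\nee_{s,r},\seee_{N-t})=0$ from \eqref{EqDiffCarOpp} shows $\Kc$ commutes with each $\eb(0\oplus\nee_{s,r})$, so we are reduced to computing $\Fc^*\eb(0\oplus\nee_{s,r})\Fc$. The plan is to establish
\[
\Fc^*\eb(0\oplus\nee_{s,r})\Fc \;=\; \eb(\nee_{s,r}\oplus\nee_s) \boxplus \eb(0\oplus\nee_{s,r}), \qquad 0\le r\le s-1,
\]
after which form-summing over $r$, together with distributivity of $\boxplus$ over tensor factors and $\boxplus_r\eb(\nee_{s,r}\oplus\nee_s) = \stgF_s\otimes\Lb_s$, produces $(\stgF_s\otimes\Lb_s)\boxplus(1\otimes\stgF_s)$. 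The analytic input is the consequence of Proposition \ref{PropSkewComm},
\[
\varphi(v)\eb(w)\varphi(v)^* = \eb(v+w)\boxplus\eb(w) \qquad \text{when } (v,w)=1,
\]
with its mirror for $(v,w)=-1$. By the pairing formula \eqref{EqDiffNil}, the factors of $\Fc$ that interact non-trivially with $0\oplus\nee_{s,r}$ are exactly those $\varphi(\nee_{s',k'}\oplus\seee_{N-s',(n-r')-k'})$ with $s'\in\{r,r+1\}$; the remaining factors commute through. One tracks the telescoping of these interactions through the ordered product, using the Kashaev pentagon \eqref{EqCommKashaev} to push newly generated dilogarithms past the factors to their right. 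The statement for $\hat\Delta(\stgE_s)$ is obtained by the symmetric computation using $\Fct=\Fctt\Kc$ and the $\epsilon$-pairings for $\seee_{s,k}$.

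\textbf{Main obstacle.} The delicate point is the combinatorial bookkeeping in the nilpotent case: one must sequence the pentagon rearrangements so that conjugation of the single exponential $\eb(0\oplus\nee_{s,r})$ through the long ordered product $\Fc$ collapses cleanly to exactly two surviving form summands, with all intermediate cross-terms cancelling. A more structural route that bypasses much of this bookkeeping is to use the rank-one decomposition of Theorem \ref{TheoProdFG}: in the representation on $\bigotimes_{i\in I}\Hc_i$, $\Fct = \prod^{\longra}_{i\in I}\bbU^{(i)}_{i,\bullet}$ is a product of rank-one modular multiplicative unitaries for which Theorem \ref{TheoRank1} directly yields the coproduct formulas on the rank-one generators $\Fb$, $\Lb$. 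Iterating these rank-one identities and collecting contributions assembles the general formula, which transfers to an arbitrary Fock--Goncharov flip by Corollary \ref{CorUnitRepArb}.
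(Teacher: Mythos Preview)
Your Cartan computations are correct and essentially match what the paper does for $\Lb_s$ in Lemma~\ref{LemGrouplike}. For the nilpotent generators, however, there is a genuine gap: the claimed intermediate identity
\[
\Fc^*\,\eb(0\oplus\nee_{s,r})\,\Fc \;=\; \eb(\nee_{s,r}\oplus\nee_s)\boxplus\eb(0\oplus\nee_{s,r})
\]
is never established. You correctly identify via \eqref{EqDiffNil} which factors of $\Fc$ interact, but the ``telescoping'' through the ordered product via pentagon moves is left entirely unspecified, and it is far from clear that the cross-terms cancel to leave exactly two summands. The alternative route you sketch at the end is in the right spirit but is also not a proof: the $\bbU^{(i)}$ act on distinct legs $\Hc_i\otimes\Hsp$, so ``iterating the rank-one coproduct identities'' through the ordered product $\prod_i\bbU^{(i)}_{i,\bullet}$ has no clear meaning without further structure.

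The paper avoids all of this by a single structural device you are missing: the degenerate-representation projections $\pi_i$ of \eqref{EqProjMapsPii}, built from Rieffel's deformation theory (Theorem~\ref{TheoDegRep}). Applying $\pi_i$ to the first leg of the pentagon identity $\Fct_{23}\Fct_{12}\Fct_{23}^*=\Fct_{12}\Fct_{13}$, and using $(\pi_i\otimes\id)\Fct=\bbU^{(i)}$ from \eqref{EqProjFct}, yields the clean corepresentation relation $(\id\otimes\hat\Delta)\bbU^{(i)}=\bbU^{(i)}_{13}\bbU^{(i)}_{12}$. For $i=s\in I_0$ the Gaussian form of $\bbU^{(s)}$ gives $\hat\Delta(\Kb_s)=\Kb_s\otimes\Kb_s$ by functional calculus; for $i=(s,0)$ the compact form \eqref{EqProdFormComp} together with the quantum exponential property \eqref{EqExpoGenProp} gives $\hat\Delta(\stgE_s)=(\stgE_s\otimes1)\boxplus(\Kb_s\otimes\stgE_s)$ directly. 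The formulas for $\Delta$ on $\Lb_s,\stgF_s$ are then obtained from the $\hat\Delta$-side via the self-duality isomorphism $\Theta$ of \eqref{EqSelfDual}, not by a parallel direct computation. The point is that $\pi_i$ isolates a single rank-one factor without ever having to conjugate through the full product $\Fc$.
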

\begin{proof}
Recall the identity \eqref{EqProjFct}. Then applying $\pi_{i}$ to the first leg of the pentagon identity 
\[
\Fct_{23}\Fct_{12}\Fct_{23}^* = 
\Fct_{12}\Fct_{13},
\]
we find 
\begin{equation}\label{EqCoprodOnGen}
(\id\otimes \hat{\Delta})\bbU^{(i)} = \bbU^{(i)}_{13}\bbU^{(i)}_{12}. 
\end{equation}
If we take $i = s \in I_0$, this gives 
\begin{eqnarray*}
(\id\otimes \hat{\Delta})\Gauss(2\varpi_s \otimes \seee_{N-s}) &=& \Gauss(2\varpi_s \otimes \seee_{N-s})_{13}\Gauss(2\varpi_s \otimes \seee_{N-s})_{12}\\
&=& \Gauss(2\varpi_s \otimes (\seee_{N-s}\otimes 1 \dotplus 1\otimes \seee_{N-s})),
\end{eqnarray*}
so by functional calculus we get 
\[
\hat{\Delta}(\seee_{N-s}) = \seee_{N-s}\otimes 1 \dotplus 1\otimes \seee_{N-s}, 
\]
and hence $\hat{\Delta}(\Kb_s) = \Kb_s \otimes \Kb_s$. Alternatively, we could also verify directly from e.g.\ \eqref{EqRealMU} that 
\[
(\Kb^{-it}\otimes \Kb^{-it})\Fct (\Kb^{it}\otimes 1)= \Fct,\qquad t\in \R. 
\]

Now taking on the other hand $i = (s,0)$, we find from \eqref{EqCoprodOnGen} and \eqref{EqProdFormComp} that 
\begin{multline*}
(\id\otimes \hat{\Delta})\left(\Gauss\left(2\varpi_{(s,k)} \otimes \seee_{N-s}\right)\overline{F}\hr{\eb(f_{(s,k)})\otimes  \stgE_s}\right) \\ = \Gauss\left(2\varpi_{(s,k)} \otimes \seee_{N-s}\right)_{12}\overline{F}\hr{\eb(f_{(s,k)})\otimes  \stgE_s}_{13}\Gauss\left(2\varpi_{(s,k)} \otimes \seee_{N-s}\right)_{12}\overline{F}\hr{\eb(f_{(s,k)})\otimes  \stgE_s}_{12}.
\end{multline*}
Then from \eqref{EqGausEqInv}, \eqref{EqNormalisationfw} and the first part of the proof, this gives 
\begin{eqnarray*}
(\id\otimes \hat{\Delta})\overline{F}\hr{\eb(f_{(s,k)})\otimes  \stgE_s}&=& \overline{F}\hr{\eb(f_{(s,k)})\otimes \Eb(\seee_{N-s})\otimes  \stgE_s} \overline{F}\hr{\eb(f_{(s,k)})\otimes  \stgE_s}_{12}\\
&=& \overline{F}\hr{\eb(f_{(s,k)})\otimes ((\Kb_s \otimes \stgE_s) \boxplus (\stgE_s \otimes 1))}, 
\end{eqnarray*}
where in the last step we used \eqref{EqExpoGenProp}.
Again by functional calculus (cf.\ the proof of Lemma \ref{LemCommQDil}), this let's us conclude that 
\[
\hat{\Delta}(\stgE_s) = (\stgE_s\otimes 1)\boxplus(\Kb_s\otimes \stgE_s).
\]

Now note that, with $\Theta$ as in \eqref{EqThetaIso}, we have 
\[
\overline{\Theta}(\overline{\Kb_{N-s}}) = \Lb_s^{-1},\qquad \overline{\Theta}(\overline{\stgE_{N-s}}) = \Lb_s^{-1}\star \stgF_s. 
\]
Then from \eqref{EqSelfDual}, it follows that 
\[
\Delta(\Lb_s) = \Lb_s \otimes \Lb_s,\qquad \Delta(\Lb_s^{-1} \star \stgF_s) = (\Lb_s^{-1}\star \stgF_s \otimes 1)\boxplus (\Lb_s^{-1}\otimes \Lb_s^{-1}\star \stgF_s),
\]
and hence 
\[
\Delta(\stgF_s) = (\Lb_s\otimes \Lb_s)\star (\Delta(\Lb^{-1}\star \stgF_s)) = (\stgF_s \otimes \Lb_s)\boxplus (1\otimes \stgF_s). 
\]
\end{proof}

\subsection{Modular duality}

In this subsection, we briefly explain how modular duality arises in this setting. 

Let $(V,\epsilon)$ be an $n$-dimensional skew-symmetric space. Let $\pi_{\hbar}$ be the standard $\hbar$-representation on $L^2(V)$ as in Example \ref{ExaStandardRep}, which we now explicitly index by $\hbar$ for differentation. We also label the associated infinitesimal generator by $\hbar$, so $\eb_{\hbar}(v)^{it} = \pi_{\hbar}(tv)$ for $t\in \R,v\in V$. 

Let $u_{\hbar}$ be the unitary 
\[
u_{\hbar} \colon L^2(V) \rightarrow L^2(V),\qquad (ug)(w) = |\hbar|^{n/2}g(\hbar w) ,\qquad g\in L^2(V),w\in V. 
\]
Then we have a normal $*$-isomorphism 
\begin{equation}\label{EqIsohbarhinvbar}
\Theta_{\hbar} = \Ad(u_{\hbar})\colon L_{\hbar^{-1}}(V) \cong L_{\hbar}(V),\qquad \eb_{1/\hbar}(v) \mapsto u_{\hbar}\eb_{1/\hbar}(v)u_{\hbar}^* = \eb_{\hbar}(v)^{1/\hbar} = \eb_{\hbar}(\hbar^{-1}v).
\end{equation}

\begin{theorem}
Let $(L_{\hbar}(\Bor_N^-),\Delta_{\hbar})$ be the Fock--Goncharov von Neumann bialgebra at parameter $\hbar$. Then $\Theta_{\hbar}$ as in \eqref{EqIsohbarhinvbar} provides an isomorphism
\[
(L_{1/\hbar}(\Bor_N^-),\Delta_{1/\hbar}) \cong (L_{\hbar}(\Bor_N^-),\Delta_{\hbar}).
\]
\end{theorem}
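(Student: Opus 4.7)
The proof strategy is to show that the standard Fock--Goncharov flips at parameters $\hbar$ and $1/\hbar$ are intertwined by $u_\hbar \otimes u_\hbar$; once this is established, both the isomorphism of the underlying von Neumann algebras and the intertwining of the coproducts will follow essentially by formal manipulation.

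First, I would establish a modular invariance property of the quantum dilogarithm. Performing the change of variables $y = \hbar y'$ in the defining integral \eqref{EqFunctWTheta} yields $W_{1/\hbar}(z) = W_\hbar(\hbar z)$, so that
\[
\varphi_{1/\hbar}(z) = \varphi_\hbar(\hbar z).
\]
Next, I would track how $u_\hbar$ transforms the building blocks of $\widetilde{\mathcal{F}}$. By construction of $u_\hbar$ one has $u_\hbar \mathbf{v}_{1/\hbar} u_\hbar^* = \hbar^{-1}\mathbf{v}_\hbar$ for every $v \in V$, and so by functional calculus combined with the modular invariance,
\[
u_\hbar\, \varphi_{1/\hbar}(\mathbf{v}_{1/\hbar})\, u_\hbar^* \;=\; \varphi_{1/\hbar}(\hbar^{-1}\mathbf{v}_\hbar) \;=\; \varphi_\hbar(\mathbf{v}_\hbar).
\]
For the Gaussians \eqref{EqGausDef}, the factor $(2\pi/\hbar)^{-1}$ in $G_{1/\hbar}(v_1,v_2)$ precisely cancels the two factors of $\hbar^{-1}$ coming from conjugating the two infinitesimal generators, giving
\[
(u_\hbar \otimes u_\hbar)\, G_{1/\hbar}(v_1,v_2)\, (u_\hbar^* \otimes u_\hbar^*) \;=\; G_\hbar(v_1,v_2).
\]

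Since $\widetilde{\mathcal{F}}_\hbar = \mathcal{K}_\hbar \mathcal{F}'_\hbar$ is assembled, via the formulas \eqref{EqGaussFG}--\eqref{EqRealMU} and \eqref{eq:F-1or2}, as a product of such Gaussians and quantum dilogarithms applied to vectors in $\widetilde{\nabla}_N \oplus \widetilde{\nabla}_N$, the two displays above yield the key intertwining
\[
(u_\hbar \otimes u_\hbar)\, \widetilde{\mathcal{F}}_{1/\hbar}\, (u_\hbar^* \otimes u_\hbar^*) \;=\; \widetilde{\mathcal{F}}_\hbar.
\]
From here, the conclusion is immediate. Indeed, $\Theta_\hbar = \mathrm{Ad}(u_\hbar)$ maps $\eb_{1/\hbar}(v) \mapsto \eb_\hbar(\hbar^{-1}v)$, so it restricts to a normal $*$-isomorphism $L_{1/\hbar}(\Bor_N^-) \to L_\hbar(\Bor_N^-)$ (both being generated by the same family of infinitesimal generators, up to the scalar $\hbar^{-1}$). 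By Theorem \ref{TheovNFGFlipOr}, both sides coincide with $M_{\widetilde{\mathcal{F}}_{1/\hbar}}$ and $M_{\widetilde{\mathcal{F}}_\hbar}$, so the intertwining sends slices of $\widetilde{\mathcal{F}}_{1/\hbar}$ to slices of $\widetilde{\mathcal{F}}_\hbar$, giving the isomorphism of the underlying algebras. For any $x \in L_{1/\hbar}(\Bor_N^-)$, the defining formula \eqref{EqComultiplication} for the coproduct then yields
\[
(\Theta_\hbar \otimes \Theta_\hbar)(\Delta_{1/\hbar}(x)) = \mathrm{Ad}(u_\hbar \otimes u_\hbar)\bigl(\widetilde{\mathcal{F}}_{1/\hbar}^*(1 \otimes x)\widetilde{\mathcal{F}}_{1/\hbar}\bigr) = \widetilde{\mathcal{F}}_\hbar^*(1 \otimes \Theta_\hbar(x))\widetilde{\mathcal{F}}_\hbar = \Delta_\hbar(\Theta_\hbar(x)).
\]

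The one step that requires actual verification rather than bookkeeping is the modular invariance $\varphi_{1/\hbar}(z) = \varphi_\hbar(\hbar z)$; all other steps are routine once the exponents of $\hbar$ are carefully tracked. This reflects the general principle that modular duality in the operator-algebraic setting is essentially built into the quantum dilogarithm, and needs only to be transported through the algebraic identities defining the Fock--Goncharov flip.
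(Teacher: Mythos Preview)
Your proposal is correct and follows essentially the same route as the paper: verify the modular invariance $\varphi_{1/\hbar}(z)=\varphi_\hbar(\hbar z)$, check that conjugation by $u_\hbar\otimes u_\hbar$ sends the Gaussian $\mathcal{K}_{1/\hbar}$ to $\mathcal{K}_\hbar$ and each dilogarithm factor $B^{b,r,i}_{1/\hbar}$ to $B^{b,r,i}_\hbar$, conclude $(\Theta_\hbar\otimes\Theta_\hbar)(\mathcal{F}_{1/\hbar})=\mathcal{F}_\hbar$, and read off the coproduct intertwining from \eqref{EqComultiplication}. One small slip: the intermediate identity you write for $W$ is off because of the subscript convention ($\varphi_\hbar$ is built from $W_{1/\hbar}$, not $W_\hbar$); the correct change of variables gives $W_{1/\hbar}(z)=W_\hbar(z/\hbar)$, and it is the subscript flip between $W$ and $\varphi$ that then yields your (correct) conclusion $\varphi_{1/\hbar}(z)=\varphi_\hbar(\hbar z)$, which is exactly the form \eqref{EqModDualityV} used in the paper.
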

\begin{proof}
Note that when writing $\vb_{\hbar} = \ln(\eb_{\hbar}(v))$, the isomorphism $\Theta_{\hbar}$ is simply acting via 
\[
\Theta_{\hbar}(\vb_{1/\hbar}) = \hbar^{-1}\vb_{\hbar}. 
\]

Consider now the decomposition \eqref{EqRealMU} for the Fock--Goncharov flip. Then on the one hand, we see that 
\[
(\Theta_{\hbar}\otimes \Theta_{\hbar})(\Kc_{\hbar^{-1}}) = \Gauss_{1/\hbar}\left(2\hbar^{-2}\sum_{t=1}^n \nevarpi_t \otimes \seee_{N-t}\right) =  \Gauss_{\hbar}\left(2\sum_{t=1}^n \nevarpi_t \otimes \seee_{N-t}\right) = \Kc_{\hbar}. 
\]
On the other hand, from \eqref{EqModDualityV} it follows that
\[
\varphi_{1/\hbar}(z/\hbar) = \varphi_{\hbar}(z),\qquad z\in \R,
\]
so for any of the factors $B^{b,r,i} = B_{\hbar}^{b,r,i}$ of $\Fc$, defined in \eqref{DefBbri}, we get 
\begin{multline*}
(\Theta_{\hbar}\otimes \Theta_{\hbar})(B_{1/{\hbar}}^{b,r,i}) = 
(\Theta_{\hbar}\otimes \Theta_{\hbar})(\varphi_{1/\hbar}(\nee_{b-i+1,b-r}\oplus \seee_{n-b+i,n-b})) \\ = \varphi_{1/\hbar}(\hbar^{-1}(\nee_{b-i+1,b-r}\oplus \seee_{n-b+i,n-b}))  =  \varphi_{\hbar}(\nee_{b-i+1,b-r}\oplus \seee_{n-b+i,n-b}) = B_{\hbar}^{b,r,i},
\end{multline*}
hence $(\Theta_{\hbar}\otimes \Theta_{\hbar})(\Fc_{1/\hbar}) = \Fc_{\hbar}$. 

It follows from the above that $(\Theta_{\hbar}\otimes \Theta_{\hbar})(\Fct_{1/\hbar}) = \Fct_{\hbar}$, and so in particular we get for any $x\in L_{1/\hbar}(\Bor_N^-)$ that 
\[
(\Theta_{\hbar}\otimes \Theta_{\hbar})(\Delta_{1/\hbar}(x)) = (\Theta_{\hbar}\otimes \Theta_{\hbar})(\Fct_{1/\hbar}^*(1\otimes x)\Fct_{1/\hbar}) = \Fct_{\hbar}^*(1\otimes \Theta_{\hbar}(x))\Fct_{\hbar} = \Delta_{\hbar}(\Theta_{\hbar}(x)),
\]
finishing the proof.
\end{proof}

\section{Quantized Borel groups as locally compact quantum groups}

In this section, we show that the Fock--Goncharov von Neumann bialgebra $(L_{\hbar}(\Bor_N^-),\Delta)$ is a locally compact quantum group, in the sense of \cite{KV03}.

\subsection{Invariant weights associated to modular multiplicative unitaries}

For a brief general overview of weight theory for von Neumann algebras, we refer to Appendix \ref{AppWeights}.

Fix $\Hc$ a Hilbert space with associated trace $\Tr$ on $\Bc(\Hc)$ as in \eqref{EqTrace}. For $\Tb$ a positive operator on $\Hc$, consider the deformation $\Tr_{\Tb}$ as in \eqref{EqDeformTrace}, so with $\chi_{[0,r]}$ the characteristic function of $[0,r]$ and 
\[
\Tb_r = \Tb \chi_{[0,r]}(\Tb),
\]
we have 
\[
\Tr_{\Tb}(x) = \sup_{r>0} \sum_i \omega_{\Tb_r^{1/2}e_i,\Tb_r^{1/2}e_i}(x) = \sup_{r>0} \sum_i \|x^{1/2}\Tb_r^{1/2}e_i\|^2,\qquad x\geq 0,
\] 
with $\{e_i\}$ any orthonormal basis of $\Hc$. 

Let $\Wbb$ be a multiplicative unitary on $\Hc\otimes \Hc$, and $(\Wbb,\Qb,\hat{\Qb},\wWbb)$ a modular datum. Rewriting \eqref{EqRightInvOp}, we see that
\begin{equation}\label{EqRightInvOpAlt}
\langle \xi'\otimes \eta,\Wbb(\eta'\otimes \Qb\xi)\rangle = \langle \xi'\otimes \overline{\xi},\overline{\wWbb}^*(\eta'\otimes \overline{\Qb\eta})\rangle,\qquad \forall \xi,\eta\in \msD(\Qb),\xi',\eta'\in \Hc.
\end{equation}

The following is then a variation on \cite[Theorem 1.1]{Wor03}, see also \cite{VVD03}. 

\begin{prop}\label{PropTechn}
Let $\Wbb\in \Bc(\Hc\otimes \Hc)$ be a modular multiplicative unitary with respect to $(\Qb,\hat{\Qb})$, with associated von Neumann algebras $M,\hat{M}$ as in \eqref{EqFormvN} and \eqref{EqFormvNDual}. Let $a \in \hat{M}'$. Then the normal weight 
\[
\varphi = \Tr_{\Qb^2,a}\colon  \Bc(\Hc)_+ \rightarrow [0,\infty],\qquad x \mapsto \Tr_{\Qb^2}(axa^*)
\]
is left invariant with respect to 
\[
\Delta = \Delta_{\Wbb}\colon  \Bc(\Hc)\rightarrow \Bc(\Hc)\bar{\otimes}\Bc(\Hc),\qquad x \mapsto\Delta(x) = \Wbb^*(1\otimes x)\Wbb,\qquad x\in \Bc(\Hsp),
\]
in the sense that 
\begin{equation}\label{EqLeftInvLem}
\varphi((\omega \otimes \id)\Delta(x)) = \varphi(x),\qquad \forall x \in \mfm_{\varphi}^+, \textrm{ all states }\omega \in \Bc(\Hc)_*^+.
\end{equation}
\end{prop}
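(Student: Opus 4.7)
The first step is a direct reduction using the hypothesis $a \in \hat{M}'$. Since $\hat{M}$ is generated by the first-leg slices $(\omega \otimes \id)\Wbb$ of the multiplicative unitary, the commutation $a \in \hat{M}'$ is equivalent to $(1\otimes a)\Wbb = \Wbb(1\otimes a)$, whence
\[
\Delta(axa^*) = \Wbb^*(1\otimes axa^*)\Wbb = (1\otimes a)\Delta(x)(1\otimes a^*),
\]
so that $\varphi((\omega\otimes\id)\Delta(x)) = \Tr_{\Qb^2}((\omega\otimes\id)\Delta(axa^*))$. Setting $y := axa^* \geq 0$, it therefore suffices to establish the plain left-invariance
\[
\Tr_{\Qb^2}\bigl((\omega_{\xi,\xi}\otimes\id)\Delta(y)\bigr) = \|\xi\|^2\,\Tr_{\Qb^2}(y), \qquad y \geq 0,
\]
for pure vector states $\omega = \omega_{\xi,\xi}$; the case of a general normal state follows by decomposing $\omega$ as a countable sum of vector states and invoking normality of $\Tr_{\Qb^2}$. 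The remaining work is therefore to verify this tracial invariance purely from modularity of $\Wbb$.

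\textbf{Paragraph 2.} Fix an orthonormal basis $\{e_i\}$ of $\Hc$ contained in $\msD(\Qb)$ (possible since $\msD(\Qb)$ is dense), and write $P_r = \chi_{[0,r]}(\Qb)$ and $\Qb_r = \Qb P_r$. The definition of the deformed trace then reads
\[
\Tr_{\Qb^2}\bigl((\omega_{\xi,\xi}\otimes\id)\Delta(y)\bigr) = \sup_{r>0}\sum_i \bigl\|y^{1/2}\Wbb(\xi\otimes\Qb_r e_i)\bigr\|^2.
\]
Expanding each $\Wbb(\xi\otimes\Qb_r e_i)$ in the ON basis $\{e_j\otimes e_k\}$ and applying the modularity identity \eqref{EqRightInvOpAlt} --- legitimate since both $P_r e_i$ and $e_k$ lie in $\msD(\Qb)$ --- rewrites each matrix coefficient as
\[
\langle e_j \otimes e_k, \Wbb(\xi \otimes \Qb P_r e_i)\rangle = \langle \overline{\xi} \otimes \Qb e_k, \wWbb(\overline{e_j} \otimes P_r e_i)\rangle.
\]
Squaring this, pairing with $\langle e_k, y e_{k'}\rangle$, and summing over $i,j$ collapses by unitarity of $\wWbb$ together with the resolution of identity $\sum_{i,j} |\overline{e_j}\otimes e_i\rangle\langle\overline{e_j}\otimes e_i| = 1$ on $\overline{\Hc}\otimes\Hc$, producing
\[
\sum_i \bigl\|y^{1/2}\Wbb(\xi\otimes\Qb_r e_i)\bigr\|^2 = \sum_{k,k'}\bigl\langle\overline{\xi}\otimes\Qb e_{k'},\,\wWbb(1\otimes P_r)\wWbb^*(\overline{\xi}\otimes\Qb e_k)\bigr\rangle\,\langle e_k, y e_{k'}\rangle.
\]

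\textbf{Paragraph 3.} The operator $\wWbb(1\otimes P_r)\wWbb^*$ is a monotone net of positive contractions increasing strongly to $1$ as $r\to\infty$. Repackaging the double sum on the right of the above display as a single normal positive sesquilinear functional evaluated at $\wWbb(1\otimes P_r)\wWbb^*$, operator-level monotone convergence yields
\[
\sup_{r>0}\sum_i \bigl\|y^{1/2}\Wbb(\xi\otimes\Qb_r e_i)\bigr\|^2 = \|\xi\|^2\sum_k \bigl\|y^{1/2}\Qb e_k\bigr\|^2,
\]
which a parallel application of the definition of $\Tr_{\Qb^2}$ to the ON basis $\{e_i\}\subseteq\msD(\Qb)$ identifies with $\|\xi\|^2\,\Tr_{\Qb^2}(y)$. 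Combined with the reduction of Paragraph 1, this establishes the left invariance \eqref{EqLeftInvLem}.

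\textbf{Main obstacle.} The delicate step is the passage to the limit in Paragraph 3: the individual summands in the $(k,k')$-sum are not sign-definite as functions of $r$, so one cannot apply monotone convergence term-by-term. The resolution --- essentially the strategy of \cite{Wor03} and \cite{VVD03} --- is to avoid splitting the sum and instead view the whole $(k,k')$-sum as evaluation of a normal positive functional on $\wWbb(1\otimes P_r)\wWbb^*$, at which point monotone convergence applies at the operator level. A companion technicality is that the modularity identity \eqref{EqRightInvOpAlt} can only be invoked on vectors in $\msD(\Qb)$, which forces the initial choice of basis $\{e_i\}\subseteq\msD(\Qb)$ and requires careful bookkeeping around the spectral cutoff $P_r$ throughout the argument.
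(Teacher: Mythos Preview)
Your overall strategy matches the paper's: exploit $a\in\hat{M}'$ to reduce to the plain invariance of $\Tr_{\Qb^2}$ under $\Delta_{\Wbb}$, then use the modularity identity and unitarity of $\wWbb$. However, you are missing one crucial reduction that the paper makes at the outset: after passing to vector states $\omega=\omega_{\xi,\xi}$, the paper \emph{further} reduces to rank-one $x=\theta_{\eta,\eta}$, using that any $x\in\mfm_\varphi^+$ is an increasing supremum of finite-rank positives below it, together with normality of $\varphi$ and $\Delta$. The point is that $\theta_{\eta,\eta}\in\mfm_\varphi^+$ forces $a\eta\in\msD(\Qb)$, which is exactly the domain condition needed to apply \eqref{EqRightInvOpAlt} with the \emph{single} vector $a\eta$ in the second slot. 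The whole computation then collapses to a Parseval identity for the vector $\overline{\wWbb}^*(\xi\otimes\overline{\Qb a\eta})$, with the cutoff appearing only as $(1\otimes\overline{P_r})$ acting on that vector; the limit $r\to\infty$ is then trivially monotone.

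Without the rank-one reduction, your Paragraphs 2--3 run into genuine trouble. In Paragraph~2 you expand in a basis $\{e_k\}\subset\msD(\Qb)$ and apply modularity termwise, producing $\Qb e_k$ on the right; but ``collapsing by unitarity of $\wWbb$'' then requires interchanging the $(i,j)$-sum with the $(k,k')$-sum weighted by $\langle e_k,ye_{k'}\rangle$, and this iterated sum is not absolutely convergent in general (the norms $\|\Qb e_k\|$ are unbounded and $y$ need not preserve $\msD(\Qb)$). In Paragraph~3 your proposed fix---viewing the $(k,k')$-sum as evaluation of a normal positive functional on $\wWbb(1\otimes P_r)\wWbb^*$---is precisely the assertion in doubt: writing out such a functional leads to vectors of the form $\sum_k(y^{1/2})_{kl}\,\Qb e_k$, whose convergence would require $y^{1/2}e_l\in\msD(\Qb)$, which is not available. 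One can probably rescue the argument by invoking the Hilbert--Schmidt closure of $y^{1/2}\Qb$ (finite since $\Tr_{\Qb^2}(y)<\infty$), but this is substantially more delicate than what you have written and is entirely bypassed by the paper's rank-one reduction.
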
 

\begin{proof}
Assume that $x\in \mfm_\varphi^+$. If then $\xi$ is a unit vector in $\Hc$, it is sufficient to show \eqref{EqLeftInvLem} with respect to $\omega = \omega_{\xi,\xi}$. Since $x$ is the supremum of the directed set of sums of positive rank $1$ operators below $x$, it is moreover sufficient to prove \eqref{EqLeftInvLem} for $x = \theta_{\eta,\eta}$ (as in \eqref{EqRank1Op}) with $\|\eta\|=1$ and $axa^* = \theta_{a\eta,a\eta} \in \mfm_{\Tr_{\Qb^2}}^+$. Note that the latter implies $a\eta\in \msD(\Qb)$.

Since $\Wbb \in M \bar{\otimes} \hat{M}$, we now compute that indeed, for $\{e_n\}$ an orthonormal basis of $\Hc$, 
\begin{eqnarray*}
\varphi((\omega_{\xi,\xi}\otimes \id)\Delta_{\Wbb}(x)) &=& \sup_{r>0} \sum_n (\omega_{\xi,\xi}\otimes \omega_{a^*\Qb_re_n,a^*\Qb_re_n})(\Wbb^*(1\otimes \theta_{\eta,\eta})\Wbb)\\
 &=& \sup_{r>0} \sum_{m,n} (\omega_{\xi,\xi}\otimes \omega_{a^*\Qb_re_n,a^*\Qb_re_n})(\Wbb^*(\theta_{e_m,e_m}\otimes \theta_{\eta,\eta})\Wbb)\\
&=& \sup_{r>0} \sum_{m,n} |\langle e_m \otimes \eta,\Wbb(\xi\otimes a^*\Qb_re_n)\rangle|^2\\
&=& \sup_{r>0} \sum_{m,n} |\langle e_m \otimes a\eta,\Wbb(\xi\otimes \Qb_re_n)\rangle|^2\\
&\underset{\eqref{EqRightInvOpAlt}}{=}& \sup_{r>0} \sum_{m,n} |\langle e_m \otimes \overline{\chi_{[0,r]}(\Qb)e_n},\overline{\wWbb}^*(\xi\otimes \overline{\Qb a\eta})\rangle|^2\\
&=& \sup_{r>0} \|(1\otimes \overline{\chi_{[0,r]}(\Qb)})\overline{\wWbb}^*(\xi\otimes \overline{\Qb a\eta})\|^2 \\
&=&  \|\overline{\wWbb}^*(\xi\otimes \overline{ \Qb a\eta})\|^2 \\
&=& \|\xi\otimes \overline{ \Qb a\eta}\|^2\\
&=& \|\Qb a\eta\|^2\\
&=& \varphi(x). 
\end{eqnarray*}
\end{proof}

\subsection{Invariant weights on Rieffel deformations}

To apply Proposition \ref{PropTechn} in the setting of the Fock--Goncharov flip, we need some information on weights on twisted group von Neumann algebras of skew-symmetric spaces.

We start with the following well-known observations. Fix $\hbar\in \R^{\times}$, and fix $V$ a skew-symmetric space with its standard unitary $\hbar$-representation $(L^2(\hat{V}),\pi)$ and associated von Neumann algebra
\[
L_{\hbar}(V) \subseteq \Bc(L^2(\hat{V}))
\]
as in \eqref{EqvNQT}. We consider also the (ordinary) unitary representations of $V$ and $\hat{V}$ on $L^2(\hat{V})$ given by 
\begin{equation}\label{EqExpoGen}
(e_{v}f)(\hat{w}) = e^{2\pi i v\cdot \hat{w}}f(\hat{w}),\qquad f\in L^2(\hat{V}),v\in V,\hat{w}\in \hat{V},
\end{equation}
\begin{equation}\label{EqTransDual}
(\theta_{\hat{v}}f)(\hat{w}) = f(\hat{w}-\hat{v}),\qquad f\in L^2(\hat{V}),\hat{v},\hat{w}\in \hat{V}.
\end{equation}
Note that from \eqref{EqStandardDefRep} we get
\begin{equation}\label{EqStructurepi}
\pi(v) = e_v \theta_{-\Jc v},
\end{equation}
with $\Jc$ as in \eqref{EqJOperator}.

\begin{lemma}
There exists a unique $\sigma$-weakly continuous action $\alpha$ of $\hat{V}$ on $L_{\hbar}(V)$ such that 
\begin{equation}\label{EqAlphaActGroup}
\alpha_{\hat{v}}(\pi(v)) = e^{-2\pi i \hat{v} \cdot v}\pi(v),\qquad \forall v\in V,\hat{v} \in\hat{V}. 
\end{equation}
Moreover, this action is ergodic, and for $f \in \mathcal{S}(\hat{V})$ one has
\begin{equation}\label{EqAlphaActAlg}
\alpha_{\hat{v}}(f_{\hbar}) = (f_{\hat{v}})_{\hbar},\qquad f_{\hat{v}}(\hat{w}) = f(\hat{w}-\hat{v}),\qquad f\in \Sc(\hat{V}),\hat{v},\hat{w}\in \hat{V}.
\end{equation}

\end{lemma}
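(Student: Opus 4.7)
The plan is to define, for $\hat{v}\in\hat V$, the automorphism $\alpha_{\hat v}$ of $\Bc(L^2(\hat V))$ by conjugation with the translation operator $\theta_{\hat v}$ from \eqref{EqTransDual}:
\[
\alpha_{\hat{v}}(x) := \theta_{\hat{v}}\, x\, \theta_{\hat{v}}^{*},\qquad x\in \Bc(L^2(\hat{V})).
\]
First I would verify \eqref{EqAlphaActGroup} by a direct computation. From \eqref{EqExpoGen}--\eqref{EqTransDual} one has the commutation relation $\theta_{\hat v}\, e_v\, \theta_{\hat v}^{*} = e^{-2\pi i \hat v\cdot v}\, e_v$, and translations commute among themselves; combined with \eqref{EqStructurepi} this gives
\[
\theta_{\hat v}\pi(v)\theta_{\hat v}^{*} = \theta_{\hat v}\, e_v\, \theta_{-\Jc v}\, \theta_{\hat v}^{*} = e^{-2\pi i \hat v\cdot v}\, e_v\, \theta_{-\Jc v} = e^{-2\pi i \hat v\cdot v}\pi(v).
\]
Consequently $\alpha_{\hat v}$ preserves the $*$-algebra generated by $\{\pi(v)\mid v\in V\}$, and hence also its $\sigma$-weak closure $L_{\hbar}(V)$. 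Strong continuity of the unitary group $\hat v\mapsto \theta_{\hat v}$ on $L^2(\hat V)$ (continuity of translation in $L^2$) immediately yields $\sigma$-weak continuity of $\alpha$. Uniqueness is automatic, since any $\sigma$-weakly continuous action satisfying \eqref{EqAlphaActGroup} is determined by its values on the $\sigma$-weakly generating set $\{\pi(v)\}_{v\in V}$.

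Next I would verify \eqref{EqAlphaActAlg}. Using \eqref{EqExtRep}, $f_{\hbar} = \int_V \hat f(u)\pi(u)\rd u$; applying $\alpha_{\hat v}$ under the integral (justified by boundedness of conjugation and absolute integrability of $\hat f$) yields $\alpha_{\hat v}(f_{\hbar}) = g_{\hbar}$ with $\hat g(u) = \hat f(u)\, e^{-2\pi i \hat v\cdot u}$. The elementary Fourier shift identity, a direct consequence of \eqref{EqFourDualOrd}--\eqref{EqInvFourDualOrd}, then gives $g(\hat w) = f(\hat w - \hat v) = f_{\hat v}(\hat w)$, which is \eqref{EqAlphaActAlg}.

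The delicate point will be ergodicity. If $x\in L_{\hbar}(V)^{\alpha}$, then $x$ commutes with every $\theta_{\hat v}$, so $x$ lies in the commutant of the translation group on $L^2(\hat V)$, which by Plancherel/Pontryagin duality for $\hat V$ is the maximal abelian algebra of multiplication operators $L^\infty(\hat V)$. To exploit the additional constraint $x\in L_{\hbar}(V)$, I would pass through the decomposition \eqref{EqDecompRad}, $V = \Rad(V)\oplus V'$, with the corresponding factorisation $L_{\hbar}(V)\cong L^\infty(\widehat{\Rad(V)})\vNTens L_{\hbar}(V')$ coming from \eqref{EqIsoSkewBilvN}. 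Under this identification the action $\alpha$ of $\hat V = \widehat{\Rad(V)}\oplus \hat V'$ splits as the translation action of $\widehat{\Rad(V)}$ on the first tensor factor and an action of $\hat V'$ on the second. The translation action on $L^\infty(\widehat{\Rad(V)})$ is ergodic by classical measure theory. On the symplectic factor, $\Jc\colon V'\to\hat V'$ is a bijection, and for $\hat v = 2\Jc v_0$ a short calculation using \eqref{EqProdProj} identifies $\alpha_{\hat v}$ on $L_{\hbar}(V')$ with $\Ad(\pi(v_0))$; since $L_{\hbar}(V')$ is a factor by Lemma \ref{LemTypeIRieff}, the fixed points of this inner action coincide with its centre $\C 1$. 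Tensoring the two trivial fixed-point algebras then gives $L_{\hbar}(V)^\alpha = \C 1$.

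The main obstacle I anticipate is this final step, and specifically the bookkeeping required to check that the tensor-product identification induced by \eqref{EqIsoSkewBilvN} genuinely intertwines $\alpha$ with the product of the translation action and the inner action described above—this involves tracking how the splitting $\hat V = \widehat{\Rad(V)}\oplus\hat V'$ interacts with the unitary implementing $L^2(\hat V)\cong L^2(\widehat{\Rad(V)})\otimes L^2(\hat V')$. Everything else reduces to two classical facts (ergodicity of abelian translation actions and triviality of fixed points of inner actions on factors) together with the short verifications sketched above.
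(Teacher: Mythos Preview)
Your construction of $\alpha$ via $\theta_{\hat v}$-conjugation, the verification of \eqref{EqAlphaActGroup} from \eqref{EqStructurepi}, and the Fourier-shift argument for \eqref{EqAlphaActAlg} are exactly what the paper does. The divergence is in the ergodicity step.

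The paper's argument is considerably shorter than your decomposition route. Rather than splitting $V=\Rad(V)\oplus V'$ and tracking the tensor factorisation, it introduces the ``opposite'' unitaries
\[
\pi'(v) = e_v\theta_{\Jc v},\qquad v\in V,
\]
and checks directly from \eqref{EqStructurepi} that $[\pi(w),\pi'(v)]=0$ for all $v,w\in V$, so that $\pi'(v)\in L_{\hbar}(V)'$. Now if $x\in L_{\hbar}(V)$ is $\alpha$-fixed, then $x$ commutes with every $\theta_{\hat v}$ (that's $\alpha$-fixedness) and with every $\pi'(v)=e_v\theta_{\Jc v}$ (that's $x\in L_{\hbar}(V)=L_{\hbar}(V)''$); combining the two, $x$ commutes with every $e_v$. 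Since $\{e_v,\theta_{\hat w}\}$ generate $\Bc(L^2(\hat V))$, one concludes $x\in\C1$ immediately.

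Your approach is correct: the identification $\alpha_{-2\Jc v_0}=\Ad(\pi(v_0))$ on $L_{\hbar}(V')$ does hold, and the product-action argument (slicing to reduce to $1\otimes M_2$, then using $M_2^{\alpha_2}=\C$) is standard. What you gain is a decomposition that makes the structure of $\alpha$ transparent; what the paper's argument buys is that the bookkeeping you flag as the main obstacle simply disappears---no splitting of $\hat V$, no compatibility check for the unitary $L^2(\hat V)\cong L^2(\widehat{\Rad(V)})\otimes L^2(\hat V')$, and it works uniformly without distinguishing the symplectic and radical parts.
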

\begin{proof}
With the $\theta_{\hat{v}}$ as in \eqref{EqTransDual}, one sees that 
\[
x \mapsto \alpha_{\hat{v}}(x) =  \theta_{\hat{v}}x\theta_{\hat{v}}^*
\]
satisfies \eqref{EqAlphaActGroup}, and hence indeed implements a $\hat{V}$-action on $L_{\hbar}(V)$. Using e.g.\ \eqref{EqExtRep}  and  \eqref{EqStandardDefRep}, one finds \eqref{EqAlphaActAlg}.

To see that the action is ergodic, consider also the unitaries
\[
\pi'(v) = e_v\theta_{\Jc v},\qquad v\in V.
\]
From \eqref{EqStructurepi} one sees that
\[
[\pi(v),\pi'(w)] = 0,\qquad v,w\in V,
\]
and hence the $\pi'(w)$ also commute with all $x\in L_{\hbar}(V)$. So, if $x\in L_{\hbar}(V)$ and $\alpha_{\hat{v}}(x) = x$ for all $\hat{v}\in \hat{V}$, then $x$ commutes with all $\theta_{\hat{v}}$ and all $e_v\theta_{\Jc v}$, hence with all $\theta_{\hat{v}}$ and $e_v$. But the latter generate $\Bc(L^2(\hat{V}))$ as a von Neumann algebra, so $x$ must be a scalar.
\end{proof}

Recall now the construction and notation from \eqref{EqIntegratedWeight}. 

\begin{lemma}
The weight
\[
\int_{\hat{V}}^{\hbar} := \varphi_{\alpha}\colon  L_{\hbar}(V)_+ \rightarrow [0,\infty] 
\]
associated to $\alpha$ is a tracial nsf weight. 
\end{lemma}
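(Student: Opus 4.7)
The plan is to reduce to two elementary cases via the radical decomposition \eqref{EqDecompRad}, $V = \Rad(V) \oplus V'$ with $V'$ symplectic, and then to verify traciality by direct computation on a Schwartz-dense $*$-subalgebra. By \eqref{EqTensProdCanRad} and the compatible factorization of the $\hat V$-action $\alpha = \alpha_0 \otimes \alpha'$, where $\alpha_0$ is translation on $L^\infty(\widehat{\Rad(V)})$ and $\alpha'$ is the ergodic action on the symplectic piece $L_\hbar(V')$, the integrated-weight construction of Appendix~\ref{AppWeights} factors as a tensor product. This reduces the claim to the two extreme cases. The trivial case is immediate: $L_\hbar(V) = L^\infty(\hat V)$, $\alpha$ is translation, and $\varphi_\alpha$ is Haar integration on $\hat V$, which is a tracial nsf weight.

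For the symplectic case, I would first identify $\varphi_\alpha$ explicitly on the $\sigma$-weakly dense $*$-subalgebra $\mathcal{A} := \{f_\hbar \mid f \in \Sc(\hat V)\}$. Using the translation formula \eqref{EqAlphaActAlg} together with ergodicity of $\alpha$ and the integrated-weight machinery of Appendix~\ref{AppWeights}, the expected formula is
\begin{equation*}
\varphi_\alpha(f_\hbar) = \int_{\hat V} f(\hat u)\,\rd\hat u, \qquad f \in \Sc(\hat V),
\end{equation*}
the heuristic being that averaging $\alpha_{\hat v}(f_\hbar) = (f_{\hat v})_\hbar$ over $\hat v$ already yields the scalar $(\int f)\cdot 1$. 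Normality, faithfulness and semifiniteness then follow from this formula on $\mathcal{A}$ by standard density arguments; faithfulness in particular uses $\varphi_\alpha(f_\hbar^* f_\hbar) = \int_{\hat V} |f|^2$.

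Traciality on $\mathcal{A}$ amounts to the identity $\int (f \times_\hbar g)\,\rd\hat v = \int (g \times_\hbar f)\,\rd\hat v$ for $f,g \in \Sc(\hat V)$. Inserting \eqref{EqTwistProd} and performing the successive changes of variable $\hat a = \hat v - \Jc u$ and $\hat b = \hat a + \Jc u - \hat w$ collapses the triple integral to
\begin{equation*}
\int_{\hat V}\!\int_{\hat V} f(\hat a)\, g(\hat b)\left(\int_V e^{2\pi i (\hat a - \hat b)\cdot u}\, e^{2\pi i \langle \Jc u,\,u\rangle}\,\rd u\right) \rd \hat b\, \rd\hat a,
\end{equation*}
in which the quadratic phase $\langle \Jc u, u\rangle = -\tfrac{\hbar}{2}\,\epsilon(u,u)$ vanishes identically by skew-symmetry of $\epsilon$. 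The remaining $u$-integral is therefore $\delta(\hat a - \hat b)$, yielding $\int_{\hat V} f(\hat a) g(\hat a)\,\rd\hat a$, which is manifestly symmetric in $f$ and $g$. Normality extends traciality from $\mathcal{A}$ to all of $L_\hbar(V')^+$, and combining the two cases via the tensor-product factorization of the opening paragraph completes the proof.

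The principal technical obstacle is pinning down the formula $\varphi_\alpha(f_\hbar) = \int f$ rigorously from the construction of Appendix~\ref{AppWeights} and justifying the Fubini/change-of-variables interchanges used in the collapse above. Once these analytic points are in place, the algebraic content of traciality reduces to the single observation $\epsilon(u,u)=0$.
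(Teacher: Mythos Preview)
Your argument takes a genuinely different route from the paper's. The paper does not decompose via the radical at all: it works directly with an arbitrary skew-symmetric $V$, establishes the formula $\int_{\hat V}^\hbar h_\hbar = \int_{\hat V} h$ on positive elements $h = f^* \times_\hbar f$ via a short Fourier-type computation (this is the rigorous version of what you call the ``heuristic''), and then proves traciality by an entirely different mechanism. Rather than computing $\int(f\times_\hbar g) = \int(g\times_\hbar f)$, the paper observes directly from \eqref{EqAlphaActGroup} that $\varphi_\alpha(\pi(v)x\pi(v)^*) = \varphi_\alpha(x)$ for all $v\in V$ and all $x\in L_\hbar(V)_+$; this invariance forces the modular automorphism group of $\varphi_\alpha$ to fix each $\pi(v)$, and since the $\pi(v)$ generate $L_\hbar(V)$, the modular group is trivial and $\varphi_\alpha$ is a trace. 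Your radical decomposition is harmless but unnecessary, since your integral computation already goes through for any skew-symmetric $V$ (the vanishing $\epsilon(u,u)=0$ needs nothing beyond skew-symmetry).

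There is, however, a genuine gap at the very end of your argument: the claim that ``normality extends traciality from $\mathcal A$ to all of $L_\hbar(V')_+$'' is not correct as stated. Traciality is the condition $\varphi(x^*x)=\varphi(xx^*)$ for arbitrary $x\in M$, and this does not propagate from a $\sigma$-weakly dense $*$-subalgebra by normality alone, since $\varphi$ is unbounded and neither $x\mapsto x^*x$ nor $x\mapsto xx^*$ behaves well under $\sigma$-weak limits. The clean fix is to translate your identity into the GNS picture: your formula $\varphi_\alpha(f_\hbar^* g_\hbar) = \int_{\hat V}\overline f\, g$ identifies $L^2(L_\hbar(V),\varphi_\alpha)$ with $L^2(\hat V)$ via $\Lambda(f_\hbar)=f$ (this is exactly Lemma~\ref{LemIdentGNS} in the paper), and then the closure of $\Lambda(f_\hbar)\mapsto\Lambda(f_\hbar^*)$ is complex conjugation $f\mapsto\overline f$, which is already an anti-unitary involution, forcing $\nabla_{\varphi_\alpha}=1$. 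Alternatively, you can simply adopt the paper's one-line $\mathrm{Ad}(\pi(v))$-invariance argument, which bypasses the density issue entirely.
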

\begin{proof}
Faithfulness of $\varphi_{\alpha}$ follows from the general theory and construction of $\varphi_{\alpha}$. One can also prove semi-finiteness with general machinery, but let us give a direct approach: first note that if $f,g\in\Sc(\hat{V}) \subseteq L^2(\hat{V})$, then 
\[
k_{f,g}\colon  v \mapsto \langle f,\pi(v)g\rangle = \int_{\hat{V}} e^{2\pi i v\cdot \hat{w}}g(\hat{w}+\Jc v)\overline{f(\hat{w})}\rd \hat{w} = \int_V e^{-2\pi i v\cdot \Jc w}\hat{g}(w-v)\overline{\hat{f}(w)} \rd w
\]
lies in $\Sc(V)$. Take then $f,g \in \Sc(\hat{V})$ with $\|g\|_2 = 1$, and put 
\[
h = f^*\times_{\hbar} f,\qquad k = k_{g,g}.
\]
Then with $\omega = \omega_{g,g}$, and noting that $k\hat{h}\in \Sc(V)$, we find 
\[
\int_{\hat{V}}^{\hbar} h_{\hbar} =  \int_{\hat{V}} \omega(\alpha_{\hat{v}}(h_{\hbar})) \rd \hat{v} =  \int_{\hat{V}} \int_V k(v)\hat{h}(v) e^{-2\pi i v\cdot \hat{v}} \rd v  \rd \hat{v} = k(0)\hat{h}(0) = \hat{h}(0) = \int_{\hat{V}} h(\hat{v})\rd\hat{v}.
\]
In particular, $h_{\hbar} \in \mfm_{\varphi_{\alpha}}^+$, and since such $h_{\hbar}$ generate $L_{\hbar}(V)$ we find that $\int_{\hat{V}}^{\hbar}$ is semi-finite.

Finally, note that the definition of $\int_{\hat{V}}^{\hbar}$ and \eqref{EqAlphaActGroup} immediately give that 
\[
\int_{\hat{V}}^{\hbar}\pi(v)x\pi(v)^*  = \int_{\hat{V}}^{\hbar} x,\qquad x\in L_{\hbar}(V)_+,v\in V. 
\]
This implies that the modular automorphism group of $\int_{\hat{V}}^{\hbar}$ vanishes on the $\pi(v)$. But as the latter generate $L_{\hbar}(V)$, it follows that the modular automorphism group of $\int_{\hat{V}}^{\hbar}$ is trivial, i.e.\ $\int_{\hat{V}}^{\hbar}$ is a trace.
\end{proof}
In particular, we record from the above proof that $f,g \in \Sc(\hat{V})$ satisfy $f_{\hbar}g_{\hbar}\in \mfm_{\varphi_{\alpha}}$ with 
\begin{equation}\label{EqIntegralDef}
\int_{\hat{V}}^{\hbar} f_{\hbar}g_{\hbar} = \int_{\hat{V}}f(\hat{v})g(\hat{v})\rd \hat{v}. 
\end{equation}

\begin{lemma}\label{LemIdentGNS}
The GNS-space of $\int_{\hat{V}}^{\hbar}$ can be uniquely identified with $L^2(\hat{V})$ in such a way that the associated GNS-map satisfies
\begin{equation}\label{EqGNSIdent}
\Lambda(f_{\hbar}) = f\in L^2(\hat{V}),\qquad f\in \Sc(\hat{V}). 
\end{equation}
\end{lemma}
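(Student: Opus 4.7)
My plan is to produce an isometry $U: L^2(\hat V) \to \Hc_\varphi$ via the Schwartz quantization map and then show it is surjective by exploiting that the image is a bimodule under $\pi_\varphi(L_\hbar(V))$ together with faithfulness of $\varphi$.

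First, for $f, g \in \Sc(\hat V)$, the relation $(f_\hbar)^* = (f^*)_\hbar$ with $f^*(\hat v) = \overline{f(\hat v)}$ combined with \eqref{EqIntegralDef} immediately yields
\[
\varphi(f_\hbar^* g_\hbar) \;=\; \int_{\hat V} \overline{f(\hat v)} g(\hat v) \rd \hat v \;=\; \langle f, g \rangle_{L^2(\hat V)} .
\]
Hence $f_\hbar \in \mfn_\varphi$, the map $U_0 : \Sc(\hat V) \to \Hc_\varphi$, $f \mapsto \Lambda(f_\hbar)$, is an $L^2$-isometry, and extends uniquely to an isometry $U : L^2(\hat V) \to \Hc_\varphi$. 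Uniqueness of the identification is immediate from density of $\Sc(\hat V)$ in $L^2(\hat V)$.

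Next, I would show that $K := U(L^2(\hat V))$ is invariant under the full bimodule action of $L_\hbar(V)$. For $f, g \in \Sc(\hat V)$ the product $g_\hbar f_\hbar = (g \times_\hbar f)_\hbar$ lies again in $\Sc(\hat V)_\hbar$, so
\[
\pi_\varphi(g_\hbar) U f \;=\; \Lambda(g_\hbar f_\hbar) \;=\; U(g \times_\hbar f) \;\in\; K.
\]
By Kaplansky's density theorem applied to the $\sigma$-weakly dense $*$-subalgebra $\Sc(\hat V)_\hbar \subseteq L_\hbar(V)$, combined with $\sigma$-strong$^*$ continuity of $\pi_\varphi$ on bounded sets, this extends to $\pi_\varphi(L_\hbar(V)) K \subseteq K$. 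Traciality of $\varphi$ means the modular conjugation $J$ acts on $\Lambda(\mfn_\varphi \cap \mfn_\varphi^*)$ by $\Lambda(x) \mapsto \Lambda(x^*)$, and since $\Sc(\hat V)_\hbar$ is $*$-closed, $K$ is $J$-invariant, hence also invariant under the commutant $\pi_\varphi(L_\hbar(V))' = J\pi_\varphi(L_\hbar(V))J$.

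The projection $P$ onto $K$ therefore lies in the center of $\pi_\varphi(L_\hbar(V))$, and via the faithful normal isomorphism $\pi_\varphi$ corresponds to a central projection $p \in L_\hbar(V)$; set $q = 1 - p$. For any $f \in \Sc(\hat V)$, since $\mfn_\varphi$ is a left ideal,
\[
\Lambda(q f_\hbar) \;=\; \pi_\varphi(q) \Lambda(f_\hbar) \;=\; (1-P) U f \;=\; 0 .
\]
Faithfulness of $\varphi$ forces $q f_\hbar = 0$ for every $f \in \Sc(\hat V)$, so $q$ annihilates the $\sigma$-weakly dense subset $\Sc(\hat V)_\hbar$ under left multiplication. $\sigma$-weak continuity of left multiplication then gives $q \cdot L_\hbar(V) = 0$, whence $q = 0$, $P = 1$, and $U$ is surjective. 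The step that requires the most care will be verifying the extension of $K$-invariance from $\Sc(\hat V)_\hbar$ to all of $L_\hbar(V)$ via Kaplansky together with $\sigma$-strong$^*$ continuity of $\pi_\varphi$, and the use of the tracial modular conjugation for the commutant side; both are routine for tracial nsf weights but merit explicit comment.
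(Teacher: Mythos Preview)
Your proof is correct and takes a different route from the paper. The paper observes that $(\Sc(\hat V),\times_\hbar,*)$ with the $L^2$-norm is a (unimodular) Hilbert algebra and then invokes the correspondence between Hilbert algebras and traces (\cite[Proposition VIII.3.16]{Tak03}) to identify the associated weight with $\int_{\hat V}^{\hbar}$, from which the GNS identification follows in one stroke. Your argument is more elementary and self-contained: you build the isometry $U$ directly from \eqref{EqIntegralDef}, and then establish surjectivity by showing that the range $K$ is invariant under both $\pi_\varphi(L_\hbar(V))$ and its commutant (the latter via $J$-invariance, using traciality), so that the projection onto $K$ is central, and finally use faithfulness of $\varphi$ to force that projection to be $1$. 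The paper's route is shorter but relies on the Hilbert algebra machinery as a black box; yours unpacks the essential mechanism (bimodule invariance plus faithfulness) and would work with only minor changes for any tracial nsf weight whose $\mfn_\varphi$ contains a $*$-closed, $\sigma$-weakly dense subalgebra. Your flagged step about extending invariance from $\Sc(\hat V)_\hbar$ to $L_\hbar(V)$ is in fact automatic: a closed subspace invariant under a set $S$ has its projection in $S'$, and $(\pi_\varphi(\Sc(\hat V)_\hbar))' = \pi_\varphi(L_\hbar(V))'$ by normality of $\pi_\varphi$ and $\sigma$-weak density, so no Kaplansky approximation is actually needed there.
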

\begin{proof}
We use below the notion of a (unimodular) Hilbert algebra with respect to a trace, see \cite[Chapter VI]{Tak03}.

The $*$-algebra $(\mathcal{S}(\hat{V}),\times_{\hbar},*)$ forms a Hilbert algebra with respect to the $2$-norm
\[
\|f\|_2^2 = \int_{\hat{V}}^{\hbar} f_{\hbar}^*f_{\hbar} = \int_{\hat{V}} |f(\hat{v})|^2\rd \hat{v}, 
\]
where the last equality is \eqref{EqIntegralDef}. From e.g.\ \cite[Proposition VIII.3.16]{Tak03}, one sees that the associated weight on $L_{\hbar}(V)$ must equal $\int_{\hat{V}}^{\hbar}$, which is enough to conclude the lemma. 
\end{proof}

We suspect the following lemma is known, but we do not know a direct reference. 
\begin{lemma}\label{LemCharCorrSub}
Assume $Z \subseteq V$ is a subspace, and put 
\[
\hat{W} = \Ker(Z) = \{\hat{v}\in \hat{V}\mid \hat{v}_{\mid Z} = 0\}\subseteq \hat{V}.
\]
Then for $x \in L_{\hbar}(V)$, one has 
\begin{equation}\label{EqRightCoact}
L_{\hbar}(Z) = \{x\in L_{\hbar}(V) \mid \alpha_{\hat{w}}(x) = x\textrm{ for all }\hat{w}\in \hat{W}\}.
\end{equation}
\end{lemma}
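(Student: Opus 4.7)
The plan is to prove the two inclusions separately. The inclusion $L_{\hbar}(Z) \subseteq L_{\hbar}(V)^{\hat{W}}$ is immediate from \eqref{EqAlphaActGroup}: for $z\in Z$ and $\hat w \in \hat W = \Ker(Z)$, one has $\hat{w}\cdot z = 0$, so $\alpha_{\hat w}(\pi(z)) = \pi(z)$. Since $\alpha_{\hat w}$ is a $\sigma$-weakly continuous $*$-automorphism of $L_{\hbar}(V)$, every element of the von Neumann algebra $L_{\hbar}(Z) = \pi(Z)''$ is fixed.

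For the reverse inclusion, I would exploit the spatial implementation $\alpha_{\hat v} = \Ad(\theta_{\hat v})$. Fix any linear complement $Z' \subseteq V$ to $Z$, giving a direct sum decomposition $\hat V = \hat Z \oplus \hat W$ with $\hat Z = \Ker(Z')$, and hence a unitary identification $L^2(\hat V) \cong L^2(\hat Z)\otimes L^2(\hat W)$. Under this identification, the unitaries $\{\theta_{\hat w}\mid \hat w\in \hat W\}$ implementing $\alpha_{\hat W}$ act as $1\otimes \theta_{\hat w}$, and their generated von Neumann algebra on the second factor is the left regular representation algebra $L(\hat W)$. Since $\hat W$ is locally compact abelian, $L(\hat W)$ is a maximal abelian subalgebra of $\Bc(L^2(\hat W))$ (this is just Pontryagin/Fourier duality). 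Hence its commutant in $\Bc(L^2(\hat V))$ equals $\Bc(L^2(\hat Z))\bar{\otimes} L(\hat W)$, and we obtain
\[
L_{\hbar}(V)^{\hat W} = L_{\hbar}(V) \cap \big(\Bc(L^2(\hat Z))\bar{\otimes} L(\hat W)\big).
\]

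The main obstacle is to show that this intersection is not larger than $L_{\hbar}(Z)$. My plan is to construct a faithful normal operator-valued weight $T\colon L_{\hbar}(V)^{\hat W} \to L_{\hbar}(Z)$ and show that it restricts to the identity on $L_{\hbar}(Z)$; since both algebras then carry the same trace (the canonical one from Section 4.2 restricted from $L_{\hbar}(V)$, which is $\alpha_{\hat W}$-invariant and generates $L_{\hbar}(Z)$ already by density of $\Sc(\hat Z)$ in the GNS space), the equality $L_{\hbar}(Z) = L_{\hbar}(V)^{\hat W}$ follows. Concretely, $T$ is the distributional integration along $\hat W$-orbits: for $f\in \Sc(\hat V)$ of the product form $f = g\otimes h$ with $g\in \Sc(\hat Z)$, $h\in \Sc(\hat W)$, the element $f_\hbar$ acts on $L^2(\hat Z)\otimes L^2(\hat W)$ with a controlled second-factor dependence via $\theta_{-\Jc z'}$-type terms, and $T(f_\hbar) = (\int_{\hat W} h\,d\hat w)\,g_\hbar \in L_{\hbar}(Z)$. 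One then uses that $T$ is the unique $\hat Z$-equivariant such map, extending it to all of $L_{\hbar}(V)^{\hat W}$ by normality.

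The delicate part is verifying the semifiniteness and faithfulness of $T$ on $L_{\hbar}(V)^{\hat W}$, since $\hat W$ is non-compact and genuine averaging does not converge; I would handle this by approximating via cutoff projections in $L(\hat W)$ and invoking the ergodicity of the residual $\alpha$-action of $\hat V/\hat W \cong \hat Z$ on $L_{\hbar}(V)^{\hat W}$ (the total $\hat V$-action on $L_{\hbar}(V)$ being ergodic with fixed-point algebra $\C$). This ergodicity forces any element of $L_{\hbar}(V)^{\hat W}$ orthogonal to $L_{\hbar}(Z)$ in the GNS-picture to vanish, closing the argument.
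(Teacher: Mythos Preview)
The easy inclusion and the spatial identification $N := L_\hbar(V)^{\hat W} = L_\hbar(V) \cap \bigl(\Bc(L^2(\hat Z)) \bar{\otimes} L(\hat W)\bigr)$ are correct. The gap is in your construction of $T$. Your formula $T(f_\hbar) = (\int_{\hat W} h)\, g_\hbar$ describes the operator-valued weight $T_{\alpha_{\hat W}}\colon L_\hbar(V)_+ \to N_+^{\ext}$ (integration over $\hat W$-orbits), not a map $N \to L_\hbar(Z)$. No nonzero $f_\hbar$ with $f \in \Sc(\hat V)$ lies in $N$ (by \eqref{EqAlphaActAlg} such $f$ would be $\hat W$-translation invariant, contradicting Schwartz decay), so there is nothing to ``restrict'', and on $N_+\setminus\{0\}$ the weight $T_{\alpha_{\hat W}}$ is identically $+\infty$. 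Your cutoffs destroy the property $T|_{L_\hbar(Z)} = \id$ that you need. Even if one is granted a conditional expectation $E\colon N \to L_\hbar(Z)$, its mere existence does not force $N = L_\hbar(Z)$; your final sentence (``ergodicity forces orthogonal elements to vanish'') is exactly the statement to be proved, not a consequence of what you have set up.

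The paper supplies the missing idea. It first uses the residual ergodic integrable $\hat Z$-action $\beta$ on $N$ to obtain an nsf weight $\varphi_\beta$ with trivial modular group on $L_\hbar(Z)$, whence Takesaki's theorem yields a conditional expectation $E\colon N \to L_\hbar(Z)$. The substantive step is then a Galois-map argument: the isometry $\Gc\colon L^2(N)\otimes L^2(N)\to L^2(\hat Z,L^2(N))$, $\Lambda(x)\otimes\Lambda(y)\mapsto (\hat z\mapsto \Lambda(\beta_{-\hat z}(y)x))$, when restricted to $L^2(N)\otimes L^2(\hat Z)$ and Fourier-transformed in the second variable, becomes the manifestly unitary map $F\mapsto (z\mapsto \pi(z)F(z))$ on $L^2(Z,L^2(N))$. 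Since an isometry whose restriction to a closed subspace already surjects onto the full range must vanish on the orthogonal complement, one gets $L^2(N)=L^2(\hat Z)$ and hence $N=L_\hbar(Z)$. This Galois map is the concrete mechanism replacing your unjustified ergodicity claim.
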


The proof of this lemma will unfortunately be a bit technical, but it is simply a quantized version of the following well-known fact: we can identify
\[
L^{\infty}(\hat{V}/\hat{W}) = \{f\in L^{\infty}(\hat{V})\mid \forall \hat{w}\in \hat{W}, \textrm{for almost all }\hat{v}\in \hat{V}: f(\hat{v}+\hat{w}) = f(\hat{v})\}. 
\]
Indeed, if we now identify $\hat{Z}= \hat{V}/\hat{W}$ and consider the Pontryagin dual picture, we get 
\[
L(Z) = L^{\infty}(\hat{Z}),\quad L(V) = L^{\infty}(\hat{V}),
\]
which is the classical counterpart to \eqref{EqRightCoact}.

\begin{proof}
Write the right hand side of \eqref{EqRightCoact} as $N$. Then clearly $N \subseteq L_{\hbar}(V)$ is a von Neumann subalgebra with $L_{\hbar}(Z) \subseteq N$. It is also clear that the $\hat{V}$-action $\alpha$ restricts to $N$, where it factorizes over $\hat{Z} = \hat{V}/\hat{W}$. Let us specifically write this action of $\hat{Z}$ as $\beta$, which then restricts to the usual $\alpha$-action of $\hat{Z}$ on $L_{\hbar}(Z)$. 

Obviously the $\hat{Z}$-action $\beta$ on $N$ is still ergodic. It is also still integrable, as the associated weight 
\[
\varphi_{\beta}(x) = \int_{\hat{Z}} \beta_{\hat{z}}(x) \rd \hat{z},\qquad x\in \Nilp_+
\]
restricts to the tracial semi-finite weight $\int_{\hat{Z}}^{\hbar}$ on $L_{\hbar}(Z)$, so that all elements of the form
\[
xyz,\qquad y\in N,x,z\in \mfm_{\int_{\hat{Z}}^{\hbar}} =(\mfn_{\int_{\hat{Z}}^{\hbar}})^2  = (\mfn_{\int_{\hat{Z}}^{\hbar}})^*\mfn_{\int_{\hat{Z}}^{\hbar}}
\]
are integrable. 

The same argument as in the proof of traciality in Lemma \ref{LemCharCorrSub} moreover gives that the modular automorphism $\sigma_t^{\varphi_{\beta}}$ is trivial on $L_{\hbar}(Z)$. By Takesaki's Theorem \cite[Theorem X.4.2]{Tak03} it follows that there exists a conditional expectation $ E\colon  N \rightarrow L_{\hbar}(Z) $ such that 
\[
\varphi_{\beta} = \int_{\hat{Z}}^{\hbar} \circ E.
\]
This conditional expectation is uniquely determined via 
\[
E(x)p = pxp,\qquad x\in N,
\]
for $p$ the projection of $L^2(N)$ onto $L^2(\hat{Z})\subseteq L^2(N)$, with this isometric inclusion given via
\[
L^2(\hat{Z}) \subseteq L^2(N),\quad f\mapsto \Lambda_{\varphi_{\beta}}(f_{\hbar}),\qquad f\in \Sc(\hat{Z}).
\]

To show that $L_{\hbar}(Z) = N$, it suffices now to show that $p = 1$. But note that by ergodicity and integrability, one can uniquely define an isometric map 
\begin{multline}\label{EqGaloisMap}
\Gc\colon  L^2(N) \otimes L^2(N) \rightarrow L^2(\hat{Z},L^2(N)),\quad \Lambda_{\varphi_{\beta}}(x) \otimes \Lambda_{\varphi_{\beta}}(y) \mapsto \left(\hat{z} \mapsto \Lambda_{\varphi_{\beta}}(\beta_{-\hat{z}}(y)x)\right),\\ \qquad x,y \in \mfn_{\varphi_{\beta}}.
\end{multline}
However, using the above identification \eqref{EqGNSIdent} for the GNS-map of $(L_{\hbar}(Z),\int_{\hat{Z}}^{\hbar})$, one sees by using 
\[
\beta_{\hat{z}}(f_{\hbar}) = \alpha_{\hat{z}}(f_{\hbar}) = (f_{\hat{z}})_{\hbar},\qquad f\in \Sc(\hat{Z}),\hat{z}\in \hat{Z},
\] 
that the restriction of $\Gc$ to $L^2(N) \otimes L^2(\hat{Z})$ satisfies
\[
\Gc(\Lambda_{\varphi_{\beta}}(x) \otimes \Lambda(f_\hbar))=  \left(\hat{z}\mapsto \Lambda_{\varphi_{\beta}}(((f_{-\hat{z}})_{\hbar})x)\right),\qquad x\in \mfn_{\varphi_{\beta}},f\in \Sc(\hat{Z}).
\]
So, conjugating with the Fourier transform $ \msF $ and identifying $L^2(N) \otimes L^2(Z) \cong L^2(Z,L^2(N))$, we see that $\hat{\Gc} = (1\otimes \msF)\Gc(1\otimes \msF^*)$ gives the unitary transformation 
\[
\hat{\Gc}\colon  L^2(Z,L^2(N)) \rightarrow L^2(Z,L^2(N)),\qquad F \mapsto \left(z \mapsto \pi(z)F(z)\right).
\]
Since  $\Gc$ is isometric on $L^2(N) \otimes L^2(N)$ and $\Gc_{\mid L^2(N)\otimes L^2(\hat{Z})}$ is unitary onto the range of $\Gc$,  this implies that indeed $L^2(N) = L^2(\hat{Z})$, finishing the proof.
\end{proof}

\begin{lemma}
Let $Z \subseteq V$ be a vector subspace, and put $\hat{W} = \Ker(Z) \subseteq \hat{V}$. There exists a unique nsf operator-valued weight 
\begin{equation}\label{EqOVW}
\msE^{\hbar}_{\hat{W}}\colon  L_{\hbar}(V)_+\rightarrow L_{\hbar}(Z)_+^{\ext}
\end{equation}
such that, for any normal state $\omega$ on $L_{\hbar}(V)$, it holds that   
\[
\omega_{\mid L_{\hbar}(Z)}\left(\msE^{\hbar}_{\hat{W}} (x)\right) = \int_{\hat{W}} \omega(\alpha_{\hat{w}}(x))\rd \hat{w},\qquad x\in L_{\hbar}(V)_+. 
\]
Moreover, with the Lebesgue measure on $\hat{Z}$ suitably normalized, it holds that 
\[
\int^{\hbar}_{\hat{V}} = \int^{\hbar}_{\hat{Z}}\circ\, \msE^{\hbar}_{\hat{W}}.
\]
\end{lemma}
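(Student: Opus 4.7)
The plan is to obtain $\msE^\hbar_{\hat W}$ as the standard operator-valued weight arising from integration against the restricted action $\alpha|_{\hat W}$, whose fixed-point algebra is exactly $L_\hbar(Z)$ by Lemma \ref{LemCharCorrSub}. Concretely, for each $x \in L_\hbar(V)_+$ I would define a functional on $L_\hbar(Z)_*^+$ by
\[
\msE^\hbar_{\hat W}(x)(\omega) := \int_{\hat W} \tilde\omega(\alpha_{\hat w}(x))\, d\hat w, \qquad \omega \in L_\hbar(Z)_*^+,
\]
where $\tilde\omega$ is any normal state extension of $\omega$ to $L_\hbar(V)$. The map $\omega \mapsto \msE^\hbar_{\hat W}(x)(\omega)$ is additive, positively homogeneous, and lower semicontinuous (by Fatou's lemma applied to the $\sigma$-weakly continuous integrand), hence by the standard characterization of the extended positive part defines a unique element of $L_\hbar(Z)_+^{\ext}$.

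Next I would verify the defining properties of a nsf operator-valued weight. Normality of $\msE^\hbar_{\hat W}$ follows from monotone convergence, while $L_\hbar(Z)$-bimodularity is immediate from the fact that $\alpha_{\hat w}$ acts trivially on $L_\hbar(Z)$ for $\hat w \in \hat W$. Faithfulness follows from the $\sigma$-weak continuity of $\hat w \mapsto \alpha_{\hat w}(x)$: if $\msE^\hbar_{\hat W}(x) = 0$ then $\omega(\alpha_{\hat w}(x))$ vanishes for almost every $\hat w$ and every normal $\omega$, forcing $x=0$. For semifiniteness, I would exhibit a $\sigma$-weakly dense cone of bounded-image elements: picking a real linear splitting $\hat V = \hat W \oplus \hat U$ and identifying $\hat U$ with $\hat Z$ via restriction, a direct calculation using \eqref{EqAlphaActAlg} shows that for $f \in \Sc(\hat V)$ of product form $f(\hat w,\hat u) = h(\hat w)g(\hat u)$ one has $\msE^\hbar_{\hat W}(f_\hbar) = \bigl(\int_{\hat W} h(\hat w)\, d\hat w\bigr)\, g_\hbar$, with $g_\hbar \in L_\hbar(Z)$ through the inclusion of Lemma \ref{LemIncGivesFaith}. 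Linear combinations of such products span a $\sigma$-weakly dense subspace of $L_\hbar(V)$, giving semifiniteness.

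Finally, the compatibility $\int^\hbar_{\hat V} = \int^\hbar_{\hat Z} \circ\, \msE^\hbar_{\hat W}$ is a Fubini computation once the Haar measures are chosen consistently along the splitting $\hat V = \hat W \oplus \hat Z$ (with $\hat V/\hat W \cong \hat Z$ induced by restriction): for $x \in L_\hbar(V)_+$ and a suitable test state $\omega$,
\[
\int_{\hat V} \omega(\alpha_{\hat v}(x))\, d\hat v \;=\; \int_{\hat Z}\!\int_{\hat W} \omega(\alpha_{\hat z+\hat w}(x))\, d\hat w\, d\hat z,
\]
where the inner integral equals $\omega|_{L_\hbar(Z)}(\alpha_{\hat z}(\msE^\hbar_{\hat W}(x)))$ because $\msE^\hbar_{\hat W}(x)$ is $\alpha|_{\hat W}$-invariant and $\alpha_{\hat z}$ preserves $L_\hbar(Z)$, so the outer integral is exactly $\int^\hbar_{\hat Z}(\msE^\hbar_{\hat W}(x))$. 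The main obstacle will be handling the extended positive part carefully in the semifiniteness step and fixing the Haar normalization on $\hat Z \cong \hat V/\hat W$ so as to make the Fubini identity exact; the rest is a formal application of Haagerup's theory of operator-valued weights for actions of locally compact abelian groups.
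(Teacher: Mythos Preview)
Your approach matches the paper's: both construct $\msE^\hbar_{\hat W}$ as the operator-valued weight $T_{\alpha|_{\hat W}}$ obtained by integrating over the restricted $\hat W$-action, invoke Lemma~\ref{LemCharCorrSub} to identify the fixed-point algebra as $L_\hbar(Z)$, and derive the compatibility $\int_{\hat V}^\hbar = \int_{\hat Z}^\hbar \circ \msE^\hbar_{\hat W}$ by a Fubini computation along a linear section $s\colon \hat Z \to \hat V$ after normalizing the measure on $\hat Z \cong \hat V/\hat W$. The paper leaves the nsf verification implicit in its general $T_\alpha$ construction, whereas you spell out faithfulness and semifiniteness; your arguments are essentially correct, though the independence of the extension $\tilde\omega$ is cleanest seen by first defining the functional on all of $L_\hbar(V)_*^+$, observing it is $\hat W$-invariant, and hence descends to $L_\hbar(Z)_+^{\ext}$---which is exactly the paper's formulation.
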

\begin{proof}
Considering $L_{\hbar}(V)$ with the restricted $\hat{W}$-action $\alpha_{\mid \hat{W}}$, we simply define 
\[
\msE_{\hat{W}}^{\hbar} = T_{\alpha_{\hat{W}}}
\]
as in \eqref{EqOpValWeight}, which by Lemma \ref{LemCharCorrSub} is indeed an operator valued weight onto
\[
L_{\hbar}(V)^{\alpha_{\mid \hat{W}}} = L_{\hbar}(Z).
\]

Normalize now the Lebesgue measure on $\hat{Z}$ such that for positive $f\in C_c(\hat{V})$ we have
\[
\int_{\hat{Z}} E_{\hat{W}}(f)(\hat{z})\rd\hat{z} = \int_{\hat{V}}f(\hat{v})\rd\hat{v},\qquad \textrm{where } E_{\hat{W}}(f)(\hat{v}+\hat{W}) = \int_{\hat{W}}f(\hat{v}+\hat{w})\rd\hat{w}.
\]
Choose any linear splitting $s\colon  \hat{Z}\rightarrow \hat{V}$ of the quotient map $\hat{V}\twoheadrightarrow \hat{Z}$. Noting that $\alpha_{s(\hat{z})}$ restricts to $\alpha_z$ on $L_{\hbar}(Z) \subseteq L_{\hbar}(V)$, we then find for $x\in L_{\hbar}(V)_+$ and $\omega \in L_{\hbar}(V)_*^+$ a state: 
\begin{eqnarray*}
\int_{\hat{Z}}^{\hbar}\left(\msE_{\hat{W}}^{\hbar} (x)\right) &=&  \int_{\hat{Z}} (\omega \circ \alpha_{\hat{z}})\left(\msE_{\hat{W}}^{\hbar} (x)\right) \rd \hat{z} \\
&=& \int_{\hat{Z}} \int_{\hat{W}} (\omega\circ \alpha_{s(\hat{z})}\circ \alpha_{\hat{w}})(x) \rd \hat{w} \rd \hat{z}\\
&=& \int_{\hat{V}} (\omega\circ \alpha_{\hat{v}})(x) \rd \hat{v}\\
&=& \int_{\hat{V}}^{\hbar} x.
\end{eqnarray*}
\end{proof}

\begin{remark}\label{RemWeight}
If $W \subseteq V$ is any vector space complementary to $Z$, so $V = Z\oplus W$, we easily see that 
\[
\msE_{\hat{W}}^{\hbar} (x) = \left(\int_{\hat{W}}^{\hbar}x\right)1,\qquad x\in L_{\hbar}(W)_+ \subseteq L_{\hbar}(V)_+,
\]
where we consider $\hat{W}$ as the dual space of $W$ upon restriction.
\end{remark} 

In the following, we will want to further perturb the weight $\int_{\hat{V}}^{\hbar}$. Namely, if $v\in V$, we can form the nsf weight
\[
\int_{\hat{V}}^{\hbar,\eb(v)} = \varphi_{\alpha,\eb(v)}
\]
as defined through \eqref{EqDeformTrace}. This is meaningful as indeed $\int_{\hat{V}}^{\hbar}$ is an nsf tracial weight and $\eb(v)$ is affiliated to $L_{\hbar}(V)$. 
 
Recall now from Lemma \ref{LemTypeIRieff} that $L_{\hbar}(V)$ is a type $I$-factor if $V$ is a symplectic. Let $\Tr$ be the trace on $L_{\hbar}(V)$, normalized so that minimal projections have trace $1$. By uniqueness of the trace, we must have (up to suitable normalisation) 
\[
\Tr = \int_{\hat{V}}^{\hbar}.
\]

The following is the key result that we need.
\begin{lemma}\label{LemInvWeightQT}
Assume $V$ is symplectic. Let $Z \subseteq V$, put $\hat{W} = \Ker(Z) \subseteq \hat{V}$, and choose a subspace $W \subseteq V$ such that
\[
V = Z\oplus W
\]
as vector spaces. Assume $z\in Z,w\in W$. Then there exists $a \in L_{\hbar}(W) \subseteq L_{\hbar}(V)$ such that  
\begin{equation}\label{EqFact}
\int_{\hat{Z}}^{\hbar,\eb(z)} =  \left(\Tr_{\Eb(z+w)}(a-a^*)\right)_{\mid L_{\hbar}(Z)_+}.
\end{equation}
\end{lemma}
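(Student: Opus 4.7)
Plan. The strategy is to reformulate the claim in terms of the operator-valued weight $\msE_{\hat W}^\hbar\colon L_\hbar(V)_+\to L_\hbar(Z)_+^{\ext}$ and the decomposition $\int_{\hat V}^\hbar = \int_{\hat Z}^\hbar\circ\msE_{\hat W}^\hbar$, and then construct $a$ explicitly. Combining traciality of $\int_{\hat V}^\hbar$ with the $L_\hbar(Z)$-module property of $\msE_{\hat W}^\hbar$, one obtains, for $a\in L_\hbar(W)$ in a suitable domain and $x\in L_\hbar(Z)_+$,
\[
\Tr_{\Eb(z+w)}(a\,x\,a^{*}) \;=\; \int_{\hat V}^\hbar\!\bigl(x^{1/2} a^{*}\Eb(z+w)\,a\, x^{1/2}\bigr)\;=\;\int_{\hat Z}^\hbar\!\bigl(x\cdot \msE_{\hat W}^\hbar(a^{*}\Eb(z+w)\,a)\bigr),
\]
so the lemma reduces to finding $a\in L_\hbar(W)$ with $\msE_{\hat W}^\hbar(a^{*}\Eb(z+w)\,a)=\Eb(z)$.

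Step 1 (Weyl shift). Applying the product rule $\Eb(v)\Eb(v')=e^{\pi i\hbar (v,v')}\Eb(v+v')$ from \eqref{EqProdY} twice, and observing that the two phase contributions $(-w/2,\,z+w)=\tfrac{1}{2}(z,w)$ and $(z+w/2,\,-w/2)=-\tfrac{1}{2}(z,w)$ cancel (using $(w,w)=0$), I get the operator identity
\[
\Eb(-w/2)\,\Eb(z+w)\,\Eb(-w/2)\;=\;\Eb(z).
\]
So writing $a=\Eb(-w/2)\,a_0$ for an auxiliary $a_0\in L_\hbar(W)$ turns the problem into finding $a_0$ with $\msE_{\hat W}^\hbar(a_0^{*}\Eb(z)\,a_0)=\Eb(z)$.

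Step 2 (Construction of $a_0$). Since $\hat W=\Ker Z$, the positive operator $\Eb(z)$ is $\hat W$-invariant under $\alpha$, so unwinding the definition of $\msE_{\hat W}^\hbar$ gives
\[
\msE_{\hat W}^\hbar(a_0^{*}\Eb(z)\,a_0)\;=\;\int_{\hat W}\alpha_{\hat w}(a_0)^{*}\,\Eb(z)\,\alpha_{\hat w}(a_0)\,\rd\hat w.
\]
When $(z,W)=0$, the operator $\Eb(z)$ commutes with $L_\hbar(W)$ and Remark~\ref{RemWeight} applies: any positive $a_0\in L_\hbar(W)$ with $\int_{\hat W}^\hbar a_0^{*}a_0=1$ does the job, and semifiniteness of the tracial weight $\int_{\hat W}^\hbar$ provides such an element. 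In the general case, choose a splitting $W=W_0\oplus \R w_1$ with $(z,W_0)=0$ and $(z,w_1)\neq 0$, and look for $a_0=b_0\,g(\Eb(w_1))$ where $b_0\in L_\hbar(W_0)$ is normalized by $\int_{\hat W_0}^\hbar b_0^{*}b_0=1$ and $g$ is a positive bounded $L^2$-function. The relevant one-dimensional integral lives inside the Heisenberg pair $(\Eb(z),\Eb(w_1))$, which $\hbar(z,w_1)$-commute; a coherent-state computation via functional calculus collapses it to a scalar multiple of $\Eb(z)$, and $g$ is adjusted so that the scalar equals $1$.

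The main obstacle is the non-commutation of $a_0$ with $\Eb(z)$ in the general case and the attendant domain issues: one must ensure that $a^{*}\Eb(z+w)a$ actually lies in the domain of $\msE_{\hat W}^\hbar$ and that all cyclic rearrangements in Step~1 are justified. The cleanest route is to perform the one-dimensional Gaussian analysis inside an irreducible Heisenberg representation of $\mathrm{span}_\R\{z,w_1\}_{\redu}$, where everything becomes a genuine $L^2(\R)$-calculation, and then transport the resulting identity back to $L_\hbar(V)$ using uniqueness of the trace and of $\msE_{\hat W}^\hbar$.
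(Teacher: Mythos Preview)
Your high-level reduction is exactly the paper's: one seeks $a\in L_\hbar(W)$ with $\msE_{\hat W}^\hbar(a^*\,\Eb(z+w)\,a)=\Eb(z)$, and then the factorization $\int_{\hat V}^\hbar=\int_{\hat Z}^\hbar\circ\msE_{\hat W}^\hbar$ together with traciality of $\int_{\hat V}^\hbar$ yields \eqref{EqFact}. The gap is in your construction of $a$.

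Your Weyl shift $a=\Eb(-w/2)\,a_0$ is only a formal substitution: $\Eb(-w/2)$ is unbounded, and nothing in Step~2 ensures that the product $\Eb(-w/2)a_0$ lands in $L_\hbar(W)$. In the easy case $(z,W)=0$ you choose $a_0$ normalized by $\int_{\hat W}^\hbar a_0^*a_0=1$, but a generic such $a_0$ will not absorb the unboundedness of $\Eb(-w/2)$. In the general case your ``coherent-state computation'' is not an argument; in particular, the factorization $a_0=b_0\,g(\Eb(w_1))$ does not obviously disentangle the $\hat W$-integral, since $b_0$ and $g(\Eb(w_1))$ need not commute. You flag the domain issues yourself but do not resolve them.

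The paper's construction replaces all of this with a single device from modular theory. One considers the automorphism group $\gamma_t(x)=\Eb(z)^{it}x\Eb(z)^{-it}$ of $L_\hbar(W)$, builds a $\gamma$-\emph{analytic} element $b\in L_\hbar(W)$ by Gaussian averaging $b=\int_\R e^{-t^2/2}\gamma_t(\tilde c)\,dt$ of a spectrally truncated $\tilde c$ (so that $\Eb(w)^{1/2}b$ is also bounded and $\gamma$-analytic), normalizes $b$ so that $\int_{\hat W}^{\hbar,\Eb(w)}bb^*=1$ --- note the $\Eb(w)$-twist, which is how the $w$ enters --- and then sets $a:=\gamma_{i/2}(b)\in L_\hbar(W)$. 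Analyticity makes $a$ automatically bounded, and the intertwining relation $b\,\Eb(z)^{1/2}=\Eb(z)^{1/2}a$ renders the chain
\[
\|\Eb(z)^{1/2}\xi\|^2=\int_{\hat W}\|\Eb(w)^{1/2}b\,\theta_{\hat w}^*\Eb(z)^{1/2}\xi\|^2\,d\hat w
=\int_{\hat W}\|\Eb(z+w)^{1/2}a\,\theta_{\hat w}^*\xi\|^2\,d\hat w
\]
rigorous, giving $\msE_{\hat W}^\hbar(a^*\,\Eb(z+w)\,a)=\Eb(z)$ uniformly, with no case split on $(z,W)$.
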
 
\begin{proof}
In the following, we again identify $\hat{W}$ with the dual of $W$ by the restriction map.

Consider the $\sigma$-weakly continuous one-parameter group of automorphisms 
\[
\gamma_t\colon  L_{\hbar}(W) \rightarrow L_{\hbar}(W),\qquad x \mapsto \Eb(z)^{it}x \Eb(z)^{-it}.
\]
This is clearly well-defined, and scales the weight $\int_{\hat{W}}^{\hbar,\Eb(w)}$:
\[
\int_{\hat{W}}^{\hbar,\Eb(w)} \circ \gamma_t = e^{2\pi\hbar t(w,z)}\int_{\hat{W}}^{\hbar,\Eb(w)}.
\]
Taking a non-zero positive $c \in \mfn_{\int_{\hat{W}}^{\hbar}}$ and putting 
\[
b = \int_{\R} e^{-t^2/2} \gamma_t (\widetilde{c})\rd t,\qquad \widetilde{c} = \chi_{[0,1]}(\Eb(w))c \chi_{[0,1]}(\Eb(w)),
\]
we see that we can find a non-zero $b \in L_{\hbar}(W)$ which is square $\int_{\hat{W}}^{\hbar,\Eb(w)}$-integrable, with $\Eb(w)^{1/2}b$ bounded, and with $b$ and  $\Eb(w)^{1/2}b$ analytic for $\gamma_t$. Normalize $b$ such that 
\begin{equation}\label{EqNormb}
\int_{\hat{W}}^{\hbar,\Eb(w)}bb^* =1,
\end{equation}
and put then 
\[
a = \gamma_{i/2}(b) \in L_{\hbar}(W).
\]
Then also $\Eb(w)^{1/2}a$ is bounded, and $\Eb(w)^{1/2}a$ is analytic for $\gamma_t$.

Now for $y \in L_{\hbar}(V)$ and $\Tb \in L_{\hbar}(V)_+^{\ext}$, let us write $y^*\cdot \Tb \cdot y \in L_{\hbar}(V)_+^{\ext}$ for the element 
\[
\omega(y^*\cdot \Tb\cdot y) = \omega(y^*-y)(\Tb),\qquad \omega \in M_*^+.
\]
We then claim that 
\begin{equation}\label{EqEqualPosOp}
\msE_{\hat{W}}^{\hbar}(a^*\cdot \eb(z+w)\cdot a) = \eb(z). 
\end{equation}
Indeed, take $\xi \in \msD(\eb(z)^{1/2}) \subseteq L^2(\hat{V})$. Then we compute, using the notation \eqref{EqTransDual}, that
\begin{eqnarray}
\omega_{\xi,\xi}(\eb(z)) 
&=& \|\eb(z)^{1/2}\xi\|^2 \\
&\underset{\eqref{EqNormb}}{=}&
\left(\int_{\hat{W}}^{\hbar,\Eb(w)} bb^*\right) \|\eb(z)^{1/2}\xi\|^2\\
&=& \left(\int_{\hat{W}}^{\hbar} (\Eb(w)^{1/2}b)^*(\Eb(w)^{1/2}b)\right)  \|\eb(z)^{1/2}\xi\|^2\\
&\underset{\textrm{Remark } \ref{RemWeight}}{=}& 
\omega_{\Eb(z)^{1/2}\xi,\Eb(z)^{1/2}\xi}\left(\msE_{\hat{W}}^{\hbar} \left((\Eb(w)^{1/2}b)^*(\Eb(w)^{1/2}b)\right) \right)\\
&=& \int_{\hat{W}} \|\Eb(w)^{1/2}b\theta_{\hat{w}}^* \eb(z)^{1/2}\xi\|^2\rd \hat{w}\\
&=& \int_{\hat{W}} \|\Eb(w)^{1/2}b \eb(z)^{1/2}\theta_{\hat{w}}^*\xi\|^2\rd \hat{w}\\
 &=& \int_{\hat{W}} \|\Eb(w)^{1/2} \Eb(z)^{1/2}a \theta_{\hat{w}}^*\xi\|^2 \rd\hat{w} \\
 &=& \int_{\hat{W}} \| \Eb(z+w)^{1/2}a \theta_{\hat{w}}^*\xi\|^2 \rd\hat{w} \\
&=& 
\omega_{\xi,\xi}\left(\int_{\hat{W}}^{\hbar} a^*\cdot \eb(z+w)\cdot a\right).
\end{eqnarray}
Writing the left hand side of \eqref{EqEqualPosOp} as $\Tb$, this shows by \eqref{EqEqualFormPos} that $\Eb(z)^{1/2} \subseteq \Tb^{1/2}$, which by self-adjointness of both sides is sufficient to conclude that they are equal. Hence equality holds in \eqref{EqEqualPosOp}.

Choose now $x\in L_{\hbar}(Z)$. Then we compute that 
\begin{eqnarray*}
\Tr_{\Eb(z+w)}(axx^*a^*) &=& \int_{\hat{V}}^{\hbar} x^*a^* \cdot  \eb(z+w)\cdot ax\\
&=& \int_{\hat{Z}}^{\hbar} \msE_{\hat{W}}^{\hbar} \left(x^*\cdot (a^* \cdot \eb(z+w)\cdot a)\cdot x\right)\\
&=& \int_{\hat{Z}}^{\hbar}x^*\cdot \left( \msE_{\hat{W}}^{\hbar} \left( a^*\cdot \eb(z+w)\cdot a\right)\right)\cdot x\\
&\underset{\eqref{EqEqualPosOp}}{=}& \int_{\hat{Z}}^{\hbar}x^*\cdot \eb(z)\cdot x\\
&=& \int_{\hat{Z}}^{\hbar,\Eb(z)}xx^*.
\end{eqnarray*} 
\end{proof}

\begin{cor}\label{CorInvWeightQT}
Assume $V$ is symplectic, and assume
\[
V = Z\oplus W
\]
as vector spaces. Let $v\in V$. Then there exists $a \in L_{\hbar}(W) \subseteq L_{\hbar}(V)$ such that the restriction of 
\[
\Tr_{\Eb(v),a} = \Tr_{\Eb(v)}(a-a^*)
\]
to $L_{\hbar}(Z)$ is an nsf weight.
\end{cor}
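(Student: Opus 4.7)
The plan is to reduce this directly to Lemma \ref{LemInvWeightQT}. Since $V = Z \oplus W$ as vector spaces, I can uniquely decompose $v = z + w$ with $z \in Z$ and $w \in W$. Then I would apply Lemma \ref{LemInvWeightQT} to precisely this data $(z,w)$ to produce an element $a \in L_{\hbar}(W) \subseteq L_{\hbar}(V)$ such that
\[
\int_{\hat{Z}}^{\hbar,\eb(z)} = \left(\Tr_{\Eb(v),a}\right)_{\mid L_{\hbar}(Z)_+},
\]
reading the right-hand side as $\Tr_{\Eb(z+w)}(a-a^*)$.

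The remaining task is to observe that the left-hand side is an nsf weight on $L_{\hbar}(Z)$. Faithfulness and normality are immediate from the construction in \eqref{EqDeformTrace}: the weight $\int_{\hat{Z}}^{\hbar}$ is a tracial nsf weight on $L_{\hbar}(Z)$ (established just before Lemma \ref{LemIdentGNS}), and $\Eb(z)$ is strictly positive and affiliated with $L_{\hbar}(Z)$, so its truncations $\Eb(z)_r = \Eb(z)\chi_{[0,r]}(\Eb(z))$ increase to $\Eb(z)$ with dense range of the spectral projections; the normal functionals $\omega \mapsto \int_{\hat{Z}}^{\hbar}(\Eb(z)_r^{1/2} x \Eb(z)_r^{1/2})$ supremum to a faithful normal weight.

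For semi-finiteness, I would use that by conjugation with the one-parameter group generated by any positive element in $\mfn_{\int_{\hat{Z}}^{\hbar}}$ and standard smoothing (as in the proof of Lemma \ref{LemInvWeightQT}), one produces a $\sigma$-weakly dense $*$-subalgebra of square-integrable elements for $\int_{\hat{Z}}^{\hbar,\eb(z)}$. This is routine given that $\Eb(z)$ is strictly positive.

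No serious obstacle is expected here; the entire content of the Corollary is carried by Lemma \ref{LemInvWeightQT}, and the only thing to spell out is the trivial decomposition of $v$ and the fact that the deformation of a tracial nsf weight by a strictly positive affiliated operator remains an nsf weight.
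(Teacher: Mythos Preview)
Your proposal is correct and follows exactly the approach the paper intends: decompose $v=z+w$ along $V=Z\oplus W$, apply Lemma~\ref{LemInvWeightQT}, and observe that $\int_{\hat{Z}}^{\hbar,\Eb(z)}$ is nsf. The paper does not spell out a proof of the corollary, treating it as immediate from the lemma; your extra commentary on the nsf property of $\int_{\hat{Z}}^{\hbar,\Eb(z)}$ is in fact already covered by the general construction in \eqref{EqDeformTrace} and the references [PT73,Vae01] cited before \eqref{EqIdentGNSvNTwist}, so you could shorten that part to a one-line citation.
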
 

\begin{remark}\label{RemGNSRest}
If we denote 
\[
\varphi= (\Tr_{\Eb(v),a})_{\mid L_{\hbar}(Z)},
\]
then clearly the restriction of the GNS-map $\Lambda$ of $\Tr_{\Eb(v),a}$ to $\mfn_{\varphi}$ is a GNS-map for $\varphi$, giving an embedding 
\[
L^2(L_{\hbar}(Z),\varphi)\subseteq L^2(L_{\hbar}(V),\Tr_{\Eb(v),a}),\qquad \Lambda(x)\mapsto \Lambda(x),\qquad x\in \mfn_{\varphi},
\]
which intertwines the standard $L_{\hbar}(Z)$-bimodule structures (cf.\ \cite[Theorem IX.4.2]{Tak02}).
\end{remark}

\subsection{Unitary corepresentations}

In this section, we revisit the notion of modularity in the following setting. 

\begin{defn}
Let $\Wbb$ be a multiplicative unitary, represented on a Hilbert space $\Hc$. A \emph{unitary corepresentation} for $\Wbb$ on a Hilbert space $\Gc$ is a unitary $\Ubb$ on $\Hc \otimes \Gc$ such that 
\begin{equation}\label{EqPentagonCorep}
\Ubb_{23}\Wbb_{12} = \Wbb_{12}\Ubb_{13}\Ubb_{23}.
\end{equation}
\end{defn}

Note that if $\Wbb$ is modular, with associated  von Neumann bialgebra $(M,\Delta)$, then slicing \eqref{EqPentagonCorep} on the second and third leg reveals that 
\[
\Ubb \in M \bar{\otimes} \Bc(\Gc), 
\]
and \eqref{EqPentagonCorep} can then be rewritten as 
\[
(\Delta\otimes \id)\Ubb = \Ubb_{13}\Ubb_{23}.
\]

\begin{prop}\label{PropContraIsUnitCorep}
Let $\Ubb \in \Bc(\Hc\otimes \Gc)$ be a unitary corepresentation for a multiplicative unitary $\Wbb$. Assume that there exists a unitary operator $\Ubb^c \in \Bc(\Hc\otimes \overline{\Gc})$ and a strictly positive operator $\hat{\Qb}$ on $\Gc$ such that \begin{equation}\label{EqObsWeakContra}
(\Ubb_{21}^*,\hat{\Qb},\Ubb_{21}^c)
\end{equation}
is a weakly modular datum. Then also $\Ubb^c$ is a unitary corepresentation. 
\end{prop}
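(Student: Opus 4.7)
The plan is to verify the pentagon for $\Ubb^c$ by translating everything to matrix coefficients and reducing to the pentagon for $\Ubb$, with the weak modular datum serving as the bridge. For $\alpha,\beta\in\Gc$, write $A(\alpha,\beta) := (\id_{\Hc}\otimes\omega_{\alpha,\beta})(\Ubb) \in \Bc(\Hc)$, and similarly $B(\alpha',\beta') := (\id_{\Hc}\otimes\omega_{\alpha',\beta'})(\Ubb^c)\in\Bc(\Hc)$ for $\alpha',\beta'\in\overline{\Gc}$. Slicing the pentagon $\Ubb_{23}\Wbb_{12}=\Wbb_{12}\Ubb_{13}\Ubb_{23}$ on the $\Gc$-leg shows that this pentagon is equivalent to
\[
\Delta(A(\alpha,\beta)) = \sum_k A(\alpha,e_k)\otimes A(e_k,\beta), \qquad \Delta(x) := \Wbb^*(1\otimes x)\Wbb,
\]
for any orthonormal basis $\{e_k\}$ of $\Gc$; the target pentagon for $\Ubb^c$ is the analogous statement for $B$ with an ONB of $\overline{\Gc}$.

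First I would unpack the weak modular datum. Using $\Ubb^*_{21}=\Sigma\Ubb^*\Sigma$ and $\Ubb^c_{21}=\Sigma\Ubb^c\Sigma$, condition \eqref{EqLeftInvOpGen} rewrites as
\[
\langle\eta'\otimes\hat{\Qb}\xi,\,\Ubb^*(\xi'\otimes\eta)\rangle = \langle\eta'\otimes\overline{\hat{\Qb}\eta},\,\Ubb^c(\xi'\otimes\overline{\xi})\rangle
\]
for $\xi,\eta\in\msD(\hat{\Qb})$ and $\xi',\eta'\in\Hc$. Slicing off $\xi',\eta'$ gives the operator identity $B(\overline{\hat{\Qb}\eta},\overline{\xi}) = A(\eta,\hat{\Qb}\xi)^*$, equivalently $B(\overline{\xi_3},\overline{\eta_3}) = A(\hat{\Qb}^{-1}\xi_3,\hat{\Qb}\eta_3)^*$ for $\xi_3\in \msD(\hat{\Qb}^{-1})$ and $\eta_3\in\msD(\hat{\Qb})$. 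Since $\hat{\Qb}$ is strictly positive with dense domain, both $\msD(\hat{\Qb})$ and $\msD(\hat{\Qb}^{-1})=\mathrm{Ran}(\hat{\Qb})$ are dense in $\Gc$. Choose an orthonormal basis $\{e_k\}$ of $\Gc$ contained in $\msD(\hat{\Qb})\cap\msD(\hat{\Qb}^{-1})$, possible by the spectral theorem applied to $\hat{\Qb}$; then $\{\overline{e_k}\}$ is an ONB of $\overline{\Gc}$. Substituting into the target pentagon for $\Ubb^c$, it becomes the claim
\[
\Delta(A(\alpha,\beta)^*) = \sum_k A(\alpha,\hat{\Qb}e_k)^* \otimes A(\hat{\Qb}^{-1}e_k,\beta)^*
\]
for $\alpha=\hat{\Qb}^{-1}\xi_3$, $\beta=\hat{\Qb}\eta_3$, hence for $\alpha,\beta$ ranging over dense subsets of $\Gc$.

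To prove this identity, I would take matrix coefficients against test vectors $\xi_1\otimes\xi_2,\eta_1\otimes\eta_2\in\Hc\otimes\Hc$. The $k$-th summand becomes $\langle\xi_1\otimes\hat{\Qb}e_k,\Ubb^*(\eta_1\otimes\alpha)\rangle\langle\xi_2\otimes\beta,\Ubb^*(\eta_2\otimes\hat{\Qb}^{-1}e_k)\rangle$. Setting $v_1 := (\omega_{\xi_1,\eta_1}\otimes\id)(\Ubb^*)\alpha\in\Gc$ and $T := (\omega_{\xi_2,\eta_2}\otimes\id)(\Ubb^*)\in\Bc(\Gc)$, these factors are respectively $\langle\hat{\Qb}e_k,v_1\rangle$ and $\langle\beta,T\hat{\Qb}^{-1}e_k\rangle = \langle\hat{\Qb}^{-1}T^*\beta,e_k\rangle$. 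Self-adjointness of $\hat{\Qb}$ combined with Parseval's identity on $\{e_k\}$ collapses the sum to $\langle\hat{\Qb}^{-1}T^*\beta,\hat{\Qb}v_1\rangle = \langle\beta,Tv_1\rangle$, which equals $\sum_k\langle\phi_k,v_1\rangle\langle\beta,T\phi_k\rangle$ for any ONB $\{\phi_k\}$ of $\Gc$. Slicing the adjoint pentagon identity $\Ubb^*_{23}\Ubb^*_{13}=\Wbb_{12}^*\Ubb^*_{23}\Wbb_{12}$ (immediate from the pentagon for $\Ubb$) on the third leg and inserting an intermediate ONB identifies this common value with the matrix coefficient of $\Delta(A(\alpha,\beta)^*)$, giving the desired equality on dense vectors and hence on all of $\Hc\otimes\Hc\otimes\overline{\Gc}$ by continuity.

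The main obstacle is rigorously justifying the Parseval step, which formally uses $v_1\in\msD(\hat{\Qb})$ and $T^*\beta\in\msD(\hat{\Qb}^{-1})$. The cleanest way to handle this is to first restrict $\alpha,\beta$ to a joint core of analytic vectors for $\hat{\Qb}$, where all manipulations are unambiguous, and then to extend by density using the fact that both sides of the reduced identity depend continuously on $\alpha,\beta$ as bounded sesquilinear forms (each summand being a matrix coefficient of a fixed bounded operator). Alternatively, one may replace $\hat{\Qb}$ by the bounded spectral cut-off $\hat{\Qb}\chi_{[1/r,r]}(\hat{\Qb})$, carry out the calculation unambiguously for finite $r$, and pass to the limit $r\to\infty$ in the strong operator topology.
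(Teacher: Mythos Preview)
Your overall strategy coincides with the paper's: both reduce the pentagon for $\Ubb^c$ to the pentagon for $\Ubb$ via the weak modular identity, and the technical crux in both is the intermediate Parseval sum where the unbounded $\hat{\Qb}^{\pm 1}$ have to be moved across an orthonormal basis.

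However, your two proposed repairs for the domain obstacle both have gaps as stated. Fix (a) does not work: the problematic vectors are $v_1=(\omega_{\xi_1,\eta_1}\otimes\id)(\Ubb^*)\alpha$ and $T^*\beta$, not $\alpha$ and $\beta$. No analyticity hypothesis on $\alpha$ forces $v_1\in\msD(\hat{\Qb})$, since the slice $(\omega_{\xi_1,\eta_1}\otimes\id)(\Ubb^*)$ is just an arbitrary contraction with no a priori relation to $\hat{\Qb}$; likewise for $T^*\beta\in\msD(\hat{\Qb}^{-1})$. Fix (b) is close but misphrased: you cannot replace $\hat{\Qb}$ by $\hat{\Qb}\chi_{[1/r,r]}(\hat{\Qb})$ wholesale, because the identity $B(\overline{\hat{\Qb}\eta},\overline{\xi})=A(\eta,\hat{\Qb}\xi)^*$ is only available for the genuine $\hat{\Qb}$.

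What does work, and is exactly what the paper does, is to insert the spectral projection $p_r=\chi_{(r,r^{-1})}(\hat{\Qb})$ on the $\Gc$-leg \emph{between} the two factors, i.e.\ compare $\Ubb_{13}(1\otimes 1\otimes p_r)\Ubb_{23}$ with $\Ubb^{c,*}_{23}(1\otimes 1\otimes p_r)\Ubb^{c,*}_{13}$. When you expand over an orthonormal basis, the intermediate vectors are then $p_r e_k\in\msD(\hat{\Qb})\cap\msD(\hat{\Qb}^{-1})$, so the modular identity applies termwise without any domain issue; in your notation this yields $\langle\beta,Tp_r v_1\rangle$ rather than $\langle\beta,Tv_1\rangle$. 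Letting $r\to 0$ (so $p_r\to 1$ strongly) and using the pentagon for $\Ubb$ on the other side gives the result. So your sketch becomes a proof once fix (b) is implemented as ``insert $p_r$'' rather than ``replace $\hat{\Qb}$''.
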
 
\begin{proof}
The modularity condition can be rewritten as
\begin{equation}\label{EqRightInvOpAltCorep}
\langle \xi'\otimes \eta,\Ubb(\eta'\otimes \hat{\Qb}\xi)\rangle = \langle \xi'\otimes \overline{\xi},\Ubb^{c,*}(\eta'\otimes \overline{\hat{\Qb}\eta})\rangle,\qquad \forall \xi,\eta\in \msD(\hat{\Qb}),\xi',\eta'\in \Hc.
\end{equation}

Let then $\{e_i\}$ be an orthonormal basis of $\Hc$. On the one hand, we have for $\xi_i',\eta_i' \in \Hc$ and $\xi,\eta\in \msD(\hat{\Qb})\subseteq \Gc$ that 
\begin{eqnarray*} 
&&\hspace{-2cm} \langle \xi_1' \otimes \xi_2' \otimes \eta,\Wbb_{12}^*\Ubb_{23}\Wbb_{12}(\eta_1'\otimes \eta_2'\otimes \hat{\Qb}\xi)\rangle \\
&& = \langle (\Wbb(\xi_1' \otimes \xi_2')) \otimes \eta,\Ubb_{23}(\Wbb(\eta_1'\otimes \eta_2'))\otimes \hat{\Qb}\xi\rangle \\
&& = \sum_i \langle (\omega_{e_i,\xi_1'}\otimes \id)(\Wbb)\xi_2' \otimes \eta,\Ubb((\omega_{e_i,\eta_1'}\otimes \id)(\Wbb)\eta_2'\otimes \hat{\Qb}\xi)\rangle\\
&& = \sum_i \langle (\omega_{e_i,\xi_1'}\otimes \id)(\Wbb)\xi_2' \otimes \overline{\xi},\Ubb^{c,*}((\omega_{e_i,\eta_1'}\otimes \id)(\Wbb)\eta_2'\otimes \overline{\hat{\Qb}\eta})\rangle\\
&& = \langle \xi_1' \otimes \xi_2' \otimes \overline{\xi},\Wbb_{12}^*\Ubb_{23}^{c,*}\Wbb_{12}(\eta_1'\otimes \eta_2'\otimes \overline{\hat{\Qb}\eta})\rangle.
\end{eqnarray*}
On the other hand, choosing now an orthonormal basis $\{e_k\}$ of $\Gc$  and putting $p_r = \chi_{(r,r^{-1})}(\hat{\Qb})$ for $r>0$, we have that 
\begin{eqnarray*} 
&&\hspace{-1cm} \langle \xi_1' \otimes \xi_2' \otimes \eta,\Ubb_{13}(1\otimes 1 \otimes p_r)\Ubb_{23}(\eta_1'\otimes \eta_2'\otimes \hat{\Qb}\xi)\rangle \\
&& = \sum_{i,j,k}\langle \xi_1' \otimes \xi_2' \otimes \eta,\Ubb_{13}e_i \otimes e_j \otimes p_re_k\rangle \langle e_i \otimes e_j \otimes p_re_k,\Ubb_{23}(\eta_1'\otimes \eta_2'\otimes \hat{\Qb}\xi)\rangle\\
&& = \sum_{i,j,k}\langle \xi_1' \otimes \xi_2' \otimes \overline{\hat{\Qb}^{-1}p_re_k},\Ubb_{13}^{c,*}e_i \otimes e_j \otimes \overline{\hat{\Qb}\eta}\rangle \langle e_i \otimes e_j \otimes \overline{\xi},\Ubb_{23}^{c,*}(\eta_1'\otimes \eta_2'\otimes \overline{\hat{\Qb}p_re_k})\rangle\\
\\
&& = \sum_k \langle \xi_1'\otimes \xi_2', (\id\otimes \omega_{\overline{\hat{\Qb}^{-1}p_re_k},\overline{\hat{\Qb}\eta}})(\Ubb^{c,*})\eta_1' \otimes (\id\otimes \omega_{\overline{\xi},\overline{\hat{\Qb}p_re_k}})(\Ubb^{c,*})\eta_2'\rangle \\
&& = \sum_k \langle \overline{\hat{\Qb}^{-1}p_re_k}, (\omega_{\xi_1',\eta_1'}\otimes \id)(\Ubb^{c,*})\overline{\hat{\Qb}\eta}\rangle \langle \overline{\xi}, (\omega_{\xi_2',\eta_2'}\otimes \id)(\Ubb^{c,*})\overline{\hat{\Qb}p_re_k}\rangle\\
&&=    \langle \overline{\xi}, (\omega_{\xi_2',\eta_2'}\otimes \id)(\Ubb^{c,*}) p_r (\omega_{\xi_1',\eta_1'}\otimes \id)(\Ubb^{c,*})\overline{\hat{\Qb}\eta}\rangle \\
&& = \langle \xi_1' \otimes \xi_2' \otimes \overline{\xi},\Ubb_{23}^{c,*}(1\otimes 1 \otimes p_r)\Ubb_{13}^{c,*}(\eta_1'\otimes \eta_2'\otimes \overline{\hat{\Qb}\eta})\rangle.
\end{eqnarray*}
Taking the limit as $r \rightarrow 0$, this gives 
\[
\langle \xi_1' \otimes \xi_2' \otimes \eta,\Ubb_{13}\Ubb_{23}(\eta_1'\otimes \eta_2'\otimes \hat{\Qb}\xi)\rangle = \langle \xi_1' \otimes \xi_2' \otimes \overline{\xi},\Ubb_{23}^{c,*}\Ubb_{13}^{c,*}(\eta_1'\otimes \eta_2'\otimes \overline{\hat{\Qb}\eta})\rangle.
\]
Comparing the above two expressions and using the corepresentation property of $\Ubb$, we find 
\[
\Wbb_{12}^*\Ubb_{23}^{c,*}\Wbb_{12} = \Ubb_{23}^{c,*}\Ubb_{13}^{c,*},
\]
i.e.\ $\Ubb^c$ is a unitary corepresentation.
\end{proof}

We will apply this as follows. Consider $\Fct$ the Fock--Goncharov flip in the faithfully normal representation of $L_{\hbar}(\wbtd{N})$ as constructed in \eqref{SecModularity}. Let $\hat{\Fct} = \Fct_{21}^*$ be its dual. Then $\hat{\Fct}$ can be viewed as a unitary corepresentation with respect to itself.

\begin{cor}\label{CorContraRegLeft}
Let $\Fct$ be a Fock--Goncharov flip, and let $\Qb,\hat{\Qb},\tFct$ be as in Theorem \ref{EqFGGeneralMod}. Then $\tFct_{21}$ is a unitary corepresentation for $\hat{\Fct}$. 
\end{cor}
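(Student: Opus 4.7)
The plan is to obtain the statement as a direct application of Proposition \ref{PropContraIsUnitCorep}, using $\hat{\Fct}$ itself as both the ambient multiplicative unitary and the unitary corepresentation to which the proposition is applied.

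First I would verify that $\hat{\Fct}$ is a unitary corepresentation of itself. By definition, a unitary $\Ubb$ on $\Hc\otimes\Hc$ is a corepresentation of the multiplicative unitary $\Wbb$ iff $\Ubb_{23}\Wbb_{12}=\Wbb_{12}\Ubb_{13}\Ubb_{23}$; setting $\Wbb=\Ubb=\hat{\Fct}$ this is exactly the pentagon equation for $\hat{\Fct}$, which is immediate from the pentagon equation \eqref{EqMUEq} for $\Fct$ upon passing to $\hat{\Fct}=\Fct_{21}^*$.

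Next I would rewrite the modular datum $(\Fct,\Qb,\hat{\Qb},\tFct)$ obtained in Theorem \ref{EqFGGeneralMod} as a weakly modular datum in the sense of Definition \ref{DefWeaklyMod}. By Definition \ref{DefModular}, modularity of $\Fct$ includes precisely the identity \eqref{EqLeftInvOp}, which reads
\[
\langle \hat{\Qb}\xi\otimes \eta',\Fct(\eta\otimes \xi')\rangle = \langle \overline{\hat{\Qb}\eta}\otimes \eta',\tFct(\overline{\xi}\otimes \xi')\rangle,\qquad \xi,\eta\in\msD(\hat{\Qb}),\ \xi',\eta'\in\Hc.
\]
This is, word-for-word, the statement that $(\Fct,\hat{\Qb},\tFct)$ is a weakly modular datum on the Hilbert space pair $(\Hc,\Hc)$.

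Now I would set $\Wbb=\hat{\Fct}$ and $\Ubb=\hat{\Fct}$ in Proposition \ref{PropContraIsUnitCorep}. Then $\Ubb_{21}^*=(\hat{\Fct})_{21}^*=(\Fct_{21}^*)_{21}^*=\Fct$, so the hypothesis of the proposition requires the triple $(\Fct,\hat{\Qb},\Ubb_{21}^c)$ to be a weakly modular datum. Comparing with the previous paragraph, this forces $\Ubb_{21}^c=\tFct$, equivalently $\Ubb^c=\tFct_{21}$. Proposition \ref{PropContraIsUnitCorep} then concludes that $\Ubb^c=\tFct_{21}$ is a unitary corepresentation of $\Wbb=\hat{\Fct}$, which is exactly the assertion. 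There is no real obstacle: the whole content of the corollary is the packaging of modularity of $\Fct$ as a weakly modular datum in the form required by Proposition \ref{PropContraIsUnitCorep}, so the only care needed is tracking the leg-flip identifications $\Ubb_{21}^*\leftrightarrow\Fct$ and $\Ubb_{21}^c\leftrightarrow\tFct$.
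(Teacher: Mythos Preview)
Your proposal is correct and follows exactly the paper's approach: apply Proposition \ref{PropContraIsUnitCorep} with $\Wbb=\Ubb=\hat{\Fct}$, using that $\hat{\Fct}$ is trivially a corepresentation of itself (pentagon equation) and that $(\Fct,\hat{\Qb},\tFct)=(\Ubb_{21}^*,\hat{\Qb},\Ubb_{21}^c)$ is a weakly modular datum by Theorem \ref{EqFGGeneralMod}. The paper's proof is a two-line summary of precisely these steps.
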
 

\begin{proof}
This follows immediately from Proposition \ref{PropContraIsUnitCorep}, the fact that $\hat{\Fct}$ is a unitary corepresentation for $\hat{\Fct}$ and the fact that $(\Fct,\hat{\Qb},\tFct)$ forms a weakly modular datum.  
\end{proof} 

\begin{remark}
Alternatively, one can show that if $\Wbb$ is modular, the unitary $\Ubb^c$ in \eqref{PropContraIsUnitCorep} coincides with the contragredient of $\Ubb$ as defined in \cite[Definition 11]{SW07}, from which Corollary \ref{CorContraRegLeft} then also immediately follows.
\end{remark}

\subsection{Locally compact quantum group associated to the Fock--Goncharov flip}

We are now ready for the main result of this paper. 

We first make the following observation: we have an isomorphism of skew-symmetric spaces
\begin{equation}\label{EqVarTheta}
\vartheta\colon  \wbtd{N}\cong \overline{\wbtd{N}},\quad e_{a,b,c} \mapsto \overline{-e_{c,b,a}}.
\end{equation}
This gives an associated $*$-isomorphism 
\begin{equation}\label{EqFlipSo}
\varTheta\colon  L_{\hbar}(\wbtd{N}) \rightarrow \overline{L_{\hbar}(\wbtd{N})},\qquad \eb(v) \mapsto \eb(\vartheta(v)). 
\end{equation}

\begin{lemma}\label{LemRightMUreg} 
The map $\varTheta$ restricts to an isomorphism 
\begin{equation}\label{EqVarThetOr}
(L_{\hbar}(\Bor_N^+),\hat{\Delta}^{\opp}) \cong (\overline{L_{\hbar}(\Bor_N^+)},\overline{\hat{\Delta}}).
\end{equation}
\end{lemma}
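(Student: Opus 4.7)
The plan is to reduce the statement to a verification on the standard generators $\{\Kb_s,\stgE_s\mid 1\leq s\leq n\}$ of $\hat M = L_{\hbar}(\Bor_N^+)$ furnished by Proposition \ref{PropStandardGen}. That $\vartheta$ restricts to an isomorphism of skew-symmetric spaces $\Bor_N^+\cong\overline{\Bor_N^+}$ is immediate: the defining condition $1\leq b\leq n$ for $\seC_N$ is preserved under $(a,b,c)\mapsto(c,b,a)$, and hence $\varTheta$ restricts to a normal $*$-isomorphism at the level of von Neumann algebras.

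Next I would compute $\varTheta$ on the standard generators. Expanding $\seee_{N-s}=\sum_{j=0}^{N-s}e_{(N-s)-j,\,s,\,j}$ and applying $\vartheta$, one recognizes $\sum_j e_{j,\,s,\,(N-s)-j}$ as $\nwe_{N-s}$, which equals $\seee_{N-s}$ by \eqref{EqDiagFullSame}; hence $\vartheta(\seee_{N-s})=-\overline{\seee_{N-s}}$ and $\varTheta(\Kb_s)=\overline{\Kb_s}^{-1}$. Similarly, a short index computation gives $\vartheta(\seee_{N-s,n-r}) = -\overline{\nwe_{N-s,n-r}}$, and \eqref{EqDiffOr} rewrites $\nwe_{N-s,n-r}=\seee_{N-s}-\seee_{N-s,r-s}$. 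Since $(\seee_{N-s},\seee_{N-s,r-s})=-1$ by \eqref{EqSameCar}, one has $\eb(-\nwe_{N-s,n-r})=\eb(\seee_{N-s,r-s})\star\Kb_s^{-1}$, and distributing the $\star$-product over the form sum defining $\stgE_s$ via Lemma \ref{LemSumSkewComm}---whose hypotheses follow from further entries in Lemma \ref{LemSkewProdFormDiff}---yields the clean formula
\[
\varTheta(\stgE_s) \;=\; \overline{\stgE_s\star\Kb_s^{-1}} \;=\; \overline{\stgE_s}\star\overline{\Kb_s}^{-1}.
\]

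With these formulas the coproduct intertwining $(\varTheta\otimes\varTheta)\circ\hat{\Delta}^{\opp}=\overline{\hat{\Delta}}\circ\varTheta$ becomes direct. On $\Kb_s$ both sides equal $\overline{\Kb_s}^{-1}\otimes\overline{\Kb_s}^{-1}$, using the identity $\overline{\hat{\Delta}}(\bar x)=\overline{\hat{\Delta}(x)}$. On $\stgE_s$, combining Proposition \ref{Propstandardcoproducts} with a second application of Lemma \ref{LemSumSkewComm} to distribute $\star\,(\Kb_s^{-1}\otimes\Kb_s^{-1})$ over the form sum $\hat{\Delta}(\stgE_s)=(\stgE_s\otimes 1)\boxplus(\Kb_s\otimes\stgE_s)$, one obtains
\[
\hat{\Delta}(\stgE_s\star\Kb_s^{-1}) \;=\; \bigl((\stgE_s\star\Kb_s^{-1})\otimes\Kb_s^{-1}\bigr)\boxplus\bigl(1\otimes(\stgE_s\star\Kb_s^{-1})\bigr).
\]
Conjugating yields $\overline{\hat{\Delta}}(\varTheta(\stgE_s))=(\varTheta(\stgE_s)\otimes\overline{\Kb_s}^{-1})\boxplus(1\otimes\varTheta(\stgE_s))$, which by commutativity of form sum matches $(\varTheta\otimes\varTheta)((1\otimes\stgE_s)\boxplus(\stgE_s\otimes\Kb_s))=(\varTheta\otimes\varTheta)(\hat{\Delta}^{\opp}(\stgE_s))$. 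As all maps in sight are normal, verification on generators suffices.

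The main obstacle is combinatorial-analytic bookkeeping: at each step one must verify the skew-commutation hypotheses underlying Lemma \ref{LemSumSkewComm} (using Lemma \ref{LemSkewProdFormDiff}), confirm that conjugation carries an $\hbar$-commuting pair on $\Hc$ to a pair with opposite sign of $\hbar$ on $\overline{\Hc}$ (so that $\overline{\Ab\star\Bb}=\bar{\Ab}\star\bar{\Bb}$ and $\overline{\Ab\boxplus\Bb}=\bar{\Ab}\boxplus\bar{\Bb}$ in the appropriate sense), and handle the subtleties of applying $\hat{\Delta}$ to the unbounded $\star$-products that appear.
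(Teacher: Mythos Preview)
Your approach is correct and genuinely different from the paper's. The paper never touches the generators $\Kb_s,\stgE_s$: instead it verifies the single identity $(\id\otimes\varTheta)\tFct=\overline{\Fct}$ by comparing the explicit formulas for $\tFct$ and $\Fct$, and then argues at the level of multiplicative unitaries. Setting $\hat{\Vbb}=(\varTheta^{-1}\otimes\varTheta^{-1})(\overline{\Fct})$, the paper uses Corollary~\ref{CorContraRegLeft} (the contragredient corepresentation result) to show $\hat{\Vbb}_{21}$ is a unitary $\hat{\Fct}$-corepresentation, whence $\hat{\Delta}(x)=\hat{\Vbb}(x\otimes 1)\hat{\Vbb}^*$, and the intertwining property drops out from $\hat{\Delta}^{\opp}(x)=\Sigma\hat{\Vbb}(x\otimes 1)\hat{\Vbb}^*\Sigma$.

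Your route via Proposition~\ref{Propstandardcoproducts} is more elementary: it bypasses Proposition~\ref{PropContraIsUnitCorep} and Corollary~\ref{CorContraRegLeft} entirely, trading conceptual machinery for explicit computation. The paper's approach, on the other hand, gives a \emph{structural} reason for the isomorphism (the relationship $\tFct\leftrightarrow\overline{\Fct}$ under $\varTheta$), and produces as a byproduct the ``right regular'' implementation $\hat{\Delta}(x)=\hat{\Vbb}(x\otimes 1)\hat{\Vbb}^*$, which is used again later (e.g.\ in Theorem~\ref{TheoKacTakesakiop}). Your bookkeeping concerns are real but manageable: since $\varTheta$ is a normal $*$-isomorphism, it preserves all functional-calculus constructions, so $\varTheta(\Ab\boxplus\Bb)=\varTheta(\Ab)\boxplus\varTheta(\Bb)$ and $\varTheta(\Ab\star\Bb)=\varTheta(\Ab)\star\varTheta(\Bb)$ hold automatically (with the \emph{same} $\hbar$, as $*$-isomorphisms preserve commutation relations); and the ``generators suffice'' step is justified once you pass to the one-parameter unitary groups $\Kb_s^{it},\stgE_s^{it}$, which by Proposition~\ref{PropStandardGen} generate $L_{\hbar}(\Bor_N^+)$.
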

\begin{proof}
With $\Fct$ the standard Fock--Goncharov flip and $\tFct$ as in Theorem \ref{TheoModFGFin}, we easily verify that
\[
(\id\otimes \varTheta)\tFct = \overline{\Fct}.
\] 
Hence 
\[
\hat{\Vbb} := (\varTheta^{-1}\otimes \id)\tFct= (\varTheta^{-1}\otimes \varTheta^{-1})(\overline{\Fct})
\]
is also a multiplicative unitary. 

On the other hand, it follows from Corollary \ref{CorContraRegLeft} that $\hat{\Vbb}_{21}$ is a unitary $\hat{\Fct}$-corepresentation, so 
\[
\hat{\Fct}_{12}^* \hat{\Vbb}_{32} \hat{\Fct}_{12} = \hat{\Vbb}_{31} \hat{\Vbb}_{32},\qquad \textrm{i.e.}\qquad  \hat{\Fct}_{23}^* \hat{\Vbb}_{13} \hat{\Fct}_{23} = \hat{\Vbb}_{12} \hat{\Vbb}_{13}.
\]

It now follows that 
\[
\hat{\Vbb}_{23}\hat{\Vbb}_{12} \hat{\Vbb}_{23}^* = \hat{\Vbb}_{12}\hat{\Vbb}_{13} = \hat{\Fct}_{23}^*\hat{\Vbb}_{13}\hat{\Fct}_{23}.
\]
Since the second leg of $\hat{\Vbb}$ equals $L_{\hbar}(\Bor_N^+)$, we find 
\[
\hat{\Delta}(x) = \hat{\Vbb}(x\otimes 1)\hat{\Vbb}^* = \hat{\Fct}^*(1\otimes x)\hat{\Fct},\qquad x\in L_{\hbar}(\Bor_N^+). 
\]
Now $\varTheta\colon  L_{\hbar}(\Bor_N^+) \cong \overline{L_{\hbar}(\Bor_N^+)}$, and we compute 
\begin{multline*}
\hat{\Delta}^{\opp}(x) =\Sigma \hat{\Vbb}(x\otimes 1)\hat{\Vbb}^*\Sigma = (\varTheta^{-1}\otimes \varTheta^{-1})(\overline{\Fct}_{21}(1\otimes \varTheta(x))\overline{\Fct}_{21}^*) \\
= (\varTheta^{-1}\otimes \varTheta^{-1}))(\overline{\hat{\Fct}}^*(1\otimes \varTheta(x))\overline{\hat{\Fct}})
,\qquad x\in L_{\hbar}(\Bor_N^+).
\end{multline*}
This proves the lemma.
\end{proof}

We recall now the notion of a locally compact quantum group \cite{KV00,KV03}. 

\begin{defn}
A \emph{locally compact quantum group} consists of a von Neumann bialgebra $(M,\Delta)$ for which there exists a left invariant nsf weight $\varphi$ and a right invariant nsf weight $\psi$: 
\begin{equation}\label{EqLeftInvWeight}
\varphi((\omega \otimes \id)\Delta(x)) = \varphi(x),\qquad \forall x \in \mfm_{\varphi}^+, \textrm{ all states }\omega \in M_*^+,
\end{equation}
\begin{equation}\label{EqRightInvWeight}
\psi((\id \otimes \omega)\Delta(x)) = \psi(x),\qquad \forall x \in \mfm_{\psi}^+, \textrm{ all states }\omega \in M_*^+.
\end{equation}
\end{defn}
These left and right invariant nsf weights are then unique up to multiplication with a positive scalar.

\begin{theorem}\label{TheoLCQG}
The Fock--Goncharov von Neumann bialgebra $(L_{\hbar}(\Bor_N^-),\Delta)$ is a locally compact quantum group.
\end{theorem}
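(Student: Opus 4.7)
The plan is to construct a left invariant nsf weight $\varphi$ on $(M,\Delta)$ using Proposition~\ref{PropTechn}, with the auxiliary element $a$ tailored via Corollary~\ref{CorInvWeightQT}, after which the existence of a right invariant nsf weight and the remaining Kustermans--Vaes axioms follow from the standard theory of modular multiplicative unitaries of Woronowicz and Van Daele~\cite{VD01,Wor03}.

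I work with the symplectic realization $\wbtd{N}\subseteq V_N$ of Section~\ref{SecModularity}, in its standard $\hbar$-representation on $\Hsp=\otimes_{i\in I}\Hc_i$, so that $L_\hbar(V_N)=\Bc(\Hsp)$ and the canonical trace on $\Bc(\Hsp)$ is identified with $\int^\hbar_{\hat V_N}$. By Theorem~\ref{TheoModFGFin}, $\Fct$ is modular in this representation with $\hat\Qb=\Lb_\delta^{1+|\hbar|^{-1}}$ and $\Qb=\Kb_\delta^{-(1+|\hbar|^{-1})}$, hence $\Qb^2=\eb(v)$ for $v=-2(1+|\hbar|^{-1})\sum_{t=1}^{n}\sevarpi_t\in\Bor_N^+\subseteq V_N$. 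By Corollary~\ref{CorUnitRepArb}, the von Neumann bialgebra extracted from this representation agrees with $(M,\Delta)$.

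The key geometric input is the non-degeneracy of the $\Bor_N^-/\Bor_N^+$-pairing (Lemma~\ref{LemNonDegPair}): combined with the equality $\dim\Bor_N^\pm=|I|=\tfrac{1}{2}\dim V_N$, it forces $\Bor_N^-\cap(\Bor_N^+)^\perp=0$, and hence
\[
V_N=\Bor_N^-\oplus W,\qquad W:=(\Bor_N^+)^\perp.
\]
Applying Corollary~\ref{CorInvWeightQT} with $Z=\Bor_N^-$ produces $a\in L_\hbar(W)$ such that the restriction of $\Tr_{\eb(v),a}$ to $M=L_\hbar(\Bor_N^-)$ is an nsf weight. Since $W$ is skew-orthogonal to $\Bor_N^+$, the element $a$ strongly commutes with every $\eb(w)$ for $w\in\Bor_N^+$, so $a\in L_\hbar(\Bor_N^+)'=\hat M'$.

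Proposition~\ref{PropTechn}, applied to the modular datum $(\Fct,\Qb,\hat\Qb,\tFct)$ with this $a$, shows that $\varphi:=\Tr_{\Qb^2,a}$ is left invariant for the comultiplication on $\Bc(\Hsp)$; restricting to $M$, where $\Delta$ closes, delivers the required left invariant nsf weight on $(M,\Delta)$. The construction of the right invariant nsf weight and the verification of the remaining Kustermans--Vaes axioms then proceed via the antipode machinery for modular multiplicative unitaries of~\cite{VD01,Wor03}, together with the flip isomorphism of Lemma~\ref{LemRightMUreg} which allows the construction to be transported between $M$ and $\hat M$. The main technical obstacle is the simultaneous coordination of the two constraints on $a$---membership in $\hat M'$ and semifiniteness upon restriction---and it is resolved precisely by the existence of a complement $W\subseteq V_N$ skew-orthogonal to $\Bor_N^+$, which ultimately rests on the non-degeneracy provided by Lemma~\ref{LemNonDegPair}.
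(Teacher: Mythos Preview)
Your construction of the left invariant nsf weight is essentially the paper's argument, with one harmless variation: you work in the ambient symplectic space $V_N$ of Section~\ref{SecModularity}, whereas the paper uses the symplectic reduction $D_N=\wbtd{N,\redu}$ and an irreducible representation thereof (Corollary~\ref{CorIrrMap}). Both choices yield $L_\hbar(V)=\Bc(\Hsp)$, both contain $\Bor_N^\pm$, and in both the non-degeneracy of Lemma~\ref{LemNonDegPair} plus dimension counting produces a complement $W$ to $\Bor_N^-$ that is skew-orthogonal to $\Bor_N^+$, so that $a\in L_\hbar(W)\subseteq \hat M'$. Your choice $W=(\Bor_N^+)^\perp$ is the canonical one in $V_N$ and is perfectly fine.

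The argument for the right invariant weight, however, is imprecise. Lemma~\ref{LemRightMUreg} does \emph{not} transport between $M$ and $\hat M$; it gives the isomorphism $\varTheta\colon(\hat M,\hat\Delta^{\opp})\cong(\overline{\hat M},\overline{\hat\Delta})$. The passage between $M=L_\hbar(\Bor_N^-)$ and $\hat M=L_\hbar(\Bor_N^+)$ is a separate ingredient, namely the self-duality isomorphism $\Theta$ of \eqref{EqSelfDual}/\eqref{EqThet}. The paper's proof uses both: $\Theta$ transfers the left invariant weight from $(M,\Delta)$ to $(\hat M,\hat\Delta)$; then $\varTheta$ converts it to a right invariant weight on $(\hat M,\hat\Delta)$; then $\Theta$ again brings this back to a right invariant weight on $(M,\Delta)$. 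Your appeal to ``antipode machinery of \cite{VD01,Wor03}'' is a plausible alternative route (set $\psi=\varphi\circ R$ for the unitary antipode $R$ coming from the modular multiplicative unitary), but you would then need to justify that $R$ extends to a normal $*$-anti-automorphism of $M$ intertwining $\Delta$ and $\Delta^{\opp}$ \emph{before} the locally compact quantum group structure is in place; this is not immediate from \cite{SW01} alone. Finally, there are no ``remaining Kustermans--Vaes axioms'': once the left and right invariant nsf weights exist, the definition is satisfied.
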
 
\begin{proof}
Put
\[
V = D_N,\qquad Z = \Bor_N^-
\]
as in Corollary \ref{CorHeisDeg}. Then by Lemma \ref{LemNonDegPair}, we have that we can pick a complementary subspace $W \subseteq D_N$ to $Z$ such that 
\begin{equation}\label{EqDecompVOrthBNPlus}
V = Z\oplus W,\qquad (w,b) = 0,\qquad \forall b\in \Bor_N^+.
\end{equation}
In particular, $L_{\hbar}(W)$ commutes pointwise with $L_{\hbar}(\Bor_N^+)$.

Implement $(L_{\hbar}(\Bor_N^-),\Delta)$ via a Fock--Goncharov flip with respect to an irreducible normal $*$-representation of $L_{\hbar}(D_N)$ on a Hilbert space $\Hsp$ as in Corollary \ref{CorIrrMap}. Then since we can identify $L_{\hbar}(D_N) = \Bc(\Hsp)$, we are in the setting of Proposition \ref{PropTechn}, and combined with Corollary \ref{CorInvWeightQT} applied with respect to \eqref{EqDecompVOrthBNPlus}, we find that $(L_{\hbar}(\Bor_N^-),\Delta)$ admits a left invariant nsf weight.

By  the  self-duality \eqref{EqSelfDual}, also $(L_{\hbar}(\Bor_N^+),\hat{\Delta})$ then has a left invariant nsf weight. But by Lemma \ref{LemRightMUreg}, this means that $(L_{\hbar}(\Bor_N^+),\hat{\Delta}^{\opp})$ has a left invariant nsf weight, i.e.\ $(L_{\hbar}(\Bor_N^+),\hat{\Delta})$ has a right invariant nsf weight. Again by self-duality, it follows that $(L_{\hbar}(\Bor_N^-),\Delta)$ has a right invariant nsf weight, and we are done.
\end{proof}

\section{Modular structure of 
\texorpdfstring{$(L_{\hbar}(\Bor_N^-),\Delta)$}{the upper Borel quantum group}} \label{secmodularstructure}

We resume the setting and notations of the previous sections. 

Theorem \ref{TheoLCQG} shows that the Fock--Goncharov von Neumann bialgebra $(L_{\hbar}(\Bor_N^-),\Delta)$ is a locally compact quantum group. Following common practice in quantum group theory, we designate this locally compact quantum group with a symbol such that the above von Neumann algebra is its \emph{function algebra}, and the \emph{group von Neumann algebra} of its dual. Given the duality between $\Bor_N^-$ and $\Bor_N^+$, this motivates the notation 
\begin{equation}\label{EqLCQGFG}
(L^{\infty}(\Bbbb_{N,\hbar}^+),\Delta)  := (L_{\hbar}(\Bor_N^-),\Delta),
\end{equation}
and likewise for the Pontryagin dual:
\begin{equation}\label{EqLCQGFGDual}
(L^{\infty}(\Bbbb_{N,\hbar}^-),\hat{\Delta}) := (L_{\hbar}(\Bor_N^+),\hat{\Delta}).
\end{equation}

Now by the general theory of \cite{KV00,KV03}, the locally compact quantum group $\Bbbb_{N,\hbar}^+$ comes with a variety of associated \emph{modular data}, that we will determine in this section. 

\subsection{Left Haar measure for  \texorpdfstring{$\Bbbb_{N,\hbar}^+$}{the upper Borel quantum group}.}

We start off by determining more explicitly the left invariant nsf weight on $L_{\hbar}(\Bor_N^-)$.

Write $\hat{\Bor}_N^- = \widehat{\Bor_N^-}$ for the dual vector space. By Lemma \ref{LemIdentGNS}, we can uniquely identify
\[
L^2(M) \cong L^2(\hat{\Bor}_N^-)
\]
in such a way that, with $\Lambda$ the GNS-map associated to $\int_{\hat{\Bor}_N^-}^{\hbar}$, we have 
\begin{equation}\label{EqGNSNonTwisted}
\Lambda(f_{\hbar}) = f \in L^2(\hat{\Bor}_N^-),\qquad f\in \Sc(\hat{\Bor}_N^-).
\end{equation}
The associated standard representation of $L_{\hbar}(\Bor_N^-)$ is then uniquely determined by 
\[
f_{\hbar} g = f\times_{\hbar}g,\qquad f,g\in \Sc(\hat{\Bor}_N^-),
\]
which has associated unitary $\hbar$-representation of $\Bor_N^-$ given by \eqref{EqStandardDefRep} (or equivalently \eqref{EqStructurepi}). As $\int_{\hat{\Bor}_N^-}^{\hbar}$ is tracial, we also know that the modular conjugation of $L_{\hbar}(\Bor_N^-)$ with respect to this standard representation is given by 
\begin{equation}\label{EqModConjB}
J g = \overline{g},\qquad g\in \Sc(\hat{\Bor}_N^-).
\end{equation}

Now by Corollary \ref{CorIrrMap}, Proposition \ref{PropTechn} and Lemma \ref{LemInvWeightQT}, we have that $(L_{\hbar}(\Bor_N^-),\Delta)$ has a left invariant nsf weight 
\[
\varphi = \int_{\hat{\Bor}_N^-}^{\hbar,\Eb(2d_l)},
\]
for a particular $d_l \in \Bor_N^-$. More precisely, by these results, together with the fact that 
\[
\Qb = \Kb_{\delta}^{-(1+|\hbar|^{-1})} = \Eb(-(1+|\hbar|^{-1})\sum_i \sevarpi_i), 
\] 
we can take any $d_l \in \Bor_N^-$ such that there exists $w\in \wbtd{N}$, vanishing on $\Bor_N^+$, with 
\[
2d_l + w = -2(1+|\hbar|^{-1})\sum_i \sevarpi_i.
\]
By Lemma \ref{LemNonDegPair}, $d_l$ must then be the unique element in $\Bor_N^-$ such that 
\begin{equation}\label{EqChardl}
(d_l,z) = -(1+|\hbar|^{-1})\sum_{i=1}^n (\sevarpi_i,z),\qquad \forall z\in \Bor_N^+.
\end{equation}

\begin{lemma}
We have 
\[
2d_l = (1+|\hbar|^{-1})\sum_{(a,b,c)\in \neC_N} ac\,e_{a,b,c} = (1+|\hbar|^{-1})\underset{0\leq k<s}{\sum_{1\leq s\leq n}} (N-s)\nee_{s,k}. 
\]
\end{lemma}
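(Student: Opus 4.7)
The plan is to verify the claimed formula by checking it against the characterization \eqref{EqChardl}. By Lemma \ref{LemNonDegPair}, the skew pairing between $\Bor_N^-$ and $\Bor_N^+$ is non-degenerate, so $d_l$ is uniquely determined by \eqref{EqChardl}, and it suffices to evaluate both sides on a basis of $\Bor_N^+$. A convenient basis is $\{\seee_s, \seee_{s,k} : 1\leq s\leq n,\ 0\leq k<s\}$.

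The first step is a book-keeping lemma: the two formulas given in the statement agree. Expanding the definition $\nee_{s,k}=\sum_{j=0}^{k} e_{N-s,j,s-j}$, the coefficient of a basis vector $e_{a,b,c}\in \neC_N$ inside $\sum_{1\le s\le n,\,0\le k<s}(N-s)\nee_{s,k}$ is obtained by collecting the pairs $(s,k)$ with $s=N-a$ and $b\leq k\leq N-a-1$. Counting these and using $a+b+c=N$ yields precisely $ac$, which establishes the second equality.

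The main step is to check \eqref{EqChardl} on $z=\seee_s$ and $z=\seee_{s,k}$. For $z=\seee_s$, the left-hand side vanishes by \eqref{EqDiffCarOpp} since each $\nee_{s',k'}$ with $0\leq k'<s'$ pairs trivially with $\seee_s$, while the right-hand side vanishes because $(\sevarpi_i,\seee_s)=0$ (a direct consequence of $\sevarpi_i\in \Tor_N^+$ and \eqref{EqVanishing}). For $z=\seee_{s,k}$, the right-hand side simplifies via the first identity of \eqref{EqPairFundWeight} to $\tfrac{1+|\hbar|^{-1}}{2}$. The left-hand side is then computed using \eqref{EqDiffNil}: the Kronecker deltas force exactly two potential contributions to $\sum_{s',k'}(N-s')(\nee_{s',k'},\seee_{s,k})$, coming from $(s',k')=(N-s+k,\,n-s)$ with sign $+1$ and from $(s',k')=(N-s+k+1,\,N-s)$ with sign $-1$, the latter only contributing when $k\leq s-2$. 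A short case analysis on whether $k=s-1$ or $k<s-1$ yields $\tfrac{1+|\hbar|^{-1}}{2}$ in both cases, matching the right-hand side.

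The only delicate point is the second case analysis: one must carefully respect the ranges $1\leq s'\leq n$, $0\leq k'<s'$ when interpreting \eqref{EqDiffNil}, since the two Kronecker terms contribute with opposite signs and nearly cancel. Once these ranges are tracked, the verification reduces to elementary arithmetic, and combined with uniqueness from Lemma \ref{LemNonDegPair} the lemma follows.
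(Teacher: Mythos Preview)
Your proposal is correct and follows exactly the approach indicated in the paper, namely verifying the characterization \eqref{EqChardl} against the basis of $\Bor_N^+$ using the pairing formulas of Lemma~\ref{LemSkewProdFormDiff}. The paper's proof simply records this as ``a direct computation,'' and your write-up supplies precisely those details.
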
 
\begin{proof}
Using \eqref{EqChardl}, it follows by a direct computation from Lemma \ref{LemSkewProdFormDiff}.
\end{proof}

So, summarizing, we have:
\begin{theorem}\label{TheoLeftNSF}
The left invariant nsf weight for $(L_{\hbar}(\Bor_N^-),\Delta)$ is 
\begin{equation}\label{EqModOper}
\varphi = \int_{\hat{\Bor}_N^-}^{\hbar,\Eb(2d_l)},\qquad \Eb(2d_l) =  \Eb(\underset{0\leq k<s}{\sum_{1\leq s\leq n}} (N-s)\nee_{s,k})^{1+|\hbar|^{-1}}. 
\end{equation}
\end{theorem}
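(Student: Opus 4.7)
The theorem is essentially a packaging result: nearly all the work has already been done in the preceding propositions, and what remains is to identify the explicit vector $d_l \in \Bor_N^-$. Indeed, implementing $(L_\hbar(\Bor_N^-),\Delta)$ on an irreducible $*$-representation of $L_\hbar(D_N)$ via Corollary~\ref{CorIrrMap}, we may apply Proposition~\ref{PropTechn} with $\Qb=\Kb_\delta^{-(1+|\hbar|^{-1})}$ from Theorem~\ref{TheoModFGFin} and then restrict to $L_\hbar(\Bor_N^-)$ using Lemma~\ref{LemInvWeightQT}. The latter shows that the resulting left invariant weight has the form $\int_{\hat{\Bor}_N^-}^{\hbar,\Eb(2d_l)}$ where $2d_l$ is the $\Bor_N^-$-component of $-2(1+|\hbar|^{-1})\sum_i \sevarpi_i$ in any direct sum decomposition $\wbtd{N}=\Bor_N^- \oplus W$ with $W$ $\epsilon$-orthogonal to $\Bor_N^+$. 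By Lemma~\ref{LemNonDegPair}, such a $d_l$ is uniquely determined by the identity \eqref{EqChardl}.

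The plan is therefore simply to verify that the proposed vector $2d_l=(1+|\hbar|^{-1})\sum_{1\le s\le n,\,0\le k<s}(N-s)\nee_{s,k}$ satisfies \eqref{EqChardl}. Since $\Bor_N^+ = \Tor_N^+ \oplus \Nilp_N^+$, it suffices to test against the basis vectors $\seee_s$ for $1\le s\le n$ and $\seee_{s',k'}$ for $1\le s'\le n$, $0\le k'<s'$. For $z=\seee_s$, the left hand side of \eqref{EqChardl} vanishes by \eqref{EqDiffCarOpp}, while the right hand side vanishes because $\sevarpi_i$ is a linear combination of the $\seee_u$ and $(\seee_u,\seee_s)=0$ by \eqref{EqVanishing}.

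The substantive (but routine) case is $z = \seee_{s',k'}$. Here the right hand side of \eqref{EqChardl} equals $1+|\hbar|^{-1}$ directly from \eqref{EqPairFundWeight}, since $(\sevarpi_i,\seee_{s',k'})=-\tfrac12\delta_{i,s'}$. For the left hand side, one inserts the formula \eqref{EqDiffNil} to get
\[
(2d_l,\seee_{s',k'}) = (1+|\hbar|^{-1})\!\!\sum_{\substack{1\le s\le n\\ 0\le k<s}}(N-s)\,\delta_{k+k',s-1}\bigl(\delta_{k+s',n}-\delta_{k+s',N}\bigr).
\]
The first Kronecker delta forces $s=k+k'+1$, and the remaining two Kroneckers then pick out at most the values $s = n - s' + k' + 1$ and $s = n - s' + k' + 2$, contributing $N-s=s'-k'$ and $-(s'-k'-1)$ respectively. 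A case split on whether $k'=s'-1$ (only the first value is admissible) or $k'\le s'-2$ (both are admissible) shows that the two contributions always combine to $1+|\hbar|^{-1}$, matching the right hand side.

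There is no genuine obstacle here: the only care needed is the index bookkeeping in the last case and the observation that $\Eb(2d_l)=\Eb\bigl(\sum(N-s)\nee_{s,k}\bigr)^{1+|\hbar|^{-1}}$ follows from the scalar functoriality $\Eb(\lambda v)=\Eb(v)^\lambda$ inherent in the definition of $\Eb$.
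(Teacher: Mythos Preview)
Your proof is correct and follows exactly the paper's approach: the paper also reduces the theorem to verifying that the explicit candidate satisfies \eqref{EqChardl}, citing Lemma~\ref{LemSkewProdFormDiff} for the direct computation that you carry out in detail. One cosmetic slip: what you call ``the right hand side of \eqref{EqChardl}'' is actually twice it (the right side for $z=\seee_{s',k'}$ is $\tfrac{1+|\hbar|^{-1}}{2}$, not $1+|\hbar|^{-1}$), but since you simultaneously compute $(2d_l,\cdot)$ rather than $(d_l,\cdot)$ on the left, the comparison is still valid.
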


Now by general theory \cite{PT73,Vae01}, we can identify also 
\begin{equation}\label{EqIdentGNSvNTwist}
L^2(L_{\hbar}(\Bor_N^-),\varphi)\cong L^2(\hat{\Bor}_N^-)
\end{equation}
by means of the unique GNS-map 
\[
\Lambda_{\varphi}\colon  \mfn_{\varphi} \rightarrow L^2(\hat{\Bor}_N^-)
\]
such that 
\[
\Lambda_{\varphi}(x) = \Lambda(x\Eb(d_l)),\qquad x\in L_{\hbar}(\Bor_{N}^-) \textrm{ and }x\Eb(d_l)\textrm{ bounded with closure in }\mfn_{\int_{\hat{\Bor}_N^-}^{\hbar}}.
\]
\begin{cor}
In the above GNS-representation, the modular operator of the weight $\varphi$ in \eqref{EqModOper} is given by 
\[
\nabla_{\varphi} = \Eb(2d_l)J \Eb(2d_l)^{-1}J,\qquad \nabla_{\varphi}^{it} =  \Eb(2d_l)^{it}J \Eb(2d_l)^{it}J.
\]
\end{cor}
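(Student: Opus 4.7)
The plan is to leverage the standard Pedersen--Takesaki theory of weights obtained from a trace by a positive density, applied to the situation $\varphi = \tau_{\Eb(2d_l)}$ where $\tau = \int_{\hat{\Bor}_N^-}^{\hbar}$ is the tracial nsf weight recorded in Theorem \ref{TheoLeftNSF}, and $\Eb(2d_l)$ is strictly positive and affiliated to $L_{\hbar}(\Bor_N^-)$. By the recollections in Section \ref{AppWeights} and the cited work \cite{PT73,Vae01}, the modular automorphism group of such a deformation is implemented by inner conjugation with the density: $\sigma_t^{\varphi}(x) = \Eb(2d_l)^{it}\, x\, \Eb(2d_l)^{-it}$ for all $x\in L_{\hbar}(\Bor_N^-)$ in the natural smooth core. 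The identification \eqref{EqIdentGNSvNTwist} gives the GNS-map $\Lambda_\varphi(x) = \Lambda(x \Eb(d_l))$, with $\Lambda$ the tracial GNS-map of \eqref{EqGNSNonTwisted}.

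With these ingredients in place, the computation of $\nabla_\varphi^{it}$ is a direct unwinding. Using the commutation of $\Eb(d_l)$ with all $\Eb(2d_l)^{is}$ (they are functions of the same self-adjoint operator), I would write
\[
\Lambda_\varphi(\sigma_t^\varphi(x)) = \Lambda\bigl(\Eb(2d_l)^{it} x \Eb(d_l) \Eb(2d_l)^{-it}\bigr) = \Eb(2d_l)^{it}\, \Lambda\bigl(x\Eb(d_l)\Eb(2d_l)^{-it}\bigr).
\]
The key identity is the tracial right-action formula $\Lambda(y a) = J a^* J\, \Lambda(y)$ valid for $y\in \mfn_\tau$ and $a$ in the smooth domain (together with an approximation argument that handles the affiliated $\Eb(2d_l)^{-it}$ on a core). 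Applying this with $a = \Eb(2d_l)^{-it}$ yields
\[
\Lambda_\varphi(\sigma_t^\varphi(x)) = \Eb(2d_l)^{it}\, J \Eb(2d_l)^{it} J \, \Lambda_\varphi(x),
\]
which forces $\nabla_\varphi^{it} = \Eb(2d_l)^{it}\, J \Eb(2d_l)^{it} J$ by uniqueness of the modular operator implementing $\sigma_t^\varphi$ on the GNS Hilbert space. The second displayed equality follows by the antilinearity formula $J \Eb(2d_l)^{it} J = (J\Eb(2d_l)J)^{-it}$ combined with the fact that $\Eb(2d_l)$ and $J\Eb(2d_l)J \in L_{\hbar}(\Bor_N^-)'$ strongly commute, giving $\nabla_\varphi = \Eb(2d_l) \cdot (J\Eb(2d_l)J)^{-1} = \Eb(2d_l) J \Eb(2d_l)^{-1} J$.

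The main obstacle I expect is not conceptual but rather the justification of the tracial right-action formula $\Lambda(ya) = Ja^*J\Lambda(y)$ when $a$ is only affiliated, not bounded: one must work on an appropriate core (for example the span of $\pi(g_{t,w,\kappa})\xi$ as in Remark \ref{RemFormSumFinite}, or the smooth vectors under the one-parameter group $\Ad(\Eb(2d_l)^{it})$), verifying that $x\Eb(d_l) \in \mfn_\tau$ and that $x\Eb(d_l)\Eb(2d_l)^{-it}$ is the closure of a product on this core, so that the commutant identification $JaJ$ extends to $a = \Eb(2d_l)^{-it}\,\eta\, L_{\hbar}(\Bor_N^-)$. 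Once this technical verification is in place, the rest of the argument is a matter of bookkeeping with functional calculus and the trace property.
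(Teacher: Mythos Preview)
Your approach is correct and is exactly what the paper has in mind: the corollary is stated without proof, immediately after the citation to \cite{PT73,Vae01} for the identification \eqref{EqIdentGNSvNTwist}, so the paper is simply invoking the standard Pedersen--Takesaki formula for the modular operator of a trace deformed by an affiliated density, which you have unpacked in detail. One remark: your ``main obstacle'' is not actually an obstacle, since for real $t$ the element $a = \Eb(2d_l)^{-it}$ is a \emph{unitary} in $L_{\hbar}(\Bor_N^-)$, not merely affiliated, so the right-action identity $\Lambda(ya) = Ja^*J\,\Lambda(y)$ is just the bounded tracial case and needs no core argument.
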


\subsection{Right Haar measure for  \texorpdfstring{$\Bbbb_{N,\hbar}^+$}{the upper Borel quantum group}.}

We now determine explicitly the right invariant nsf weight for $(L_{\hbar}(\Bor_N^-),\Delta)$. 

First note that the maps 
$\theta$ and $\Theta$ in \eqref{EqThetaIm} and \eqref{EqSelfDual} lead to  isomorphisms 
\begin{equation}\label{EqThet}
\Theta\colon (L_{\hbar}(\Bor_N^+),\hat{\Delta}) \cong (\overline{L_{\hbar}(\Bor_N^-)},\overline{\Delta}),\qquad \Theta\colon  (L_{\hbar}(\Bor_N^-),\Delta)\cong (\overline{L_{\hbar}(\Bor_N^+)},\overline{\hat{\Delta}}).
\end{equation}
On the other hand, by Lemma \ref{LemRightMUreg} we have that the maps $\vartheta$ and $\varTheta$ in \eqref{EqVarTheta} and \eqref{EqFlipSo} lead to an isomorphism 
\begin{equation}\label{EqVarThet}
\varTheta\colon (L_{\hbar}(\Bor_N^+),\hat{\Delta}^{\opp}) \cong (\overline{L_{\hbar}(\Bor_N^+)},\overline{\hat{\Delta}}).
\end{equation}
By composition, \eqref{EqThet} and \eqref{EqVarThet} lead to 
\[
\upsilon= \theta\vartheta^{-1}\theta\colon  \wbtd{N}\cong \overline{\wbtd{N}},\quad e_{a,b,c} \mapsto \overline{-e_{a,c,b}},\qquad 
\Upsilon\colon  L_{\hbar}(\wbtd{N}) \cong \overline{L_{\hbar}(\wbtd{N})},\qquad \eb(v) \mapsto \eb(\upsilon(v))
\]
inducing an isomorphism
\[
\Upsilon\colon  (L_{\hbar}(\Bor_N^-),\Delta^{\opp}) \cong (\overline{L_{\hbar}(\Bor_N^-)},\overline{\Delta}).
\]
Now 
\[
\overline{\int_{\hat{\Bor}_N^-}^{\hbar}} \circ \Upsilon = \int_{\hat{\Bor}_N^-}^{\hbar},
\]
so since 
\[
\upsilon^{-1}(\overline{d_l}) = d_r,\qquad 2d_r = -(1+|\hbar|^{-1})\sum_{(a,b,c)\in \neC_N} ab \,e_{a,b,c} = -(1+|\hbar|^{-1})\underset{0\leq k<s}{\sum_{1\leq s\leq n}} (N-s)\swe_{s,k},
\]
we find:
\begin{theorem}\label{TheoRightNSF}
The right invariant nsf weight for $(L_{\hbar}(\Bor_N^-),\Delta)$ is 
\[
\psi = \int_{\hat{\Bor}_N^-}^{\hbar,\Eb(2d_r)},\qquad \Eb(2d_r) = \Eb(\underset{0\leq k<s}{\sum_{1\leq s\leq n}} (N-s)\swe_{s,k})^{-(1+|\hbar|^{-1})}.
\]
\end{theorem}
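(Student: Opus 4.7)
The plan is to deduce this from Theorem \ref{TheoLeftNSF} by transport along the $*$-isomorphism $\Upsilon$ constructed in the paragraph immediately preceding the statement. Recall that $\Upsilon$ gives an isomorphism
\[
\Upsilon\colon (L_{\hbar}(\Bor_N^-),\Delta^{\opp}) \cong (\overline{L_{\hbar}(\Bor_N^-)},\overline{\Delta}),
\qquad \eb(v)\mapsto \eb(\upsilon(v)),
\]
with $\upsilon(e_{a,b,c})=\overline{-e_{a,c,b}}$. Since a right invariant nsf weight on $(L_{\hbar}(\Bor_N^-),\Delta)$ is, by definition, the same as a left invariant nsf weight on $(L_{\hbar}(\Bor_N^-),\Delta^{\opp})$, it suffices to produce the latter.

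First I would observe that, by Theorem \ref{TheoLeftNSF}, the weight $\varphi=\int_{\hat{\Bor}_N^-}^{\hbar,\Eb(2d_l)}$ is left Haar on $(L_{\hbar}(\Bor_N^-),\Delta)$. Its complex conjugate $\overline{\varphi}$ is then a left Haar nsf weight on $(\overline{L_{\hbar}(\Bor_N^-)},\overline{\Delta})$, and so by functoriality of left invariant weights under isomorphisms of locally compact quantum groups, $\psi:=\overline{\varphi}\circ\Upsilon$ is left Haar for $\Delta^{\opp}$, hence right Haar for $\Delta$.

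Next I would identify $\psi$ with the weight in the statement. Since $\upsilon$ is a linear isomorphism of skew-symmetric spaces and the tracial weight $\int_{\hat{\Bor}_N^-}^{\hbar}$ is characterized intrinsically via $\int^{\hbar}(f_{\hbar}^{*}f_{\hbar})=\|f\|_{2}^{2}$ for $f\in\Sc(\hat{\Bor}_N^-)$ (as recorded in \eqref{EqIntegralDef}), the pull-back $\overline{\int_{\hat{\Bor}_N^-}^{\hbar}}\circ\Upsilon$ coincides (after the obvious normalization of Lebesgue measure) with $\int_{\hat{\Bor}_N^-}^{\hbar}$ on $L_{\hbar}(\Bor_N^-)$. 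The deformation piece transports by functional calculus: $\Upsilon(\Eb(d_l))=\Eb(\upsilon(d_l))$, so the pulled-back deformation of $\psi$ is $\Eb(\upsilon^{-1}(\overline{2d_l}))$. Writing $\nee_{s,k}=\sum_{j=0}^{k} e_{N-s,j,s-j}$ and $\swe_{s,k}=\sum_{j=0}^{k} e_{N-s,s-j,j}$, the formula $\upsilon(e_{N-s,j,s-j})=\overline{-e_{N-s,s-j,j}}$ gives $\upsilon(\nee_{s,k})=\overline{-\swe_{s,k}}$; applied to $2d_l=(1+|\hbar|^{-1})\sum(N-s)\nee_{s,k}$ this yields $\upsilon^{-1}(\overline{2d_l})=2d_r$ with precisely the $d_r$ of the statement.

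The main (and only) obstacle is the bookkeeping around the complex-conjugation/opposite duality: one has to be careful that the tracial weight really is preserved on the nose under $\Upsilon$, which comes down to noting that the twisted convolution product on $\Sc(\widehat{\overline{\Bor_N^-}})$ is intertwined with that on $\Sc(\hat{\Bor}_N^-)$ by the linear map dual to $\upsilon$, together with the fact that $\upsilon$ preserves Lebesgue measure (it is a permutation of basis vectors up to sign). All of this is already implicit in the material developed for Theorem \ref{TheoLeftNSF}, so no new analytic input is required.
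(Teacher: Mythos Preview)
Your proposal is correct and follows essentially the same approach as the paper: transport the left Haar weight from Theorem~\ref{TheoLeftNSF} along the isomorphism $\Upsilon\colon(L_{\hbar}(\Bor_N^-),\Delta^{\opp})\cong(\overline{L_{\hbar}(\Bor_N^-)},\overline{\Delta})$, use that $\overline{\int_{\hat{\Bor}_N^-}^{\hbar}}\circ\Upsilon=\int_{\hat{\Bor}_N^-}^{\hbar}$, and compute $\upsilon^{-1}(\overline{d_l})=d_r$ via $\upsilon(\nee_{s,k})=\overline{-\swe_{s,k}}$. The paper records exactly these steps in the paragraph immediately preceding the theorem.
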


\subsection{Modular element for  \texorpdfstring{$\Bbbb_{N,\hbar}^+$}{the upper Borel quantum group}}

If $(M,\Delta)$ is a locally compact quantum group with left and right nsf weights $\varphi,\psi$, there exists a unique strictly positive operator $\delta\,\eta\,M$ (i.e.\ $\delta^{it}\in M$ for all $t\in \R$) which is \emph{group-like}, i.e.\
\[
\Delta(\delta) = \delta\otimes \delta,\qquad \Delta(\delta^{it}) = \delta^{it}\otimes \delta^{it},
\]
and such that, possibly rescaling $\psi$, we have
\[
\psi = \varphi_{\delta},
\]
where $\varphi_{\delta}$ is the unique nsf weight such that 
\begin{equation}\label{EqVarphidelta}
\varphi_{\delta}(x^*x) = \varphi(\delta^{1/2}x^*x \delta^{1/2}),\qquad \forall x \in M\textrm{ with }x\circ \delta^{1/2} \textrm{ bounded and with closure }x\delta^{1/2} \in \mfn_{\varphi}.
\end{equation}
One calls $\delta$ the \emph{modular element} of $(M,\Delta)$.

\begin{lemma}\label{LemGrouplike}
The elements $\Eb(v)$ for $v \in \Tor_N^-$ are group-like.
\end{lemma}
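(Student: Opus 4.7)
The plan is to reduce the statement to the already-established case of the generators $\Lb_s = \Eb(\nee_s)$ by exploiting that $\Tor_N^-$ is a trivial skew-symmetric subspace of $\wbtd{N}$, so the exponentials of its elements pairwise strongly commute and admit a clean multiplicative decomposition via functional calculus.

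First I would invoke Proposition \ref{Propstandardcoproducts}, which already supplies the group-like identity $\Delta(\Lb_s) = \Lb_s \otimes \Lb_s$ for each $s$. Since $\Delta$ is a normal $*$-homomorphism, this upgrades (via functional calculus applied to $\Lb_s$) to the one-parameter identity
\[
\Delta(\Lb_s^{it}) = \Lb_s^{it} \otimes \Lb_s^{it},\qquad t\in \R.
\]
Next, by \eqref{EqVanishing} we have $(\nee_s,\nee_t) = 0$ for all $1\leq s,t\leq n$, so the product rule \eqref{EqProdY} degenerates into strong commutation of the $\Lb_s$:
\[
\Lb_s^{it}\Lb_t^{is'} = \Lb_t^{is'}\Lb_s^{it},\qquad s,t,s',\in\R.
\]
In particular, the operators $\Lb_s\otimes \Lb_s$ also pairwise strongly commute in $M\vNTens M$.

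Now for arbitrary $v = \sum_{s=1}^n \alpha_s \nee_s \in \Tor_N^-$, the same vanishing of pairings together with \eqref{EqProdY} gives the functional-calculus identity
\[
\Eb(v)^{it} = \prod_{s=1}^n \Lb_s^{i\alpha_s t},\qquad t\in \R,
\]
where the product is unambiguous by pairwise commutation. Applying the normal homomorphism $\Delta$ factor by factor and using the previous step, we obtain
\[
\Delta(\Eb(v)^{it}) = \prod_{s=1}^n \Delta(\Lb_s^{i\alpha_s t}) = \prod_{s=1}^n (\Lb_s\otimes \Lb_s)^{i\alpha_s t} = (\Eb(v)\otimes \Eb(v))^{it},
\]
where the last equality is again an application of \eqref{EqProdY} in the factor $M\vNTens M$, with the relevant pairings of the tensor-product skew-symmetric structure again vanishing. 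Since this holds for every $t\in \R$, functional calculus (uniqueness of the generator of a one-parameter unitary group) yields $\Delta(\Eb(v)) = \Eb(v)\otimes \Eb(v)$, which is the desired group-like identity.

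There is no real obstacle here beyond being careful with functional-calculus bookkeeping for unbounded affiliated elements; the mild point to verify is that $\Delta$, being normal, indeed commutes with the bounded Borel functional calculus of the self-adjoint generators $\ln \Lb_s$, so that the passage from the group-like identity on $\Lb_s$ to arbitrary real-power products is automatic.
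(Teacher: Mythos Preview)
Your argument is correct: once Proposition \ref{Propstandardcoproducts} has established $\Delta(\Lb_s)=\Lb_s\otimes\Lb_s$, the triviality of the skew form on $\Tor_N^-$ (equation \eqref{EqVanishing}) lets you decompose $\Eb(v)^{it}$ as a commuting product of the $\Lb_s^{i\alpha_s t}$, and the group-like property propagates through the normal $*$-homomorphism $\Delta$ factor by factor.

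The paper takes a different, more direct route: rather than invoking Proposition \ref{Propstandardcoproducts}, it computes $\Delta(\Eb(v)^{it}) = \Fct^*(1\otimes \Eb(v)^{it})\Fct$ by hand for arbitrary $v\in \Tor_N^-$, using the factorisation $\Fct = \Fctt\Kc$ from \eqref{EqRealMUAlt}. The point is that $\Fctt$ is a product of dilogarithms in vectors $-\swe_{s,k}\oplus -\nwe_{N-s,r-k-1}$, and by \eqref{EqDiffCar}, \eqref{EqDiffCart} the $\nwe_{s,k}$ pair trivially with any $v\in\Tor_N^-$; hence $1\otimes\Eb(v)^{it}$ commutes through $\Fctt$, and conjugation by $\Kc$ alone produces $\Eb(v)^{it}\otimes\Eb(v)^{it}$ via \eqref{EqGausEqInv}. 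Your approach is more economical given that Proposition \ref{Propstandardcoproducts} is already available; the paper's approach is self-contained and in fact reproves (and slightly generalises) the $\Lb_s$-case of that proposition along the way.
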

\begin{proof}
Fix $v\in \Tor_N^-$. We have to compute 
\[
\Delta(\Eb(v)^{it}) = \Fct^*(1\otimes \Eb(v)^{it})\Fct.
\]
But using the expression \eqref{EqRealMUAlt} and noting that 
\[
(\nwe_{s,k},v)=0,\qquad 0\leq k <s\leq n
\]
by \eqref{EqDiffCar} and \eqref{EqDiffCart}, we find by \eqref{EqGausEqInv} that indeed
\[
\Delta(\Eb(v)^{it}) = \Kc^* \Fctt^*(1\otimes \Eb(v)^{it})\Fctt\Kc = \Kc^*(1\otimes \Eb(v)^{it}) \Kc = \Eb(v)^{it}\otimes \Eb(v)^{it}
\]
as claimed. 
\end{proof}
 
\begin{theorem}
The modular element of $(L_{\hbar}(\Bor_N^-),\Delta)$ equals 
\begin{equation}\label{EqModEl}
\delta = \Lb_{\delta}^{-2(1+|\hbar|^{-1})},\qquad \Lb_{\delta} = \Eb(\sum_{i=1}^n \nevarpi_i).
\end{equation}
\end{theorem}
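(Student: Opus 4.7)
The plan is to verify directly that the proposed $\delta := \Lb_\delta^{-2(1+|\hbar|^{-1})}$ satisfies the two characterizing properties of the modular element, namely being group-like and obeying $\psi = \varphi_\delta$; the uniqueness of the modular element then finishes the proof. Since $\sum_i \nevarpi_i \in \Tor_N^- \subseteq \Bor_N^-$, the candidate $\delta = \Eb(-2(1+|\hbar|^{-1})\sum_i \nevarpi_i)$ is affiliated with $L_\hbar(\Bor_N^-) = L^{\infty}(\Bbbb_{N,\hbar}^+)$, and it is group-like by Lemma \ref{LemGrouplike}.

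For the invariance relation, the plan is to use the explicit formulas for $\varphi$ and $\psi$ from Theorems \ref{TheoLeftNSF}--\ref{TheoRightNSF} together with the traciality of the underlying weight $\int_{\hat{\Bor}_N^-}^{\hbar}$, identifying $\Eb(2d_l)^{1/2} = \Eb(d_l)$ (since $(d_l,d_l)=0$) and analogously for $d_r$. Expanding $\varphi_\delta(x^*x) = \varphi(\delta^{1/2}x^*x\delta^{1/2})$ and cycling under the trace on a suitable core of $x$ yields
\[
\varphi_\delta(x^*x) = \int_{\hat{\Bor}_N^-}^{\hbar}\!\bigl(\Eb(d_l)\delta^{1/2}x^*x\delta^{1/2}\Eb(d_l)\bigr) = \int_{\hat{\Bor}_N^-}^{\hbar}\!\bigl(x^*x\cdot\delta^{1/2}\Eb(2d_l)\delta^{1/2}\bigr),
\]
while $\psi(x^*x) = \int_{\hat{\Bor}_N^-}^{\hbar}(x^*x\cdot\Eb(2d_r))$. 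Thus $\psi = \varphi_\delta$ reduces to the positive operator identity
\[
\delta^{1/2}\,\Eb(2d_l)\,\delta^{1/2} = \Eb(2d_r).
\]

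The key observation will be that, since all factors have the form $\Eb(v)$ for $v\in \Bor_N^-$, this operator identity collapses by skew-symmetry of the pairing. Setting $a = -(1+|\hbar|^{-1})\sum_i \nevarpi_i$ (so $\delta^{1/2} = \Eb(a)$) and $b = 2d_l$, iterating the Weyl relation $\Eb(u)\Eb(v) = e^{-\pi i\hbar(u,v)}\Eb(u+v)$ (valid as an equality of closed operators, verified on an analytic core via Baker--Campbell--Hausdorff applied to the self-adjoint generators whose commutator is the scalar $-2\pi i\hbar(u,v)$) gives
\[
\Eb(a)\Eb(b)\Eb(a) = e^{-\pi i\hbar(a,b)}\,e^{-\pi i\hbar(b,a)}\,\Eb(2a+b) = \Eb(2a+b),
\]
the two phases cancelling because $(a,b)+(b,a)=0$. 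Hence the operator identity reduces to the finite-dimensional vector equation $2d_l - 2d_r = 2(1+|\hbar|^{-1})\sum_{i=1}^n \nevarpi_i$ in $\Bor_N^-$.

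This vector identity is then verified by direct substitution. Plugging in the explicit formulas for $d_l, d_r$ and using $\swe_{s,k} = \nee_s - \nee_{s,s-k-1}$ from \eqref{EqDiffOr}, the sums $\sum_{k=0}^{s-1}\nee_{s,k}$ and $\sum_{k=0}^{s-1}\nee_{s,s-k-1}$ cancel after reindexing, giving
\[
2d_l - 2d_r = (1+|\hbar|^{-1})\textstyle\sum_{s=1}^n s(N-s)\nee_s.
\]
What remains is $\sum_s s(N-s)\nee_s = 2\sum_i \nevarpi_i$, which via \eqref{EqFundWeightUp} translates to the statement $B\mathbf{c} = (1,\ldots,1)^T$ with $c_s = s(N-s)/2$ and $B$ the type-$A_n$ Cartan matrix from \eqref{EqBCartan}; this reduces to the elementary identity $2c_s - c_{s-1}-c_{s+1}=1$ with boundary conditions $c_0=c_N=0$, verified by direct computation. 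The main obstacle is the phase cancellation in $\Eb(a)\Eb(b)\Eb(a)$: although $\delta^{1/2}$ and $\Eb(2d_l)$ do not commute, skew-symmetry of the pairing causes the product to collapse to a single exponential of a vector, thereby reducing a genuine operator-theoretic question to an elementary combinatorial identity about the Cartan data of $A_n$.
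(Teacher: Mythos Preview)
Your proposal is correct and follows essentially the same route as the paper: both reduce the claim to the vector identity $2d_l - 2d_r = 2(1+|\hbar|^{-1})\sum_{i=1}^n \nevarpi_i$ in $\Bor_N^-$, verify it by the computation $\sum_{k=0}^{s-1}(\nee_{s,k}+\swe_{s,k}) = s\,\nee_s$ together with the $A_n$ Cartan identity $\sum_s s(N-s)\nee_s = 2\sum_i \nevarpi_i$, and conclude via Lemma~\ref{LemGrouplike} that $\Lb_\delta$ is group-like. The paper's proof is terser: it writes down $2d_r-2d_l$ directly and leaves the passage from the vector identity to $\psi = \varphi_\delta$ implicit, whereas you spell out the intermediate operator identity $\delta^{1/2}\Eb(2d_l)\delta^{1/2} = \Eb(2d_r)$ and argue via the Weyl relations that the phases cancel by skew-symmetry --- a useful clarification, but not a different strategy.
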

\begin{proof}
This follows directly from Theorem \ref{TheoLeftNSF} and Theorem \ref{TheoRightNSF}, together with the fact that
\begin{multline*}
2d_r-2d_l = -(1+|\hbar|^{-1})\underset{0\leq k<s}{\sum_{1\leq s\leq n}} (N-s)(\swe_{s,k}  + \nee_{s,k}) =- (1+|\hbar|^{-1})\sum_{1\leq s\leq n} (N-s)s\nee_s \\ =- 2(1+|\hbar|^{-1})\sum_{i=1}^n \nevarpi_i,
\end{multline*}
and the fact that $\Lb_{\delta}$ is indeed group-like by Lemma \ref{LemGrouplike}.
\end{proof}

Note that with $\varphi$ and $\psi$ scaled as in Theorem \ref{TheoLeftNSF} and Theorem \ref{TheoRightNSF}, we then indeed get 
\[
\psi = \varphi_{\delta}. 
\]

\subsection{Scaling constant of  \texorpdfstring{$\Bbbb_{N,\hbar}^+$}{the upper Borel quantum group}} 

If $(M,\Delta)$ is a locally compact quantum group with left invariant nsf weight $\varphi$, then associated to $\varphi$ one has the modular automorphism group $(\sigma_t^{\varphi})_{t\in \R}$ of $M$, determined by 
\[
\sigma_t^{\varphi}(x) = \nabla_{\varphi}^{it}x\nabla_{\varphi}^{-it},\qquad x\in M.
\]
There then exists a unique number $\nu>0$, called the \emph{scaling constant} of $(M,\Delta)$, such that if $\delta$ is the modular element of $(M,\Delta)$, we have
\[
\sigma_t^{\varphi}(\delta) = \nu^t \delta
\]
(see e.g.\ \cite[Proposition 2.13.(18)]{KV03}).

Now in the case of $(L_{\hbar}(\Bor_N^-),\Delta)$, we immediately see that
\[
\sigma_t^{\varphi}(x) = \nabla_{\varphi}^{it}x\nabla_{\varphi}^{-it} = \Eb(2d_l)^{it}x \Eb(2d_l)^{-it},\qquad x\in L_{\hbar}(\Bor_N^-),t\in \R.
\]
Together with the concrete expression \eqref{EqModEl} and the simple computation 
\[
2\underset{0\leq k <s}{\sum_{1\leq s,t\leq n}} (N-s)(\nee_{s,k},\nevarpi_t) = \underset{0\leq k <s}{\sum_{1\leq s\leq n}}(N-s) = \sum_{1\leq s\leq n} (N-s)s = \tau_n,
\]
with $\tau_n$ the $n$-th tetrahedral number
\[
\tau_n = \binom{n+2}{3},
\]
this then leads to:

\begin{theorem}
The scaling constant of $(L_{\hbar}(\Bor_N^-),\Delta)$ equals 
\[
\nu = e^{-2\pi \beta_{\hbar}  \tau_n},\qquad \beta_{\hbar} =  \sgn(\hbar)(|\hbar|^{1/2}+|\hbar|^{-1/2})^2.
\]
\end{theorem}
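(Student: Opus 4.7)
The plan is to compute $\sigma_t^{\varphi}(\delta)$ directly using the explicit expressions for the modular operator $\nabla_{\varphi} = \Eb(2d_l)$ and for the modular element $\delta$, reducing the problem to a single skew-pairing calculation that the text has already done up to a constant.

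First, since $\delta = \Eb(-2(1+|\hbar|^{-1})\sum_t \nevarpi_t)$ by functional calculus applied to $\Lb_{\delta}=\Eb(\sum_t\nevarpi_t)$, I would rewrite $\delta^{is}=\Eb(w)^{is}$ with
\[
w=-2(1+|\hbar|^{-1})\sum_{t=1}^n\nevarpi_t,\qquad u=2d_l=(1+|\hbar|^{-1})\underset{0\le k<s}{\sum_{1\le s\le n}}(N-s)\nee_{s,k}.
\]
From the commutation relation $\pi(v)\pi(w)=e^{2\pi i\hbar(v,w)}\pi(w)\pi(v)$ of a unitary $\hbar$-representation (cf.\ the line following \eqref{EqProdProj}) and $\Eb(v)^{it}=\pi(tv)$, I obtain the basic identity
\[
\Eb(u)^{it}\Eb(w)^{is}\Eb(u)^{-it}=e^{2\pi i\hbar ts(u,w)}\Eb(w)^{is},\qquad s,t\in\R.
\]
Since $\sigma^{\varphi}_t(x)=\Eb(2d_l)^{it}x\Eb(2d_l)^{-it}$, this yields $\sigma^{\varphi}_t(\delta^{is})=e^{2\pi i\hbar ts(u,w)}\delta^{is}$, so comparison with $\sigma^{\varphi}_t(\delta)=\nu^t\delta$, i.e.\ $(\nu^t\delta)^{is}=e^{ist\ln\nu}\delta^{is}$, identifies
\[
\ln\nu=2\pi\hbar\,(u,w).
\]

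Next I plug in the explicit formulas. Using the pairing $(\nee_{s,k},\nevarpi_t)=\tfrac12\delta_{s,t}$ from \eqref{EqPairFundWeight} and the identity
\[
2\underset{0\le k<s}{\sum_{1\le s,t\le n}}(N-s)(\nee_{s,k},\nevarpi_t)=\tau_n
\]
displayed in the text just above the theorem, I compute
\[
(u,w)=-2(1+|\hbar|^{-1})^2\underset{0\le k<s}{\sum_{1\le s,t\le n}}(N-s)(\nee_{s,k},\nevarpi_t)=-(1+|\hbar|^{-1})^2\tau_n,
\]
hence $\ln\nu=-2\pi\hbar(1+|\hbar|^{-1})^2\tau_n$.

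Finally, the cosmetic step is to check the algebraic identity
\[
\hbar\bigl(1+|\hbar|^{-1}\bigr)^{2}=\sgn(\hbar)\bigl(|\hbar|^{1/2}+|\hbar|^{-1/2}\bigr)^2=\beta_{\hbar},
\]
which is immediate by separating the cases $\hbar>0$ and $\hbar<0$ (noting $\hbar|\hbar|^{-2}=\sgn(\hbar)|\hbar|^{-1}$). This gives $\ln\nu=-2\pi\beta_{\hbar}\tau_n$ and hence the claimed formula for $\nu$. There is no real obstacle here: the entire argument is bookkeeping, and the nontrivial combinatorial input $\sum_s(N-s)s=\tau_n$ is the one already recorded in the excerpt.
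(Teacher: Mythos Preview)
Your proposal is correct and follows essentially the same approach as the paper: both compute $\sigma_t^{\varphi}(\delta)$ via the basic $\hbar$-commutation relation between $\Eb(2d_l)$ and $\delta=\Eb(w)$, reduce to the pairing $(2d_l,w)$, and plug in the combinatorial identity $2\sum (N-s)(\nee_{s,k},\nevarpi_t)=\tau_n$ displayed just before the theorem. Your write-up is slightly more explicit about extracting $\ln\nu$ from the one-parameter group identity, but the argument is the same.
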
 

\begin{remark}
This shows in particular that $\Bbbb_N^+$ cannot be a $*$-algebraic quantum group as in \cite{VD98}, as the latter have scaling constant equal to $1$ \cite[Theorem 3.4]{DCVD10}. 
\end{remark}

\subsection{Antipode}

Let $\Wbb \in \Bc(\Hsp\otimes \Hsp)$ be a modular multiplicative unitary with associated modular datum $(\Wbb,\Qb,\hat{\Qb},\hat{\Wbb})$. Let $(M,\Delta)$ be its associated von Neumann bialgebra. Then by \cite[Theorem 2.3]{SW01} (applied to the dual of $\Wbb$), the Banach space 
\begin{equation}\label{EqCstarMU}
A = A_{\Wbb} =  [(\id\otimes \omega)\Wbb\mid \omega \in \Bc(\Hsp)_*] 
\end{equation}
is a C$^*$-algebra, to which $\Delta_{\Wbb}$ restricts as a non-degenerate $*$-homomorphism
\[
\Delta_{\Wbb}\colon  A \rightarrow M(A\otimes A).
\]
There then exists a unique closed unbounded operator 
\[
S\colon  \mathscr{D}(S) \subseteq A \rightarrow A 
\]
such that $\{(\id\otimes \omega)\Wbb\mid \omega \in \Bc(\Hsp)_*\}$ is a core for $S$ and satisfies there 
\begin{equation}\label{EqModularAntipodeCrit}
S((\id\otimes \omega)\Wbb) = (\id\otimes \omega)(\Wbb^*),\qquad \omega \in \Bc(\Hsp). 
\end{equation}
Moreover, by this same theorem there then exists a unique involutive, anti-multiplicative automorphism
\[
R\colon  M\rightarrow M
\]
the \emph{unitary antipode}, and a $\sigma$-weakly continuous one-parameter group of automorphisms
\begin{equation}\label{EqFormScalGroup}
\tau_t\colon  M \rightarrow M,\qquad t\in \R,
\end{equation}
called the \emph{scaling group}, such that $\tau_t$ is norm-continuous on $A$, and such that on $A$ we have the equality
\[
S = R\circ \tau_{-i/2},
\]
where $\tau_{-i/2}$ has as domain all $a\in A$ for which $t \mapsto \tau_t(a)$ has a norm-continuous extension to the halfstrip $\{z\in \C\mid -i/2\leq \Imm(z) \leq 0\}$, and is holomorphic on the interior of that strip. The scaling group is then implemented concretely through 
\[
\tau_t(x) =  \hat{\Qb}^{2it} x \hat{\Qb}^{-2it},\qquad x\in M,t\in \R.  
\]

The following theorem tells us that the above data only depends on $(M,\Delta)$. It is a small variation of \cite[Theorem 5]{SW07}. 

\begin{theorem} \label{TheoIndepofantipode}
Let $\Wbb,\Vbb$ be modular multiplicative unitaries giving rise to isomorphic von Neumann bialgebras
\begin{equation}\label{EqIsovN}
(M_{\Wbb},\Delta_{\Wbb})\underset{\theta}{\cong} (M_{\Vbb},\Delta_{\Vbb}). 
\end{equation}
Then $\theta$ intertwines the respective unitary antipode and scaling group. 
\end{theorem}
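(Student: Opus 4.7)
The plan is to reduce the claim to intertwining the antipodes $S_{\Wbb}$ and $S_{\Vbb}$. Granting this, the decomposition $S = R \circ \tau_{-i/2}$ into an involutive anti-$*$-isomorphism $R$ and the analytic generator $\tau_{-i/2}$ of the scaling group is unique for a closed densely defined operator; hence any isomorphism $\theta$ carrying $S_{\Wbb}$ to $S_{\Vbb}$ must also carry $R_{\Wbb}$ to $R_{\Vbb}$, and by recovering the one-parameter group $\tau_t$ from its analytic extension at $z = -i/2$, also $\tau^{\Wbb}_t$ to $\tau^{\Vbb}_t$ for all $t \in \R$.

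For the intertwining of antipodes, the strategy is to exhibit an intrinsic characterization of $S$ in terms of $(M, \Delta)$ alone. In the present setting both $\Wbb$ and $\Vbb$ give rise to locally compact quantum groups carrying left Haar weights (Theorem~\ref{TheoLCQG}), and the left Haar weight $\varphi$ is unique up to a positive scalar. The isomorphism $\theta$ therefore carries $\varphi_{\Wbb}$ to a positive multiple of $\varphi_{\Vbb}$. One then invokes Kustermans' characterization from \cite{KV00}: the antipode $S$ is uniquely determined as the closure of the operator acting on the core of elements
$$x_{a,b} = (\id \otimes \varphi)\big((1 \otimes a^{*}) \Delta(b)\big), \qquad a, b \in \mfn_{\varphi},$$
by $S(x_{a,b}) = (\id \otimes \varphi)\big(\Delta(a^{*})(1 \otimes b)\big)$. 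Since this formula involves only $\Delta$, $\varphi$, and slicing — all of which are transported by $\theta$ — the intertwining $\theta \circ S_{\Wbb} = S_{\Vbb} \circ \theta$ follows on the core, and extends to all of $\msD(S_{\Wbb})$ by closedness.

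The main obstacle is that the general statement in \cite[Theorem~5]{SW07} works at the level of modular multiplicative unitaries without assuming Haar weights exist, and must extract the intrinsic description of $S$ directly from the defining relation $S((\id \otimes \omega)\Wbb) = (\id \otimes \omega)\Wbb^{*}$ combined with the pentagon equation, via a considerably more delicate analysis of the bimodule structure inherited from $\Delta$. In the present context the availability of Haar weights (Theorem~\ref{TheoLCQG}) sidesteps this complication, so the required variation is primarily bookkeeping to align the SW07 framework with the Kustermans--Vaes locally compact quantum group framework.
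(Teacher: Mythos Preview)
Your argument has two genuine gaps.

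\textbf{Scope.} The theorem is stated for \emph{arbitrary} modular multiplicative unitaries $\Wbb,\Vbb$; it is explicitly a variation of \cite[Theorem~5]{SW07}, a general result about manageable/modular multiplicative unitaries. Theorem~\ref{TheoLCQG} only establishes Haar weights for the specific Fock--Goncharov bialgebra, so invoking it here to obtain a Haar weight on $(M_{\Wbb},\Delta_{\Wbb})$ is unjustified in general. Moreover, the corollary immediately following this theorem applies it to the left regular multiplicative unitary of an arbitrary locally compact quantum group versus a given modular $\Wbb$, so the general statement is actually needed for the paper's logic.

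\textbf{Circularity.} Even granting Haar weights, your argument conflates two a priori different antipodes. The Kustermans formula characterizes the KV antipode $S^{KV}$, which is the one satisfying \eqref{EqModularAntipodeCrit} for the \emph{left regular} multiplicative unitary. The objects $S_{\Wbb},S_{\Vbb}$ in the present theorem are defined by \eqref{EqModularAntipodeCrit} for the given $\Wbb,\Vbb$. That $S_{\Wbb}=S^{KV}$ is precisely the content of the corollary following the theorem, which is \emph{deduced} from the theorem. So your use of the Kustermans characterization to prove the theorem is circular.

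The paper's proof avoids both issues by working purely at the level of multiplicative unitaries. After a spatial realization so that $M_{\Wbb}=M_{\Vbb}=M\subseteq\Bc(\Hsp)$ with the same comultiplication, it shows $\Vbb$ is a right absorbing corepresentation of $(M,\Delta)$ in the sense of \cite{SW07}; this yields a normal isomorphism $\hat{\theta}\colon \hat{M}_{\Wbb}\to\hat{M}_{\Vbb}$ with $(\id\otimes\hat{\theta})\Vbb=\Wbb$, whence the slice C$^*$-algebras satisfy $A_{\Wbb}=A_{\Vbb}$. One then applies \cite[Theorem~5]{SW07} directly. No Haar weight enters.
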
 
\begin{proof}
For the first part of the theorem, it is enough to show that $\theta$ intertwines the C$^*$-algebra $A_{\Wbb}$ in \eqref{EqCstarMU} with the C$^*$-algebra $A_{\Vbb}$, for one can then simply apply \cite[Theorem 5]{SW07}.  

Now since any isomorphism of von Neumann algebras can always be implemented through unitary conjugation, upon tensoring their defining normal representation with a multiplicity Hilbert space, we may assume that we are simply given a von Neumann bialgebra $(M,\Delta)$ with $M \subseteq \Bc(\Hsp)$, and two modular multiplicative unitaries 
\[
\Wbb,\Vbb\in \Bc(\Hsp \otimes \Hsp)
\]
such that 
\[
M = [(\id\otimes \omega)\Wbb\mid \omega\in \Bc(\Hc)_*]^{\sigma\textrm{-weak}} = [(\id\otimes \omega)\Vbb\mid \omega\in \Bc(\Hc)_*]^{\sigma\textrm{-weak}},
\]
and such that
\[
\Delta(x) = \Wbb^*(1\otimes x)\Wbb = \Vbb^*(1\otimes x)\Vbb,\qquad x\in M. 
\]
Now $\Vbb \in M \bar{\otimes}\Bc(\Hsp)$ is a unitary corepresentation for $(M,\Delta)$, in the sense that 
\[
(\Delta\otimes \id)\Vbb = \Vbb_{13}\Vbb_{23}. 
\]
But since $\Delta = \Delta_{\Wbb}$, this entails 
\[
\Vbb_{13} = \Wbb_{12}^*\Vbb_{23}\Wbb_{12}\Vbb_{23},
\]
and so from $\Wbb \in M(A_{\Wbb}\otimes \Kc(\Hsp))$ (\cite[Theorem 2.3]{SW01} applied to $\hat{\Wbb} = \Wbb_{21}^*$) we see that $\Vbb \in M(A_{\Wbb}\otimes \Kc(\Hsp))$. So $\Vbb$ is also a representation of $(A_{\Wbb},\Delta_{\Wbb})$ in the sense of \cite{SW07}, and moreover it is right absorbing, in the sense that for any other unitary corepresentation $U$ of $(M_{\Wbb},\Delta_{\Wbb})$ we have a unitary conjugation
\[
\Vbb_{13}U_{23} \cong \Vbb_{13},
\]
using the identity 
\[
\Vbb_{13}U_{23}  =  U_{32}\Vbb_{13}U_{32}^*,
\]
which follows immediately from the fact that $U$ is a unitary corepresentation and $\Delta = \Delta_{\Vbb}$. 

It then follows from the arguments at the beginning of \cite[Section 4]{SW07} that there exists a normal isomorphism 
\[
\hat{\theta}\colon  \hat{M}_{\Wbb} \rightarrow \hat{M}_{\Vbb},\qquad (\id\otimes \hat{\theta})\Vbb = \Wbb. 
\]
But this implies immediately 
\[
A_{\Wbb} = [(\id\otimes \omega)\Wbb\mid \omega \in \Bc(\Hsp)_*] = [(\id\otimes \omega)\Vbb\mid \omega \in \Bc(\Hsp)_*]  = A_{\Vbb}.
\] 
\end{proof}

Now if $(M,\Delta)$ is a locally compact quantum group with left invariant nsf weight $\varphi$, there exist a unique multiplicative unitary $\Wbb$ on $L^2(M)\otimes L^2(M)$, called the \emph{left regular representation}, such that 
\begin{equation}\label{EqLeftRegularMultUnit}
(\omega\otimes \id)(\Wbb^*)\Lambda_{\varphi}(x) = \Lambda_{\varphi}((\omega\otimes \id)\Delta(x)),\qquad \forall \omega \in M_*,\forall x\in \mfn_{\varphi}. 
\end{equation}

\begin{cor}
If $\Wbb$ is a modular multiplicative unitary such that $(M_{\Wbb},\Delta_{\Wbb})$ defines a locally compact quantum group, then its unitary antipode and scaling group, as defined by \eqref{EqModularAntipodeCrit}, agree with the ones defined in \cite{KV03}. 
\end{cor}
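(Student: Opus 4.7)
The plan is to reduce the statement to the case where $\Wbb$ is the left regular multiplicative unitary of $(M_\Wbb,\Delta_\Wbb)$, and then to appeal directly to the Kustermans--Vaes construction. The key observation is that Theorem \ref{TheoIndepofantipode} already tells us that the unitary antipode $R$ and scaling group $(\tau_t)$ obtained from \eqref{EqModularAntipodeCrit} depend only on the von Neumann bialgebra $(M_\Wbb,\Delta_\Wbb)$, not on the particular modular multiplicative unitary implementing it. Hence, given that $(M_\Wbb,\Delta_\Wbb)$ carries the structure of a locally compact quantum group in the sense of \cite{KV03}, it suffices to exhibit one convenient modular multiplicative unitary on which the two definitions can be compared.

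The natural candidate is the left regular representation $\Wbb_L$ from \eqref{EqLeftRegularMultUnit}, built from the left invariant nsf weight $\varphi$ on $M_\Wbb$. By the results of Kustermans--Vaes, $\Wbb_L$ is a multiplicative unitary that implements $(M_\Wbb,\Delta_\Wbb)$ in the sense that $M_{\Wbb_L}=M_\Wbb$ and $\Delta_{\Wbb_L}=\Delta_\Wbb$, and moreover $\Wbb_L$ is manageable, hence modular. Thus Theorem \ref{TheoIndepofantipode} applies and identifies the antipode data coming from $\Wbb$ with those coming from $\Wbb_L$ via the identity map on $M_\Wbb$. It therefore remains to verify, on $\Wbb_L$ itself, that the pair $(R,\tau)$ characterised by \eqref{EqModularAntipodeCrit} coincides with the Kustermans--Vaes antipode and scaling group.

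The latter verification is essentially built into the Kustermans--Vaes framework: in \cite{KV03} the antipode $S_{KV}$ of the locally compact quantum group $(M,\Delta)$ is characterised by the very same property that the slices $(\id\otimes\omega)\Wbb_L$ form a core on which $S_{KV}((\id\otimes\omega)\Wbb_L) = (\id\otimes\omega)(\Wbb_L^*)$, together with the polar decomposition $S_{KV}=R_{KV}\circ\tau_{-i/2}^{KV}$ with $R_{KV}$ anti-multiplicative involutive and $(\tau_t^{KV})$ a $\sigma$-weakly continuous one-parameter group. But these are exactly the defining properties of the closed operator $S$ and its polar decomposition $R\circ\tau_{-i/2}$ coming from the Sołtan--Woronowicz axiomatics recalled before \eqref{EqFormScalGroup}. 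Since a closed, densely defined operator is determined by its action on a core, and since the polar decomposition of such an $S$ into an anti-multiplicative involution and a $\sigma$-weakly continuous one-parameter group is unique (see \cite[Proposition 2.13]{KV03}), we conclude $S=S_{KV}$, $R=R_{KV}$ and $\tau_t=\tau_t^{KV}$.

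The main technical point — and really the only nontrivial step — is to know that the left regular representation of a locally compact quantum group in the sense of Kustermans--Vaes is automatically a modular multiplicative unitary in the sense of \cite{SW01}, so that Theorem \ref{TheoIndepofantipode} can indeed be applied. This is supplied by the manageability of $\Wbb_L$ proved in \cite{KV03}, together with the equivalence between manageability and modularity established in \cite{SW01}. Once this is granted, everything else is a matter of matching definitions.
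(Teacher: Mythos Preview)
Your proposal is correct and follows essentially the same route as the paper's proof: both invoke Theorem \ref{TheoIndepofantipode} after observing that the left regular multiplicative unitary is modular and that the Kustermans--Vaes antipode is characterised by \eqref{EqModularAntipodeCrit} on it. One minor terminological slip: manageability and modularity are not equivalent in \cite{SW01}; rather, manageability is the special case of modularity, so the implication you need (manageable $\Rightarrow$ modular) is immediate --- the paper simply cites \cite[Proposition 6.10]{KV00} for this.
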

\begin{proof}
By \cite[Proposition 6.10]{KV00}, the left regular multiplicative unitary attached to a locally compact quantum group is modular. The corollary then follows directly from Theorem \ref{TheoIndepofantipode}, since the antipode for a locally compact quantum group is determined by \eqref{EqModularAntipodeCrit} with respect to the left regular representation.
\end{proof}

Let us return now again to the locally compact quantum group $\Bbbb_N^+ = (L_{\hbar}(\Bor_N^-),\Delta)$. Invoking \eqref{EqFormScalGroup}, Theorem \ref{TheoModFGFin} and Theorem \ref{EqFGGeneralMod}, we find: 
\begin{theorem} \label{TheoScalGroup}
The scaling group of $\Bbbb_N^+$ is given by 
\begin{equation}
\tau_t(x) = \Lb_{\delta}^{2i(1+|\hbar|^{-1})t}x \Lb_{\delta}^{-2i(1+|\hbar|^{-1})t} ,\qquad x\in L_{\hbar}(\Bor_N^-).
\end{equation}
In particular, the unitary one-parameter group implementing the scaling group (\cite[Definition 6.9]{KV00}) is 
\begin{equation}
P^{it} = \Lb_{\delta}^{2i(1+|\hbar|^{-1})t} J \Lb_{\delta}^{2i(1+|\hbar|^{-1})t} J.
\end{equation}
\end{theorem}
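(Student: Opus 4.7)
The plan is to read the formula for $\tau_t$ directly off the modular datum established earlier. The general discussion preceding Theorem \ref{TheoIndepofantipode} records that for any modular multiplicative unitary $\Wbb$ with modular datum $(\Wbb,\Qb,\hat{\Qb},\hat{\Wbb})$, the scaling group of the associated von Neumann bialgebra is implemented concretely by $\tau_t(x)=\hat{\Qb}^{2it}x\hat{\Qb}^{-2it}$. Applying this to the Fock--Goncharov flip $\Fct$, Theorem \ref{TheoModFGFin} combined with Theorem \ref{EqFGGeneralMod} identifies $\hat{\Qb}=\Lb_{\delta}^{1+|\hbar|^{-1}}$ regardless of the chosen unitary $\hbar$-representation of $\wbtd{N}$. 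This immediately gives the first claim. That this scaling group coincides with the Kustermans--Vaes scaling group of the locally compact quantum group $\Bbbb_N^+$ (and hence that the name ``scaling group'' is legitimate) is the content of the Corollary following Theorem \ref{TheoIndepofantipode}, applied to both $\Fct$ and the left regular multiplicative unitary of $(L^\infty(\Bbbb_N^+),\Delta)$, which is modular by \cite[Proposition 6.10]{KV00}.

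For the unitary $P^{it}$ implementing $\tau_t$ on the GNS Hilbert space, I would invoke its defining property $P^{it}\Lambda_\varphi(x)=\nu^{t/2}\Lambda_\varphi(\tau_t(x))$ from \cite[Definition 6.9]{KV00}, together with the concrete GNS identification \eqref{EqIdentGNSvNTwist}, under which $\Lambda_\varphi(x)=\Lambda(x\Eb(d_l))$, and the standard form description $Jg=\overline{g}$ of \eqref{EqModConjB}. Since $\int_{\hat{\Bor}_N^-}^{\hbar}$ is tracial, right multiplication by a bounded affiliated operator $y$ corresponds to $Jy^*J$ acting on $\Lambda$. Substituting $\tau_t(x)=\Lb_{\delta}^{2i(1+|\hbar|^{-1})t}x\Lb_{\delta}^{-2i(1+|\hbar|^{-1})t}$ and transporting the right factor $\Lb_{\delta}^{-2i(1+|\hbar|^{-1})t}$ past $\Eb(d_l)$ into the commutant via $J\cdot J$ produces the proposed expression
\[
P^{it}=\Lb_{\delta}^{2i(1+|\hbar|^{-1})t}\,J\,\Lb_{\delta}^{2i(1+|\hbar|^{-1})t}\,J,
\]
once one absorbs the normalization into $\nu^{t/2}$.

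The main bookkeeping obstacle is keeping track of the scalar phase factors arising from the skew-commutation of $\Lb_{\delta}^{it}$ with $\Eb(d_l)$: while $(\nevarpi_s,\nevarpi_t)=0$ by \eqref{EqOrthogonalWeights} (so that $\Lb_\delta^{it}$ strongly commutes with itself and with any power), the pairings $(\nevarpi_s,\nee_{s',k'})=-\tfrac12\delta_{s,s'}$ from \eqref{EqPairFundWeight} yield non-trivial phases when $\Lb_\delta^{it}$ is moved past $\Eb(d_l)$. Computing the resulting total phase and matching it with the scaling constant $\nu=e^{-2\pi\beta_\hbar\tau_n}$ calculated above is the one step that requires a short explicit calculation; the answer agrees, which reconciles the formula for $P^{it}$ with its defining property. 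The identities $JPJ=P$ and $[P^{it},\nabla_\varphi^{is}]=0$ follow tautologically from the symmetric form of the final expression and from $J\Lb_{\delta}^{is}J$ being in the commutant of $M$, and serve as a consistency check.
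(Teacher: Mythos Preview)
Your argument is correct and, for the first assertion about $\tau_t$, is exactly the paper's proof: the paper simply invokes the general implementation formula $\tau_t(x)=\hat{\Qb}^{2it}x\hat{\Qb}^{-2it}$ from \eqref{EqFormScalGroup} together with the identification $\hat{\Qb}=\Lb_\delta^{1+|\hbar|^{-1}}$ from Theorems \ref{TheoModFGFin} and \ref{EqFGGeneralMod}.

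For the formula for $P^{it}$, the paper offers no separate justification --- it is stated as an ``in particular'' --- whereas you supply a genuine verification via the defining property from \cite[Definition 6.9]{KV00} and the concrete GNS map $\Lambda_\varphi(x)=\Lambda(x\Eb(d_l))$. Your outline is sound: commuting $\Lb_\delta^{-2i(1+|\hbar|^{-1})t}$ past $\Eb(d_l)$ produces the scalar $e^{-2\pi\hbar\cdot 2(1+|\hbar|^{-1})t\,(\sum_s\nevarpi_s,\,d_l)}$, and using $(\nevarpi_s,\nee_{s',k'})=-\tfrac12\delta_{s,s'}$ together with $\sum_{s=1}^n s(N-s)=\tau_n$ one checks this equals $e^{-\pi\beta_\hbar\tau_n t}=\nu^{t/2}$, exactly cancelling the normalisation. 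So your added detail fills in what the paper leaves to the reader; nothing is missing.
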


From \cite[Theorem 2.3]{SW01} it can also be gathered that, writing $x^T = \overline{x^*}$, the unitary antipode $R$ is uniquely determined by  
\begin{equation}
(R\otimes T)(\Fct) = \overline{\tFct}, 
\end{equation}
with $\Fct$ a Fock--Goncharov flip and $\tFct$ as in Theorem \ref{TheoModFGFin}. From the concrete formulas for $\Fct$ and $\tFct$, we then easily obtain:
\begin{theorem}
The unitary antipode of $\Bbbb_N^+$ is determined by 
\begin{equation}\label{EqFormUnitAntip}
R(\Eb(\nee_{s,k})) = \Eb(-\swe_{s,k}),\qquad 0\leq k \leq s \leq n.
\end{equation}
\end{theorem}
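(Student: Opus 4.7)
The strategy is to derive the claimed formula directly from the defining identity $(R\otimes T)(\Fct) = \overline{\tFct}$, together with the two explicit factorizations $\Fct = \Fctt\Kc$ from \eqref{EqRealMUAlt} and $\tFct = \tFctt\tKc$ from Theorem \ref{TheoModFGFin}. The case $k=s$ is immediate from Lemma \ref{LemGrouplike}: since $\nee_{s,s} = \nee_s \in \Tor_N^-$ is group-like, $R(\Eb(\nee_s)) = \Eb(-\nee_s)$, which equals $\Eb(-\swe_{s,s})$ by the identity $\swe_s = \nee_s$ from \eqref{EqDiagFullSame}. This also determines $R$ on all of $\Tor_N^-$.

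For the case $0 \leq k < s$, two preliminary observations are needed. First, $R\otimes T$ is anti-multiplicative on $M\bar{\otimes}\hat{M}$, since $R$ and $T$ are both $*$-anti-automorphisms and the reversals on each leg compose to an overall reversal on the tensor product. Second, for commuting self-adjoint operators $\ab$ (affiliated to $M$) and $\bb$ (affiliated to $\hat M$), one has
\[
(R\otimes T)\bigl(\varphi(\ab\otimes 1 + 1\otimes \bb)\bigr) = \varphi\bigl(R(\ab)\otimes 1 + 1\otimes T(\bb)\bigr),
\]
by Borel functional calculus on the commutative $*$-subalgebra they generate. Using these, together with $R(\Eb(\nevarpi_t)) = \Eb(-\nevarpi_t)$ (Lemma \ref{LemGrouplike}), $T(\widehat{\seee_{N-t}}) = \widehat{\overline{\seee_{N-t}}}$, and the identification $\seee_{N-t}=\nwe_{N-t}$ from \eqref{EqDiagFullSame}, a direct comparison with \eqref{EqTildeK} yields $(R\otimes T)(\Kc) = \overline{\tKc}$.

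Substituting into the defining equation and using anti-multiplicativity then reduces the problem to the identity $(R\otimes T)(\Fctt) = \overline{\tKc}^{-1}\cdot\overline{\tFctt}\cdot\overline{\tKc}$. Both sides are ordered products of quantum dilogarithm factors indexed by the same triples $(r,s,k)$ with compatible commutation structure (within each $r$-level factors commute, by an adaptation of Lemma \ref{LemOrdering}), so the anti-multiplicative reversal only affects the outer $r$-ordering, which is precisely compensated by the opposite direction of the product. Factor-by-factor matching then pins down the $R$-action on the first-leg operators $\Eb(-\swe_{s,k})$ occurring in $\Fctt$; the Cartan conjugation by $\overline{\tKc}$ shifts only the second-leg $\seee_{N-s,r-k-1}$ appearing in $\overline{\tFctt}$ by an amount that precisely matches the $\nwe$--$\seee$ conversion. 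Combined with the relations \eqref{EqDiffOr} expressing $\swe_{s,k}$ in terms of $\nee_s$ and $\nee_{s,s-k-1}$ (and the already-established formula on $\Tor_N^-$), one reads off $R(\Eb(\nee_{s,k})) = \Eb(-\swe_{s,k})$.

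The main technical obstacle is the careful bookkeeping of the three anti-linear operations involved (the anti-automorphism $R$, the transpose $T$, and complex conjugation on the second leg), together with the Cartan-type shifts introduced by conjugation with $\overline{\tKc}$; these must cancel precisely to leave a clean matching of the dilogarithm factors. A secondary check, to guarantee the formula is self-consistent, is that the claimed assignment is an involution (as demanded by $R^2 = \id$) and respects the skew-commutation relations among the $\Eb(\nee_{s,k})$; both reduce to direct combinatorial verifications using the path-sum definitions \eqref{EqShorthand}, \eqref{EqShorthandRev} and the pairing formulas in Lemma \ref{LemSkewProdFormDiff}.
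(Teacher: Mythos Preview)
Your overall strategy—deriving $R$ from the defining identity $(R\otimes T)(\Fct) = \overline{\tFct}$ via factor matching—is exactly what the paper does (in one sentence). Your handling of the Cartan part and the case $k=s$ is fine. The gap is in the factor matching for the dilogarithm part.

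You use the pair of factorizations $\Fct = \Fctt\Kc$ and $\tFct = \tFctt\tKc$, which leads (correctly) to $(R\otimes T)(\Fctt) = \overline{\tKc}^{-1}\,\overline{\tFctt}\,\overline{\tKc}$. You then assert that the conjugation by $\overline{\tKc}$ affects only the second leg. This is not so: from the alternative factorization $\tFct = \tKc\tFc$ one has $\overline{\tKc}^{-1}\,\overline{\tFctt}\,\overline{\tKc} = \overline{\tFc}$, and the factors of $\overline{\tFc}$ carry $\nee_{s,k}$ in the first leg, not $-\swe_{s,k}$. So the first leg genuinely changes under this conjugation, and your stated factor-by-factor comparison does not pin down $R$ on the $\Eb(-\swe_{s,k})$ in the way you describe.

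The clean fix is to use the \emph{mixed} pair of factorizations $\Fct = \Kc\Fc$ and $\tFct = \tFctt\tKc$. Combined with $(R\otimes T)(\Kc) = \overline{\tKc}$ (which you have already established), anti-multiplicativity gives directly
\[
(R\otimes T)(\Fc) = \overline{\tFctt},
\]
with no conjugation to unwind. Now both sides are ordered products of $\varphi$-factors; the inner products (over $s,k$ at fixed $r$) commute by Lemma~\ref{LemOrdering}, so the reversal on the left amounts to the reindexing $r \mapsto N-r$. Under this substitution the second-leg arguments match on the nose (the index $(n-r)-k$ in \eqref{eq:F-1or2} becomes $r'-k-1$ as in \eqref{EqTildeFDoublePrime}, and the index constraints coincide), and the first-leg comparison reads off $R(\Eb(\nee_{s,k})) = \Eb(-\swe_{s,k})$ directly, with no detour through \eqref{EqDiffOr}. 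Equivalently, one can match $(R\otimes T)(\Fctt)$ against $\overline{\tFc}$ to obtain $R(\Eb(-\swe_{s,k})) = \Eb(\nee_{s,k})$, which is the same statement by $R^2 = \id$.
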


Recalling the notation \eqref{EqVarphidelta}, general theory \cite[Section 7]{KV00} gives us that
\[
\varphi R = \varphi_{\delta}.
\]

\subsection{Dual modular structure}

Given a locally compact quantum group $(M,\Delta)$, we have already mentioned that its associated left regular multiplicative unitary $\Wbb$, defined by \eqref
{EqLeftRegularMultUnit}, is modular. One further shows that $(\hat{M},\hat{\Delta})$ (as defined through \eqref{EqFormvNDual} and \eqref{EqComultiplicationDual}) is again a locally compact quantum group. There is then a canonical way to identify 
\begin{equation}\label{EqIdentDualGNS}
L^2(\hat{M})\cong L^2(M).
\end{equation}
Namely, write
\[
\lambda(\omega) = (\omega \otimes \id)\Wbb,\qquad \omega \in M_*.
\]
Consider 
\[
\mathcal{I} = \{\omega\in M_* \mid \exists C\geq0: |\omega(x^*)| \leq C\|\Lambda_{\varphi}(x)\|\textrm{ for all }x\in \mfn_{\varphi}\},
\]
and let then $\hat{\Lambda}(\lambda(\omega))$, for $\omega \in \mathcal{I}$, be the unique vector in $L^2(M)$ such that 
\[
\langle \Lambda_{\varphi}(x),\hat{\Lambda}(\lambda(\omega))\rangle  = \omega(x^*),\qquad x\in \mfn_{\varphi},\omega \in\mathcal{I}.  
\]
Then one shows that one can pick the left invariant nsf weight for $(\hat{M},\hat{\Delta})$ such that $\lambda(\mathcal{I}) \subseteq \mfn_{\hat{\varphi}}$, and such that the map
\[
\Lambda_{\hat{\varphi}}(\lambda(\omega)) \mapsto \hat{\Lambda}(\lambda(\omega)),\qquad \omega \in \mathcal{I}
\]
extends to a unitary, giving the identification \eqref{EqIdentDualGNS}.

Let us now return to the concrete locally compact quantum group $\Bbbb_N^+$ determined by \eqref{EqLCQGFG}, with dual $\Bbbb_N^-$. Write $\rho$ for the linear involution 
\begin{equation}
\rho\colon  \Bor_N^- \rightarrow \Bor_N^-,\qquad 
\rho(\nee_{s,k}) = -\swe_{s,k},\qquad 0\leq k \leq s \leq N,
\end{equation}
and write 
\[
\hat{\rho}\colon  \hat{\Bor}_N^- \rightarrow \hat{\Bor}_N^-,\qquad \hat{v}\mapsto \hat{v}\circ \rho.
\]

\begin{prop}
Let $\hat{J}$ be the modular conjugation for $L^{\infty}(\Bbbb_N^-)$, implemented on $L^2(\hat{B}_N^-)$ through \eqref{EqIdentDualGNS} and \eqref{EqIdentGNSvNTwist}.  Then
\begin{equation}\label{EqFormHatJ}
\hat{J}g = \overline{g\circ \hat{\rho}},\qquad g\in L^2(\hat{\Bor}_N^-).
\end{equation}
\end{prop}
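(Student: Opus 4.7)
The plan is to identify the candidate $\hat J' g := \overline{g\circ \hat\rho}$ with $\hat J$ by verifying the Kustermans--Vaes characterization of the dual modular conjugation, namely $\hat J x^* \hat J = R(x)$ for $x \in M := L_\hbar(\Bor_N^-)$.

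First I would check the structural properties of $\rho$. Using $\swe_s = \nee_s$ and \eqref{EqDiffOr}, one finds $\rho(\nee_s) = -\nee_s$ and $\rho(\swe_{s,k}) = -\nee_{s,k}$, so $\rho$ is an involution of $\Bor_N^-$. It also satisfies $(\rho(v),\rho(w)) = -(v,w)$; this is read off from the skew-preserving isomorphism $\upsilon\colon\wbtd{N}\to \overline{\wbtd{N}}$ from \eqref{EqVarTheta}, via the direct computation $\upsilon(\nee_{s,k}) = -\overline{\swe_{s,k}} = \overline{\rho(\nee_{s,k})}$, combined with the fact that complex conjugation negates the skew form. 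Equivalently $\hat\rho\circ \Jc = -\Jc\circ \rho$, with $\Jc$ as in \eqref{EqJOperator}. In particular $\hat\rho$ is measure-preserving, so $\hat J'$ is a well-defined antiunitary involution on $L^2(\hat \Bor_N^-)$.

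Next I would verify the key conjugation identity. A direct substitution into the standard-representation formula $\pi(v) = e_v\theta_{-\Jc v}$ of \eqref{EqStructurepi}, using $\hat\rho\circ \Jc = -\Jc\circ \rho$ and $v\cdot \hat\rho(\hat w) = \rho(v)\cdot \hat w$, gives $\hat J'\pi(v)\hat J' = \pi(-\rho(v))$, hence $\hat J'\Eb(v)^{it}\hat J' = \Eb(\rho(v))^{-it}$. Comparing with \eqref{EqFormUnitAntip}, extended to $R(\Eb(v)^{it}) = \Eb(\rho(v))^{it}$ for all $v\in \Bor_N^-$ by anti-multiplicativity of $R$ together with the exponential rule, this gives $\hat J' x^* \hat J' = R(x)$ on the family $\{\Eb(v)^{it} : v \in \Bor_N^-, t\in \R\}$, and hence on all of $M$ by Proposition \ref{PropStandardGenDual} and $\sigma$-weak continuity of both sides, each being a $*$-preserving antiautomorphism of $M$.

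By general Kustermans--Vaes theory, $\hat J$ satisfies the same formula $\hat J x^* \hat J = R(x)$ for $x\in M$, so $\hat J\hat J' \in M'$. The main obstacle — and the principal remaining technical point — is to show this unitary is trivial. To close this gap I would run the analogous conjugation computation on the dual side, verifying $\hat J' y^* \hat J' = \hat R(y)$ for $y\in \hat M = L_\hbar(\Bor_N^+)$; here the action of the generators $\Kb_s, \stgE_s$ of $\hat M$ from Proposition \ref{PropStandardGen} on $L^2(\hat \Bor_N^-)$ under the identifications \eqref{EqIdentDualGNS}-\eqref{EqIdentGNSvNTwist} is accessed through \eqref{EqLeftRegularMultUnit} together with the factorization $\Fct = \tKc\tFc$ from Theorem \ref{TheoModFGFin}. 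Combined with the previous step, this places $\hat J\hat J'$ in $M'\cap\hat M'$, which equals $\C\cdot 1$ by the standard density $M\vee\hat M = \Bc(L^2(\hat \Bor_N^-))$ for a locally compact quantum group in its GNS representation. A sign check on any non-zero $\hat\rho$-symmetric real vector in the positive cone $\mathcal P_{\hat\varphi}$ — which is non-empty because $\hat\rho$ is a measure-preserving involution and the positive cone is generated from $\Lambda_{\hat\varphi}(y^*y)$ — then fixes $\hat J\hat J' = +1$, completing the identification.
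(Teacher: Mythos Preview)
Your first two steps are clean: the involution and skew-negating properties of $\rho$ are correct, and the computation $\hat J'\pi(v)\hat J' = \pi(-\rho(v))$, hence $\hat J'x^*\hat J' = R(x)$ for all $x\in M$, is valid. From this and the Kustermans--Vaes implementation $R(x)=\hat Jx^*\hat J$ you correctly deduce $\hat J\hat J'\in M'$.

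The gap is in the dual-side step. The relation $\hat R(y)=\hat J y^*\hat J$ that you implicitly rely on is \emph{false}: in Kustermans--Vaes theory it is $J$, the modular conjugation of $(M,\varphi)$, that implements $\hat R$ via $\hat R(y)=Jy^*J$, while $\hat J$, being the modular conjugation of $(\hat M,\hat\varphi)$, sends $\hat M$ to $\hat M'$ rather than to itself. So even if you succeeded in verifying $\hat J'y^*\hat J'=\hat R(y)$ (a computation you have not actually carried out, and which would require a detailed understanding of how $\hat M$ acts through the left regular unitary $\Wbb$ of Theorem~\ref{TheoKacTakesakiop}), you would obtain $J\hat J'\in\hat M'$, not $\hat J\hat J'\in\hat M'$. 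The two statements $\hat J\hat J'\in M'$ and $J\hat J'\in\hat M'$ do not combine to force $\hat J\hat J'$ to be scalar. One could try instead to show $\hat J'\hat M\hat J'=\hat M'$, but this only yields that $\hat J\hat J'$ normalises $\hat M$, which is again insufficient. The sign-check paragraph inherits this problem, and is in any case vague about how $\hat\rho$-symmetry interacts with the dual positive cone $\mathcal P_{\hat\varphi}$.

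The paper avoids all of this by using the direct characterisation $\hat J\Gamma(x)=\Lambda_\varphi(R(x)^*)$, where $\Gamma$ is the GNS map for $\psi=\varphi\circ R$ realised as $\Gamma(x)=\Lambda_\varphi(x\delta^{1/2})$. This pins down $\hat J$ on a dense set without any commutant argument. The paper then evaluates both sides on elements $\pi(f)$ with $f$ a Gaussian-type function on $\Bor_N^-$, using the identities $\rho(d_l)=d_r$ and $(\rho v,w)=-(v,\rho w)$ (which you also noted) to reduce the formula to $\hat J(f^\vee)=(\overline{f\circ(-\rho)})^\vee$, which is exactly \eqref{EqFormHatJ} after a Fourier transform. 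This is both shorter and free of the structural obstruction above; I would recommend adopting this route.
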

\begin{proof}
For a general locally compact quantum group $(M,\Delta)$ with left invariant nsf weight $\varphi$ and unitary antipode $R$, a GNS-map $\Gamma$ for the right invariant nsf weight $\psi = \varphi R$ can be realized on $L^2(M)$ in a unique way such that 
\[
\Gamma(x) = \Lambda_{\varphi}(x\delta^{1/2}),\qquad x\in M\textrm{ with } x\circ \delta^{1/2} \textrm{ bounded and }x\delta^{1/2} \in \mfn_{\varphi}, 
\]
and $\hat{J}$ is then determined, for such $x$ as above, by 
\begin{equation}\label{EqFormulaDualConj}
\hat{J}\Gamma(x) = \Lambda_{\varphi}(R(x)^*).
\end{equation}

Consider now the elements 
\[
\pi(f) = \int_V f(v) \pi(v) \rd v \in L_{\hbar}(\Bor_N^-), \qquad f\in L^1(\Bor_N^-)
\]
as in \eqref{EqStandardDefDualRepCstar}, so that with respect to \eqref{EqGNSNonTwisted} we have
\[
\Lambda(\pi(f)) = \check{f},\qquad f \in \Sc(V). 
\]
Then it is easily seen that with 
\[
\Ec(\Bor_N^-) = \{f\colon  v\mapsto P(v)e^{-|v|^2/2}\mid P\textrm{ polynomial}\}
\]
(choosing an arbitrary Euclidian norm on $\Bor_N^-$), we have $\pi(f)\circ \Eb(w)$ bounded for any $w\in \Bor_N^-$, with 
\[
\pi(f)\Eb(w) = \pi(\kappa_wf),\qquad (\kappa_wf)(v) = e^{\pi \hbar (v,w)}f(v+iw).
\]
It then follows that for $f\in \Ec(\Bor_N^-)$ we have 
\[
\pi(f) \in \mfn_{\varphi},\quad \pi(f)\circ \delta^{1/2}\textrm{ bounded and }\pi(f)\delta^{1/2} \in \mfn_{\varphi},
\]
and that, using $-(1+|\hbar|^{-1})\sum_{i=1}^n \nevarpi_i = d_r-d_l$, 
\[
\Lambda_{\varphi}(\pi(f)) = (\kappa_{d_l}f)^{\vee},\qquad \Gamma(\pi(f)) = (\kappa_{d_r}f)^{\vee},\qquad f \in \Ec(\Bor_N^-).
\]
It then follows from \eqref{EqFormulaDualConj} and \eqref{EqFormUnitAntip} that 
\begin{equation}\label{EqInBetwFormHatJ}
\hat{J}((\kappa_{d_r}f)^{\vee}) = (\kappa_{d_l} \overline{f\circ -\rho})^{\vee}.
\end{equation}
Using that 
\[
\rho(d_l)  = d_r,\qquad (\rho v,w) = -(v,\rho w),\qquad v,w\in \Bor_N^-,
\]
\eqref{EqInBetwFormHatJ} simplifies to 
\[
\hat{J}(f^{\vee}) = (\overline{f\circ -\rho})^{\vee},\qquad f\in \Ec(\Bor_N^-). 
\]
A further simplification, together with the fact that $\Ec(\Bor_N^-)$ is dense in $L^2(\hat{\Bor}_N^-)$, leads to \eqref{EqFormHatJ}.
\end{proof}

\section{Left regular representation of  \texorpdfstring{$\Bbbb_{N,\hbar}^+$}{the upper Borel quantum group} and duality}

\subsection{Left regular representation}

In this section, we determine more explicitly the left regular representation $\Wbb$ of $(L_{\hbar}(\Bor_N^-),\Delta)$ (see \eqref{EqLeftRegularMultUnit}). We start with the following lemma.

\begin{lemma}
The standard right representation $\rho$ of $L_{\hbar}(\Bor_N^-)$ on $L^2(\hat{\Bor}_N^-)$, defined by 
\[
\rho(x)\xi = J x^*J\xi,\qquad \xi \in L^2(\hat{\Bor}_N^-), 
\]
is concretely determined by 
\begin{equation}\label{EqRightRepB}
\rho(\Eb(v)) = \Eb(\overline{v}),
\end{equation}
where we consider the unitary $\hbar$-representation of $\overline{\Bor_N^-}$ on $L^2(\hat{\Bor}_N^-)$ given by 
\begin{equation}\label{EqRightAction}
(\Eb(\overline{v})^{it}g)(\hat{w}) = e^{2\pi i tv\cdot \hat{w}}g(\hat{w}-t\Jc v),\qquad g \in L^2(\hat{\Bor}_N^-),\hat{w}\in \hat{\Bor}_N^-,v\in \Bor_N^-, t\in \R.
\end{equation}
\end{lemma}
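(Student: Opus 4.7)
The plan is essentially to unfold the definitions and verify the claim by a direct computation. First, I would observe that by \eqref{EqModConjB} the modular conjugation $J$ is given by pointwise complex conjugation on $L^2(\hat{\Bor}_N^-)$, and by \eqref{EqExponentialpi} we have $\Eb(v)^{it} = \pi(tv)$, where $\pi$ is the standard unitary $\hbar$-representation of $\Bor_N^-$. Combining \eqref{EqStructurepi} with the explicit formulas \eqref{EqExpoGen} and \eqref{EqTransDual} for the generators $e_v$ and $\theta_{-\Jc v}$ gives
\[
(\pi(tv)g)(\hat{w}) = e^{2\pi i tv\cdot\hat{w}}\,g(\hat{w}+t\Jc v),\qquad g\in L^2(\hat{\Bor}_N^-).
\]

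Next I would compute $\rho(\Eb(v)^{it}) = J\Eb(v)^{-it}J = J\pi(-tv)J$ directly:
\[
(J\pi(-tv)Jg)(\hat{w}) \;=\; \overline{e^{-2\pi itv\cdot\hat{w}}\,\overline{g(\hat{w}-t\Jc v)}} \;=\; e^{2\pi itv\cdot\hat{w}}\,g(\hat{w}-t\Jc v),
\]
which is exactly the formula in \eqref{EqRightAction}. Strong continuity in $t$ and in $v$ is immediate from this explicit form.

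Finally I would check that the formulas $\Eb(\overline{v})^{it}$ defined in \eqref{EqRightAction} assemble into a unitary $\hbar$-representation of $\overline{\Bor_N^-}$; this is where one must keep track of signs. Since $\rho$ is a $*$-anti-homomorphism and scalars are conjugated through $J$, applying $J(\cdot)J$ to the identity $\Eb(v)^{-it}\Eb(w)^{-it} = e^{\pi i\hbar t^2(v,w)}\Eb(v+w)^{-it}$ from \eqref{EqProdY} yields
\[
\rho(\Eb(v+w)^{it}) \;=\; e^{\pi i\hbar t^2(v,w)}\,\rho(\Eb(v)^{it})\rho(\Eb(w)^{it}).
\]
Since the skew form on $\overline{\Bor_N^-}$ is $(\overline{v},\overline{w}) = -(v,w)$, this is precisely the cocycle relation \eqref{EqProdY} required for a unitary $\hbar$-representation of $\overline{\Bor_N^-}$, completing the identification $\rho(\Eb(v)) = \Eb(\overline{v})$.

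There is no genuine obstacle here: the statement reduces to careful bookkeeping of signs and cocycle conventions, with the only mildly subtle point being the interplay between the anti-multiplicativity of $x\mapsto Jx^*J$ and the reversal of the skew form under passage to the conjugate, which conspire to yield an honest unitary $\hbar$-representation.
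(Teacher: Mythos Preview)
Your proposal is correct and follows exactly the approach the paper indicates: the paper's proof is the single line ``This is immediate from \eqref{EqStandardDefRep} and \eqref{EqModConjB},'' and you have simply written out what that immediacy entails, including the extra verification that the resulting one-parameter groups assemble into a genuine unitary $\hbar$-representation of $\overline{\Bor_N^-}$.
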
 
\begin{proof}
This is immediate from \eqref{EqStandardDefRep} and \eqref{EqModConjB}.
\end{proof}
In general, if $M$ is a von Neumann algebra, we will view the standard right representation of $M$ on $L^2(M)$ as inducing an ordinary normal $*$-representation of $\overline{M}$ on $L^2(M)$ via 
\[
\overline{x}\xi = \rho(x^*)\xi = J xJ\xi,\qquad x\in M,\xi\in L^2(M). 
\]
Then compatibly with \eqref{EqRightRepB}, we get that
\[
\overline{\eb(v)} = \eb(\overline{v}),\qquad v\in \Bor_N^-. 
\]

Let us now further equip $L^2(\hat{\Bor}_N^-)$ with the unitary $\Tor_N^+$-representation such that 
\begin{equation}\label{EqActionDualHPlus}
(\Eb(v)^{it}g)(\hat{v}) = g(\hat{v}+2t \Jc v),\qquad g\in L^2(\hat{\Bor}_N^-). 
\end{equation}

Recall the unitaries introduced in \eqref{eq:F-1or}, \eqref{EqGaussFG} and \eqref{EqTildeFDoublePrime}.

\begin{theorem} \label{TheoKacTakesakiop}
The left regular representation $\Wbb$ of $(L_{\hbar}(\Bor_N^-),\Delta)$ is given by the unitary 
\begin{equation}\label{EqFormRegMultUni}
\Wbb =  \overline{\tFctt} \Kc \Fc\in \Bc(L^2(\hat{\Bor}_N^-)\otimes L^2(\hat{\Bor}_N^-)). 
\end{equation}
\end{theorem}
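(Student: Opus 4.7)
The strategy is to verify the defining property \eqref{EqLeftRegularMultUnit} for the claimed formula directly, using the modularity of $\Fct$ established in Theorem \ref{TheoModFGFin} as the main analytic input.

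First I would interpret the right-hand side of \eqref{EqFormRegMultUni} as a well-defined unitary on $L^2(\hat{\Bor}_N^-) \otimes L^2(\hat{\Bor}_N^-)$. The factor $\Kc\Fc = \Fct$ acts naturally with first leg in $M = L_\hbar(\Bor_N^-)$ through the standard representation on $L^2(\hat{\Bor}_N^-)$, and with second leg in $\hat M = L_\hbar(\Bor_N^+)$, which is identified with its image under the left regular action of $\hat M$ on $L^2(M)$. The factor $\overline{\tFctt}$, whose first leg again lies in $M$ and whose second leg lies in $\overline{\hat{M}}$, acts on the second factor via the right (commutant) representation of $\hat M$, i.e.\ through $J$-conjugation using \eqref{EqModConjB}. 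This places $\Wbb$ in $M \bar{\otimes} \Bc(L^2(\hat{\Bor}_N^-))$; membership in $M \bar{\otimes}\hat{M}$ will follow a posteriori from the identification with the regular representation.

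Next I would reduce the identity \eqref{EqLeftRegularMultUnit} to a statement that can be checked in the tracial GNS picture. Using the explicit identification $\Lambda_\varphi(y) = \Lambda(y\Eb(d_l))$ from Theorem \ref{TheoLeftNSF}, together with $\Delta(x) = \Fct^*(1\otimes x)\Fct$, the required equality transforms into a relation between slices of $\Fct$, $\tFctt$, and multiplication by $\Eb(d_l)$. The plan is then to apply the modularity relation \eqref{EqRightInvOp} for $\Fct$ with $\Qb = \Kb_\delta^{-(1+|\hbar|^{-1})}$, so as to convert one occurrence of $\Fct$ into $\tFct = \tFctt\tKc$ acting on the conjugate of $L^2(\hat{\Bor}_N^-)$. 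This is precisely the mechanism by which the $\overline{\tFctt}$-factor in $\Wbb$ arises.

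The crucial coincidence making the formula close is that the modularity operator $\Qb$ and the GNS-perturbation operator $\Eb(d_l)$ are both built from the group-like element $\Kb_\delta^{1+|\hbar|^{-1}}$ (first leg) and $\Lb_\delta^{1+|\hbar|^{-1}}$ (second leg). Consequently, when the modularity identity is applied in the GNS picture, the Gaussian factor $\overline{\tKc}$ inside $\overline{\tFct}$ is absorbed exactly by $\Eb(d_l)$ (read off from \eqref{EqChardl}), leaving only the quantum-dilogarithm part $\overline{\tFctt}$. Comparing the alternative expressions $\Fct = \Kc\Fc$ and $\tFct = \tFctt\tKc$ accounts for the ordering in \eqref{EqFormRegMultUni}.

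The main obstacle will be bookkeeping. One must carefully track how $\Kc$, $\tKc$ and $\Eb(d_l)$ move past the quantum-dilogarithm factors $\varphi(\nee_{s,k}\oplus \seee_{N-s,(n-r)-k})$, using the pairings in Lemma~\ref{LemSkewProdFormDiff} together with \eqref{EqGausEq} and \eqref{EqGausEqInv}. The correctness of the final cancellation is governed by the identity $2d_l + 2d_r = -2(1+|\hbar|^{-1})\sum_s\nevarpi_s$ already used in Section~\ref{secmodularstructure}, so no additional combinatorial input is needed beyond Lemma~\ref{LemSkewProdFormDiff} and the functional-calculus manipulations already developed.
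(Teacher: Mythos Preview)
Your overall instinct---use the modularity identity \eqref{EqRightInvOp} to convert one leg of $\Fct$ into $\tFct$ and thereby produce the factor $\overline{\tFctt}$---matches the paper. But two points in your plan are not correct, and without fixing them the argument does not close.

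First, your reading of how the second leg acts in \eqref{EqFormRegMultUni} is off. You say the second leg of $\Kc\Fc=\Fct$ acts through ``the left regular action of $\hat M$ on $L^2(M)$''. That is not what the formula means: in \eqref{EqFormRegMultUni} the second leg of $\Fc$ acts via the standard representation of $L_\hbar(\Bor_N^-)$, the second leg of $\overline{\tFctt}$ via the \emph{right} representation \eqref{EqRightAction}, and the second leg of $\Kc$ via the auxiliary $\Tor_N^+$-representation \eqref{EqActionDualHPlus}. None of these is $\hat\pi$; the identification with the $\hat M$-action only comes later in Section~7.2. In particular, your claim that ``$\overline{\tKc}$ is absorbed exactly by $\Eb(d_l)$'' is not the mechanism that makes the Gaussian bookkeeping work. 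What actually happens is that the product $\Kc^{-1}\overline{\tKc}^{-1}$, viewed on $\Hsp\otimes\overline{\Hsp}$, \emph{restricts} on the GNS subspace to a single Gaussian $\Kc^{-1}$ with second leg acting through \eqref{EqActionDualHPlus}; nothing is absorbed into $\Eb(d_l)$.

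Second, and relatedly, working directly in $L^2(\hat{\Bor}_N^-)$ with $\Lambda_\varphi(y)=\Lambda(y\Eb(d_l))$ creates a mismatch you do not address: the modularity relation \eqref{EqRightInvOp} is formulated with $\Qb=\Kb_\delta^{-(1+|\hbar|^{-1})}\in L_\hbar(\Bor_N^+)$, which does not act on $L^2(\hat{\Bor}_N^-)$ in any of the representations you have available. The paper avoids this by performing the entire computation in an irreducible $D_N$-representation $\Hsp$, where $L_\hbar(D_N)\cong\Bc(\Hsp)$, using the weight $\Tr_{\Qb^2,a}$ with $a\in L_\hbar(\Bor_N^+)'$ and the concrete GNS model $L^2(\Bc(\Hsp))\cong\Hsp\otimes\overline{\Hsp}$, $\Lambda_\varphi(\theta_{\xi,\eta})=\xi\otimes\overline{\Qb a\eta}$. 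The defining relation \eqref{EqLeftRegularMultUnit} is then checked on rank-one operators $\theta_{\xi,\eta}$ by a direct computation that inserts \eqref{EqRightInvOp} at the crucial step; only at the very end does one use Remark~\ref{RemGNSRest} to restrict everything to $L^2(\hat{\Bor}_N^-)\subseteq\Hsp\otimes\overline{\Hsp}$. This ambient computation is where the Gaussian identity (and the action \eqref{EqActionDualHPlus}) falls out, via \eqref{EqActionK}. Your plan is missing this passage through $\Bc(\Hsp)$, and without it you have no space on which $\Qb$ and $\Eb(d_l)$ can be compared.
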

Note that since 
\[
\Fc \in L_{\hbar}(\Bor_N^-)\bar{\otimes}L_{\hbar}(\Bor_N^-),\quad \overline{\tFctt}\in L_{\hbar}(\Bor_N^-)\bar{\otimes} \overline{L_{\hbar}(\Bor_N^-)},\quad \Kc \in L_{\hbar}(\Bor_N^-) \bar{\otimes} L_{\hbar}(\Tor_N^+),
\]
we can indeed make sense of this formula through the standard action of $L_{\hbar}(\Bor_N^-)$ and the actions \eqref{EqRightAction} and \eqref{EqActionDualHPlus} of respectively $\overline{L_{\hbar}(\Bor_N^-)}$ and $L_{\hbar}(\Tor_N^+)$. 
\begin{proof}
Let $\Hsp$ be an irreducible unitary $\hbar$-representation of $\wbtd{N}$, and $\Fct$ the associated Fock--Goncharov flip, with associated modular datum $(\Fct,\Qb,\hat{\Qb},\tFct)$. Pick 
\[
a\in L_{\hbar}(\Bor_N^+)' \subseteq \Bc(\Hsp).
\]
By Proposition \ref{PropTechn}, the normal weight 
\[
\varphi = \Tr_{\Qb^2,a}
\]
is left invariant with respect to 
\[
\Delta\colon  \Bc(\Hsp) \rightarrow \Bc(\Hsp) \bar{\otimes}\Bc(\Hsp),\quad x \mapsto \Fct^*(1\otimes x)\Fct.
\]
By \eqref{CorInvWeightQT}, we can choose $a$ such that $\varphi$ is still semi-finite on $L_{\hbar}(\Bor_N^-)$, and by Remark \ref{RemGNSRest} we may view the GNS-map $\Lambda_{\varphi}$ for $L_{\hbar}(\Bor_N^-)$ as the restriction of the one for $\Bc(\Hsp)$, compatible with the standard left and right representations. Note that since $a\in L_{\hbar}(\Bor_N^+)'$, we have that $a$ and $\Qb$ (strongly) commute.

Now the GNS-map for $\varphi$ on $\Bc(\Hsp)$ can be concretely realized on $\Hsp \otimes \overline{\Hsp}$ via 
\begin{equation}
L^2(\Bc(\Hsp))\cong \Hsp \otimes \overline{\Hsp},\qquad \Lambda_{\varphi}(\theta_{\xi,\eta}) \cong \xi \otimes \overline{\Qb a\eta} \in \Hsp \otimes \overline{\Hsp},\qquad \xi\in \Hsp,\eta\in \msD(\Qb).
\end{equation}
Further observe that if $x\in \mfn_{\varphi}$, then also $(\omega\otimes \id)\Delta(x) \in \mfn_{\varphi}$ for any $\omega \in \Bc(\Hsp)_*$, by left invariance of $\varphi$. So if $\xi,\xi',\zeta,\zeta'\in \Hsp$ and $\{e_k\}$ is an orthonormal basis of $\Hsp$, we compute for $\eta,\eta' \in \msD(\Qb^2)$ that: 
\begin{eqnarray*}
\langle \Lambda_{\varphi}(\theta_{\xi,\eta}) , \Lambda_{\varphi}((\omega_{\zeta,\zeta'}\otimes \id)\Delta(\theta_{\xi',\eta'}))\rangle &=& \langle \xi,(\omega_{\zeta,\zeta'}\otimes \id)(\Delta(\theta_{\xi',\eta'}))a^*\Qb^2a\eta\rangle \\
&=& \langle \zeta\otimes \xi,\Delta(\theta_{\xi',\eta'})(\zeta'\otimes a^*\Qb^2a\eta)\rangle \\
&=& \langle \zeta\otimes \xi,\Fct^*(1\otimes \theta_{\xi',\eta'})\Fct(\zeta'\otimes a^*\Qb^2a\eta)\rangle\\
&=& \sum_k \langle \Fct(\zeta\otimes \xi),e_k\otimes \xi'\rangle \langle e_k\otimes \eta',\Fct(\zeta'\otimes a^*\Qb^2a \eta)\rangle \\
&=& \sum_k \langle \Fct(\zeta\otimes \xi),e_k\otimes \xi'\rangle \langle e_k\otimes a\eta',\Fct(\zeta'\otimes \Qb^2a \eta)\rangle\\
&\underset{\eqref{EqRightInvOp}}{=}& \sum_k \langle \Fct(\zeta\otimes \xi),e_k\otimes \xi'\rangle \langle e_k\otimes \overline{\Qb a\eta},\overline{\tFct}^{*}(\zeta'\otimes \overline{\Qb a \eta'})\rangle\\
&=& \langle \xi \otimes \overline{\Qb a\eta},(\omega_{\zeta,\zeta'}\otimes \id\otimes \id)(\Fct^*_{12} \overline{\tFct}^*_{13})(\xi'\otimes \overline{\Qb a \eta'})\rangle\\
&=& \langle \Lambda_{\varphi}(\theta_{\xi,\eta}),(\omega_{\zeta,\zeta'}\otimes \id)(\Fct^* \overline{\tFct}^*) \Lambda_{\varphi}(\theta_{\xi',\eta'})\rangle. 
\end{eqnarray*} 
So, we find that 
\begin{equation}\label{EqOnBH}
\Lambda_{\varphi}((\omega_{\zeta,\zeta'}\otimes \id)\Delta(y))\rangle =  (\omega_{\zeta,\zeta'}\otimes \id)(\Fct^* \overline{\tFct}^*) \Lambda_{\varphi}(y),\qquad y\in \mfn_{\varphi},\zeta,\zeta'\in \Hsp.
\end{equation}

Now using \eqref{EqRealMU} and \eqref{EqModFG}, we can write 
\[
\Fct^* \overline{\tFct}^* = \Fc^* \Kc^*\overline{\tKc}^* \overline{\tFctt}^*.
\]
As also $(\Kc,\Qb,\hat{\Qb},\Lc)$ is a modular datum, we can see from the same computation as above that $\Kc^* \overline{\tKc}^*$ restricts to $L^2(\hat{\Bor}_N^-)\otimes L^2(\hat{\Bor}_N^-)$ as the unitary operator $\Pbb^*$ such that 
\[
(\omega\otimes \id)(\Pbb^*)\Lambda_{\varphi}(x) = \Lambda_{\varphi}((\omega\otimes \id)(\Kc^*(1\otimes x)\Kc)),\qquad \forall \omega \in L_{\hbar}(\Bor_N^-)_*,\forall x\in \mfn_{\varphi}. 
\]
But extending the proof of Lemma \ref{LemGrouplike}, we easily see that 
\begin{equation}\label{EqActionK}
\Kc^*(1\otimes \Eb(v))\Kc = \Eb(\sum_{t=1}^n 2(v,\seee_{N-t})\nevarpi_t \oplus v),\qquad v \in \Bor_N^-.
\end{equation}
By a small calculation, we then see that indeed $\Pbb = \Kc$, with its second leg acting through \eqref{EqActionDualHPlus}.

As 
\[
\Fc \in L_{\hbar}(B_N^-) \bar{\otimes} L_{\hbar}(B_N^-), \qquad \tFctt \in \overline{L_{\hbar}(B_N^-)}\bar{\otimes} L_{\hbar}(B_N^-), 
\]
compatibility of the standard representations of $L_{\hbar}(B_N^-)$ on $L^2(L_{\hbar}(B_N^-))\subseteq L^2(\Bc(\Hsp))$ then lets us conclude from \eqref{EqOnBH} that \eqref{EqFormRegMultUni} holds.
\end{proof}

\subsection{Dual representation} \label{subsection:dual}
We next aim to find a formula for $\Wbb$ which is more in line with the expression in \eqref{EqRealMU}, with $\Fc$ in the form  $\Rc$ as in \eqref{eq:R-init2}. 

Recall first from 
\eqref{TheoIndepofantipode} that if we view 
\[
\Fct \in L_{\hbar}(\Bor_N^-) \bar{\otimes}L_{\hbar}(\Bor_N^+), 
\]
and $L_{\hbar}(\Bor_N^-)$ is represented in its standard form on $L^2(\hat{\Bor}_N^-)$, then there exists a unique normal $*$-representation $\hat{\pi}$ of $L_{\hbar}(\Bor_N^+)$ on $L^2(\hat{\Bor}_N^-)$ such that 
\[
(\id\otimes \hat{\pi})\Fct = \Wbb. 
\]
If we denote the image of the standard generators of \eqref{EqStandardGenE} as 
\[
\eb(\hat{\varpi}_r) := \hat{\pi}(\eb(\sevarpi_r)),\qquad \hat{\stgE}_{rs} := \hat{\pi}(\stgE_{rs}),\qquad 1\leq r<s\leq N,
\]
then we trivially have 
\[
\Wbb =   \Gauss_\hbar\left(2 \sum_{t=1}^n \nee_{t} \otimes \hat{\varpi}_{N - t}\right) \prod_{r=1}^{\substack{n \\[-2pt] \longrightarrow}} \prod_{s=r+1}^{\substack{n+1 \\[-2pt] \longrightarrow}} \overline{F}_\hbar(\stgF_{rs} \otimes \hat{\stgE}_{rs}).
\]
Our goal will be to determine the operators $\eb(\hat{\varpi}_r)$ and $\hat{\stgE}_{rs}$ more explicitly. 

We start with the following trivial observation, which is immediate from the definitions of the structures involved (see \eqref{EqRightAction} for the second part of the statement):

\begin{lemma}
The standard unitary $\hbar$-representation of $\Bor_N^-$ and the unitary $\hbar$-representation of $\Tor_N^+$ in \eqref{EqActionDualHPlus} extend to a unitary $\hbar$-representation of $\wbtd{N}$: 
\[
\Eb(v)^{it} \Eb(w)^{is} = e^{2\pi i \hbar st (v,w)}\Eb(w)^{is}\Eb(v)^{it},\qquad w\in \Bor_N^-,v\in \Tor_N^+.
\]
The similar rule holds for the $\overline{L_{\hbar}(\Bor_N^-)}$-representation:
\[
\Eb(v)^{it} \Eb(\overline{w})^{is} = e^{2\pi i \hbar st (v,w)}\Eb(\overline{w})^{is}\Eb(v)^{it},\qquad w\in \Bor_N^-,v\in \Tor_N^+.
\]
\end{lemma}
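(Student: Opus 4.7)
The plan is to verify the commutation relation directly on $L^2(\hat{\Bor}_N^-)$ using the explicit formulas for both actions, and then deduce that the combined assignment extends to a bona fide unitary $\hbar$-representation of $\wbtd{N}$.

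First I would write out both one-parameter unitary groups explicitly. For $w\in \Bor_N^-$, using \eqref{EqStandardDefRep} together with $\Eb(w)^{is}=\pi(sw)$, one has
\[
(\Eb(w)^{is}g)(\hat u)=e^{2\pi i s\,w\cdot\hat u}\,g(\hat u+s\Jc w),
\]
while for $v\in \Tor_N^+$ the action \eqref{EqActionDualHPlus} reads
\[
(\Eb(v)^{it}g)(\hat u)=g(\hat u+2t\Jc v),
\]
where $\Jc v\in\hat{\Bor}_N^-$ is defined via the non-degenerate skew pairing between $\Bor_N^+$ and $\Bor_N^-$ from Lemma~\ref{LemNonDegPair} by $\langle u,\Jc v\rangle=-\tfrac12\hbar(u,v)$. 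A direct computation then gives
\[
(\Eb(v)^{it}\Eb(w)^{is}g)(\hat u)=e^{2\pi i s\,w\cdot\hat u}\,e^{4\pi i st\,w\cdot\Jc v}\,g(\hat u+s\Jc w+2t\Jc v),
\]
\[
(\Eb(w)^{is}\Eb(v)^{it}g)(\hat u)=e^{2\pi i s\,w\cdot\hat u}\,g(\hat u+s\Jc w+2t\Jc v),
\]
and using $w\cdot\Jc v=-\tfrac12\hbar(w,v)=\tfrac12\hbar(v,w)$ one reads off the stated identity.

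Next I would observe that the same calculation goes through verbatim for the conjugate representation from \eqref{EqRightAction}: only the sign in front of $s\Jc w$ in the shift changes, but that shift commutes with the $\Tor_N^+$ shift, so the phase factor $e^{4\pi i st\,w\cdot\Jc v}$ is unaffected. This yields the second commutation relation.

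To conclude the extension claim, I would use the vector-space decomposition $\wbtd{N}=\Bor_N^-\oplus\Tor_N^+$ (which is easy to check: $\Tor_N^+\cap\Bor_N^-=\{0\}$ because the $\seee_s$ have independent $e_{0,N-s,s}$ components, and a dimension count $|\neC_N|+n=|C_N'|$ seals the sum). On this direct sum, define
\[
\Eb(w+v)^{it}:=e^{-\pi i\hbar t^{2}(v,w)}\Eb(w)^{it}\Eb(v)^{it},\qquad w\in\Bor_N^-,\ v\in\Tor_N^+,
\]
i.e.\ the $\star$-product \eqref{EqStarProd}. The cocycle identity \eqref{EqProdProj} for $\pi(w+v):=\Eb(w+v)^i$ reduces, by bilinearity of $\epsilon$ and the fact that the restrictions to $\Bor_N^-$ and $\Tor_N^+$ already are $\hbar$-representations, to precisely the cross-leg commutation relation verified above. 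I expect no obstacle here: once the single commutation identity is checked, assembling it into an $\hbar$-representation of $\wbtd{N}$ is routine, and the only mild care needed is bookkeeping of the factor of two in \eqref{EqActionDualHPlus} against the factor of $-\tfrac12$ in the definition of $\Jc$.
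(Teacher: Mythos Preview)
Your proposal is correct and is exactly the ``immediate verification from the definitions'' that the paper has in mind, spelled out in full detail; your interpretation of $\Jc v$ for $v\in\Tor_N^+$ as the functional $u\mapsto -\tfrac12\hbar(u,v)$ on $\Bor_N^-$ is the intended one, and your dimension count for $\wbtd{N}=\Bor_N^-\oplus\Tor_N^+$ is right.
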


If we hence write 
\[
\Eb(\overline{v}) = \Eb(-v),\qquad v\in \Tor_N^+,
\]
then we also obtain a unitary $\hbar$-representation of $\overline{\wbtd{N}}$. So, if we put
\[
\Heiss(\wbtd{N}) =  \wbtd{N}\widetilde{\oplus} \overline{\wbtd{N}},
\]
where $\widetilde{\oplus}$ indicates that we take the unique skew-symmetric bilinear form restricting to the usual one on $\wbtd{N}$ and $\overline{\wbtd{N}}$, and with cross form
\[
(u,\overline{v}) = (w,\overline{z}) = 0,\quad (u,\overline{z}) = -(u,z),\quad (w,\overline{v}) = (w,v),\qquad u,v\in \Bor_N^-,w,z\in \Tor_N^+, 
\]
the above formulas combine into a unitary $\hbar$-representation of $H(\wbtd{N})$ with 
\begin{equation}\label{EqRepTwistDirSum}
\eb(v\oplus \overline{w}) = \eb(v)\star \eb(\overline{w}),\qquad v,w\in \wbtd{N}. 
\end{equation}

Now from the above, we can by \eqref{EqModFG} rewrite \eqref{EqFormRegMultUni} as 
\begin{equation}\label{EqFormRegMultUniAlt}
\Wbb =  \Kc\overline{\tFc} \Fc \in \Bc(L^2(\hat{\Bor}_N^-)\otimes L^2(\hat{\Bor}_N^-)),
\end{equation}
with $\tFc$ as in \eqref{EqTildeFPrime}. Then using the notation of Section \ref{SecModularity} and of \eqref{EqProjMapsPii}, we get with $i = s$ that 
\[
(\pi_i \otimes \id)\Wbb = \Gauss(2\varpi_s \otimes \seee_{N-s}),
\]
which is enough to conclude the following lemma: 
\begin{lemma}
One has
\begin{equation}
\eb(\hat{\varpi}_s) = \eb(\sevarpi_s),\qquad 1\leq s\leq n,
\end{equation}
with $\eb(\sevarpi_s)$ as determined by \eqref{EqActionDualHPlus}. 
\end{lemma}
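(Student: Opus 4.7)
The plan is to combine the formula $(\pi_s \otimes \id)\Wbb = \Gauss_\hbar(2\varpi_s \otimes \seee_{N-s})$ established in the paragraph just above (in which the second tensor factor $\eb(\seee_{N-s})$ denotes the concrete operator on $L^2(\hat{\Bor}_N^-)$ defined by \eqref{EqActionDualHPlus}) with the defining property $(\id \otimes \hat\pi)\Fct = \Wbb$ of $\hat\pi$ and the projection formula $(\pi_s \otimes \id)\Fct = \bbU^{(s)}$ from \eqref{EqProjFct}.

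First, I will invoke Lemma \ref{LemAffil}, which states that $\Fct$ is affiliated to $C^*_\hbar(\wwbtd{N,+}) \otimes C^*_\hbar(\wwwbtd{N,+})$, so that the composite slice $(\pi_s \otimes \hat\pi)\Fct$ is well-defined. This composite can be evaluated in two ways: slicing the second leg first yields $(\pi_s \otimes \id)\Wbb$, while slicing the first leg first gives $(\id \otimes \hat\pi)\bbU^{(s)} = \Gauss_\hbar(2\varpi_s \otimes \hat\pi(\eb(\seee_{N-s})))$, the last step by compatibility of functional calculus with normal $*$-homomorphisms applied to the expression for $\bbU^{(s)}$ in \eqref{EqPartFlipTwo}. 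Comparing these two evaluations with the established identity and exploiting $\Spec(\varpi_s) = \R$ in the functional-calculus style of Lemma \ref{LemCommGauss}, I conclude $\hat\pi(\eb(\seee_{N-s})) = \eb(\seee_{N-s})$ for every $1 \le s \le n$.

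Next, I will promote this equality from the $\seee_t$'s to the fundamental weights $\sevarpi_s$ using the Cartan matrix inversion \eqref{EqFundWeightDown}, which expresses $\sevarpi_s = \sum_t (B^{-1})_{st}\seee_t$. Since the generators $\eb(\seee_t)$ pairwise strongly commute by \eqref{EqVanishing}, joint functional calculus yields the factorization $\eb(\sevarpi_s) = \prod_t \eb(\seee_t)^{(B^{-1})_{st}}$, valid both for the abstract elements in $L_\hbar(\Bor_N^+)$ and for the concrete operators on $L^2(\hat{\Bor}_N^-)$ coming from \eqref{EqActionDualHPlus}. Applying the normal $*$-homomorphism $\hat\pi$ to this factorization and substituting the output of the previous step then delivers $\hat\pi(\eb(\sevarpi_s)) = \eb(\sevarpi_s)$, which is the claim.

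The main obstacle is the first step, specifically justifying that the degenerate representation $\pi_s$ from Theorem \ref{TheoDegRep} can legitimately be slotted into the first leg of $\Fct$ in tandem with $\hat\pi$ acting on the second leg. Lemma \ref{LemAffil} is precisely what licenses this manipulation; once it is granted, the remaining functional-calculus steps are straightforward.
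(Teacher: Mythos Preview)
Your proposal is correct and follows essentially the same approach as the paper: the paper also rests on the identity $(\pi_s\otimes\id)\Wbb = \Gauss(2\varpi_s\otimes\seee_{N-s})$ and the (implicit) comparison with $(\id\otimes\hat\pi)\bbU^{(s)}$, then says ``which is enough to conclude.'' You simply spell out the details the paper leaves implicit --- the two-way evaluation of $(\pi_s\otimes\hat\pi)\Fct$, the Lemma~\ref{LemCommGauss}-style extraction of $\hat\pi(\eb(\seee_{N-s}))=\eb(\seee_{N-s})$, and the Cartan-matrix passage from $\seee_t$ to $\sevarpi_s$ via \eqref{EqFundWeightDown} and \eqref{EqVanishing}.
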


Now on the other hand, if $i = (s,k)$, we get that 
\begin{eqnarray*}
(\pi_i \otimes \id)\Wbb &=& \Gauss(2\varpi_i \otimes \seee_{N-s}) \prod_{s \le r \le n}^{\longra} \varphi\hr{f_i\oplus \overline{-\nwe_{N-s,n-r}}}   \prod_{s \le r \le n}^{\longra}\varphi\hr{f_i\oplus \seee_{N-s,n-r}}\\
&=&  \Gauss(2\varpi_i \otimes \seee_{N-s})  \overline{F}\hr{\eb(f_i)\otimes  \left(\boxplus_{s \le r \le n} \eb(\overline{-\nwe_{N-s,n-r}})\right)}\\
&& \hspace{5cm} \times \overline{F}\hr{\eb(f_i)\otimes  \left(\boxplus_{s \le r \le n} \eb(\seee_{N-s,n-r})\right)}.
\end{eqnarray*}
Since we can write 
\[
\Eb(\overline{-\nwe_{N-s,n-r}}) = \Eb(\overline{\seee_{N-s,r-s}})*\Eb(\seee_{N-s}),
\]
and all $\Eb(\overline{\seee_{N-s,r-s}})$ and $\Eb(\seee_{N-s,n-r'})$ strongly commute, it follows that $\boxplus_{s \le r \le n} \eb(\overline{-\nwe_{N-s,n-r}})$ and $\boxplus_{s \le r \le n} \eb(\seee_{N-s,n-r})$ $-\hbar$-commute, and we can further meaningfully simplify the above expression for $\Wbb$ (using commutativity of the form sum) to 
\begin{eqnarray*}
(\pi_i \otimes \id)\Wbb &=&  \Gauss(2\varpi_i \otimes \seee_{N-s})\\
&& 
\times \overline{F}\hr{\eb(f_i)\otimes  \left(\left(\boxplus_{s \le r \le n} \eb(\overline{-\nwe_{N-s,n-r}}) \right)\boxplus \left(\boxplus_{s \le r \le n} \eb(\seee_{N-s,n-r})\right)\right)}.
\end{eqnarray*}
On the other hand, this needs to equal  also 
\[
(\pi_i\otimes \id)\Wbb = \Gauss(2\varpi_i \otimes \seee_{N-s})
\overline{F}\hr{\eb(f_i)\otimes  \hat{\Eb}_s}.
\]
We hence deduce:
\begin{theorem}
For all and $1\leq s\leq n$, it holds that
\begin{equation}
\hat{\Eb}_s = \left(\boxplus_{s \le r \le n} \eb(\overline{-\nwe_{N-s,n-r}}) \right)\boxplus \left(\boxplus_{s \le r \le n} \eb(\seee_{N-s,n-r})\right).
\end{equation}
\end{theorem}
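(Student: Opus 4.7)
The strategy is to project both tensor legs of the factorization $\Wbb = \Kc\,\overline{\tFc}\,\Fc$ from \eqref{EqFormRegMultUniAlt} under $\pi_{(s,k)} \otimes \id$, with $\pi_{(s,k)}$ the quotient representation of \eqref{EqProjMapsPii}, and then compare the result with the tautological expression
\[
(\pi_{(s,k)}\otimes \id)\Wbb = \Gauss(2\varpi_{(s,k)}\otimes \seee_{N-s})\,\overline{F}\bigl(\eb(f_{(s,k)})\otimes \hat{\Eb}_s\bigr),
\]
which holds by the very definitions of $\eb(\hat{\varpi}_s)$ and $\hat{\Eb}_s$ together with the formula \eqref{EqRealMU} for $\Fct$. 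Applying $\pi_{(s,k)}$ to the first leg of each of the three factors $\Kc$, $\overline{\tFc}$, $\Fc$, by exactly the mechanism used in Theorem~\ref{TheoProdFG}, yields
\[
\Gauss(2\varpi_{(s,k)}\otimes \seee_{N-s})\prod_{s \le r \le n}^{\longra}\!\varphi\bigl(f_{(s,k)}\oplus \overline{-\nwe_{N-s,n-r}}\bigr) \prod_{s \le r \le n}^{\longra}\!\varphi\bigl(f_{(s,k)}\oplus \seee_{N-s,n-r}\bigr).
\]

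Next, I would collapse each ordered product of quantum dilogarithms into a single $\overline{F}$ of a form sum via Theorem~\ref{TheoExponGen}; this is legitimate because within each product the successive summands $-\hbar$-commute in the second leg. To combine the two resulting $\overline{F}$'s into one, I would verify that the two partial form sums $\boxplus_{s\le r\le n}\eb(\overline{-\nwe_{N-s,n-r}})$ and $\boxplus_{s\le r\le n}\eb(\seee_{N-s,n-r})$ themselves $-\hbar$-commute, so that their total form sum is well-defined and Theorem~\ref{TheoExponGen} applies once more. The required cross-commutation relations reduce via Lemma~\ref{LemSumSkewComm} to pairing computations in the space $\Heiss(\wbtd{N})$ of \eqref{EqRepTwistDirSum}, using the identity $\Eb(\overline{-\nwe_{N-s,n-r}}) = \Eb(\overline{\seee_{N-s,r-s}})\star \Eb(\seee_{N-s})$ already noted in the text preceding the theorem, and the pairing formulas of Lemma~\ref{LemSkewProdFormDiff}.

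Once the products are combined into a single $\overline{F}$, the Gaussian factors cancel, and one is left with an identity of the form $\overline{F}(\eb(f_{(s,k)})\otimes \Ab) = \overline{F}(\eb(f_{(s,k)})\otimes \Bb)$ for two strictly positive operators $\Ab,\Bb$ on $L^2(\hat{\Bor}_N^-)$. The argument of Lemma~\ref{LemCommQDil}, based on the functional equation \eqref{EqFuncEqDilog} for $\overline{\varphi}$ and the fact that $\Spec(\ln \eb(f_{(s,k)})) = \R$, then forces $\Ab = \Bb$, which is exactly the claimed formula for $\hat{\Eb}_s$. The main obstacle, and the only genuinely nontrivial point in this plan, is the careful verification that Theorem~\ref{TheoExponGen} can be applied to the mixed form sum combining conjugate and ordinary exponentials; once the needed skew-commutation is established through the $\Heiss(\wbtd{N})$-pairing, the remainder of the argument is bookkeeping using tools already developed earlier in the paper.
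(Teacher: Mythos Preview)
Your proposal is correct and follows essentially the same approach as the paper: both project the factorization $\Wbb = \Kc\,\overline{\tFc}\,\Fc$ via $\pi_{(s,k)}\otimes\id$, collapse the resulting ordered products of dilogarithms into $\overline{F}$ of form sums using the quantum exponential property, verify the needed $-\hbar$-commutation through the identity $\Eb(\overline{-\nwe_{N-s,n-r}}) = \Eb(\overline{\seee_{N-s,r-s}})\star \Eb(\seee_{N-s})$, and then conclude by comparing with the tautological expression for $(\pi_{(s,k)}\otimes\id)\Wbb$ in terms of $\hat{\Eb}_s$. The only difference is that you make the final injectivity step (via Lemma~\ref{LemCommQDil}) explicit, whereas the paper leaves it implicit under ``we hence deduce''.
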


We now show how this formula can be extended to all standard generators, in the form of partition functions.

Note first that we have two embeddings of $\Nilp_N^+$ into $\Heiss(\wbtd{N})$, namely 
\begin{equation}\label{EqTwoEmb}
\Nilp_N^+\rightarrow \Heiss(\wbtd{N}),\quad  e_{abc} \mapsto e_{abc} =  e_{abc} \oplus 0,\qquad e_{abc} \mapsto f_{abc} = 0 \oplus \vartheta(e_{abc}), 
\end{equation}
with $\vartheta$ as in \eqref{EqVarTheta}. We obtain in this way a twisted direct sum 
\begin{equation}\label{EqEmbTwistSum}
\Nilp_N^+\widetilde{\oplus}\Nilp_N^+ \subseteq \Heiss(\wbtd{N}).
\end{equation}
\begin{lemma}
The resulting cross relations on $\Nilp_N^+\widetilde{\oplus}\Nilp_N^+$ are determined by 
\begin{equation}\label{EqCrossNilp}
(e_{abc},f_{a'b'c'}) = 0,\qquad c\neq 0\textrm{ or }c'\neq 0,\qquad 
(e_{ab0},f_{a'b'0}) = \left\{\begin{array}{lll} 1 & \textrm{if}& a=a',\\ -1/2 &\textrm{if}& |a-a'|=1\\
0&\textrm{if}&|a-a'|\geq 2.\end{array}\right.
\end{equation}
\end{lemma}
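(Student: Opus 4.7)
The proof comes from a direct computation in $\Heiss(\wbtd{N})$, broken into three steps. The first observation is that $\Nilp_N^+ \subseteq \Bor_N^-$ when both are viewed inside $\wbtd{N}$: the condition $(a,b,c) \in \wseC_N$ means $a \geq 1$, whence $(a,b,c) \in \neC_N$. Next, $\wbtd{N} = \Bor_N^- \oplus \Tor_N^+$ as a vector space; the decomposition of a basis vector $e_{a,b,c}$ is $e_{a,b,c}$ itself if $a \geq 1$, while $e_{0,b,c}$ (necessarily with $b, c \geq 1$) decomposes via the identity $\seee_c = e_{0,b,c} + \sum_{j<c} e_{c-j, b, j}$ into a $\Tor_N^+$-part $\seee_c$ and a $\Bor_N^-$-part $-\sum_{j<c} e_{c-j, b, j}$. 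Directness of the sum is immediate, since the $a=0$ components of the distinct $\seee_s$'s are linearly independent basis vectors.

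With these decompositions, the cross form reads $(u, \overline{v}) = -(u_{\Bor}, v_{\Tor}) + (u_{\Tor}, v_{\Bor})$ for $u, v \in \wbtd{N}$, directly from the data defining $\Heiss(\wbtd{N})$. Applying this with $u = e_{abc}$ (so $u_{\Tor} = 0$) and $v = e_{c',b',a'}$ (the vector underlying $f_{a'b'c'} = -\overline{e_{c',b',a'}}$) gives $(e_{abc}, f_{a'b'c'}) = -(e_{abc}, \overline{e_{c',b',a'}}) = (e_{abc}, v_{\Tor})$. If $c' \geq 1$ then $e_{c',b',a'} \in \Bor_N^-$, whence $v_{\Tor} = 0$ and the pairing vanishes. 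If $c' = 0$ then $v_{\Tor} = \seee_{a'}$, so $(e_{abc}, f_{a'b'0}) = (e_{abc}, \seee_{a'})$, reducing everything to this single pairing.

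The remaining computation of $(e_{abc}, \seee_{a'}) = \sum_{j=0}^{a'} (e_{abc}, e_{a'-j, N-a', j})$ proceeds via \eqref{eq:q-comm}. A summand is nonzero only when $(a,b,c)$ and $(a'-j, N-a', j)$ differ by a permutation of $(\pm 1, \mp 1, 0)$, which a short enumeration shows forces $a' \in \{a+c-1,\, a+c,\, a+c+1\}$, with two admissible values of $j$ per $a'$. For $c \geq 1$ both $j$'s lie in $[0, a']$ and the two contributions are opposite in sign but equal in magnitude, so each sum vanishes; this gives the asserted zeros whenever $c \neq 0$. For $c = 0$ one of the two admissible $j$'s equals $-1$ and is invalid; only the surviving boundary term contributes, and the common index in that pair becomes $0$, producing the $\pm 1/2$ factors from \eqref{eq:q-comm}, namely $-1/2$ at $a' = a-1$, $+1$ at $a' = a$, and $-1/2$ at $a' = a+1$, matching the formula.

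The main obstacle is this last case-by-case calculation: the magnitudes of individual pairings switch between $1$ and $1/2$ depending on whether the common index vanishes, and careful bookkeeping is needed to track when a boundary $j$ becomes invalid and the otherwise clean cancellation for $c \geq 1$ breaks. Once the admissible pairs are enumerated the argument is mechanical, but the enumeration itself is the delicate point.
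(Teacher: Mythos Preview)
Your reduction to $(e_{abc}, \seee_{a'})$ via the vector-space decomposition $\wbtd{N} = \Bor_N^- \oplus \Tor_N^+$ is correct and is a detailed rendering of the paper's ``straightforward verification''. However, the final $c=0$ case analysis has a slip. Your claim that ``one of the two admissible $j$'s equals $-1$'' applies to $a' = a-1$ and $a' = a$ (where the pair of $j$-values includes $c-1=-1$), but not to $a' = a+1$: there the two $j$-values are $c=0$ and $c+1=1$, and \emph{both} survive. The contributions are $(e_{a,b,0}, e_{a+1,b-1,0}) = +1/2$ (common index $c=0$) and $(e_{a,b,0}, e_{a,b-1,1}) = -1$ (common index $a \geq 1$), summing to $-1/2$ as required. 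Similarly, at $a'=a$ the surviving term $j=1$ has common index $b \geq 1$, yielding $+1$ directly rather than via a half-integer boundary factor. So your listed final values $-1/2,\,+1,\,-1/2$ are all correct, but the mechanism you describe does not account for the three cases uniformly.
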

\begin{proof}
It follows from a straightforward verification.
\end{proof}

The diagram $\wseC_N(2)$ associated to $\Nilp_N^+\widetilde{\oplus}\Nilp_N^+$ (with respect to its given basis) is obtained by stacking a second (horizontally reflected) copy of $\wseC_N$ (the second summand) on top of $\wseC_N$ (the first summand), with connecting full arrows going up between vertices at the same position, and half-arrows going down between vertices at distance one apart. See Figure \ref{FigTriangDoublePrimeDouble}.

\begin{figure}[ht]
\adjustbox{scale=0.7,center}{%
\begin{tikzcd}
	&&  && f_{112}&& \\
	& && f_{211}&& f_{121} & \\
&& f_{310} && f_{220} && f_{130}  \\
&& e_{310} && e_{220} && e_{130}  \\
	& && e_{211} && e_{121} & \\
	&&  && e_{112} && 
	\arrow[dashed, from=3-5, to=3-3]
	\arrow[dashed, from=3-7, to=3-5]
\arrow[from=2-4, to=3-5]
\arrow[from=2-6, to=3-7]
\arrow[from=1-5, to=2-6]
\arrow[from=2-6, to=2-4]
\arrow[from=3-3, to=2-4]
\arrow[from=3-5, to=2-6]
\arrow[from=2-4,to=1-5]
	\arrow[dashed, from=4-5, to=4-3]
	\arrow[dashed, from=4-7, to=4-5]
\arrow[from=5-4, to=4-5]
\arrow[from=5-6, to=4-7]
\arrow[from=6-5, to=5-6]
\arrow[from=5-6, to=5-4]
\arrow[from=4-3, to=5-4]
\arrow[from=4-5, to=5-6]
\arrow[from=5-4,to=6-5]
\arrow[from=4-3,to=3-3]
\arrow[from=4-5,to=3-5]
\arrow[from=4-7,to=3-7]
\arrow[dashed, from=3-3, to=4-5]
\arrow[dashed, from=3-5, to=4-7]
\arrow[dashed, from=3-7, to=4-5]
\arrow[dashed, from=3-5, to=4-3]
\end{tikzcd}
}
\caption{The diagram $\protect\wseC_4(2)$.}
\label{FigTriangDoublePrimeDouble}
\end{figure}

Recall now again the setting of Section \ref{SecStandGenPrel} and Section \ref{SecStandGen}, and the $\btd{N}$-labelled graph $\bbE = (\Gamma_{\bbE},\ell)$ as in Figure  \ref{fig:E-graph}, corresponding to the word $\mathbf{w}_0 \in B_N$ as \eqref{EqLongestWord1}. We slightly rewrite the labelling as to make it explicitly $\Bor_N^+$-valued (Figure \ref{fig:E-graphNilp}).
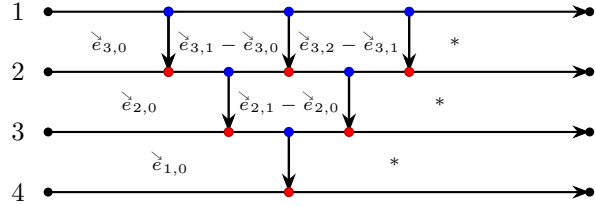
\begin{figure}[ht]
\begin{tikzpicture}[>=Stealth, line cap=round, x=8mm, y=8mm]
  \tikzset{
    wire/.style   ={line width=0.8pt},
    post/.style   ={line width=0.9pt,->},
    reddot/.style ={circle,draw=red!70!black,fill=red,inner sep=1.2pt},
    bluedot/.style={circle,draw=blue!70!black,fill=blue,inner sep=1.2pt},
    elab/.style   ={font=\small},
    blackdot/.style={circle,fill=black,inner sep=1.2pt},
    elab/.style   ={font=\small}
  }

  \draw[wire,->] ( -1,3) -- (8,3);
  \draw[wire,->] (  -1,2) -- ( 8,2);
  \draw[wire,->] (  -1,1) -- ( 8,1);
  \draw[wire,->] (  -1,0) -- ( 8,0);

  \foreach \x in {1,3,5,}{
    \draw[post] (\x,3) -- (\x,2);
    \node[bluedot] at (\x,3) {};
    \node[reddot]  at (\x,2) {};
  }
  \foreach \x in {2,4}{
    \draw[post] (\x,2) -- (\x,1);
    \node[bluedot] at (\x,2) {};
    \node[reddot]  at (\x,1) {};
  }
  \foreach \x in {3}{
    \draw[post] (\x,1) -- (\x,0);
    \node[bluedot] at (\x,1) {};
    \node[reddot]  at (\x,0) {};
  }
  
   \node[blackdot] at (-1,0) {};
   \node[blackdot] at (-1,1) {};
   \node[blackdot] at (-1,2) {};
   \node[blackdot] at (-1,3) {};
   \node[blackdot] at (8,0) {};
   \node[blackdot] at (8,1) {};
   \node[blackdot] at (8,2) {};
   \node[blackdot] at (8,3) {};
  \node[elab] at (-1.5,3) {$1$};
  \node[elab] at (-1.5,2) {$2$};
  \node[elab] at (-1.5,1) {$3$};
  \node[elab] at (-1.5,0) {$4$};

  \node[elab, anchor=east] at (1.5,0.5) {{\tiny $\seee_{1,0}$}};
  \node[elab, anchor=west] at (4.5,0.5) {{\tiny $*$}};

  \node[elab, anchor=east] at (1,1.5) {{\tiny $\seee_{2,0}$}};
  \node[elab]              at (3.0,1.5) {{\tiny $\seee_{2,1}-\seee_{2,0}$}};
  \node[elab, anchor=west] at (5.25,1.5) {{\tiny $*$}};

  \node[elab, anchor=east] at (.5,2.5) {{\tiny $\seee_{3,0}$}};
  \node[elab]              at (2.0,2.5) {{\tiny $\seee_{3,1}-\seee_{3,0}$}};
  \node[elab]              at (4.0,2.5) {{\tiny $\seee_{3,2}-\seee_{3,1}$}};
  \node[elab, anchor=west] at (5.5,2.5) {{\tiny $*$}};

\end{tikzpicture}
\caption{The $\Nilp_4^+$-labelled graph $\mathbb{E}_4 = (\Gamma_{\mathbb{E}_4},\ell)$.}
\label{fig:E-graphNilp}
\end{figure}
Note that the labelling on the utmost right is not needed to compute our partition functions, so we don't need to specify it. In particular, for all intents and purposes we can consider the labelling as being $\Nilp_N^+$-valued.

Consider now again the word $\mathbf{w}_0\mathbf{w}_0$ which concatenates $\mathbf{w}_0$ with itself. We then label the associated planar directed graph $\Gamma_{\mathbf{w}_0\mathbf{w}_0}$ with elements of $\Nilp_N^+(2)$ via a function $\ell(2)$ (Figure \ref{fig:E-graphDoubled}). We write the resulting $\Nilp_N^+(2)$-colored graph as $\mathbb{E}(2)$. 

\begin{figure}[ht]
\begin{tikzpicture}[>=Stealth, line cap=round, x=8mm, y=8mm]
  \tikzset{
    wire/.style   ={line width=0.8pt},
    post/.style   ={line width=0.9pt,->},
    reddot/.style ={circle,draw=red!70!black,fill=red,inner sep=1.2pt},
    bluedot/.style={circle,draw=blue!70!black,fill=blue,inner sep=1.2pt},
    elab/.style   ={font=\small}, 
    blackdot/.style={circle,fill=black,inner sep=1.2pt},
    elab/.style   ={font=\small}
  }

  \draw[wire,->] ( -1,3) -- (14,3);
  \draw[wire,->] (  -1,2) -- ( 14,2);
  \draw[wire,->] (  -1,1) -- ( 14,1);
  \draw[wire,->] (  -1,0) -- ( 14,0);

  \foreach \x in {1,3,5,7,9,11,}{
    \draw[post] (\x,3) -- (\x,2);
    \node[bluedot] at (\x,3) {};
    \node[reddot]  at (\x,2) {};
  }
  \foreach \x in {2,4,8,10}{
    \draw[post] (\x,2) -- (\x,1);
    \node[bluedot] at (\x,2) {};
    \node[reddot]  at (\x,1) {};
  }
  \foreach \x in {3,9}{
    \draw[post] (\x,1) -- (\x,0);
    \node[bluedot] at (\x,1) {};
    \node[reddot]  at (\x,0) {};
  }
   \node[blackdot] at (-1,0) {};
   \node[blackdot] at (-1,1) {};
   \node[blackdot] at (-1,2) {};
   \node[blackdot] at (-1,3) {};
   \node[blackdot] at (14,0) {};
   \node[blackdot] at (14,1) {};
   \node[blackdot] at (14,2) {};
   \node[blackdot] at (14,3) {};
  \node[elab] at (-1.5,3) {$1$};
  \node[elab] at (-1.5,2) {$2$};
  \node[elab] at (-1.5,1) {$3$};
  \node[elab] at (-1.5,0) {$4$};

  \node[elab, anchor=east] at (1.5,0.5) {{\tiny $\seee_{1,0}$}};
  \node[elab] at (6,0.5) {{\tiny $\sef_{1,0}-\seee_{1,0}$}};
  \node[elab, anchor=west] at (10.5,0.5) {{\tiny $*$}};

  \node[elab, anchor=east] at (1,1.5) {{\tiny $\seee_{2,0}$}};
  \node[elab]              at (3.0,1.5) {{\tiny $\seee_{2,1}-\seee_{2,0}$}};
  \node[elab] at (6,1.5) {{\tiny $\sef_{2,0}-\seee_{2,1}$}};
  \node[elab]              at (9,1.5) {{\tiny $\sef_{2,1}-\sef_{2,0}$}};
  \node[elab, anchor=west] at (11,1.5) {{\tiny $*$}};

  \node[elab, anchor=east] at (.5,2.5) {{\tiny $\seee_{3,0}$}};
  \node[elab]              at (2.0,2.5) {{\tiny $\seee_{3,1}-\seee_{3,0}$}};
  \node[elab]              at (4.0,2.5) {{\tiny $\seee_{3,2}-\seee_{3,1}$}};
  \node[elab] at (6.0,2.5) {{\tiny $\sef_{3,0}-\seee_{3,2}$}};
  \node[elab]              at (8.0,2.5) {{\tiny $\sef_{3,1}-\sef_{3,0}$}};
  \node[elab]              at (10.0,2.5) {{\tiny $\sef_{3,2}-\sef_{3,1}$}};
  \node[elab, anchor=west] at (11.5,2.5) {{\tiny $*$}};
\end{tikzpicture}
\caption{The $\Nilp_4^+(2)$-labelled graph $\mathbb{E}_4(2) = (\Gamma_{\mathbb{E}_4(2)},\ell(2))$.}
\label{fig:E-graphDoubled}
\end{figure}
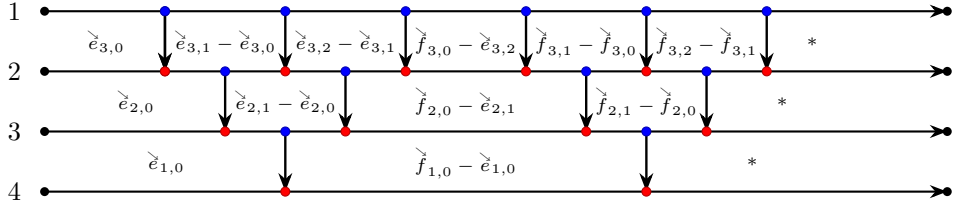

It is elementary to check that this labelling is indeed consistent with the graph structure (again, the utmost right labelling is just for padding, and the precise choice of these vectors are not relevant for the further computations).

Note now that the unitary $\hbar$-representation \eqref{EqEmbTwistSum} of $\Heiss(\wbtd{N})$ on $L^2(\hat{\Bor}_N^-)$ restricts via \eqref{EqRepTwistDirSum} to a unitary $\hbar$-representation of $\Nilp_N^+\widetilde{\oplus}\Nilp_N^+$. Let $Z_{\bullet}$ be the partition operator associated to a labelled graph $\bullet$. 

\begin{theorem}
We have 
\[
\hat{\stgE}_{rs} = 
Z_{\bbE_{[r,s]}(2)}.
\]
\end{theorem}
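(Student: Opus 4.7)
The strategy is to extend the factorization argument for the braided Fock--Goncharov flip (Proposition~\ref{prop:R-factor}) to the doubled setting relevant for $\Wbb$. First I would reinterpret the product $\overline{\tFc}\,\Fc$ appearing in $\Wbb = \Kc\,\overline{\tFc}\,\Fc$ from \eqref{EqFormRegMultUniAlt} as a product of quantum dilogarithms attached to the $(\Nilp_N^+ \widetilde{\oplus} \Nilp_N^+)$-labelled doubled graph whose underlying braid word is $\mathbf{w}_0\mathbf{w}_0$. The factors in $\Fc$ contribute dilogarithms whose second tensor leg is indexed by $\seee_{N-s,(n-r)-k}$ (acting through the standard representation of $L_\hbar(\Bor_N^-)$), while the factors in $\overline{\tFc}$ contribute dilogarithms whose second leg is indexed by $\overline{-\nwe_{N-s,(n-r)-k}}$ (acting through the conjugate representation). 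Since these two copies of $L_\hbar(\Bor_N^-)$ commute pointwise on $L^2(\hat{\Bor}_N^-)$, they jointly realize an $\hbar$-representation of $\Nilp_N^+ \widetilde{\oplus} \Nilp_N^+$ as in \eqref{EqRepTwistDirSum}, and the two families of labels glue into precisely the coloring $\ell(2)$ of $\bbE(2)$ depicted in Figure~\ref{fig:E-graphDoubled}.

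Next I would run the inductive merging scheme used to prove Lemma~\ref{lem:1} and Proposition~\ref{prop:R-factor}, now applied to this doubled colored graph. The combinatorial input powering this is Lemma~\ref{LemLongestDouble}, which reduces the doubled word $\mathbf{w}_0\mathbf{w}_0$ back to $\mathbf{w}_0$ via a sequence of braid and Demazure moves preserving the labels on a designated sub-collection of faces. By Lemma~\ref{lem:Zmut} and Lemma~\ref{lem:dilog-product}, these moves conjugate the initial product of dilogarithms into a factorization
\[
\Wbb = \hat{\Kc}\,\prod_{a=1}^{\substack{n \\[-2pt] \longrightarrow}} \prod_{b=a+1}^{\substack{n+1 \\[-2pt] \longrightarrow}} \overline{F}_\hbar\bigl(\stgF_{a,b} \otimes \hat{\stgE}_{a,b}\bigr),
\]
where $\hat{\Kc} = (\id\otimes \hat{\pi})(\Kc)$ and each $\hat{\stgE}_{a,b}$ is the form sum naturally appearing as the partition function on $\bbE_{[a,b]}(2)$. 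At each reduction step, the Kashaev pentagon identity together with the quantum exponential property \eqref{EqExpoGenProp} merges two dilogarithm factors sharing a common $\stgF$-leg into a single dilogarithm whose other leg is the form sum of the merged legs, exactly mirroring the mechanism in the undoubled case.

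Finally, I would compare this with the factorization of $\Wbb = (\id\otimes\hat{\pi})\Fct$ obtained by applying $(\id\otimes\hat{\pi})$ termwise to Proposition~\ref{prop:R-factor}, yielding a second expression for $\Wbb$ of the same shape but with second tensor legs equal to $\hat{\pi}(\stgE_{a,b}) = \hat{\stgE}_{a,b}$. Since $\stgF_{a,b}$ has continuous spectrum filling $(0,\infty)$ on an irreducible Heisenberg summand, functional calculus applied to the matched pairs of dilogarithm factors --- combined with the injectivity argument underlying Lemma~\ref{LemCommQDil} --- forces the second legs to coincide, giving $\hat{\pi}(\stgE_{r,s}) = Z_{\bbE_{[r,s]}(2)}$ as claimed.

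The hard part will be verifying at each merging step that the two dilogarithm factors to be combined satisfy the correct commutation relation: in the undoubled case of Lemma~\ref{lem:1}, all relevant pairs of operators in the second leg are $\hbar$-commuting, but in the doubled setting we must simultaneously track $\hbar$-commutation (within a single copy of $L_\hbar(\Bor_N^-)$) and strong commutation (across the two copies realizing $\overline{L_\hbar(\Bor_N^-)}$). This forces a refined case analysis driven by the labelled-graph combinatorics of $\bbE(2)$ and the explicit pairings in Lemma~\ref{LemSkewProdFormDiff}, but the structure of Figure~\ref{FigTriangDoublePrimeDouble} suggests the bookkeeping can be carried out in parallel with the undoubled induction.
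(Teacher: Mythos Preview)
The paper takes a much shorter route that sidesteps both the doubled factorization and the factor-matching entirely. Its key observation is that the assignment $\stgE_{rs}\mapsto Z_{\bbE_{[r,s]}(2)}$ is itself a normal $*$-representation of $L_\hbar(\Nilp_N^+)$: by Lemma~\ref{LemLongestDouble} there is a sequence of mutations reducing $(\mathbf{w}_0\mathbf{w}_0,\ell(2))$ to $(\mathbf{w}_0,\iota\circ\ell)$ where $\iota\colon v\mapsto v\oplus 0$, and by Lemma~\ref{lem:Zmut} the corresponding product of dilogarithms is a single unitary $u$ with $Z_{\bbE_{[r,s]}(2)}=u\,Z_{\Gamma_{\bbE_{[r,s]}},\iota\circ\ell}\,u^*$ simultaneously for all $r<s$. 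Since $\hat\pi$ is also a normal $*$-representation and the two agree on the simple generators $\stgE_s$ (this is the content of the preceding theorem), Proposition~\ref{PropStandardGen} forces them to agree on all of $L_\hbar(\Nilp_N^+)$, hence on every $\stgE_{rs}$. No analysis of $\Wbb$ beyond the simple-root case is needed, and there is no commutation bookkeeping to carry out.

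Your final matching step has a genuine gap. Even if you succeed in producing two ordered factorizations
\[
\prod_{a<b}\overline{F}_\hbar(\stgF_{a,b}\otimes X_{a,b})=\prod_{a<b}\overline{F}_\hbar(\stgF_{a,b}\otimes Y_{a,b}),
\]
the argument underlying Lemma~\ref{LemCommQDil} does not let you conclude $X_{a,b}=Y_{a,b}$ termwise: the factors do not commute, the various $\stgF_{a,b}$ act on the same Hilbert space with nontrivial skew relations, and there is no analogue of the degenerate projections $\pi_i$ of \eqref{EqProjMapsPii} that would isolate a single non-simple $\stgF_{a,b}$. The paper's use of Proposition~\ref{PropStandardGen} is precisely what circumvents this obstacle---it replaces factor-by-factor comparison with the abstract statement that two normal representations agreeing on generators must coincide. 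Your invocation of Lemma~\ref{LemLongestDouble} is the right combinatorial ingredient; the cleaner way to deploy it is via Lemma~\ref{lem:Zmut} to obtain the conjugating unitary $u$ directly on partition functions and then finish with the generation argument, rather than rederiving a doubled version of Proposition~\ref{prop:R-factor}.
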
 
\begin{proof}
Consider the embedding 
\begin{equation}
\iota \colon \Nilp_N^+ \rightarrow\Nilp_N^+\widetilde{\oplus} \Nilp_N^+,\qquad v \mapsto v \oplus 0. 
\end{equation}
We will show that there exists a unitary $u$ such that
\begin{equation}\label{EqUnitaryConj}
Z_{\bbE_{[r,s]}(2)} = u Z_{\Gamma_{\bbE_{[r,s]}},\iota \circ \ell} u^*.
\end{equation}
This is enough to establish the theorem: indeed, then both 
\[
\stgE_{rs} \mapsto Z_{\bbE_{[r,s]}}\qquad \textrm{ and }\qquad \stgE_{rs}\mapsto \hat{\stgE}_{rs}
\]
arise from normal $*$-representations of $L_{\hbar}(\Nilp_N^+)$. As these coincide on the $\eb_r$, they must be equal by  Proposition \ref{PropStandardGen}. 

By Lemma \ref{lem:Zmut}, we will have shown \eqref{EqUnitaryConj} if we can reduce $(\mathbf{w}_0\mathbf{w}_0,\ell(2))$ to $(\mathbf{w}_0,\ell)$ by a particular sequence of mutations (ignoring the rightmost columns of the latter). But this is exactly the content of Lemma \ref{LemLongestDouble}. 
\end{proof}

\section{Quantum  \texorpdfstring{$SL^+(N,\R)$}{special linear group} as a locally compact quantum group}

From any locally compact quantum group $ \Bbbb = (L^{\infty}(\Bbbb,\Delta)$ one can construct its Drinfeld double $D\Bbbb$, which is again a locally compact quantum group \cite{BV05}. 

More precisely, writing the dual of $\Bbbb$ as $\hat{\Bbbb} = (L(\Bbbb),\hat{\Delta})$, one considers
\[
L^\infty(D\Bbbb) = L^\infty(\Bbbb) \bar{\otimes} L(\Bbbb) \subset \Bc(L^2(\Bbbb)\otimes L^2(\Bbbb))
\]
with the comultiplication 
\[
\Delta_{D\Bbbb} = (\id \otimes \sigma \otimes \id)(\id \otimes \ad(\Wbb) \otimes \id)(\Delta \otimes \hat{\Delta}),
\]
where $ \ad(\Wbb)(x) = \Wbb x \Wbb^* $ denotes conjugation by the left regular multiplicative unitary $ \Wbb $ and $ \sigma $ is the tensor flip. The unitary antipode and scaling group of $ L^\infty(D\Bbbb) $ are given by 
\[
R_{D\Bbbb} = (R \otimes \hat{R}) \ad(\Wbb) = \ad(\Wbb^*) (R \otimes \hat{R}), \quad 
\tau^{D\Bbbb}_t = \tau_{-t} \otimes \hat{\tau}_t,  
\]
respectively. 

We are interested in the case $ \Bbbb = \Bbbb_{N, \hbar}^- $, and we will identify $ L^\infty(\Bbbb) = L(\Bbbb_{N, \hbar}^+) = L_\hbar(\Bor_N^+) $ and $ L(\Bbbb) = L(\Bbbb_{N, \hbar}^-) = L_\hbar(\Bor_N^-) $, using the notation introduced at the start of Section \ref{secmodularstructure}, so that $ L^\infty(D \Bbbb_{N, \hbar}^-) = L_\hbar(\Bor_N^+) \bar{\otimes} L_\hbar(\Bor_N^-) $.

Let us write $ L_\hbar(\Tor_N) $ for the diagonal copy of $ L_\hbar(\Tor_N^\pm) $ inside $ L_\hbar(\Bor_N^+) \bar{\otimes} L_\hbar(\Bor_N^-)$ generated by the operators
\[
\Db_s = \Kb_s \otimes \Lb_s,\qquad 1 \leq s \leq n,
\]
compare \eqref{EqCartanK}, \eqref{EqCartanL}, and consider the von Neumann subalgebra $ M \subseteq L^\infty(D \Bbbb_{N, \hbar}^-) $ generated by $ L_\hbar(\Nilp_N^+) \bar{\otimes} L_\hbar(\Nilp_N^-) $ and $ L_\hbar(\Tor_N) $. 

\begin{prop}
The von Neumann algebra $ M $ is a Baaj--Vaes subalgebra of $ L^\infty(D \Bbbb_{N, \hbar}^-) $. That is, we have $ \Delta_{D \Bbbb_{N, \hbar}^-}(M) \subseteq M \bar{\otimes} M $, and $ M $ is invariant under the unitary antipode and the scaling group of $ L^\infty(D \Bbbb_{N, \hbar}^-) $. 
\end{prop}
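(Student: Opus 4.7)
By normality of $\Delta_{D\Bbbb}$, $R_{D\Bbbb}$, and $\tau^{D\Bbbb}_t$, it suffices to check each of the three conditions on a generating set. By Propositions \ref{PropStandardGen} and \ref{PropStandardGenDual}, $M$ is generated as a von Neumann algebra by the operators $\Db_s = \Kb_s \otimes \Lb_s$, $\stgE_s \otimes 1$, and $1 \otimes \stgF_s$ for $1 \leq s \leq n$.

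The scaling-group case is the most immediate. Theorem \ref{TheoScalGroup} gives the scaling group of $\Bbbb_{N,\hbar}^+$ as conjugation by $\Lb_\delta^{2i(1+|\hbar|^{-1})t}$, affiliated to $L_\hbar(\Tor_N^-)$, and the self-duality isomorphism \eqref{EqSelfDual} yields the analogous formula for the scaling group of $\Bbbb_{N,\hbar}^-$ in terms of $\Kb_\delta \in L_\hbar(\Tor_N^+)$. Consequently $\tau^{D\Bbbb}_t = \tau_{-t} \otimes \hat\tau_t$ is implemented by conjugation with an element affiliated to $L_\hbar(\Tor_N^+) \bar{\otimes} L_\hbar(\Tor_N^-)$. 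Such a conjugation fixes $\Db_s$ (as torus elements mutually commute) and multiplies each of $\stgE_s \otimes 1$ and $1 \otimes \stgF_s$ by a strictly positive scalar (by the root grading encoded in \eqref{EqSameCar}). Hence $\tau^{D\Bbbb}_t$ preserves $M$.

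The crux is coproduct invariance. For $\Db_s$, the key observation is that $\Db_s$ strongly commutes with the Kac--Takesaki operator $\Wbb$ of $\Bbbb_{N,\hbar}^-$: the action \eqref{EqActionK} of the Gaussian $\Kc$ on $\Kb_s$ in one leg is exactly cancelled by its dual on $\Lb_s$ in the other leg, so that the cross-commutator vanishes on the diagonal. Combined with the group-like coproducts $\hat\Delta(\Kb_s) = \Kb_s \otimes \Kb_s$ and $\Delta(\Lb_s) = \Lb_s \otimes \Lb_s$ of Proposition \ref{Propstandardcoproducts}, this gives $\Delta_{D\Bbbb}(\Db_s) = \Db_s \otimes \Db_s$. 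For $\stgE_s \otimes 1$ and $1 \otimes \stgF_s$ we substitute $\hat\Delta(\stgE_s) = (\stgE_s \otimes 1) \boxplus (\Kb_s \otimes \stgE_s)$ and $\Delta(\stgF_s) = (\stgF_s \otimes \Lb_s) \boxplus (1 \otimes \stgF_s)$ into the Drinfeld double formula, and compute $\ad(\Wbb)$ on the middle legs using the factorization of $\Wbb$ inherited (via the self-dual identification $\Wbb = \Sigma\Wbb^*\Sigma$-type relation) from Theorem \ref{TheoKacTakesakiop}. The effect of $\ad(\Wbb)$ is to pair the bare Cartan factor $\Kb_s$ produced by $\hat\Delta(\stgE_s)$ with a dual Cartan $\Lb_s$ supplied by the Gaussian part of $\Wbb$, assembling them into the diagonal $\Db_s$. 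The outputs, schematically of the form
\[
\Delta_{D\Bbbb}(\stgE_s \otimes 1) = \bigl((\stgE_s \otimes 1) \otimes 1\bigr) \boxplus \bigl(\Db_s \otimes (\stgE_s \otimes 1)\bigr),
\]
\[
\Delta_{D\Bbbb}(1 \otimes \stgF_s) = \bigl((1 \otimes \stgF_s) \otimes \Db_s\bigr) \boxplus \bigl(1 \otimes (1 \otimes \stgF_s)\bigr),
\]
then visibly lie in $M \bar{\otimes} M$. Unitary antipode invariance follows by the same matched-pair mechanism: formula \eqref{EqFormUnitAntip} and its dual describe $R$ and $\hat R$ on the generators (inverting the Cartan and permuting the nilpotent generators up to a torus scaling), while the $\ad(\Wbb^*)$ factor in $R_{D\Bbbb} = \ad(\Wbb^*)(R \otimes \hat R)$ again recombines bare Cartan contributions into diagonal $\Db_s$'s, so that $M$ is stabilised.

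The principal technical obstacle lies in justifying the above formal manipulations of the form sums $\boxplus$ inside $\ad(\Wbb)$, since $\Wbb$ is a product of unbounded quantum-dilogarithm factors. These manipulations are to be legitimised by the functional calculus for skew-commuting strictly positive operators (Proposition \ref{PropSkewComm} and Theorem \ref{TheoExponGen}) together with the Kashaev pentagon identity, along the lines of the graph-mutation calculus of Lemma \ref{lem:Zmut}, which handles precisely this type of conjugation of form sums by quantum exponentials.
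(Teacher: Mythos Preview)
Your overall strategy—reduce to generators, use the known Borel coproducts from Proposition \ref{Propstandardcoproducts}, and analyse the $\ad(\Wbb)$ twist—matches the paper's, and your treatment of the scaling group is correct. But the coproduct and antipode arguments contain a genuine gap.

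The ``schematic'' formulas you write for $\Delta_{D\Bbbb}(\stgE_s\otimes 1)$ and $\Delta_{D\Bbbb}(1\otimes\stgF_s)$ are not the actual answers. The mechanism you describe—the Gaussian part of $\Wbb$ pairing a bare Cartan $\Kb_s$ or $\Lb_s$ with its partner to assemble $\Db_s$—is only half of the story: it accounts for the factor $\Kc$ in the decomposition $\Wbb=\Kc\Wbb'=\Wbb''\Kc$ inherited from Theorem \ref{TheoKacTakesakiop}. What remains is conjugation by $\Wbb'$ (resp.\ $\Wbb''$), and this does \emph{not} act trivially on the form sums in question. The paper obtains, for instance,
\[
\Delta_D(1\otimes\stgF_s)=(\Wbb')^*_{23}\bigl(1\otimes\stgF_s\otimes\Kb_s\otimes\Lb_s\;\boxplus\;1\otimes 1\otimes 1\otimes\stgF_s\bigr)(\Wbb')_{23},
\]
which is \emph{not} the clean expression you claim. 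The reason this still lands in $M\bar\otimes M$ is the structural fact you never invoke: $\Wbb'_{21}\in L_\hbar(\Nilp_N^+)\bar\otimes L_\hbar(\Nilp_N^-)\subseteq M$ and $\Wbb''_{21}\in M$, so the residual conjugation, while nontrivial, is by a unitary with legs in $M$. The same factorization drives the unitary antipode computation via $R_{D\Bbbb}=\ad(\Wbb^*)(R\otimes\hat R)$. Your appeal to Lemma \ref{lem:Zmut} and the mutation calculus does not substitute for this: those tools control conjugation by a \emph{single} quantum dilogarithm, not by the full product $\Wbb'$, and in any case the point is not to simplify that conjugation away but to recognise that it already lives in $M$.

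Your claim that $\Db_s$ strongly commutes with $\Wbb$ is likewise not what the paper establishes and is not obviously true. The paper instead checks, using \eqref{EqVanishing}, that $(\hat\Delta\otimes\Delta)(\Db_s)$ commutes with the Gaussian part $\Kc_{32}$, and then again uses $\Wbb'_{21}\in M$ for the remaining conjugation.
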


\begin{proof}
Consider the Fock--Goncharov flip $ \Fct = \Kc \Fc = \Fctt \Kc $. 
Using \eqref{eq:F-1or2} one easily checks that   
$ \Fc \in L_\hbar(\Nilp_N^-) \bar{\otimes} L_\hbar(\Nilp_N^+) $, or equivalently, $ \Fc_{21} \in L_\hbar(\Nilp_N^+) \bar{\otimes} L_\hbar(\Nilp_N^-) \subseteq M $. 
Moreover, by inspecting \eqref{eq:Fprime-1or2} one obtains $ \Fctt_{21} \in M $, but note that $ \Fctt_{21} $ is not contained in $ L_\hbar(\Nilp_N^+) \bar{\otimes} L_\hbar(\Nilp_N^-) $. 
It follows from the proof of Theorem \ref{TheoIndepofantipode} that we can similarly write $\Wbb = \Kc \Wbb' = \Wbb'' \Kc $ with $ \Wbb'_{21} \in L_\hbar(\Nilp_N^+) \bar{\otimes} L_\hbar(\Nilp_N^-) \subseteq M $ and $ \Wbb''_{21} \in M $, see also the discussion at the start of section \ref{subsection:dual}. 

Let us now abbreviate $ D = D \Bbbb_{N, \hbar}^- $ and observe that the left regular representation of $ \Bbbb_{N, \hbar}^- $ is given by $ \widehat{\Wbb} = \Wbb^*_{21} = (\Wbb')^*_{21} \Kc_{21}^* = \Kc_{21}^* (\Wbb'')^*_{21} $. 
According to Proposition \ref{Propstandardcoproducts} we have $ \Delta(\Lb_s) = \Lb_s \otimes \Lb_s $ and $ \hat{\Delta}(\Kb_s) = \Kb_s \otimes \Kb_s $ for $ 1 \leq s \leq n $. 
Using \eqref{EqVanishing} one checks that $ (\hat{\Delta} \otimes \Delta)(\Db_s) $ strongly commutes with $ \Kc_{32} $ for $ 1 \leq s \leq n $, and as a consequence we obtain 
\[
\Delta_D(L_\hbar(\Tor_N)) \subseteq (\Wbb')^*_{23} (L_\hbar(\Tor_N) \bar{\otimes} L_\hbar(\Tor_N)) (\Wbb')_{23} \subseteq M \bar{\otimes} M.
\]

We next claim $ \Delta_D(1 \otimes L_\hbar(\Nilp_N^-)) \subseteq M \bar{\otimes} M $. By Proposition \ref{PropStandardGenDual} it is enough to check $ \Delta_D(1 \otimes \Fb_s^{it}) \in M \bar{\otimes} M $ for the standard generators $ \Fb_s $ for $ 1 \leq s \leq n $ defined in \eqref{EqGenFs}. Due to Proposition \ref{Propstandardcoproducts} we obtain 
\begin{align*}
\Delta_D(1 \otimes \Fb_s) &= (\Wbb')^*_{23} \Kc^*_{23} (1 \otimes \Fb_s \otimes 1 \otimes \Lb_s \boxplus 1 \otimes 1 \otimes 1 \otimes \Fb_s) \Kc_{23} \Wbb'_{23} \\
&= (\Wbb')^*_{23} (1 \otimes \Fb_s \otimes \Kb_s \otimes \Lb_s \boxplus 1 \otimes 1 \otimes 1 \otimes \Fb_s)\Wbb'_{23},   
\end{align*}
using that $ \Kc^*( \Eb(\nee_{s, k}) \otimes 1)\Kc = \Eb(\nee_{s, k}) \otimes \Eb(\seee_{N - s}) $ for all $ 0 \leq k \leq s - 1 $ by \eqref{EqPairFundWeight}. Combining this again with $ \Wbb' \in L_\hbar(\Nilp_N^-) \bar{\otimes} L_\hbar(\Nilp_N^+) $ we conclude that $ \Delta_D(1 \otimes \Fb_s^{it}) \in M \bar{\otimes} M $ for $ 1 \leq s \leq n $ as required. Using Proposition \ref{PropStandardGen} one verifies $ \Delta_D(L_\hbar(\Nilp_N^+) \otimes 1) \subseteq M \bar{\otimes} M $ in a similar way, which allows us to conclude that $ \Delta_D(M) \subseteq M \bar{\otimes} M $.

Next we use \eqref{EqFormUnitAntip} to show that $ M $ is invariant under the unitary antipode of the Drinfeld double. Firstly, we observe $ (\hat{R} \otimes R)(\Db_s) = \Db_s^{-1} $ for $ 1 \leq s \leq n $, and using that $ \Db_s^{-1} $ strongly commutes with $ \Kc_{21} $ combined with $ \Wbb_{21}'' \in M $ we get $ R_D(L_\hbar(\Tor_N)) \subseteq M $. 
For the standard generators $ \Fb_s $ for $ 1 \leq s \leq n $ we obtain 
\begin{align*}
R_D(1 \otimes \Fb_s) &= \Wbb_{21}'' \Kc_{21} (\hat{R} \otimes R)(1 \otimes \Fb_s) \Kc_{21}^* (\Wbb_{21}'')^* \\
&= \Wbb_{21}'' \Kc_{21} (1 \otimes \Fb_s * \Lb_s^{-1}) \Kc^*_{21} (\Wbb_{21}'')^* \\ 
&= \Wbb_{21}'' (\Kb_s^{-1} \otimes \Fb_s * \Lb_s^{-1}) (\Wbb_{21}'')^*, 
\end{align*}
and using again $ \Wbb_{21}'' \in M $ and Proposition \ref{PropStandardGenDual} we deduce $ R_D(1 \otimes L_\hbar(\Nilp_N^-)) \subseteq M $. 
Analogously one verifies $ R_D(L_\hbar(\Nilp_N^+) \otimes 1) \subseteq M $.  

Finally, recall from Theorem \ref{TheoScalGroup} and Theorem \ref{TheoModFGFin} 
that the scaling groups of $ L_\hbar(\Bor_N^\pm) $ are implemented by conjugation with $ \hat{\Qb}^{it} $ and $ \Qb^{it} $, respectively. Here $ \hat{\Qb} = \Lb_\delta^{1 + |\hbar|^{-1}} $ where $ \Lb_\delta = \Eb(\sum \nevarpi_i) $, and $ \Qb = \Kb_\delta^{-(1 + |\hbar|^{-1})} $ where $ \Kb_\delta = \Eb(\sum \sevarpi_i) $. Clearly, each $ \Db_s $ is fixed by $ \tau^D_t $ for all $ t \in \mathbb{R} $. Moreover, from  \eqref{EqPairFundWeight} we see that the standard generators $ \Fb_s $ are preserved under conjugation by $ \hat{\Qb}^{it} $ up to a scalar. Similarly, the operators $ \Eb_s $ are preserved under conjugation by $ \Qb^{it} $ up to a scalar. We conclude that $ M $ is invariant under the scaling group. 
\end{proof}

According to \cite[Proposition A.5]{BV05} we conclude that $ M = L^\infty(\G) $ for a uniquely determined locally compact quantum group $ \G $. 

\begin{defn}
We call $ \G = SL_\hbar^+(N, \mathbb{R}) $ the totally positive quantum $ SL(N, \mathbb{R}) $-group. 
\end{defn}

The general theory from \cite{BV05} allows one to describe the basic structure of $ D \Bbbb_{N, \hbar}^- $ and $ SL_\hbar^+(N, \mathbb{R}) $ in terms of the quantized Borel groups $ \Bbbb_{N, \hbar}^\pm $.  
A more detailed study of $ SL_\hbar^+(N, \mathbb{R}) $ and its representation theory will be left to future work.

\appendix

\section{The quantum dilogarithm function}\label{Apqdilog}

For $a_+,a_- \in \R^{\times}$, put
\begin{equation}\label{EqGenFormW}
W_{a_+,a_-}(z) = -\frac{\pi i}{2} \int_{\Omega} \frac{dy}{y}\, \frac{e^{-2 iyz}}{\sinh(a_+y)\sinh(a_-y)},\qquad z \in \R,
\end{equation}
with $\Omega$ any curve in $\C$ going from $-\infty$ to $+\infty$ with a small bump in the upper half plane around the origin. Gathering leading terms, this can be reduced to the alternative expression
\begin{equation}\label{EqGenFormWAlt}
W_{a_+,a_-}(z) = \frac{\pi^2}{12}\left(\frac{a_+}{a_-} + \frac{a_-}{a_+}\right) + \frac{\pi^2z^2}{a_+a_-} -2\pi \widetilde{W}_{a_+,a_-}(z), 
\end{equation}
where $\widetilde{W}_{a_+,a_-}(z)$ is defined through the real integral
\begin{equation}\label{EqGenFormWReal}
\widetilde{W}_{a_+,a_-}(z) = \int_0^{\infty} \frac{dy}{y}\left(\frac{\sin(2yz)}{2\sinh(a_+y)\sinh(a_-y)} - \frac{z}{a_+a_-y}\right).
\end{equation}

Now recalling \eqref{EqFunctWTheta}, we see that
\begin{equation}\label{EqWorToGen}
W_{a_+,a_-}(z) = W_{a_+/a_-}(2\pi z/|a_+|).
\end{equation}
Writing 
\begin{equation}\label{EqTwoVarPhi}
\varphi_{a_+,a_-}(z) = \exp\left(\frac{i}{2\pi}W_{a_+,a_-}(z)\right) = \exp\left(\frac{1}{4}\int_{\Omega} \frac{dy}{y}\, \frac{e^{-2 iyz}}{\sinh(a_+y)\sinh(a_-y)}\right),
\end{equation}
the resulting function for $a_+ =a_-^{-1}=:b$ is the quantum dilogarithm as it appears in e.g. \cite{Fad95,Kas01,FG09}. By \eqref{EqWorToGen}, we have the transformation formula 
\begin{equation}\label{EqWorToGenExp}
\varphi_{a_+,a_-}(z) = \varphi_{a_-/a_+}(2\pi z/|a_+|).
\end{equation}
Finally, 
\begin{equation}
G(a_+,a_-;z) = \exp(i \widetilde{W}_{a_+,a_-}(z))
\end{equation}
gives the \emph{hyperbolic gamma function} as introduced in \cite{Rui97} (see also \cite[Appendix A]{Rui05}). 

Note that 
\[
W_{1/\hbar}(t) = \sgn(\hbar) W_{1/|\hbar|}(t),\qquad t\in \R,\hbar\in \R^{\times}.
\]
We then have the following alternative expression for $W_{1/|\hbar|}$, see \cite[Proposition 2.4]{GK21}.

\begin{lemma}\label{LemOtherDescW}
With $\theta>0$, we have
\begin{equation}\label{EqDifferentExpW}
W_{\theta}(t) = \int_0^{\infty} \frac{\ln(1+s^{-\theta})}{s+e^{-t}}\rd s,\qquad t \in \R.
\end{equation}
\end{lemma}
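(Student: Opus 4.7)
The plan is to identify the real integral on the right-hand side with the contour integral defining $W_\theta(t)$ via the Mellin–Parseval formula, then recognize the resulting vertical-line integral as a deformation of the contour $\Omega$.

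First I would compute the Mellin transforms of each factor. For the logarithm, substituting $u = s^{-\theta}$ and using the classical identity $\int_0^\infty u^{\alpha-1}\ln(1+u)\,du = \pi/(\alpha\sin(\pi\alpha))$ for $-1<\alpha<0$ (which itself follows from an integration by parts reducing to the beta integral $\int_0^\infty u^\alpha/(1+u)\,du = -\pi/\sin(\pi\alpha)$) yields
\[
\mathcal{M}\bigl[\ln(1+s^{-\theta})\bigr](p) = \frac{\pi}{p\sin(\pi p/\theta)}, \qquad 0<\operatorname{Re}p<\theta.
\]
For the resolvent, the substitution $v = se^{t}$ together with $\int_0^\infty v^{q-1}/(v+1)\,dv = \pi/\sin(\pi q)$ gives
\[
\mathcal{M}\!\left[\frac{1}{s+e^{-t}}\right](1-p) = \frac{\pi e^{tp}}{\sin(\pi p)}, \qquad 0<\operatorname{Re}p<1.
\]
Both transforms decay exponentially along vertical lines, so for any $c$ with $0<c<\min(1,\theta)$ the Parseval formula for the Mellin transform gives
\[
\int_0^\infty \frac{\ln(1+s^{-\theta})}{s+e^{-t}}\,ds = \frac{1}{2\pi i}\int_{c-i\infty}^{c+i\infty}\frac{\pi^2 e^{tp}}{p\sin(\pi p)\sin(\pi p/\theta)}\,dp.
\]

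Next I would perform the substitution $p = -2iy$, so that $dp = -2i\,dy$ and the vertical $p$-line becomes a horizontal line $\Omega'$ at height $\operatorname{Im}y = c/2$, traversed from right to left. Using $\sin(ix)=i\sinh x$, one finds $\sin(\pi p) = -i\sinh(2\pi y)$ and $\sin(\pi p/\theta) = -i\sinh(2\pi y/\theta)$. After collecting signs and reversing orientation so that the horizontal contour is traversed left to right, the right-hand side becomes
\[
-\frac{\pi i}{2}\int_{\Omega'}\frac{dy}{y}\cdot\frac{e^{-2iyt}}{\sinh(2\pi y)\sinh(2\pi y/\theta)}.
\]
Finally, the poles of this integrand in the upper half-plane sit at $y = ik/2$ and $y = ik\theta/2$ for integers $k\ge 1$, and the condition $c/2 < \min(1/2, \theta/2)$ guarantees that the horizontal strip between $\Omega'$ and the contour $\Omega$ of \eqref{EqFunctWTheta} contains no poles. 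Since the integrand decays rapidly as $|\operatorname{Re}y|\to\infty$ on any such horizontal strip, Cauchy's theorem allows the deformation $\Omega' \rightsquigarrow \Omega$, and we arrive at $W_\theta(t) = W_{1/\hbar}(t)$ under the identification $\theta = 1/\hbar$.

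The main obstacle is purely bookkeeping: the substitution $p=-2iy$ combined with the $\sin \to \sinh$ conversion involves three sign changes (two from the two $\sin$ factors and one from $dp$), and in addition the orientation of the contour reverses; keeping track of these so that the overall prefactor matches $-\pi i/2$ is the one delicate point. Everything else—convergence of the Mellin integrals, absolute convergence on the vertical line, admissibility of the contour deformation past infinity—is a routine estimate using the exponential decay of the factors $1/\sin(\pi p)$ and $1/\sin(\pi p/\theta)$.
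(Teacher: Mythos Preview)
Your argument is correct. The Mellin transforms are computed properly, the Parseval step is justified by the exponential decay of $1/\sin(\pi p)$ and $1/\sin(\pi p/\theta)$ on vertical lines, and the sign bookkeeping in the substitution $p=-2iy$ works out exactly as you say: the three factors of $-i$ from $(-2iy)$, $\sin(\pi p)=-i\sinh(2\pi y)$, $\sin(\pi p/\theta)=-i\sinh(2\pi y/\theta)$ together with the $-2i$ from $dp$ and the orientation reversal combine to give precisely the prefactor $-\pi i/2$. The contour deformation from the horizontal line at height $c/2$ to $\Omega$ is legitimate since $0<c/2<\min(1/2,\theta/2)$ keeps you below the first nonzero poles.

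As for comparison with the paper: the paper does not actually prove this lemma. It simply refers the reader to \cite[Proposition~2.4]{GK21} for the identity. Your Mellin--Parseval derivation is therefore a genuine self-contained proof where the paper offers none, and it has the advantage of being short and requiring nothing beyond two standard beta-type integrals and a contour shift.
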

This is the expression for $W_{\theta}$ as it appears in \cite{Wor00}. 

In the following, we recall some of the structural properties of the quantum dilogarithm that we will need.

Fix $\theta >0$. First note that from \eqref{EqFunctWTheta}, we see that $W_{\theta}$ extends holomorphically to the strip 
\[
\{z\in \C\mid |\Imm(z)|<\pi(1+\theta^{-1})\}.
\]
It furthermore satisfies the functional identity 
\[
W_{\theta}(x+\pi i ) - W_{\theta}(x-\pi i) = 2\pi i \ln(1+e^{\theta x}),\qquad x\in \R 
\]
(see \cite[Lemma 1.1]{Wor00}). Hence
\begin{equation}\label{EqDefV}
V_{\theta}(z) := \exp\left(\frac{1}{2\pi i}W_{\theta}(z)\right) =  \varphi_{-\theta^{-1}}(z)= 1/\varphi_{\theta^{-1}}(z)
\end{equation}
admits a meromorphic extension 
\[
V_{\theta}\colon  \C\rightarrow \hat{\C}.
\]
(We use the notation \eqref{EqDefV} to be in agreement with \cite{Wor00}.) Note that writing 
\begin{equation}\label{DefChi}
\chi= \pi(\theta+\theta^{-1})/24,
\end{equation}
we get from \eqref{EqGenFormWAlt} the identity 
\begin{equation}\label{EqIdentifSpecial}
V_{\theta}(z) = e^{-i\chi} e^{-i\theta z^2/8\pi}G(2\pi,2\pi/\theta;z)
\end{equation}
as claimed in \cite[Equation (A.18)]{Rui05}.

The following Lemma gathers results from \cite[Section 1 and Appendix A]{Wor00} and  \cite[Appendix A]{Rui05} (the latter reference extends certain results of \cite{Wor00} to a larger domain for the deformation parameter). 

\begin{prop}\label{PropFundPropert}
Let $\theta>0$. The meromorphic function $V_{\theta}$ satisfies the following properties:
\begin{enumerate}
\item\label{Prop1} Its zeroes are located at 
\[
Z = \{\pi i \left((2k+1) + (2l+1)\theta^{-1}\right)\mid k,l\in \Z_{\geq0}\},
\]
while its poles are located at $-Z$. The multiplicity of a zero $z$ is counted by the number of couples $(k,l)\in \Z_{\geq0}^2$ for which $z= \pi i((2k+1) + (2l+1)\theta^{-1})$, and similarly for poles. In particular, all zeroes and poles are simple if $\theta$ is irrational. 
\item\label{Prop2} It satisfies the following functional equations: for $z\in \C$,
\begin{equation}\label{EqFunctEq1}
V_{\theta}(z+2\pi i) = (1+ e^{i \pi \theta}e^{\theta z})V_{\theta}(z),
\end{equation}
\begin{equation}\label{EqFunctEq2}
V_{\theta}(z+2\pi i/\theta) = (1+e^{\pi i/\theta}e^z)V_{\theta}(z).
\end{equation}
\item\label{Prop3} It satisfies the following functional equation: for $z\in \C$,
\begin{equation}\label{EqComplexConj}
V_{\theta}(z)V_{\theta}(-z)= V_{\theta}(z)/\overline{V_{\theta}(-\overline{z})} = e^{-2i\chi}e^{-i\theta z^2/4\pi}.
\end{equation}
\item\label{Prop4} The functions $V_{\theta}$ and $V_{1/\theta}$ are related through 
\begin{equation}\label{EqModDualityV}
V_{1/\theta}(z) = V_{\theta}(z/\theta),\qquad z\in \R.
\end{equation}
\item\label{Prop5}
For fixed $0<\rho < \min\{1,\theta\}$ and with $y$ uniformly varying over a compact subset of $\R$, we have
\begin{equation}\label{EqEstima1}
V_{\theta}(x+iy) = 1+ \Oc(e^{\rho x}),\qquad x \rightarrow -\infty,
\end{equation}
\begin{equation}\label{EqEstima2}
e^{i\theta (x+iy)^2/4\pi}V_{\theta}(x+iy) = e^{-2i\chi} + \Oc(e^{-\rho x}),\qquad x \rightarrow + \infty.
\end{equation}
In particular, for any strip $S_b = \{z\in \C\mid -b\leq\Imm(z)\leq b\}$, there exists $a\geq 0$ such that, for any $\lambda>0$, the function $z \mapsto V_{\theta}(z)e^{-\lambda z^2}$ is bounded on $\{z\in S_b\mid |\Ree(z)|\geq a\}$. 
\end{enumerate}
\end{prop}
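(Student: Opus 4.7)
The plan is to deduce everything from the integral representation \eqref{EqGenFormW} together with the identification \eqref{EqIdentifSpecial} linking $V_\theta$ to the hyperbolic gamma function $G(2\pi,2\pi/\theta;z)$ of Ruijsenaars. The merit of this identification is that most claims about $V_\theta$ reduce to (or are literally equivalent to) standard properties of $G$ established in \cite{Rui97,Rui05}; what remains is a matter of carefully translating those between the two normalizations and tracking the extra Gaussian/constant factors. I would first verify meromorphicity of $V_\theta$ by showing that $W_\theta(z)$, initially defined for $z\in\R$ via the contour $\Omega$, extends to a holomorphic function on any vertical strip by allowing $\Omega$ to be deformed so as to keep it bounded away from the poles of $1/(\sinh(a_+y)\sinh(a_-y))$ on the imaginary axis, while the exponential $e^{-2iyz}$ stays integrable for $\Imm(z)$ in any bounded range.

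For (2), I would establish the two functional equations by a contour-shift argument: replacing $z$ by $z+2\pi i$ inside \eqref{EqGenFormW} (with $a_+=2\pi$, $a_-=2\pi/\theta$), I would deform $\Omega$ back to its original position and collect the residue at $y=0$ coming from the bump, yielding the logarithmic correction $\ln(1+e^{i\pi\theta}e^{\theta z})$. The second equation \eqref{EqFunctEq2} follows by the same method after interchanging the roles of $a_+$ and $a_-$, which is essentially the modular-duality identity \eqref{EqModDualityV}; the latter in turn is immediate from the change of variables $y\mapsto y/\theta$ in \eqref{EqGenFormW} (noting that this swaps the two $\sinh$ factors up to a $\theta^{-1}$-scaling of the contour, which is harmless because the integrand is holomorphic off the imaginary axis). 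Claim (1) on the zero and pole locus is then a direct consequence of (2): iterating \eqref{EqFunctEq1} and \eqref{EqFunctEq2} starting from the fact that $V_\theta$ is zero-free on a strip near $+\infty$ (from \eqref{EqEstima2}) forces precisely the arithmetic progressions stated, with multiplicities counted by the number of representations $z=\pi i((2k+1)+(2l+1)\theta^{-1})$.

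For (3), I would add $W_\theta(z)$ and $W_\theta(-z)$ using \eqref{EqGenFormW}: in the sum, the integrand becomes $-\cos(2yz)/(y\sinh(a_+y)\sinh(a_-y))$, whose integral along $\Omega$ can be evaluated by closing the contour and picking up the residue at $y=0$. Expanding $\cos(2yz)/(y\sinh(a_+y)\sinh(a_-y))$ in Laurent series at $0$ and collecting the coefficient of $y^{-1}$ gives exactly $-\frac{1}{\pi i}(\chi + \theta z^2/8\pi)$ after a short computation involving $\pi^2/6$, reproducing \eqref{EqComplexConj} upon exponentiation. The second equality in \eqref{EqComplexConj} follows because $W_\theta$ is real-valued on $\R$ (Lemma \ref{LemOtherDescW}), so $\overline{V_\theta(-\bar z)}=V_\theta(-z)^{-1}$-type identities reduce to the first.

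For the asymptotics in (5), I would use the representation \eqref{EqDifferentExpW}: as $t\to -\infty$, a dominated convergence argument shows $W_\theta(t)\to 0$, and quantifying the rate by splitting the integral at $s=e^{-t/2}$ yields the $O(e^{\rho t})$ bound for any $\rho<\min\{1,\theta\}$. The $t\to+\infty$ asymptotic is then extracted from the combination of the $t\to-\infty$ behavior with \eqref{EqComplexConj}, which converts $V_\theta(z)$ at $+\infty$ into $e^{-2i\chi-i\theta z^2/4\pi}/V_\theta(-z)$, and $V_\theta(-z)$ is close to $1$ there. The extension to a strip $S_b$ is obtained by analytic continuation and the Phragm\'en--Lindel\"of principle applied to $V_\theta(z)e^{i\theta z^2/8\pi}$, which is of moderate growth on the real line by the above and of controlled growth on the horizontal boundaries by the functional equations (2). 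The main technical obstacle in this program is the book-keeping in part (3): correctly identifying the constant $\chi$ in \eqref{DefChi} as the residue contribution requires a small but delicate calculation with the Laurent expansion of $1/(\sinh(a_+y)\sinh(a_-y))$ to order $y^1$, which one must do without arithmetic slips. Everything else is either structural or a direct translation from \cite{Wor00,Rui97,Rui05}.
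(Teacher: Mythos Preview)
The paper does not give its own proof of this proposition; it simply records the statement and points to \cite[Section 1 and Appendix A]{Wor00} and \cite[Appendix A]{Rui05} as the sources. Your proposal is therefore more detailed than what the paper supplies, and it follows exactly the standard route taken in those references: derive the functional equations by a contour shift in the integral \eqref{EqGenFormW}, read off the zero/pole structure by iterating those equations from the fundamental strip, obtain the reflection identity from the symmetry of the integrand, and get the asymptotics from \eqref{EqDifferentExpW} combined with the reflection identity. So your plan is correct and matches the intended argument.

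Two small refinements are worth noting. For part~(3), your phrase ``closing the contour'' is awkward since $\Omega$ runs from $-\infty$ to $+\infty$; what actually happens is that the integrand in $W_\theta(z)+W_\theta(-z)$ becomes odd in $y$, so the contribution reduces to the residue at $y=0$ via the bump. The cleanest route is to use \eqref{EqGenFormWAlt} directly: $\widetilde W_{a_+,a_-}$ is manifestly odd in $z$, so $W_\theta(z)+W_\theta(-z)$ equals twice the even polynomial part, and the constants $\chi$ and $\theta/4\pi$ drop out immediately without any Laurent expansion. For part~(1), you do not need the asymptotics at $+\infty$ to know $V_\theta$ is zero-free in the initial strip $|\Imm z|<\pi(1+\theta^{-1})$: this holds simply because $V_\theta=\exp\bigl(\tfrac{1}{2\pi i}W_\theta\bigr)$ is the exponential of a holomorphic function there. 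The iteration of the functional equations then produces exactly the stated lattice of zeros and poles.
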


To prove the modularity of particular multiplicative unitaries, we will need some information about the ($\theta$-rescaled) Fourier dual of $V_{\theta}$ and its inverse. 

Let $\Sc(\R)$ be the class of Schwartz space functions with its natural Fréchet topology, and $\Sc'(\R)$ its dual, the space of tempered distributions. We write the pairing of a distribution and a function alternatively as 
\[
\langle f,g\rangle = \langle f(x),g(x)\rangle = \int_{\R} f(x)g(x)\rd x,\qquad f\in \Sc'(\R),g\in \Sc(\R).
\]
We will use below rescaled Fourier transforms, so 
\begin{equation}\label{EqFourTrans}
\hat{g}(x) =  \frac{\sqrt{\theta}}{2\pi} \int_{\R} e^{-i\theta xy/2\pi}g(y)\rd y,\qquad \check{g}(y) = \frac{\sqrt{\theta}}{2\pi} \int_{\R} e^{i\theta xy/2\pi}g(x)\rd x,\qquad g\in \Sc(\R),
\end{equation}
and similarly for their extension to the space of tempered distributions, 
\begin{multline*}
\int_{\R} \hat{f}(y)g(y)\rd y :=  \int_{\R} f(x) \hat{g}(x)\rd x,\qquad \int_{\R} \check{f}(y)g(y)\rd y :=  \int_{\R} f(x) \check{g}(x)\rd x,\\ f\in \Sc'(\R),g \in \Sc(\R).
\end{multline*}
In particular, we consider $L^{\infty}(\R) \subseteq \Sc'(\R)$, and as such can interpret Fourier duals of elements in $L^{\infty}(\R)$ as tempered distributions.

Since $V_{\theta}$, restricted to $\R$, has values on the unit circle and hence $1/V_{\theta}= \overline{V}_{\theta}$, its complex conjugate, we can consider the inverse Fourier duals of $V_{\theta}$ and $\overline{V}_{\theta}$ in the above sense of tempered distributions. 

\begin{prop}\label{PropFourDualV}
Let $\theta>0$, and put
\[
a = \pi(1+\theta^{-1}).
\]
Then 
\begin{equation}\label{EqDefNeedEqual}
\hat{\overline{V}}_{\theta}(y) = \alpha_{\theta}(y)\hat{V}_{\theta}(y),
\end{equation}
where 
\begin{equation}\label{EqDefAlphaMult}
\alpha_{\theta}(y) = e^{a\theta y/2\pi} e^{-i\theta y^2/4\pi}.
\end{equation}
\end{prop}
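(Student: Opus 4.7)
The strategy is to reduce the identity to a direct contour-shift computation using the reflection relation \eqref{EqComplexConj} for $V_\theta$, combined with the holomorphy of $V_\theta$ in the strip $|\Imm z|<a$.

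First I would rewrite $\overline{V}_\theta$ in terms of $V_\theta$: since $|V_\theta(x)|=1$ on $\R$, we have $\overline{V}_\theta(x) = 1/V_\theta(x)$, and \eqref{EqComplexConj} gives the meromorphic identity
\[
\overline{V}_\theta(z) = e^{2i\chi}\, e^{i\theta z^2/4\pi}\, V_\theta(-z).
\]
Substituting this into the distributional definition of $\hat{\overline{V}}_\theta$, changing variable $x\mapsto -x$, and completing the square $-i\theta xy/2\pi + i\theta x^2/4\pi = i\theta(x+y)^2/4\pi - i\theta y^2/4\pi$, one arrives at
\[
\hat{\overline V}_\theta(y) \;=\; \frac{\sqrt\theta}{2\pi}\, e^{2i\chi}\, e^{-i\theta y^2/4\pi}\int_{\R} e^{i\theta(x+y)^2/4\pi}\, V_\theta(x)\, dx.
\]
The Fresnel factor $e^{-i\theta y^2/4\pi}$ in $\alpha_\theta(y)$ is already visible here; what remains is to match the exponential prefactor $e^{a\theta y/2\pi}$.

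Second, I would deform the $x$-contour. The factor $e^{i\theta(x+y)^2/4\pi}V_\theta(x)$ is holomorphic for $-a<\Imm x<a$ by Proposition \ref{PropFundPropert}\eqref{Prop1}, and the asymptotic bounds \eqref{EqEstima1}--\eqref{EqEstima2} combined with the Fresnel prefactor show that on any horizontal line $\Imm x=-c$ with $0<c<a$ the integrand is bounded as $\Re x\to +\infty$ and decays exponentially as $\Re x\to -\infty$, with vanishing vertical boundary terms in the oscillatory/distributional sense. Pushing the contour downwards by $-ic$ and letting $c\to a^-$ produces an extra factor $e^{c\theta y/2\pi}$; in the limit $c=a$ this is exactly $e^{a\theta y/2\pi}$. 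After the shift, a change of variable and an application of \eqref{EqComplexConj} in the reverse direction collapses the remaining integral to the defining Fourier integral for $\hat V_\theta(y)$, at which point one reads off $\hat{\overline V}_\theta(y)=\alpha_\theta(y)\hat V_\theta(y)$.

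The main technical obstacle is the rigorous justification of the contour shift in a distributional sense, because $V_\theta$ is only of modulus one on $\R$ so the Fourier integrals are not absolutely convergent. The natural way around this is to regularize by inserting a convergence factor $e^{-\varepsilon z^2}$, carry out all the deformations on the regularized integrals (where boundary-at-infinity arguments are rigorous by Proposition \ref{PropFundPropert}\eqref{Prop5}), and then take $\varepsilon\to 0^+$; dually, one checks the identity against test functions of the form $g$ whose Fourier transform $\hat g$ has compact support (or sufficient decay to absorb the factor $e^{a\theta y/2\pi}$), so that the product $\alpha_\theta\hat V_\theta$ pairs well with $g$ via the analytic continuation of $\hat V_\theta$ to $\Imm y>-a\theta/(2\pi)$ that the strip of holomorphy provides. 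In either formulation, the exponent $a=\pi(1+\theta^{-1})$ is forced to be the maximal width of the holomorphy strip, which is consistent with the $a$ appearing in $\alpha_\theta$.
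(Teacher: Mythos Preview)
Your first step is fine: the reflection relation \eqref{EqComplexConj} does give $\overline V_\theta(x)=e^{2i\chi}e^{i\theta x^2/4\pi}V_\theta(-x)$ on $\R$, and after the change of variable and completing the square you correctly isolate the Fresnel factor $e^{-i\theta y^2/4\pi}$. The trouble is in the second step. Shifting the contour in $\int_\R e^{i\theta(x+y)^2/4\pi}V_\theta(x)\,dx$ down by $-ic$ does \emph{not} just produce $e^{c\theta y/2\pi}$: expanding $(x-ic+y)^2$ you also get a factor $e^{c\theta x/2\pi}$, together with $V_\theta(x-ic)$ in place of $V_\theta(x)$. You do not explain how these two extra modifications collapse back to the plain Fourier integral defining $\hat V_\theta(y)$; a ``reverse application of \eqref{EqComplexConj}'' does not obviously do this. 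More seriously, the endpoint $c=a$ is exactly where $V_\theta(x-ia)$ acquires a simple pole at $x=0$ (by Proposition \ref{PropFundPropert}\eqref{Prop1}), so the limit $c\to a^-$ produces a singular contribution you have not accounted for. Since none of these Fourier integrals converge absolutely (\,$|V_\theta|=1$ on $\R$\,), the regularization you gesture at would need to be carried out in full, and doing so carefully is essentially the whole content of the proof.

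The paper proceeds quite differently and avoids all of this. It first identifies $\hat V_\theta$ \emph{explicitly} as a concrete tempered distribution, namely $\hat V_\theta(y)=\mathrm{P.V.}\,\widetilde V_\theta(y)-\pi i\,\Res_0(\widetilde V_\theta)\,\delta_0(y)$, where $\widetilde V_\theta(y)=e^{-i\pi/4}e^{-2i\chi}e^{i\theta y^2/4\pi}V_\theta(y-ia)$. This identification comes from an integral representation of $V_\theta$ due to Ruijsenaars (\cite[(A.21)]{Rui05}), verified against test functions in the dense space $\Ec(\R)$ of Gaussians times polynomials. Once $\hat V_\theta$ is known in this form, $\hat{\overline V}_\theta$ is obtained by conjugating and reflecting, and the identity \eqref{EqDefNeedEqual} reduces to a \emph{pointwise} check on the smooth functions: one verifies $\overline{\widetilde V}_\theta(-y)=\alpha_\theta(y)\widetilde V_\theta(y)$ for $y\ne 0$ directly from \eqref{EqComplexConj}, and observes $\alpha_\theta(0)=1$ for the $\delta$-piece. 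The pole/principal-value structure, which you are missing, is built in from the start.
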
 
Here the right hand side in \eqref{EqDefNeedEqual} is a priori defined on the Fréchet dense domain $\msD \subseteq \Sc(\R)$ of $f$ such that $\alpha_{\theta}f$ again lies in $\Sc(\R)$. What the result says in particular, is that the resulting functional uniquely extends to a tempered distribution, i.e.\ $\alpha_{\theta}$ is a multiplier for $\hat{V}_{\theta}$.

The result in Proposition \ref{PropFourDualV} again appears in \cite[Appendix B]{Wor00}, but under the restriction $\theta>2$. We use the results of \cite[Appendix A]{Rui05} to extend this result to arbitrary $\theta >0$. 
\begin{proof}
With $\chi$ as in \eqref{DefChi}, write 
\[
\widetilde{V}_{\theta}(y) = e^{-i\pi/4} e^{-2i\chi}  e^{i\theta y^2/4\pi}  V_{\theta}(y-ia).
\]
Then $\widetilde{V}_{\theta}$ is continuous  on $\R$ except for a simple pole at $0$, and extends to a meromorphic function that is holomorphic on the upper half plane. By the estimates \eqref{EqEstima1} and \eqref{EqEstima2}, we know that $\widetilde{V}_{\theta}$ is bounded at infinity, and we can hence make sense of the tempered distribution 
\begin{equation}\label{EqDistributionDual}
D(y) = P.V.(\widetilde{V}_{\theta}(y)) - \pi i \Res_0(\widetilde{V}_{\theta})\delta_0(y).
\end{equation}
We claim that this is exactly the distribution $\hat{V}_{\theta}(y)$. Indeed, as both distributions are tempered, it is enough to establish equality on
\[
\Ec(\R) = \{g\colon  x \mapsto P(x) e^{-x^2/2}\mid P \textrm{ polynomial}\},
\]
which is a Fréchet dense subspace of $\Sc(\R)$ (\cite[Theorem V.13]{RS81}), stable under Fourier duality.

Now from \cite[(A.21)]{Rui05} we get that, for any $0<\delta<a$, 
\[
V_{\theta}(z) = \frac{\sqrt{\theta}}{2\pi} \int_{\R+i\delta} e^{i\theta yz/2\pi}\widetilde{V}_{\theta}(y)\rd y,\qquad -a<\Imm(z) <-\delta,
\]
where we note that the integrand is $L^1$ on $\R+i\delta$ by  the behaviour of $x\mapsto V_{\theta}(x+iy)$ at infinity in Proposition \ref{PropFundPropert}.\eqref{Prop5}. We then find for $g\in \Ec(\R)$ and $\Omega$ as before that, again by Proposition \ref{PropFundPropert}.\eqref{Prop5}, the following string of equalities is justified for $0<\varepsilon$ small: 
\begin{eqnarray*}
\langle D(y),g(y)\rangle &=& \int_{\Omega} \widetilde{V}_{\theta}(y) g(y) \rd y\\
&=& \int_{\R+ i\delta} \widetilde{V}_{\theta}(y)g(y)\rd y \\
&=& \frac{\sqrt{\theta}}{2\pi}\int_{\R-i\varepsilon} \hat{g}(x) \int_{\R+ i\delta} \widetilde{V}_{\theta}(y) e^{i\theta xy/2\pi} \rd y \rd x \\
&=& \int_{\R-i\varepsilon} \hat{g}(x) V_{\theta}(x)\rd x \\
&=& \langle \hat{V}_{\theta}(y),g(y)\rangle.
\end{eqnarray*} 
In the following, we can hence write 
\begin{equation}\label{EqFourTransPVV}
\hat{V}_{\theta}(y) = D(y).
\end{equation}

To finish the proof, it is now sufficient to show that $\hat{V}_{\theta}$ is in the definition domain of multiplication with $\alpha_{\theta}$, and that \eqref{EqDefNeedEqual} holds. But clearly from  \eqref{EqDistributionDual} and \eqref{EqFourTransPVV} we have
\[
\hat{\overline{V}}_{\theta}(y) = P.V.(\overline{\widetilde{V}}_{\theta}(-y)) + \pi  i \overline{\Res_0(\widetilde{V}_{\theta})}\delta_0(y).
\]
So \eqref{EqDefNeedEqual} follows by observing that  \eqref{EqComplexConj} gives 
\[
\overline{\widetilde{V}}_{\theta}(-y) = \alpha_{\theta}(y)\widetilde{V}_{\theta}(y),\qquad y \in \R^{\times}, 
\]
and that $\alpha_{\theta}(0) = 1$.
\end{proof}

\section{Pentagon equation}\label{SecPentQE}

In this section, we give some details on the proof of Theorem \ref{TheoPentagon}. There are basically two arguments in the literature that we are aware of: the abstract operator algebraic approach in \cite{Wor00} (whose proof however has a restriction on the parameter range), and the approach in \cite{FKV01} obtained via considering the concrete operators $\Xb,\Pb$ and proving \eqref{TheoPentagon} via an integral transform with the deformation parameter extended to a particular region in the complex domain. In this section, we argue that, with a small modification, the approach in \cite{Wor00} can be completed to give a full proof of the theorem, and then comment on how this result is equivalent to the one obtained in \cite{FKV01}. 

Fix $\hbar\in \R^{\times}$, and let $\Ab= e^{\mathbf{a}},\Bb= e^{\mathbf{b}}$ be two strictly positive $\hbar$-commuting operators on a Hilbert space, i.e.\ satisfying \eqref{EqSkewCommDef}. Put $\mathbf{a} \dotplus \mathbf{b}$ the unique self-adjoint operator such that 
\[
e^{it(\mathbf{a}\dotplus \mathbf{b})} = (\Ab\star\Bb)^{it},\qquad t\in \R, 
\]
with $\Ab\star\Bb$ as in \eqref{EqStarProd}. It can indeed easily be shown that $\mathbf{a} \dotplus \mathbf{b}$ is the unique self-adjoint extension of the sum $\mathbf{a}+\mathbf{b}$ defined on the intersection of their domains, by considering the concrete presentation \eqref{EqHeisPicSvN} below and arguing that $\Xb+\Pb$ is essentially self-adjoint on $\Sc(\R)$ by means of the identity
\begin{equation}\label{EqTransformSum}
(e^{-\pi i \Xb^2}\Pb e^{\pi i \Xb^2})f = \Xb f +\Pb f,\qquad f \in \Sc(\R).
\end{equation}

We have to prove that, with $\varphi_{\hbar}$ as in \eqref{EqQuantExp}, it holds that
\begin{equation}\label{EqPentagonProof}
\varphi_{\hbar}(\mathbf{a})\varphi_{\hbar}(\mathbf{b}) = \varphi_{\hbar}(\mathbf{b})\varphi_{\hbar}(\mathbf{a}\dotplus \mathbf{b})\varphi_{\hbar}(\mathbf{a}),\qquad \forall \hbar>0 
\end{equation}
(the case $\hbar <0$ follows by symmetry, using $\overline{\varphi}_{\hbar} = \varphi_{-\hbar}$).

Alternatively, pick $b \in \R_{>0}$ with $b^2  = \hbar$. Then by the Stone-von Neumann theorem, we may as well take 
\begin{equation}\label{EqHeisPicSvN}
\Hsp = L^2(\R),\qquad \mathbf{a} = 2\pi b\Pb,\qquad \mathbf{b} = 2\pi b \Xb,
\end{equation}
with $\Xb,\Pb$ as in \eqref{EqInfGenPosMom}. Then using the notation \eqref{EqTwoVarPhi} and writing
\[
e_b(z) = \varphi_{b^{-1},b}(z) =  \varphi_{b^2}(2\pi bz),\qquad z\in \C,
\]
we get that \eqref{EqPentagonProof} is equivalent to proving
\begin{equation}\label{EqPentagonProofAlt}
e_b(\Pb)e_b(\Xb) = e_b(\Xb) e_b(\Xb\dotplus \Pb) e_b(\Pb). 
\end{equation}

Now \eqref{EqPentagonProof} is proven in \cite[Theorem 6.1]{Wor00} for $0<\hbar<1/2$. In fact, a careful inspection shows that the proof still works for $0<\hbar<1$. Indeed, the only property that is used crucially for that part of the argument is that the function 
\[
x \mapsto \frac{1-\varphi_{\hbar}(\ln(x))}{x}
\]
stays bounded in the neighborhood of $0$, which is guaranteed for $0<\hbar<1$ by Proposition \ref{PropFundPropert}.(4).

Now from  \eqref{EqFunctWTheta} we see that
\begin{equation}\label{EqFunctEquationW}
W_{1/\theta}(x) = W_{\theta}(x/\theta),\qquad x\in \R.
\end{equation}
Hence \eqref{EqPentagonProof} holds for all $\hbar\in \R_{>0}\setminus\{1\}$, or alternatively, \eqref{EqPentagonProofAlt} holds for all $b\in \R_{>0}\setminus\{1\}$. The missing case $b=1$ is now easily derived from the fact that $b \mapsto e_b(z)$ is continuous on $\R_{>0}$ for each $z\in \R$, which (by the dominated convergence theorem) is enough to guarantee that $b \mapsto e_b(\mathbf{a})$ is strongly continuous for each strictly positive operator $\mathbf{a}$. Hence both sides of \eqref{EqPentagonProofAlt} converge strongly as $b \rightarrow 1$ to their instances at $b=1$, giving that \eqref{EqPentagonProofAlt} holds for all $b>0$. 

Let us now comment on the proof of \cite{FKV01}. First note that $W_{b^{-1},b}(z)$ as in \eqref{EqGenFormW} is still well-defined for any $b\in \C\setminus i\R$ (taking the half-circle in the contour $\Omega$ of radius $\delta < |b|^{\pm1}$), and then defines on the halfplane $H_+ = \{z\in \C\mid \Ree(z)>0\}$ a function 
\[
W\colon  H_+ \times \R \rightarrow \C,\quad (b,z) \mapsto W_{b,b^{-1}}(z) 
\]
which is holomorphic in $b$ for each $z$. 

In fact, we easily see from \eqref{EqGenFormW}  that $W(b,x)$ remains bounded for $b$ ranging over a compact region, uniformly over all $x$ on the negative axis, and that we have there continuity in $b$,  uniformly over all $x$. We hence obtain a Banach space valued holomorphic function 
\[
W^-\colon  H_+ \rightarrow C_b(\R_{\leq 0}),\qquad W^-(b)(x) = W_{b,b^{-1}}(x),
\]
and consequently also a holomorphic function
\[
e^-\colon  H_+ \rightarrow C_b(\R_{\leq 0}),\qquad e^-(b)(x) = e_{b}(x).
\]
On the other hand, the identity \eqref{EqComplexConj}, the renormalisation \eqref{EqWorToGenExp} and holomorphicity in $b$ lead to the identity
\[
e_{b}(-x) = e^{-2\pi i(b^2+b^{-2})/24}e^{ix^2/2}\overline{e_{\overline{b}}(x)},\qquad x\in \R,b\in H_+,
\]
from which it is immediately concluded that we obtain in fact a Banach space valued holomorphic function 
\[
e\colon  H_+ \rightarrow C_b(\R),\qquad e(b)(x) = e_b(x).
\]
It is hence sufficient to show that \eqref{EqPentagonProofAlt} holds on any subset of $b$'s with a cluster point in $H_+$. From the result in \cite[Theorem 6.1]{Wor00}, one can take $(0,1/\sqrt{2})$ as such a set. In \cite[Section 6]{FKV01}, one rather shows \eqref{EqPentagonProofAlt} directly on the quadrant $\R_{>0} + i\R_{>0} \subseteq H_+$.

\section{Modularity of the rank  \texorpdfstring{$1$}{1} Fock--Goncharov flip}\label{SecModAxB}

In this section we present a proof of Theorem \ref{TheoRank1}. It is simply the proof of \cite[Proposition 2.3]{WZ02} with no restriction on the deformation parameter. 

We will need the notion of spectral measure: recall that if $\Hsp$ is a Hilbert space, $\Hb$ a self-adjoint operator on $\Hsp$ and $\xi,\xi'\in \Hsp$, the associated \emph{spectral measure} is the (absolutely) bounded (complex) measure $\mu_{\xi',\xi}$ on $\R$ determined by 
\[
\int_{\R} g(x) \rd \mu_{\xi',\xi}(x) = \langle \xi',g(\Hb)\xi\rangle,\qquad g \in C_b(\R). 
\]

We also recall that if $\Hsp$ is a Hilbert space and $\Hb$ a self-adjoint operator on $\Hsp$, then for any $\xi \in \cap_{z\in \C} \msD(e^{z\Hb})$ the map
\[
\C\rightarrow \Hsp,\qquad z\mapsto e^{z \Hb}\xi
\]
is automatically a Banach space valued holomorphic function.

\begin{prop}\label{PropDenseSubspCor}
Consider on $L^2(\R)$ the operators 
\[
\Fb = e^{\Xb},\qquad \Lb = e^{-4\pi^2 \hbar \Pb},\qquad \Eb = \Lb\star\Fb^{-1},\qquad  \Kb = \Fb^{-1}
\]
as in the statement of Theorem \ref{TheoRank1}. Then there exists a subspace $V \subseteq L^2(\R)$ such that 
\begin{itemize}
\item $V \subseteq \msD(\Fb^z) \cap \msD(\Lb^z) \cap \msD(\Eb^z) \cap \msD(\Kb^z)$ for all $z\in \C$,
\item $V$ is stable under each $\Fb^z,\Lb^z,\Eb^z,\Kb^z$, and
\item  for each $z\in \C$ the space $V$ is a core for any of the operators $\Fb^z,\Lb^z,\Eb^z$ and $\Kb^z$. 
\end{itemize} 
\end{prop}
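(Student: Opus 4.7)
The plan is to take $V$ to be the subspace $\Hsp_0$ of Remark \ref{RemFormSumFinite} applied to the unitary $\hbar$-representation of the two-dimensional symplectic space $V_0 = \R v_1 \oplus \R v_2$ with $(v_1,v_2) = 1$ determined by $\eb(v_1) = \Fb$ and $\eb(v_2) = \Lb$. Explicitly,
\[
V = \operatorname{span}\bigl\{\pi(g_{t,w,\kappa})\xi : \xi \in L^2(\R),\ t>0,\ w \in V_{0,\C},\ \kappa \in \C\bigr\}.
\]
The four operators in the statement correspond to $\eb(v)$ for $v$ in $\{v_1,\,v_2,\,v_2 - v_1,\,-v_1\} \subseteq V_0$ respectively, so it suffices to show that for each $v \in V_0$ and each $z \in \C$ one has $V \subseteq \msD(e^{z\mathbf{v}})$, $e^{z\mathbf{v}} V \subseteq V$, and $V$ is a core for $e^{z\mathbf{v}}$. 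Density of $V$ in $L^2(\R)$ and invariance under $e^{it\mathbf{v}}$ for real $t$ are already the content of Remark \ref{RemFormSumFinite}.

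For domain-containment and stability at complex power, I would start from the Weyl commutation relation $\pi(sv)\pi(u) = e^{\pi i\hbar s(v,u)}\pi(u + sv)$ and derive, for each $s \in \R$, the identity
\[
\pi(sv)\,\pi(g_{t,w,\kappa})\xi = \pi\!\bigl(g_{t,\, w(s),\, \kappa(s)}\bigr)\,\xi,
\]
where $s \mapsto w(s) \in V_{0,\C}$ and $s \mapsto \kappa(s) \in \C$ are entire (in fact polynomial) functions of $s$ determined by integrating the Weyl relation against $g_{t,w,\kappa}$ and performing a contour shift justified by Gaussian decay. Both sides of this identity define entire $L^2(\R)$-valued functions of $s \in \C$: the right-hand side by inspection of its defining Gaussian integral, the left-hand side because $\pi(g_{t,w,\kappa})\xi$ is manifestly an entire vector for the self-adjoint generator $\mathbf{v}$. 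Setting $s = -iz$ then reproduces the formula for $e^{z\mathbf{v}}$ acting on $\pi(g_{t,w,\kappa})\xi$ and establishes simultaneously that $\pi(g_{t,w,\kappa})\xi \in \msD(e^{z\mathbf{v}})$ and $e^{z\mathbf{v}}\pi(g_{t,w,\kappa})\xi \in V$.

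For the core property, the same formula shows that every $\xi \in V$ is an \emph{entire} vector for each of $\ln\Fb,\ln\Lb,\ln\Eb,\ln\Kb$, in the strong sense that $z \mapsto e^{z\mathbf{v}}\xi$ is a holomorphic $L^2$-valued map on all of $\C$. Combined with density of $V$ and invariance under the one-parameter unitary group $e^{it\mathbf{v}}$, a variant of Nelson's analytic vector theorem implies that $V$ is a core for $e^{z\mathbf{v}}$ for every $z \in \C$. Concretely, given $\xi \in \msD(e^{z\mathbf{v}})$, one approximates first in the graph norm of $e^{z\mathbf{v}}$ by the Gaussian cut-offs $e^{-\mathbf{v}^2/n}\xi \in \bigcap_w \msD(e^{w\mathbf{v}})$, and then approximates each $e^{-\mathbf{v}^2/n}\xi$ in the same graph norm by elements of $V$ obtained by smearing $\pi(tv)$-orbits of $\xi$ against an $L^1$-approximate identity on $\R$, which stays inside $V$ by invariance.

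The principal technical point is the analytic continuation of the Weyl-relation identity to complex parameters: one must verify that both sides extend to holomorphic $L^2(\R)$-valued maps on $\C$ and agree for $s \in \R$. The right-hand side is entire by dominated convergence applied to a Gaussian integrand whose parameters $w(s), \kappa(s)$ depend holomorphically on $s$ and whose decay in the integration variable is uniform on compact sets of $s$; the left-hand side is entire because $\pi(g_{t,w,\kappa})\xi$, being itself a Gaussian-smeared vector, lies in every $\msD(e^{w\mathbf{v}})$. Uniqueness of analytic continuation then forces the formula for all $s \in \C$, and the three bullets follow.
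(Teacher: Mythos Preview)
Your choice of $V$ as the Gaussian-smeared subspace $\Hsp_0$ from Remark~\ref{RemFormSumFinite} is genuinely different from the paper's: the paper takes the much smaller and more concrete space $V=\operatorname{span}\{x\mapsto e^{\alpha x+\beta x^2}:\alpha\in\C,\ \Ree\beta<0\}$, which is manifestly stable under multiplication by $e^{z\Xb}$, under Fourier transform (hence under $e^{z\Pb}$), and under the conjugating unitary $e^{\pm\pi i\Xb^2}$ of \eqref{EqTransformSum} (hence under $\Eb^z$); the core property then reduces to density of Gaussians in the weighted spaces $L^2(\R,(1+e^{2\Ree(z)x})\rd x)$. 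Your approach is more abstract and would work for any symplectic space, whereas the paper's is tailored to the Schr\"odinger picture and requires no analytic continuation argument at all.

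Your arguments for the first two bullets are sound: the identity $\pi(sv)\pi(g_{t,w,\kappa})\xi=\pi(g_{t,w(s),\kappa(s)})\xi$ with polynomial $w(s),\kappa(s)$ is correct, and its analytic continuation gives both domain containment and stability.

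There is, however, a gap in your core argument. The second approximation step proposes to reach $e^{-\vb^2/n}\xi$ by one-dimensional smears $\int h(t)\pi(tv)\xi\,\rd t$ ``which stays inside $V$ by invariance''. This fails on two counts: such one-dimensional smears are not of the form $\pi(g_{t',w',\kappa'})\eta$ (the Gaussians in Remark~\ref{RemFormSumFinite} are nondegenerate on all of $V_0$), and pointwise invariance of $V$ under each $\pi(tv)$ does not imply closure under integrals, since $V$ is only a linear span. The first step $\xi\mapsto e^{-\vb^2/n}\xi$ is in fact unnecessary: for $\xi\in\msD(e^{a\vb})$ one can take directly the two-dimensional approximate identities $\pi(g_{T,0,c_T})\xi\in V$ and use the commutation $e^{a\vb}\pi(u)=e^{2\pi\hbar a(v,u)}\pi(u)e^{a\vb}$ on $\msD(e^{a\vb})$ to write
\[
e^{a\vb}\pi(g_{T,0,c_T})\xi=\pi(\widetilde g_T)(e^{a\vb}\xi),\qquad \widetilde g_T(u)=g_{T,0,c_T}(u)\,e^{2\pi\hbar a(v,u)},
\]
where $\widetilde g_T$ is again an $L^1$-approximate identity as $T\to\infty$ (its centre shifts by $O(1/T)$ and its $L^1$-norm tends to~$1$). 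This gives graph-norm convergence and hence the core property, after which your proposal goes through.
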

\begin{proof}
We can take $V$ to be the linear span of all functions of the form $x\mapsto e^{\alpha x +\beta x^2}$ with $\alpha\in \C$ and $\Ree(\beta)<0$. Indeed, $V$ is stable under Fourier duality, and is easily seen to form a core of $e^{z\Xb}$ for each $z\in \C$, left stable by each such operator. By Fourier duality this then also holds for $e^{z\Pb}$ for each $z\in\C$. Finally, by \eqref{EqTransformSum} it also follows that $V$ is a core for each such $\Eb^z$, and stable under all $\Eb^z$. 
\end{proof} 

Consider now the setting and notation of Theorem \ref{TheoRank1}. In the terminology of Definition \ref{DefWeaklyMod}, it is sufficient to show that the triple $(\Wbb,\Lb^{\frac{1+|\hbar|^{-1}}{2}},\wWbb)$ is a weakly modular datum, since clearly $\Wbb$ commutes with 
\[
\hat{\Qb}\otimes \Qb = \Lb^{\frac{1+|\hbar|^{-1}}{2}} \otimes \Kb^{-\frac{1+|\hbar|^{-1}}{2}}.
\]

We slightly rewrite \eqref{EqLeftInvOp} as in \eqref{EqRightInvOpAltCorep} to be more in line with the setup of \cite{WZ02}. Namely, put  
\begin{equation}\label{EqAltZExpr}
Z = \Wbb_{21}^* = F_{\hbar}(\Eb\otimes \Fb)\exp\left(-\frac{i}{2\pi \hbar} \ln(\Kb)\otimes \ln(\Lb)\right)
\end{equation}
 and put 
\[
\widetilde{Z} = {\wWbb}_{,21} = \exp\left(\frac{i}{2\pi \hbar} \ln(\Kb) \otimes \ln(\overline{\Lb})\right) F_{\hbar}(\Kb^{-1}\star\Eb\otimes \overline{\Fb}).
\]
Then with $\hat{\Qb} = \Lb^{\frac{1+|\hbar|^{-1}}{2}}$, we need to show that
\begin{equation}\label{EqHelpInvRep}
\langle \xi'\otimes \overline{\hat{\Qb}^{-1}\eta'},\widetilde{Z}^*(\xi\otimes \overline{\hat{\Qb}\eta})\rangle = \langle \xi'\otimes \eta,Z(\xi\otimes \eta')\rangle,\qquad \forall \xi,\xi'\in L^2(\R),\eta\in \msD(\hat{\Qb}),\eta'\in \msD(\hat{\Qb}^{-1}). 
\end{equation}

We start with the following easy lemma, which follows immediately by functional calculus.

\begin{lemma}
Assume $\mathbf{a},\mathbf{b}$ are selfadjoint operators on a Hilbert space $\Hc$. Then for all $\xi,\xi',\eta,\eta'\in \Hc$, we have
\begin{equation}\label{EqAntiSwitch}
\langle \xi'\otimes  \overline{\eta'}, \exp\left(\frac{i}{2\pi\hbar}\mathbf{a}\otimes \overline{\mathbf{b}}\right)(\xi\otimes \overline{\eta})\rangle \\
= \langle \xi'\otimes \eta, \exp\left(\frac{i}{2\pi \hbar}\mathbf{a}\otimes \mathbf{b}\right)(\xi\otimes \eta')\rangle.
\end{equation}
\end{lemma}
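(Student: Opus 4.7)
The plan is to reduce the identity to a statement about joint spectral measures, using that $\mathbf{a}\otimes 1$ commutes strongly with $1\otimes\mathbf{b}$ (and with $1\otimes\overline{\mathbf{b}}$) so that $\mathbf{a}\otimes \mathbf{b}$ and $\mathbf{a}\otimes\overline{\mathbf{b}}$ can be defined unambiguously via the standard tensor product of self-adjoint operators, and the exponentials can be computed by functional calculus applied to the corresponding joint spectral resolutions.

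First I would unpack the definition of the conjugate operator. By convention $\overline{\mathbf{b}}$ acts on $\overline{\Hc}$ by $\overline{\mathbf{b}}\,\overline{v}=\overline{\mathbf{b}v}$, which is $\mathbb{C}$-linear on $\overline{\Hc}$ and self-adjoint with the same spectrum as $\mathbf{b}$. A small functional calculus computation then gives $g(\overline{\mathbf{b}})\,\overline{v}=\overline{\overline{g}(\mathbf{b})v}$ for any bounded Borel function $g$, and consequently the spectral measures satisfy
\[
\mu^{\overline{\mathbf{b}}}_{\overline{\eta'},\overline{\eta}}(A)=\langle \overline{\eta'},E_{\overline{\mathbf{b}}}(A)\overline{\eta}\rangle_{\overline{\Hc}}=\langle E_{\mathbf{b}}(A)\eta,\eta'\rangle_{\Hc}=\mu^{\mathbf{b}}_{\eta,\eta'}(A)
\]
for every Borel set $A\subseteq\R$, where we use that $E_{\mathbf{b}}(A)$ is self-adjoint.

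Next I would apply joint functional calculus to the commuting pair $(\mathbf{a}\otimes 1,\,1\otimes\overline{\mathbf{b}})$ on $\Hc\otimes\overline{\Hc}$, and similarly to $(\mathbf{a}\otimes 1,\,1\otimes\mathbf{b})$ on $\Hc\otimes\Hc$. Expressing the Gaussian via its spectral integral gives
\[
\langle \xi'\otimes\overline{\eta'},\exp\!\bigl(\tfrac{i}{2\pi\hbar}\mathbf{a}\otimes\overline{\mathbf{b}}\bigr)(\xi\otimes\overline{\eta})\rangle=\iint e^{i\lambda\mu/(2\pi\hbar)}\,d\mu^{\mathbf{a}}_{\xi',\xi}(\lambda)\,d\mu^{\overline{\mathbf{b}}}_{\overline{\eta'},\overline{\eta}}(\mu),
\]
and likewise for the right-hand side with $d\mu^{\mathbf{b}}_{\eta,\eta'}(\mu)$ in place of $d\mu^{\overline{\mathbf{b}}}_{\overline{\eta'},\overline{\eta}}(\mu)$. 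Substituting the measure identity from the previous paragraph makes the two iterated integrals coincide, proving the claim.

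The only real subtlety is justifying the interchange of pairings with the spectral integrals when $\mathbf{a},\mathbf{b}$ are unbounded, but this is automatic because $\exp(it\,\mathbf{a}\otimes(\cdot))$ is a strongly continuous unitary group in $t$ and the joint spectral theorem for commuting self-adjoint operators is available on all of $\Hc\otimes\Hc$ and $\Hc\otimes\overline{\Hc}$; hence no domain considerations on $\xi,\xi',\eta,\eta'$ arise beyond their being elements of the respective Hilbert spaces. In effect the whole proof is the bookkeeping remark that passing $\mathbf{b}\leftrightarrow\overline{\mathbf{b}}$ through conjugate-linear slots in the inner product swaps $\eta\leftrightarrow\eta'$, which is precisely the statement of the lemma.
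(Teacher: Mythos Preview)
Your proof is correct and is essentially what the paper has in mind: the paper's entire proof is the one-line remark ``follows immediately by functional calculus,'' and your argument is a careful spelling-out of precisely that, via the spectral measure identity $\mu^{\overline{\mathbf{b}}}_{\overline{\eta'},\overline{\eta}}=\mu^{\mathbf{b}}_{\eta,\eta'}$ and joint functional calculus for the commuting pair.
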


We now show \eqref{EqHelpInvRep}.
Note first that in this identity, we may restrict to $\xi,\xi',\eta,\eta' \in V$, 
with $V$ as in Proposition \ref{PropDenseSubspCor}, as this space forms a core for $\hat{\Qb}^{\pm 1}$. In the following, we fix such $\eta,\eta',\xi,\xi'$. 

Put 
\begin{multline*}
\Cb = \Eb\otimes \Fb,\qquad \widetilde{\Cb}= \Kb^{-1}\star\Eb\otimes \overline{\Fb},\\
E_{\hbar} = \exp\left(-\frac{i}{2\pi \hbar}(\ln(\Kb)\otimes \ln(\Lb)\right),\quad \widetilde{E}_{\hbar} = \exp\left(-\frac{i}{2\pi \hbar}(\ln(\Kb)\otimes \ln(\overline{\Lb})\right), 
\end{multline*}
and for $y \in \R$ write 
\[
\varphi(y) = \langle \xi' \otimes \eta,\Cb^{iy/2\pi \hbar}E_{\hbar}(\xi\otimes \eta')\rangle,\qquad 
\psi(y) = \langle \xi'\otimes \overline{\hat{\Qb}^{-1}\eta'},\widetilde{\Cb}^{iy/2\pi\hbar}\widetilde{E}_{\hbar}(\xi\otimes \overline{\hat{\Qb}\eta})\rangle.
\]
By our choice of $\xi',\eta,\eta'$ we have that 
\[
z \mapsto \Cb^{iz} (\xi'\otimes \eta),\qquad z \mapsto \widetilde{\Cb}^{iz}(\xi'\otimes \overline{\hat{\Qb}^{-1}\eta'}) 
\]
are holomorphic maps $\C\rightarrow \Hsp$, which implies that $\varphi,\psi$ extend to holomorphic functions.

From the skew commutation between $\Lb$ and $\Fb$, resp.\ $\overline{\Lb}$ and $\overline{\Fb}$, we get
\begin{equation}
\langle \xi' \otimes \Lb^{-\overline{z}}\eta,\Cb^{iy/2\pi \hbar}E_{\hbar}(\xi\otimes \Lb^{z}\eta')\rangle =e^{y z}\varphi(y),
\end{equation}
\begin{equation}\label{EqUnifExp}
\langle \xi'\otimes \overline{\Lb}^{-\overline{z}}\,\overline{\hat{\Qb}^{-1}\eta'},\widetilde{\Cb}^{iy/2\pi\hbar}\widetilde{E}_{\hbar}(\xi\otimes \overline{\Lb}^{z}\,\overline{\hat{\Qb}\eta})\rangle = e^{-y z}\psi(y).
\end{equation}
Since the left hand sides and their derivatives are bounded in $y\in \R$ (for $z\in \R$ fixed), we find that $\varphi,\psi$ and their derivatives have faster than exponential decay at infinity, and so in particular $\varphi,\psi\in\Sc(\R)$. Moreover, if we denote by $\mu$ the spectral measure for $\ln(\Cb)$ associated to $\xi'\otimes \eta$ and $E_{\hbar}(\xi\otimes \eta')$, and by $\nu$ the spectral measure for $\ln(\widetilde{\Cb})$ associated to $\xi'\otimes \overline{\hat{\Qb}^{-1}\eta'}$ and $\widetilde{E}_{\hbar}(\xi\otimes \overline{\hat{\Qb}\eta})$, we see that
\[
\varphi(y) = \int_{\R} e^{ixy/2\pi \hbar} \rd\mu(x),\qquad \psi(y) = \int_{\R}e^{ixy/2\pi\hbar} \rd\nu(x),
\]
so it follows that $\mu,\nu$ are absolutely continuous with respect to the Lebesgue measure. Then, using the notation \eqref{EqFourTrans} with $\theta := |\hbar|^{-1}$, and putting $\varepsilon = \sgn(\hbar)$, we see that
\begin{equation}\label{EqSpecMeasQD}
\rd \mu(x) = (4\pi^2 |\hbar|)^{-1/2}\hat{\varphi}(\varepsilon x)\rd x,\qquad \rd\nu(x) = (4\pi^2 |\hbar|)^{-1/2}\hat{\psi}(\varepsilon x)\rd x.
\end{equation}

Now from \eqref{EqUnifExp} we get in particular, putting $z =- (1+\theta)/2$, that
\[
\psi(y) = e^{-(1+\theta)y/2}\langle \xi'\otimes \overline{\eta'},\widetilde{\Cb}^{iy/2\pi\hbar}\widetilde{E}_{\hbar}(\xi\otimes \overline{\eta})\rangle 
\]
But
\begin{eqnarray*}
&& \hspace{-2cm} \langle \xi'\otimes \overline{\eta'},\widetilde{\Cb}^{iy/2\pi\hbar}\widetilde{E}_{\hbar}(\xi\otimes \overline{\eta})\rangle  \\ &&= \langle (\Kb^{-1}\star\Eb)^{-iy/2\pi\hbar}\xi'\otimes \overline{\Fb}^{-iy/2\pi\hbar} \overline{\eta'}, \widetilde{E}_{\hbar}(\xi\otimes \overline{\eta})\rangle \\
&& =\langle (\Kb^{-1}\star\Eb)^{-iy/2\pi\hbar}\xi'\otimes \overline{\Fb^{iy/2\pi\hbar}} \overline{\eta'},\exp\left(-\frac{i}{2\pi \hbar}(\ln(\Kb)\otimes \ln(\overline{\Lb})\right)(\xi\otimes \overline{\eta})\rangle \\
&&\underset{\eqref{EqAntiSwitch}}{=} \langle (\Kb^{-1}\star\Eb)^{-iy/2\pi\hbar}\xi'\otimes \eta, \exp\left(-\frac{i}{2\pi \hbar}(\ln(\Kb)\otimes \ln(\Lb)\right)(\xi\otimes \Fb^{iy/2\pi\hbar}\eta')\rangle \\
&&= \langle \xi'\otimes \eta, ((\Kb^{-1}\star\Eb)^{iy/2\pi\hbar}\otimes 1) E_{\hbar}(1\otimes \Fb^{iy/2\pi\hbar})(\xi\otimes \eta')\rangle \\
&&\underset{\eqref{EqGausEqInv}}{=} \langle \xi'\otimes \eta, ((\Kb^{-1}\star\Eb)^{iy/2\pi\hbar}\otimes 1)(\Kb\otimes \Fb)^{iy/2\pi\hbar} E_{\hbar}(\xi\otimes \eta')\rangle\\
&&\underset{\eqref{EqStarProd}}{=} e^{i\pi\hbar (y/2\pi\hbar)^2} \langle \xi'\otimes \eta,(\Eb^{iy/2\pi\hbar}\otimes \Fb^{iy/2\pi\hbar})E_{\hbar}(\xi\otimes \eta')\rangle \\
&&= e^{iy^2/4\pi\hbar}\varphi(y). 
\end{eqnarray*}

We hence conclude that 
\begin{equation}\label{EqIdPsiPhi}
\psi(y) = e^{-(1+\theta) y/2}e^{iy^2/4\pi\hbar}\varphi(y). 
\end{equation}
Now recall again the notation and results of Proposition \ref{PropFourDualV}. Then from \eqref{EqIdPsiPhi} we see that  
\begin{equation}\label{EqIdPHPS}
\alpha_{\theta}^{\varepsilon}(\varepsilon y) \psi(y) = \varphi(y).
\end{equation}
We can now compute that 
\begin{eqnarray*}
\langle \xi'\otimes \overline{\hat{\Qb}^{-1}\eta'},\widetilde{Z}^*(\xi\otimes \overline{\hat{\Qb}\eta})\rangle &=& 
\langle \xi'\otimes \overline{\hat{\Qb}^{-1}\eta'}, \overline{F}_{\hbar}(\widetilde{\Cb}) \widetilde{E}_{\hbar}(\xi\otimes \overline{\hat{\Qb}\eta})\rangle \\
&=& \int_{\R} V_{\theta}^{-\varepsilon}(x)\rd \nu(x)
\\
&\underset{\eqref{EqSpecMeasQD}}{=}& (4\pi^2|\hbar|)^{-1/2}\langle V_{\theta}^{-\varepsilon}(x),\hat{\psi}(\varepsilon x)\rangle
\\
&=& (4\pi^2|\hbar|)^{-1/2}\langle (V^{-\varepsilon}_{\theta})^{\wedge}(\varepsilon y),\psi(y)\rangle \\
&\underset{\eqref{EqDefNeedEqual}}{=}&  (4\pi^2|\hbar|)^{-1/2}\langle (V^{\varepsilon}_{\theta})^{\wedge}(\varepsilon y),\alpha_{\theta}^{\varepsilon}(\varepsilon y)\psi(y)\rangle \\
&\underset{\eqref{EqIdPHPS}}{=}&  (4\pi^2|\hbar|)^{-1/2} \langle (V_{\theta}^{\varepsilon})^{\wedge}(\varepsilon y),\varphi(y)\rangle \\
&=&  (4\pi^2|\hbar|)^{-1/2} \langle V_{\theta}^{\varepsilon}(x),\hat{\varphi}(\varepsilon x)\rangle \\
&\underset{\eqref{EqSpecMeasQD}}{=}& \int_{\R} V_{\theta}^{\varepsilon}(x)\rd \mu (x)\\ 
&=&  \langle \xi'\otimes \eta,F_{\hbar}(\Cb) E_{\hbar}(\xi\otimes \eta')\rangle \\
&=&  \langle \xi'\otimes \eta,Z(\xi\otimes \eta')\rangle.
\end{eqnarray*}

\section{Multiplier  \texorpdfstring{C$^*$-algebras}{C*-algebras} and affiliated elements}\label{SecMult}
\begin{defn}
A \emph{multiplier} for a C$^*$-algebra $A$ is a linear map $m\colon  A \rightarrow A$ for which there exists a linear map $m^*\colon  A \rightarrow A$ (necessarily unique) such that 
\[
(m^*b)^* a = b^*(ma),\qquad \forall b,a \in A. 
\]
\end{defn}
By the closed graph theorem, multipliers are automatically norm-continuous. The space $M(A)$ of multipliers then becomes a unital C$^*$-algebra under composition and the above $*$-structure. Identifying 
\[
A \subseteq M(A),\qquad a \mapsto (a' \mapsto aa'),
\]
we have that $M(A)$ contains $A$ as a norm-closed $2$-sided ideal.

If $B$ is a C$^*$-algebra containing $A$ as a $*$-ideal, we get a $*$-homomorphism 
\[
B \rightarrow M(A), \qquad b \mapsto (a\mapsto ba).
\]
If $A$ is an \emph{essential} ideal, so $b\in B$ and $bA=0$ implies $b=0$, we get a $*$-embedding 
\[
B \hookrightarrow M(A),
\]
acting as identity on $A$. So, $M(A)$ is the largest  C$^*$-algebra containing $A$ as an essential ideal.

For example, if $X$ is a locally compact Hausdorff space, we have an identification
\[
C_b(X) \cong M(C_0(X)),\qquad f \mapsto (g\mapsto fg).
\]
As another example, let $\Hc$ be a Hilbert space. Then 
\[
\Bc(\Hsp) \cong M(\Kc(\Hsp)),\qquad x \mapsto (y \mapsto xy).
\]

If $A,B$ are C$^*$-algebras, a  $*$-homomorphism 
\[
\pi\colon  A \rightarrow M(B)
\]
is called \emph{non-degenerate} if 
\[
B = [\pi(A)B].
\]
If $A$ is a $*$-algebra and $\pi\colon  A \rightarrow \Bc(\Hsp)$ is a $*$-representation, one calls $\pi$ \emph{non-degenerate} if 
\[
\Hsp = [\pi(A)\Hsp].
\]
Viewing $\Bc(\Hc) = M(\Kc(\Hc))$, this is then the same as asking that $\pi\colon  A \rightarrow M(\Kc(\Hsp))$ is non-degenerate.

Whenever $\pi\colon  A \rightarrow M(B)$ is a non-degenerate $*$-homomorphism between C$^*$-algebras, there exists a unique extension of $\pi$ to a unital $*$-homomorphism 
\[
\pi\colon  M(A) \rightarrow M(B). 
\]

We will also have need of the affiliation relation and affiliated elements \cite{Wor91} (see also \cite{WN92,Wor95}). 

\begin{defn}
Let $A$ be a C$^*$-algebra. Let $z \in M(A)$ be such that $\|z\|\leq 1$ and 
\begin{equation}\label{EqDenseRange}
A = [(1-z^*z)^{1/2}A].
\end{equation}
Then we define $T_z$ to be the unbounded, densely defined operator 
\begin{equation}\label{EqDefT}
\msD(T_z) := (1-z^*z)^{1/2}A \rightarrow A,\qquad (1-z^*z)^{1/2}a \mapsto za.
\end{equation}
\end{defn}
It is easily seen from \eqref{EqDenseRange} that $T_z$ is well-defined. 

\begin{defn}
A densely defined linear operator $T\colon  \msD(T) \rightarrow A$ is said to be \emph{affiliated to $A$} if $T = T_z$ for some $z$ as above. Notation: 
\[
T \eta A,\qquad A^{\eta} = \{T \mid T \eta A\}.
\]
\end{defn}
\begin{remark}
\begin{itemize}
\item One can show that $T_z= T_w$ if and only if $w=z$, see the remark following \cite[Proposition 2.2]{Wor91}. Hence if $T \eta A$, it makes sense to write $z = z_T \in M(A)$ for the unique contraction satisfying \eqref{EqDenseRange} and $T = T_z$. 
\item Any $T\in M(A)$ is affiliated to $A$, by taking 
\[
z = T(1+T^*T)^{-1/2}.
\]
Conversely, if $T \eta A$ and $T$ is norm-bounded, then $T \in M(A)$. 
\end{itemize}
\end{remark}

For example, if $X$ is a locally compact Hausdorff space, one shows (\cite[Example 2]{Wor91}) that 
\[
C_0(X)^{\eta} = C(X) = \{\textrm{continuous functions }X \rightarrow \C\},
\]
where we view $f \in C(X)$ as an operator $T = M_f$ on $C_0(X)$ via multiplication: 
\[
\msD(M_f) = \{g\in C_0(X) \mid fg \in C_0(X)\},\qquad M_fg = fg,\qquad g\in C_0(X). 
\]
As another example one shows (\cite[Example 3]{Wor91}) that, for $\Hsp$ a Hilbert space,
\[
\Kc(\Hc)^{\eta} = \{\textrm{closed, densely defined operators on }\Hsp\},
\]
by sending $T$ to $z_T(1-z_T^*z_T)^{-1/2}$ (the latter in the sense of unbounded operators). 

We will need the following simple observations on affiliated elements, which we gather in the following lemmas. See \cite[Theorem 1.2]{Wor91} and  \cite[Theorem 6.1]{WN92} for the proof of the respective statements.

\begin{lemma}\label{LemHomCorr}
Let $A,B$ be C$^*$-algebras.
\begin{itemize}
\item If $\pi\colon  A \rightarrow M(B)$ is a non-degenerate $*$-homomorphism, then for any $T \,\eta\, A$ there exists a unique $\pi(T) \eta B$ such that 
\[
z_{\pi(T)} = \pi(z_T).
\]

\item If $T\, \eta\, A$ and $S\, \eta\, B$, there exists a unique $ (T \otimes S) \,\eta\, (A\otimes B)$ such that 
\begin{equation}\label{EqAffiliTens}
\msD(T)\odot \msD(S) \subseteq \msD(T\otimes S),\quad (T\otimes S)(a\otimes b) = Ta\otimes Sb,\qquad a\in \msD(T),b\in \msD(S).
\end{equation}
\end{itemize}
\end{lemma}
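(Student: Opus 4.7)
The plan for part (1) is to show that $w := \pi(z_T) \in M(B)$ is the generator of an affiliated element, and then set $\pi(T) := T_w$. Since $\pi\colon A \to M(B)$ is non-degenerate, it extends canonically to a unital $*$-homomorphism $\overline{\pi}\colon M(A) \to M(B)$, so $w = \overline{\pi}(z_T)$ makes sense. Contractivity of $*$-homomorphisms gives $\|w\|\le 1$. The key point is to verify the density condition $B = [(1-w^*w)^{1/2}B]$. Starting from $A=[(1-z_T^*z_T)^{1/2}A]$ and applying $\overline{\pi}$, one obtains $[\pi(A)B] = [\pi((1-z_T^*z_T)^{1/2})\,\pi(A)\,B]$; using the continuous functional calculus one has $\pi((1-z_T^*z_T)^{1/2}) = (1-w^*w)^{1/2}$ inside $M(B)$, and non-degeneracy $B=[\pi(A)B]$ gives the desired equality. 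Uniqueness of $\pi(T)$ follows from the injectivity of $T \leftrightarrow z_T$. The only mild subtlety is justifying the functional calculus identity, which follows since $\overline{\pi}$ is a unital $*$-homomorphism of C$^*$-algebras applied to a positive contraction.

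The plan for part (2) is to realize $T\otimes S$ as an affiliated element through its generator. First observe that the two natural maps
\[
\iota_A\colon A \hookrightarrow M(A\otimes B),\quad a\mapsto a\otimes 1,\qquad
\iota_B\colon B \hookrightarrow M(A\otimes B),\quad b\mapsto 1\otimes b,
\]
are non-degenerate $*$-homomorphisms, so by part (1) we obtain affiliated elements $T\otimes 1 := \iota_A(T)$ and $1\otimes S := \iota_B(S)$ in $(A\otimes B)^\eta$, with generators $z_T\otimes 1$ and $1\otimes z_S$. The candidate generator for $T\otimes S$ is
\[
z := (z_T\otimes z_S)\bigl(1 - (1-z_T^*z_T)\otimes 1 - 1\otimes(1-z_S^*z_S) + (1-z_T^*z_T)\otimes(1-z_S^*z_S)\bigr)^{-1/2},
\]
motivated by the formal identity $z_{T\otimes S} = (T\otimes S)(1+T^*T\otimes S^*S)^{-1/2}$ together with $1-z^*z = (1-z_T^*z_T)\otimes(1-z_S^*z_S)\cdot (1+T^*T\otimes S^*S)^{-1}$. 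One checks $\|z\|\le 1$ and that $1-z^*z$ has dense range in $A\otimes B$ by using the corresponding density for $z_T$ and $z_S$ separately together with the fact that $A\otimes B$ is spanned by elementary tensors. The affiliated element $T_z$ then restricts on the algebraic tensor product of the defining domains to the algebraic tensor product $Ta\otimes Sb$, and uniqueness follows since $\msD(T)\odot\msD(S)$ is a core for any closed operator on $A\otimes B$ that agrees with the algebraic tensor action there.

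\textbf{Expected main obstacle.} In part (1) the argument is essentially bookkeeping. The main obstacle sits in part (2): verifying that the proposed $z$ really lies in $M(A\otimes B)$ with $\|z\|\le 1$, and that $(1-z^*z)^{1/2}(A\otimes B)$ is norm-dense in $A\otimes B$. The denseness reduces, by standard approximation of elementary tensors, to the two separate densities for $z_T$ and $z_S$, but one must take some care because the inverse square root in the definition of $z$ is only a priori defined on a suitable dense subspace; one handles this by interpreting $z$ through bounded functional calculus applied to the commuting positive contractions $1-z_T^*z_T\otimes 1$ and $1\otimes(1-z_S^*z_S)$, which keeps everything inside $M(A\otimes B)$. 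Once this is done, the identification of $T_z$ on $\msD(T)\odot\msD(S)$ with the prescribed action $a\otimes b\mapsto Ta\otimes Sb$ is a direct computation using the definition $T_z((1-z^*z)^{1/2}x) = zx$.
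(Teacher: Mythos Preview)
The paper does not give its own proof of this lemma; it simply cites \cite[Theorem 1.2]{Wor91} for part (1) and \cite[Theorem 6.1]{WN92} for part (2). Your argument for part (1) is correct and is essentially the standard one: extend $\pi$ to $M(A)$, push $z_T$ forward, and check density via non-degeneracy and functional calculus.

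Part (2), however, contains a genuine error. Your displayed expression for the candidate generator simplifies: with $p=z_T^*z_T$ and $q=z_S^*z_S$, the term in large parentheses is
\[
1 - (1-p)\otimes 1 - 1\otimes(1-q) + (1-p)\otimes(1-q) \;=\; p\otimes q \;=\; z_T^*z_T\otimes z_S^*z_S,
\]
so your $z$ equals $(z_T\otimes z_S)(z_T^*z_T\otimes z_S^*z_S)^{-1/2}$, i.e.\ the partial isometry in the polar decomposition of $T\otimes S$, not its $z$-transform. The correct identity is
\[
z_{T\otimes S} \;=\; (z_T\otimes z_S)\,\bigl((1-z_T^*z_T)\otimes(1-z_S^*z_S) \;+\; z_T^*z_T\otimes z_S^*z_S\bigr)^{-1/2},
\]
but even this cannot be interpreted directly via continuous functional calculus in $M(A\otimes B)$: the scalar function $(s,t)\mapsto \sqrt{st}\big/\sqrt{(1-s)(1-t)+st}$ on $[0,1]^2$ fails to extend continuously to the corners $(1,0)$ and $(0,1)$, which lie in the joint spectrum whenever $T$ and $S$ are genuinely unbounded. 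This is precisely why the construction in \cite{WN92} proceeds more carefully, and why your plan to ``handle this by interpreting $z$ through bounded functional calculus applied to the commuting positive contractions'' does not go through as stated. Your uniqueness claim (``$\msD(T)\odot\msD(S)$ is a core for any closed operator that agrees with the algebraic tensor action there'') is also not justified and is not true without further hypotheses.
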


If $A$ is a C$^*$-algebra, we call
an affiliated element $T \,\eta\, A$ \emph{normal} if $z_T$ is normal. We will then need the following form of functional calculus (see \cite[Theorem 1.5, Theorem 1.6]{Wor91} and the discussion following it). Note first that if 
\[
\alpha\colon  \{z\mid |z|<1\} \rightarrow \C,\quad z\mapsto z(1-|z|^2)^{-1/2}
\]
and $g \in C_0(\C)$, then $g\circ \alpha$ is a $C_0$-function on the open unit disk, and in particular extends to a continuous function on the closed unit disk. 

\begin{lemma}[Functional calculus]\label{LemFunctCal}
Assume $A$ is a C$^*$-algebra and $T \,\eta\, A$ normal. Then there exists a unique map 
\[
C(\C) \rightarrow A^{\eta},\qquad f \mapsto f(T)
\]
such that $f(T) = \pi(f)$ for $\pi$ the non-degenerate $*$-homomorphism
\[
\pi\colon  C_0(\C) \rightarrow M(A),\qquad g \mapsto (g\circ \alpha)(z_T).
\]
\end{lemma}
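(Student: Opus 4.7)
The plan is to construct $f(T)$ by reducing to bounded continuous functional calculus on $z_T$. Since $T$ is normal, $z_T \in M(A)$ is a normal contraction with spectrum in $\overline{D}$, the closed unit disk. The map $\alpha\colon D \to \C$, $\alpha(w) = w(1-|w|^2)^{-1/2}$, is a homeomorphism inverse to $c \mapsto c(1+|c|^2)^{-1/2}$, and for $g \in C_0(\C)$ the composition $g \circ \alpha$ extends continuously to $\overline{D}$ (taking the value $0$ on $\partial D$, since $|\alpha(w)| \to \infty$ as $|w| \to 1^-$). Continuous functional calculus for the normal element $z_T$ then produces $(g \circ \alpha)(z_T) \in M(A)$, so $\pi$ is a well-defined $*$-homomorphism. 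To check non-degeneracy I would exploit the distinguished function $g_0(c) := (1+|c|^2)^{-1/2}$: a direct calculation gives $(g_0 \circ \alpha)(w) = (1-|w|^2)^{1/2}$, so $\pi(g_0) = (1-z_T^*z_T)^{1/2}$, and the affiliation condition \eqref{EqDenseRange} immediately yields $A = [\pi(g_0)A] \subseteq [\pi(C_0(\C))A]$.

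By the universal property of multipliers, $\pi$ then extends uniquely to a unital $*$-homomorphism $\bar{\pi}\colon C_b(\C) = M(C_0(\C)) \to M(A)$. Given $f \in C(\C)$, I set $h_f(c) := f(c)(1+|f(c)|^2)^{-1/2}$ and $k_f(c) := (1+|f(c)|^2)^{-1/2}$, both in $C_b(\C)$ with $|h_f|^2 + k_f^2 = 1$, and propose
\[
z_{f(T)} := \bar{\pi}(h_f) \in M(A),
\]
which is automatically a normal contraction satisfying $1 - z_{f(T)}^* z_{f(T)} = \bar{\pi}(k_f)^2$. The main obstacle is verifying the affiliation condition $[\bar{\pi}(k_f)A] = A$: the function $k_f$ typically fails to lie in $C_0(\C)$ (precisely when $f$ is unbounded), so $\bar{\pi}(k_f)$ is a genuine multiplier and its dense-range property on $A$ must be checked by hand. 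My strategy will be to observe that for any $g \in C_c(\C)$ the function $f$ is bounded on $\operatorname{supp}(g)$, so both $k_f g$ and $k_f^{-1}g$ lie in $C_c(\C) \subseteq C_0(\C)$; this yields the factorisation $\pi(g) = \bar{\pi}(k_f) \cdot \pi(k_f^{-1}g) \in \bar{\pi}(k_f)A$. Density of $C_c(\C)$ inside $C_0(\C)$ together with non-degeneracy of $\pi$ then forces $[\bar{\pi}(k_f)A] = A$, and so $f(T) := T_{z_{f(T)}} \in A^\eta$ is a well-defined normal affiliated element.

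For $f \in C_0(\C)$ the $z$-transform of $\pi(f) \in M(A)$, viewed inside $A^\eta$, is $\pi(f)(1+\pi(f)^*\pi(f))^{-1/2} = \bar{\pi}(f \cdot k_f) = \bar{\pi}(h_f) = z_{f(T)}$, confirming that the construction agrees with $\pi$. For uniqueness, I interpret the extension as a $*$-homomorphism in the sense of \cite{Wor91}, so that for any $f \in C(\C)$ and any $g \in C_c(\C)$ the product $gf \in C_c(\C)$ forces the composition relation $\pi(g) \cdot \Phi(f) = \pi(gf)$; since $\{\pi(g) : g \in C_c(\C)\}$ acts non-degenerately on $A$, the densely defined closed operator $\Phi(f)$ is determined by these bounded products, hence by $\pi$ alone.
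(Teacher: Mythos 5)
Your construction is correct. Note that the paper does not actually prove this lemma: it is stated as a citation to \cite[Theorems 1.5 and 1.6]{Wor91}, so there is no in-text argument to compare against. What you have written is essentially a self-contained reconstruction of Woronowicz's proof, obtained by specialising the two general mechanisms the paper does record: the extension of a non-degenerate morphism to multiplier algebras, and the extension to affiliated elements via $z$-transforms (Lemma \ref{LemHomCorr}). All the key steps check out: the identity $(g_0\circ\alpha)(w)=(1-|w|^2)^{1/2}$ for $g_0(c)=(1+|c|^2)^{-1/2}$ reduces non-degeneracy of $\pi$ exactly to the affiliation condition \eqref{EqDenseRange}; the factorisation $\pi(g)=\bar\pi(k_f)\,\pi(k_f^{-1}g)$ for $g\in C_c(\C)$ (using that $k_f$ is nowhere vanishing and $f$ is bounded on $\operatorname{supp} g$) correctly establishes $[\bar\pi(k_f)A]=A$, which is the only genuinely non-bounded point; and the consistency check on $C_0(\C)$ is just the compatibility of $\bar\pi$ with continuous functional calculus. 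The one place to be careful is the uniqueness clause: as literally stated, "$f(T)=\pi(f)$" only constrains the map on $C_0(\C)$, so uniqueness must be read as uniqueness of the extension of $\pi$ in the affiliated-element sense of \cite{Wor91} (equivalently, $z_{f(T)}=\bar\pi(z_f)$); your closing argument via the relations $\pi(g)\cdot f(T)=\pi(gf)$ for $g\in C_c(\C)$ together with non-degeneracy pins down $z_{f(T)}$ and hence $f(T)$, which is the right way to make that precise.
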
 

For example, if $A \subseteq \Bc(\Hsp)$ is a non-degenerate faithful $*$-representation of $A$, then any normal $T \,\eta\, A$ can be seen as an (unbounded) normal operator $\Tb$ on $\Hsp$, and the operator corresponding to $f(T)$ will be $f(\Tb)$ as obtained through the usual functional calculus for normal operators on $\Hsp$. In particular, we only need to have $f$ defined as a continuous function on the closed set 
\[
\Spec(\Tb) = \{\lambda \in \C \mid \Tb- \lambda \textrm{ bijective as a map }\msD(\Tb) \rightarrow \Hsp\}. 
\]
We need this observation in particular in case $\Tb$ is a self-adjoint positive operator on $\Hsp$, so
\[
\Spec(\Tb) \subseteq [0,\infty).
\]

\section{Rieffel's deformation theory}\label{SecRieff}

Although the material in Section \ref{SecCStarRieff} only refers to the deformation of very specific function algebras, we recall some of the general framework of  \cite{Rie93} to have easy access to the results obtained there. 

Let $A$ be any C$^*$-algebra with a (not necessarily norm-continuous) action $\alpha$ of $\hat{V}$ by $*$-automorphisms. We then let $A  ^u \subseteq A$ be the C$^*$-subalgebra of norm-continuous elements, i.e.\ those $a\in A$ for which 
\begin{equation}\label{EqActionMap}
\hat{v}\mapsto \alpha_{\hat{v}}(a)
\end{equation}
is norm-continuous. We let $A^{\infty} \subseteq A^u$ be the $*$-algebra of smooth functions, i.e.\ those for which the Banach-space valued functions \eqref{EqActionMap} have infinite partial derivatives.

If now $A$ is any C$^*$-algebra, we can endow the C$^*$-algebra $C_b(\hat{V},A)$ of bounded functions $\hat{V}\rightarrow A$ with the translation action\footnote{We change the direction of the translation action with respect to \cite{Rie93}, to be in line with the other conventions in our paper. The resulting sign changes that appear are of course inconsequential.}
\[
(\tau_{\hat{v}}f)(\hat{w}) = f(\hat{w}-\hat{v}),\qquad \hat{v},\hat{w}\in \hat{V},
\]
and consider the associated $*$-algebra $C^{\infty}_b(\hat{V},A)$ as above. Let $\Sc(\hat{V},A) \subseteq C^{\infty}_b(\hat{V},A)$ be the corresponding space of Schwartz functions $f$, i.e.\ those functions such that for any iterated partial derivative $g$ of $f$, the function $\hat{v}\mapsto p(\hat{v})g(\hat{v})$ is bounded for any polynomial function $p$ on $\hat{V}$. Then we obtain a well-defined map 
\[
\times_{\hbar}\colon  C^{\infty}_b(\hat{V},A) \times \Sc(\hat{V},A) \rightarrow \Sc(\hat{V},A),\qquad (f \times_{\hbar} g)(\hat{v}) = \int_V f(\hat{v} - \Jc u)\hat{g}(u) e^{2\pi i \hat{v}\cdot u} \rd u,
\]
see \cite[Proposition 3.3]{Rie93}. There then exists a unique associative algebra structure $\times_{\hbar}$ on $C_b^{\infty}(\hat{V},A)$ which turns the above map into a module, and $C_b^{\infty}(\hat{V},A)$ becomes a pre-C$^*$-algebra through the pointwise $*$-structure and the norm 
\[
\|f\| = \sup_{\|g\|_A\leq 1} \|f\times_{\hbar} g\|_A,\qquad \|g\|_A^2 = \left\|\int_{\hat{V}} g(\hat{v})^*g(\hat{v}) \rd \hat{v}\right\|,\qquad f \in C_b^{\infty}(\hat{V},A),g\in \Sc(\hat{V},A),
\]
see \cite[Chapter 4]{Rie93}. We denote
\[
C_{b,\hbar}^{\infty}(\hat{V},A) = \textrm{ norm-completion of }C_{b}^{\infty}(\hat{V},A).
\]
If then $A$ is \emph{any} C$^*$-algebra with a norm-continuous action $\alpha$ of $\hat{V}$, we denote
\[
\widetilde{\alpha}\colon  A^{\infty} \rightarrow C_b^{\infty}(\hat{V},A),\qquad a \mapsto (\hat{v}\mapsto \alpha_{\hat{v}}(a)),
\]
and we endow $A^\infty$ with the pre-C$^*$-algebra structure inherited from $C_b^{\infty}(\hat{V},A)$ through his map. We write $A_{\hbar}$ for the completion of $A^{\infty}$, and we denote the resulting emedding of $A^{\infty}$ into this completion as 
\[
\Qc_A\colon  A^{\infty} \rightarrow A_{\hbar},\qquad a \mapsto a_{\hbar}. 
\]
For example, if $A = C_0(\hat{V})$, it is not hard to show that $A^{\infty} = \Sc(\hat{V})$, and that the resulting quantization $C_{0,\hbar}(\hat{V})$ is precisely $C_{\hbar}^*(V)$ as constructed before. 

What will be important for us, are the structural properties of the assignment 
\[
A \mapsto A_{\hbar}. 
\]
For this, we first recall the following result, which is \cite[Proposition 5.9]{Rie93}. 

\begin{prop}\label{PropExtMultAlg}
Let $B$ be a C$^*$-algebra with a norm-continuous $\hat{V}$-action. Consider $C = M(B)$ with its natural (not necessarily norm-continuous) $\hat{V}$-action. Then there exists a unique injective $*$-homomorphism 
\[
B_{\hbar} \rightarrow C_{\hbar},\qquad b_{\hbar}\mapsto b_{\hbar},\qquad b \in B^{\infty},
\]
realizing $B_{\hbar}$ as an essential two-sided $*$-ideal of $C_{\hbar}$. 
\end{prop}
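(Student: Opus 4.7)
My plan is to reduce the proposition to three points: that $B^\infty$ sits inside $C^\infty$ as a $\star_\hbar$-subalgebra, that it is a two-sided ideal there at the smooth level, and that the embedding is isometric for the two pre-C$^*$-norms.

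First I would observe that since $B$ is a $\hat{V}$-invariant closed $*$-ideal in $C=M(B)$, the inclusion $B\subseteq C$ restricts to a $*$-algebra inclusion $B^\infty\subseteq C^\infty$ of smooth vectors, and $\widetilde{\alpha}_B\colon B^\infty\to C_b^\infty(\hat{V},B)$ is the restriction of $\widetilde{\alpha}_C\colon C^\infty\to C_b^\infty(\hat{V},C)$. Since the Rieffel product formula
\[
(f\times_\hbar g)(\hat{v})=\int_V f(\hat{v}-\Jc u)\,\hat g(u)\,e^{2\pi i\hat{v}\cdot u}\,\rd u
\]
depends only on the module structure, when both $f$ and $g$ take values in $B$ the result is the same whether interpreted inside $C_b^\infty(\hat{V},B)$ or $C_b^\infty(\hat{V},C)$. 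So the inclusion $B^\infty\hookrightarrow C^\infty$ is a $*$-subalgebra inclusion for $\star_\hbar$.

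For the ideal property at the smooth level, take $b\in B^\infty$ and $c\in C^\infty$. In the above integral formula with $f=\widetilde{\alpha}_C(c)$ and $g=\widetilde{\alpha}_C(b)$, the integrand at each $\hat{v}$ is a product of an element of $C=M(B)$ with an element of $B$, hence lies in $B$. Smoothness of $c\star_\hbar b$ is automatic from smoothness of $c$ and $b$ and a straightforward derivative-under-the-integral argument, so $c\star_\hbar b\in B^\infty$. The case $b\star_\hbar c$ is handled symmetrically.

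The main technical obstacle is the norm compatibility: that the C$^*$-seminorm on $B^\infty$ inherited from $B_\hbar$ coincides with its restriction from $C_\hbar$. The inequality $\|b\|_{B_\hbar}\le\|b\|_{C_\hbar}$ is immediate, since $\mathcal{S}(\hat{V},B)\hookrightarrow\mathcal{S}(\hat{V},C)$ is an isometric embedding (because $B$ is a $C^*$-subalgebra of $C$) and $b\star_\hbar g$ lies in $\mathcal{S}(\hat{V},B)$ whenever $g$ does. For the reverse inequality I would, for $g\in\mathcal{S}(\hat{V},C)$, use a smooth approximate unit $\{e_\lambda\}\subset B^\infty$ of $B$ to approximate $g$ by $g\cdot e_\lambda\in\mathcal{S}(\hat{V},B)$ in the Hilbert $C$-module norm on $L^2(\hat{V},C)$, and then use essentialness of $B\subset C$ together with continuity of left multiplication by $b\in B^\infty$ in this Hilbert module picture to conclude that the two norms of $b$ agree. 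Equivalently, realizing both $B_\hbar$ and $C_\hbar$ as operator algebras on $L^2(\hat{V},B)$ and $L^2(\hat{V},C)$ respectively via left $\star_\hbar$-multiplication, the inclusion $L^2(\hat{V},B)\subseteq L^2(\hat{V},C)$ of Banach spaces intertwines the two actions of $B^\infty$, and essentialness of $B\subset C$ gives isometry.

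Once isometry is established, taking completions yields an injective $*$-homomorphism $B_\hbar\hookrightarrow C_\hbar$. The ideal property at the smooth level then extends by norm continuity of $\star_\hbar$-multiplication to make $B_\hbar$ a closed two-sided $*$-ideal in $C_\hbar$. For essentialness, if $c\in C_\hbar$ satisfies $c\cdot B_\hbar=0$, approximate $c$ by elements of $C^\infty$ and multiply on the right by a smooth approximate unit for $B_\hbar$ (obtained by regularizing an approximate unit of $B$ with a Schwartz delta-sequence on $\hat{V}$); passing to the limit forces $c=0$. Uniqueness of the injective $*$-homomorphism extending $b_\hbar\mapsto b_\hbar$ is immediate since $B^\infty$ is norm-dense in $B_\hbar$.
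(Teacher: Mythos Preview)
The paper does not prove this proposition; it is stated as \cite[Proposition 5.9]{Rie93} and simply cited. Your outline follows essentially the same route as Rieffel's original argument in that memoir: first establish that $B^\infty$ is a $\star_\hbar$-ideal in $C^\infty$, then prove that the two pre-C$^*$-norms agree on $B^\infty$ by working in the Hilbert-module (equivalently, $L^2(\hat{V},\,\cdot\,)$) picture, and finally deduce essentialness from an approximate-unit argument.

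Two points deserve more care than you give them. First, when you argue that $c\star_\hbar b\in B^\infty$ by inspecting the integrand, remember that for $c\in C^\infty$ and $b\in B^\infty$ neither factor is Schwartz, so the product is \emph{not} given by the absolutely convergent integral you write down; it is defined indirectly through the module action on $\Sc(\hat{V},C)$ (or via Rieffel's oscillatory-integral machinery). One must therefore either pass through that module characterization or regularize carefully before concluding that the result lands in $B^\infty$. Second, your isometry step---approximating $g\in\Sc(\hat{V},C)$ by $g\cdot e_\lambda\in\Sc(\hat{V},B)$ using a smooth approximate unit---is the right idea, but making it rigorous requires controlling the Hilbert-module norm $\|b\times_\hbar(g-g\cdot e_\lambda)\|_C$, which is where the essentialness of $B\subset C$ and the boundedness of left $\star_\hbar$-multiplication by $b$ really do the work. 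Rieffel devotes several pages to these details; your sketch is correct in spirit but would need to be fleshed out to stand on its own.
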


In particular, we may view $C_{\hbar}\subseteq M(B_{\hbar})$. 

Together with the previous proposition, the following theorem now follows from combining \cite[Theorem 5.7, Proposition 5.8 and Proposition 5.10]{Rie93}.

\begin{theorem}\label{TheoFunctRieffDef}
Let $A,B$ be C$^*$-algebras with norm-continuous $\hat{V}$-actions, and assume that $\theta\colon  A \rightarrow M(B)$ is a non-degenerate $\hat{V}$-equivariant $*$-homomorphism. Then there exists a unique non-degenerate $*$-homomorphism
\[
\theta_{\hbar}\colon A_{\hbar}\rightarrow M(B_{\hbar}),\qquad a_{\hbar} \mapsto \theta(a)_{\hbar},\qquad a\in A^{\infty}. 
\]
Moreover, if $\theta$ is faithful, also $\theta_{\hbar}$ is faithful, and if $\theta$ has range in $B$ (resp.\ is surjective with range $B$), then also $\theta_{\hbar}$ has range in $B_{\hbar}$ (resp.\ is surjective with range $B_{\hbar}$).

The above construction is functorial, in the sense that if $\theta\colon  A \rightarrow M(B)$ and $\kappa\colon  B \rightarrow M(C)$ are non-degenerate $*$-homomorphisms, then 
\[
(\kappa \circ\theta)_{\hbar} = \kappa_{\hbar}\circ \theta_{\hbar}.
\]
\end{theorem}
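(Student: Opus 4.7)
The plan is to define $\theta_{\hbar}$ first at the pre-C$^*$-algebra level on $A^{\infty}$, and then extend by a boundedness/density argument. The starting observation is that $\hat{V}$-equivariance of $\theta$ guarantees that $\theta$ sends $A^{\infty}$ into $M(B)^{\infty}$: indeed, if $\hat{v}\mapsto \alpha_{\hat{v}}^A(a)$ is smooth in norm and $\theta$ is a bounded linear map, then $\hat{v}\mapsto \alpha_{\hat{v}}^{M(B)}(\theta(a)) = \theta(\alpha_{\hat{v}}^A(a))$ is also smooth in norm. Composing with the inclusion $M(B)_{\hbar} \subseteq M(B_{\hbar})$ furnished by Proposition \ref{PropExtMultAlg} gives a candidate map $a_{\hbar}\mapsto \theta(a)_{\hbar}$ at the level of smooth vectors.

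Next I would verify the multiplicative and $*$-compatibility of this assignment with the deformed products. Since $\theta$ is pointwise $*$-preserving and multiplicative, and since $\times_{\hbar}$ is built from pointwise multiplication in $A$ together with translations by $\Jc u$ in $\hat{V}$ and the Fourier kernel $e^{2\pi i\hat{v}\cdot u}$ --- none of which interact with the target algebra $A$ --- the identity $\theta((f\times_{\hbar} g)(\hat{v})) = (\theta\circ f)\times_{\hbar}(\theta\circ g)(\hat{v})$ holds on $\Sc(\hat{V},A)$. Specializing to $\widetilde{\alpha}(a)$ and $\widetilde{\alpha}(b)$ this yields $\theta(ab)_{\hbar} = \theta(a)_{\hbar}\theta(b)_{\hbar}$ at the pre-C$^*$-algebra level, and similarly for the involution.

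The extension to $A_{\hbar}$ requires a boundedness estimate: by definition, $\|a_{\hbar}\|_{A_{\hbar}}$ is the operator norm of $\widetilde{\alpha}(a)$ acting by $\times_{\hbar}$ on the Hilbert $A$-module $\Sc(\hat{V},A)$ (completed in the norm $\|g\|_A$), and one must bound $\|\theta(a)_{\hbar}\|_{M(B_{\hbar})}$ by this. The key point is that $\theta$ induces a bounded, $\times_{\hbar}$-equivariant linear map $\Sc(\hat{V},A)\to C_b(\hat{V},M(B))$, and since $\theta$ is a $*$-homomorphism it is contractive on the relevant Hilbert module norms; this lets one estimate the left- and right-multiplication by $\theta(a)_{\hbar}$ on $\Sc(\hat{V},B)$ in terms of left- and right-multiplication by $a_{\hbar}$ on $\Sc(\hat{V},A)\,\widehat{\otimes}_A\, B$ (where $A$ acts on $B$ through $\theta$ and the multiplier structure). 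I expect this to be the main technical obstacle, as it requires a careful juggling of Hilbert module norms and it is where the non-degeneracy hypothesis enters: non-degeneracy lets one generate all of $B$ (and hence its smooth vectors) from $\theta(A)B$, which both yields the bound and forces $[\theta_{\hbar}(A_{\hbar})B_{\hbar}] = B_{\hbar}$, i.e.\ non-degeneracy of $\theta_{\hbar}$.

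Once $\theta_{\hbar}$ exists and is non-degenerate, the remaining claims are formal. Uniqueness follows from density of $\Qc_A(A^{\infty})$ in $A_{\hbar}$. Faithfulness: if $\theta$ is injective, then $\theta$ lifts to a faithful map of Hilbert $A$-modules $\Sc(\hat{V},A)\hookrightarrow\Sc(\hat{V},B)$, so the induced action on the completion cannot collapse, giving injectivity of $\theta_{\hbar}$. For the range statements, if $\theta(A)\subseteq B$ then $\theta(A^{\infty})\subseteq B^{\infty}$ and the image of $\theta_{\hbar}$ is contained in the norm-closure of $\Qc_B(B^{\infty})=B_{\hbar}$; surjectivity is preserved because the quantization of a surjective $\hat{V}$-equivariant $*$-homomorphism is surjective (one lifts smooth vectors). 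Finally, functoriality $(\kappa\circ\theta)_{\hbar} = \kappa_{\hbar}\circ\theta_{\hbar}$ follows by evaluating both sides on the dense subalgebra $\Qc_A(A^{\infty})$, where both agree with $\Qc_C(\kappa\circ\theta(a))$ by construction, and then invoking uniqueness of the extension.
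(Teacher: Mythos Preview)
The paper does not give a proof of this theorem at all: it simply records that the statement ``follows from combining \cite[Theorem 5.7, Proposition 5.8 and Proposition 5.10]{Rie93}'' (together with Proposition~\ref{PropExtMultAlg}). So there is no argument in the paper to compare your sketch against; the result is quoted as a black box from Rieffel's monograph.

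Your outline is a plausible reconstruction of how Rieffel's argument goes: define $\theta_{\hbar}$ on smooth vectors, check that it is a $*$-homomorphism for the deformed product (which is purely formal since $\times_{\hbar}$ is built from translations and pointwise products), and then prove a norm bound to extend. You correctly identify the norm estimate as the nontrivial step and the place where non-degeneracy is used. The faithfulness and surjectivity claims are indeed separate results in Rieffel (Propositions 5.8 and 5.10 respectively), each requiring its own short argument, while the functoriality is immediate from uniqueness on a dense subalgebra, as you say. If you wanted to turn this into a self-contained proof you would need to actually carry out the Hilbert-module boundedness estimate rather than just gesture at it, but since the paper itself defers entirely to the reference, your sketch already goes further than what the paper provides.
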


We will apply this as follows. 

First assume that we have a subspace $W \subseteq V$. Then the corresponding projection map $q\colon  \hat{V}\twoheadrightarrow \hat{W}$ leads to the non-degenerate $*$-homomorphism 
\[
\pi\colon  C_0(\hat{W})\rightarrow C_b(\hat{V}) = M(C_0(\hat{V})),\qquad f\mapsto f\circ q. 
\]
Moreover, this $*$-homomorphism is $\hat{V}$-equivariant when endowing $\hat{W}$ with the translation action of $\hat{V}$ through the quotient map. This leads to a non-degenerate $*$-homomorphism
\[
C_{0,\hbar}(W) \rightarrow M(C_{0,\hbar}(V)) = M(C_{\hbar}(V)). 
\]
A priori, the left hand side could be different from the canonical quantization $C_{\hbar}^*(W)$ of $C_0(\hat{W})$ viewing $W$ as a skew-symmetric space in its own right. However, one checks that the above map, considered on elements of the form $f_{\hbar}$ for $f\in \mathcal{S}(\hat{W})$, coincides with the following map as defined on these elements: 
\[
C_{\hbar}^*(W) \rightarrow M(C^*_{\hbar}(V)),\qquad f_{\hbar} \mapsto \int_{W} \hat{f}(w) e_{\hbar}^{2\pi i w}\rd w, \qquad f\in \mathcal{S}(W),
\]
using notation as in \eqref{EqCorrCanEl}. Hence we can unambiguosly interpret $C_{0,\hbar}(W) = C^*_{\hbar}(W)$. 

In particular, if $v \in V$ and we take $W =\R v$, then we can canonically identify 
\begin{equation}\label{EqIdentHatWR}
\hat{W} \cong \R,\qquad \hat{w}\mapsto \hat{w}\cdot v 
\end{equation}
with associated quotient map 
\begin{equation}\label{EqIdentHatWRQuot}
\hat{V} \twoheadrightarrow \R,\quad \hat{w}\mapsto \hat{w}\cdot v.
\end{equation}
We then find a canonical non-degenerate inclusion
\begin{equation}\label{EqVectIncl}
\pi_{v,\hbar}\colon  C_0(\R) = C_0(\hat{W}) = C_{0,\hbar}(\R) = C_{\hbar}^*(\R v) \hookrightarrow M(C^*_{\hbar}(V)).
\end{equation}
So, if $f\in C(\R)$, then we can consider $f \,\eta \,C_0(\R)$, and through Lemma \ref{LemHomCorr} we obtain a corresponding element 
\[
f_{\hbar} = \pi_v(f) \in C^*_{\hbar}(V).
\]
If in particular the function 
\[
z = z_f\colon  \R \hat{v}\rightarrow \C,\quad \hat{w} \mapsto z(\hat{w}) = \frac{f(\hat{w})}{(1+|f(\hat{w})|^2)^{1/2}}
\]
lies in $C_b^{\infty}(\R \hat{v})$, then the functoriality of our constructions gives concretely that
\[
f_{\hbar} = T_{z_{\hbar}}.
\]
For example, applying this to the identity function $\id_{\R}\colon  \R\rightarrow \R$, we find (after applying the functional calculus as in Lemma \ref{LemFunctCal} to the exponential map) the element $e_{\hbar}^v$ affiliated to $C_{\hbar}^*(V)$ as in \eqref{EqChare}.

Resume now the setting as at the end of Section \ref{SecCStarRieff}, where we have chosen dual bases $\{f_i\},\{\hat{f}_i\}$ of $V$ and $\hat{V}$, together with a partitioning $I = I_0 \sqcup I_1$. Then the translation action of $\hat{V}$ on itself extends to a continuous action of $\hat{V}$ on $\hat{V}_+$ (where $-\infty$ acts as a fixed point). Hence from the theory developed above, we see that \eqref{EqListInclus} quantizes into a sequence of inclusions
\begin{equation}\label{EqListInclusQuant}
C_{0,\hbar}(\hat{V}) \subseteq C_{0,\hbar}(\hat{V}_+) \subseteq C_{u,\hbar}(\hat{V}_+) \subseteq C_{u,\hbar}(\hat{V}) \subseteq M(C_{0,\hbar}(\hat{V}))
\end{equation}
(where $C_u(\hat{V})$, resp.\ $C_u(\hat{V}_*)$ denotes the norm-continuous elements for the translation action in $C_b(\hat{V})$, resp.\ $C_b(\hat{V}_+)$). By construction, this already gives that $C^*_{\hbar}(V_+)$, as defined by \eqref{EqCStarCone}, is indeed a C$^*$-algebra, equal to $C_{0,\hbar}(\hat{V}_+)$ in the notation above. 

Let us now conclude the remainder of the proof of Theorem \ref{TheoAffil}. Assume that $v\in V_+$. Then \eqref{EqIdentHatWRQuot} extends to a quotient map 
\[
\hat{V}_+ \rightarrow [-\infty,\infty),\qquad \hat{w}\mapsto v\cdot \hat{w}.
\]
The resulting non-degenerate $*$-homomorphism 
\[
C_0([-\infty,\infty)) = C_{0,\hbar}([-\infty,\infty)) \rightarrow M(C^*_{\hbar}(V_+))
\]
is then simply the restriction of the corresponding map 
\[
\pi_{v,\hbar}\colon  C_b(\R) = M(C_0(\R)) \rightarrow M(C^*_{\hbar}(V)). 
\]
Hence it is sufficient to observe that the exponential function $x \mapsto e^x$ extends to a continuous function on $[-\infty,\infty)$, and so determines an element affiliated to $C_0([-\infty,\infty))$.   

Theorem \ref{TheoDegRep} follows similarly through functoriality. Indeed, assume first that $I_0 = I_0'$. Then $\iota_{I'}$ as in \eqref{EqDegMap} is a $\hat{V}$-equivariant closed embedding when endowing $\hat{V}'$ with the translation action of $\hat{V}$ under the quotient map $\hat{V}\twoheadrightarrow \hat{V}'$. Hence \eqref{EqRepDegBound} is provided through Theorem \ref{TheoFunctRieffDef}. The formula \eqref{EqImageDegMap} then follows from the fact that, again through functoriality, $\pi_{I'}(e_{\hbar}^{f_i})$ must coincide with $e_{\hbar}^{f_i \circ \iota_{I'}}$. 

For the general theorem, we may now already assume that $I_1' = I_1$, and that $I_0'$ is such that $(f_i,f_j) =0$ for each $i \in I_0'\setminus I_0$. But in this case the $f_i$ with $i\in I_0\setminus I_0'$ lie in the kernel of $\epsilon$, and the inclusion map $V'\subseteq V$ \emph{does} allow a splitting as a map between skew-symmetric spaces, through the section
\[
s\colon  V \rightarrow V',\qquad \sum_{i\in I} v_i f_i \mapsto \sum_{i\in I'} v_{i'}f_i'. 
\]
It follows that any unitary $\hbar$-representation $\pi$ of $V'$ leads to a unitary $\hbar$-representation $\pi$ of $V$ through
\[
\pi(v) :=  \pi(s(v)),
\]
and we hence obtain a surjective $*$-homomorphism 
\[
\pi_{I'}\colon  C_{\hbar}^*(V) \rightarrow C_{\hbar}^*(V'), \quad f_{\hbar} \mapsto (f\circ \hat{s})_{\hbar},\qquad f \in \mathcal{S}(\hat{V}). 
\]
Extending this map to $M(C_{\hbar}^*(V))$ and restricting to $C_{\hbar}^*(V_+)$ leads to the map $\pi_{I'}$ as in Theorem \ref{TheoDegRep}. As we can then also apply the extended map $\pi_{I'}$ to the element $e_{\hbar}^v$ as affiliated to $C^*(V)$, this then also immediately proves \eqref{EqImageDegMap} in this case.

\section{von Neumann algebras and their weight theory}\label{AppWeights}

As a main reference for von Neumann algebras and their weight theory, we refer to \cite{Tak02,Tak03}.

\subsection{von Neumann algebras}

A von Neumann algebra $M$ is any unital C$^*$-algebra allowing a predual as a Banach space. The resulting weak$^*$-topology on $M$ is independent of the choice of predual, and called the \emph{$\sigma$-weak} or \emph{ultraweak} topology. Canonically, one can then choose the predual of $M$ to be the space $M_* \subseteq M^*$ of $\sigma$-weakly continuous functionals.

For example, if $\Hsp$ is a Hilbert space, then $\Bc(\Hsp)$ is a von Neumann algebra with $\Bc(\Hsp)_*$ the functionals of the form 
\[
\omega(x) = \sum_i \langle \xi_i,x\eta_i\rangle,\qquad \sum_i \|\xi_i\|^2 <\infty,\sum_i \|\eta_i\|^2 <\infty. 
\]

The following are equivalent for a unital $*$-subalgebra $M \subseteq \Bc(\Hsp)$: 
\begin{itemize}
\item $M \subseteq \Bc(\Hsp)$ is a von Neumann algebra whose $\sigma$-weak topology is induced by the one on $\Bc(\Hsp)$.
\item $M$ is $\sigma$-weakly closed in $\Bc(\Hsp)$. 
\item $M= M''$, where for a subset $S \subseteq \Bc(\Hsp)$ we put 
\[
S' = \{x\in \Bc(\Hsp)\mid \forall y\in S: xy = yx\}.
\]
\end{itemize}
Usually, one only calls $M \subseteq \Bc(\Hsp)$ a von Neumann algebra if the above conditions are satisfied, i.e.\ $M \subseteq \Bc(\Hsp)$ is what is called \emph{normally embedded}.  

More generally, a unital $*$-homomorphism $\pi\colon  M \rightarrow N$ between von Neumann algebras is called \emph{normal} if it is $\sigma$-weak-$\sigma$-weak continuous, which is the same as asking that it is dual to a norm-continuous linear map $\pi_*\colon  N_*\rightarrow M_*$. For example, any $*$-isomorphism between von Neumann algebras is automatically normal.

A \emph{normal unital $*$-representation} of a von Neumann algebra $M$ on a Hilbert space $\Hsp$ is any normal unital $*$-homomorphism $\pi\colon  M \rightarrow \Bc(\Hsp)$. One can show that any von Neumann algebra $M$ has a faithful normal unital $*$-representation $\pi$ on a Hilbert space $\Hsp$, and then necessarily $\pi(M) \subseteq \Bc(\Hsp)$ is a (normally embedded) von Neumann algebra with $M \cong \pi(M)$. 

If $\pi\colon  M \rightarrow \Bc(\Hsp)$ is a normal unital $*$-representation and $\Gc$ is another Hilbert space, then we obtain another normal $*$-representation 
\[
\pi \otimes 1\colon  M \rightarrow \Bc(\Hsp \otimes \Gc),\quad x\mapsto \pi(x)\otimes 1,
\]
called \emph{amplification} of $\pi$. 

If on the other hand $\pi\colon  M \rightarrow \Bc(\Hsp)$ is a normal unital $*$-representation and $p \in \pi(M)'$ is a self-adjoint projection, then we obtain another normal $*$-representation 
\[
\pi_p\colon  M \rightarrow \Bc(p\Hsp),\quad x\mapsto \pi(x)p,
\]
called \emph{cut-down} of $\pi$ (by $p$). One can then show:
\begin{lemma}\label{LemCutAmp}
Given a faithful normal unital $*$-representation $\pi$ of a von Neumann algebra $M$, every other normal unital $*$-representation of $M$ is, up to unitary equivalence, a cut-down of an amplification of $\pi$. 
\end{lemma}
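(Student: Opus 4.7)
The plan is to reduce the statement to the well-known fact that normal states on a von Neumann algebra correspond to vector states in an amplification of any faithful normal representation, and to assemble this from a cyclic decomposition of $\rho$. Let $\rho \colon M \to \Bc(\Gc)$ be the given normal unital $*$-representation.

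First, I would decompose $\rho$ into a direct sum of cyclic subrepresentations. Using Zorn's lemma applied to mutually orthogonal cyclic subspaces of the form $[\rho(M)\xi]$ (each invariant under $\rho(M)$, hence corresponding to a projection $p_\alpha \in \rho(M)'$), one writes $\Gc = \bigoplus_{\alpha \in A} \Gc_\alpha$ as an orthogonal direct sum of $\rho(M)$-cyclic subspaces with respective cyclic vectors $\xi_\alpha$. Each cut-down $\rho_\alpha = \rho_{\mid \Gc_\alpha}$ is, by the usual GNS-type uniqueness, completely determined up to unitary equivalence by the normal state $\omega_\alpha = \omega_{\xi_\alpha} \circ \rho$ on $M$.

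The next step is the key technical ingredient: each $\omega_\alpha$ is realised as a vector state of an amplification of $\pi$. Since $\pi$ is a faithful normal unital $*$-homomorphism, it induces an isometric isomorphism $M_* \cong \pi(M)_*$, so $\omega_\alpha$ corresponds to a normal state on $\pi(M) \subseteq \Bc(\Hc)$. Any such normal state extends (via Hahn--Banach followed by normalisation, or directly by the structure theorem for the predual of $\Bc(\Hc)$) to a normal state on $\Bc(\Hc)$, and is therefore of the form $x \mapsto \sum_{i \in I_\alpha} \langle \eta_i^{(\alpha)}, x \eta_i^{(\alpha)}\rangle$ with $\sum_i \|\eta_i^{(\alpha)}\|^2 = 1$, for some countable set $I_\alpha$. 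Equivalently, setting $\eta^{(\alpha)} = (\eta_i^{(\alpha)})_i \in \Hc \otimes \ell^2(I_\alpha)$, the state $\omega_\alpha$ is the vector state $\omega_{\eta^{(\alpha)}}$ on the amplification $\pi \otimes 1 \colon M \to \Bc(\Hc \otimes \ell^2(I_\alpha))$. Hence the cyclic representation $\rho_\alpha$ is unitarily equivalent to the cut-down of $\pi \otimes 1$ by the projection $q_\alpha$ onto $[(\pi \otimes 1)(M)\eta^{(\alpha)}] \subseteq \Hc \otimes \ell^2(I_\alpha)$.

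Finally, to glue everything together, I would take a single index set $I$ with $|I| \geq \sum_\alpha |I_\alpha|$ (and $|I| \geq \aleph_0$), and form the single amplification $\widetilde{\pi} = \pi \otimes 1 \colon M \to \Bc(\Hc \otimes \ell^2(I))$. Choosing mutually orthogonal embeddings $\ell^2(I_\alpha) \hookrightarrow \ell^2(I)$, the vectors $\eta^{(\alpha)}$ sit in pairwise orthogonal cyclic subspaces of $\Hc \otimes \ell^2(I)$, whose orthogonal sum corresponds to a projection $q = \bigoplus_\alpha q_\alpha \in \widetilde{\pi}(M)'$. The cut-down $\widetilde{\pi}_q$ is then unitarily equivalent to $\bigoplus_\alpha \rho_\alpha \cong \rho$, proving the claim. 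The main obstacle is purely set-theoretic: making sure a single amplification index set $I$ is large enough to carry all the cyclic components simultaneously; everything else is a routine unpacking of GNS for normal states and the correspondence between $M_*$ and $\pi(M)_*$ furnished by faithfulness and normality of $\pi$.
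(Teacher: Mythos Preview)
Your argument is correct and is essentially the standard textbook proof of this well-known structure result (see e.g.\ Takesaki's \emph{Theory of Operator Algebras I}, Theorem IV.5.5). The paper itself does not give a proof of this lemma: it is stated in Appendix~F as background material, prefaced only by ``One can then show:'', so there is nothing to compare your argument against.

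One small remark on presentation: in Step~2, the Hahn--Banach extension is not quite the right justification, since an arbitrary Hahn--Banach extension of a normal functional need not be normal. The correct (and standard) argument, which you also allude to, is that the restriction map $\Bc(\Hc)_* \to \pi(M)_*$ is surjective and admits a positive section, so any normal state on $\pi(M)$ lifts to a positive normal functional on $\Bc(\Hc)$, hence is of the form $\sum_i \omega_{\eta_i,\eta_i}$. Apart from this cosmetic point, the cyclic decomposition, the realisation of each cyclic piece as a cut-down of an amplification via GNS-uniqueness, and the set-theoretic gluing into a single amplification are all carried out correctly.
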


\subsection{General theory of weights}

Fix $M \subseteq \Bc(\Hc)$ a von Neumann algebra on a Hilbert space $\Hsp$ and let $M_+$ be the cone of positive elements in $M$.
\begin{defn}
A \emph{weight} on $M$ is any map 
\[
\varphi\colon  M_+ \rightarrow [0,\infty]
\]
such that
\[
\varphi(x+y) = \varphi(x)+\varphi(y),\qquad \varphi(0) = 0,\qquad  \varphi(ax) = a\varphi(x),\qquad a\in \R_{>0}, x,y\in M_+.
\]
One calls the weight 
\begin{itemize}
\item \emph{normal} if for any strongly converging increasing net $x_{i}\nearrow x$ in $M_+$, it holds that 
\[
\varphi(x) = \lim_{i} \varphi(x_i),
\]
\item \emph{faithful} if $\varphi(x)=0$ for $x\in M_+$ implies $x=0$, 
\item \emph{semi-finite} if the linear span $\mathfrak{m}_{\varphi}$ of 
\[
\mathfrak{m}_{\varphi}^+ = \{x\in M_+\mid \varphi(x)<\infty\}
\]
is $\sigma$-weakly dense in $M$, and
\item \emph{tracial} if 
\[
\varphi(xx^*) = \varphi(x^*x),\qquad \forall x\in M.
\]
\end{itemize}
We denote by $\mathfrak{W}_M$ the set of all weights, and by $\mathfrak{W}_M^{\nsf}$ the set of all normal, semi-finite, faithful (=nsf) weights (we also use the obvious variations).
\end{defn}

Clearly any weight which is a pointwise supremum of normal states is normal, and conversely for any normal weight $\varphi$ one has
\[
\varphi(x) = \sup_{\omega\in \mathcal{G}_{\varphi}}\omega(x),\qquad \forall x\in M_+,
\]
where
\[
\mathcal{G}_{\varphi} = \{\omega \in M_*^+\mid \omega \leq \varphi\},
\]
and where for $\omega \in M_*^+$ we write $\omega \leq \varphi$ if this holds pointwise on $M_+$. 

\begin{example}
Assume $\Hc$ is a Hilbert space. Then we obtain a normal, semi-finite, faithful, tracial weight on $\Bc(\Hc)$ via the usual trace 
\begin{equation}\label{EqTrace}
\Tr(x) = \sum_i \langle e_i,xe_i\rangle,\qquad x\geq0,
\end{equation}
where $\{e_i\}$ is an orthonormal basis of $\Hc$. Any other orthonormal basis gives the same trace. 
\end{example}

\begin{example}\label{ExaTwistTraceBH}
More generally, if $\Tb$ is a (strictly) positive operator on $\Hc$, one defines a normal semi-finite (faithful) weight
\begin{equation}\label{EqDeformTraceOrd}
\Tr_{\Tb}(x) = \sup_{r\geq0} \Tr(\Tb_r^{1/2}x\Tb_r^{1/2}) =\sup_{r\geq 0} \Tr(x^{1/2}\Tb_rx^{1/2}),\qquad x\in \Bc(\Hsp)_+,
\end{equation}
where $\Tb_r = \Tb \chi_{[0,r)}(\Tb)$ for $\chi_Y$ the indicator function on a set $Y$. In fact, any semi-finite normal weight on $\Bc(\Hsp)$ is of this form, for a unique $\Tb$. 
\end{example}

\begin{example} 
The construction in Example \ref{ExaTwistTraceBH} can be modified to an arbitrary von Neumann algebra $M \subseteq \Bc(\Hc)$ with an nsf trace $\tau$. Namely, assume that $\Tb$ is a positive operator on $\Hc$ which is \emph{affiliated} to $M$ in the sense of von Neumann algebras:
\begin{equation}\label{EqAffiliation}
u\Tb u^* = \Tb,\qquad \forall u \textrm{ unitary in }M'. 
\end{equation}
Then $\Tb_r \in M$ for all $0\leq r$, and one can deform $\tau$ into a semifinite normal weight $\tau_{\Tb}$, exactly as before:
\begin{equation}\label{EqDeformTrace}
\tau_{\Tb}(x) = \sup_{r\geq0} \tau(\Tb_r^{1/2}x\Tb_r^{1/2}) =\sup_{r\geq 0} \tau(x^{1/2}\Tb_rx^{1/2}),\qquad x\in M_+.
\end{equation}
\end{example}

We now recall the GNS-construction: with $\varphi$ an nsf weight, put 
\[
\mfn_{\varphi} = \{x\in M \mid x^*x\in \mfm_{\varphi}^+\}. 
\]
Then $\mfn_{\varphi}$ is a left-sided ideal in $M$, and in case $\varphi$ is tracial also a two-sided ideal with $\mfn_{\varphi}^* = \mfn_{\varphi}$. In general, one has 
\[
\mfm_{\varphi}= \mfn_{\varphi}^*\mfn_{\varphi}. 
\]
By polarisation, it follows that $\varphi$ can be extended to a linear functional on $\mfm_{\varphi}$, and $\mfn_{\varphi}$ becomes a Hilbert space with respect to the inner product 
\[
\langle x,y \rangle_{\varphi} :=\varphi(x^*y),\qquad x,y\in \mfn_{\varphi}.
\]
One writes $L^2(M,\varphi)$ for the completion, or simply $L^2(M)$ if $\varphi$ is understood. The natural embedding of $\mfn_{\varphi}$ into $L^2(M)$ is written 
\[
\Lambda = \Lambda_{\varphi}\colon  \mfn_{\varphi} \rightarrow L^2(M). 
\]
One further shows that there exists a unique normal faithful unital $*$-representation of $M$ on $L^2(M)$ such that 
\[
x\Lambda(y) = \Lambda(xy),\qquad x\in M,y\in \mfn_{\varphi}. 
\]
The celebrated \emph{Tomita--Takesaki} theorem states that there exist a unique anti-linear involutive isometry $J_{\varphi}$ (the modular conjugation) and strictly positive operator $\nabla_{\varphi}$ (the modular operator) on $L^2(M)$ such that 
\[
\mfn_{\varphi}\cap \mfn_{\varphi}^* \subseteq \msD(\nabla_{\varphi}^{1/2}),\qquad J_{\varphi}\nabla_{\varphi}^{1/2}\Lambda_{\varphi}(x) = \Lambda_{\varphi}(x^*),\qquad x\in \mfn_{\varphi}\cap \mfn_{\varphi}^*.
\]
Moreover, one has 
\[
\nabla_{\varphi}^{it}M\nabla_{\varphi}^{-it} = M,\qquad J_{\varphi} M J_{\varphi} = M'. 
\]
The resulting one-parameter group of automorphisms of $M$ is called the \emph{modular} \emph{one-parameter-group}: 
\[
\sigma_t^{\varphi}\colon  M \rightarrow M,\quad x \mapsto \nabla_{\varphi}^{it}x\nabla_{\varphi}^{-it},\qquad t\in \R. 
\]
It compensates for the non-traciality of arbitrary nsf weights: whereas an nsf weight $\varphi$ is tracial if and only if $\sigma_t^{\varphi} = \id$ for all $t\in \R$, a general nsf weight satisfies
\[
\varphi \circ \sigma_t^{\varphi}= \varphi,\qquad \varphi(xx^*) = \varphi(\sigma_{-i/2}^{\varphi}(x)^*\sigma_{-i/2}^{\varphi}(x)),\qquad \forall t\in \R,x\in \mfn_{\varphi}\cap \mfn_{\varphi}^*,
\]
where the notation indicates that $\mfn_{\varphi}^*\cap\mfn_{\varphi} \subseteq \msD(\sigma_{-i/2}^{\varphi})$. Here $x\in \msD(\sigma_{z}^{\varphi})$ for a $z\in \C$ if there exists a (necessarily unique) $\sigma$-weakly continuous function $w \rightarrow \sigma_w^{\varphi}(x)$ on the closed strip 
\[
\{w \mid \Imm(w) \textrm{ between }0\textrm{ and }\Imm(z)\}
\]
which is holomorphic on its interior (as a Banach-space valued function) and extends $\sigma_t^{\varphi}(x)$ for $t\in \R$. 

\begin{example}
Resume the setting of Example \ref{ExaTwistTraceBH}. Then if $\Tb$ is strictly positive and $\varphi = \Tr_{\Tb}$, one can identify 
\[
L^2(\Bc(\Hsp),\varphi)\cong \Hsp \otimes \overline{\Hsp}
\]
in a unique way such that 
\[
\Lambda(\theta_{\xi,\eta}) = \xi\otimes \overline{\Tb^{1/2}\eta},\qquad \xi\in \Hsp, \eta\in \msD(\Tb^{1/2}). 
\]
The modular conjugation and modular operator are given in this case by 
\[
J_{\varphi}(\xi\otimes \overline{\eta}) = \eta\otimes \overline{\xi},\qquad \nabla_{\varphi}^{it}(\xi\otimes \overline{\eta}) = \Tb^{it}\xi\otimes \overline{\Tb^{it}\eta},\qquad \xi,\eta\in \Hsp,
\]
so in particular the modular one-parameter group is 
\[
\sigma_t^{\varphi}(x) = \Tb^{it}x\Tb^{-it},\qquad x\in \Bc(\Hsp),t\in \R. 
\]
\end{example}

We also briefly comment on how affiliated positive operators (as in \eqref{EqAffiliation}) can be seen as being  \emph{dual} to the set of weights. Namely, define the \emph{extended positive cone} $M_+^{\mathrm{ext}}$ of a von Neumann algebra $M$ as the set of all
\[
x\colon  M_*^+ \rightarrow [0,\infty],\qquad \omega \mapsto \omega(x)
\]
such that there exists an increasing net $x_i \in M_+$ with 
\[
\omega(x) = \sup_{i}\omega(x_i),\qquad \omega \in M_*^+.
\]
Necessarily one then has 
\[
(\omega+\omega')(x) = \omega(x)+\omega'(x),\qquad 0(x) = 0,\qquad  (a\omega)(x) = a\omega(x),\qquad a\in \R_{>0}, \omega\in M_+^*,
\]
and $x$ satisfies the normality condition 
\[
\omega(x) = \lim_{\alpha} \omega_{\alpha}(x)
\]
whenever $\omega_{\alpha}$ is an increasing net in $M_*^+$ converging in norm to $\omega$. One then calls $x\in M^{\ext}_+$ \emph{semi-finite} if 
\[
M_*^+ = \textrm{closure of }\{\omega \in M_*^+ \mid \omega(x)<\infty\},
\]
and \emph{faithful} if $\omega(x) = 0$ implies $\omega = 0$ for $\omega \in M_*^+$. 

Then if $M \subseteq \Bc(\Hc)$, there is a one-to-one correspondence between the (faithful) semi-finite $x \in M^{\ext}_+$ and the (strictly) positive operators $\mathbf{x}$ on $\Hsp$ which are affiliated to $M$, determined through 
\begin{equation}\label{EqEqualFormPos}
\omega_{\xi,\xi}(x) = \begin{cases} \|\mathbf{x}^{1/2}\xi\|^2&\text{ if } \xi \in \msD(\mathbf{x}^{1/2}),\\
\infty &\text{ if } \xi \notin\msD(\mathbf{x}^{1/2}).
\end{cases}
\end{equation}
The abstract notion of the extended cone makes it easy to directly evaluate weights on its elements: for a general von Neumann algebra, we have an $\R_{>0}$-bilinear pairing 
\[
\mathfrak{W}_M^{\nsf} \times M_+^{\ext} \rightarrow [0,\infty],\qquad \varphi(x) = \sup_{\omega,y} \omega(y),
\]
where $\omega$ ranges over $\mathcal{G}_{\varphi}$ and $y$ ranges over 
\[
\mathcal{G}_x = \{y \in M_+ \mid y\leq x\},
\]
where for $y \in M_+$ we write $y \leq x$ if this holds pointwise on $M_*^+$.

\subsection{Form sums}\label{SecPropSkewComm}

If $\Hsp$ is a Hilbert space and $\Tb,\Sb$ are positive operators on $\Hsp$, then 
\[
\Tr_{\Tb}+ \Tr_{\Sb}
\]
is again a normal weight on $\Bc(\Hsp)$, which is faithful if $\Tb,\Sb$ are strictly positive. By the result in Example \ref{ExaTwistTraceBH}, we can make the following definition:  
\begin{defn}\label{DefFormSum}
If $\Tb,\Sb$ are strictly positive operators on a Hilbert space $\Hsp$ such that $\Tr_{\Tb}+ \Tr_{\Sb}$ is semi-finite, their \emph{form sum}\footnote{The notation $\Tb\dotplus \Sb$ is more common, but we prefer to use the latter notation for the closure of the operator $\Tb+\Sb$ with domain $\msD(\Tb)\cap \msD(\Sb)$.} $\Tb\boxplus \Sb$ is the unique strictly positive operator such that 
\[
\Tr_{\Tb\boxplus \Sb} = \Tr_{\Tb}+ \Tr_{\Sb}.
\]
\end{defn}
We have that $\Tr_{\Tb}+ \Tr_{\Sb}$ is semi-finite if and only if $\msD(\Tb)\cap \msD(\Sb)$ is dense in $\Hsp$.

Consider $\Xb,\Pb$ as in \eqref{EqInfGenPosMom}, as well as the rescaled operator
\begin{equation}\label{EqRescMomOp}
\Pb_{\hbar} = 4\pi^2\hbar \Pb.
\end{equation}
By the Stone-von Neumann theorem, Proposition \ref{PropSkewComm} will follow once we prove the following: 
\begin{prop}
The following identity of positive operators on $L^2(\R)$ holds:
\begin{equation}\label{EqToProve}
e^{-\Pb_{\hbar}} \boxplus e^{-\Pb_{\hbar}\dotplus \Xb} = \varphi_{\hbar}(\Xb) e^{-\Pb_{\hbar}} \varphi_{\hbar}(\Xb)^*.
\end{equation}
\end{prop}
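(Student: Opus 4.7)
The plan. I would prove \eqref{EqToProve} as an equality of strictly positive self-adjoint operators via a quadratic-form computation on a dense common core, following the strategy of \cite{Wor00} and \cite[Appendix A]{Rui05}. First, set $\Ab=e^{\Xb}$ and $\Bb=e^{-\Pb_\hbar}$ on $L^2(\R)$. The canonical commutation relation $[\Xb,\Pb_\hbar]=2\pi i\hbar\,I$ and Baker--Campbell--Hausdorff give $(\Ab\star\Bb)^{it}=e^{it(\Xb-\Pb_\hbar)}$, so $\Ab\star\Bb=e^{-\Pb_\hbar\dotplus\Xb}$. Since $|\varphi_\hbar|\equiv 1$ on $\R$ by Lemma \ref{LemOtherDescW}, $\varphi_\hbar(\Xb)$ is unitary and the right-hand side of \eqref{EqToProve} is strictly positive with square root $\varphi_\hbar(\Xb)\Bb^{1/2}\varphi_\hbar(\Xb)^*$. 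By Definition \ref{DefFormSum}, \eqref{EqToProve} is then equivalent to matching form domains $\msD(\Bb^{1/2})\cap\msD((\Ab\star\Bb)^{1/2})=\varphi_\hbar(\Xb)\,\msD(\Bb^{1/2})$ together with the quadratic-form identity
\[
\|\Bb^{1/2}\varphi_\hbar(\Xb)^*\xi\|^2 \;=\; \|\Bb^{1/2}\xi\|^2 + \|(\Ab\star\Bb)^{1/2}\xi\|^2, \qquad \xi\in V,
\]
where $V$ is a dense common core. I would take $V$ to be the span of Gaussian-weighted polynomial vectors from Proposition \ref{PropDenseSubspCor}, whose elements extend to entire functions with Gaussian decay in any horizontal strip. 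On such $\xi$, $\Bb^{1/2}=e^{-\Pb_\hbar/2}$ acts as the complex translation $z\mapsto z+\pi i\hbar$, and $(\Ab\star\Bb)^{1/2}=e^{\pi i\hbar/4}e^{\Xb/2}e^{-\Pb_\hbar/2}$ as $\xi(x)\mapsto e^{\pi i\hbar/4}e^{x/2}\xi(x+\pi i\hbar)$.

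Writing $V_{1/\hbar}=\overline{\varphi_\hbar}$ for the meromorphic extension to $\C$ (holomorphic in $|\mathrm{Im}\, z|<\pi(1+\hbar)$ by Proposition \ref{PropFundPropert}.\eqref{Prop1}), a contour shift from $\R$ to $\R+\pi i\hbar$ reduces the identity above to the pointwise statement $|V_{1/\hbar}(x+\pi i\hbar)|^2=1+e^{x}$ on $\R$. This is a direct consequence of two inputs: the Schwarz-reflection relation $V_{1/\hbar}(\bar z)\overline{V_{1/\hbar}(z)}=1$ (from unimodularity on $\R$), and the functional equation $V_{1/\hbar}(z+2\pi i\hbar)=(1+e^{\pi i\hbar}e^z)V_{1/\hbar}(z)$ of Proposition \ref{PropFundPropert}.\eqref{Prop2} specialized to $\theta=1/\hbar$. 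Indeed, setting $z=x-\pi i\hbar$ in the latter gives $V_{1/\hbar}(x+\pi i\hbar)/V_{1/\hbar}(x-\pi i\hbar)=1+e^{x}$, and Schwarz reflection converts this ratio into $|V_{1/\hbar}(x+\pi i\hbar)|^2$. The same contour analysis identifies the form domains, and uniqueness of the positive operator determined by a closed quadratic form then yields \eqref{EqToProve}.

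The main obstacle lies in rigorously justifying the contour shift from $\R$ to $\R+\pi i\hbar$. By Proposition \ref{PropFundPropert}.\eqref{Prop5}, $V_{1/\hbar}(x+iy)$ carries a Gaussian factor $e^{-ix^2/(4\pi\hbar)}$ as $x\to+\infty$ uniformly for $y$ in a compact set, so the contour shift is delicate and must be handled by combining the Gaussian decay of $\xi\in V$ with a Phragm\'en--Lindel\"of argument in the singularity-free strip $0\le\mathrm{Im}\,z\le\pi\hbar$. Once these analytic estimates are in hand, the algebraic pointwise identity of the previous paragraph follows purely formally from the functional equation and unimodularity of the quantum dilogarithm, and the equality of self-adjoint operators in \eqref{EqToProve} follows.
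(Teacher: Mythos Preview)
Your approach is essentially the paper's: both reduce \eqref{EqToProve} to the form-domain equality $\msD(e^{-\Pb_\hbar/2})\cap\msD(e^{-(\Pb_\hbar\dotplus\Xb)/2})=\varphi_\hbar(\Xb)\,\msD(e^{-\Pb_\hbar/2})$ together with the quadratic-form identity, derive the pointwise relation $|\varphi_\hbar(x+\pi i\hbar)|^{-2}=1+e^x$ from the functional equation \eqref{EqFunctEq2} and unimodularity on $\R$, and handle the strip analysis via a Phragm\'en--Lindel\"of argument. One wrinkle: your initial restriction to the Gaussian core $V$ does not quite close the argument, since computing $\Bb^{1/2}\varphi_\hbar(\Xb)^*\xi$ already takes you outside $V$, and inferring equality of the closed forms from their agreement on $V$ would still require knowing that $V$ is a \emph{form core} for the form sum. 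The paper avoids this by working directly with the Hardy-space description of $\msD(e^{-\Pb_\hbar/2})$ (Lemma \ref{LemSch}) and establishing the form identity on the full domain, which is precisely what your ``contour shift'' is reaching for; once you make that replacement, the two proofs coincide.
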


We will need some preparations.

For $r>0$ we consider the strips 
\begin{equation}\label{NotStrips}
S_{r} = \{z\in \C\mid 0< \Imm(z) <r\},\qquad S_{-r} = \{z\in \C\mid -r <\Imm(z) < 0\},  
\end{equation}
\begin{equation}
T_{r} = \{z\in \C\mid 0\leq \Imm(z) \leq r\},\qquad T_{-r} = \{z\in \C\mid -r \leq \Imm(z) \leq 0\}. 
\end{equation}

For a general Hilbert space $\Hsp$ with self-adjoint operator 
\[
\Tb\colon  \msD(\Tb) \rightarrow \Hsp,\qquad \msD(\Tb) \subseteq \Hsp, 
\]
and any $r\in \R^{\times}$, we define 
\[
\Hol(T_r,\Hsp) =  \{F\colon  T_r \rightarrow \Hsp \mid F\textrm{ continuous on $ T_r $ and holomorphic on }S_r\}
\] 
and
\[
H_r(\Tb) = \{F \in \Hol(T_r,\Hsp)\mid \forall z\in T_r, t\in \R: F(z+t) = e^{it\Tb}F(z)\}.
\]
The following is \cite[Lemma VI.2.3]{Tak03}.
\begin{lemma}
The map
\begin{equation}\label{EqMapEvZero}
H_r(\Tb) \rightarrow \msD(e^{-r\Tb}),\qquad F \mapsto F(0)
\end{equation}
is a well-defined linear isomorphism, with
\[
e^{-r\Tb}(F(0))= F(ir),\qquad F \in H_r(\Tb).
\]
\end{lemma}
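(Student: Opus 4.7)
The plan is to exhibit the inverse of \eqref{EqMapEvZero} directly via functional calculus, and then verify the remaining properties.

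First, for surjectivity together with the formula, I would start with $\xi\in\msD(e^{-r\Tb})$ and define, via the spectral resolution $\Tb = \int s\,dE_{\Tb}(s)$, the vector-valued function
\[
F_\xi(z) := \int_{\R} e^{izs}\,dE_{\Tb}(s)\xi,\qquad z\in T_r.
\]
Since $\xi\in\msD(e^{-r\Tb})$ means $\int_{\R}(1+e^{-2rs})\,d\|E_{\Tb}(s)\xi\|^2<\infty$, and for $z = t + iy$ with $0\le y\le r$ we have $|e^{izs}|^2 = e^{-2ys}\le 1 + e^{-2rs}$, the integral converges in $\Hsp$ for every $z\in T_r$. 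Continuity on $T_r$ follows from the dominated convergence theorem, and holomorphy on $S_r$ follows from Morera plus Fubini applied to any weak pairing $\langle \eta,F_\xi(z)\rangle$. The translation property $F_\xi(z+t) = e^{it\Tb}F_\xi(z)$ is immediate from $e^{i(z+t)s} = e^{its}e^{izs}$, so $F_\xi\in H_r(\Tb)$, with $F_\xi(0)=\xi$ and $F_\xi(ir) = \int e^{-rs}\,dE_{\Tb}(s)\xi = e^{-r\Tb}\xi$.

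For injectivity, suppose $F\in H_r(\Tb)$ with $F(0)=0$. Then the translation property gives $F(t)=e^{it\Tb}F(0)=0$ for all $t\in\R$. Fix $\eta\in\Hsp$; the scalar function $g_\eta(z)=\langle\eta,F(z)\rangle$ is continuous on $T_r$, holomorphic on $S_r$, and vanishes on $\R$. Schwarz reflection then extends $g_\eta$ holomorphically to a strip around $\R$ where it vanishes identically, so $g_\eta\equiv 0$ on $T_r$. Since $\eta$ was arbitrary, $F\equiv 0$.

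For the remaining content (the forward direction of well-definedness, that is, $F(0)\in\msD(e^{-r\Tb})$ for any $F\in H_r(\Tb)$, and the formula $F(ir)=e^{-r\Tb}F(0)$), I would argue as follows. Given $F\in H_r(\Tb)$, set $\xi=F(0)$. For every $\eta\in\Hsp$, the boundary function $t\mapsto\langle\eta,F(t)\rangle=\langle\eta,e^{it\Tb}\xi\rangle$ is the Fourier transform of the complex spectral measure $d\mu_{\eta,\xi}(s):=d\langle\eta,E_{\Tb}(s)\xi\rangle$. Meanwhile $g_\eta(z)=\langle\eta,F(z)\rangle$ is a bounded (on compacta) holomorphic extension to $S_r$. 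Apply a cut-off $\eta_n:=\chi_{[-n,n]}(\Tb)\eta$: for these vectors $\mu_{\eta_n,\xi}$ is compactly supported, so the Fourier-Laplace integral $\int e^{izs}\,d\mu_{\eta_n,\xi}(s)$ is an entire function agreeing with $g_{\eta_n}$ on $\R$, hence on $S_r$ by uniqueness of analytic continuation. Evaluating at $z=ir$ and taking $n\to\infty$, together with the uniform bound $\sup_{n}|g_{\eta_n}(ir)|\le\|F(ir)\|\,\|\eta\|$ (which comes from $g_{\eta_n}(ir)=\langle\chi_{[-n,n]}(\Tb)\eta,F(ir)\rangle$), forces $\int e^{-rs}\,d|\mu_{\eta,\xi}|(s)<\infty$ with $\int e^{-rs}\,d\mu_{\eta,\xi}(s)=\langle\eta,F(ir)\rangle$. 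As $\eta$ ranges over $\Hsp$ this is precisely the statement that $\xi\in\msD(e^{-r\Tb})$ and $e^{-r\Tb}\xi=F(ir)$.

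The main obstacle is the last paragraph: extracting, from only boundary/continuity data on the strip and the $e^{it\Tb}$-equivariance on $\R$, the spectral-measure integrability that places $F(0)$ in $\msD(e^{-r\Tb})$. Everything else is routine functional calculus and complex analysis; this step is essentially a Paley--Wiener statement for the spectral measure and has to be handled carefully using spectral cut-offs rather than naive dominated convergence.
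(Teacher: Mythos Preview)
The paper does not prove this lemma; it simply cites \cite[Lemma VI.2.3]{Tak03}. Your direct argument via spectral calculus and spectral cut-offs is the natural one and is essentially correct, and you are right that the only nontrivial direction is showing $F(0)\in\msD(e^{-r\Tb})$.

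There is one imprecision in that step. The uniform bound $|g_{\eta_n}(ir)|\le\|F(ir)\|\|\eta\|$ does not, for a \emph{complex} spectral measure $\mu_{\eta,\xi}$, force $\int e^{-rs}\,d|\mu_{\eta,\xi}|(s)<\infty$; bounded partial integrals of signed measures need not converge absolutely. The cleanest fix is to notice that the identity you already established,
\[
\langle \chi_{[-n,n]}(\Tb)\eta,\,F(ir)\rangle \;=\; \int_{[-n,n]} e^{-rs}\,d\mu_{\eta,\xi}(s),
\]
valid for all $\eta\in\Hsp$, says exactly that $e^{-r\Tb}\chi_{[-n,n]}(\Tb)\xi = \chi_{[-n,n]}(\Tb)F(ir)$. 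The right-hand side converges in norm to $F(ir)$ while $\chi_{[-n,n]}(\Tb)\xi\to\xi$, so closedness of $e^{-r\Tb}$ gives $\xi\in\msD(e^{-r\Tb})$ and $e^{-r\Tb}\xi=F(ir)$ at once, with no detour through total variation.
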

We will write the inverse of \eqref{EqMapEvZero} as
\begin{equation}\label{EqIdDom}
\msD(e^{-r\Tb}) \rightarrow  H_r(\Tb),\qquad \xi \mapsto F_{\xi},
\end{equation}
so that
\[
e^{-r\Tb}\xi = F_{\xi}(ir),\qquad \xi\in \msD(e^{-r\Tb}).
\]
Moreover, then clearly $\msD(e^{-r\Tb}) \subseteq\msD(e^{z\Tb})$ for all $z\in T_r$, with 
\begin{equation}\label{EqRestF}
e^{iz\Tb}\xi = F_{\xi}(z),\qquad \xi \in \msD(e^{-r\Tb}).
\end{equation}

We now return to the specific case of the $L^2(\R)$, and gather some extra information on the domain of $e^{-\Pb_{\hbar}/2}$ with $\Pb_{\hbar}$ as in \eqref{EqRescMomOp}. 

For $r\in \R^{\times}$, write 
\[
\Hol(S_{r}) = \{F\colon  S_{r} \rightarrow \C\mid F \textrm{ holomorphic}\},
\]
and 
\[
H(S_{r})= \{f \in \Hol(S_{r}) \mid \sup_{0<b<1} \int_{\R} |f(x+ir b)|^2\rd x <\infty\} \subseteq \Hol(S_{r}) .
\]
\begin{lemma}\label{LemSch}
If $0<b$ and $f \in H(S_{\pi b\hbar})$, there exists $\xi \in \msD(e^{-b\Pb_{\hbar}/2})$ such that 
\begin{equation}\label{EqLimLow}\xi(x) \overset{\textrm{a.e.}}{=} \lim_{n\rightarrow \infty} f(x+i/n^2),
\end{equation}
and we obtain in this way a linear isomorphism 
\begin{equation}\label{EqIdSchH}
H(S_{\pi b\hbar}) \cong \msD(e^{-b\Pb_{\hbar}/2}),\quad f \mapsto f_{\xi}.
\end{equation}
Moreover, in this case we have 
\begin{equation}\label{EqLimSup}
(e^{-b\Pb_{\hbar}/2}\xi)(x) \overset{\textrm{a.e.}}{=} \lim_{n\rightarrow \infty} f(x+ \pi i b\hbar - i/n^2) 
\end{equation}
and, for all $z\in H(S_{\pi b\hbar})$, 
\begin{equation}\label{EqRelfF}
(e^{-z\Pb_{\hbar}/2}\xi)(x) \overset{\textrm{a.e.}}{=} f(z+x).
\end{equation}
\end{lemma}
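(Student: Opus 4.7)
The strategy is to diagonalize $\Pb_{\hbar}=4\pi^{2}\hbar\Pb$ via the Fourier transform $\msF$ on $L^{2}(\R)$, under which $\Pb_{\hbar}$ becomes multiplication by $4\pi^{2}\hbar p$. By functional calculus this gives the spectral description
\[
\xi\in\msD(e^{-b\Pb_{\hbar}/2})\iff\hat{\xi}(p)\,e^{-2\pi^{2}\hbar bp}\in L^{2}(\R,dp),
\]
where $\hat\xi=\msF\xi$. I will treat the case $\hbar,b>0$ in detail; the remaining cases follow by complex conjugation and by applying the arguments below to the Pontryagin-dual side. This spectral picture is the one lens through which all four assertions (the bijection \eqref{EqIdSchH} and the three identities) are to be read off.

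For the forward direction of \eqref{EqIdSchH}, starting from such $\xi$ I would first verify that $p\mapsto\hat\xi(p)e^{-2\pi py}\in L^{1}(\R,dp)$ for every $y\in(0,\pi\hbar b)$. Splitting at $p=0$ and applying Cauchy--Schwarz to each half---using $\hat\xi\in L^{2}$ and $e^{-2\pi py}\in L^{2}([0,\infty))$ on $p\ge 0$, and the factorization $e^{-2\pi py}=e^{-2\pi^{2}\hbar bp}\cdot e^{2\pi p(\pi\hbar b-y)}$ combined with $\hat\xi(p)e^{-2\pi^{2}\hbar bp}\in L^{2}(\R_{\le 0})$ on $p\le 0$---legitimizes the pointwise definition
\[
f(w):=\int_{\R}\hat\xi(p)\,e^{2\pi ipw}\,dp,\qquad w\in S_{\pi\hbar b},
\]
and differentiation under the integral shows $f$ is holomorphic on $S_{\pi\hbar b}$. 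By Plancherel,
\[
\|f(\cdot+iy)\|_{2}^{2}=\int_{\R}|\hat\xi(p)|^{2}e^{-4\pi py}\,dp\le\|\xi\|_{2}^{2}+\|e^{-b\Pb_{\hbar}/2}\xi\|_{2}^{2}
\]
uniformly in $y\in(0,\pi\hbar b)$, so $f\in H(S_{\pi\hbar b})$.

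The limit identities then follow by spectral-side dominated convergence: as $y\to 0^{+}$ (resp.\ $y\to(\pi\hbar b)^{-}$), $\hat\xi(p)e^{-2\pi py}$ converges in $L^{2}(\R,dp)$ to $\hat\xi(p)$ (resp.\ to $e^{-2\pi^{2}\hbar bp}\hat\xi(p)=\msF[e^{-b\Pb_{\hbar}/2}\xi](p)$), whence by Plancherel $f(\cdot+iy)$ converges in $L^{2}(\R)$ to $\xi$ (resp.\ to $e^{-b\Pb_{\hbar}/2}\xi$); extracting an a.e.\ convergent subsequence along $y_{n}=1/n^{2}$ yields \eqref{EqLimLow} and \eqref{EqLimSup}. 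The identity \eqref{EqRelfF} is then immediate from the spectral formula $\msF[e^{-z\Pb_{\hbar}/2}\xi](p)=e^{-2\pi^{2}\hbar zp}\hat\xi(p)$ combined with the absolute convergence of the integral defining $f$, after unwinding the scaling between the complex variable of $f$ and the exponent $z$.

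The genuine obstacle is the inverse direction, i.e.\ surjectivity of \eqref{EqIdSchH}: given an arbitrary $f\in H(S_{\pi\hbar b})$ one must produce $\xi\in\msD(e^{-b\Pb_{\hbar}/2})$ such that $f$ is recovered from $\xi$ by the forward recipe above. I would handle this by a Paley--Wiener-type argument for $L^{2}$ classes holomorphic on a strip. The uniform $L^{2}$ control on horizontal slices, together with the Cauchy integral formula applied to rectangles exhausting $S_{\pi\hbar b}$, forces the existence of $L^{2}$ boundary traces $\xi_{0}:=\lim_{y\to 0^{+}}f(\cdot+iy)$ and $\xi_{1}:=\lim_{y\to(\pi\hbar b)^{-}}f(\cdot+iy)$, and Plancherel together with uniqueness of Fourier inversion along each horizontal line gives the relation $\hat\xi_{1}(p)=e^{-2\pi^{2}\hbar bp}\hat\xi_{0}(p)$. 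Setting $\xi:=\xi_{0}$ is then exactly the condition $\xi\in\msD(e^{-b\Pb_{\hbar}/2})$, and a direct comparison with the forward construction shows that the holomorphic extension built from this $\xi$ reproduces $f$. The most delicate technical point is the existence of these boundary traces, for which I would either reduce to classical $H^{2}$-theory on the upper half plane by a conformal map (the exponential $w\mapsto e^{w/(\hbar b)}$ sends $S_{\pi\hbar b}$ to the upper half plane), or invoke a standard reference such as \cite{RS81}.
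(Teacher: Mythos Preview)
Your approach is correct and is precisely what the paper does: the paper simply cites \cite[Lemma~1.1]{Sch94}, which is exactly the Paley--Wiener theorem for strips that you sketch via the Fourier transform, and then derives \eqref{EqRelfF} and injectivity of \eqref{EqIdSchH} by restricting $f$ to substrips $S_{\pi b'\hbar}$ with $0<b'<b$. The only loose end in your write-up is that $L^{2}$ convergence of $f(\cdot+iy)$ as $y\to 0^{+}$ yields a.e.\ convergence only along a \emph{subsequence}, so for the full-sequence pointwise limits in \eqref{EqLimLow} and \eqref{EqLimSup} you do need the $H^{2}$-Fatou boundary theory (via conformal transfer to the half-plane) that you allude to at the end.
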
 
\begin{proof}
This is just a reformulation of \cite[Lemma 1.1]{Sch94} (which is an application of the Paley--Wiener theorem). More precisely, \cite[Lemma 1.1]{Sch94} states \eqref{EqLimLow}, the surjectivity of \eqref{EqIdSchH}, and \eqref{EqLimSup}. The identity \eqref{EqRelfF} and the injectivity of \eqref{EqIdSchH}  then follow by applying this first part to $f$ restricted to $S_{\pi b'\hbar}$ with $0<b'<b$. 
\end{proof}

If $f \in H(S_{\pi\hbar})$ and $\xi\in \msD(e^{-\Pb_{\hbar}/2})$ are related through \eqref{EqLimLow}, we write 
\[
f = f_{\xi}.
\]
Similarly, if $f \in H(S_{\pi\hbar})$, we write $F\in H_{\pi\hbar}(2\pi \Pb)$ for the unique element related to $f$ via \eqref{EqRelfF}, so
\begin{equation}\label{EqRelfFNew}
F(z)(x) \overset{\textrm{a.e.}}{=} f(z+x).
\end{equation}
Then indeed $f_{\xi}$ and $F_{\xi}$ are related under this correspondence for $\xi\in \msD(e^{-\Pb_{\hbar}/2})$. We then also extend the definition domain of $f\in H(S_{\pi\hbar})$ to $T_{\pi\hbar}$ as
\[
f(x+ i\pi\hbar) = F(i\pi\hbar)(x),\qquad f(x) = F(0)(x),
\]
with the understanding that this is only well-defined almost everywhere.

\begin{lemma}\label{LemPL}
Let $\chi$ be a holomorphic function on a domain $\Omega \supseteq T_{\hbar\pi}$, and assume there exist $C,D>0$ with \begin{equation}\label{EqCondPL}
|\chi(x+iy)| \leq Ce^{D|x|},\qquad \forall x+iy \in T_{\pi \hbar}.
\end{equation}
Assume that $f \in H(S_{\pi\hbar})$, and define $g\in \Hol(S_{\pi\hbar})$ by 
\[
g(z) = \chi(z)f(z),\qquad z\in S_{\pi\hbar}.
\]
Then 
\begin{equation}\label{EqPL}
\sup_{0<b<1} \int_{\R} |g(x+ib)|^2\rd x = \max\left\{\int_{\R}|g(x)|^2\rd x, \int_{\R}|g(x+\pi i \hbar)|^2\rd x\right\}.
\end{equation}
\end{lemma}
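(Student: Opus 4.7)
The plan is to combine the Hadamard three-lines theorem with a Gaussian regularization that compensates for the exponential growth of $\chi$. Without loss of generality assume $\hbar > 0$ (the case $\hbar < 0$ is obtained by reflecting $z \mapsto \bar z$). Both sides of \eqref{EqPL} are simultaneously finite or infinite, so we may assume both boundary norms on the right are finite.

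For $\epsilon > 0$, set $g_\epsilon(z) = e^{-\epsilon z^2} g(z)$, so that on any horizontal line $\Imm(z) = b \in [0,\pi\hbar]$ one has $|g_\epsilon(x+ib)|^2 = e^{2\epsilon(b^2 - x^2)}|g(x+ib)|^2$. The factor $e^{-2\epsilon x^2}$ absorbs the bound $|\chi(x+ib)|^2 \leq C^2 e^{2D|x|}$ from \eqref{EqCondPL}, giving
\[
|g_\epsilon(x+ib)|^2 \leq e^{2\epsilon(\pi\hbar)^2}\,C^2 \sup_{x \in \R}e^{-2\epsilon x^2 + 2D|x|}\,|f(x+ib)|^2.
\]
Since $f \in H(S_{\pi\hbar})$ has $\sup_b \int_\R |f(x+ib)|^2\,\rd x < \infty$, this shows $g_\epsilon(\cdot + ib) \in L^2(\R)$ with norm bounded uniformly in $b \in [0,\pi\hbar]$. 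For each $\xi \in L^2(\R)$ the formula $\Psi_\epsilon(w) = \int_\R g_\epsilon(t+w)\overline{\xi(t)}\,\rd t$ then defines a holomorphic function on the open strip $\{0 < \Imm(w) < \pi\hbar\}$ (by differentiation under the integral, using that the integrand is holomorphic in $w$ and uniformly dominated on compact subsets of the strip) which extends continuously and boundedly to the closed strip.

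By translation invariance of the $L^2$-inner product together with Cauchy--Schwarz,
\[
|\Psi_\epsilon(u)| \leq \|g_\epsilon(\cdot)\|_{L^2} \|\xi\|_{L^2} \leq \|g(\cdot)\|_{L^2}\|\xi\|_{L^2}
\]
on the real axis (since $|g_\epsilon(x)| \leq |g(x)|$), while
\[
|\Psi_\epsilon(u + i\pi\hbar)| \leq \|g_\epsilon(\cdot + i\pi\hbar)\|_{L^2} \|\xi\|_{L^2} \leq e^{\epsilon(\pi\hbar)^2}\|g(\cdot + i\pi\hbar)\|_{L^2}\|\xi\|_{L^2}
\]
on the upper boundary. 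Hadamard's three-lines theorem applied to $\Psi_\epsilon$ on $T_{\pi\hbar}$ therefore yields
\[
|\Psi_\epsilon(ib)| \leq \|\xi\|_{L^2}\, e^{\epsilon(\pi\hbar)^2} \max\bigl\{\|g\|_{L^2},\, \|g(\cdot+i\pi\hbar)\|_{L^2}\bigr\}, \quad b \in (0,\pi\hbar).
\]
Taking the supremum over $\xi$ with $\|\xi\|_{L^2} \leq 1$ and recognizing $\Psi_\epsilon(ib) = \langle g_\epsilon(\cdot + ib),\xi\rangle$ gives the same bound for $\|g_\epsilon(\cdot + ib)\|_{L^2}$. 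Since $|g_\epsilon(x+ib)|^2 \to |g(x+ib)|^2$ pointwise as $\epsilon \to 0^+$, Fatou's lemma yields
\[
\int_\R |g(x+ib)|^2\,\rd x \leq \max\Bigl\{\int_\R|g(x)|^2\,\rd x,\, \int_\R |g(x+i\pi\hbar)|^2\,\rd x\Bigr\},
\]
establishing the ``$\leq$'' direction of \eqref{EqPL}.

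For the reverse inequality I would use that $f \in H(S_{\pi\hbar})$ has $L^2$-boundary values, so that $f(\cdot + ib) \to f(\cdot)$ almost everywhere as $b \to 0^+$ (and analogously at $b = \pi\hbar^-$) via the identification \eqref{EqIdSchH}. Continuity of $\chi$ on $T_{\pi\hbar}$ then transfers this to $g$, and another application of Fatou's lemma gives $\liminf_{b \to 0^+}\int_\R |g(x+ib)|^2\,\rd x \geq \int_\R |g(x)|^2\,\rd x$ and the analogous inequality at the top, completing the proof. The main obstacle is confirming that the regularization $g_\epsilon$ really places us inside a Hardy-type class where Hadamard's theorem is applicable; this is exactly where the specific exponential form of \eqref{EqCondPL} dictates the Gaussian damping $e^{-\epsilon z^2}$ rather than any slower regularizer.
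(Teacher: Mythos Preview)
Your argument is correct and follows the same skeleton as the paper's (pair against a test function, apply a maximum-modulus principle on the strip, take the supremum, and use Fatou for the reverse inequality), but the implementation differs in a way worth recording. The paper pairs $g$ against $\eta\in C_c(\R)$ and rewrites the resulting function as $\langle \overline{\chi}(z+\Xb)\eta,\,F(z)\rangle$, where $F\in H_{\pi\hbar}(2\pi\Pb)$ is the Hilbert-space-valued extension of $f$ from \eqref{EqIdDom}. Since $F$ is by definition continuous on the closed strip with $\|F(z)\|$ uniformly bounded, continuity up to the boundary is automatic and the exponential growth coming from $\chi$ is absorbed directly into the hypotheses of Phragmen--Lindel\"of; no regularization is needed. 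Your route instead damps by $e^{-\epsilon z^2}$ to force boundedness, applies Hadamard, and then removes the damping via Fatou. This is a perfectly valid and slightly more elementary substitute for Phragmen--Lindel\"of, but note that your assertion that $\Psi_\epsilon$ extends continuously to $\partial T_{\pi\hbar}$ still implicitly rests on the $L^2$-continuity of $b\mapsto f(\cdot+ib)$ on $[0,\pi\hbar]$, which is exactly what the $F$-representation in Lemma~\ref{LemSch} encodes. The paper's choice to work through $F$ makes that step transparent and eliminates the extra $\epsilon\to0$ limit.
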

\begin{proof}
For any $\eta\in C_c(\R)$ with $\|\eta\|_2\leq 1$ we obtain a holomorphic function 
\[
z \mapsto \int_{\R} g(z+t)\overline{\eta(t)}\rd t =  \langle \overline{\chi}(z+ \Xb)\eta,F(z)\rangle.
\]
Using \eqref{EqCondPL}, together with the uniform boundedness of $\|F(z)\|_2$, this function satisfies the conditions for the Phragmen-Lindel\"{o}f theorem, by which it then follows that 
\begin{multline*}
\sup_{0<b<1} \left|\int_{\R} g(t+\pi \hbar b) \overline{\eta(t)}\rd t \right|^2 \leq \sup_{z \in \R \cup (\R+\pi i \hbar)} |\langle \overline{\chi}(z+\Xb)\eta,F(z)\rangle|^2 \\ \leq \max\left\{\int_{\R}|g(x)|^2\rd x, \int_{\R}|g(x+\pi i \hbar)|^2\rd x\right\}.
\end{multline*}
Taking the supremum over all such $\eta$, we find 
\[
\sup_{0<b<1} \int_{\R} |g(t+\pi \hbar b)|^2\rd t  \leq \max\left\{\int_{\R}|g(x)|^2\rd x, \int_{\R}|g(x+\pi i \hbar)|^2\rd x\right\}.
\]

The converse direction follows from \eqref{EqLimLow},\eqref{EqLimSup} and Fatou's lemma.
\end{proof}

\begin{lemma}
We have 
\begin{equation}\label{EqCharDomAlt}
\msD(e^{-\Pb_{\hbar}/2\dotplus \Xb/2}) = e^{i\Xb^2/4\pi \hbar}\msD(e^{-\Pb_{\hbar}/2}),
\end{equation}
with 
\begin{equation}\label{EqIdent}
e^{-\Pb_{\hbar}/2\dotplus \Xb/2}= e^{i\Xb^2/4\pi \hbar} e^{-\Pb_{\hbar}/2} e^{-i\Xb^2/4\pi \hbar},
\end{equation}
\end{lemma}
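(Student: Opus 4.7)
The plan is to reduce both claims to a single operator-level identity, namely that
\[
-\Pb_{\hbar}/2 \dotplus \Xb/2 \;=\; e^{i\Xb^2/4\pi \hbar}\bigl(-\Pb_{\hbar}/2\bigr) e^{-i\Xb^2/4\pi \hbar}
\]
as self-adjoint operators on $L^2(\R)$, and then invoke functional calculus. Since $e^{\pm i\Xb^2/4\pi\hbar}$ are bounded unitaries, taking exponentials of this operator equality, and using that functional calculus commutes with unitary conjugation, yields \eqref{EqIdent}; the domain equality \eqref{EqCharDomAlt} is then the general fact that $\msD(u T u^*) = u\,\msD(T)$ for a unitary $u$ and a self-adjoint $T$.

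The first step is an infinitesimal computation on $\Sc(\R)$. Using $(\Pb f)(x) = (2\pi i)^{-1} f'(x)$, direct differentiation gives
\[
\bigl(e^{i\alpha \Xb^2}\,\Pb\, e^{-i\alpha \Xb^2}f\bigr)(x) \;=\; e^{i\alpha x^2}\cdot \frac{1}{2\pi i}\frac{d}{dx}\bigl(e^{-i\alpha x^2}f(x)\bigr) \;=\; (\Pb f)(x) - \frac{\alpha}{\pi}(\Xb f)(x),
\]
so $e^{i\alpha \Xb^2}\Pb\, e^{-i\alpha \Xb^2} = \Pb - \tfrac{\alpha}{\pi}\Xb$ on $\Sc(\R)$. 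Multiplying by $4\pi^2\hbar$ and specializing to $\alpha = 1/(4\pi\hbar)$, recalling $\Pb_{\hbar} = 4\pi^2\hbar\, \Pb$, we obtain
\[
e^{i\Xb^2/4\pi\hbar}\bigl(-\Pb_{\hbar}/2\bigr) e^{-i\Xb^2/4\pi\hbar} f \;=\; \bigl(-\Pb_{\hbar}/2 + \Xb/2\bigr) f, \qquad f\in \Sc(\R).
\]

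Next I would promote this pointwise equality to the operator-level identity. The multiplier $e^{\pm i \Xb^2/4\pi\hbar}$ preserves $\Sc(\R)$ (its derivatives produce polynomial prefactors times a unit-modulus factor), and $\Sc(\R)$ is a core for the self-adjoint operator $\Pb_{\hbar}$; hence the Gaussian conjugate $e^{i\Xb^2/4\pi\hbar}(-\Pb_{\hbar}/2)e^{-i\Xb^2/4\pi\hbar}$ is self-adjoint with $\Sc(\R)$ as a core. By the previous step, this self-adjoint closure coincides with the closure of the symmetric operator $-\Pb_{\hbar}/2 + \Xb/2$ restricted to $\Sc(\R)$; this is the same essential self-adjointness argument underlying \eqref{EqTransformSum}. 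Since $\Sc(\R) \subseteq \msD(\Pb_{\hbar})\cap \msD(\Xb)$, the operator $-\Pb_{\hbar}/2 \dotplus \Xb/2$, defined as the closure of $-\Pb_{\hbar}/2 + \Xb/2$ on $\msD(\Pb_{\hbar})\cap\msD(\Xb)$, is a symmetric extension of this essentially self-adjoint operator. Uniqueness of self-adjoint extensions forces the two closures to coincide.

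The only step requiring real care is the essential self-adjointness of $-\Pb_{\hbar}/2 + \Xb/2$ on $\Sc(\R)$, but this is supplied automatically by the unitary equivalence with $-\Pb_{\hbar}/2$ and its standard core. With the operator-level identity established, the Borel functional calculus applied to the function $\lambda \mapsto e^{-\lambda}$ yields \eqref{EqIdent} together with the domain equality \eqref{EqCharDomAlt}, completing the proof.
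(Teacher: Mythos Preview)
Your proof is correct and follows exactly the route the paper indicates: the paper's one-line proof simply cites the Gaussian conjugation identity \eqref{EqTransformSum} and says \eqref{EqCharDomAlt} is immediate, while you have carefully spelled out the details (core argument on $\Sc(\R)$, essential self-adjointness via unitary equivalence, then functional calculus). There is no substantive difference in approach.
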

\begin{proof}
The identity \eqref{EqIdent} follows from \eqref{EqTransformSum}, and  \eqref{EqCharDomAlt} is then an immediate consequence.
\end{proof}

\begin{lemma}\label{LemDomainInt}
We have $\xi\in \msD(e^{-\Pb_{\hbar}/2}) \cap \msD(e^{-\Pb_{\hbar}/2\dotplus \Xb/2})$ if and only if there exists $f \in H(S_{\pi\hbar})$ with $f_{\mid \R} = \xi$ and 
\begin{equation}\label{EqFundEst}
\int_{\R} (1+e^{x}) |f(x+\pi i \hbar)|^2\rd x<\infty.
\end{equation}
\end{lemma}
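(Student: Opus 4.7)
The plan is to combine Lemma \ref{LemSch}, the identity \eqref{EqCharDomAlt}, and Lemma \ref{LemPL}. By Lemma \ref{LemSch}, membership $\xi \in \msD(e^{-\Pb_\hbar/2})$ is equivalent to the existence of a (unique) $f \in H(S_{\pi\hbar})$ with $f_{\mid \R} = \xi$ a.e., and in that case $\int_\R |f(x+i\pi\hbar)|^2\rd x < \infty$ holds automatically. On the other hand, \eqref{EqCharDomAlt} rewrites the second membership as $e^{-i\Xb^2/4\pi\hbar}\xi \in \msD(e^{-\Pb_\hbar/2})$. So what needs to be shown is that, for $\xi = f_{\mid \R}$ with $f\in H(S_{\pi\hbar})$, the twisted vector $e^{-i\Xb^2/4\pi\hbar}\xi$ lies in $\msD(e^{-\Pb_\hbar/2})$ if and only if \eqref{EqFundEst} holds.

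The natural candidate for the $H(S_{\pi\hbar})$-lift of $e^{-i\Xb^2/4\pi\hbar}\xi$ is
\[
g(z) \;:=\; e^{-iz^2/4\pi\hbar}\,f(z), \qquad z\in S_{\pi\hbar},
\]
which is holomorphic on $S_{\pi\hbar}$ with boundary value $g_{\mid\R} = e^{-i\Xb^2/4\pi\hbar}\xi$. A second application of Lemma \ref{LemSch} reduces the problem to verifying that $g\in H(S_{\pi\hbar})$ is equivalent to \eqref{EqFundEst}. The function $\chi(z) := e^{-iz^2/4\pi\hbar}$ is entire, and $|\chi(x+iy)| = e^{xy/(2\pi\hbar)}$, so on $T_{\pi\hbar}$ (parametrising $y = \pi\hbar b$ with $b\in[0,1]$) it grows at most like $e^{|x|/2}$ independently of the sign of $\hbar$. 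Hence the growth hypothesis of Lemma \ref{LemPL} is met, and that lemma tells us that $g\in H(S_{\pi\hbar})$ is equivalent to both $\int_\R|g(x)|^2\rd x$ and $\int_\R|g(x+i\pi\hbar)|^2\rd x$ being finite.

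A direct computation yields $|g(x)|^2 = |f(x)|^2 = |\xi(x)|^2$, which is integrable; and using $-i(x+i\pi\hbar)^2/(4\pi\hbar) = -ix^2/(4\pi\hbar) + x/2 + i\pi\hbar/4$, one finds $|g(x+i\pi\hbar)|^2 = e^x |f(x+i\pi\hbar)|^2$. Combining this with the automatic bound $\int_\R |f(x+i\pi\hbar)|^2\rd x<\infty$ coming from $f\in H(S_{\pi\hbar})$, we conclude that $g\in H(S_{\pi\hbar})$ if and only if $\int_\R(1+e^x)|f(x+i\pi\hbar)|^2\rd x<\infty$, which is exactly \eqref{EqFundEst}.

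The proof is essentially bookkeeping on top of the Paley--Wiener-style statements already established, so no real obstacle is expected; the only point meriting care is the verification that $\chi$ meets the growth hypothesis of Lemma \ref{LemPL} uniformly in the sign of $\hbar$, so that one treats the cases $\hbar>0$ and $\hbar<0$ by a single argument.
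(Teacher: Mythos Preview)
Your approach is essentially the same as the paper's: combine Lemma~\ref{LemSch} with \eqref{EqCharDomAlt} to reduce to the candidate $g(z)=e^{-iz^2/4\pi\hbar}f(z)$, then invoke Lemma~\ref{LemPL} and the identity $|g(x+i\pi\hbar)|^2=e^x|f(x+i\pi\hbar)|^2$. One step deserves a little more care. In the forward implication you write that ``a second application of Lemma~\ref{LemSch} reduces the problem to verifying that $g\in H(S_{\pi\hbar})$''; but Lemma~\ref{LemSch} only tells you that \emph{some} $h\in H(S_{\pi\hbar})$ extends $e^{-i\Xb^2/4\pi\hbar}\xi$, and you still need $h=g$ before you can read off \eqref{EqFundEst} from its top boundary values. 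The paper fills exactly this gap by pairing both $h(\,\cdot\,+z)$ and $g(\,\cdot\,+z)$ against $\eta\in C_c(\R)$, obtaining two functions holomorphic on $S_{\pi\hbar}$ and continuous up to $\R$ with the same boundary values, and concluding $h=g$ by the identity principle. Once this is noted, your argument is complete and matches the paper's.
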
 
\begin{proof}
Assume first that $\xi \in \msD(e^{-\Pb_{\hbar}/2}) \cap \msD(e^{-\Pb_{\hbar}/2\dotplus \Xb/2})$. By Lemma \ref{LemSch} and equation \eqref{EqCharDomAlt}, we can then find $f,g\in H(S_{\pi\hbar})$ with associated $F,G \in H_{\pi\hbar}(2\pi \Pb)$ such that 
\[
F(0) = \xi,\qquad G(0) = e^{-i\Xb^2/4\pi\hbar} \xi.
\]
We claim that then 
\begin{equation}\label{EqEqualfg} 
g(z) = e^{-iz^2/4\pi \hbar}f(z),\qquad \forall z\in S_{\pi\hbar}. 
\end{equation}
Indeed, pick $\eta \in C_c(\R)$. Then we obtain the holomorphic function 
\[
S_{\pi\hbar}\ni z \mapsto \int_{\R} g(z+t)\overline{\eta}(t) \rd t  = \langle \eta,G(z)\rangle.
\]
On the other hand, also 
\[
S_{\pi\hbar}\ni z \mapsto \int_{\R} f(z+t)e^{-i(z+t)^2/4\pi \hbar}\overline{\eta}(t) \rd t  = \langle e^{i(\overline{z}+\Xb)^2/4\pi\hbar}\eta,F(z)\rangle
\]
is holomorphic. Since these functions extend continuously to the same function on $\R$, they must be equal. This immediately implies \eqref{EqEqualfg}.

Since now $g \in H(S_{\pi\hbar})$ and $\chi(z) = e^{-iz^2/4\pi\hbar}$ satisfies the conditions in Lemma \ref{LemPL}, we can conclude from equation \eqref{EqPL} that \eqref{EqFundEst} holds. 

Conversely, if $\xi \in \msD(e^{-\Pb_{\hbar}/2})$ with associated function $f = f_{\xi}$ that satisfies \eqref{EqFundEst}, define $g\in \Hol(S_{\pi\hbar})$ via \eqref{EqEqualfg}. Applying again Lemma \ref{LemPL},  we find $g\in H(S_{\pi \hbar})$. But through the identity \eqref{EqLimLow}, this implies precisely 
\[
\xi \in e^{i\Xb^2/4\pi\hbar}\msD(e^{-\Pb_{\hbar}/2}) = \msD(e^{-\Pb_{\hbar}/2\dotplus \Xb/2})
\]
as required.
\end{proof}

For the next lemma, we recall that $\varphi_{\hbar}$ is unimodular on the real line, so that 
\begin{equation}\label{EqProdAdjo}
\overline{\varphi_{\hbar}(\overline{z})}\varphi_{\hbar}(z) = 1,\qquad z\in \C. 
\end{equation}
On the other hand, by the identity \eqref{EqFunctEq2} we have
\begin{equation}\label{EqFunctEq2n}
\varphi_{\hbar}(z-\pi i\hbar) = (1+e^z)\varphi_{\hbar}(z+\pi i \hbar),\qquad z\in \C,
\end{equation}
so combined with \eqref{EqProdAdjo} we find
\begin{equation}\label{EqValueShift}
|\varphi_{\hbar}(x+\pi i \hbar)| = (1+e^x)^{-1/2},\qquad x\in \R.
\end{equation}

\begin{lemma}\label{LemEqIdDomains}
We have 
\begin{equation}\label{EqIdDomains}
\msD(e^{-\Pb_{\hbar}/2}) \cap \msD(e^{-\Pb_{\hbar}/2\dotplus \Xb/2}) =   \varphi_{\hbar}(\Xb)\msD(e^{-\Pb_{\hbar}/2}).
\end{equation}
\end{lemma}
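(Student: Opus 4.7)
The plan is to translate both sides of \eqref{EqIdDomains} into conditions on holomorphic functions on the strip $S_{\pi\hbar}$ and then realize the identification between them as multiplication by $\varphi_\hbar$. By Lemma~\ref{LemSch} (applied with $b=1$), $\msD(e^{-\Pb_\hbar/2})$ is identified with $H(S_{\pi\hbar})$ via boundary values, so the right hand side of \eqref{EqIdDomains} is the image of $H(S_{\pi\hbar})$ under the unitary $\varphi_\hbar(\Xb)$. By Lemma~\ref{LemDomainInt}, the left hand side consists exactly of the boundary values of those $f \in H(S_{\pi\hbar})$ additionally satisfying the weighted integrability condition \eqref{EqFundEst}. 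The goal is therefore to show that $f \mapsto g := f/\varphi_\hbar$ is a bijection from this constrained subspace onto $H(S_{\pi\hbar})$.

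First I would verify the requisite analytic properties of $\varphi_\hbar$. By Proposition~\ref{PropFundPropert}(1) and the relation \eqref{EqDefV} between $\varphi_\hbar$ and $V_\theta$, all zeros and poles of $\varphi_\hbar$ have imaginary part of strictly larger absolute value than $\pi|\hbar|$; in particular $\varphi_\hbar$ is holomorphic and zero-free on an open neighborhood of $T_{\pi\hbar}$. From the estimates in Proposition~\ref{PropFundPropert}(5) together with \eqref{EqIdentifSpecial}, the asymptotics of $\varphi_\hbar(x+iy)$ for $y\in[0,\pi\hbar]$ (the case $\hbar<0$ is symmetric) are: bounded as $x\to-\infty$, and $|\varphi_\hbar(x+iy)|\sim e^{-xy/(2\pi\hbar)}$ as $x\to+\infty$ --- the nominal Gaussian factor $e^{i\hbar z^2/4\pi}$ contributes only $e^{-\hbar xy/2\pi}$ in modulus because $y$ is bounded. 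Consequently $\varphi_\hbar$ is \emph{uniformly bounded} on $T_{\pi\hbar}$, while $1/\varphi_\hbar$ satisfies an exponential estimate $|1/\varphi_\hbar(x+iy)|\le C e^{D|x|}$ of the form required by Lemma~\ref{LemPL}.

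With these facts in hand, the bijection is straightforward. In the direction $\supseteq$, take $\eta\in\msD(e^{-\Pb_\hbar/2})$ with holomorphic extension $g\in H(S_{\pi\hbar})$ and set $f(z):=\varphi_\hbar(z)g(z)$; since $\varphi_\hbar$ is unimodular on $\R$, one has $\|f\|_{L^2(\R)} = \|g\|_{L^2(\R)}$, and by \eqref{EqValueShift},
\[
\int_\R (1+e^x)|f(x+\pi i\hbar)|^2\,\rd x = \int_\R |g(x+\pi i\hbar)|^2\,\rd x < \infty,
\]
so that the decay condition \eqref{EqFundEst} holds; boundedness of $\varphi_\hbar$ and Lemma~\ref{LemPL} (with $\chi=\varphi_\hbar$) give $f\in H(S_{\pi\hbar})$, and Lemma~\ref{LemDomainInt} then places $\xi=\varphi_\hbar(\Xb)\eta$ in the intersection. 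For the direction $\subseteq$, take $\xi$ in the intersection with extension $f\in H(S_{\pi\hbar})$ satisfying \eqref{EqFundEst}, and define $g(z):=f(z)/\varphi_\hbar(z)$ on $S_{\pi\hbar}$; the same two boundary identities show that $g$ has finite $L^2$-norms on $\R$ and on $\R+\pi i\hbar$, and applying Lemma~\ref{LemPL} now with $\chi=1/\varphi_\hbar$ (which is where the exponential estimate is used) yields $g\in H(S_{\pi\hbar})$, so $\eta:=\overline{\varphi_\hbar}(\Xb)\xi$ lies in $\msD(e^{-\Pb_\hbar/2})$ and $\xi=\varphi_\hbar(\Xb)\eta$.

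The main obstacle is the analytic input in the second paragraph: one must pin down that $1/\varphi_\hbar$ has only exponential growth on $T_{\pi\hbar}$, since a Gaussian bound would be incompatible with the Phragm\'en--Lindel\"of-type estimate packaged in Lemma~\ref{LemPL}. This is the one place where the specific shape of the quantum dilogarithm --- in particular the cancellation of the Gaussian factor in modulus on horizontal strips --- enters essentially; everything else is formal manipulation of boundary $L^2$-norms via the functional equation \eqref{EqValueShift}.
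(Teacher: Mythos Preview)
Your proposal is correct and follows essentially the same approach as the paper: characterize the intersection via Lemma~\ref{LemDomainInt} and the weighted condition~\eqref{EqFundEst}, rewrite the weight $(1+e^x)$ as $|\varphi_\hbar(x+\pi i\hbar)|^{-2}$ using~\eqref{EqValueShift}, and apply the Phragm\'en--Lindel\"of estimate of Lemma~\ref{LemPL} with $\chi=\varphi_\hbar$ (for $\supseteq$) and $\chi=1/\varphi_\hbar$ (for $\subseteq$). Your discussion of the analytic input (holomorphy/zero-freeness on $T_{\pi\hbar}$ and the exponential bound on $1/\varphi_\hbar$) is a bit more explicit than the paper's, which simply invokes~\eqref{EqEstima1}--\eqref{EqEstima2}, but the argument is otherwise identical.
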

\begin{proof}
Assume first that $\xi \in \msD(e^{-\Pb_{\hbar}/2}) \cap \msD(e^{-\Pb_{\hbar}/2\dotplus \Xb/2})$, and put $f = f_{\xi}$. Then by holomorphicity of $\varphi_{\hbar}^{-1}$ on the strip $S_{\pi \hbar}$ we get that 
\[
z\mapsto g(z):= f(z)/\varphi_{\hbar}(z)
\]
defines a function in $\Hol(S_{\pi\hbar})$. Now by the growth estimates in \eqref{EqEstima1} and \eqref{EqEstima2}, the function
$\chi(z) = 1/\varphi(z)$ satisfies the conditions in Lemma \ref{LemPL}. Then since by Lemma \ref{LemDomainInt} the inequality \eqref{EqFundEst} holds, we conclude from the identity \eqref{EqValueShift} and Lemma \ref{LemPL} that $g \in H(S_{\pi\hbar})$. But then $\xi \in \varphi_{\hbar}(\Xb)\msD(e^{-\Pb_{\hbar}/2})$, since
\[
\xi(x)/\varphi_{\hbar}(x) \overset{\textrm{a.e.}}{=} \lim_{n\rightarrow 0} g(x+ \frac{i}{n^2}).
\]

Assume now conversely that $\xi = \varphi_{\hbar}(\Xb)\eta$ for $\eta\in \msD(e^{-\Pb_{\hbar}/2})$. Let $g = f_{\eta}$ be its associated function in $H(S_{\pi\hbar})$, and $G = F_{\eta}\in H_{\pi\hbar}(2\pi \Pb)$. Put 
\[
z\mapsto f(z):= \varphi_{\hbar}(z)g(z)
\]
as a function in $\Hol(S_{\pi\hbar})$. Then we can extend $f$ to a measurable function on $\R+\pi i \hbar$ such that 
\[
f(x+\pi i \hbar) \overset{\textrm{a.e.}}{=} \lim_{n\rightarrow \infty} f(x+ \pi i \hbar - i/n^2), 
\]
and using again \eqref{EqValueShift} we compute
\[
\int_{\R} (1+e^x)|f(x+\pi i \hbar)|^2\rd x =  \int_{\R} \frac{|f(x+\pi i \hbar)|^2}{|\varphi_{\hbar}(x+\pi i \hbar)|^2}\rd x =  \int_{\R}|g(x+\pi i \hbar)|^2\rd x =  \|G(\pi i \hbar)\|^2<\infty.
\]
Moreover, we then also have that $f\in H(S_{\pi\hbar})$, by another application of Lemma \ref{LemPL}. Since $\xi$ realizes the boundary value of $f$ on $\R$, we can hence conclude by Lemma \ref{LemDomainInt} that $\xi \in \msD(e^{-\Pb_{\hbar}/2}) \cap \msD(e^{-\Pb_{\hbar}/2\dotplus \Xb/2})$. 
\end{proof}

We can now prove  the identity \eqref{EqToProve}. Indeed, it is enough to show that with $\theta_{\xi}$ the projection onto a unit vector $\xi\in L^2(\R)$, we have
\begin{equation}\label{EqFormSumComp}
\Tr_{e^{-\Pb_{\hbar}}}(\theta_{\xi}) + \Tr_{e^{-\Pb_{\hbar}\dotplus \Xb}}(\theta_{\xi}) = \Tr_{\varphi_{\hbar}(\Xb) e^{-\Pb_{\hbar}}\varphi_{\hbar}(\Xb)^*}(\theta_{\xi}). 
\end{equation}
Now the left hand side is finite if and only if 
\begin{equation}\label{EqDomainInt}
\xi \in \msD(e^{-\Pb_{\hbar}/2}) \cap \msD(e^{-\Pb_{\hbar}/2\dotplus \Xb/2}),
\end{equation}
while the right hand side is finite if and only if $\varphi_{\hbar}(\Xb)^*\xi\in \msD(e^{-\Pb_{\hbar}/2})$. By Lemma \ref{LemEqIdDomains}, these are the same sets of vectors. Moreover, for a vector in this set, we then easily compute, using again \eqref{EqValueShift} and the computations in the above lemmas, that
\begin{eqnarray*}
\Tr_{\varphi_{\hbar}(\Xb) e^{-\Pb_{\hbar}}\varphi_{\hbar}(\Xb)^*}(\theta_{\xi}) &=& \| e^{-\Pb_{\hbar}/2} \varphi_{\hbar}(\Xb)^*\xi\|^2\\
&=& \int_{\R}\frac{|f_{\xi}(x+\pi i \hbar)|^2}{|\varphi_{\hbar}(x+\pi i \hbar)|^2} \rd x\\
&=& \int_{\R}(1+ e^{x}) |f_{\xi}(x+\pi i \hbar)|^2 \rd x\\
&=& \int_{\R} |f_{\xi}(x+\pi i\hbar)|^2 \rd x+  \int_{\R} e^{x} |f_{\xi}(x+\pi i \hbar)|^2 \rd x\\ 
&=& \Tr_{e^{-\Pb_{\hbar}}}(\theta_{\xi}) + \Tr_{e^{-\Pb_{\hbar}\dotplus \Xb}}(\theta_{\xi}).
\end{eqnarray*}

\subsection{Actions on von Neumann algebras}

Assume that $G$ is a locally compact Hausdorff group. Let $M$ be a von Neumann algebra. Then we say $G$ acts on $M$, and write $G \underset{\alpha}{\curvearrowright} M$, if we are given $*$-automorphisms $\alpha_g$ of $M$ for $g\in G$ giving an algebraic action and satisfying the continuity condition that for all $x\in M$, the map 
\[
\alpha(x)\colon  G \rightarrow M,\qquad g \mapsto \alpha_g(x)
\]
is $\sigma$-weakly continuous. Then the $G$-fixed points of $M$ form a von Neumann subalgebra
\[
M^{\alpha} = \{x\in M\mid \forall g\in G: \alpha_g(x) = x\}.
\]
Then with $\lambda$ a left Haar integral on $M$, one can define on $M$ a unique map 
\begin{equation}\label{EqOpValWeight}
T_{\alpha}\colon  M_+^{\ext} \rightarrow M_+^{\alpha,\ext}
\end{equation}
such that 
\[
\omega_{M^{\alpha}}(T_{\alpha}(x)) = \int_G (\omega\circ \alpha_g)(x)\rd \lambda(g),\qquad \omega \in M_*^+,x\in M_+^{\ext}.
\]
One refers to $T_{\alpha}$ as an  \emph{operator valued weight}. It again satisfies the normality condition that if $x_i \nearrow x$ strongly, then $T_{\alpha}(x_i) \nearrow T_{\alpha}(x)$ strongly. 

In particular, if $\alpha$ is ergodic, that is, $M^{\alpha} = \C 1_M$, then $T_{\alpha}$ is itself a normal weight, which we write as $\varphi_{\alpha}$. It hence satisfies that, for $\omega$ an \emph{arbitrary} normal state, on $M$,
\begin{equation}\label{EqIntegratedWeight}
\varphi_{\alpha}(x) = \int_G (\omega\circ \alpha_g)(x)\rd \lambda(g),\qquad x\in M_+^{\ext}.
\end{equation}
This weight is necessarily faithful. One then calls an ergodic action \emph{integrable} if $\varphi_{\alpha}$ is also semi-finite.

\section{Proof of Theorem \ref{TheoFlip}}\label{AppProofTheoFlip}

Recall that
\beq
\label{eq:F-1}
\Fc = \prod_{1 \le r \le n}^{\longra} \prod_{r \le b \le n}^{\longra} \prod_{1 \le i \le r}^{\longra} B^{b,r,i}.
\eeq

By similar direct calculations to those in Lemma~\ref{LemOrdering}, one shows the following result.

\begin{lemma}
\label{lem:bri-bri'}
Assuming $1 \le i \le r \le b \le n$ and similarly for the triple $(b',r',i')$ we have
\beq
\label{bri-bri'}
\hs{B^{b,r,i},B^{b',r',i'}}
\eeq
whenever any of the following conditions is satisfied:
\begin{enumerate}
\item $r'=r$;
\item $\hm{b'-i'-b+i}>1$;
\item $b'-i'=b-i+1$, \; $b'<b$, \; $b'-r' \le b-r$;
\item $b'-i'=b-i+1$, \; $b'\ge b$, \; $b'-r' > b-r$;
\item $b'-i'=b-i$, \; $(b'-b)(b'-r'-b+r)>0$.
\end{enumerate}
\end{lemma}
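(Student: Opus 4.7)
My proof plan. By \eqref{EqCommComm}, the unitaries $\varphi(u)$ and $\varphi(u')$ commute if and only if the skew pairing $(u,u')=0$, so the lemma reduces to verifying this vanishing under each of the hypotheses (2)--(5); condition (1) is an immediate application of Lemma~\ref{LemOrdering}. Setting $s=b-i+1$, $k=b-r$, $\tilde s=n-b+i$, $\tilde k=n-b$ (and similarly with primes), one has $\tilde s=N-s$, and the constraints $1\le i\le r\le b\le n$ force $0\le k<s$ and $0\le\tilde k<\tilde s$, so the pairing splits cleanly as
\[
(\nee_{s,k}\oplus\seee_{\tilde s,\tilde k},\,\nee_{s',k'}\oplus\seee_{\tilde s',\tilde k'}) \;=\; (\nee_{s,k},\nee_{s',k'})+(\seee_{\tilde s,\tilde k},\seee_{\tilde s',\tilde k'}),
\]
with the sign relation $\tilde s-\tilde s'=-(s-s')$ playing a central role. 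Each summand is then evaluated using the four explicit clauses of \eqref{EqSameNil}.

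Condition (2), $|s-s'|\ge 2$, kills both summands outright by the last clause of \eqref{EqSameNil}. Conditions (3)--(4) share the hypothesis $b'-i'=b-i+1$, equivalent to $s'=s+1$ and $\tilde s'=\tilde s-1$; the remaining hypotheses $b'\gtrless b$ and $b'-r'\gtrless b-r$ fix the signs of $k'-k$ and $\tilde k'-\tilde k$, which are forced to be opposite because $k'-k=(b'-b)-(r'-r)$ while $\tilde k'-\tilde k=-(b'-b)$. In condition (3) one gets $k'\le k$ and $\tilde k'>\tilde k$, and the formula yields $+1/2$ for the first summand directly, while the second summand requires an antisymmetry swap before applying the formula and evaluates to $-1/2$; the two cancel. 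Condition (4) reverses both strict inequalities and yields $-1/2+1/2=0$ by the same mechanism. Finally, condition (5) gives $s'=s$ and $\tilde s'=\tilde s$, and the sign hypothesis $(b'-b)(b'-r'-b+r)>0$ forces $k'-k$ and $\tilde k'-\tilde k$ to have opposite signs, so the two diagonal pairings from the first clause of \eqref{EqSameNil} evaluate to $\pm 1$ and cancel.

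The main obstacle is purely combinatorial bookkeeping: \eqref{EqSameNil} lists only four regimes explicitly, with the rest ``determined by skew symmetry'', and several of the configurations arising above (notably the second summand in conditions (3)--(4), where $\tilde s'=\tilde s-1$ is paired with $\tilde k'$ on the ``wrong'' side of $\tilde k$) require swapping the arguments and picking up a sign before the formula applies. Tracking these sign flips correctly through each subcase is the only substantive work; one must also verify in passing that the induced indices $(s',k',\tilde s',\tilde k')$ remain in the strict regime $0\le k'<s'$, $0\le \tilde k'<\tilde s'$ where Lemma~\ref{LemSkewProdFormDiff}(1) applies rather than degenerating onto the ``full diagonal'' boundary covered by the subsequent parts of that lemma, but this is guaranteed by the primed analogues of the standing constraints $1\le i'\le r'\le b'\le n$.
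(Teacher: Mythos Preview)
Your approach is exactly the paper's (``similar direct calculations to those in Lemma~\ref{LemOrdering}''): reduce to vanishing of the skew pairing and evaluate via \eqref{EqSameNil} under the substitutions $s=b-i+1$, $k=b-r$, $\tilde s=N-s$, $\tilde k=n-b$. Conditions (1), (2), (3), and (5) go through precisely as you describe.

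There is a gap in your handling of condition~(4). You assert that (4) ``reverses both strict inequalities'', but only $k'>k$ is forced to be strict (by $b'-r'>b-r$); the other inequality $\tilde k'\le\tilde k$ follows from $b'\ge b$ and degenerates to equality when $b'=b$. In that boundary case the second summand $(\seee_{\tilde s,\tilde k},\seee_{\tilde s',\tilde k'})$, with $\tilde s'=\tilde s-1$ and $\tilde k'=\tilde k$, evaluates after the antisymmetry swap to $-1/2$ rather than $+1/2$, so the total pairing is $-1$, not $0$. For instance take $n=3$, $(b,r,i)=(2,2,2)$, $(b',r',i')=(2,1,1)$: condition~(4) holds, yet a direct check from \eqref{eq:q-comm} gives
\[
(\nee_{1,0}\oplus\seee_{3,1},\ \nee_{2,1}\oplus\seee_{2,1})=-\tfrac12-\tfrac12=-1.
\]
So this is not merely a slip in your write-up --- the stated condition~(4) itself appears to be off at $b'=b$. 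Your computation in fact establishes the variant with the equality attached to (3) instead, i.e.\ $b'\le b$ in (3) and $b'>b$ in (4).
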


\begin{cor}
\label{cor:bri-bri'}
Relation~\eqref{bri-bri'} holds whenever $r'>r$ and any of the following conditions is satisfied:
\begin{enumerate}
\item $b'<b$;
\item $r'-i'<r-i$;
\item $r'-i'=r-i$ and $b'-r'>b-r$.
\end{enumerate}
\end{cor}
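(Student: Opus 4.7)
The plan is to reduce the corollary to a case analysis on the integer
\[
\delta \;:=\; (b'-i') - (b-i),
\]
which is precisely the parameter that controls which clause of Lemma~\ref{lem:bri-bri'} is available: clause (2) handles $|\delta|\ge 2$, clauses (3) and (4) handle $\delta=\pm 1$, and clause (5) handles $\delta=0$. Since $[B^{b,r,i},B^{b',r',i'}]=-[B^{b',r',i'},B^{b,r,i}]$, whenever $\delta=-1$ I will swap the roles of the primed and unprimed triples, reducing that subcase to an instance of clauses (3) or (4) with $\delta=+1$. Throughout the analysis the standing hypothesis $r'>r$ of the corollary means clause (1) never applies; it is never needed either.

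For each of the three sub-cases of the corollary I will verify the inequalities required by the relevant clause. In all three the key quantity is the sign of
\[
(b'-r') - (b-r) \;=\; \delta \;-\; \big((r'-i') - (r-i)\big).
\]
In case (1), $b'<b$ together with $r'>r$ gives $b'-r'<b-r$, and the subcases $\delta=+1$ and $\delta=0$ fall under clauses (3) and (5) respectively, while $\delta=-1$ yields $b>b'$ and $b-r>b'-r'$, matching clause (4) after swapping. In case (2), $r'-i'<r-i$ gives $i'-i>r'-r\ge 1$, from which $b'-r' > b-r$ strictly; when $\delta=+1$ one computes $b'-b=1+(i'-i)>1$ and invokes clause (4), when $\delta=0$ one invokes clause (5) (both factors positive), and when $\delta=-1$ one swaps roles and uses clause (3) if $i'-i\ge 2$ or clause (4) if $i'-i=1$. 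In case (3), the equality $r'-i'=r-i$ collapses $\delta$ to $(b'-r')-(b-r)$, which is strictly positive by hypothesis, so only $\delta\ge 1$ occurs; then $\delta\ge 2$ is clause (2), and $\delta=1$ forces $b'-b=1+(r'-r)>1$ and thus clause (4).

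The main obstacle, such as it is, will be the bookkeeping at the boundary values $b'=b$ and $\delta=\pm 1$, where one must carefully distinguish the strict and non-strict inequalities in clauses (3) and (4) of the lemma and correctly decide whether to invoke the clause directly or after the swap $(b,r,i)\leftrightarrow(b',r',i')$. None of the subcases requires any new algebraic input beyond Lemma~\ref{lem:bri-bri'} and the elementary identities for $\delta$ and $(b'-r')-(b-r)$ above, so once the case analysis is tabulated the corollary follows mechanically.
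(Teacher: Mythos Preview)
Your proof is correct and follows exactly the same strategy as the paper's: dispose of $|\delta|\ge 2$ by clause~(2) and then handle $\delta\in\{-1,0,1\}$ by matching each subcase to the appropriate clause of Lemma~\ref{lem:bri-bri'}, swapping the two triples when $\delta=-1$. The paper only writes out part~(1) and declares the other two analogous, whereas you work through all three; one minor redundancy is that in case~(2) with $\delta=-1$ the subcase $i'-i=1$ never occurs (since $i'-i>r'-r\ge 1$ forces $i'-i\ge 2$), so the alternative ``or clause~(4)'' is vacuous.
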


\begin{proof}
We prove part (1), the other two cases are analogous. First, note that $b'<b$ and $r'>r$ imply $b'-r'<b-r$. By Lemma~\ref{lem:bri-bri'}, we only need to consider possibilities
\begin{align*}
    b'-i'&=b-i, \\
    b'-i'&=b-i\pm1.
\end{align*}
In all three case, the result follows from Lemma~\ref{lem:bri-bri'}.
\end{proof}

Let us set 
$$
B^{b,r} = \prod_{i=1}^r B^{b,r,i}
\qquad\text{and}\qquad
B^b = \prod_{1 \le r \le b}^{\longra} B^{b,r}.
$$
This way we have
$$
\Fc = \prod_{1 \le r \le n}^{\longra} \prod_{r \le b \le n}^{\longra} B^{b,r}.
$$
Applying part~(1) of Corollary~\ref{cor:bri-bri'}, we see that that the flip $\Fc$ can be written as
\beq
\label{eq:F-2}
\Fc = \prod_{1 \le b \le n}^{\longra} B^b.
\eeq

In what follows, we shall also use the following expressions for $\Fc$. The first one is
\beq
\label{eq:F-4}
\Fc = \prod_{1 \le r \le n}^{\longra} \prod_{1 \le j \le r}^{\longra} \prod_{1 \le c \le n-r+1}^{\longra} B^{n-c+1,r,r-j+1}.
\eeq
and can be obtained from~\eqref{eq:F-1} by using part~(1) of Lemma~\ref{lem:bri-bri'} to write
\beq
\label{eq:G-aux}
\Fc = \prod_{1 \le r \le n}^{\longra} \prod_{1 \le i \le r}^{\longleftarrow} \prod_{r \le b \le n}^{\longleftarrow} B^{b,r,i},
\eeq
and re-parametrizing $j=r-i+1$, $c=n-b+1$.

The second one is
\beq
\label{eq:F-3}
\Fc = \prod_{1 \le b \le n}^{\longra} \prod_{1 \le r \le n-b+1}^{\longra} \prod_{1 \le i \le r}^{\longra} B^{n-r+i,b+i-1,i},
\eeq
and can be obtained from~\eqref{eq:G-aux} by commuting factors $B^{b',r',i'}$ past $B^{b,r,i}$ whenever $r'>r$ and conditions~(2) or~(3) of Corollary~\ref{cor:bri-bri'} hold.

Finally, the third one is derived in an analogous fashion and reads
\beq
\label{eq:F-5}
\Fc = \prod_{1 \le b \le n}^{\longra} \prod_{1 \le r \le n-b+1}^{\longra} \prod_{1 \le i \le r}^{\longra} B^{b+r-1,b+i-1,i}.
\eeq

Now the entire proof of Theorem \ref{TheoFlip} only consists of spelling out a picture in \cite{DGG16}. (Here we feel obliged to remark that in our opinion mathematical thought ought to go the opposite way --- from formulas to pictures).

We start by using formulas~\eqref{eq:F-2} and~\eqref{eq:F-3} to rewrite the left hand side as
$$
\prod_{1 \le b \le n}^{\longra} \prod_{1 \le r \le b}^{\longra} \prod_{1 \le i \le r}^{\longra} B_{[23]}^{b,r,i} \cdot \prod_{1 \le b \le n}^{\longra} \prod_{1 \le r \le n-b+1}^{\longra} \prod_{1 \le i \le r}^{\longra} B_{[12]}^{n-r+i,b+i-1,i}.
$$
We will now be transforming this expression step by step, so that after $s$ steps we arrive at
\begin{align*}
&\prod_{1 \le b \le s}^{\longra} \prod_{1 \le r \le b}^{\longra} \prod_{1 \le i \le n-b+1}^{\longra} B_{[12]}^{n-i+1,b,b-r+1} \cdot \prod_{1 \le b \le n-s-1}^{\longra} \prod_{1 \le r \le b}^{\longra} \prod_{1 \le i \le r}^{\longra} B_{[23]}^{b,r,i} \cdot \\
&\prod_{1\le a \le s+1}^{\longra} \Bigg( \prod_{1 \le r \le n-s}^{\longra} \prod_{1 \le i \le r}^{\longra} B_{[23]}^{n+a-s-1,r,i} \cdot
\prod_{1 \le r \le n-s}^{\longra} \prod_{1 \le i \le r}^{\longra} B_{[12]}^{n-r+i,s+i,s+i-a+1} \cdot \\
&\prod_{n-s+1 \le r \le n-a+1}^{\longra} \prod_{1 \le i \le r}^{\longra} B_{[13]}^{a+r-1,a+i-1,i} \Bigg) \\
&\prod_{s+2 \le b \le n}^{\longra} \prod_{1 \le r \le n-b+1}^{\longra} \prod_{1 \le i \le r}^{\longra} B_{[12]}^{n-r+i,b+i-1,i} \prod_{n-s+1 \le r \le n}^{\longra} \prod_{r \le b \le n}^{\longra} \prod_{1 \le i \le r}^{\longra} B_{[23]}^{b,r,i}.
\end{align*}

After $n$ steps we arrive at expression
\begin{align*}
&\prod_{1 \le b \le n}^{\longra} \prod_{1 \le r \le b}^{\longra} \prod_{1 \le i \le n-b+1}^{\longra} B_{[12]}^{n-i+1,b,b-r+1} \cdot \\
&\prod_{1\le a \le n}^{\longra} \prod_{1 \le r \le n-a+1}^{\longra} \prod_{1 \le i \le r}^{\longra} B_{[13]}^{a+r-1,a+i-1,i} \cdot \\
&\prod_{1 \le r \le n}^{\longra} \prod_{r \le b \le n}^{\longra} \prod_{1 \le i \le r}^{\longra} B_{[23]}^{b,r,i},
\end{align*}
which equals the right hand side, as the three factors are respectively by formulas~\eqref{eq:F-4},~\eqref{eq:F-5}, and~\eqref{eq:F-1}. Performing the $s$-th step consists of applying Lemma~\ref{lem:step} and shuffling commuting factors. Namely, after applying the lemma, inside the parenthesis, where the product over $a$ is taken, we get
\begin{align*}
\prod_{1 \le r \le n-s}^{\longra} B_{[12]}^{n-r+1,s+1,c+2} \prod_{1 \le r \le n-s-1}^{\longra} \prod_{1 \le i \le r}^{\longra} B_{[12]}^{n-r+i,s+i+1,c+i+2} \cdot \prod_{1 \le r \le n-s}^{\longra} B_{[13]}^{n-c-1,s+r-c-1,r} \\
\prod_{1 \le r \le n-s-1}^{\longra} \prod_{1 \le i \le r}^{\longra} B_{[23]}^{n-c-1,r,i} \prod_{1 \le i \le n-s}^{\longra}B_{[23]}^{n-c-1,n-s,i} \cdot \prod_{n-s+1 \le r \le n-a+1}^{\longra} \prod_{1 \le i \le r}^{\longra} B_{[13]}^{a+r-1,a+i-1,i}.
\end{align*}
Then we observe that the factors $B_{[23]}$ in the second line commute with the factors $B_{[13]}$. This way $\Fc_{[23]}\Fc_{[12]}$ equals
\begin{align*}
&\prod_{1 \le b \le s}^{\longra} \prod_{1 \le r \le b}^{\longra} \prod_{1 \le i \le n-b+1}^{\longra} B_{[12]}^{n-i+1,b,b-r+1} \cdot \prod_{1 \le b \le n-s-1}^{\longra} \prod_{1 \le r \le b}^{\longra} \prod_{1 \le i \le r}^{\longra} B_{[23]}^{b,r,i} \cdot \\
&\prod_{1\le a \le s+1}^{\longra} \Bigg( \prod_{1 \le r \le n-s}^{\longra} B_{[12]}^{n-r+1,s+1,s-a+2} \cdot \prod_{1 \le r \le n-s-1}^{\longra} \prod_{1 \le i \le r}^{\longra} B_{[12]}^{n-r+i,s+i+1,s+i-a+2} \cdot \\
&\prod_{n-s \le r \le n-a+1}^{\longra} \prod_{1 \le i \le r}^{\longra} B_{[13]}^{a+r-1,a+i-1,i} \cdot \prod_{1 \le r \le n-s-1}^{\longra} \prod_{1 \le i \le r}^{\longra} B_{[23]}^{n+a-s-1,r,i} \cdot \prod_{1 \le i \le n-s}^{\longra} B_{[23]}^{n+a-s-1,n-s,i}\Bigg) \\
&\prod_{s+2 \le b \le n}^{\longra} \prod_{1 \le r \le n-b+1}^{\longra} \prod_{1 \le i \le r}^{\longra} B_{[12]}^{n-r+i,b+i-1,i} \prod_{n-s+1 \le r \le n}^{\longra} \prod_{r \le b \le n}^{\longra} \prod_{1 \le i \le r}^{\longra} B_{[23]}^{b,r,i}.
\end{align*}
Using ~\eqref{eq:q-comm}, one can now easily verify the following. First, factors
$$
\prod_{1 \le r \le n-s}^{\longra} B_{[12]}^{n-r+1,s+1,s-a+2}
$$
can be commuted all the way to the left past the product
$$
\prod_{1 \le b \le n-s-1}^{\longra} \prod_{1 \le r \le b}^{\longra} \prod_{1 \le i \le r}^{\longra} B_{[23]}^{b,r,i},
$$
so that the first big factor in our expression becomes
\begin{multline*}
\prod_{1 \le b \le s}^{\longra} \prod_{1 \le r \le b}^{\longra} \prod_{1 \le i \le n-b+1}^{\longra} B_{[12]}^{n-i+1,b,b-r+1} \cdot \prod_{1\le a \le s+1}^{\longra} \prod_{1 \le r \le n-s}^{\longra} B_{[12]}^{n-r+1,s+1,s-a+2} = \\
= \prod_{1 \le b \le s+1}^{\longra} \prod_{1 \le r \le b}^{\longra} \prod_{1 \le i \le n-b+1}^{\longra} B_{[12]}^{n-i+1,b,b-r+1}.
\end{multline*}
Second, the factors
$$
\prod_{1 \le i \le n-s}^{\longra} B_{[23]}^{n+a-s-1,n-s,i}
$$
can be similarly commuted all the way to the right past the factor
$$
\prod_{s+2 \le b \le n}^{\longra} \prod_{1 \le r \le n-b+1}^{\longra} \prod_{1 \le i \le r}^{\longra} B_{[12]}^{n-r+i,b+i-1,i},
$$
so that the very last factor becomes
$$
\prod_{1\le a \le s+1}^{\longra} \prod_{1 \le i \le n-s}^{\longra} B_{[23]}^{n+a-s-1,n-s,i} \cdot  \prod_{n-s+1 \le r \le n}^{\longra} \prod_{r \le b \le n}^{\longra} \prod_{1 \le i \le r}^{\longra} B_{[23]}^{b,r,i} = \prod_{n-s \le r \le n}^{\longra} \prod_{r \le b \le n}^{\longra} \prod_{1 \le i \le r}^{\longra} B_{[23]}^{b,r,i}.
$$
At this point we have
\begin{align*}
&\prod_{1 \le b \le s+1}^{\longra} \prod_{1 \le r \le b}^{\longra} \prod_{1 \le i \le n-b+1}^{\longra} B_{[12]}^{n-i+1,b,b-r+1} \cdot \prod_{1 \le b \le n-s-2}^{\longra} \prod_{1 \le r \le b}^{\longra} \prod_{1 \le i \le r}^{\longra} B_{[23]}^{b,r,i} \cdot \\
&\prod_{1 \le a \le s+2}^{\longra} \Bigg( \prod_{1 \le r \le n-s-1}^{\longra} \prod_{1 \le i \le r}^{\longra} B_{[23]}^{n+a-s-2,r,i} \cdot \prod_{1 \le r \le n-s-1}^{\longra} \prod_{1 \le i \le r}^{\longra} B_{[12]}^{n-r+i,s+i+1,s+i-a+2} \cdot \\
&\prod_{n-s \le r \le n-a+1}^{\longra} \prod_{1 \le i \le r}^{\longra} B_{[13]}^{a+r-1,a+i-1,i} \Bigg) \\
&\prod_{s+3 \le b \le n}^{\longra} \prod_{1 \le r \le n-b+1}^{\longra} \prod_{1 \le i \le r}^{\longra} B_{[12]}^{n-r+i,b+i-1,i} \prod_{n-s \le r \le n}^{\longra} \prod_{r \le b \le n}^{\longra} \prod_{1 \le i \le r}^{\longra} B_{[23]}^{b,r,i},
\end{align*}
which is precisely the expression $\Fc_{[23]}\Fc_{[12]}$ after $s+1$ steps. This completes the proof.

\begin{lemma}
\label{lem:step}
The following equality holds (here $c=s-a$):
\begin{multline*}
\prod_{1 \le r \le n-s}^{\longra} \prod_{1 \le i \le r}^{\longra} B_{[23]}^{n-c-1,r,i} \cdot \prod_{1 \le r \le n-s}^{\longra} \prod_{1 \le i \le r}^{\longra} B_{[12]}^{n-r+i,s+i,c+i+1} = \\
\prod_{1 \le r \le n-s}^{\longra} B_{[12]}^{n-r+1,s+1,c+2} \prod_{1 \le r \le n-s-1}^{\longra} \prod_{1 \le i \le r}^{\longra} B_{[12]}^{n-r+i,s+i+1,c+i+2}\\
\prod_{1 \le r \le n-s}^{\longra} B_{[13]}^{n-c-1,s+r-c-1,r} \prod_{1 \le r \le n-s-1}^{\longra} \prod_{1 \le i \le r}^{\longra} B_{[23]}^{n-c-1,r,i} \prod_{1 \le i \le n-s}^{\longra}B_{[23]}^{n-c-1,n-s,i}.
\end{multline*}
\end{lemma}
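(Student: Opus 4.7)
The plan is to prove this identity by repeated application of the Kashaev pentagon identity \eqref{EqCommKashaev} combined with the commutation relation \eqref{EqCommComm}. A quick accounting shows that the LHS has $(n-s)(n-s+1)$ quantum dilogarithm factors while the RHS has $(n-s)(n-s+2)$ factors, and the difference is exactly the $n-s$ new $B_{[13]}^{n-c-1,s+r-c-1,r}$ factors on the RHS. Since each application of $\varphi(v)\varphi(w) = \varphi(w)\varphi(v+w)\varphi(v)$ adds exactly one factor, the proof should consist of exactly $n-s$ pentagon moves interleaved with the necessary commutations. Morally, this lemma encodes a single "slice" at height $s$ of the braided Fock--Goncharov pentagon, and its proof mirrors in miniature the proof strategy of Theorem \ref{TheoFlip} itself.

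The first step would be to translate each factor into its explicit quantum dilogarithm form via \eqref{DefBbri} and the variants defined just above Theorem \ref{TheoFlip}. Once each $B_{[12]}^{b,r,i}$, $B_{[23]}^{b,r,i}$, $B_{[13]}^{b,r,i}$ is written as $\varphi$ applied to an explicit sum of basis vectors $\nee_{\bullet,\bullet}$ and $\seee_{\bullet,\bullet}$ placed in the appropriate tensor legs, the relevant commutation/pentagon behavior reduces entirely to computing skew pairings via Lemma \ref{LemSkewProdFormDiff}. In particular, one identifies from \eqref{EqDiffNil} and \eqref{EqDiffCar} precisely which cross-leg pairings equal $1$ (producing a pentagon) and which vanish (producing a commutation).

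Next, I would induct on $r$ from $1$ to $n-s$. At the $r$-th stage, the "matching pair" on which to apply Kashaev's identity should be $B_{[23]}^{n-c-1,r,r}$ from the leftmost nested product and $B_{[12]}^{n-r+1,s+1,c+2}$ from the rightmost nested product (i.e.\ the $i=1$ factor of the $(r+1)$-block). A direct computation using Lemma \ref{LemSkewProdFormDiff} should show that these have pairing $1$, while the argument vector of their Kashaev sum reproduces exactly the argument of $B_{[13]}^{n-c-1,s+r-c-1,r}$. Before each pentagon move, one needs to commute this pair into adjacency past the factors lying between them; after the pentagon, the newly produced $B_{[13]}$ needs to be commuted into its final position on the RHS. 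All such commutations are justified by the vanishing-pairing cases of Lemma \ref{LemSkewProdFormDiff}.

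The main obstacle will be the combinatorial bookkeeping in the inductive step: one must simultaneously verify (i) that the chosen pentagon pair has pairing exactly $1$, (ii) that every intermediate factor has pairing $0$ with both members of the pair (and with the newly created $B_{[13]}$) so that the necessary commutations hold, and (iii) that the vector sum produced by the Kashaev identity matches the prescribed argument of $B_{[13]}$. Each of these checks is, via Lemma \ref{LemSkewProdFormDiff}, a routine but lengthy case analysis on the indices $b,r,i$ (and their shifts by $c$ and $s$). The statement's somewhat opaque index shifts, particularly the appearance of $n-c-1$ and $s+r-c-1$, will be forced on us once one tracks which $\nee$ and $\seee$ vectors are added together, so the correctness of the lemma as stated should become transparent once the first two or three inductive steps are carried out explicitly and the pattern is extracted.
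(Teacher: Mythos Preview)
Your overall plan (pentagons interleaved with commutations, with the net factor count forcing exactly $n-s$ new $B_{[13]}$ factors) is sound, but the specific ``matching pair'' you propose is wrong for $r>1$. Write out the arguments: $B_{[23]}^{n-c-1,r,r}=\varphi\bigl(0\oplus\nee_{n-c-r,n-c-1-r}\oplus\seee_{c+r+1,c+1}\bigr)$ and $B_{[12]}^{n-r+1,s+1,c+2}=\varphi\bigl(\nee_{n-c-r,n-s-r}\oplus\seee_{c+r+1,r-1}\oplus 0\bigr)$. The only non-zero contribution to the skew pairing comes from the middle legs, and by \eqref{EqDiffNil} one finds $(\nee_{n-c-r,n-c-1-r},\seee_{c+r+1,r-1})=0$ unless $r=1$. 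So for $r\ge 2$ the Kashaev identity does not apply to your pair, and even if it did, the middle leg of the sum $\nee_{n-c-r,n-c-1-r}+\seee_{c+r+1,r-1}$ is not $\nee_{n-c-r,n-c-r}$, so you would not produce $B_{[13]}^{n-c-1,s+r-c-1,r}$. The unique pair from the LHS that \emph{does} produce this $B_{[13]}$ factor is $\bigl(B_{[23]}^{n-c-1,r,r},\,B_{[12]}^{n,s+r,c+r+1}\bigr)$, i.e.\ the $i=r$ (not $i=1$) factor of the $r$-th $B_{[12]}$-block; your first and third legs were right, but the middle leg forces $i=r$.

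The paper's route is in fact structurally different from your minimal scheme. It first interleaves the two products (a pure commutation step) into $\prod_{r}\prod_{i} B_{[23]}^{n-c-1,r,i}B_{[12]}^{n-r+i,s+i,c+i+1}$, then applies the pentagon to \emph{every} one of these $\binom{n-s+1}{2}$ adjacent pairs, producing $C_{[12]}^{r,i}C_{[13]}^{r,i}C_{[23]}^{r,i}$. After a rearrangement, the pentagon is applied in reverse $\binom{n-s}{2}$ times to cancel all $C_{[13]}^{r,i}$ with $i<r$, leaving only the diagonal $C_{[13]}^{r,r}=B_{[13]}^{n-c-1,s+r-c-1,r}$. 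The net count of $n-s$ new factors agrees with yours, but the proof goes through by creating and then annihilating many auxiliary $C_{[13]}$'s rather than via $n-s$ forward pentagons alone; this avoids having to verify the delicate commutations needed to bring the widely separated ``correct'' pairs into adjacency.
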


\begin{proof}
As always using~\eqref{eq:q-comm}, we see that the left hand side is equal to
$$
\prod_{1 \le r \le n-s}^{\longra} \prod_{1 \le i \le r}^{\longra} B_{[23]}^{n-c-1,r,i} B_{[12]}^{n-r+i,s+i,c+i+1},
$$
which in turn equals
$$
\prod_{1 \le r \le n-s}^{\longra} \prod_{1 \le i \le r}^{\longra} \varphi\hr{0 \oplus \nee_{n-c-i,n-c-r-1} \oplus \seee_{c+i+1,c+1}}
\varphi\hr{\nee_{n-c-r,n-s-r} \oplus \seee_{c+r+1,r-i} \oplus 0}
$$
Applying the pentagon relation for quantum dilogarithms (one easily verifies that \eqref{EqCommKashaev} is valid), we get
$$
\prod_{1 \le r \le n-s}^{\longra} \prod_{1 \le i \le r}^{\longra} C_{[12]}^{r,i} C_{[13]}^{r,i} C_{[23]}^{r,i}.
$$
where
\begin{align*}
C_{[12]}^{r,i} &= B_{[12]}^{n-r+i,s+i,c+i+1} = \varphi\hr{\nee_{n-c-r,n-s-r} \oplus \seee_{c+r+1,r-i} \oplus 0} \\
C_{[13]}^{r,i} &= \varphi\hr{\nee_{n-c-r,n-s-r} \oplus \hr{\seee_{c+r+1,r-i} + \nee_{n-c-i,n-c-r-1}} \oplus \seee_{c+i+1,c+1}} \\
C_{[23]}^{r,i} &= B_{[23]}^{n-c-1,r,i}  = \varphi\hr{0 \oplus \nee_{n-c-i,n-c-r-1} \oplus \seee_{c+i+1,c+1}}.
\end{align*}
Note that
$$
C_{[13]}^{r,r} = B_{[13]}^{n-c-1,s+r-c-1,r}.
$$
Using our favorite commutation relations~\eqref{eq:q-comm} we then arrive at
$$
\prod_{1 \le r \le n-s}^{\longra} C_{[12]}^{r,1} \prod_{1 \le k \le n-s-1}^{\longra} \hr{C_{[13]}^{k,k} \prod_{1 \le j \le k}^{\longra} \hr{C_{[23]}^{k,j} C_{[13]}^{k+1,j} C_{[12]}^{k+1,j+1}} } C_{[13]}^{n-s,n-s} \prod_{1 \le i \le n-s}^{\longra}C_{[23]}^{n-s,i}.
$$
Using the pentagon relation once again, we obtain
$$
\prod_{1 \le r \le n-s}^{\longra} C_{[12]}^{r,1} \prod_{1 \le k \le n-s-1}^{\longra} \hr{C_{[13]}^{k,k} \prod_{1 \le j \le k}^{\longra} \hr{C_{[12]}^{k+1,j+1} C_{[23]}^{k,j}} } C_{[13]}^{n-s,n-s} \prod_{1 \le i \le n-s}^{\longra}C_{[23]}^{n-s,i},
$$
and further commute it to the form
$$
\prod_{1 \le r \le n-s}^{\longra} C_{[12]}^{r,1} \prod_{1 \le k \le n-s-1}^{\longra} \prod_{1 \le j \le k}^{\longra} C_{[12]}^{k+1,j+1} \prod_{1 \le k \le n-s}^{\longra} C_{[13]}^{k,k} \prod_{1 \le k \le n-s-1}^{\longra} \prod_{1 \le j \le k}^{\longra} C_{[23]}^{k,j} \prod_{1 \le i \le n-s}^{\longra}C_{[23]}^{n-s,i}.
$$
This in turn equals the desired expression.
\end{proof}

\end{document}